\documentclass[11pt,a4paper]{article}
\usepackage{xcolor}
\usepackage[T1]{fontenc}
\usepackage{lmodern}
\usepackage[margin=25mm]{geometry}
\usepackage{amsmath,amssymb,amsthm,mathtools,microtype,needspace}
\usepackage{enumitem,aligned-overset,aliascnt}
\usepackage[colorlinks=true,allcolors=blue,pdfencoding=auto]{hyperref}
\usepackage[nameinlink,noabbrev,capitalize]{cleveref}
\crefname{theorem}{Theorem}{Theorems}
\crefname{proposition}{Proposition}{Propositions}
\crefname{lemma}{Lemma}{Lemmas}
\crefname{corollary}{Corollary}{Corollaries}
\crefname{conjecture}{Conjecture}{Conjectures}
\crefname{claim}{Claim}{Claims}
\numberwithin{equation}{section}
\newtheorem{theorem}{Theorem}[section]
\newaliascnt{lemma}{theorem}
\newtheorem{lemma}[lemma]{Lemma}
\aliascntresetthe{lemma}
\newaliascnt{proposition}{theorem}
\newtheorem{proposition}[proposition]{Proposition}
\aliascntresetthe{proposition}
\newaliascnt{corollary}{theorem}
\newtheorem{corollary}[corollary]{Corollary}
\aliascntresetthe{corollary}
\newaliascnt{observation}{theorem}
\newtheorem{observation}[observation]{Observation}
\aliascntresetthe{observation}
\crefname{observation}{Observation}{Observations}
\Crefname{observation}{Observation}{Observations}
\newaliascnt{claim}{theorem}
\newtheorem{claim}[claim]{Claim}
\aliascntresetthe{claim}
\newaliascnt{fact}{theorem}
\newtheorem{fact}[fact]{Fact}
\aliascntresetthe{fact}
\crefname{fact}{Fact}{Facts}
\Crefname{fact}{Fact}{Facts}

\newaliascnt{conjecture}{theorem}
\newtheorem{conjecture}[conjecture]{Conjecture}
\aliascntresetthe{conjecture}
\theoremstyle{remark}
\newaliascnt{remark}{theorem}

\aliascntresetthe{remark}
\newcommand{\E}{\mathbb{E}}
\newcommand{\Prob}{\mathbb{P}}
\newcommand{\1}{\mathbf{1}}
\newcommand{\law}{\mathsf{L}}
\newcommand{\Pcal}{\mathcal{P}}
\newcommand{\rd}{\mathrm{d}}
\newcommand{\dd}{\,\rd}
\newcommand{\Given}{\,\middle|\,}
\DeclareMathOperator*{\esssup}{ess\,sup}
\DeclareMathOperator{\Var}{Var}
\DeclareMathOperator{\Cov}{Cov}
\DeclareMathOperator{\supp}{supp}
\allowdisplaybreaks[2]
\title{\fontsize{14pt}{20pt}\selectfont
A Proof of the Erd\H{o}s Matching Conjecture for Large Matchings\\
via Sharp Tail Probability for Sums of i.i.d. Nonnegative Random Variables}
\author{Cheng Chi$^\ast$
\and Shuang Sun$^\ast$
\and Yan Wang$^\ast$}
\date{}
\hypersetup{pdftitle={A Proof of the Erdős Matching Conjecture for Large Matchings via Sharp Tail Probability for Sums of i.i.d. Nonnegative Random Variables},pdfauthor={}}
\crefname{theorem}{Theorem}{Theorems}
\crefname{conjecture}{Conjecture}{Conjectures}
\Crefname{conjecture}{Conjecture}{Conjectures}
\crefname{lemma}{Lemma}{Lemmas}
\crefname{proposition}{Proposition}{Propositions}
\crefname{corollary}{Corollary}{Corollaries}
\crefname{claim}{Claim}{Claims}
\theoremstyle{remark}

\newcommand{\doi}[1]{\href{https://doi.org/#1}{\nolinkurl{doi:#1}}}
\begin{document}
\maketitle
{
\renewcommand{\thefootnote}{\fnsymbol{footnote}}
\footnotetext[1]{School of Mathematical Sciences, Shanghai Jiao Tong University, 800 Dongchuan Road, Shanghai 200240, China.
  Supported by National Key R\&D Program of China under grant No. 2022YFA1006400, National Natural Science Foundation of China No. 12571376 and Shanghai Institute for Mathematics and Interdisciplinary Sciences, SIMIS-ID-26-AMS-003.
  Email: chengchi@sjtu.edu.cn (C. Chi), yan.w@sjtu.edu.cn (Y. Wang) and chocolatesun@sjtu.edu.cn (S. Sun).}
}
\begin{abstract}
  We resolve a conjecture of \L{}uczak, Mieczkowska and \v{S}ileikis on the maximal tail probability of sums of independent and identically distributed nonnegative random variables.
  Specifically, for every integer $k\ge1$ and every $0\le x\le1/k$, we prove
  \[
    \sup_{\E X_1\le x}\Prob(X_1+\cdots+X_k\ge1)
    =\max\{1-(1-x)^k,(kx)^k\},
  \]
  where the supremum is over nonnegative i.i.d.\ random variables $X_1,\ldots,X_k$.
  We also characterize all maximizing distributions.

  Using this result, we prove the Erd\H{o}s Matching Conjecture for every fixed $k\ge3$, all sufficiently large $s$, and every $n\ge k(s+1)$.
  More specifically, we show that in this range, the maximum number of edges in an $n$-vertex $k$-uniform hypergraph containing no $s+1$ pairwise disjoint edges is
  \[
    \max\left\{
    \binom{k(s+1)-1}{k},
    \binom nk-\binom{n-s}{k}
    \right\}.
  \]
  Moreover, every extremal family consists either of all $k$-sets meeting a fixed $s$-set or of all $k$-sets contained in a fixed $(k(s+1)-1)$-set.
\end{abstract}

\section{Introduction}\label{sec:introduction}

\subsection{Maximum tail probability of i.i.d.\ sums of nonnegative random variables}

We consider the problem of determining the largest probability that the sum of $k$ independent and identically distributed (i.i.d.) nonnegative random variables is at least one when their common mean is at most a prescribed value.
For $k=1$, the answer follows from Markov's inequality.
In 1955, Hoeffding and Shrikhande~\cite{HoeffdingShrikhande} solved the case $k=2$.
Samuels~\cite{Samuels1966,Samuels1968,Samuels1969} subsequently studied the broader setting of independent nonnegative variables whose distributions and means need not be identical.

  {Alon, Frankl, Huang, R\"odl, Ruci\'nski and Sudakov~\cite{AFHRS} related tail inequalities for sums of nonnegative random variables to fractional matchings in uniform hypergraphs.}
Building on this connection, \L{}uczak, Mieczkowska and \v{S}ileikis~\cite{LMS} formulated a precise conjecture for the i.i.d.\ nonnegative random variables problem and proved its equivalence to the corresponding asymptotic fractional matching problem.

For a Borel probability law $\mathsf P$ on $[0,\infty)$, let $m(\mathsf P)=\int t\dd\mathsf P(t)$ and define
\[
  F_k(\mathsf P)=\Prob(X_1+\cdots+X_k\ge1)
  \text{ and }
  \Phi_k(x)=\sup_{m(\mathsf P)\le x}F_k(\mathsf P),
\]
where $X_1,\ldots,X_k$ are independent with common law $\mathsf P$, and the supremum is over all such laws with mean at most $x$.
  {If $x\ge1/k$, then $\Phi_k(x)=1$, since we may choose $X_i=1/k$ with probability one for each $i$, so that the mean constraint is satisfied and their sum equals one.}
For $0\le x\le1/k$, consider the two laws
\[
  \mathsf B_x=(1-x)\delta_0+x\delta_1
  \text{ and }
  \mathsf C_{k,x}=(1-kx)\delta_0+kx\delta_{1/k},
\]
where $\delta_t$ denotes the point mass at $t$.
Equivalently,
\[
  \mathsf B_x(\{t\})=
  \begin{cases}
    1-x, & t=0,             \\
    x,   & t=1,             \\
    0,   & \text{otherwise}
  \end{cases}
  \text{ and }
  \mathsf C_{k,x}(\{t\})=
  \begin{cases}
    1-kx, & t=0,             \\
    kx,   & t=1/k,           \\
    0,    & \text{otherwise}
  \end{cases}.
\]
Both laws have mean $x$.
For $\mathsf B_x$, the sum reaches one exactly when at least one observation equals one, so $F_k(\mathsf B_x)=1-(1-x)^k$.
For $\mathsf C_{k,x}$, the sum reaches one exactly when all $k$ observations equal $1/k$, so $F_k(\mathsf C_{k,x})=(kx)^k$.
Thus, $\Phi_k(x)\ge\max\{1-(1-x)^k,(kx)^k\}$.
The conjecture of \L{}uczak, Mieczkowska and \v{S}ileikis~\cite{LMS} asserts that this lower bound is sharp for every $k\ge2$ and $0\le x\le1/k$.

\begin{conjecture}[\L{}uczak--Mieczkowska--\v{S}ileikis~\cite{LMS}]
  \label{conj:lms}
  For every integer $k\ge2$ and every real $0\le x\le1/k$,
  \[
    \Phi_k(x)=\max\{1-(1-x)^k,(kx)^k\}.
  \]
\end{conjecture}

\L{}uczak, Mieczkowska and \v{S}ileikis~\cite{LMS} proved \cref{conj:lms} for $k=3$ and every $x$.
More recently, Frankl, Lu, Ma and Wu~\cite{FranklLuMaWu2026} and Hou, Hu and Liu~\cite{HouHuLiu2026} obtained results on the Erd\H{o}s matching conjecture for $4$-uniform hypergraphs.
Combining these results with the equivalence established by \L{}uczak, Mieczkowska and \v{S}ileikis~\cite{LMS} yields \cref{conj:lms} for $k=4$.

For $k\ge5$, \L{}uczak, Mieczkowska and \v{S}ileikis~\cite{LMS} proved \cref{conj:lms} when $x<1/(2k-1)$.
Frankl and Kupavskii~\cite{FranklKupavskii2022} extended the range $x<1/(2k-1)$ to $x\le3/(5k-2)$.
For every $k\ge2$, a result of Fu, Han, Wang, Yan, Zhang and Zhou~\cite{FHWYZZ} implies $\Phi_k(x)=1-(1-x)^k$ for $0\le x\le1/(k+1)$.
Cao, Liu and Zhang~\cite{CLZ} proved that $\mathsf B_x$ is the unique maximizing distribution in this range and used this characterization to prove a stability theorem for hypergraphs with bounded matching number.
Despite this progress, Conjecture~\ref{conj:lms} remains open in general for $k\ge5$.

We prove Conjecture~\ref{conj:lms} for every $k$ and determine all maximizing Borel distributions.
To describe the equality cases for $k\ge2$, let ${\beta_k}$ be the unique value of $x$ in $(0,1/k)$ at which $\mathsf B_x$ and $\mathsf C_{k,x}$ have the same success probability.
Equivalently, $1-(1-{\beta_k})^k=(k{\beta_k})^k$.
  {The existence and uniqueness of ${\beta_k}$ are proved in \cref{lem:crossing-parameters}.}
Our main result is as follows.

\begin{theorem}\label{thm:iid-tail}
  For every integer $k\ge1$ and every real $x\ge0$,
  \[
    \Phi_k(x)=
    \begin{cases}
      \max\{1-(1-x)^k,(kx)^k\}, & 0\le x\le1/k, \\
      1,                        & x\ge1/k.
    \end{cases}
  \]
  The maximizing distributions are characterized as follows.
  \begin{enumerate}[label=\textup{(\roman*)},leftmargin=*,itemsep=0pt]
    \item For $k=1$ and $0\le x\le1$, the unique maximizing distribution is $\mathsf B_x$.
    \item For $k\ge2$ and $0\le x<\beta_k$, the unique maximizing distribution is $\mathsf B_x$.
    \item For $k\ge2$ and $x=\beta_k$, the maximizing distributions are exactly $\mathsf B_{\beta_k}$ and $\mathsf C_{k,\beta_k}$.
    \item For $k\ge2$ and $\beta_k<x\le1/k$, the unique maximizing distribution is $\mathsf C_{k,x}$.
    \item For every $k\ge1$ and $x>1/k$, a distribution with mean at most $x$ is maximizing if and only if it assigns probability one to $[1/k,\infty)$.
  \end{enumerate}
\end{theorem}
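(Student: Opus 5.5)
The plan is to reduce to finitely supported laws and then run a variational argument on the support. First the easy parts. For $x\ge1/k$ the value $1$ is attained by $\delta_{1/k}$. For $x>1/k$, a law has $F_k(\mathsf P)=1$ if and only if $X_1+\cdots+X_k\ge1$ almost surely. If $\mathsf P([0,1/k))>0$, then with positive probability all $X_i<1/k$ and the sum is below one, so (v) follows. The case $k=1$ is Markov's inequality, with equality in $\Prob(X\ge1)\le\E X$ only if $X\in\{0,1\}$ almost surely, giving (i).

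Now fix $k\ge2$ and $0\le x\le1/k$. I would first truncate: replacing $X$ by $\min(X,1)$ does not change the event $\{\sum X_i\ge1\}$ and does not increase the mean. Next, rounding each value down to the nearest point of a fine grid and then rescaling loses little, so by a compactness argument on $[0,1]$ (weak convergence, and upper semicontinuity of $F_k$ on the closed event) a maximizer exists. It therefore suffices to analyse maximizers. For a maximizer $\mathsf P$, define the influence function
\[
g(t)=\Prob\bigl(t+X_2+\cdots+X_k\ge1\bigr),
\]
which is nondecreasing. The first-order (Lagrange) condition for maximizing the multilinear functional $F_k$ under the mean constraint says that on $\supp\mathsf P$ we have $g(t)=\lambda+\mu t$, and everywhere $g(t)\le\lambda+\mu t$, with $\mu\ge0$. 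Since $g$ is a step-like monotone function, touching a line from below at several points forces strong structure. I would use this to show that the support is contained in $\{0\}\cup\{1/j: 1\le j\le k\}$, or more generally in a set of points $a$ for which the threshold structure is "tight": any value strictly between two thresholds can be pushed down to the threshold, saving mean, which can then be reinvested.

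After this reduction, the problem becomes a finite-dimensional optimization over the weights $p_j$ on $1/j$ (plus mass at $0$). I would then show that a maximizer uses only one nonzero atom. This is a local perturbation argument: moving mass between the atoms $1/i$ and $1/j$ while keeping the mean fixed changes $F_k$ by a polynomial expression, and I would show this expression is convex along the segment, so the optimum sits at an extreme point. An extreme point has the form $(1-jx)\delta_0+jx\delta_{1/j}$. Its success probability is $\Prob(\mathrm{Bin}(k,jx)\ge j)$, and I would compare these values over $1\le j\le k$, showing that the maximum is attained only at $j=1$ or $j=k$. Finally, \cref{lem:crossing-parameters} gives the crossing at $\beta_k$, which yields (ii)--(iv) together with uniqueness, because every inequality in the reduction is strict unless the law is already of the extremal form.

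The main obstacle is the support-reduction step. Rigorously, one must show that a maximizer cannot carry mass at values that are not of the form $1/j$ (for instance mixtures like $\{1/3,2/3\}$, which are genuinely competitive for intermediate $x$). I would attack it by induction on $k$: conditioning on the number of atoms above $1/2$ reduces the problem to smaller sums. I would also use the known range $x\le1/(k+1)$ from~\cite{FHWYZZ,CLZ} as a base. The other delicate point is the unimodality comparison of the binomial tails in $j$.
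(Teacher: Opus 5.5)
\textbf{Verdict: there is a genuine gap.} Your treatment of (i), (v) and of the existence of a maximizer is fine and matches the paper. The core step, however, is missing.

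\textbf{The first-order condition does not reduce the support.} The condition you write down ($g\le\lambda+\mu t$ everywhere, with equality on $\supp\mathsf P$) is correct; the paper uses the same identity. But it does not force $\supp\mathsf P\subseteq\{0\}\cup\{1/j\}$.
\begin{enumerate}
\item For laws with a continuous part, $g$ is continuous rather than step-like.
\item For atomic laws, it only says that $1-t$ is an atom of $X_2+\cdots+X_k$ for each support point $t$. This is a self-referential condition with many solutions.
\end{enumerate}
Concretely, take $k=2$, $1/3\le x<1/2$ and $\mathsf P=(1-2x)\delta_0+x\delta_{1/3}+x\delta_{2/3}$. Then $g(t)\le 3xt$ for all $t\ge0$, with equality at $0,1/3,2/3$, so this law passes your test. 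Its support is not of the form you need, and its value is $3x^2<(2x)^2$.

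Your ``push down to the threshold and reinvest'' step is circular, because the thresholds are atoms of $S_{k-1}$ and move when $\mathsf P$ moves. The proposed induction with base range $x\le1/(k+1)$ cannot reach the critical region, since $\beta_k=1/(k+c_k)\ge1/(k+\tfrac12)>1/(k+1)$. Ruling out non-extremal critical points near $\beta_k$ is exactly the content of the theorem. At this point your proposal assumes what has to be proved.

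\textbf{The finite-dimensional step also fails as stated.} $F_k$ is a degree-$k$ polynomial in the weights and is not convex along mean-preserving segments. For $k=3$, move mass between the atoms $1$ and $1/3$ via $\mathsf P_\theta=(1-x-2\theta x)\delta_0+3\theta x\delta_{1/3}+(1-\theta)x\delta_1$. This gives $F_3(\mathsf P_\theta)=1-(1-x+\theta x)^3+27\theta^3x^3$, whose second derivative at $\theta=0$ is $-6x^2(1-x)<0$. So the argument ``convex along segments, hence an extreme point'' is not available. The comparison of $\Prob(\mathrm{Bin}(k,jx)\ge j)$ over $j$ is also left unproved.

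\textbf{How the paper avoids all of this.} It never restricts the support.
\begin{enumerate}
\item Kellerer duality gives $\mathsf P=(1-a)\delta_0+a\mathsf Q$ with $m(\mathsf Q)=1/k$.
\item Mixtures with $\delta_1$ or $\delta_0$ reduce everything to $x=\beta_k$.
\item The problem is parametrized by the single scalar $w=k\E(R_k-1)_+\in[0,W_k]$. An upper bound for the shorter-sum contribution, affine between the nodes $W_s$, is compared with a strictly concave lower bound for the failure probability. This excludes $0<w\le W_4$ and finishes $k\le5$.
\item For $k\ge6$, the first-order identity $\Prob(S_{k-1}<1-z)=p(M-z)/D$ is used quantitatively: support separation into two intervals, counting upper observations, Berry--Esseen, and exponential-moment bounds. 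Together these exclude $W_4<w<W_k$.
\item The equality cases then follow from $w\in\{0,W_k\}$.
\end{enumerate}
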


\subsection{The Erd\H{o}s Matching Conjecture}
\label{sec:matching-intro}

We next apply \cref{thm:iid-tail} to hypergraphs with bounded matching number.
The matching number $\nu(\mathcal F)$ of a $k$-uniform hypergraph ($k$-graph, for brevity) $\mathcal F$ is the maximum number of pairwise disjoint edges in $\mathcal F$.
In 1965, Erd\H{o}s~\cite{Erdos1965} asked for the maximum number of edges in an $n$-vertex $k$-graph $\mathcal F$ with $\nu(\mathcal F)\le s$, that is, containing no $s+1$ pairwise disjoint edges.

For a positive integer $n$, write $[n]=\{1,\ldots,n\}$.
  {There are two natural constructions with many edges and matching number at most $s$.}
One is the \emph{clique family} $\binom{Q}{k}$, where $Q\subseteq[n]$ is a fixed $({k(s+1)-1})$-set.
The other is the \emph{cover family} $\mathcal A(S)=\{E\in\binom{[n]}k:E\cap S\ne\emptyset\}$, where $S\subseteq[n]$ is {any fixed $s$-set.}
The two families have sizes $C_k(s)=\binom{{k(s+1)-1}}{k}$ and $M_k(n,s)=\binom{n}{k}-\binom{n-s}{k}$, respectively.
  {Both have matching number at most $s$. The clique family has too few vertices to contain $s+1$ disjoint edges, while every edge of the cover family must meet the fixed $s$-set.}
Erd\H{o}s~\cite{Erdos1965} conjectured that the maximum size is $\max\{C_k(s),M_k(n,s)\}$.

\begin{conjecture}[Erd\H{o}s~\cite{Erdos1965}]
  \label{conj:emc}
  Let $n,k,s$ be positive integers with $n\ge k(s+1)$.
  If $\mathcal F\subseteq\binom{[n]}k$ and $\nu(\mathcal F)\le s$, then $|\mathcal F|\le\max\{C_k(s),M_k(n,s)\}$.
\end{conjecture}

Several special cases of \cref{conj:emc} have been studied.
For $s=1$, \cref{conj:emc} is the theorem of Erd\H{o}s, Ko and Rado~\cite{ErdosKoRado1961} on intersecting families. For $k=2$, Erd\H{o}s and Gallai~\cite{ErdosGallai1959} proved the corresponding matching theorem.
Kleitman~\cite{Kleitman1968} proved the {special case when $n=k(s+1)$.}
For $k=3$, Frankl, R\"odl and Ruci\'nski~\cite{FranklRodlRucinski} proved \cref{conj:emc} for $n\ge4(s+1)$.
\L{}uczak and Mieczkowska~\cite{LuczakMieczkowska2014} proved the conjecture for all sufficiently large $n$ and every $s\ge1$ satisfying $n\ge3(s+1)$.
Frankl~\cite{Frankl2017Triple} completely solved the conjecture for $k=3$.
  {When} $k=4$, Frankl, Lu, Ma and Wu~\cite{FranklLuMaWu2026} proved the conjecture for $n\ge5s$ and sufficiently large $n$.
The result of Hou, Hu and Liu~\cite{HouHuLiu2026} confirms \cref{conj:emc} for $k=4$ provided that $s\ge6004$.

For general uniformity, progress has followed two main directions.
On the cover side, Erd\H{o}s~\cite{Erdos1965} proved the conjecture when $n$ is sufficiently large in terms of $k$ and $s$.
Bollob\'as, Daykin and Erd\H{o}s~\cite{BDE} {proved the conjecture for $n\ge2k^3s$.}
Huang, Loh and Sudakov~\cite{HuangLohSudakov} {extended the range to $n\ge3k^2s$.}
Frankl, \L{}uczak and Mieczkowska~\cite{FranklLuczakMieczkowska2012} {further improved it to $n>2k^2s/\log k$.}
Frankl~\cite{Frankl2013} then {obtained} the linear range $n\ge(2k-1)s+k$.
For sufficiently large $s$, Frankl and Kupavskii~\cite{FranklKupavskii2022} reduced the leading coefficient to $(5k-2)/3$.
Cao, Liu and Zhang~\cite{CLZ} obtained the range $n\ge(k+1)s$ for every fixed $k$ and sufficiently large $s$, together with uniqueness and stability of the cover construction.
Using Ling's theorem on Samuels' conjecture~\cite{Ling2026}, Cao, Liu and Zhang~\cite[Section~6.1]{CLZ} further obtained the range $n\ge(k+0.6)s$ for fixed $k\ge5$ and sufficiently large $s$.

The other direction concerns the clique side, where $n$ is close to $k(s+1)$.
For $s>k\ge2$, Frankl~\cite{Frankl2017Range} proved \cref{conj:emc} for $k(s+1)\le n<(k+1/(2k^{2k+1}))(s+1)$.
Kolupaev and Kupavskii~\cite{KolupaevKupavskii2023} extended the range to $k(s+1)\le n<(k+1/(100k))(s+1)$ for $s>101k^3$.
For $k\ge5$, \cref{conj:emc} remains open in full generality.

The edge density of an $n$-vertex $k$-graph $\mathcal F$ is $|\mathcal F|/\binom nk$.
For fixed $k$, as $n\to\infty$ and $s/n\to x$, the edge densities of the cover and clique families tend to $F_k(\mathsf B_x)=1-(1-x)^k$ and $F_k(\mathsf C_{k,x})=(kx)^k$, respectively.
Thus, the two limiting densities agree when $n/s\to\rho_k$, where $\rho_k=1/\beta_k$.
The main difficulty is to resolve \cref{conj:emc} near this transition, where neither construction has a fixed positive advantage in edge density.

We use \cref{thm:iid-tail} to prove \cref{conj:emc} for every fixed $k\ge3$, all sufficiently large $s$, and every $n\ge k(s+1)$.
The following theorem covers both the range where the cover family dominates and the range where the clique family dominates, including the transition between them, and determines all extremal families.

  \begin{theorem}\label{thm:emc}
    For every fixed integer $k\ge3$, there is an integer $s_{\mathrm{EMC}}(k)$ such that, for all integers $s\ge s_{\mathrm{EMC}}(k)$ and $n\ge k(s+1)$, every family $\mathcal F\subseteq\binom{[n]}k$ with $\nu(\mathcal F)\le s$ satisfies
    \[
      |\mathcal F|\le\max\{C_k(s),M_k(n,s)\}.
    \]
    The equality cases are as follows.
    \begin{enumerate}[label=\textup{(\roman*)},leftmargin=*,itemsep=0pt]
      \item If $M_k(n,s)>C_k(s)$, then equality holds if and only if $\mathcal F=\mathcal A(S)$ for some $s$-set $S\subseteq[n]$.
      \item If $M_k(n,s)<C_k(s)$, then equality holds if and only if $\mathcal F=\binom Qk$ for some $(k(s+1)-1)$-set $Q\subseteq[n]$.
      \item If $M_k(n,s)=C_k(s)$, then equality holds if and only if $\mathcal F=\mathcal A(S)$ for some $s$-set $S\subseteq[n]$ or $\mathcal F=\binom Qk$ for some $(k(s+1)-1)$-set $Q\subseteq[n]$.
    \end{enumerate}
  \end{theorem}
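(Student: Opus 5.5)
We deduce \cref{thm:emc} from \cref{thm:iid-tail} in three steps: a fractional relaxation, an exact stability argument, and a direct treatment of the ranges far from the transition.

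\textbf{Step 1 (asymptotic density via \cref{thm:iid-tail}).} By the equivalence of \L{}uczak, Mieczkowska and \v{S}ileikis, the largest asymptotic edge density of a $k$-graph with fractional matching number at most $xn$ is $\Phi_k(x)$. Let $\mathcal F$ have $\nu(\mathcal F)\le s$. We first pass to a shifted family, since shifting preserves $|\mathcal F|$ and does not increase $\nu$. We then use the regularity/LP duality argument of Alon et al.\ and Frankl--Kupavskii. For large $s$ this shows that if $|\mathcal F|\ge\max\{C_k(s),M_k(n,s)\}$, then a fractional vertex cover of weight about $s$ exists. This cover induces a law $\mathsf P$ of mean at most about $s/n$ with $F_k(\mathsf P)\ge|\mathcal F|/\binom nk-o(1)$. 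Here $\mathsf P$ is the law of the weight of a uniformly random vertex, and a uniform random $k$-set is approximated by i.i.d.\ samples.

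\textbf{Step 2 (stability).} The uniqueness part of \cref{thm:iid-tail} upgrades, by a compactness argument on laws with bounded mean, to stability. If $F_k(\mathsf P)\ge\Phi_k(x)-\varepsilon$, then $\mathsf P$ is $\delta(\varepsilon)$-close in the weak topology to $\mathsf B_x$ or to $\mathsf C_{k,x}$. In the first case, $\mathcal F$ is close to a cover family: all but $o(s)$ vertices of some $s$-set have huge degree. In the second case, $\mathcal F$ is close to a clique on about $ks$ vertices. The transition point $\beta_k$ is where both alternatives must be handled, and this step is uniform in $x$.

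\textbf{Step 3 (exact result from stability).} We now remove the $o(1)$ errors. In the cover case we use the standard argument of Frankl and of Cao--Liu--Zhang. Vertices of high degree can be greedily matched around, so if fewer than $s$ of them exist, then the link of the remaining structure has a smaller matching number. A counting comparison then gives $|\mathcal F|\le M_k(n,s)$, with equality only for $\mathcal A(S)$. In the clique case we adapt the Kolupaev--Kupavskii / Frankl approach. Let $Q$ be the near-clique. Edges leaving $Q$ force the loss of many edges inside $Q$, because every missing $k$-set inside $Q$ blocks augmentations of a near-perfect matching. This gives $|\mathcal F|\le C_k(s)$, with equality only for $\binom Qk$. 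The equality cases (iii) follow because the two local arguments are exact at each end. The ranges $n/s$ far from $\rho_k$ are covered by the known results (Frankl's $(2k-1)s+k$ bound for large $n$, Kolupaev--Kupavskii near $k(s+1)$), or by the same stability argument with a strict density gap.

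\textbf{Main obstacle.} I expect the hardest part to be the exact clique-side analysis in Step 3 for $n$ up to about $\rho_k s$, which is well beyond the range $n<(k+1/(100k))(s+1)$ handled before. I would try a weighted counting argument. Each edge meeting $[n]\setminus Q$ would be charged to $\Omega(s^{k-1})$ missing $k$-subsets of $Q$, with the charging justified by the absence of $s+1$ disjoint edges together with the fact that $\mathcal F[Q]$ is nearly complete.
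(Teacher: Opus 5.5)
Your architecture matches the paper's, and the cover side is fine. The chain is the same: an approximate fractional cover, sampling to get a law with $F_k(\mathsf P)\ge\Phi_k(x)-o(1)$, and compactness plus the uniqueness part of \cref{thm:iid-tail} to reach $o(n^k)$-closeness to a cover or clique family. One correction to Step~1: the paper uses Cao--Liu--Zhang, Lemma~3.2, which gives weights of total $s+o(n)$ covering all but $o(n^k)$ edges. That is not a fractional cover of all of $\mathcal F$, and the lower bound on $|\mathcal F|$ plays no role in producing it. On the cover side, greedy augmentation through the vertices of $S$ with few missing edges leaves a residual family $\mathcal F[U]$ whose matching number is at most the number $b=O(\delta s)$ of bad vertices. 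The paper then finishes with Bollob\'as--Daykin--Erd\H{o}s, which also gives uniqueness.

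The genuine gap is the clique side, which you flag as the main obstacle and leave as a heuristic. As stated, the heuristic does not lead to an exact bound. Charging each edge leaving $Q$ to $\Omega(s^{k-1})$ missing $k$-subsets of $Q$, with no control on how often a missing set is charged, yields at best $|\binom Qk\setminus\mathcal F|\ge c\,|\mathcal F\setminus\binom Qk|$ for some constant $c$. But $|\mathcal F|\le C_k(s)$ with uniqueness needs a comparison with constant exactly $1$, strict unless $\mathcal F$ is a clique family, and relative to a well-chosen vertex set rather than the stability set $Q$. Whether such a comparison holds depends on $n/(ks)$. Your sketch gives no reason it survives up to $n\approx\rho_ks$, where it must hold because near $\rho_k$ stability may return the clique alternative.

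The paper supplies this in two steps.

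\emph{Getting an actual clique.} It shifts only inside $Q$. If some $k$-subset of the first $\lceil(1-\eta)ks\rceil$ vertices were absent, every $k$-subset of the remaining roughly $\eta ks$ vertices of $Q$ would be absent, contradicting $|\binom Qk\setminus\mathcal F|\le\delta n^k$. Your global shift in Step~1 could serve here, but you never use it. Any shifting also obliges you to pull the equality cases back along the shift sequence; the paper does this by reverse induction, since each earlier family still contains a clique on $ks+k-3$ vertices.

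\emph{The defect count.} Extend to an $s$-saturated $\mathcal H'$ and take a maximum clique $Q$ of order $u\ge(1-\eta)ks$. Choose an $s$-matching $\mathcal M$ maximizing $|Q\cup V(\mathcal M)|$. Saturation forces $T=Q\cup V(\mathcal M)$ to have exactly $ks+k-1$ vertices, and $d=|T\setminus Q|$ is the defect.
\begin{itemize}
\item Each $v\in T\setminus Q$ lies in at most $(k-1)d\binom u{k-2}$ edges $\{v\}\cup D$ with $D\in\binom Q{k-1}$. Hence at least $d\binom u{k-1}-O(d^2s^{k-2})$ members of $\binom Tk$ are missing.
\item Every edge not inside $Q$ meets $B=V(\mathcal M')$, where $\mathcal M'$ consists of the at most $d$ edges of $\mathcal M$ not inside $Q$.
\item A swapping argument shows that at most $|F\setminus Q|$ vertices of each $F\in\mathcal M'$ have a large link into $(k-1)$-sets avoiding $B$. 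This bounds the edges outside $\binom Tk$ by $d\bigl(\binom n{k-1}-\binom u{k-1}\bigr)+O\bigl((d+d^2)s^{k-2}\bigr)$.
\end{itemize}
Together these give
\[
  C_k(s)-|\mathcal H'|\ge d\binom{ks}{k-1}\Bigl[2-\Bigl(\frac n{ks}\Bigr)^{k-1}-5k^3\eta-\frac{3k^3}{s}\Bigr].
\]
This is positive whenever $d>0$, because $n\le(k+1/2)s$ gives $(n/(ks))^{k-1}<\mathrm e^{1/2}<2$, and $\rho_k<k+1/2$. If $d=0$, then $\mathcal H'=\binom Qk$.

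This per-defect-vertex comparison, with the explicit threshold $(n/(ks))^{k-1}<2$, is the idea your proposal is missing.
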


We also describe the structure of nearly extremal families.
Fix $k\ge3$, and let $s\to\infty$ with $n\ge k(s+1)$ and $n/s$ bounded.
If $\mathcal F\subseteq\binom{[n]}k$ satisfies $\nu(\mathcal F)\le s$ and
\[
  |\mathcal F|\ge\max\{C_k(s),M_k(n,s)\}-o(n^k),
\]
then $\mathcal F$ differs in $o(n^k)$ members from a cover or clique family.
The conclusion remains valid when $s/n\to\beta_k$, where the two constructions have the same limiting edge density.
A unified formulation of this stability result for the Erd\H{o}s Matching Conjecture is given in \cref{prop:unified-stability}.

We also obtain two Hilton--Milner refinements for families with matching number at most $s$.
The first imposes the additional condition that no $s$-set meets every edge, or equivalently that the vertex cover number, the minimum size of a vertex set meeting every edge, is greater than $s$.
This excludes the cover construction.
For every fixed $k\ge3$ and all sufficiently large $s$, \cref{cor:hilton-milner} determines the maximum size and all extremal families under this condition throughout $n\ge k(s+1)$.

The other refinement requires at least $k(s+1)$ non-isolated vertices, which excludes the clique construction.
For every fixed $k\ge3$, \cref{cor:clique-hilton-milner} determines the maximum size and all extremal families throughout $k(s+1)\le n\le(1/\beta_k-o(1))s$ as $s\to\infty$.

\subsection{Proof outlines for Theorems~\ref{thm:iid-tail} and~\ref{thm:emc}}

We first outline the proof of \cref{thm:iid-tail} and then explain how it implies \cref{thm:emc}.

The case $k=1$ follows directly from Markov's inequality, as shown in \cref{lem:dimension-one}.
For $k\ge2$, mixtures with point masses at one and zero reduce the upper bound in the nontrivial mean range to the crossing mean $\beta_k$ by \cref{prop:mean-transfer}.
We first prove that the maximum is attained in \cref{lem:attainment}.
Using Kellerer's duality theorem (\cref{thm:kellerer-duality}), we then show in \cref{prop:normalization} that every maximizing distribution at this mean can be written as $\mathsf P=(1-k\beta_k)\delta_0+k\beta_k\mathsf Q$, where $\mathsf Q$ is a Borel probability law on $[0,1]$ with mean $1/k$.
Conditioning on the number of samples drawn from $\mathsf Q$, we express $F_k(\mathsf P)/(k\beta_k)^k$ as the full-sum success probability plus a weighted contribution from shorter sums, as stated in \cref{fact:retained-sums}\textup{(i)}.
For independent samples $Y_1,\ldots,Y_k$ from $\mathsf Q$, let $w=k\E(Y_1+\cdots+Y_k-1)_+$.
We choose the interpolation nodes $W_s=s(1-1/s)^s$, where $1\le s\le k$.
As functions of $w$, we construct an upper bound $A(w)$ for the shorter-sum contribution and a lower bound $H(w)$ for the full-sum failure probability in \cref{lem:allocation,lem:critical}.
Maximality requires $A(w)\ge H(w)$ by \cref{prop:strict-chord}.
We compare $A$ and $H$ by interpolation to derive a contradiction for values of $w$ in $(0,W_k)$.
By \cref{lem:critical}, we have $0\le w\le W_k$.
For every integer $s$ with $1\le s<k$, the function $A$ is affine and $H$ is strictly concave on $[W_s,W_{s+1}]$, as shown in \cref{prop:strict-chord}.
Thus, if $A(W_s)\le H(W_s)$ and $A(W_{s+1})\le H(W_{s+1})$, then $A(w)<H(w)$ for every $W_s<w<W_{s+1}$, which contradicts maximality and excludes all such values of $w$.
For $2\le k\le5$, the node comparisons in \cref{lem:initial-nodes} exclude every $0<w<W_k$, leaving only $w=0$ and $w=W_k$.
For $k\ge6$, they exclude $0<w\le W_4$, leaving $w=0$ or $W_4<w\le W_k$.

For $k\ge6$, suppose that a maximizing distribution under the mean bound $\beta_k$ has $0<w<W_k$.
Then $W_4<w<W_k$ by \cref{cor:remaining-excess-range}.
We use the optimality of the maximizing distribution to derive a contradiction.
Small perturbations of its probabilities that preserve the total mass and mean yield an optimality condition relating a fixed coordinate value to the probability of success in \cref{obs:contact-identities}.
Together with quantitative estimates for the sums, the optimality condition shows that the distribution assigns all its mass to two intervals separated by a positive gap, with positive probability in each interval, as shown in \cref{prop:support-separation}.
Let $X^{\operatorname{L}}$ be a random variable that has the conditional distribution on the lower interval.
Among samples whose sum is less than one, we separately count those with no observation in the upper interval and those with exactly one.
Estimating these contributions with the optimality condition and comparing their sum with the total failure probability, we obtain $\Prob(X_1^{\operatorname{L}}+\cdots+X_k^{\operatorname{L}}\ge1)>\eta/3$ by \cref{lem:exact-upper-count}, where $X_1^{\operatorname{L}},\ldots,X_k^{\operatorname{L}}$ are independent copies of $X^{\operatorname{L}}$.
Shevtsova's Berry--Esseen bound (\cref{lem:normal-comparison}) compares the centered lower-interval sum with a normal distribution.
Combined with the optimality condition, it rules out $\eta\ge1/10$ by establishing an upper variance bound incompatible with the lower variance bound, as shown in \cref{lem:conditional-variance-bounds,lem:averaged-contact-normal,lem:large-defect}.
In the remaining range $0<\eta<1/10$, \cref{lem:large-defect} in fact gives the stronger bound $\eta<k\beta_k/10$.
Sharper support and variance estimates, together with a centered exponential-moment bound, then yield an upper bound smaller than $\eta/3$ for the same tail probability by \cref{lem:conditional-variance-bounds,lem:conditional-support,lem:small-conditional-tail}, which again leads to a contradiction.
Thus, every maximizing distribution at $\beta_k$ has $w=0$ or $w=W_k$, as stated in \cref{prop:maximizing-endpoints}.
    At $w=0$ and $w=W_k$, the equality $A(w)=H(w)$ gives $\Phi_k(\beta_k)=(k\beta_k)^k$ by \cref{prop:strict-chord}.
    \Cref{prop:mean-transfer} then determines $\Phi_k(x)$ for every $x\ge0$.
    The values $w=0$ and $w=W_k$ correspond to $\mathsf P=\mathsf C_{k,\beta_k}$ and $\mathsf P=\mathsf B_{\beta_k}$, respectively, and \cref{prop:equality-transfer} determines all maximizing distributions for every mean bound $x\ge0$.

For \cref{thm:emc}, we use the connection between tail probabilities and fractional vertex covers developed by Alon, Frankl, Huang, R\"odl, Ruci\'nski and Sudakov~\cite{AFHRS}.
For a family $\mathcal F\subseteq\binom{[n]}k$, a fractional vertex cover assigns weights in $[0,1]$ to the vertices so that every edge has total weight at least one.
Sampling $k$ vertices independently and uniformly with replacement turns their weights into i.i.d.\ nonnegative random variables, whose common mean is the average vertex weight.
Each edge contributes $k!$ successful ordered samples.
Hence, if the total vertex weight is at most $xn$, then \eqref{eq:cover-sampling} gives, for fixed $k$ as $n\to\infty$,
\[
  \frac{|\mathcal F|}{\binom nk}
  \le\frac{n^k}{k!\binom nk}\Phi_k(x)
  =(1+o(1))\Phi_k(x).
\]

Assigning weight one to the $s$ central vertices of a cover family and zero to the other vertices produces $\mathsf B_{s/n}$.
Assigning weight $1/k$ to the ${k(s+1)-1}$ vertices of a clique family and zero elsewhere produces a distribution tending to $\mathsf C_{k,x}$ when $n\to\infty$ and $s/n\to x$.
\L{}uczak, Mieczkowska and \v{S}ileikis~\cite{LMS} proved that \cref{conj:lms} is equivalent to the asymptotic fractional matching conjecture.
For ordinary matchings, we use the approximate-cover lemma of Cao, Liu and Zhang~\cite[Lemma~3.2]{CLZ}.
For fixed $k$ and a sequence with $\nu(\mathcal F)\le s$, $s\to\infty$ and $s/n\to x\in(0,1/k)$, \cref{lem:approx-cover} yields weights in $[0,1]$ with average at most $s/n+o(1)$ that cover all but $o(n^k)$ edges.
If $|\mathcal F|\ge\max\{C_k(s),M_k(n,s)\}-o(n^k)$, then the sampling argument forces the tail probability of the weight distribution to tend to $\Phi_k(x)$.
By weak compactness, every subsequence of the weight distributions has a further weakly convergent subsequence.
The limit of any such convergent subsequence has mean at most $x$ and, by the Portmanteau theorem (\cref{thm:portmanteau}\textup{(iii)}), attains $\Phi_k(x)$.
By \cref{thm:iid-tail}, the only possible limits are $\mathsf B_x$ and $\mathsf C_{k,x}$, including at $x=\beta_k$, where both are maximizing.
  Convergence to $\mathsf B_x$ implies that all but $o(n)$ vertex weights lie near $0$ or $1$, with $s+o(n)$ weights near $1$, while convergence to $\mathsf C_{k,x}$ implies that all but $o(n)$ vertex weights lie near $0$ or $1/k$, with $ks+o(n)$ weights near $1/k$.
  Adding or removing $o(n)$ vertices produces an $s$-set $S$ from the vertices with weights near $1$, or a $(k(s+1)-1)$-set $Q$ from those with weights near $1/k$.
  With $S$ or $Q$ chosen as above, \cref{lem:approx-cover} and $|\mathcal F|\ge\max\{C_k(s),M_k(n,s)\}-o(n^k)$ yield $|\mathcal F\triangle\mathcal A(S)|=o(n^k)$ or $|\mathcal F\triangle\binom Qk|=o(n^k)$, respectively.
  To prove \cref{thm:emc}, we first apply \cref{lem:sparse-emc} to handle $n\ge4k^3s$, including the equality cases.
  Suppose that, for arbitrarily large $s$, there is a family $\mathcal F\subseteq\binom{[n]}k$ with $n\ge k(s+1)$, $\nu(\mathcal F)\le s$, and $|\mathcal F|\ge\max\{C_k(s),M_k(n,s)\}$ that is neither a cover family nor a clique family.
  \Cref{lem:sparse-emc} gives $n<4k^3s$.
  We choose a sequence with $s\to\infty$ and pass to a subsequence such that $n/s\to R\in[k,4k^3]$.
  \Cref{lem:matching-sequence} gives a further subsequence satisfying $|\mathcal F\triangle\mathcal A(S)|=o(n^k)$ with $R\ge\rho_k>k$ or $|\mathcal F\triangle\binom Qk|=o(n^k)$ with $R\le\rho_k<k+1/2$.
  \Cref{lem:local-cover} then gives $|\mathcal F|\le M_k(n,s)$ with equality only for cover families, while \cref{lem:local-clique} gives $|\mathcal F|\le C_k(s)$ with equality only for clique families.
  Since $|\mathcal F|\ge\max\{C_k(s),M_k(n,s)\}$, equality is forced and $\mathcal F$ must itself be a cover or clique family, a contradiction.

  The paper is organized as follows.
  \Crefrange{sec:crossing-reduction}{sec:equality-cases} prove \cref{thm:iid-tail}, including the classification of all maximizing distributions.
  \Cref{sec:emc} proves cover--clique stability and \cref{thm:emc}.
  \Cref{sec:concluding} gives a unified stability statement and two Hilton--Milner refinements.

\section{Reduction and decomposition of maximizing laws}\label{sec:crossing-reduction}
\label{sec:dimension-one}

We begin this section by proving \cref{thm:iid-tail} for $k=1$.

\begin{lemma}\label{lem:dimension-one}
  For every $x\ge0$, we have $\Phi_1(x)=\min\{x,1\}$.
  For $0\le x\le1$, the unique maximizing law is $\mathsf B_x=(1-x)\delta_0+x\delta_1$.
  For $x>1$, a probability law on $[0,\infty)$ with mean at most $x$ is maximizing if and only if it is supported on $[1,\infty)$.
\end{lemma}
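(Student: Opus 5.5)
The plan is to prove the upper bound with Markov's inequality and then read off the equality cases from when Markov is tight. For $k=1$ we have $F_1(\mathsf P)=\mathsf P([1,\infty))$. For any law $\mathsf P$ on $[0,\infty)$ with $m(\mathsf P)\le x$, pointwise
\[
  \1\{t\ge1\}\le\min\{t,1\},
\]
so $F_1(\mathsf P)\le\int\min\{t,1\}\dd\mathsf P(t)\le\min\{m(\mathsf P),1\}\le\min\{x,1\}$. For the matching lower bound, when $x\le1$ the law $\mathsf B_x$ has mean $x$ and $F_1(\mathsf B_x)=x$. When $x>1$ the law $\delta_1$ has mean $1\le x$ and success probability $1$. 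Hence $\Phi_1(x)=\min\{x,1\}$.

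For uniqueness when $0\le x\le1$, suppose $F_1(\mathsf P)=x$. Then every inequality in the chain above is an equality, which gives two conditions.

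First, $\int(\min\{t,1\}-\1\{t\ge1\})\dd\mathsf P=0$. The integrand equals $t$ on $[0,1)$ and $0$ elsewhere. Being nonnegative with zero integral, it vanishes $\mathsf P$-a.s., so $\mathsf P$ has no mass on $(0,1)$.

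Second, $\int(t-\min\{t,1\})\dd\mathsf P=0$ (in the case $x\le1$, where $\min\{m,1\}$ is just $m$). The integrand is $(t-1)_+\ge0$, so $\mathsf P$ has no mass on $(1,\infty)$.

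So $\mathsf P$ is supported on $\{0,1\}$ with $\mathsf P(\{1\})=F_1(\mathsf P)=x$, which means $\mathsf P=\mathsf B_x$. The only care needed is the edge case $x=0$: the argument still gives $\mathsf P=\delta_0=\mathsf B_0$, and no separate treatment is required.

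For $x>1$, a maximizer has $F_1(\mathsf P)=\mathsf P([1,\infty))=1$ by the formula for $\Phi_1$, so it is supported on $[1,\infty)$. Conversely, any law with mean at most $x$ supported on $[1,\infty)$ has success probability $1$ and is therefore maximizing. I expect no real obstacle here. The only step needing care is identifying the two vanishing integrands in the equality analysis.
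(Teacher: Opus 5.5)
Your proof is correct and follows the paper's argument: Markov's inequality gives the bound, and uniqueness follows because a nonnegative integrand with zero integral vanishes almost surely. Your two integrands, $\min\{t,1\}-\1\{t\ge1\}$ and $(t-1)_+$, sum to the paper's single integrand $t-\1\{t\ge1\}$, so routing the equality analysis through $\min\{t,1\}$ is only a cosmetic difference.
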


\begin{proof}
  Let $0\le x\le1$, and let $X$ be a nonnegative random variable with $\E X\le x$.
  Markov's inequality yields
  \[
    \Prob(X\ge1)=\E\1_{\{X\ge1\}}\le\E X\le x.
  \]
  The law $\mathsf B_x$ has mean $x$ and success probability $x$, so $\Phi_1(x)=x$.

  If $\Prob(X\ge1)=x$, then $\E X=\E\1_{\{X\ge1\}}=x$, so the nonnegative random variable $X-\1_{\{X\ge1\}}$ has expectation zero and hence vanishes almost surely.
  Thus, $X$ takes only the values zero and one, with $\Prob(X=1)=x$, so its law is $\mathsf B_x$.

  For $x>1$, the law $\delta_1$ is feasible and has success probability one, so $\Phi_1(x)=1$.
  For any nonnegative random variable $X$ with $\E X\le x$, its law attains this value exactly when $\Prob(X\ge1)=1$, which is equivalent to being supported on $[1,\infty)$.
\end{proof}

For the remainder of this section, assume $k\ge2$.
We reduce the upper bound to the mean $x=\beta_k$, at which the two candidate laws, $\mathsf B_{\beta_k}$ and $\mathsf C_{k,\beta_k}$, have the same success probability.
Adding mass at one or at zero converts the bound at $\beta_k$ into bounds for smaller or larger means.
We then prove that the maximum exists and that every maximizing law at $x=\beta_k$ is a mixture of a point mass at zero and a distribution of mean $1/k$.
Finally, by truncating the random variables one at a time, we obtain the bounds for the failure probability needed in \cref{sec:low-dimensions}.

All reductions apply to nonnegative Borel probability laws.
  {We first collect some necessary notation.}

  For a metric space $K$, let $\Pcal(K)$ denote the set of Borel probability measures on $K$.
  For a $K$-valued random variable $X$ on a probability space $(\Omega,\mathcal F,\Prob)$, its \emph{law} $\law(X)$ is the measure defined by $\law(X)(A)=\Prob(X\in A)$ for every Borel set $A\subseteq K$.
  For an event $E$ with $\Prob(E)>0$, the conditional law $\law(X\mid E)$ is defined by $\law(X\mid E)(A)=\Prob(X\in A\mid E)$ for every Borel set $A\subseteq K$.
  For $\mathsf P\in\Pcal(K)$ and a positive integer $j$, the \emph{product measure} $\mathsf P^{\otimes j}$ is the joint law of $j$ independent random variables with common law $\mathsf P$.
  For a nonnegative real-valued law $\mathsf P$, write $m(\mathsf P)=\int_{[0,\infty)}z\dd\mathsf P(z)$ for its \emph{mean}.
  For $t\in K$, the \emph{Dirac law} $\delta_t$ assigns mass one to $t$.
  It satisfies $\delta_t(A)=1$ if $t\in A$ and $\delta_t(A)=0$ otherwise.
  For real numbers $u,v$, write $u_+=\max\{u,0\}$ and $u\wedge v=\min\{u,v\}$.
  A function $f$ on an interval is \emph{affine} if $f(t)=At+B$ for some constants $A,B\in\mathbb R$.
  For an event $A$, its indicator $\1_A$ equals one on $A$ and zero outside $A$.
  The \emph{topological support} $\supp\mathsf P$ consists of the points $t\in K$ for which every open neighbourhood $U$ of $t$ in $K$ satisfies $\mathsf P(U)>0$.
  For a random variable $X$, write $\supp X=\supp\law(X)$.
  We say that $\mathsf P$ is \emph{supported on} a Borel set $A$ when $\mathsf P(A)=1$.
  A property holds for $\mathsf P$-almost every $z$ if the set of points where it fails has $\mathsf P$-measure zero.
  Equality of Borel laws means equality on every Borel set.
  Conditional expectations are taken on events of positive probability.

For independent $X_1,X_2,\ldots$ with a common nonnegative Borel law $\mathsf P$, write $S_j=\sum_{i=1}^jX_i$ and $F_j(\mathsf P)=\Prob(S_j\ge1)$ for every positive integer $j$.
When considering partial sums of $S_k$, we take $1\le j\le k$.
As in the introduction, for $x\ge0$, let
\[
  \Phi_k(x)=\sup\{F_k(\mathsf P):\mathsf P\in\Pcal([0,\infty)),\ m(\mathsf P)\le x\}.
\]
For $\mathsf P\in\Pcal([0,1])$,
\[
  F_j(\mathsf P)
  =\int_{[0,1]^j}
  \1_{\{z_1+\cdots+z_j\ge1\}}\dd\mathsf P^{\otimes j}(z).
\]
We use $H_k=\{z\in[0,1]^k:z_1+\cdots+z_k\ge1\}$ for the success set in the truncated space.
Throughout, whenever $S$ denotes the sum of the random variables under consideration, we refer to the event $S\ge1$ as success and to $S<1$ as failure.
The candidate laws are $\mathsf B_x=(1-x)\delta_0+x\delta_1$ for $0\le x\le1$ and $\mathsf C_{k,x}=(1-kx)\delta_0+kx\delta_{1/k}$ for $0\le x\le1/k$.

\subsection{\texorpdfstring{Reduction to $x=\beta_k$}{Reduction to x = beta k}}
\label{sec:mix-one}\label{sec:mean-transfer-section}
\label{sec:crossing-notation}

  For $k\ge2$, we show that it suffices to prove the upper bound in \cref{thm:iid-tail} at $x=\beta_k$ (\cref{prop:mean-transfer}), where the two distributions $\mathsf B_x$ and $\mathsf C_{k,x}$ have the same success probability.
  We first verify that the equation $1-(1-x)^k=(kx)^k$ has a unique solution $\beta_k\in(0,1/k)$.

  For $k\ge2$ and $0<x<1/k$, we use the notation $a=kx$, $\delta=1-a$, and $c=\delta/x$ throughout Sections~\ref{sec:crossing-reduction}--\ref{sec:exclusion}.
  Thus, $x=1/(k+c)$ and $\delta=cx$.
  For $k\ge2$ and $c\ge0$, define
  \begin{equation}\label{eq:sec3-crossing-function}
    g_k(c)=\left(1+\frac ck\right)^k-\left(1+\frac{c-1}{k}\right)^k.
  \end{equation}
  The substitution $x=1/(k+c)$ transforms $1-(1-x)^k=(kx)^k$ into $g_k(c)=1$.
  Let $c_k\in(0,1)$ be the solution of $g_k(c)=1$.
  The existence and uniqueness of $c_k$ are proved in \cref{lem:crossing-parameters}.
  The parameter $\beta_k$ defined in \cref{sec:introduction} therefore satisfies $\beta_k=1/(k+c_k)$.
  At $x=\beta_k$, we have $c=c_k$ and $a^k=1-(1-\beta_k)^k$.

  \begin{lemma}\label{lem:crossing-parameters}
    For every integer $k\ge2$, the equation $g_k(c)=1$ has a unique solution $c=c_k$ in $[0,\infty)$, and ${\beta_k}$ is the unique solution of $1-(1-x)^k=(kx)^k$ in $(0,1/k]$.
    Moreover, $c_2=1/2$ and $0<c_k<1/2$ for $k\ge3$.
  \end{lemma}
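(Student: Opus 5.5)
Throughout, write $u=1+(c-1)/k$, so that $u+1/k=1+c/k$. The plan is to show that $g_k$ is strictly increasing on $[0,\infty)$ and has values on both sides of one at the right points; the claims about $\beta_k$ then follow by translating through the substitution $x=1/(k+c)$.

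\textbf{Monotonicity.} Differentiating gives
\[
  g_k'(c)=\left(1+\frac ck\right)^{k-1}-\left(1+\frac{c-1}{k}\right)^{k-1}.
\]
For $c\ge0$ we have $1+c/k>|1+(c-1)/k|$: indeed $1+(c-1)/k\ge1-1/k>0$ and $1+c/k$ exceeds it by $1/k$. Hence $g_k'(c)>0$, and $g_k$ is strictly increasing on $[0,\infty)$.

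\textbf{Values at the endpoints.} At $c=0$ we get $g_k(0)=1-(1-1/k)^k<1$. As $c\to\infty$, the mean value theorem gives $g_k(c)\ge(1/k)\cdot k\cdot u^{k-1}\to\infty$. So a unique root $c_k$ exists in $(0,\infty)$.

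\textbf{The bound $c_k<1/2$ for $k\ge3$.} By monotonicity it suffices to check $g_k(1/2)>1$, and this is the main obstacle. Take $c=1/2$ in the definition of $g_k$: we need
\[
  \left(1+\frac1{2k}\right)^k-\left(1-\frac1{2k}\right)^k>1.
\]
Expand both powers binomially; the even terms cancel, leaving
\[
  2\sum_{j\text{ odd}}\binom kj(2k)^{-j}.
\]
The $j=1$ term already equals $2\cdot k\cdot\frac{1}{2k}=1$. For $k\ge3$ the $j=3$ term is strictly positive, and all other terms are nonnegative, so the sum exceeds one. For $k=2$ only $j=1$ occurs, so $g_2(1/2)=1$ and $c_2=1/2$. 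Since $g_k(0)<1<g_k(1/2)$ for $k\ge3$, we obtain $0<c_k<1/2$.

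\textbf{Translating to $\beta_k$.} The map $c\mapsto x=1/(k+c)$ is a bijection from $[0,\infty)$ onto $(0,1/k]$. For $x=1/(k+c)$ we have $kx=k/(k+c)$ and $1-x=(k+c-1)/(k+c)$. Multiplying $1-(1-x)^k=(kx)^k$ by $(k+c)^k/k^k$ gives exactly $g_k(c)=1$. Hence the equation in $x$ has the unique solution $\beta_k=1/(k+c_k)$ in $(0,1/k]$. Since $c_k>0$, this root lies in $(0,1/k)$. The point $x=0$ is excluded from the range, even though it also solves the equation trivially, which is why the statement uses the half-open interval.
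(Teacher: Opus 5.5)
Your proof is correct and follows essentially the same route as the paper: strict monotonicity of $g_k$ from the derivative, the binomial expansion of $g_k(1/2)$ with cancellation of the even terms (giving $c_2=1/2$ and $c_k<1/2$ for $k\ge3$), and the bijective substitution $x=1/(k+c)$ to transfer everything to $\beta_k$. The only difference is cosmetic: the paper gets existence from $g_k(1)=(1+1/k)^k-1>1$ rather than letting $c\to\infty$. Your own bound $g_k(1/2)\ge1$ already makes either of these steps unnecessary.
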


  \begin{proof}
    Differentiating \eqref{eq:sec3-crossing-function}, we obtain $g_k'(c)=(1+c/k)^{k-1}-(1+(c-1)/k)^{k-1}>0$ for $c\ge0$, so $g_k$ is strictly increasing on $[0,\infty)$.
    We have $g_k(0)=1-(1-1/k)^k<1$ and $g_k(1)=(1+1/k)^k-1>1$.
    Hence, there is a unique $c_k\in(0,1)$ satisfying $g_k(c_k)=1$.
    Multiplying $g_k(c_k)=1$ by $(k/(k+c_k))^k$, we obtain $1-(1-{\beta_k})^k=(k{\beta_k})^k$.
    The substitution $c=1/x-k$ is a bijection from $(0,1/k]$ to $[0,\infty)$ and transforms $1-(1-x)^k=(kx)^k$ into $g_k(c)=1$.
            Since $g_k(c)=1$ has the unique solution $c=c_k$, the equation $1-(1-x)^k=(kx)^k$ has the unique solution $x=1/(k+c_k)=\beta_k$ in $(0,1/k]$.
    For $k=2$, the equation $g_2(c)=1$ has the unique solution $c_2=1/2$, and the corresponding crossing mean is $\beta_2=2/5$.
    For $k\ge3$,
      \[
        \left(1+\frac1{2k}\right)^k-\left(1-\frac1{2k}\right)^k
        =1+2\sum_{\substack{3\le j\le k\\j\text{ odd}}}
        \binom{k}{j}(2k)^{-j}>1.
      \]
    This shows that $0<c_k<1/2$ for $k\ge3$, since $g_k$ is strictly increasing.
  \end{proof}

  For a nonnegative law $\mathsf P$ of finite mean and parameters $0\le\theta,q\le1$, define the two mixtures $\mathsf P_\theta^+=(1-\theta)\mathsf P+\theta\delta_1$ and $\mathsf P_q^-=q\mathsf P+(1-q)\delta_0$.
  The first replaces a sample from $\mathsf P$ by one with probability $\theta$.
  The second retains it with probability $q$ and replaces it by zero otherwise.

  \begin{fact}\label{fact:mixtures}
    Let $k\ge2$, and let $\mathsf P$ be a nonnegative Borel law of finite mean.
    The following statements hold.
    \begin{enumerate}[label=\textup{(\roman*)},leftmargin=*,itemsep=0pt]
      \item\label{it:mixtures-one} For $0\le\theta\le1$, we have $m(\mathsf P_\theta^+)=(1-\theta)m(\mathsf P)+\theta$ and $1-F_k(\mathsf P_\theta^+)=(1-\theta)^k[1-F_k(\mathsf P)]$.
      Consequently, $F_k(\mathsf P_\theta^+)\ge F_k(\mathsf P)$, with equality if and only if $\theta=0$ or $F_k(\mathsf P)=1$.
      \item\label{it:mixtures-zero} For $0\le q\le1$, we have $m(\mathsf P_q^-)=qm(\mathsf P)$ and
      \[
        F_k(\mathsf P_q^-)=\sum_{j=1}^k\binom{k}{j}q^j(1-q)^{k-j}F_j(\mathsf P)
        \ge q^kF_k(\mathsf P).
      \]
      If $0<q<1$, then equality in the inequality holds if and only if $F_j(\mathsf P)=0$ for every $1\le j<k$.
    \end{enumerate}
  \end{fact}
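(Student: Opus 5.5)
Both parts come from coupling: realize each sample from the mixture as a sample from $\mathsf P$ that is then modified by an independent coin, and condition on how many coins come up.

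For part (i), the mean identity is linearity of $\mathsf P\mapsto m(\mathsf P)$, since $m(\delta_1)=1$. For the failure identity, let $X_i\sim\mathsf P$ be independent, and let $\xi_i$ be independent Bernoulli$(\theta)$ variables, independent of the $X_i$. Set $Z_i=1$ if $\xi_i=1$ and $Z_i=X_i$ otherwise; then the $Z_i$ are i.i.d.\ with law $\mathsf P_\theta^+$. Since all variables are nonnegative, $Z_1+\cdots+Z_k<1$ forces every $\xi_i=0$, because a single $Z_i=1$ already gives sum at least one. On the event $\{\xi_1=\cdots=\xi_k=0\}$, which has probability $(1-\theta)^k$ and is independent of the $X$'s, failure is exactly $S_k<1$. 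Hence $1-F_k(\mathsf P_\theta^+)=(1-\theta)^k[1-F_k(\mathsf P)]$. Because $(1-\theta)^k\le1$, we get $F_k(\mathsf P_\theta^+)\ge F_k(\mathsf P)$. Equality holds exactly when $(1-(1-\theta)^k)(1-F_k(\mathsf P))=0$. As $k\ge1$, this means $\theta=0$ or $F_k(\mathsf P)=1$.

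For part (ii), again the mean identity is linearity, using $m(\delta_0)=0$. Let $\eta_i$ be independent Bernoulli$(q)$ variables, independent of the $X_i$, and set $Z_i=\eta_iX_i$, which is i.i.d.\ with law $\mathsf P_q^-$. Condition on the random set $J=\{i:\eta_i=1\}$. Given $J$ with $|J|=j$, the sum $\sum_i Z_i=\sum_{i\in J}X_i$ has the law of $S_j$, so its success probability is $F_j(\mathsf P)$; for $j=0$ it is $0$, since the sum is $0<1$. Summing over $J$ with the binomial weights $\binom kj q^j(1-q)^{k-j}$ gives the stated formula. Every term is nonnegative, and the $j=k$ term is $q^kF_k(\mathsf P)$, which gives the inequality.

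For the equality case with $0<q<1$, every coefficient $\binom kj q^j(1-q)^{k-j}$ with $1\le j<k$ is strictly positive. So equality holds if and only if $F_j(\mathsf P)=0$ for all $1\le j<k$.

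There is no real obstacle here; the only care needed is measure-theoretic bookkeeping, namely that the coupled variables are independent and have the right marginals. This follows from independence of the families $(X_i)$ and $(\xi_i)$ (respectively $(\eta_i)$) and the product structure.
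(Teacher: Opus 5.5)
Your proof is correct and follows essentially the same coupling as the paper: realize $\mathsf P_\theta^+$ (resp.\ $\mathsf P_q^-$) by independently replacing each sample by one with probability $\theta$ (resp.\ retaining it with probability $q$), then condition on which samples are affected. Your factorization $F_k(\mathsf P_\theta^+)-F_k(\mathsf P)=(1-(1-\theta)^k)(1-F_k(\mathsf P))$ also makes the equality case in (i) more explicit than the paper's brief treatment.
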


  \begin{proof}
    The mean formulas follow immediately by linearity.
    For \textup{(i)}, independently replace each of $k$ independent samples from $\mathsf P$ by one with probability $\theta$.
    The resulting sum is less than one exactly when no sample is replaced and the original sum is less than one.
    This proves \textup{(i)}.
    For \textup{(ii)}, retain each sample independently with probability $q$.
    For a fixed set of $j\ge1$ retained indices, independence gives the contribution $q^j(1-q)^{k-j}F_j(\mathsf P)$ to the success probability.
    There are $\binom{k}{j}$ choices of these indices, and the sum is zero when no sample is retained.
    Thus,
    \[
      F_k(\mathsf P_q^-)
      =q^kF_k(\mathsf P)
      +\sum_{j=1}^{k-1}\binom{k}{j}q^j(1-q)^{k-j}F_j(\mathsf P)
      \ge q^kF_k(\mathsf P).
    \]
    If $0<q<1$, then the coefficient $\binom{k}{j}q^j(1-q)^{k-j}$ of $F_j(\mathsf P)$ is positive for every $1\le j<k$.
    Since $F_j(\mathsf P)\ge0$, the sum over $1\le j<k$ vanishes if and only if $F_j(\mathsf P)=0$ for every $1\le j<k$.
  \end{proof}

  The next proposition determines the maximum of $F_k(\mathsf P)$ under $m(\mathsf P)\le x$ for every $x\ge0$.
  The two mixtures reduce the nontrivial range $0<x<1/k$ to $x=\beta_k$.

\begin{proposition}\label{prop:mean-transfer}
  Fix $k\ge2$.
  Suppose that every nonnegative Borel law $\mathsf P$ with $m(\mathsf P)\le\beta_k$ satisfies $F_k(\mathsf P)\le1-(1-\beta_k)^k=(k\beta_k)^k$.
  Then, for every $x\ge0$,
  \[
    \Phi_k(x)=
    \begin{cases}
      \max\{1-(1-x)^k,(kx)^k\}, & 0\le x\le1/k, \\
      1,                        & x\ge1/k.
    \end{cases}
  \]
\end{proposition}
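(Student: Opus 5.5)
The lower bound $\Phi_k(x)\ge\max\{1-(1-x)^k,(kx)^k\}$ for $0\le x\le1/k$ is already witnessed by $\mathsf B_x$ and $\mathsf C_{k,x}$. For $x\ge1/k$, the law $\delta_{1/k}$ gives $\Phi_k(x)=1$. So only the upper bound for $0\le x\le1/k$ needs proof, and I will split at $\beta_k$ and use the two mixtures of \cref{fact:mixtures} to move the mean.

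\emph{Case $0\le x\le\beta_k$.} Let $\mathsf P$ have $m(\mathsf P)\le x$. Choose $\theta\in[0,1]$ with $1-\beta_k=(1-\theta)(1-x)$, that is, $\theta=(\beta_k-x)/(1-x)$. By \cref{fact:mixtures}\textup{(i)},
\[
  m(\mathsf P_\theta^+)=(1-\theta)m(\mathsf P)+\theta\le(1-\theta)x+\theta=\beta_k,
\]
so the hypothesis gives
\[
  (1-\theta)^k[1-F_k(\mathsf P)]=1-F_k(\mathsf P_\theta^+)\ge(1-\beta_k)^k=(1-\theta)^k(1-x)^k .
\]
Since $\theta<1$ (because $x\le\beta_k<1$), we can divide to get $F_k(\mathsf P)\le1-(1-x)^k$.

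\emph{Case $\beta_k\le x\le1/k$.} Let $\mathsf P$ have $m(\mathsf P)\le x$. If $x>0$, set $q=\beta_k/x\in(0,1]$. By \cref{fact:mixtures}\textup{(ii)}, $m(\mathsf P_q^-)=q\,m(\mathsf P)\le\beta_k$, so
\[
  q^kF_k(\mathsf P)\le F_k(\mathsf P_q^-)\le(k\beta_k)^k=q^k(kx)^k,
\]
and therefore $F_k(\mathsf P)\le(kx)^k$.

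\emph{Combining the bounds.} It remains to show that these bounds equal $\max\{1-(1-x)^k,(kx)^k\}$ on each side of $\beta_k$. This is the only step needing care. Put $h(x)=1-(1-x)^k-(kx)^k$. By \cref{lem:crossing-parameters}, $\beta_k$ is the unique zero of $h$ in $(0,1/k]$. Moreover, $h(1/k)=-(1-1/k)^k<0$, and $h(x)=kx+O(x^2)>0$ for small $x>0$ when $k\ge2$. By continuity, $h>0$ on $(0,\beta_k)$ and $h<0$ on $(\beta_k,1/k]$. Hence the case bounds match the claimed maximum, and the endpoint $x=0$ is trivial.
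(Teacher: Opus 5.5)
Your proposal is correct and follows the paper's proof essentially verbatim. You use the same mixtures $\mathsf P_\theta^+$ with $\theta=(\beta_k-x)/(1-x)$ and $\mathsf P_q^-$ with $q=\beta_k/x$ from \cref{fact:mixtures}, followed by the comparison of $1-(1-x)^k$ with $(kx)^k$ on either side of $\beta_k$. The only difference is that you justify this comparison explicitly (uniqueness of the crossing from \cref{lem:crossing-parameters}, the sign of $h$ near $0$ and at $1/k$, and continuity), whereas the paper simply asserts it, and you fold $x=\beta_k$ into the closed cases instead of treating it separately.
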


\begin{proof}
  Fix any $x\in(0,\beta_k)$, and let $\mathsf P$ be any nonnegative Borel law with $m(\mathsf P)\le x$.
  Take $\theta=({\beta_k}-x)/(1-x)$.
  Then $m(\mathsf P_\theta^+)\le\beta_k$.
  By \cref{fact:mixtures}\textup{(i)} and the assumed bound at $\beta_k$,
  \[
    (1-\theta)^k[1-F_k(\mathsf P)]
    =1-F_k(\mathsf P_\theta^+)\ge(1-\beta_k)^k.
  \]
  Since $1-\theta=(1-\beta_k)/(1-x)$, we obtain $1-F_k(\mathsf P)\ge(1-x)^k$, with equality attained by $\mathsf B_x$.

  Now fix any $x\in(\beta_k,1/k)$, and let $\mathsf P$ be any nonnegative Borel law with $m(\mathsf P)\le x$.
  Take $q={\beta_k}/x\in(0,1)$, so $m(\mathsf P_q^-)\le {\beta_k}$.
  Combining \cref{fact:mixtures}\textup{(ii)} with the assumed bound at ${\beta_k}$, we obtain
    \[
      q^kF_k(\mathsf P)\le F_k(\mathsf P_q^-)\le(k{\beta_k})^k.
    \]
  Consequently, $F_k(\mathsf P)\le q^{-k}(k{\beta_k})^k=(kx)^k$, with equality attained by $\mathsf C_{k,x}$.

  The two candidate probabilities $F_k(\mathsf B_x)=1-(1-x)^k$ and $F_k(\mathsf C_{k,x})=(kx)^k$ are equal at $x=\beta_k$.
    For $0<x<\beta_k$, we have $F_k(\mathsf B_x)>F_k(\mathsf C_{k,x})$.
    For $\beta_k<x<1/k$, we have $F_k(\mathsf C_{k,x})>F_k(\mathsf B_x)$.
  The conclusion for $x=\beta_k$ follows from the assumed bound and the two candidate laws.
  At $x=0$, nonnegativity forces $\mathsf P=\delta_0$.
  For $x\ge1/k$, the feasible law $\delta_{1/k}$ attains one.
\end{proof}

\subsection[Decomposition of maximizing laws]{Decomposition of maximizing laws}
\label{sec:normalization}\label{sec:notation}

We prove that the supremum defining $\Phi_k(x)$ is attained.
For $k\ge2$ and $0<x<1/k$, we then show that every maximizing distribution is a mixture of a point mass at zero and a distribution of mean $1/k$.

For nonnegative coordinates, truncation at $1$ leaves the success
event unchanged, since $\sum_i z_i\ge1$ if and only if
$\sum_i(z_i\wedge1)\ge1$.
Moreover, $z_i\wedge1\le z_i$ for every $i$, so this truncation
will not increase the mean.

  Let $E$ be a metric space, and let $\mathsf P,\mathsf P_1,\mathsf P_2,\ldots\in\Pcal(E)$ be Borel probability measures.
  We write $\mathsf P_j\Rightarrow\mathsf P$ and say that $\mathsf P_j$ \emph{converges weakly} to $\mathsf P$ if $\int f\dd\mathsf P_j\to\int f\dd\mathsf P$ for every bounded continuous function $f:E\to\mathbb R$.
  For a Borel set $A\subseteq E$, its \emph{boundary} $\partial_E A$ is the intersection of the closures in $E$ of $A$ and $E\setminus A$.
  We call $A$ a \emph{$\mathsf P$-continuity set} if $\mathsf P(\partial_E A)=0$.

We shall use the following form of the Portmanteau theorem.

\begin{theorem}[Portmanteau theorem; Billingsley~{\cite[Theorem~2.1]{Billingsley}}]
  \label{thm:portmanteau}
  Let $\mathsf P,\mathsf P_1,\mathsf P_2,\ldots$ be Borel probability measures
  on a metric space $S$. The following conditions are equivalent:
  \begin{enumerate}[label=\textup{(\roman*)},leftmargin=*,itemsep=2pt]
    \item The laws $\mathsf P_j$ converge weakly to $\mathsf P$.
    \item For every bounded uniformly continuous function
          $f:S\to\mathbb R$, we have $\int_S f\dd\mathsf P_j\longrightarrow\int_S f\dd\mathsf P$.
    \item Each closed set $C\subseteq S$ satisfies $
            \displaystyle\limsup_{j\to\infty}\mathsf P_j(C)\le\mathsf P(C)$.
    \item Each open set $G\subseteq S$ satisfies $\displaystyle\liminf_{j\to\infty}\mathsf P_j(G)\ge\mathsf P(G)$.
    \item For every Borel set $A\subseteq S$ with
          $\mathsf P(\partial_S A)=0$, one has $\mathsf P_j(A)\longrightarrow\mathsf P(A)$.
  \end{enumerate}
\end{theorem}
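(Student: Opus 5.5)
This is the standard Portmanteau theorem, which the paper cites from Billingsley; the plan is to prove the cycle (i)$\Rightarrow$(ii)$\Rightarrow$(iii)$\Leftrightarrow$(iv)$\Rightarrow$(v)$\Rightarrow$(i). The steps in order are as follows.

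\textbf{(i)$\Rightarrow$(ii).} This is immediate, since bounded uniformly continuous functions are bounded and continuous.

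\textbf{(ii)$\Rightarrow$(iii).} Fix a closed set $C$ and put $f_\varepsilon(t)=(1-\varepsilon^{-1}d(t,C))_+$. This function is bounded and uniformly continuous (indeed $\varepsilon^{-1}$-Lipschitz), and it satisfies $\1_C\le f_\varepsilon\le\1_{C^\varepsilon}$, where $C^\varepsilon=\{t:d(t,C)<\varepsilon\}$. Hence
\[
\limsup_j\mathsf P_j(C)\le\lim_j\int f_\varepsilon\dd\mathsf P_j=\int f_\varepsilon\dd\mathsf P\le\mathsf P(C^\varepsilon).
\]
Since $C$ is closed, $C^\varepsilon\downarrow C$ as $\varepsilon\downarrow0$, so continuity from above gives $\mathsf P(C^\varepsilon)\downarrow\mathsf P(C)$.

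\textbf{(iii)$\Leftrightarrow$(iv).} Take complements.

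\textbf{(iii),(iv)$\Rightarrow$(v).} Let $A$ have interior $A^\circ$ and closure $\bar A$. Then
\[
\limsup_j\mathsf P_j(A)\le\limsup_j\mathsf P_j(\bar A)\le\mathsf P(\bar A)
\quad\text{and}\quad
\liminf_j\mathsf P_j(A)\ge\liminf_j\mathsf P_j(A^\circ)\ge\mathsf P(A^\circ).
\]
If $\mathsf P(\partial_SA)=0$, then $\mathsf P(\bar A)=\mathsf P(A^\circ)=\mathsf P(A)$, so the two bounds coincide and $\mathsf P_j(A)\to\mathsf P(A)$.

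\textbf{(v)$\Rightarrow$(i).} This is the main step. Let $f$ be bounded and continuous; after an affine change, assume $0<f<1$. Write
\[
\int f\dd\mathsf Q=\int_0^1\mathsf Q(f>t)\dd t
\]
for any probability measure $\mathsf Q$. By continuity of $f$, $\partial\{f>t\}\subseteq\{f=t\}$. The sets $\{f=t\}$ are disjoint, so $\mathsf P(f=t)>0$ for at most countably many $t$. Hence $\{f>t\}$ is a $\mathsf P$-continuity set for Lebesgue-almost every $t$, and (v) gives $\mathsf P_j(f>t)\to\mathsf P(f>t)$ for almost every $t$. Bounded convergence on $[0,1]$ then yields $\int f\dd\mathsf P_j\to\int f\dd\mathsf P$.

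The only points needing care are the boundary inclusion $\partial\{f>t\}\subseteq\{f=t\}$ and the countability of the atoms of the pushforward law of $f$. Both are elementary, so no step poses a genuine obstacle.
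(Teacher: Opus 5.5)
Your proof is correct and is essentially the standard argument of Billingsley's Theorem~2.1. The paper cites that theorem without reproducing a proof, and your steps match Billingsley's: the uniformly continuous cutoff $(1-\varepsilon^{-1}d(t,C))_+$ for (ii)$\Rightarrow$(iii), complements for (iii)$\Leftrightarrow$(iv), the closure/interior sandwich for (v), and the layer-cake formula with $\{f>t\}$ a $\mathsf P$-continuity set for all but countably many $t$ for (v)$\Rightarrow$(i).
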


The sets in \cref{thm:portmanteau}\textup{(iii)--(v)} are fixed as
$j\to\infty$.
We will use \textup{(iii)} for success probabilities and \textup{(v)} for
probabilities of fixed weight intervals.

We record the following standard consequences of Billingsley~\cite[Theorems~2.8(ii), 5.1 and~6.8]{Billingsley} and \cref{thm:portmanteau}.

\begin{fact}\label{fact:weak-convergence}
  The following statements hold for Borel probability laws.
  \begin{enumerate}[label=\textup{(\roman*)},leftmargin=*,itemsep=0pt]
    \item\label{it:weak-compactness} For every compact metric space $K$, the space $\Pcal(K)$ is compact and metrizable in the topology of weak convergence.
    In particular, every sequence in $\Pcal(K)$ has a weakly convergent subsequence.
    \item\label{it:weak-products} If $\mathsf P_j,\mathsf P\in\Pcal([0,1])$ and $\mathsf P_j\Rightarrow\mathsf P$, then $m(\mathsf P_j)\to m(\mathsf P)$ and $\mathsf P_j^{\otimes r}\Rightarrow\mathsf P^{\otimes r}$ for every fixed positive integer $r$.
    \item\label{it:weak-upper-semicontinuity} If $\mathsf P_j,\mathsf P\in\Pcal([0,1])$ and $\mathsf P_j\Rightarrow\mathsf P$, then $\limsup_{j\to\infty}F_k(\mathsf P_j)\le F_k(\mathsf P)$.
  \end{enumerate}
\end{fact}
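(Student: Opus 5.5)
The three parts of \cref{fact:weak-convergence} are standard, and I would derive each from the cited results of Billingsley together with \cref{thm:portmanteau}.

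For \textup{(i)}, the plan is to quote Prokhorov's theorem (Billingsley, Theorem~5.1 and its converse, Theorem~6.8). Since $K$ is compact, every family in $\Pcal(K)$ is tight, because $K$ itself is a compact set of full mass. Hence every sequence has a weakly convergent subsequence. Metrizability comes from the Prokhorov (L\'evy--Prokhorov) metric, which metrizes weak convergence on separable metric spaces, and a compact metric space is separable. Relative compactness together with closedness of $\Pcal(K)$ under weak limits (limits of probability measures on compact $K$ are probability measures) then gives compactness.

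For \textup{(ii)}, I would first obtain convergence of means. The identity $z\mapsto z$ is bounded and continuous on $[0,1]$, so the definition of weak convergence applies directly and gives $m(\mathsf P_j)\to m(\mathsf P)$. For products I would invoke Billingsley, Theorem~2.8(ii): on separable spaces, $\mathsf P_j\Rightarrow\mathsf P$ and $\mathsf Q_j\Rightarrow\mathsf Q$ imply $\mathsf P_j\otimes\mathsf Q_j\Rightarrow\mathsf P\otimes\mathsf Q$. Induction on $r$ then yields $\mathsf P_j^{\otimes r}\Rightarrow\mathsf P^{\otimes r}$. As an alternative, one can check convergence on the class of finite products of bounded continuous functions and use that this class is convergence determining. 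Citing the theorem is cleaner, so I would do that.

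For \textup{(iii)}, write $F_k(\mathsf P_j)=\mathsf P_j^{\otimes k}(H_k)$, where $H_k=\{z\in[0,1]^k:\sum z_i\ge1\}$ is closed in $[0,1]^k$, being the preimage of $[1,\infty)$ under a continuous map. By \textup{(ii)} we have $\mathsf P_j^{\otimes k}\Rightarrow\mathsf P^{\otimes k}$, and \cref{thm:portmanteau}\textup{(iii)} then gives $\limsup_j\mathsf P_j^{\otimes k}(H_k)\le\mathsf P^{\otimes k}(H_k)=F_k(\mathsf P)$.

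The only point needing a little care is the product step in \textup{(ii)}, specifically ensuring that the separability hypothesis of Billingsley's theorem holds. It does, since $[0,1]$ is separable. Everything else is a direct citation.
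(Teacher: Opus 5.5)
Your proposal is correct and matches the paper, which likewise records (i) and (ii) as standard consequences of Prokhorov's theorem and Billingsley's Theorem~2.8(ii) on product measures; convergence of the means is immediate because $z\mapsto z$ is bounded and continuous on $[0,1]$. The paper obtains (iii) exactly as you do, by applying \cref{thm:portmanteau}\textup{(iii)} to $\mathsf P_j^{\otimes k}\Rightarrow\mathsf P^{\otimes k}$ and the closed set $H_k$.
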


\Cref{fact:weak-convergence}\textup{(iii)} follows by applying \cref{thm:portmanteau}\textup{(iii)} to the product laws in \textup{(ii)} and $H_k=\{(z_1,\ldots,z_k)\in[0,1]^k:\sum_{i=1}^kz_i\ge1\}$, which is closed in $[0,1]^k$.

We next apply the inequality $\limsup_{j\to\infty}F_k(\mathsf P_j)\le F_k(\mathsf P)$ to a weakly convergent maximizing sequence to prove that $\Phi_k(x)$ is attained.

\begin{lemma}\label{lem:attainment}
  For every $x\ge0$, the supremum defining $\Phi_k(x)$ is attained by a law $\mathsf P\in\Pcal([0,1])$ with $m(\mathsf P)\le x$.
\end{lemma}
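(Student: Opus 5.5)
We first reduce to laws on $[0,1]$ by truncation, then use compactness of $\Pcal([0,1])$ together with the upper semicontinuity of $F_k$ from \cref{fact:weak-convergence}.

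\textbf{Truncation.} Fix $x\ge0$, and let $\mathsf P$ be any nonnegative Borel law with $m(\mathsf P)\le x$. Let $\mathsf P'$ be the law of $X\wedge1$ for $X\sim\mathsf P$. As observed before the Portmanteau theorem, truncation at $1$ leaves the success event unchanged coordinatewise, so $F_k(\mathsf P')=F_k(\mathsf P)$. It also does not increase the mean, so $m(\mathsf P')\le m(\mathsf P)\le x$. Consequently the supremum defining $\Phi_k(x)$ equals the supremum of $F_k$ over the set $\mathcal K_x=\{\mathsf P\in\Pcal([0,1]):m(\mathsf P)\le x\}$. This set is nonempty, since it contains $\delta_0$.

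\textbf{Compactness and limit.} Choose a sequence $\mathsf P_j\in\mathcal K_x$ with $F_k(\mathsf P_j)\to\Phi_k(x)$. By \cref{fact:weak-convergence}\textup{(i)}, applied to the compact space $[0,1]$, we may pass to a subsequence converging weakly to some $\mathsf P\in\Pcal([0,1])$. By \cref{fact:weak-convergence}\textup{(ii)}, $m(\mathsf P)=\lim m(\mathsf P_j)\le x$. This step uses that $z\mapsto z$ is bounded and continuous on $[0,1]$, which is exactly why truncation is done first. Hence $\mathsf P\in\mathcal K_x$. By \cref{fact:weak-convergence}\textup{(iii)}, $F_k(\mathsf P)\ge\limsup F_k(\mathsf P_j)=\Phi_k(x)$. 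The reverse inequality holds by the definition of $\Phi_k(x)$, so $\mathsf P$ attains the supremum.

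There is no real obstacle here. The only point requiring care is that weak limits of laws on $[0,\infty)$ need not preserve the mean constraint, and mass can escape to infinity. Truncating to $[0,1]$ beforehand resolves both issues at once. Upper semicontinuity holds because $H_k$ is closed, and it already points in the direction we need. The statement for $k$ general, including $k=1$, goes through verbatim, since \cref{fact:weak-convergence} applies for every fixed $k$.
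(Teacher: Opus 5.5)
Your proof is correct and follows the paper's argument essentially verbatim: you truncate at one to work in $\Pcal([0,1])$, take a maximizing sequence, extract a weakly convergent subsequence by compactness, and then use \cref{fact:weak-convergence}\textup{(ii)} for the mean constraint and \textup{(iii)} for the upper semicontinuity of $F_k$. Your remark that truncating first is what keeps the mean constraint and prevents mass from escaping is a useful clarification, but it does not change the route.
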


\begin{proof}
  By truncation at one, choose laws $\mathsf P_j\in\Pcal([0,1])$ with $m(\mathsf P_j)\le x$ and $F_k(\mathsf P_j)\to\Phi_k(x)$.
  By \cref{fact:weak-convergence}\textup{(i)}, we may pass to a subsequence such that $\mathsf P_j\Rightarrow\mathsf P$ for some $\mathsf P\in\Pcal([0,1])$.
  By \cref{fact:weak-convergence}\textup{(ii)}, we have $m(\mathsf P)=\lim_{j\to\infty}m(\mathsf P_j)\le x$, so $\mathsf P$ is feasible.
  Applying \cref{fact:weak-convergence}\textup{(iii)} gives
  \[
    \Phi_k(x)
    =\lim_{j\to\infty}F_k(\mathsf P_j)
    \le F_k(\mathsf P)
    \le\Phi_k(x).
  \]
  Thus, $\mathsf P$ attains $\Phi_k(x)$.
\end{proof}

We next normalize a maximizing law.
For $k\ge2$ and $0<x<1/k$, \cref{prop:normalization} shows that every $\mathsf P\in\Pcal([0,1])$ maximizing $F_k$ under $m(\mathsf P)\le x$ can be written as $\mathsf P=(1-a)\delta_0+a\mathsf Q$, where $\mathsf Q\in\Pcal([0,1])$ and $m(\mathsf Q)=1/k$.

\emph{Coupling} probability laws $\mathsf P_1,\ldots,\mathsf P_j$ means constructing random variables $Y_1,\ldots,Y_j$ on a common probability space with $\law(Y_i)=\mathsf P_i$ for every $i$.
For example, let $X\sim\mathsf P$, and independently choose $I\in\{0,1\}$ with $\Prob(I=1)=q$.
Then $IX$ has law $\mathsf P_q^-=q\mathsf P+(1-q)\delta_0$, so $(X,IX)$ couples $\mathsf P$ and $\mathsf P_q^-$ and satisfies $IX\le X$ almost surely.
It should be noticed that the coordinates of a coupling need not be independent.

For $\mathsf P\in\Pcal([0,1])$, let $\Pi_k(\mathsf P)=\{\mathsf G\in\mathcal P([0,1]^k):\mathsf G_i=\mathsf P,\ 1\le i\le k\}$, where $\mathsf G_i$ denotes the $i$-th marginal of $\mathsf G$.
  Thus, if $(X_1,\ldots,X_k)\sim\mathsf G\in\Pi_k(\mathsf P)$, then $X_i\sim\mathsf P$ for every $1\le i\le k$, while the variables need not be independent.
  Since $H_k=\{(z_1,\ldots,z_k)\in[0,1]^k:\sum_{i=1}^k z_i\ge1\}$, we have $\mathsf G(H_k)=\Prob_{\mathsf G}(\sum_{i=1}^k X_i\ge1)$.
  Define $T_k(\mathsf P)=\max_{\mathsf G\in\Pi_k(\mathsf P)}\mathsf G(H_k)$.
  The independent coupling gives $F_k(\mathsf P)=\mathsf P^{\otimes k}(H_k)\le T_k(\mathsf P)$.
The marginal constraints are preserved by weak convergence, so $\Pi_k(\mathsf P)$ is a closed subset of $\Pcal([0,1]^k)$ and is compact by \cref{fact:weak-convergence}\textup{\ref{it:weak-compactness}}.
By \cref{thm:portmanteau}\textup{(iii)}, the function $\mathsf G\mapsto\mathsf G(H_k)$ is upper semicontinuous, so the maximum defining $T_k(\mathsf P)$ exists.
The product law is an admissible coupling, and every admissible coupling has expected sum $km(\mathsf P)$.
Thus, by Markov's inequality, we have $F_k(\mathsf P)\le T_k(\mathsf P)\le km(\mathsf P)$.

Kellerer~\cite[Theorem~2.6 and Proposition~1.31]{Kellerer} established the following duality theorem.

\begin{theorem}[Kellerer's duality theorem]\label{thm:kellerer-duality}
  Let $\mathsf P\in\Pcal([0,1])$, and let $h:[0,1]^k\to\mathbb R$ be bounded and upper semicontinuous.
  Then
  \[
    \sup_{\mathsf G\in\Pi_k(\mathsf P)}\int h\dd\mathsf G
    =
    \inf\left\{
    \sum_{i=1}^k\int\varphi_i\dd\mathsf P:
    \begin{array}{l}
      \varphi_i:[0,1]\to\mathbb R\text{ are upper semicontinuous}, \\
      \varphi_i\text{ are integrable},                             \\
      \sum_{i=1}^k\varphi_i(z_i)\ge h(z)\text{ for every }z\in[0,1]^k
    \end{array}
    \right\}.
  \]
\end{theorem}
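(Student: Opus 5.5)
The plan is to prove \cref{thm:kellerer-duality} in three stages: weak duality, strong duality for continuous $h$, and passage to upper semicontinuous $h$ by monotone approximation. Weak duality is immediate. If $\varphi_1,\ldots,\varphi_k$ are admissible and $\mathsf G\in\Pi_k(\mathsf P)$, we integrate the pointwise inequality $\sum_i\varphi_i(z_i)\ge h(z)$ against $\mathsf G$. Since each marginal of $\mathsf G$ equals $\mathsf P$, this gives $\int h\dd\mathsf G\le\sum_i\int\varphi_i\dd\mathsf P$. Hence the supremum is at most the infimum.

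For continuous $h$ on the compact space $K=[0,1]^k$, we run a Hahn--Banach (or minimax) argument in $C(K)$.
\begin{enumerate}[label=\textup{(\alph*)},leftmargin=*,itemsep=0pt]
  \item Consider the convex cone $\mathcal C=\{\sum_i\varphi_i(z_i)-f(z):\varphi_i\in C([0,1]),\ f\ge0\}$ and the linear functional $\ell(\sum_i\varphi_i(z_i)-f)=\sum_i\int\varphi_i\dd\mathsf P$.
  \item Define the sublinear functional $p(u)=\inf\{\sum_i\int\varphi_i\dd\mathsf P:\sum_i\varphi_i(z_i)\ge u(z)\}$ on $C(K)$. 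It is finite, since $\varphi_i\equiv\|u\|_\infty/k$ is admissible.
  \item Extend the linear functional $u=h\mapsto p(h)$, defined on the line through $h$, by Hahn--Banach to a linear functional $L\le p$ on $C(K)$.
  \item Positivity of $L$ follows because $p(u)\le0$ whenever $u\le0$. So by Riesz, $L$ is a nonnegative Borel measure $\mathsf G$.
  \item Testing against functions $\pm\varphi(z_i)$ shows $L(\varphi(z_i))=\int\varphi\dd\mathsf P$. Hence $\mathsf G$ has total mass one and all marginals equal to $\mathsf P$, that is, $\mathsf G\in\Pi_k(\mathsf P)$.
  \item Then $\int h\dd\mathsf G=p(h)$, so the primal supremum reaches the dual infimum taken over continuous $\varphi_i$.
\end{enumerate}
Continuous $\varphi_i$ are upper semicontinuous and integrable, so this gives the theorem for continuous $h$.

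For a bounded upper semicontinuous $h$ on the compact metric space $K$, we choose continuous $h_n\downarrow h$ pointwise, for instance via inf-convolutions $h_n(z)=\sup_y\{h(y)-n|y-z|\}$, which are bounded. The dual value for $h$ is at most the dual value for $h_n$, since $h\le h_n$. By the continuous case, the dual value for $h_n$ equals $\int h_n\dd\mathsf G_n$ for some $\mathsf G_n\in\Pi_k(\mathsf P)$. Since $\Pi_k(\mathsf P)$ is compact (as noted before the theorem), we pass to a subsequence with $\mathsf G_n\Rightarrow\mathsf G\in\Pi_k(\mathsf P)$. For fixed $m$ and $n\ge m$, we have $\int h_n\dd\mathsf G_n\le\int h_m\dd\mathsf G_n\to\int h_m\dd\mathsf G$. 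Letting $m\to\infty$ and using monotone convergence gives $\limsup_n\int h_n\dd\mathsf G_n\le\int h\dd\mathsf G$. This yields $\text{dual}(h)\le\int h\dd\mathsf G\le\sup$, and combined with weak duality it proves equality; the supremum is even attained.

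The main obstacle is the continuous case, specifically checking that the Hahn--Banach extension is positive and has exactly the prescribed marginals. This requires verifying $p(u)\le0$ for $u\le0$ and $p(\pm\varphi(z_i))=\pm\int\varphi\dd\mathsf P$. The second identity needs the weak duality bound from the first step applied to the product coupling $\mathsf P^{\otimes k}$, to rule out $p$ being strictly smaller than the marginal integral.
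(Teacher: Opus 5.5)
Your argument is correct, but it is not the paper's route. The paper gives no proof of this statement and simply imports it from Kellerer's general theory of marginal problems \cite[Theorem~2.6 and Proposition~1.31]{Kellerer}, which handles several, possibly different, marginals on general spaces.

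You instead give a self-contained proof that uses only the special features of the present setting: compactness of $[0,1]^k$, boundedness of $h$, and the compactness of $\Pi_k(\mathsf P)$ already recorded before the theorem. The steps are Hahn--Banach plus Riesz for continuous $h$, then the decreasing Lipschitz approximations $h_n\downarrow h$ and a weak-limit argument.

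Besides being elementary, your route yields a little more than the citation is used for. First, the supremum is attained, which the paper obtains separately. Second, your chain of inequalities runs equally well with the continuous dual value $p$ in place of the upper semicontinuous one. So the infimum may even be restricted to continuous $\varphi_i$, which is all that \cref{lem:duality} requires.

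One small correction to your final paragraph. The marginal identity in step (e) does not need weak duality, because the two trivial bounds $p(\pm\varphi(z_i))\le\pm\int\varphi\dd\mathsf P$ already force $L(\varphi(z_i))=\int\varphi\dd\mathsf P$. Weak duality against $\mathsf P^{\otimes k}$ is genuinely needed elsewhere: to show $p>-\infty$ and $p(0)=0$. These are exactly what make $th\mapsto t\,p(h)$ dominated by $p$ on the line through $h$ in step (c).
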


In the duality argument, a \emph{covering function} is a Borel map $w:[0,1]\to[0,1]$ such that $\sum_{i=1}^k w(z_i)\ge1$ for every $z\in H_k$.
A covering function assigns weights to coordinate values so that every successful tuple has total weight at least one.

\begin{lemma}\label{lem:duality}
  For every $\mathsf P\in\Pcal([0,1])$,
  \[
    T_k(\mathsf P)
    =\inf\left\{k\int w\dd\mathsf P:
    \begin{array}{l}
      w:[0,1]\to[0,1]\text{ is Borel measurable}, \\
      \sum_{i=1}^k w(z_i)\ge1\text{ for every }z\in H_k
    \end{array}\right\}.
  \]
\end{lemma}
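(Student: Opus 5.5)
The plan is to prove the two inequalities separately: weak duality by a direct argument, and strong duality by applying \cref{thm:kellerer-duality} with $h=\1_{H_k}$ and then normalizing the resulting dual functions into a single covering function.

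\emph{Weak duality.} Let $w$ be a covering function and $\mathsf G\in\Pi_k(\mathsf P)$. Since $\sum_i w(z_i)\ge\1_{H_k}(z)$ holds for every $z$ (on $H_k$ by the covering property, off $H_k$ because $w\ge0$), integrating against $\mathsf G$ and using that every marginal of $\mathsf G$ equals $\mathsf P$ gives $\mathsf G(H_k)\le k\int w\dd\mathsf P$. Taking the maximum over $\mathsf G$ yields $T_k(\mathsf P)\le$ the infimum.

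\emph{Strong duality.} The set $H_k$ is closed in $[0,1]^k$, so $h=\1_{H_k}$ is bounded and upper semicontinuous. \Cref{thm:kellerer-duality} therefore gives
\[
T_k(\mathsf P)=\inf\sum_i\int\varphi_i\dd\mathsf P,
\]
where the infimum runs over upper semicontinuous integrable $\varphi_i$ with $\sum_i\varphi_i(z_i)\ge\1_{H_k}(z)$. Fix $\varepsilon>0$ and an admissible tuple $(\varphi_i)$ whose value is within $\varepsilon$ of $T_k(\mathsf P)$. The aim is to convert it into a single covering function $w$ with $k\int w\dd\mathsf P\le\sum_i\int\varphi_i\dd\mathsf P$.

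\emph{Normalization.} This step is the main obstacle, because the $\varphi_i$ may be negative, may exceed $1$, and may differ from one another. First I symmetrize. Set $\psi=\frac1k\sum_i\varphi_i$, which is u.s.c.\ and integrable. For $z\in H_k$, every cyclic permutation of $z$ also lies in $H_k$. Averaging the constraint over the $k$ cyclic shifts gives
\[
\sum_j\psi(z_j)=\frac1k\sum_{\sigma}\sum_i\varphi_i(z_{\sigma(i)})\ge1,
\]
and the same averaging gives $\sum_j\psi(z_j)\ge0$ everywhere. Moreover $k\int\psi\dd\mathsf P=\sum_i\int\varphi_i\dd\mathsf P$.

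Next I remove negative values. Let $m=\inf\psi$, which is finite: $\psi$ is bounded above by u.s.c.\ on a compact set, and bounded below because $k\psi(t)\ge0$ at the constant tuple $(t,\ldots,t)$. In fact this shows $\psi\ge0$ pointwise directly. Since $(t,\ldots,t)\notin H_k$ only when $t<1/k$, that conclusion uses only the global inequality $\sum_j\psi(z_j)\ge0$, which holds for every $z$, including the diagonal. So $\psi\ge0$ with no further work.

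Finally I cap the values: put $w=\psi\wedge1$. This is Borel, takes values in $[0,1]$, and satisfies $\int w\le\int\psi$. For $z\in H_k$, if some $\psi(z_j)\ge1$ then $w(z_j)=1$ and, since the other terms are nonnegative, $\sum_j w(z_j)\ge1$. Otherwise $w=\psi$ at every coordinate and the sum is at least $1$. Hence $w$ is a covering function with $k\int w\dd\mathsf P\le T_k(\mathsf P)+\varepsilon$, and letting $\varepsilon\to0$ completes the argument.

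The only delicate point is the symmetrization together with the nonnegativity derived from the diagonal. The other items to confirm are that the measurability and integrability conditions in Kellerer's theorem match the ones stated, and that the infimum over covering functions need not be attained, which the $\varepsilon$ argument handles.
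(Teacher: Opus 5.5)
Your proposal is correct and follows the paper's proof step for step: Kellerer duality with $h=\1_{H_k}$, symmetrization $w=k^{-1}\sum_i\varphi_i$, nonnegativity from the diagonal tuples $(t,\ldots,t)$, truncation to $w\wedge1$, and weak duality by integrating the covering inequality against an arbitrary coupling. The differences are cosmetic: you average over cyclic shifts where the paper uses all permutations, which is equivalent, and your $\varepsilon$-approximation is an explicit form of the paper's comparison of the two infima.
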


\begin{proof}
  Applying \cref{thm:kellerer-duality} with $h=\1_{H_k}$, we obtain
  \[
    T_k(\mathsf P)
    =
    \inf\left\{
    \sum_{i=1}^k\int\varphi_i\dd\mathsf P:
    \sum_{i=1}^k\varphi_i(z_i)\ge \1_{H_k}(z)
    \text{ for every }z\in[0,1]^k
    \right\}.
  \]
  Here the infimum is over upper semicontinuous, $\mathsf P$-integrable functions $\varphi_i:[0,1]\to\mathbb R$, $1\le i\le k$.
  For any admissible family $\varphi_1,\ldots,\varphi_k$, set $w=k^{-1}\sum_i\varphi_i$.
  Averaging $\sum_i\varphi_i(z_i)\ge\1_{H_k}(z)$ over all coordinate permutations, we obtain $\sum_iw(z_i)\ge\1_{H_k}(z)$ for every $z\in[0,1]^k$.
  Taking $z=(t,\ldots,t)$ shows $kw(t)\ge0$, so $w\ge0$.
  Replacing $w$ by $w\wedge1$ preserves the covering condition and does not increase its integral.
  \[
    \inf_{\text{covers}} k\int w\dd\mathsf P
    \le
    \inf_{\text{dual feasible }(\varphi_i)}
    \sum_{i=1}^k \int \varphi_i\dd\mathsf P
    =
    T_k(\mathsf P).
  \]

  Conversely, let $w:[0,1]\to[0,1]$ be any Borel measurable function satisfying $\sum_{i=1}^k w(z_i)\ge1$ for every $z\in H_k$.
  For every $\mathsf G\in\Pi_k(\mathsf P)$, we have
  \[
    \mathsf G(H_k)
    \le\int\sum_{i=1}^k w(z_i)\dd\mathsf G(z)
    =k\int w\dd\mathsf P.
  \]
  Taking the maximum over $\mathsf G\in\Pi_k(\mathsf P)$ and then the infimum over all such $w$ gives $T_k(\mathsf P)\le\inf_w k\int w\dd\mathsf P$, completing the proof.
\end{proof}

\begin{proposition}\label{prop:normalization}
  Fix $k\ge2$ and $0<x<1/k$.
  Let $\mathsf P$ be a nonnegative Borel law maximizing $F_k$ under $m(\mathsf P)\le x$, and set $a=kx$.
  Then the following statements hold.
  \begin{enumerate}[label=\textup{(\roman*)},leftmargin=*,itemsep=0pt]
    \item\label{it:normalization-support} The law $\mathsf P$ is supported on $[0,1]$ and has mean $x$.
    \item\label{it:normalization-mixture} There is a law $\mathsf Q\in\Pcal([0,1])$ of mean $1/k$ such that $\mathsf P=(1-a)\delta_0+a\mathsf Q$.
    In particular, $\mathsf P(\{0\})\ge1-a$.
    \item\label{it:normalization-coupling} The law $\mathsf Q$ admits an exchangeable coupling whose sum is one almost surely.
  \end{enumerate}
\end{proposition}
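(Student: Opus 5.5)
The plan is to get \textup{(i)} from the mixture operations of \cref{fact:mixtures}, and then to derive \textup{(ii)} and \textup{(iii)} together from the duality of \cref{lem:duality}. The key claim is that a maximizer must satisfy $T_k(\mathsf P)=kx$, which is the largest value the Markov bound $T_k(\mathsf P)\le km(\mathsf P)$ permits.

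\emph{Step 1 (part \textup{(i)}).} By Markov's inequality, $F_k(\mathsf P)\le km(\mathsf P)\le kx<1$. Hence the equality case in \cref{fact:mixtures}\textup{(i)} never occurs: for every feasible law and every $\theta>0$, the mixture $\mathsf P^+_\theta$ has strictly larger success probability.

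First suppose $m(\mathsf P)<x$. A small $\theta>0$ keeps $m(\mathsf P^+_\theta)=(1-\theta)m(\mathsf P)+\theta\le x$ while strictly increasing $F_k$, which contradicts maximality. So every maximizer has mean exactly $x$.

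Next suppose $\mathsf P((1,\infty))>0$. Let $\mathsf P'$ be the law of $X\wedge1$ for $X\sim\mathsf P$. Truncation does not change the success event, so $F_k(\mathsf P')=F_k(\mathsf P)$, and $m(\mathsf P')<m(\mathsf P)\le x$. Thus $\mathsf P'$ is a maximizer with mean strictly below $x$, which the previous paragraph rules out. Hence $\mathsf P$ is supported on $[0,1]$.

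\emph{Step 2 (the key step: $T_k(\mathsf P)=kx$).} By \cref{lem:duality}, it suffices to show that every covering function $w$ satisfies $\int w\dd\mathsf P\ge x$. Suppose instead that some covering $w$ has $\int w\dd\mathsf P<x$.

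Let $\mathsf P''$ be the law of $w(X)$ for $X\sim\mathsf P$. This is a law on $[0,1]$ with mean $\int w\dd\mathsf P<x$. Take $X_1,\ldots,X_k$ independent with law $\mathsf P$. Then $w(X_1),\ldots,w(X_k)$ are independent with law $\mathsf P''$. The covering property says that whenever $\sum_iX_i\ge1$ we also have $\sum_iw(X_i)\ge1$, so $F_k(\mathsf P'')\ge F_k(\mathsf P)=\Phi_k(x)$.

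This makes $\mathsf P''$ a maximizer with mean strictly below $x$, contradicting Step 1. Therefore $T_k(\mathsf P)\ge kx$. Combined with $T_k(\mathsf P)\le km(\mathsf P)=kx$, this gives $T_k(\mathsf P)=kx$.

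\emph{Step 3 (parts \textup{(ii)} and \textup{(iii)}).} The maximum defining $T_k(\mathsf P)$ is attained, so fix $\mathsf G\in\Pi_k(\mathsf P)$ with $\mathsf G(H_k)=kx$. Symmetrizing $\mathsf G$ over coordinate permutations keeps its marginals and the value $\mathsf G(H_k)$, so we may assume $\mathsf G$ is exchangeable.

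Let $\Sigma=\sum_iZ_i$ for $(Z_1,\ldots,Z_k)\sim\mathsf G$. Then
\[
  kx=\E\Sigma\ge\E[\Sigma\1_{\{\Sigma\ge1\}}]\ge\Prob(\Sigma\ge1)=kx,
\]
so both inequalities are equalities. The first equality forces $\Sigma=0$ on $\{\Sigma<1\}$ almost surely, and the second forces $\Sigma=1$ on $\{\Sigma\ge1\}$ almost surely. Hence $\Sigma\in\{0,1\}$ almost surely, with $\Prob(\Sigma=1)=kx=a$ and $\Prob(\Sigma=0)=1-a$.

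On the event $\{\Sigma=0\}$, nonnegativity forces every coordinate to vanish. Therefore $\mathsf P(\{0\})=\Prob(Z_1=0)\ge1-a$, and so $\mathsf Q:=(\mathsf P-(1-a)\delta_0)/a$ is a probability law on $[0,1]$. Its mean is $m(\mathsf P)/a=x/(kx)=1/k$, which proves \textup{(ii)}.

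For \textup{(iii)}, consider the conditional law $\mathsf G(\cdot\mid\Sigma=1)$. It is exchangeable, its coordinates sum to one almost surely, and its first marginal is
\[
  \frac{\mathsf P-(1-a)\delta_0}{a}=\mathsf Q,
\]
because the law of $Z_1$ on $\{\Sigma=0\}$ is $\delta_0$. This gives the required exchangeable coupling of $\mathsf Q$ with sum one almost surely.

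The main obstacle is Step 2, which converts maximality among independent laws into the coupling statement $T_k(\mathsf P)=kx$. The pushforward of $\mathsf P$ by a cheap covering function is what closes this gap. It works because independence is preserved under the coordinatewise map $z\mapsto w(z)$, which is where Step 1 is essential.
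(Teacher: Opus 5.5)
Your proof is correct and follows essentially the paper's route: mixing with $\delta_1$ to force mean $x$, truncation at one for the support, Kellerer duality combined with the pushforward by a covering function of mean below $x$ to get $T_k(\mathsf P)=kx$, and finally a symmetrized optimal coupling whose sum lies in $\{0,1\}$ almost surely. The only difference is the ordering. You establish mean $x$ for general finite-mean laws before truncating, whereas the paper first treats laws on $[0,1]$ and then deduces the support from the bounded case via $\E(X-1)_+=0$; this difference is immaterial.
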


\begin{proof}
  First suppose that $\mathsf P$ is supported on $[0,1]$.
  {If $m(\mathsf P)<x$, then take $\theta=(x-m(\mathsf P))/(1-m(\mathsf P))\in(0,1)$.
  Then $m(\mathsf P_\theta^+)=x$.
  Since $F_k(\mathsf P)\le kx<1$ and $0<\theta<1$, it follows from $1-F_k(\mathsf P_\theta^+)=(1-\theta)^k[1-F_k(\mathsf P)]$ that $F_k(\mathsf P_\theta^+)>F_k(\mathsf P)$, a contradiction.
  Thus, $m(\mathsf P)=x$.}

  Suppose that $T_k(\mathsf P)<kx$.
  By \cref{lem:duality}, there exists a Borel measurable map $w:[0,1]\to[0,1]$ such that $\int w\dd\mathsf P<x$ and
  \[
    \sum_{i=1}^k w(z_i)\ge1
    \text{ for every }(z_1,\ldots,z_k)\in[0,1]^k
    \text{ with }\sum_{i=1}^k z_i\ge1.
  \]
  For independent $X_1,\ldots,X_k$ with law $\mathsf P$, the variables $w(X_i)$ are independent with the common law $\law(w(X_1))$, whose mean is less than $x$.
  The covering condition implies that $\sum_iX_i\ge1$ forces $\sum_iw(X_i)\ge1$, so this new law has success probability at least $F_k(\mathsf P)$.
  This new law is therefore also maximizing, but its mean is strictly less than $x$, a contradiction.
  Therefore, $T_k(\mathsf P)=kx$.

  Average an optimal coupling over all coordinate permutations, and denote the resulting coordinates by $\widetilde X_1,\ldots,\widetilde X_k$.
  The averaged coupling is exchangeable and still has marginal law $\mathsf P$ in every coordinate.
  Coordinate permutations preserve the sum, so the averaged coupling remains optimal.
  Writing $\widetilde S=\sum_{i=1}^k\widetilde X_i$, we have $\E\widetilde S=kx=T_k(\mathsf P)=\Prob(\widetilde S\ge1)$, while
  \[
    \E\widetilde S-\Prob(\widetilde S\ge1)
    =\E[\widetilde S\1_{\{\widetilde S<1\}}]
    +\E[(\widetilde S-1)\1_{\{\widetilde S\ge1\}}].
  \]
  Both terms on the right are nonnegative and therefore vanish.
  Thus, $\widetilde S\in\{0,1\}$ almost surely.
  On $\{\widetilde S=0\}$ every coordinate is zero.
  The event $\{\widetilde S=1\}$ has probability $kx=a$, and exchangeability makes its conditional marginals equal to a law $\mathsf Q$ of mean $1/k$.
  Decomposing a marginal over $\{\widetilde S=0\}$ and $\{\widetilde S=1\}$, we obtain $\mathsf P=(1-a)\delta_0+a\mathsf Q$.
  The conditional joint law on $\{\widetilde S=1\}$ is the required constant-sum coupling.
  The mixture formula also gives $\mathsf P(\{0\})=1-a+a\mathsf Q(\{0\})\ge1-a$.

  For a general maximizing law, let $X$ have law $\mathsf P$.
  Truncation at one preserves success, so the law of $X\wedge1$ is maximizing and, by the bounded case, has mean $x$.
  Since $x=\E(X\wedge1)\le\E X\le x$, we have $\E(X-1)_+=0$.
  Thus, $\mathsf P$ is supported on $[0,1]$, and all the conclusions follow from the bounded case.
\end{proof}

\subsection{Lower bounds for the failure probability}\label{sec:cap}

For independent bounded nonnegative random variables $X_1,\ldots,X_j$, we derive lower bounds for the failure probability, that is, $\Prob(X_1+\cdots+X_j<1)$.
We also obtain an explicit bound when $\E X_1=\cdots=\E X_j$.
In \cref{sec:envelope}, \cref{lem:gamma} will be applied with common mean $1/k$ to bound $\Prob(X_1+\cdots+X_j<1)$ in terms of $\E(X_1+\cdots+X_j-1)_+$.

For independent bounded nonnegative variables $X_1,\ldots,X_j$, not necessarily identically distributed, write $S=\sum_{i=1}^jX_i$ and $p=\Prob(S<1)$.
Whenever $p>0$, write $r=1-\E[S\mid S<1]$ for the \emph{conditional deficit} and $B=\E S-\E[S\mid S<1]$ for the difference between the unconditional mean and the mean conditional on failure.
The word \emph{coordinate} refers to one of the random variables $X_i$.
When all $X_i$ have the same mean, we write $m=\E X_i$ for every $i$.
We continue to use $m(\mathsf P)$ for the mean of a law.
Assume $p>0$.
Then $0<r\le1$, and
$(1-S)_+=(1-S)\1_{\{S<1\}}=1-S+(S-1)_+$ implies
$pr=\E(1-S)_+=1-\E S+\E(S-1)_+$.
Thus $r$ is the average deficit below one conditional on failure.
By the definition of $B$, we have $r=1-\E S+B$.

We first prove \cref{lem:safe-cap}, which provides a truncation that does not decrease the conditional deficit $r$ for finitely supported variables.
Each truncation replaces one coordinate $X_i$ by $X_i\wedge \tau$, where $\tau\ge0$ is a constant chosen from the current distributions, not from the sampled values.
Since the new variable depends only on $X_i$, independence is preserved.
Truncating only $X_i$ can destroy equality of the coordinate means, so \cref{lem:safe-cap,lem:product} do not assume equal means.

\begin{lemma}\label{lem:safe-cap}
  Let $X_1,\ldots,X_j$ be independent nonnegative variables with finite support, sum $S$, and $p=\Prob(S<1)>0$.
  Suppose that $\max_{1\le i\le j}|\supp X_i|\ge2$.
  Then there exist $i\in\{1,\ldots,j\}$ and $\tau\in\supp X_i$ with $\tau<\max\supp X_i$ such that the following statements hold after replacing $X_i$ by $X_i'=X_i\wedge\tau$ and leaving all other coordinates unchanged.
  Write $S'=\sum_hX_h'$, $p'=\Prob(S'<1)$, $r'=1-\E[S'\mid S'<1]$, $B'=\E S'-\E[S'\mid S'<1]$, and $\zeta=B-B'$.
  \begin{enumerate}[label=\textup{(\roman*)},leftmargin=*,itemsep=0pt]
    \item\label{it:safe-cap-deficit} We have $r'\ge r$, $|\supp X_i'|<|\supp X_i|$, and $\sum_{h=1}^j|\supp X_h'|<\sum_{h=1}^j|\supp X_h|$.
    \item\label{it:safe-cap-loss} We have $p'\ge p>0$ and
    \[
      r'-r=\E(X_i-\tau)_+-\zeta,\text{ }
      p'(r-\zeta)\le pr
      \text{ and }0\le\zeta\le\E(X_i-\tau)_+.
    \]
  \end{enumerate}
\end{lemma}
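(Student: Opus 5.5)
The plan is to separate the parts of the statement that hold for every admissible truncation from the one part that needs a good choice of $(i,\tau)$. That part is $r'\ge r$. Everything else should follow from identities once $r'\ge r$ is known.

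\emph{Step 1: identities valid for any $i$ and any $\tau<\max\supp X_i$.}
\begin{itemize}[leftmargin=*,itemsep=0pt]
\item Since $\tau\in\supp X_i$ and $\tau<\max\supp X_i$, the support of $X_i'=X_i\wedge\tau$ is $\{t\in\supp X_i:t\le\tau\}$. This is a proper subset, so both cardinality claims in (i) are immediate.
\item We have $S'=S-(X_i-\tau)_+\le S$. Hence $\{S<1\}\subseteq\{S'<1\}$, which gives $p'\ge p>0$.
\item Taking expectations, $\E S'=\E S-\E(X_i-\tau)_+$. Combining this with $r=1-\E S+B$ and $r'=1-\E S'+B'$ yields $r'-r=\E(X_i-\tau)_+-\zeta$.
\item Given this identity, $\zeta\le\E(X_i-\tau)_+$ is equivalent to $r'\ge r$.
\end{itemize}

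For the inequality $p'(r-\zeta)\le pr$, write $r-\zeta=r'-\E(X_i-\tau)_+$ and $p'r'=\E(1-S')_+$. Let $R=\sum_{h\ne i}X_h$. Then
\[
\E(1-S)_+\ge\E[(1-S')\1_{\{S'<1\}}]-\E[(X_i-\tau)_+\1_{\{S'<1\}}].
\]
On the event $\{X_i>\tau\}$ we have $X_i'=\tau$, and $R$ is independent of $X_i$. Therefore
\[
\E[(X_i-\tau)_+\1_{\{S'<1\}}]=\E(X_i-\tau)_+\,\Prob(\tau+R<1).
\]
Since $X_i'\le\tau$, we also have $\Prob(\tau+R<1)\le p'$. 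Putting these together gives $pr\ge p'r'-p'\E(X_i-\tau)_+=p'(r-\zeta)$.

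Finally, $\zeta\ge0$ should reduce to $r\ge r'-\E(X_i-\tau)_+$, which is the same kind of comparison. I would verify it directly from $p'\ge p$ and the inequality just proved. We have $r-\zeta\le pr/p'\le r$ whenever $r-\zeta\ge0$, and $r-\zeta<0$ would trivially give $\zeta>r>0$.

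\emph{Step 2: choosing $(i,\tau)$ so that $r'\ge r$.} This is the main obstacle. Let $M=\max\supp X_i$ and $\pi=\Prob(X_i=M)$. The condition $r'\ge r$ is equivalent to $\E(1-S')_+-\E(1-S)_+\ge r(p'-p)$. After factoring out $\pi$, it becomes
\[
\E\,f_\tau(R)\ge r\,\Prob(1-M\le R<1-\tau),\qquad f_\tau(y)=\min\{M-\tau,(1-\tau-y)_+\}.
\]
I would treat the left minus the right side as a function $g(t)$ of a continuous cap level $t\in[\tau_0,M]$, where $\tau_0=\min\supp X_i$. We have $g(M)=0$. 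I would then show that $g$ cannot be negative at every support point below $M$ for every coordinate with at least two support points.

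The first attempt is the largest support point $\tau$ below $M$ in the coordinate $i$ whose top atom $M$ is largest among all coordinates. The reasoning is that truncating the globally largest value removes excess that mostly lies in success configurations, which are far from the failure region.

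If that attempt fails, I would use an averaging argument. The deficit $r$ is a weighted average of $1-\tau-R$ over failures. Comparing $r$ with the conditional average of $f_\tau(R)$ over $\{R<1-\tau\}$, and using that $f_\tau$ equals the full deficit on $\{1-M\le R<1-\tau\}$, one of the candidate caps $\tau\in\supp X_i$ must make the newly created failures have average deficit at least $r$.

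I expect this selection step, rather than the identities in Step 1, to require the real case analysis.
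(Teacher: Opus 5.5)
Your Step~1 is correct and matches the paper. The support count, $p'\ge p$, and the identity $r'-r=\E(X_i-\tau)_+-\zeta$ all hold for every cap. So does the bound $\E[(X_i-\tau)_+\1_{\{S'<1\}}]=\E(X_i-\tau)_+\Prob(\tau+R<1)\le p'\E(X_i-\tau)_+$, which gives $p'(r-\zeta)\le pr$ and hence $\zeta\ge r(1-p/p')\ge0$.

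The genuine gap is Step~2. This is the real content of the lemma, and you leave it as a heuristic.
\begin{itemize}
\item Your first candidate does fail. Take $j=1$ and $X_1$ uniform on $\{0,\tfrac12,2\}$. Then $r=\tfrac34$, while $X_1\wedge\tfrac12$ gives $r'=\tfrac23$.
\item The fallback ``averaging argument'' is never specified, and its setup is inaccurate. The reduction to $\E f_\tau(R)\ge r\Prob(1-M\le R<1-\tau)$ after factoring out $\pi$ is valid only when $\tau$ is the second-largest support point. A lower cap also moves every atom in $(\tau,M)$. Your $g(t)$ for smaller $t$ describes moving only the top atom, which is not $X_i\wedge t$.
\item Also, $r$ is the failure-average of $1-X_i-R$, not of $1-\tau-R$.
\item The coordinate must be chosen by some rule. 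Let $X_1\in\{0,0.95\}$ and $X_2\in\{0,0.1\}$, each taking its top value with probability $\tfrac12$. Then $r=0.65$, and the only cap of $X_2$ gives $r'=0.525$. You offer no criterion that provably selects a coordinate admitting a good cap.
\end{itemize}

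The missing idea is the paper's covariance selection combined with summation by parts.
\begin{itemize}
\item Since $\sum_h\Cov(S,X_h\mid S<1)=\Var(S\mid S<1)\ge0$ and constant coordinates contribute zero, some $i$ with $|\supp X_i|\ge2$ satisfies $\Cov(S,X_i\mid S<1)\ge0$.
\item Put $T=S-X_i$ and $K(t)=\E[(1-r-T-t)\1_{\{T+t<1\}}]$. Independence gives $p'(r'-r)=\E K(X_i\wedge\tau)$ for every cap $\tau$; in particular $\E K(X_i)=0$.
\item Write $\supp X_i=\{x_1<\cdots<x_N\}$ and $A_q=\E[K(X_i)\1_{\{X_i\le x_q\}}]$, so that $A_0=A_N=0$. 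Summation by parts yields $\sum_{q=1}^{N-1}(x_{q+1}-x_q)A_q=-\E[X_iK(X_i)]=p\Cov(S,X_i\mid S<1)\ge0$.
\item Hence some $A_q\ge0$ with $q<N$. For the least such $q$ we have $A_{q-1}\le0$, so $K(x_q)\ge0$.
\item Taking $\tau=x_q$ gives $p'(r'-r)=A_q+\Prob(X_i>x_q)K(x_q)\ge0$.
\end{itemize}
Without this or an equivalent mechanism, $r'\ge r$ remains unproved, and with it $\zeta\le\E(X_i-\tau)_+$.
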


\begin{proof}
  First consider any index $i$ and cap $\tau\ge0$, using the primed notation in the statement.
  Then $S'=S-X_i+(X_i\wedge\tau)$.
  Since $S'\le S$, we have $p'\ge p>0$, so these parameters, namely $r'$, $B'$, and $\zeta$, are defined.
  The loss of mean is
  $\E S-\E S'=\E(X_i-\tau)_+$.
  For each sample, the increase in $(1-S)_+$ is zero unless
  $X_i>\tau$ and $S-X_i+\tau<1$.
  When both conditions hold, the increase is at most $X_i-\tau$.
  By independence of $X_i$ and $S-X_i$, we therefore have
  \[
    \begin{aligned}
      p'r'-pr
       & =\E[(1-S')_+-(1-S)_+]                    \\
       & \le\E[(X_i-\tau)_+\1_{\{S-X_i+\tau<1\}}] \\
       & =\E(X_i-\tau)_+\Prob(S-X_i+\tau<1)       \\
       & \le p'\E(X_i-\tau)_+.
    \end{aligned}
  \]
  The last inequality follows from
  $\{S-X_i+\tau<1\}\subseteq\{S'<1\}$, since $X_i\wedge \tau\le \tau$.
  Using $r=1-\E S+B$ and $r'=1-\E S'+B'$, we obtain
  \begin{equation}\label{eq:cap-basic}
    r'-r=\E(X_i-\tau)_+-\zeta
    \text{ and }
    p'(r-\zeta)\le pr.
  \end{equation}
  Since $p'\ge p>0$, the second inequality implies
  $\zeta\ge r(1-p/p')\ge0$.
  If $r'\ge r$, then the first identity also implies
  $\zeta\le\E(X_i-\tau)_+$.
  Thus a truncation that does not decrease $r$ satisfies
  $0\le\zeta\le\E(X_i-\tau)_+$.

  Recall that, for a finitely supported random variable $X$, $\supp X=\{t:\Prob(X=t)>0\}$ and $|\supp X|$ is the number of points in this set.
  In particular, $|\supp X|=1$ means that $X$ is almost surely constant.
  Since
  $\sum_{h=1}^j\Cov(S,X_h\mid S<1)=\Var(S\mid S<1)\ge0$
  and $\Cov(S,X_h\mid S<1)=0$ whenever $|\supp X_h|=1$,
  there is an index $i\in\{1,\ldots,j\}$ such that
  $|\supp X_i|\ge2$ and $\Cov(S,X_i\mid S<1)\ge0$.
  Fix one such index $i$.

  Write $T=S-X_i$ and $\supp X_i=\{x_1,\ldots,x_N\}$,
  where $0\le x_1<\cdots<x_N$ and $N\ge2$.
  For $t\ge0$, define
  $K(t)=\E[(1-r-T-t)\1_{\{T+t<1\}}]$,
  where the expectation is over $T$.
  By independence of $T$ and $X_i$ and the equality $r=1-\E[S\mid S<1]$, we have
  \[
    \begin{aligned}
      \E K(X_i)
       & =
      \E\left[
        \E_T\left[
          (1-r-T-X_i)\1_{\{T+X_i<1\}}
          \Given X_i
          \right]
      \right]                        \\
       & =
      \E\left[
        (1-r-T-X_i)\1_{\{T+X_i<1\}}
      \right]                        \\
       & =
      \E\left[
        (1-r-S)\1_{\{S<1\}}
      \right]                        \\
       & =
      \Prob(S<1)
      \left(1-r-\E[S\mid S<1]\right) \\
       & =0.
    \end{aligned}
  \]

  Set $A_0=0$ and
  $A_q=\E[K(X_i)\1_{\{X_i\le x_q\}}]$ for $1\le q\le N$.
  Then $A_N=\E K(X_i)=0$.
  By summation by parts and the choice of $i$, we have
  \[
    \sum_{q=1}^{N-1}(x_{q+1}-x_q)A_q
    =-\E[X_iK(X_i)]
    =p\Cov(S,X_i\mid S<1)\ge0.
  \]
  Every coefficient $x_{q+1}-x_q$ is positive.
  Hence there is a $q\in\{1,\ldots,N-1\}$ such that $A_q\ge0$.
  Choose the least such $q$.
  Then $A_{q-1}\le0$, and
  $K(x_q)=(A_q-A_{q-1})/\Prob(X_i=x_q)\ge0$.

  Set $\tau=x_q$, so that $\tau\in\supp X_i$ and
  $\tau<\max\supp X_i$.
  Define $X_i'=X_i\wedge\tau$ and $X_h'=X_h$ for $h\ne i$.
  The variable $X_i'$ remains independent of $T$.
  Since $S'=\sum_{h=1}^jX_h'=T+X_i'$, we obtain
  \[
    \begin{aligned}
      p'(r'-r)
       & =p'r'-p'r                                 \\
       & =\E[(1-S')\1_{\{S'<1\}}]
      -\E[r\1_{\{S'<1\}}]                          \\
       & =\E[(1-r-S')\1_{\{S'<1\}}]                \\
       & =\E[(1-r-T-X_i')\1_{\{T+X_i'<1\}}]        \\
       & =\E K(X_i')                               \\
       & =\E\left[K(X_i)\1_{\{X_i\le x_q\}}\right]
      +K(x_q)\Prob(X_i>x_q)                        \\
       & =A_q+\Prob(X_i>x_q)K(x_q)                 \\
       & \ge0,
    \end{aligned}
  \]
  Thus $r'\ge r$, since $p'\ge p>0$.
  The truncation leaves each value $x_1,\ldots,x_q$ unchanged
  and replaces every value $x_{q+1},\ldots,x_N$ by $x_q$.
  Therefore $\supp X_i'=\{x_1,\ldots,x_q\}$, so
  $|\supp X_i'|=q<N=|\supp X_i|$.
  For every $h\ne i$, we have $\supp X_h'=\supp X_h$.
  Consequently,
  $\sum_{h=1}^j|\supp X_h'|<\sum_{h=1}^j|\supp X_h|$.
  Together with \eqref{eq:cap-basic}, $p'\ge p>0$ and $r'\ge r$ give \textup{(ii)}.
\end{proof}

We next use \cref{lem:safe-cap} to prove a lower bound for $p$.

\begin{lemma}\label{lem:product}
  Let $j\ge1$, and let $X_1,\ldots,X_j$ be independent bounded nonnegative variables with sum $S$.
  Suppose that $p=\Prob(S<1)>0$, and write $r=1-\E[S\mid S<1]$.
  If $r>\max_{1\le i\le j}\E X_i$, then there are $z_i\in[0,\E X_i]$, $1\le i\le j$, such that
  $\sum_{i=1}^jz_i=\E S-\E[S\mid S<1]$ and
  $
    p\ge\prod_{i=1}^j\left(1-\frac{z_i}{r}\right).
  $
\end{lemma}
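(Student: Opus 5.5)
We would prove \cref{lem:product} by induction on the total support size $\sum_h|\supp X_h|$, first for finitely supported variables, and then pass to general bounded variables by approximation. Throughout we use $B=\E S-\E[S\mid S<1]$ and the identity $r=1-\E S+B$.

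\emph{Base case.} All supports are singletons, so every $X_i$ equals the constant $\E X_i$ and $S=\E S$. Then $p>0$ forces $p=1$, $\E[S\mid S<1]=\E S$ and $B=0$, and we take all $z_i=0$. The required inequality $p\ge1$ holds trivially.

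\emph{Inductive step, finitely supported case.} If some support has at least two points, we apply \cref{lem:safe-cap} to obtain an index $i$ and a cap $\tau$. The truncated system $X'$ has strictly smaller total support, $p'\ge p$, and $r'\ge r$. Its means satisfy $\E X_i'\le\E X_i$ and $\E X_h'=\E X_h$ for $h\ne i$, so $r'\ge r>\max_h\E X_h\ge\max_h\E X_h'$ and the hypothesis persists. By induction there are $z_h'\in[0,\E X_h']$ with $\sum_h z_h'=B'$ and $p'\ge\prod_h(1-z_h'/r')$.

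We then set $z_h=z_h'$ for $h\ne i$ and $z_i=z_i'+\zeta$. Three checks are needed.
\begin{itemize}
  \item Since $\zeta=B-B'$, we get $\sum_h z_h=B'+\zeta=B$.
  \item Using $\zeta\le\E(X_i-\tau)_+$ from \cref{lem:safe-cap}\textup{(ii)} and $\E X_i'=\E X_i-\E(X_i-\tau)_+$, we get $z_i\le\E X_i'+\E(X_i-\tau)_+=\E X_i$. Also $z_i\ge0$ because $\zeta\ge0$.
  \item The probability bound. From $p'(r-\zeta)\le pr$ we get $p\ge p'(1-\zeta/r)$, and $r-\zeta>0$ since $\zeta\le\E X_i<r$. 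Since $r'\ge r$, each factor satisfies $1-z_h'/r'\ge1-z_h'/r\ge0$, where nonnegativity uses $z_h'\le\E X_h<r$. Hence
  \[
    p\ge\Bigl(1-\frac{\zeta}{r}\Bigr)\prod_h\Bigl(1-\frac{z_h'}{r}\Bigr).
  \]
  It remains to merge the two factors for index $i$. With $a=z_i'/r$ and $b=\zeta/r$, both nonnegative, we have $(1-a)(1-b)=1-a-b+ab\ge1-(a+b)=1-z_i/r$. This yields $p\ge\prod_h(1-z_h/r)$.
\end{itemize}

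\emph{General bounded variables.} We approximate each $X_i$ by finitely supported variables $X_i^{(n)}$ obtained by rounding down to a grid of mesh $1/n$. Then $S^{(n)}\le S$, $S^{(n)}\to S$ uniformly, and independence is preserved.

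We expect this limit to be the main obstacle, because $\Prob(S^{(n)}<1)$ and the conditional mean can jump at the boundary $\{S=1\}$. Rounding down from below helps: $\{S^{(n)}<1\}$ decreases to $\{S<1\}$ up to the set $\{S^{(n)}<1\le S\}$, and since $S-S^{(n)}\le j/n$, this set lies in $\{1\le S<1+j/n\}$, whose probability tends to $0$. Consequently $p^{(n)}\to p$, the restricted means $\E[S^{(n)}\1_{\{S^{(n)}<1\}}]$ converge, and $r^{(n)}\to r$ and $B^{(n)}\to B$. The strict hypothesis $r>\max_i\E X_i$ then holds for large $n$.

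We apply the finite case to each $X^{(n)}$ and extract a convergent subsequence of the vectors $(z_i^{(n)})$ in the compact box $\prod_i[0,\E X_i]$. In the limit, $\sum_i z_i=B$ and $p\ge\prod_i(1-z_i/r)$ by continuity of the product.
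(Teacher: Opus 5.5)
\textbf{The finite-support case is fine; the approximation step has a genuine gap.} Your induction for finitely supported variables is correct. It is the paper's argument in inductive form: the paper iterates \cref{lem:safe-cap} until all supports are singletons and then groups the factors $1-\zeta_n/r$ by coordinate via $\prod_h(1-u_h)\ge1-\sum_hu_h$, whereas you merge one factor per step. Your checks ($r'\ge r>\max_h\E X_h'$, $0\le\zeta\le\E(X_i-\tau)_+\le\E X_i<r$, nonnegativity of all factors) are in order.

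The gap is that rounding down is the wrong direction. Since $S^{(n)}\le S$, the excess event $\{S^{(n)}<1\le S\}$ lies in $\{1\le S<1+j/n\}$. These sets shrink to $\{S=1\}$, not to the empty set, so their probabilities tend to $\Prob(S=1)$, which can be positive. For example, take $j=2$, $X_1\in\{0,\alpha\}$ and $X_2\in\{0,1-\alpha\}$, each value with probability $1/2$, with $\alpha$ irrational. On $\{S=1\}$ you get $S^{(n)}=(\lfloor n\alpha\rfloor+\lfloor n-n\alpha\rfloor)/n=(n-1)/n<1$. Hence $\Prob(S^{(n)}<1)=1$ for every $n$, while $p=3/4$.

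\textbf{Why the limit cannot be rescued.} In general only $p^{(n)}r^{(n)}=\E(1-S^{(n)})_+\to\E(1-S)_+=pr$ survives. So any subsequential limit $(p^*,r^*)$ has $p^*\ge p$ and $r^*=pr/p^*\le r$; in the example $r^*=1/2<2/3=r$. Also $B^{(n)}\to\E S-1+r^*\ne B$.

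The limiting inequality $p^*\ge\prod_i(1-z_i^*/r^*)$ therefore concerns the wrong pair, and it cannot be transferred back. In the truncation step $r'\ge r$ worked in your favour, but now $1-z/r^*\le1-z/r$, which is the unfavourable direction. Even the hypothesis $r^{(n)}>\max_i\E X_i^{(n)}$ is no longer guaranteed, since $r^{(n)}\to r^*$.

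In the example your limit statement is vacuous ($1\ge\cdots$). Meanwhile the target inequality $3/4\ge(1-3z_1/2)(1-3z_2/2)$ with $z_1+z_2=B=1/6$ can only hold with equality, since the right side equals $3/4+\tfrac94z_1z_2$. So there is no slack to absorb the error.

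\textbf{The repair.} Round up, as the paper does: $X_i^{(n)}=\lceil nX_i\rceil/n$, or dyadically $2^{-h}\lceil2^hX_i\rceil$. Then $S\le S^{(n)}<S+j/n$. A sample with $S\ge1$ never lies in $\{S^{(n)}<1\}$, while a sample with $S<1$ lies in it for all large $n$. Dominated convergence gives $p^{(n)}\to p$. Together with $\E(1-S^{(n)})_+\to pr$ and $\E X_i^{(n)}\to\E X_i$, this yields $r^{(n)}\to r$, $B^{(n)}\to B$, and the strict hypothesis for large $n$. The vectors $z^{(n)}$ now lie in $\prod_i[0,\E X_i+1/n]$, but every limit point lies in $\prod_i[0,\E X_i]$, so your compactness step goes through unchanged.
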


\begin{proof}
  First suppose that every coordinate has finite support.
  If $|\supp X_i|=1$ for every $i$, then $S$ is constant.
  Since $p>0$, we have $p=1$ and $B=0$, so the conclusion holds
  with $z_i=0$ for every $i$.

  Otherwise, set $X_i^{(0)}=X_i$.
  For the variables after $n$ truncations, define
  $S^{(n)}=\sum_{i=1}^jX_i^{(n)}$,
  $p^{(n)}=\Prob(S^{(n)}<1)$,
  $r^{(n)}=1-\E[S^{(n)}\mid S^{(n)}<1]$,
  $B^{(n)}=\E S^{(n)}-\E[S^{(n)}\mid S^{(n)}<1]$, and
  $L_n=\sum_{i=1}^j|\supp X_i^{(n)}|$.
  Initially, $S^{(0)}=S$, $p^{(0)}=p$, $r^{(0)}=r$,
  $B^{(0)}=B$, and $L_0>j$.

  Whenever $L_n>j$, some coordinate has at least two support points.
  By \cref{lem:safe-cap}, we can choose an index $i_n$ and a point
  $\tau_n\in\supp X_{i_n}^{(n)}$ with
  $\tau_n<\max\supp X_{i_n}^{(n)}$ such that, setting
  $X_{i_n}^{(n+1)}=X_{i_n}^{(n)}\wedge\tau_n$ and
  $X_h^{(n+1)}=X_h^{(n)}$ for $h\ne i_n$, we have
  $r^{(n+1)}\ge r^{(n)}$ and $L_{n+1}\le L_n-1$.
  Each truncation preserves independence, nonnegativity, and finite
  support. Since $S^{(n+1)}\le S^{(n)}$, we also have
  $p^{(n+1)}\ge p^{(n)}\ge p>0$.
  Thus, the conditional parameters remain defined, and the current variables satisfy the hypotheses of \cref{lem:safe-cap} whenever $L_n>j$.
  Moreover, $r^{(n)}\ge r>0$ throughout the procedure.

  Since $L_n\ge j$, the procedure stops after some
  $N\le L_0-j$ steps with $L_N=j$.
  At each step, the index $i_n$ and threshold $\tau_n$ are chosen from the current distributions, not from the sampled values.
    Once these choices and the initial distributions are fixed, $N$ is a fixed integer and does not depend on the sample.
  Each support is nonempty, so $|\supp X_i^{(N)}|=1$ for every $i$.
  Hence $S^{(N)}$ is constant.
  Since $p^{(N)}>0$, this constant is less than one, so
  $p^{(N)}=1$ and
  $\E[S^{(N)}\mid S^{(N)}<1]=\E S^{(N)}$.
  Therefore $B^{(N)}=0$.

  For $0\le n<N$, set $\zeta_n=B^{(n)}-B^{(n+1)}$.
  Since $p^{(n+1)}\ge p^{(n)}>0$ and $r^{(n+1)}\ge r^{(n)}$, \cref{lem:safe-cap} gives
  \[
    0\le r^{(n)}\left(1-\frac{p^{(n)}}{p^{(n+1)}}\right)
    \le\zeta_n
    \le\E X_{i_n}^{(n)}-\E X_{i_n}^{(n+1)}.
  \]
  Moreover, since $r^{(n)}\ge r>0$ and $\zeta_n\ge0$, \cref{lem:safe-cap} yields
  \[
    p^{(n)}
    \ge p^{(n+1)}\left(1-\frac{\zeta_n}{r^{(n)}}\right)
    \ge p^{(n+1)}\left(1-\frac{\zeta_n}{r}\right).
  \]

  For each $i\in\{1,\ldots,j\}$, set
  $z_i=\sum_{0\le n<N:\,i_n=i}\zeta_n$.
  Since $X_i^{(n+1)}=X_i^{(n)}$ whenever $i_n\ne i$, summing the mean losses at the steps with $i_n=i$ gives
  \[
    0\le z_i\le\E X_i-\E X_i^{(N)}\le\E X_i.
  \]
  Each $\zeta_n$ occurs in exactly one $z_i$, so
  \[
    \sum_{i=1}^jz_i=\sum_{n=0}^{N-1}\zeta_n=B^{(0)}-B^{(N)}=B.
  \]

  For each $i$, we have
  $\sum_{0\le n<N:\,i_n=i}\zeta_n/r=z_i/r\le\E X_i/r<1$.
  Since every $\zeta_n$ is nonnegative, all factors
  $1-\zeta_n/r$ are positive.
  Iterating $p^{(n)}\ge p^{(n+1)}(1-\zeta_n/r)$ for $0\le n<N$, using $p^{(N)}=1$, and grouping the factors by coordinate, we obtain
  \[
    \begin{aligned}
      p=p^{(0)}
       & \ge p^{(N)}
      \prod_{n=0}^{N-1}\left(1-\frac{\zeta_n}{r}\right)=\prod_{i=1}^j
      \prod_{\substack{0\le n<N        \\i_n=i}}
      \left(1-\frac{\zeta_n}{r}\right) \\
       & \ge\prod_{i=1}^j
      \left(1-\sum_{\substack{0\le n<N \\i_n=i}}
      \frac{\zeta_n}{r}\right)=\prod_{i=1}^j\left(1-\frac{z_i}{r}\right).
    \end{aligned}
  \]
  The last inequality applies
  $\prod_h(1-u_h)\ge1-\sum_hu_h$
  to each coordinate separately, with $u_h\ge0$ and
  $\sum_hu_h<1$.
  For a coordinate that is never truncated, the corresponding
  sum is zero and the product is one.
  This proves the finite-support case.

  Now consider arbitrary bounded nonnegative coordinates.
  For each integer $h\ge1$, set $X_i^{(h)}=2^{-h}\lceil2^{h}X_i\rceil$ and
  $S_{h}=\sum_{i=1}^jX_i^{(h)}$.
  Here $h$ is a new approximation index, distinct from the fixed number of truncations $N$ above.
  These rounded variables remain independent and have finite support.
  Moreover, $X_i^{(h)}\downarrow X_i$ and
  $0\le X_i^{(h)}-X_i<2^{-h}$.
  It follows that $\{S_{h}<1\}\uparrow\{S<1\}$ and
  $(1-S_{h})_+\uparrow(1-S)_+$.
  In particular, a sample with $S=1$ belongs to none of the rounded
  failure events.

  Write $p_{h}=\Prob(S_{h}<1)$.
  By monotone convergence, we have $p_{h}\uparrow p$ and
  $\E(1-S_{h})_+\to pr$.
  For all sufficiently large $h$, we therefore have $p_{h}>0$ and
  $r_{h}=\E(1-S_{h})_+/p_{h}\to r$.
  The bound on the rounding error also implies
  $\E X_i^{(h)}\to\E X_i$ for each $i$.
  Since $r>\max_i\E X_i$, we have
  $r_{h}>\max_i\E X_i^{(h)}$ for all sufficiently large $h$.

For each sufficiently large $h$, there exist numbers
$z_i^{(h)}\in[0,\E X_i^{(h)}]$, $1\le i\le j$, such that
$\sum_{i=1}^j z_i^{(h)}=\E S_h-1+r_h$ and
$p_h\ge\prod_{i=1}^j\left(1-z_i^{(h)}/r_h\right)$.
  These vectors are uniformly bounded, so a subsequence converges
  to a vector $(z_i)_{i=1}^j$.
  Passing to the limit along this subsequence in $0\le z_i^{(h)}\le\E X_i^{(h)}$ and $\sum_i z_i^{(h)}=\E S_{h}-1+r_{h}$, we obtain $0\le z_i\le\E X_i$ and $\sum_i z_i=\E S-1+r=B$.
  Since $p_{h}\to p$ and $r_{h}\to r>0$, we also obtain $p\ge\prod_i(1-z_i/r)$.
  This proves the result for the original variables.
\end{proof}

When all coordinate means are equal, minimizing the product yields
the following explicit bound.

\begin{lemma}\label{lem:gamma}
  Under the hypotheses and notation of \cref{lem:product}, suppose that $\E X_i=m>0$ for $1\le i\le j$ and $jm\le1$.
  For $\ell\in\{0,\ldots,j-1\}$, if $1-(j-\ell)m\le r\le1-(j-\ell-1)m$, then
  \[
    p\ge\left(1-\frac{m}{r}\right)^\ell
    \frac{1-(j-\ell)m}{r}.
  \]
\end{lemma}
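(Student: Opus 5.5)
The plan is to apply \cref{lem:product} and then minimize its product bound explicitly over all admissible vectors $(z_i)$. The hypotheses of \cref{lem:product} include $r>\max_i\E X_i=m$. So \cref{lem:product} gives $z_i\in[0,m]$ with $\sum_i z_i=B$ and $p\ge\prod_{i=1}^j(1-z_i/r)$. Since $\E S=jm$ and $r=1-\E S+B$, we have $B=r-1+jm$. The hypothesis $1-(j-\ell)m\le r\le 1-(j-\ell-1)m$ translates exactly into
\[
  \ell m\le B\le(\ell+1)m .
\]
It therefore suffices to show that
\[
  \min\Bigl\{\textstyle\prod_{i=1}^j(1-z_i/r): z\in[0,m]^j,\ \sum_i z_i=B\Bigr\}
  =\Bigl(1-\frac mr\Bigr)^\ell\frac{1-(j-\ell)m}{r}.
\]

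For this minimization, each factor $1-z_i/r$ is positive on $[0,m]$ because $m<r$. Hence it is equivalent to minimize $\sum_i\log(1-z_i/r)$, which is a sum of concave functions of the separate coordinates and therefore concave on the compact convex polytope
\[
  K=\Bigl\{z\in[0,m]^j:\textstyle\sum_i z_i=B\Bigr\}.
\]
A concave function on a compact convex set attains its minimum at an extreme point. I then check the standard fact that extreme points of $K$ have at most one coordinate strictly between $0$ and $m$. Indeed, if two coordinates were fractional, moving mass $\pm\varepsilon$ between them would stay inside $K$, so the point would not be extreme.

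At such an extreme point, let $t$ be the number of coordinates equal to $m$ and $u\in[0,m)$ the possible fractional coordinate, so that $B=tm+u$. The constraint $\ell m\le B\le(\ell+1)m$ forces either $t=\ell$ with $u=B-\ell m$, or the boundary case $B=(\ell+1)m$, $t=\ell+1$, $u=0$. Both cases give the same product, because $(1-m/r)=1-(B-\ell m)/r$ there. The value of the product is
\[
  \Bigl(1-\frac mr\Bigr)^\ell\Bigl(1-\frac{B-\ell m}{r}\Bigr)
  =\Bigl(1-\frac mr\Bigr)^\ell\frac{r-(r-1+jm-\ell m)}{r}
  =\Bigl(1-\frac mr\Bigr)^\ell\frac{1-(j-\ell)m}{r},
\]
which is the claimed bound.

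The only delicate point is the extreme-point characterization and the bookkeeping at the boundary values of $B$. I expect this to be routine, so there is no real obstacle beyond confirming $r>m$, which is already assumed through \cref{lem:product}.
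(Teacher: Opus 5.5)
Your proof is correct and takes essentially the same route as the paper. You apply \cref{lem:product}, rewrite the hypothesis on $r$ as $\ell m\le B\le(\ell+1)m$ with $B=r-1+jm$, and reduce to vectors with at most one coordinate in $(0,m)$; the only difference is that you justify this reduction by concavity of $\sum_i\log(1-z_i/r)$ and the extreme points of the polytope, whereas the paper performs explicit pairwise transfers $(u,v)\mapsto(u-\varepsilon,v+\varepsilon)$ and checks that the product does not increase, which is the same idea in discrete form.
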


\begin{proof}
  By \cref{lem:product}, choose numbers $z_i\in[0,m]$, $1\le i\le j$, satisfying $\sum_i z_i=B=r-1+jm$ and $p\ge\prod_i(1-z_i/r)$.
  If two coordinates satisfy $0<u\le v<m$, then set $\varepsilon=\min\{u,m-v\}>0$ and replace them by $u-\varepsilon,v+\varepsilon$.
  Their sum is unchanged, and
  \[
    \left(1-\frac{u-\varepsilon}{r}\right)
    \left(1-\frac{v+\varepsilon}{r}\right)
    =\left(1-\frac ur\right)\left(1-\frac vr\right)
    -\frac{\varepsilon(v-u+\varepsilon)}{r^2}.
  \]
  Since $r>m$, all other factors are positive, so the product does not increase.
  Each step moves at least one coordinate in $(0,m)$ to $0$ or $m$.
  After finitely many steps, at most one coordinate lies in $(0,m)$.
  The hypothesis on $r$ is equivalent to $\ell m\le B\le(\ell+1)m$.
  Thus the resulting coordinates, up to permutation, are $\ell$ copies of $m$, one copy of $B-\ell m$, and zeros.
  Consequently,
  \[
    p\ge\left(1-\frac mr\right)^\ell
    \left(1-\frac{B-\ell m}{r}\right)
    =\left(1-\frac mr\right)^\ell\frac{1-(j-\ell)m}{r}.
  \]
  The same result holds at either interval endpoint, where the remaining coordinate is $0$ or $m$.
\end{proof}

\section[Linear interpolation and node comparisons]{Linear interpolation and node comparisons}
\label{sec:low-dimensions}

For $k\ge2$, \cref{prop:mean-transfer} reduces the upper bound in \cref{thm:iid-tail} to $x=\beta_k$.
  Let $Y_1,\ldots,Y_k$ be independent with a common law $\mathsf Q\in\Pcal([0,1])$ of mean $1/k$.
  For $1\le j\le k$, write $R_j=\sum_{i=1}^jY_i$ and $f_j=\Prob(R_j\ge1)$, and set $w=k\E(R_k-1)_+$.
  With the weights $b_j$ defined in \cref{sec:common-bounds}, we construct functions $A$ and $H$ such that $\sum_{j=1}^{k-1}b_jf_j\le A(w)$ and $1-f_k=\Prob(R_k<1)\ge H(w)$.
The function $A$ is obtained by linear interpolation at $W_s=s(1-1/s)^s$, $1\le s\le k$, while $H$ is strictly concave between consecutive nodes.
Thus, $A\le H$ at two consecutive nodes implies $A<H$ on the open interval between them.

The number $W_s$ is the largest value of $s\E(U_1+\cdots+U_s-1)_+$ for independent $[0,1]$-valued random variables $U_1,\ldots,U_s$ with $\E U_i=1/s$.
By convexity in each variable, this maximum is attained by independent Bernoulli random variables with mean $1/s$, each taking the value $1$ with probability $1/s$ and $0$ otherwise.
For $N_s\sim Bin(s,1/s)$, we have $\E N_s=1$, and hence
\[
  s\E(N_s-1)_+=s\Prob(N_s=0)
  =s\left(1-\frac1s\right)^s=W_s.
\]

At $x=\beta_k$, we prove $A(W_s)<H(W_s)$ for $2\le s\le\min\{4,k-1\}$, with equality at $W_1$ and $W_k$.
For $2\le k\le5$, these comparisons cover every interior node and yield the upper bound in \cref{thm:iid-tail}.
The comparison does not hold at all interior nodes in general for larger $k$.
For example, when $k=6$ and $x=\beta_6$, we have $A(W_5)>H(W_5)$.
We therefore treat the remaining case $k\ge6$ using further properties of maximizing distributions in Sections~\ref{sec:contact-shortfall}--\ref{sec:exclusion}.
We also state the upper bound for $c_k$ used in the node comparisons.

\subsection[Retained sums and comparison distributions]{Retained sums and comparison distributions}
\label{sec:common-bounds}
\label{sec:comparison-nodes}

We first express $F_k(\mathsf P)$ in terms of probabilities for partial sums.
We then construct comparison distributions for sums of $k$ independent random variables, each with mean $1/k$, and interpolate the associated probabilities and expected excesses.

For the preliminary estimates, let $0<x<1/k$, let $\mathsf Q\in\Pcal([0,1])$ have mean $1/k$, and set $a=kx$ and $\mathsf P=(1-a)\delta_0+a\mathsf Q$.
By \cref{lem:attainment,prop:normalization}, a maximizing distribution under the mean bound $x$ exists and can be written in this form.
Let $Y_1,\ldots,Y_k$ be independent with law $\mathsf Q$.
Let $I_1,\ldots,I_k$ be independent Bernoulli variables with mean $a$, independent of $Y_1,\ldots,Y_k$, and set $X_i=I_iY_i$ for $1\le i\le k$.
Then $X_1,\ldots,X_k$ are independent with common law $\mathsf P=(1-a)\delta_0+a\mathsf Q$.
We call $Y_i$ \emph{retained} when $I_i=1$.
For $1\le j\le k$, write $R_j=\sum_{i=1}^jY_i$, $f_j=\Prob(R_j\ge1)$ and $e_j=k\E(R_j-1)_+$.
In particular, $w=e_k$.
As shown in \cref{lem:critical}, $0\le w\le W_k$.

For $1\le j<k$, define
\[
  b_j=\binom{k}{j}\left(\frac{1-a}{a}\right)^{k-j}.
\]
\begin{fact}\label{fact:retained-sums}
  Let $k\ge2$ and $0<x<1/k$, and let $\mathsf Q\in\Pcal([0,1])$ have mean $1/k$.
  Set $a=kx$ and $\mathsf P=(1-a)\delta_0+a\mathsf Q$, and use the notation $R_j,f_j,e_j,w,b_j$ above.
  For independent $X_1,\ldots,X_k$ with law $\mathsf P$, set $S_k=\sum_{i=1}^kX_i$.
  Then the following statements hold.
  \begin{enumerate}[label=\textup{(\roman*)},leftmargin=*,itemsep=0pt]
    \item\label{it:retained-sums-mixture} We have
    \[
      \begin{aligned}
        F_k(\mathsf P)     & =a^k\left(f_k+\sum_{j<k}b_jf_j\right),         \\
        F_{k-1}(\mathsf P) & =a^{k-1}\sum_{j=1}^{k-1}\binom{k-1}{j}
        \left(\frac{1-a}{a}\right)^{k-1-j}f_j,                              \\
        \E(S_k-1)_+        & =\frac{a^k}{k}\left(w+\sum_{j<k}b_je_j\right).
      \end{aligned}
    \]
    \item\label{it:retained-sums-budget} We have $e_1=0$ and $e_{j+1}\ge e_j+f_j$ for $1\le j<k$.
    In particular, $\sum_{j<k}f_j\le w$.
    \item\label{it:retained-sums-maximality} If $\mathsf P$ is maximizing under the mean bound $x$, then $\sum_{j<k}b_jf_j\ge1-f_k$.
  \end{enumerate}
\end{fact}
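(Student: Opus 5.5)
For \textup{(i)}, I would condition on the retained set $J=\{i:I_i=1\}$, which is independent of $Y_1,\ldots,Y_k$. Given $|J|=j$, the sum $S_k$ equals a sum of $j$ independent copies of $Y$. Hence $\Prob(S_k\ge1\mid |J|=j)=f_j$ and $\E[(S_k-1)_+\mid |J|=j]=e_j/k$, while $J=\emptyset$ contributes zero. Since $\Prob(|J|=j)=\binom kj a^j(1-a)^{k-j}=a^k b_j$ for $j<k$ and $a^k$ for $j=k$, this gives the formulas for $F_k(\mathsf P)$ and $\E(S_k-1)_+$; the term $j=k$ carries $f_k$ and $e_k=w$. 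The same conditioning applied to $k-1$ samples gives the $F_{k-1}$ formula, with weights $\binom{k-1}{j}a^j(1-a)^{k-1-j}=a^{k-1}\binom{k-1}{j}((1-a)/a)^{k-1-j}$.

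For \textup{(ii)}, I would first note that $e_1=k\E(Y_1-1)_+=0$ because $Y_1\le1$. For the increment, I would use the pointwise inequality
\[
(R_j+Y_{j+1}-1)_+\ge(R_j-1)_++Y_{j+1}\1_{\{R_j\ge1\}},
\]
which holds because on $\{R_j\ge1\}$ both sides equal $R_j-1+Y_{j+1}$, and off it the right side is zero. Taking expectations and using independence of $Y_{j+1}$ and $R_j$ with $\E Y_{j+1}=1/k$, I get $\E(R_{j+1}-1)_+\ge\E(R_j-1)_++f_j/k$. Multiplying by $k$ gives $e_{j+1}\ge e_j+f_j$. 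Telescoping from $j=1$ to $k-1$ then yields $w=e_k\ge e_1+\sum_{j<k}f_j=\sum_{j<k}f_j$.

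For \textup{(iii)}, the idea is to compare $\mathsf P$ with a perturbation whose mean is at most $x$. Since $\mathsf P$ maximizes at mean bound $x$, and $\mathsf C_{k,x}$ is feasible with $F_k(\mathsf C_{k,x})=a^k$, we get $F_k(\mathsf P)\ge a^k$. Substituting \textup{(i)} gives $f_k+\sum_{j<k}b_jf_j\ge1$, which rearranges to $\sum_{j<k}b_jf_j\ge1-f_k$. No further perturbation argument should be needed.

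The main obstacle is only the bookkeeping in \textup{(ii)}, namely checking the pointwise inequality and the use of independence with mean $1/k$. Everything else is direct conditioning on the Bernoulli retention indicators.
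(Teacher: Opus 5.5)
Your proposal is correct and follows essentially the same route as the paper: conditioning on the number of retained samples for (i), the increment bound $(R_{j+1}-1)_+-(R_j-1)_+\ge Y_{j+1}\1_{\{R_j\ge1\}}$ combined with independence and $\E Y_{j+1}=1/k$ for (ii), and, for (iii), simply the feasibility of $\mathsf C_{k,x}$ with $F_k(\mathsf C_{k,x})=a^k$. As you yourself note, no perturbation argument is needed there, so that framing can be dropped.
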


\begin{proof}
  For $1\le j\le k$, the probability that exactly $j$ samples are retained is $\binom{k}{j}a^j(1-a)^{k-j}$.
  Conditional on exactly $j$ samples being retained, $S_k$ has the same distribution as $R_j$, because the $Y_i$'s are i.i.d. and independent of the $I_i$'s.
  When no samples are retained, $S_k=0$, so summing over $1\le j\le k$ gives
  \begin{equation}\label{eq:thinning}
    \begin{aligned}
      F_k(\mathsf P)
       & =\sum_{j=1}^k\binom{k}{j}a^j(1-a)^{k-j}f_j
      =a^k\left(f_k+\sum_{j<k}b_jf_j\right),            \\
      \E(S_k-1)_+
       & =\frac{a^k}{k}\left(w+\sum_{j<k}b_je_j\right).
    \end{aligned}
  \end{equation}
  The same conditioning with $k-1$ samples gives the stated formula for $F_{k-1}(\mathsf P)$.

  Since $Y_1\le1$, we have $e_1=0$.
  For $1\le j<k$, the increment $(R_{j+1}-1)_+-(R_j-1)_+$ equals $Y_{j+1}$ on $\{R_j\ge1\}$ and is nonnegative otherwise.
  Independence and $\E Y_{j+1}=1/k$ therefore give $e_{j+1}-e_j\ge k\E[Y_{j+1}\1_{\{R_j\ge1\}}]=f_j$.
  Consequently,
  \[
    e_1=0\text{ and }e_{j+1}\ge e_j+f_j
  \]
  for $1\le j\le k-1$.
  In particular, summing yields $\sum_{j<k}f_j\le w$.

  If $\mathsf P$ is maximizing, then $F_k(\mathsf P)\ge F_k(\mathsf C_{k,x})=a^k$, so \eqref{eq:thinning} implies $\sum_{j<k}b_jf_j\ge1-f_k$.
\end{proof}

At $x=\beta_k$, we will construct a piecewise affine upper bound $A(w)$ for $\sum_{j<k}b_jf_j$ and a lower bound $H(w)$ for $1-f_k$.
Whenever $A(w)<H(w)$, we obtain
\[
  \sum_{j<k}b_jf_j\le A(w)<H(w)\le1-f_k,
\]
contradicting \cref{fact:retained-sums}\textup{(iii)}.

For $\mathsf C_{k,x}$, every retained sample equals $1/k$, whereas for $\mathsf B_x$ the retained samples are Bernoulli variables with mean $1/k$.
For each $1\le s\le k$, the first $k-s$ random variables are constant and equal to $1/k$.
The remaining $s$ random variables take the value $s/k$ with probability $1/s$ and zero otherwise.
Their sum has the distribution of $(k-s)/k+(s/k)N_s$, and hence has expected excess $W_s/k$ above one.
The first nonzero value among the last $s$ variables brings the sum exactly to one, and only further nonzero values create positive excess.
For $1<s<k$, these random variables are not identically distributed and are used only for comparison.

For $1\le s\le k$, take $k$ independent Bernoulli variables $J_1,\ldots,J_k$ with mean $1/s$ and set
\[
  Y_i^{(s)}=
  \begin{cases}
    1/k,      & 1\le i\le k-s, \\
    (s/k)J_i, & k-s<i\le k.
  \end{cases}
\]
Each $Y_i^{(s)}$ has mean $1/k$.
Write $R_j^{(s)}=\sum_{i=1}^jY_i^{(s)}$, $f_j^{(s)}=\Prob(R_j^{(s)}\ge1)$, and $e_j^{(s)}=k\E(R_j^{(s)}-1)_+$.
We have $W_1<W_2<\cdots<W_k$, since $W_1=0$ and differentiation shows that $s(1-1/s)^s$ is strictly increasing for real $s>1$.

We use $W_1,\ldots,W_k$ as interpolation nodes for the pairs $(e_j^{(s)},f_j^{(s)})$.
For each integer $s$ with $1\le s<k$ and every $u\in[W_s,W_{s+1}]$, define, for $1\le j<k$,
\begin{equation}\label{eq:interpolated-arrays}
  \begin{aligned}
    e_j^*(u)
     & =\frac{W_{s+1}-u}{W_{s+1}-W_s}e_j^{(s)}
    +\frac{u-W_s}{W_{s+1}-W_s}e_j^{(s+1)},     \\
    f_j^*(u)
     & =\frac{W_{s+1}-u}{W_{s+1}-W_s}f_j^{(s)}
    +\frac{u-W_s}{W_{s+1}-W_s}f_j^{(s+1)}.
  \end{aligned}
\end{equation}
The formulas agree at every common endpoint, so they define $e_j^*$ and $f_j^*$ on $[0,W_k]$ for every $1\le j<k$.
For every $u\in[0,W_k]$, set $e_k^*(u)=u$.

\begin{fact}\label{fact:comparison-arrays}
  Let $k\ge2$, and use the comparison values $e_j^{(s)},f_j^{(s)}$ and their interpolants $e_j^*,f_j^*$ defined above.
  \begin{enumerate}[label=\textup{(\roman*)},leftmargin=*,itemsep=0pt]
    \item\label{it:comparison-node-arrays} For $1\le s\le k$ and $1\le j\le k$,
    \[
      (e_j^{(s)},f_j^{(s)})=
      \begin{cases}
        (0,0),                                                & j\le k-s, \\
        \bigl(j-k+s(1-1/s)^{j-k+s},\,1-(1-1/s)^{j-k+s}\bigr), & j>k-s.
      \end{cases}
    \]
    Moreover, $e_1^{(s)}=0$, $e_{j+1}^{(s)}=e_j^{(s)}+f_j^{(s)}$ for $1\le j<k$, and $e_k^{(s)}=\sum_{j<k}f_j^{(s)}=W_s$.
    \item\label{it:comparison-interpolated-budget} For $0\le w\le W_k$, we have
    \[
      \begin{aligned}
        e_1^*(w) & =0,\text{ }e_{j+1}^*(w)=e_j^*(w)+f_j^*(w)\quad(1\le j<k), \\
        e_k^*(w) & =w=\sum_{j<k}f_j^*(w).
      \end{aligned}
    \]
  \end{enumerate}
\end{fact}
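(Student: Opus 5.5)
We verify \cref{fact:comparison-arrays} by direct computation on the comparison variables, followed by linearity of the interpolation.

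For \textup{(i)}, fix $1\le s\le k$ and $1\le j\le k$. If $j\le k-s$, then $R_j^{(s)}=j/k$. This is less than one unless $j=k$, and $j=k\le k-s$ is impossible since $s\ge1$. Hence $f_j^{(s)}=0$ and $e_j^{(s)}=0$.

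If $j>k-s$, write $t=j-k+s\in\{1,\ldots,s\}$. Then $R_j^{(s)}=(k-s)/k+(s/k)M_t$ with $M_t\sim\mathrm{Bin}(t,1/s)$. Since $M_t$ is an integer, $R_j^{(s)}\ge1$ exactly when $M_t\ge1$, so $f_j^{(s)}=1-(1-1/s)^t$. Also $(R_j^{(s)}-1)_+=(s/k)(M_t-1)_+$, which gives
\[
  e_j^{(s)}=s\E(M_t-1)_+=s\bigl(\E M_t-1+\Prob(M_t=0)\bigr)=t-s+s(1-1/s)^t .
\]
This matches the stated formula, since $t-s=j-k$.

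The recursion $e_{j+1}^{(s)}=e_j^{(s)}+f_j^{(s)}$ is checked by cases.
\begin{itemize}
  \item For $j+1\le k-s$, both sides vanish.
  \item For $j=k-s$, we have $e_{j+1}^{(s)}=1-s+s(1-1/s)=0=e_j^{(s)}+f_j^{(s)}$.
  \item For $j>k-s$, the formula gives $e_{j+1}^{(s)}-e_j^{(s)}=1+s(1-1/s)^{t}\bigl((1-1/s)-1\bigr)=1-(1-1/s)^t=f_j^{(s)}$.
\end{itemize}
The boundary case $s=1$ is consistent: $(1-1/s)^t=0$ for $t\ge1$, and $0^0=1$ at $t=0$. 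Also $e_1^{(s)}=0$, either because $1\le k-s$ or because $s=k$, $t=1$ and $e_1^{(k)}=1-k+k(1-1/k)=0$.

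Telescoping the recursion gives $e_k^{(s)}=\sum_{j<k}f_j^{(s)}$. Evaluating the formula at $j=k$, $t=s$ gives $e_k^{(s)}=s(1-1/s)^s=W_s$.

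For \textup{(ii)}, fix $w\in[W_s,W_{s+1}]$. Each of $e_j^*(w)$ and $f_j^*(w)$ for $j<k$ is the same convex combination of the node values at $s$ and $s+1$. The relations $e_1^*=0$ and $e_{j+1}^*=e_j^*+f_j^*$ for $j+1<k$ are linear and hold at both nodes, so they persist for the interpolants.

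The case $j=k-1$ needs separate treatment, because $e_k^*(w)=w$ is defined directly rather than by interpolation. By the relations already established, $e_{k-1}^*(w)+f_{k-1}^*(w)=\sum_{j<k}f_j^*(w)$. This sum is the convex combination of $\sum_{j<k}f_j^{(s)}=W_s$ and $\sum_{j<k}f_j^{(s+1)}=W_{s+1}$ with weights $(W_{s+1}-w)/(W_{s+1}-W_s)$ and $(w-W_s)/(W_{s+1}-W_s)$, and this combination equals $w$. Hence $e_k^*(w)=w=\sum_{j<k}f_j^*(w)=e_{k-1}^*(w)+f_{k-1}^*(w)$.

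The only delicate point is this consistency at $j=k-1$ together with the $s=1$ and $j=k-s$ boundary cases. The rest is a routine binomial computation.
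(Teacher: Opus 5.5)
Your proposal is correct and follows essentially the same route as the paper: compute the node values from the binomial count of nonzero coordinates, verify $e_{j+1}^{(s)}=e_j^{(s)}+f_j^{(s)}$ from the explicit formula and telescope to get $W_s$, then pass to the interpolants by linearity. Your separate treatment of the step $j=k-1$, where $e_k^*(w)=w$ is defined directly and the interpolated $f_j^*$ sum to $w$, spells out what the paper compresses into ``taking the same affine combinations.''
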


\begin{proof}
  For $j\le k-s$, we have $R_j^{(s)}=j/k\le(k-s)/k<1$.
  For $j>k-s$, set $N=\sum_{i=k-s+1}^jJ_i$.
  Then $N$ is binomial with parameters $j-k+s$ and $1/s$, and $R_j^{(s)}=(k-s)/k+(s/k)N$.
  Since $R_j^{(s)}-1=(s/k)(N-1)$, we have $f_j^{(s)}=\Prob(N\ge1)=1-\Prob(N=0)$.
  Since $N$ is a nonnegative integer, $(N-1)_+=N-1+\1_{\{N=0\}}$, so $e_j^{(s)}=s\E(N-1)_+=s(\E N-1+\Prob(N=0))$.
  Substituting $\E N=(j-k+s)/s$ and $\Prob(N=0)=(1-1/s)^{j-k+s}$ gives the formula in \textup{(i)}.
  Taking differences $e_{j+1}^{(s)}-e_j^{(s)}$ for $1\le j<k$ and evaluating $e_j^{(s)}$ at $j=k$, we obtain
  \begin{equation}\label{eq:alignment}
    e_{j+1}^{(s)}=e_j^{(s)}+f_j^{(s)}
    \text{ for $1\le j\le k-1$ and }e_k^{(s)}=W_s.
  \end{equation}
  Summing \eqref{eq:alignment} and using $e_1^{(s)}=0$, we obtain $\sum_{j<k}f_j^{(s)}=W_s$ for every integer $s$ with $1\le s\le k$.
  \textup{(ii)} follows by taking the same affine combinations of the relations in \textup{(i)}.
\end{proof}

\subsection[Reduction to node comparisons]{Reduction to node comparisons}
\label{sec:envelope}\label{sec:proof-partial-sum-capacities}\label{sec:structural}

We bound $\sum_{j=1}^{k-1}b_jf_j$ from above and $1-f_k=\Prob(R_k<1)$ from below.
We then use the fact that $A$ is affine and $H$ is strictly concave between consecutive nodes to reduce their comparison to their values at $W_1,\ldots,W_k$.

If $\mathsf P$ is maximizing, then \cref{fact:retained-sums}\textup{(iii)} gives $\sum_{j<k}b_jf_j\ge1-f_k$.
For fixed $k$ and $x$, define
\[
  A(w)=\sum_{j<k}b_jf_j^*(w).
\]
We first show that, whenever $b_1\le\cdots\le b_{k-1}$, we have
\begin{equation}\label{eq:weighted-upper-bound}
  \sum_{j<k}b_jf_j\le\sum_{j<k}b_jf_j^*(w).
\end{equation}
The weights are nondecreasing if $(1-a)/a\le2/(k-1)$, as verified in the proof of \cref{prop:strict-chord}.
At $x=\beta_k$, we have $(1-a)/a=c_k/k\le1/(2k)<2/(k-1)$ by \cref{lem:crossing-parameters}, so the weights are nondecreasing.
We also obtain a lower bound $H(w)$ for $1-f_k$ in \cref{lem:critical}.
When $b_1\le\cdots\le b_{k-1}$, we combine \eqref{eq:weighted-upper-bound}, \cref{lem:critical}, and \cref{fact:retained-sums}\textup{(i)} to obtain
\[
  a^{-k}F_k(\mathsf P)\le1-H(w)+A(w).
\]
By \eqref{eq:interpolated-arrays}, each $f_j^*$ is affine on $[W_s,W_{s+1}]$; hence $A=\sum_{j<k}b_jf_j^*$ is affine there. We will show that $H$ is strictly concave on the same interval.
Consequently, if $A(W_s)\le H(W_s)$ and $A(W_{s+1})\le H(W_{s+1})$, then $A(w)<H(w)$ for every $W_s<w<W_{s+1}$.
Hence, $F_k(\mathsf P)<a^k$ throughout the open interval, excluding a maximizing distribution with such a value of $w$.

We first derive bounds for the individual probabilities $f_j$ in terms of $e_j$, using \cref{lem:gamma}.
We use the following inequality to bound $e_j$ and later to identify the law when $w=W_k$.

\begin{lemma}\label{lem:product-shortfall}
  Let $k\ge2$.
  The following statements hold.
  \begin{enumerate}[label=\textup{(\roman*)},leftmargin=*,itemsep=0pt]
    \item\label{it:product-shortfall-pointwise} For $y_1,\ldots,y_k\in[0,1]$, we have
    \[
      \left(1-\sum_{i=1}^ky_i\right)_+\le\prod_{i=1}^k(1-y_i).
    \]
    The inequality is strict if $0<y_i<1$ for every $1\le i\le k$.
    \item\label{it:product-shortfall-expectation} Let $Y_1,\ldots,Y_j$ be independent $[0,1]$-valued variables of mean $1/k$, where $1\le j\le k$.
    Set $R_j=\sum_{i=1}^jY_i$ and $e_j=k\E(R_j-1)_+$.
    Then
    \[
      \begin{aligned}
        \E(1-R_j)_+ & =\frac{k-j+e_j}{k}\le\left(1-\frac1k\right)^j, \\
        0\le e_j    & \le k\left(1-\frac1k\right)^j-(k-j).
      \end{aligned}
    \]
  \end{enumerate}
\end{lemma}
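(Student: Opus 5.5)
For part (i), I will argue by induction on $k$, using the product $\prod_{i}(1-y_i)$ as a lower bound that is controlled by one factor at a time. The base case $k=1$ is the equality $(1-y_1)_+=1-y_1$ for $y_1\in[0,1]$.

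For the inductive step, write $s=\sum_{i<k}y_i$. If $s\ge1$, then $1-s-y_k<0$, so the left side is $0$ and is at most the nonnegative product. If $s<1$, then
\[
(1-s-y_k)_+\le(1-s)(1-y_k),
\]
because $(1-s)(1-y_k)-(1-s-y_k)=sy_k\ge0$ and the right side is nonnegative. By induction, $1-s=(1-s)_+\le\prod_{i<k}(1-y_i)$, and multiplying by $1-y_k\ge0$ closes the induction.

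For strictness when every $y_i\in(0,1)$, the product is positive. If the left side is $0$, strictness is immediate. Otherwise $s<1-y_k<1$, and the step above has slack $sy_k>0$, since $s>0$ when $k\ge2$; together with the nonstrict inductive bound multiplied by $1-y_k>0$, this gives strict inequality.

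For part (ii), the identity comes from $(1-R_j)_+=1-R_j+(R_j-1)_+$. Taking expectations gives
\[
\E(1-R_j)_+=1-\frac{j}{k}+\frac{e_j}{k}=\frac{k-j+e_j}{k}.
\]
For the inequality, apply (i) with $j$ variables, which is valid for $j\ge1$ by the same induction. This gives $(1-R_j)_+\le\prod_{i\le j}(1-Y_i)$, and independence yields $\E\prod_{i\le j}(1-Y_i)=(1-1/k)^j$. Rearranging gives $e_j\le k(1-1/k)^j-(k-j)$, while $e_j\ge0$ holds trivially.

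Every step here is elementary. The only point needing care is the strictness clause, where the case in which the left side vanishes must be separated from the case $s<1$.
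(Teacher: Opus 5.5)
Your proof is correct and follows essentially the same route as the paper: split on whether the sum reaches one, induct using $(1-s)(1-y_k)-(1-s-y_k)=sy_k\ge0$, get strictness from the positive slack term, and prove part (ii) by taking expectations and using independence (the paper pads with $y_i=0$ for $j<i\le k$ rather than invoking the $j$-variable version, which is equivalent). One cosmetic slip: when $s\ge1$ you only get $1-s-y_k\le0$, not $<0$ (take $s=1$, $y_k=0$), but the left side is still zero, so nothing changes.
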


\begin{proof}
  If $\sum_{i=1}^k y_i\ge1$, then the left-hand side is zero and the right-hand side is nonnegative, so the inequality holds.
  If $\sum_{i=1}^k y_i<1$, then induction using $(1-u)(1-v)=1-u-v+uv\ge1-u-v$ proves the inequality.

  Now suppose that $0<y_i<1$ for every $1\le i\le k$.
  If $\sum_{i=1}^k y_i\ge1$, then $(1-\sum_{i=1}^k y_i)_+=0<\prod_{i=1}^k(1-y_i)$.
  If $\sum_{i=1}^k y_i<1$, then applying the non-strict inequality to $0,y_2,\ldots,y_k$ gives
  \[
    \prod_{i=1}^k(1-y_i)
    \ge(1-y_1)\left(1-\sum_{i=2}^k y_i\right)
    >1-\sum_{i=1}^k y_i,
  \]
  since $y_1\sum_{i=2}^k y_i>0$.

  For \textup{(ii)}, by \textup{(i)} applied with $y_i=Y_i$ for $1\le i\le j$ and $y_i=0$ for $j<i\le k$, we have $(1-R_j)_+\le\prod_{i=1}^j(1-Y_i)$ almost surely.
  Taking expectations and using the independence of $Y_1,\ldots,Y_j$ and their common mean $1/k$, we obtain
  \[
    \frac{k-j+e_j}{k}
    =\E(1-R_j)_+
    \le\E\prod_{i=1}^j(1-Y_i)
    =\prod_{i=1}^j\E(1-Y_i)
    =\left(1-\frac1k\right)^j.
  \]
  The bound on $e_j$ follows by rearranging.
\end{proof}

We introduce the following functions to express the lower bound for $1-f_j$ from \cref{lem:gamma} in terms of $e_j$.
For an integer $\ell$ with $0\le \ell\le k-2$, define $\Psi_\ell$ on $[\ell+1,k]$ as follows.
For each integer $s$ with $\ell+1\le s<k$ and every $v\in[s,s+1]$, define
\[
  \Psi_\ell(v)=s\left(1-\frac1v\right)^{s-\ell}.
\]
For each integer $s$ with $\ell+1\le s\le k-2$, the formulas on $[s,s+1]$ and $[s+1,s+2]$ agree at $v=s+1$, since $s(s/(s+1))^{s-\ell}=(s+1)(s/(s+1))^{s+1-\ell}$.
The derivative of $\Psi_\ell$ is positive on $(s,s+1)$ for every integer $s$ with $\ell+1\le s<k$.
Thus, $\Psi_\ell$ is continuous and strictly increasing, with range $[\ell,k(1-1/k)^{k-\ell}]$.
Therefore, $\Psi_\ell$ has an inverse function $\Psi_\ell^{-1}$ on $[\ell,k(1-1/k)^{k-\ell}]$.
For each integer $s$ with $\ell+1\le s<k$ and every $u\in[\Psi_\ell(s),\Psi_\ell(s+1)]$, we have
\[
  \Psi_\ell^{-1}(u)=\frac{1}{1-(u/s)^{1/(s-\ell)}}.
\]
The denominator is positive because $u\le\Psi_\ell(s+1)<s$.

\begin{fact}\label{fact:retained-shortfall}
  Let $k\ge2$ and $1\le j\le k$.
  Let $Y_1,\ldots,Y_j$ be independent $[0,1]$-valued variables of mean $1/k$.
  Set $R_j=\sum_{i=1}^jY_i$, $f_j=\Prob(R_j\ge1)$, and $e_j=k\E(R_j-1)_+$.
  Suppose that $f_j<1$, and write $r_j=\E[1-R_j\mid R_j<1]$.
  Then the following statements hold.
  \begin{enumerate}[label=\textup{(\roman*)},leftmargin=*,itemsep=0pt]
    \item\label{it:retained-shortfall-mean} We have $k(1-f_j)r_j=k-j+e_j$.
    \item\label{it:retained-shortfall-inversion} If $2\le j\le k$ and $\ell=k-j$, then $\ell+e_j$ belongs to the domain of $\Psi_\ell^{-1}$ and
    \[
      kr_j\le\Psi_\ell^{-1}(\ell+e_j)
      \text{ and }
      1-f_j\ge\frac{\ell+e_j}{\Psi_\ell^{-1}(\ell+e_j)}.
    \]
  \end{enumerate}
\end{fact}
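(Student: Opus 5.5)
Part (i) is a direct computation. Since $(1-R_j)_+=1-R_j+(R_j-1)_+$ and $\E R_j=j/k$, we get $\E(1-R_j)_+=1-j/k+e_j/k=(k-j+e_j)/k$. On the other hand, $\E(1-R_j)_+=\E[(1-R_j)\1_{\{R_j<1\}}]=(1-f_j)r_j$ by the definition of the conditional deficit. Multiplying by $k$ gives (i).

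For (ii), fix $2\le j\le k$ and $\ell=k-j$, so $\ell\le k-2$ and $\Psi_\ell$ is defined.

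\emph{Domain.} By \cref{lem:product-shortfall}\textup{(ii)}, $0\le e_j\le k(1-1/k)^j-(k-j)$. Hence $\ell\le\ell+e_j\le k(1-1/k)^{k-\ell}$, which is exactly the range of $\Psi_\ell$.

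\emph{Bound on $r_j$.} I apply \cref{lem:gamma} with $m=1/k$ and $jm=j/k\le1$. Write $\rho=kr_j$. The hypothesis $r_j>m$ of \cref{lem:product} would follow from $\rho>1$. By (i), $\rho=(\ell+e_j)/(1-f_j)\ge\ell+e_j\ge\ell$. The case $\ell=0$ with $\rho\le1$ has to be handled separately: then $\rho\ge e_j$ and $\Psi_0^{-1}(e_j)\ge1$, so it is at least plausible; I return to it below. The condition $1-(j-\ell')m\le r_j\le 1-(j-\ell'-1)m$ translates to $k-j+\ell'\le\rho\le k-j+\ell'+1$. Setting $s=\ell+\ell'$, this says $\rho\in[s,s+1]$ with $\ell\le s\le k-1$. \Cref{lem:gamma} then gives
\[
1-f_j\ge\Bigl(1-\frac1\rho\Bigr)^{s-\ell}\frac{s}{\rho}.
\]
Combining with (i), $\ell+e_j=(1-f_j)\rho\ge s(1-1/\rho)^{s-\ell}=\Psi_\ell(\rho)$ whenever $s\ge\ell+1$, which is the definition of $\Psi_\ell$ on $[s,s+1]$. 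Since $\Psi_\ell$ is strictly increasing, $\rho\le\Psi_\ell^{-1}(\ell+e_j)$, which is the first claim. The second claim follows from (i): $1-f_j=(\ell+e_j)/\rho\ge(\ell+e_j)/\Psi_\ell^{-1}(\ell+e_j)$.

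\emph{Edge cases.} The main obstacle is the range $\rho\in[\ell,\ell+1]$, i.e.\ $s=\ell$, where $\Psi_\ell$ is not defined. There the bound is trivial, since $\rho\le\ell+1\le\Psi_\ell^{-1}(\ell+e_j)$ because $\Psi_\ell^{-1}\ge\ell+1$ on its whole domain. This also covers the failure of $r_j>m$: that failure means $\rho\le1$, which forces $\ell=0$ and $\rho\le 1\le\Psi_0^{-1}$. The degenerate case $\ell+e_j=0$ (so $\rho=0$, impossible when $f_j<1$ unless $R_j\equiv1$ on failure) is also covered by the trivial bound.

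If $\rho$ exceeds $k$, \cref{lem:gamma} cannot apply, because it needs $\ell'\le j-1$. However, $r_j\le1$ gives $\rho\le k$, so this case does not arise. At $\rho=k$ exactly, we take $s=k-1$, which is the endpoint of the last interval. Boundary values of $\rho$ lying in two intervals are consistent because $\Psi_\ell$ is continuous.
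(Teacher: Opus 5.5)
Your proposal is correct and follows the paper's proof essentially step for step. Part (i) uses $(1-R_j)_+=1-R_j+(R_j-1)_+$. For (ii), the case $kr_j\le\ell+1$ is disposed of via $\Psi_\ell^{-1}\ge\ell+1$. Otherwise you apply \cref{lem:gamma} with $m=1/k$ and index $\sigma-\ell\ge1$, where $\sigma\le kr_j\le\sigma+1$, and invert $\Psi_\ell$. Your aside on the degenerate case $\ell+e_j=0$ is unnecessary, since $f_j<1$ already forces $r_j>0$, but it does no harm.
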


\begin{proof}
  Since $\E R_j=j/k$ and $(1-z)_+=1-z+(z-1)_+$, we have $\E(1-R_j)_+=(k-j+e_j)/k$.
  By the definitions of $f_j$ and $r_j$, we have $(1-f_j)r_j=\E(1-R_j)_+$, and hence
  \begin{equation}\label{eq:retained-shortfall-identity}
    k(1-f_j)r_j=k-j+e_j.
  \end{equation}

  Fix $2\le j\le k$ with $f_j<1$, and set $\ell=k-j$.
  To prove \textup{(ii)}, it suffices by \eqref{eq:retained-shortfall-identity} to show that $kr_j\le\Psi_\ell^{-1}(\ell+e_j)$.
  By \cref{lem:product-shortfall}\textup{(ii)}, $\ell+e_j$ lies in the domain of $\Psi_\ell^{-1}$, and $\Psi_\ell^{-1}(\ell+e_j)\ge \ell+1$.
  If $kr_j\le \ell+1$, then $kr_j\le\Psi_\ell^{-1}(\ell+e_j)$, proving the desired result.

  Suppose that $kr_j>\ell+1$.
  Since $r_j\le1$, choose an integer $\sigma$ with $\ell+1\le\sigma<k$ such that $\sigma\le kr_j\le\sigma+1$.
  Applying \cref{lem:gamma} to $Y_1,\ldots,Y_j$ with $m=1/k$ and index $\sigma-\ell\in\{1,\ldots,j-1\}$, and using \eqref{eq:retained-shortfall-identity}, we obtain
  \[
    \ell+e_j=k(1-f_j)r_j
    \ge\sigma\left(1-\frac1{kr_j}\right)^{\sigma-\ell}
    =\Psi_\ell(kr_j).
  \]
  Since $\Psi_\ell^{-1}$ is increasing, we again have $kr_j\le\Psi_\ell^{-1}(\ell+e_j)$.
  Thus, \textup{(ii)} follows from \eqref{eq:retained-shortfall-identity}.
\end{proof}

For each $1\le j<k$, set $\ell=k-j$ and define $C_j$ on $[0,\infty)$ by
\[
  C_j(u)=
  \begin{cases}
    f_j^{(s)}+
    \dfrac{f_j^{(s+1)}-f_j^{(s)}}{e_j^{(s+1)}-e_j^{(s)}}
    \bigl(u-e_j^{(s)}\bigr),
               & \begin{aligned}
                    & e_j^{(s)}\le u<e_j^{(s+1)}, \\
                    & s=\ell+1,\ldots,k-1,
                 \end{aligned} \\[6pt]
    f_j^{(k)}, & u\ge e_j^{(k)}.
  \end{cases}
\]
In particular, $C_1\equiv1/k$ on $[0,\infty)$.
        We first prove some basic properties of $C_j$.

        \begin{lemma}\label{lem:capacities}
          For every $1\le j<k$, the function $C_j$ is nonnegative, nondecreasing, and concave, and satisfies $f_j\le C_j(e_j)$.
        \end{lemma}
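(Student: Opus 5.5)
The plan is to identify the graph of $C_j$ as the piecewise linear interpolation through the comparison points, and then to compare it with the lower bound for $1-f_j$ from \cref{fact:retained-shortfall}\textup{(ii)}.

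\textbf{Node structure.} Fix $1\le j<k$ and $\ell=k-j$. By \cref{fact:comparison-arrays}\textup{(i)}, for $\ell+1\le s\le k$ we have
\[
  e_j^{(s)}=s(1-1/s)^{s-\ell}-\ell=\Psi_\ell(s)-\ell
  \quad\text{and}\quad
  f_j^{(s)}=1-(1-1/s)^{s-\ell}=1-\Psi_\ell(s)/s .
\]
Two consequences follow.
\begin{itemize}
\item At the first node, $e_j^{(\ell+1)}=0$, so $C_j$ is defined on all of $[0,\infty)$. Also $f_j^{(\ell+1)}=1/(\ell+1)>0$.
\item Both $e_j^{(s)}$ and $f_j^{(s)}$ are strictly increasing in $s$. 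For $e_j^{(s)}$ this holds because $\Psi_\ell$ is strictly increasing. For $f_j^{(s)}$ it holds because $(1-1/s)^{s-\ell}$ decreases in $s$: it is a product of a decreasing base and an increasing exponent, with base less than one.
\end{itemize}
Hence every slope is positive. Since the last piece is constant, $C_j$ is nonnegative and nondecreasing. For $j=1$ there is a single node, which gives $C_1\equiv1/k$.

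\textbf{Concavity.} I would show that the consecutive chord slopes
\[
  \sigma_s=\frac{f_j^{(s+1)}-f_j^{(s)}}{e_j^{(s+1)}-e_j^{(s)}}
\]
are nonincreasing in $s$. The final slope $0$ after $e_j^{(k)}$ is then automatically smaller than all of them. Writing $q_s=(1-1/s)^{s-\ell}$, we get $\sigma_s=(q_s-q_{s+1})/\bigl((s+1)q_{s+1}-sq_s\bigr)$. I would prove $\sigma_{s+1}\le\sigma_s$ by direct algebra in $q_s,q_{s+1},q_{s+2}$. If this is messy, I would instead interpret $\sigma_s$ as a secant slope of the smooth parametric curve $v\mapsto\bigl(v(1-1/v)^{v-\ell}-\ell,\ 1-(1-1/v)^{v-\ell}\bigr)$ and check that this curve is concave.

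\textbf{The bound $f_j\le C_j(e_j)$.} Since $\E R_j=j/k<1$, we have $f_j<1$. For $j\ge2$, \cref{fact:retained-shortfall}\textup{(ii)} gives $f_j\le G(e_j)$, where
\[
  G(u)=1-\frac{\ell+u}{\Psi_\ell^{-1}(\ell+u)}.
\]
By \cref{lem:product-shortfall}\textup{(ii)}, $0\le e_j\le e_j^{(k)}$, so $e_j$ lies in the span of the nodes. At each node $\Psi_\ell^{-1}(\ell+e_j^{(s)})=s$, hence $G(e_j^{(s)})=f_j^{(s)}=C_j(e_j^{(s)})$. It therefore suffices to show that $G$ is convex on each segment $[e_j^{(s)},e_j^{(s+1)}]$. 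Then $G$ lies below its chord there, and that chord is exactly $C_j$.

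On such a segment I parametrize by $v\in[s,s+1]$, with
\[
  u+\ell=s(1-1/v)^{s-\ell}
  \quad\text{and}\quad
  G=1-\frac sv(1-1/v)^{s-\ell}.
\]
I then compute $\rd G/\rd u=(\rd G/\rd v)/(\rd u/\rd v)$. This gives a simple rational expression in $v$: $\rd u/\rd v=s(s-\ell)(1-1/v)^{s-\ell-1}v^{-2}$, and $\rd G/\rd v$ has a similar form. The task is to show this ratio is nondecreasing in $v$. The case $j=1$ needs no argument, since $f_1\le\E[kY_1]=1/k$ by Markov's inequality.

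\textbf{Expected main obstacle.} I expect the two monotonicity computations to be the main difficulty: the slope comparison for concavity, and the per-segment convexity of $G$. Everything else is bookkeeping from \cref{fact:comparison-arrays} and \cref{fact:retained-shortfall}.
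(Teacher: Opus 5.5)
\textbf{Overall.} Your architecture matches the paper's proof: concavity of $C_j$ comes from the smooth curve $v\mapsto\bigl(v(1-1/v)^{v-\ell}-\ell,\,1-(1-1/v)^{v-\ell}\bigr)$ through all the nodes, and $f_j\le C_j(e_j)$ comes from convexity, on each segment, of the bound in \cref{fact:retained-shortfall}\textup{(ii)}.

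The last part is sound. With $w=\ell+u$, your $G$ equals $1-w+s^{-1/(s-\ell)}w^{1+1/(s-\ell)}$ on the $s$-th segment, which is the paper's $Q_{\ell,s}$. Your parametrization gives $\rd G/\rd u=\frac1{s-\ell}-\frac{s-\ell+1}{(s-\ell)v}$, which is increasing in $v$, so $G$ is convex there. There is a small slip for $j=1$: $\E[kY_1]=1$, and what you want is $f_1=\Prob(Y_1\ge1)\le\E Y_1=1/k$.

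The first half, however, contains a false step and leaves its central claim unproved.

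\textbf{The monotonicity of $f_j^{(s)}$.} Your reason is wrong. The base $1-1/s$ is \emph{increasing} in $s$, so the behaviour of $(1-1/s)^{s-\ell}$ is a competition between base and exponent. For $\ell=0$ it actually increases. What you need is
\[
g(s)=-\tfrac{\rd}{\rd s}\log(1-1/s)^{s-\ell}=\log\tfrac{s}{s-1}-\tfrac{s-\ell}{s(s-1)}=\sum_{m\ge2}(\ell-1+1/m)s^{-m}>0,
\]
and this uses $\ell\ge1$.

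\textbf{Concavity.} This is the real content of the lemma, and neither of your routes establishes it. The numbers $q_s,q_{s+1},q_{s+2}$ are not free variables, so no ``direct algebra'' in them alone can suffice. Saying ``check that this curve is concave'' only restates the claim.

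The missing computation goes as follows. Put $q=(1-1/v)^{v-\ell}$, $e=vq-\ell$, $f=1-q$. Then $f'=qg$ and $e'=q(1-vg)$, so $\frac{\rd}{\rd v}(f'/e')=(g'+g^2)/(1-vg)^2$. You need two facts.
\begin{itemize}
\item $vg<1$, so that the curve is a graph over $e$. Note that $v(1-1/v)^{v-\ell}$ is not $\Psi_\ell(v)$ off the integers, so the monotonicity of $\Psi_\ell$ does not give $e'>0$ along the curve.
\item $g'+g^2<0$.
\end{itemize}
Both follow from the series. First, $g(v)<\ell\sum_{m\ge2}v^{-m}=\ell/(v(v-1))\le1/v$ for $v\ge\ell+1$. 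Second, $g'(v)=\frac{\ell-(2\ell-1)v}{v^2(v-1)^2}$, hence
\[
g'+g^2<\frac{\ell^2+\ell-(2\ell-1)v}{v^2(v-1)^2}\le\frac{1-\ell^2}{v^2(v-1)^2}\le0 .
\]
With these in hand, the Cauchy mean value theorem makes the consecutive chord slopes strictly decreasing. Since those slopes are positive, the final slope $0$ lies below them, which gives concavity on all of $[0,\infty)$.
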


        \begin{proof}
          The function $C_1$ is constant, and $f_1=\Prob(Y_1=1)\le\E Y_1=1/k=C_1(e_1)$.
          Thus, it suffices to prove the desired result for $j\ge2$.
          Fix $2\le j<k$ from now on.

          We first show that $C_j$ is nondecreasing and concave.
          To compare the slopes of the segments defining $C_j$, we regard $e_j^{(s)}$ and $f_j^{(s)}$ as functions of a real variable $s\ge \ell+1$ using \cref{fact:comparison-arrays}\textup{(i)}.
          Let
          \[
            g(s)=-\frac{\rd}{\rd s}\log(1-f_j^{(s)})
            =\log\frac{s}{s-1}-\frac{s-\ell}{s(s-1)}.
          \]
          Since $\ell=k-j\ge1$ and $s\ge\ell+1\ge2$, the power series for $-\log(1-1/s)$ and direct differentiation give
          \[
            g(s)=\sum_{m=2}^{\infty}\frac{\ell-1+1/m}{s^m}
            \text{ and }
            g'(s)=\frac{\ell-(2\ell-1)s}{s^2(s-1)^2}.
          \]
          The lower bound for $g(s)$ follows from the first term, while comparison with $\ell\sum_{m=2}^{\infty}s^{-m}=\ell/(s(s-1))$ proves the strict upper bound because $0<\ell-1+1/m<\ell$ for $m\ge2$.
          Substituting $g(s)<\ell/(s(s-1))$ into $g'(s)+g(s)^2$ and using $s\ge\ell+1$, we obtain
          \[
            \begin{aligned}
              \frac{\ell-1/2}{s^2}
               & \le g(s)<\frac{\ell}{s(s-1)}\le\frac1s,    \\
              g'(s)+g(s)^2
               & <\frac{\ell^2+\ell-(2\ell-1)s}{s^2(s-1)^2}
              \le\frac{1-\ell^2}{s^2(s-1)^2}\le0.
            \end{aligned}
          \]
          Consequently,
          \[
            \begin{aligned}
              \frac{\rd}{\rd s}e_j^{(s)} & =(1-f_j^{(s)})(1-sg(s))>0,           \\
              \frac{\rd}{\rd s}f_j^{(s)} & =(1-f_j^{(s)})g(s)>0,                \\
              \frac{\rd}{\rd s}\left(\frac{\rd f_j^{(s)}/\rd s}{\rd e_j^{(s)}/\rd s}\right)
                                         & =\frac{g'(s)+g(s)^2}{(1-sg(s))^2}<0.
            \end{aligned}
          \]
          By the Cauchy mean value theorem, $C_j$ is increasing and concave on $[0,e_j^{(k)}]$.
          Since $C_j$ is constant on $[e_j^{(k)},\infty)$ and $C_j(0)=f_j^{(\ell+1)}\ge0$, it is nonnegative, nondecreasing, and concave on $[0,\infty)$.

          It remains to prove $f_j\le C_j(e_j)$.
          Since $\E R_j=j/k<1$, we have $f_j<1$.
          Moreover, \cref{lem:product-shortfall}\textup{(ii)} implies that $0\le e_j\le e_j^{(k)}$.
          Since $0=e_j^{(\ell+1)}<\cdots<e_j^{(k)}$, we can choose an integer $s$ with $\ell+1\le s<k$ such that $e_j^{(s)}\le e_j\le e_j^{(s+1)}$.
          Since $\ell+e_j^{(s)}=\Psi_\ell(s)$, \cref{fact:retained-shortfall}\textup{(ii)} and the explicit formula for $\Psi_\ell^{-1}$ imply $f_j\le Q_{\ell,s}(e_j)$, where
          \[
            Q_{\ell,s}(v)
            =1-(\ell+v)+s^{-1/(s-\ell)}(\ell+v)^{1+1/(s-\ell)}.
          \]
          On $[e_j^{(s)},e_j^{(s+1)}]$, this function is convex, since
          \[
            Q_{\ell,s}''(v)
            =\frac{s-\ell+1}{(s-\ell)^2}s^{-1/(s-\ell)}(\ell+v)^{1/(s-\ell)-1}>0.
          \]

          By \cref{fact:comparison-arrays}\textup{(i)} and the definition of $C_j$, for $t=s,s+1$, we have $Q_{\ell,s}(e_j^{(t)})=f_j^{(t)}=C_j(e_j^{(t)})$.
          Thus, $C_j$ is the chord joining the endpoint values of the convex function $Q_{\ell,s}$ on $[e_j^{(s)},e_j^{(s+1)}]$, so $Q_{\ell,s}(v)\le C_j(v)$ throughout this interval.
          Taking $v=e_j$, we obtain $f_j\le Q_{\ell,s}(e_j)\le C_j(e_j)$.
        \end{proof}

        To prove \eqref{eq:weighted-upper-bound}, we use \cref{fact:retained-sums}\textup{(ii)} together with the bounds $f_j\le C_j(e_j)$.
        We establish the weighted estimate in the following lemma for arbitrary nonnegative nondecreasing weights.

        \begin{lemma}\label{le:allocation}\label{lem:allocation}
          Let $0\le\kappa_1\le\cdots\le\kappa_{k-1}$ and $w\ge0$.
          Suppose that the nonnegative numbers $e_1,\ldots,e_k$ and $f_1,\ldots,f_{k-1}$ satisfy $e_1=0$, $e_{j+1}\ge e_j+f_j$ for $1\le j<k$, $f_j\le C_j(e_j)$ for $1\le j<k$, and $e_k\le w$.
          Then
          \[
            \sum_{j<k}\kappa_jf_j
            \le
            \begin{cases}
              \displaystyle\sum_{j<k}\kappa_jf_j^*(w),
               & 0\le w\le W_k, \\[4pt]
              \displaystyle\sum_{j<k}\kappa_jf_j^{(k)},
               & w>W_k.
            \end{cases}
          \]
          For $0\le w\le W_k$, equality holds when $e_j=e_j^*(w)$ and $f_j=f_j^*(w)$.
          For $w>W_k$, equality holds when $e_j=e_j^{(k)}$ and $f_j=f_j^{(k)}$.
        \end{lemma}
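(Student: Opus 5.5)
The plan is to reduce the weighted estimate to unweighted tail sums by summation by parts, and then to bound each tail sum through a two-regime comparison with the interpolated arrays $(e_j^*(w),f_j^*(w))$ of \cref{fact:comparison-arrays}. Put $\kappa_0=0$ and $T_j=\sum_{i=j}^{k-1}f_i$. Then
\[
  \sum_{j<k}\kappa_jf_j=\sum_{j=1}^{k-1}(\kappa_j-\kappa_{j-1})T_j ,
\]
and every coefficient $\kappa_j-\kappa_{j-1}$ is nonnegative. So it suffices to prove, for each $1\le j<k$ and $0\le w\le W_k$,
\[
  T_j\le T_j^*(w):=\sum_{i=j}^{k-1}f_i^*(w)=w-e_j^*(w),
\]
where the identity on the right comes from \cref{fact:comparison-arrays}\textup{(ii)}. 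For $w>W_k$ the target is $T_j\le T_j^{(k)}$ with the same argument. The equality statements are then immediate: \cref{fact:comparison-arrays}\textup{(ii)} shows that the interpolated arrays satisfy the budget relations with equality, and they satisfy $f_j^*(w)\le C_j(e_j^*(w))$. That last inequality holds because at the nodes $f_j^{(s)}=C_j(e_j^{(s)})$, and $C_j$ is concave by \cref{lem:capacities}, so a chord between nodes lies below $C_j$. Hence the arrays are feasible and attain the bound.

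To bound $T_j$, telescope $f_i\le e_{i+1}-e_i$ to get $T_j\le e_k-e_j\le w-e_j$. This settles the case $e_j\ge e_j^*(w)$. In the opposite case $e_j<e_j^*(w)$, the budget is not the binding constraint; the capacities are. Here I would use the \emph{greedy trajectory}: starting from $e_j=u$, set $f_i=C_i(e_i)$ and $e_{i+1}=e_i+f_i$. Because each $C_i$ is nondecreasing, induction on $i$ shows that any feasible sequence starting at $e_j\le u$ has $e_i$ bounded by the greedy $e_i$ and $f_i$ bounded by the greedy $f_i$. So $T_j\le\Gamma_j(u)$, the greedy tail sum from $u$. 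The comparison family $s=k$ is itself the greedy trajectory from $e_1=0$, since $f_i^{(k)}=C_i(e_i^{(k)})$ and $e_{i+1}^{(k)}=e_i^{(k)}+f_i^{(k)}$. This also explains why $W_k$ is the largest attainable budget and gives the case $w>W_k$.

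The key monotonicity claim I would then prove is that $u\mapsto u+\Gamma_j(u)$ is nondecreasing on $[0,e_j^{(k)}]$. Granting it, for $e_j<e_j^*(w)$ we get $T_j\le\Gamma_j(e_j)\le e_j^*(w)+\Gamma_j(e_j^*(w))-e_j$. It then remains to compare $\Gamma_j(e_j^*(w))$ with $w-e_j^*(w)$. The idea is to use that the interpolated trajectory from $e_j^*(w)$ is feasible and lies below greedy, and to combine this with the budget. The combined bound I actually aim for is
\[
  T_j\le\min\{w-e_j,\ \Gamma_j(e_j)\}\le w-e_j^*(w),
\]
and I would try to prove it by showing that at $u=e_j^*(w)$ the two quantities $w-u$ and $\Gamma_j(u)$ cross in the right order. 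I would verify this first at the nodes $w=W_s$, where the trajectory from $e_j^{(s)}$ is greedy for $i$ beyond the constant block. I would then extend between nodes using concavity of $\Gamma_j$ in $u$ (composition of concave nondecreasing maps) against linearity of $e_j^*$ and $T_j^*$ in $w$.

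The main obstacle is this middle step: controlling the greedy tail sum $\Gamma_j$ from an interpolated starting point and showing $u+\Gamma_j(u)$ is monotone. This needs slopes of $C_i$ at most $1$, which I would extract from the slope computation $\frac{\rd f}{\rd e}=g/(1-sg)\le1$ in the proof of \cref{lem:capacities}. If that route stalls, the fallback is backward induction on $j$ for the value function $V_j(u,b)$ (the maximum of $\sum_{i\ge j}\kappa_if_i$ given $e_j=u$ and remaining budget $b$). I would show it is concave and attained on the interpolated arrays, using that each $C_i$ is concave and the constraints are linear.
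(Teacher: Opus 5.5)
Your summation-by-parts reduction is sound: over all nondecreasing weights, the lemma is equivalent to the tail bounds $T_j\le w-e_j^*(w)$ (take $\kappa_i=\mathbf 1_{\{i\ge j\}}$). Your handling of the budget case $e_j\ge e_j^*(w)$ and of the equality cases is also fine.

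The argument fails at the greedy-domination step. The induction that keeps a feasible sequence below the greedy trajectory needs $e_{i+1}\le e_i+f_i$, but the hypothesis is $e_{i+1}\ge e_i+f_i$. A sequence may overspend budget to push $e_{i+1}$ up and so raise the capacity $C_{i+1}(e_{i+1})$. Hence $T_j\le\Gamma_j(e_j)$ is false exactly in the regime $e_j<e_j^*(w)$ where you need it.

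For instance, take $k=6$, $w=W_6$, $e_1=\dots=e_4=0$, $f_1=f_2=f_3=0$, $f_4=C_4(0)=1/3$, $e_5=e_5^{(6)}$, $f_5=f_5^{(6)}$, $e_6=W_6$. This sequence is feasible and has $e_4=0<e_4^{(6)}=e_4^*(W_6)$. It gives $T_4=1/3+1-(5/6)^5\approx0.931$, whereas your greedy tail is $\Gamma_4(0)=1/3+C_5(1/3)=1/3+5/9=8/9$. So the bound $\min\{w-e_j,\Gamma_j(e_j)\}$ is not available: neither term sees the trade-off between budget wasted and capacity gained.

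The surrounding steps also do not do what you want. Monotonicity of $u+\Gamma_j(u)$ only returns $T_j\le w-e_j$. Concavity of $\Gamma_j$ bounds values between nodes from below, not above. Neither step is needed anyway: $f_i^*(w)=C_i(e_i^*(w))$ for $k-s<i<k$ gives $\Gamma_j(e_j^*(w))=T_j^*(w)$ exactly, and $e_j^*(w)=0$ for $j\le k-s$.

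What is missing is a device that charges overspending to the final budget constraint. The paper works with $d_j=e_j-e_j^*(w)$ and $g_j=f_j-f_j^*(w)$, which satisfy $d_{j+1}\ge d_j+g_j$ and $d_k\le0$. For $k-s<j<k$, concavity of $C_j$ gives the supergradient bound $g_j\le\alpha_jd_j$, where $\alpha_j$ is the slope of the segment of $C_j$ containing $e_j^*(w)$. The indices $j\le k-s$ are handled by telescoping, using $e_j^*(w)=0$ there.

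It then introduces multipliers $\mu_{k-s}=\kappa_{k-s}$ and $\mu_j=(\mu_{j-1}+\kappa_j\alpha_j)/(1+\alpha_j)\in[0,\kappa_j]$ (called $\Gamma_j$ in the paper). These satisfy $\mu_{j-1}d_j+\kappa_jg_j\le\mu_jd_{j+1}$. Because $\mu_j\ge0$, any slack in $d_{j+1}\ge d_j+g_j$ only enlarges the right-hand side, and telescoping yields $\sum_{j<k}\kappa_jg_j\le\mu_{k-1}d_k\le0$.

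Your value-function fallback could work in principle. But showing that the optimum sits on the interpolated arrays is the entire content of the lemma, and a multiplier certificate of this kind is what supplies it. For $w>W_k$ no trajectory argument is needed at all, since $f_j\le C_j(e_j)\le f_j^{(k)}$ by monotonicity of $C_j$.
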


        \begin{proof}
          For $w>W_k$, the monotonicity of $C_j$ and the definition of $C_j$ imply $C_j(u)\le f_j^{(k)}$ for every $u\ge0$.
          Thus, $f_j\le f_j^{(k)}$ for each $j<k$, and summing with the nonnegative weights $\kappa_j$ yields $\sum_{j<k}\kappa_jf_j\le\sum_{j<k}\kappa_jf_j^{(k)}$.
          Equality holds when $e_j=e_j^{(k)}$ for $1\le j\le k$ and $f_j=f_j^{(k)}$ for $1\le j<k$.
          Indeed, \cref{fact:comparison-arrays}\textup{(i)} gives $e_1=0$ and $e_{j+1}=e_j+f_j$, while the definition of $C_j$ gives $f_j=C_j(e_j)$ for $j<k$, with $e_k=W_k<w$.

          Now suppose that $0\le w\le W_k$ and choose $1\le s<k$ with $W_s\le w\le W_{s+1}$.
          We first verify that the bound in \cref{lem:allocation} is attained when $e_j=e_j^*(w)$ for $1\le j\le k$ and $f_j=f_j^*(w)$ for $1\le j<k$.
          For $1\le j<k$, the numbers $e_j^*(w)$ and $f_j^*(w)$ are nonnegative by the definitions of $e_j^*$ and $f_j^*$.
          We have $e_1^*(w)=0$, $e_{j+1}^*(w)=e_j^*(w)+f_j^*(w)$, and $e_k^*(w)=w$ by \cref{fact:comparison-arrays}\textup{(ii)}.
          For $j<k-s$, both $e_j^*(w)$ and $f_j^*(w)$ are zero.
          At $j=k-s$, we have $e_j^*(w)=0$ and $0\le f_j^*(w)\le C_j(0)$.
          For $k-s<j<k$, substituting $e_j^*(w)$ from the definitions of $e_j^*$ and $f_j^*$ into the definition of $C_j$, we obtain $C_j(e_j^*(w))=f_j^*(w)$.
          Thus, all the hypotheses of \cref{lem:allocation} hold with $e_j=e_j^*(w)$ for $1\le j\le k$ and $f_j=f_j^*(w)$ for $1\le j<k$.
          The corresponding weighted sum is $\sum_{j<k}\kappa_jf_j^*(w)$.

          For arbitrary nonnegative numbers $e_1,\ldots,e_k$ and $f_1,\ldots,f_{k-1}$ satisfying the hypotheses of \cref{lem:allocation}, it remains to prove
          \[
            \sum_{j<k}\kappa_j(f_j-f_j^*(w))\le0.
          \]
          Set $d_j=e_j-e_j^*(w)$ and $g_j=f_j-f_j^*(w)$.
          Subtracting the equalities in \cref{fact:comparison-arrays}\textup{(ii)} from the assumed inequalities $e_{j+1}\ge e_j+f_j$, we find $d_{j+1}\ge d_j+g_j$, while $d_k=e_k-w\le0$.
          We first bound the terms through $j=k-s$ in terms of $d_{k-s+1}$.

          Since $f_j^*(w)=0$ for $j<k-s$, $e_{k-s+1}^*(w)=f_{k-s}^*(w)$, and the weights are nondecreasing, summing the assumed inequalities $e_{j+1}\ge e_j+f_j$ yields
          \begin{equation}\label{eq:sec3-allocation-initial}
            \begin{aligned}
              \sum_{j\le k-s}\kappa_jg_j
               & \le\kappa_{k-s}\left(\sum_{j\le k-s}f_j-f_{k-s}^*(w)\right)\le\kappa_{k-s}(e_{k-s+1}-e_{k-s+1}^*(w)) \\
               & =\kappa_{k-s}d_{k-s+1}.
            \end{aligned}
          \end{equation}
          For $k-s<j<k$, let $\alpha_j$ be the slope of the segment of $C_j$ joining nodes $s$ and $s+1$, that is, $\alpha_j=\frac{f_j^{(s+1)}-f_j^{(s)}}{e_j^{(s+1)}-e_j^{(s)}}$.
          By the concavity of $C_j$ and $f_j^*(w)=C_j(e_j^*(w))$,
          \[
            g_j\le C_j(e_j)-C_j(e_j^*(w))\le\alpha_jd_j.
          \]
          Set $\Gamma_{k-s}=\kappa_{k-s}$ and $\Gamma_j=(\Gamma_{j-1}+\kappa_j\alpha_j)/(1+\alpha_j)$ for $k-s<j<k$.
          Since $0\le\Gamma_{j-1}\le\kappa_{j-1}\le\kappa_j$, induction shows that $0\le\Gamma_j\le\kappa_j$.
          By the definition of $\Gamma_j$, we obtain
          \[
            \Gamma_{j-1}d_j+\kappa_jg_j =\Gamma_j(d_j+g_j)
            +(\kappa_j-\Gamma_j)(g_j-\alpha_jd_j)\le\Gamma_jd_{j+1}.
          \]
          Applying the preceding inequality repeatedly for $k-s<j<k$ and using \eqref{eq:sec3-allocation-initial}, we obtain
          \[
            \sum_{j<k}\kappa_jg_j
            \le\Gamma_{k-s}d_{k-s+1}+\sum_{k-s<j<k}\kappa_jg_j
            \le\Gamma_{k-1}d_k\le0.
          \]
          This proves the asserted maximum on $[W_s,W_{s+1}]$.
        \end{proof}

        We next bound $1-f_k$ in terms of $w$.
        For $0\le w\le W_k$, define
        \[
          H(w)=\frac{w}{\Psi_0^{-1}(w)}.
        \]
        \begin{lemma}\label{lem:critical}
          Let $k\ge2$, and let $Y_1,\ldots,Y_k$ be independent with a common law $\mathsf Q\in\Pcal([0,1])$ of mean $1/k$.
          Set $R_k=\sum_{i=1}^kY_i$, $f_k=\Prob(R_k\ge1)$, and $w=e_k=k\E(R_k-1)_+$.
          Then the following statements hold.
          \begin{enumerate}[label=\textup{(\roman*)},leftmargin=*,itemsep=0pt]
            \item\label{it:critical-tail} We have $0\le w=e_k\le W_k$ and $1-f_k\ge H(w)$.
            \item\label{it:critical-shape} The function $H$ is continuous on $[0,W_k]$, satisfies $H(W_s)=(1-1/s)^s$ for $1\le s\le k$, and is strictly concave on each $[W_s,W_{s+1}]$ with $1\le s<k$.
          \end{enumerate}
        \end{lemma}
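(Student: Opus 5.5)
For (i), I would specialize \cref{fact:retained-shortfall} and \cref{lem:product-shortfall} to $j=k$, so $\ell=0$.
\cref{lem:product-shortfall}\textup{(ii)} with $j=k$ gives $0\le e_k\le k(1-1/k)^k=W_k$, which is the bound $0\le w\le W_k$.
Since $\E R_k=1$, either $f_k<1$ or $R_k\ge1$ almost surely; in the latter case $R_k=1$ almost surely, so $w=0$, and $H(0)=0\le 1-f_k$ trivially.
When $f_k<1$, \cref{fact:retained-shortfall}\textup{(ii)} with $j=k$ and $\ell=0$ gives $1-f_k\ge (0+e_k)/\Psi_0^{-1}(e_k)=H(w)$ directly.
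I also need $w$ to lie in the domain of $\Psi_0^{-1}$, namely $[0,k(1-1/k)^k]=[0,W_k]$; this holds by the bound just proved.

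For (ii), I would first identify the nodes.
By definition $\Psi_0(s)=s(1-1/s)^s=W_s$ for integers $1\le s\le k$, using the formula on $[s,s+1]$, which agrees at the common endpoint.
Hence $\Psi_0^{-1}(W_s)=s$ and $H(W_s)=W_s/s=(1-1/s)^s$.
Continuity of $H$ follows because $\Psi_0$ is continuous and strictly increasing, so its inverse is continuous, and $\Psi_0^{-1}\ge1>0$.
At $w=0$ we have $\Psi_0^{-1}(0)=1$, so $H(0)=0=(1-1/1)^1$, consistent with the claim.

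The main work is strict concavity on $[W_s,W_{s+1}]$.
There I would reparametrize by $v=\Psi_0^{-1}(w)\in[s,s+1]$, so that $w=s(1-1/v)^s$ and $H=s(1-1/v)^s/v$.
A cleaner route is to write $u=(w/s)^{1/s}=1-1/v$, so $w=su^s$ and $1/v=1-u$.
Then $H(w)=w(1-(w/s)^{1/s})=w-s^{-1/s}w^{1+1/s}$.
Its second derivative is $-s^{-1/s}(1+1/s)(1/s)w^{1/s-1}<0$ for $w>0$.
At $s=1$ the formula reads $H(w)=w-w^2$, which is strictly concave everywhere including $w=0$.
For $s\ge2$ the interval satisfies $W_s>0$, so the second derivative is negative throughout the interval and $H$ is strictly concave there.

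The only step where care is needed is verifying that this closed form matches the definition of $\Psi_0^{-1}$ on the right piece.
The paper states $\Psi_0^{-1}(u)=1/(1-(u/s)^{1/s})$ on $[\Psi_0(s),\Psi_0(s+1)]=[W_s,W_{s+1}]$, which is exactly this form.
So no real obstacle is expected beyond the degenerate case $f_k=1$ handled above.
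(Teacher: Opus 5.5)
Your proposal is correct and follows the paper's proof essentially step for step. You get the bound on $w$ from \cref{lem:product-shortfall}\textup{(ii)} with $j=k$, the tail bound from \cref{fact:retained-shortfall}\textup{(ii)} with $\ell=0$, the node values from $\Psi_0(s)=W_s$, and strict concavity from the closed form $H(w)=w-s^{-1/s}w^{1+1/s}$, whose second derivative $-(s+1)s^{-2-1/s}w^{1/s-1}$ is exactly the one the paper writes. The only difference is cosmetic: you split into $f_k<1$ versus $f_k=1$, while the paper splits into $w>0$ versus $w=0$.
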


        \begin{proof}
          By \cref{lem:product-shortfall}\textup{(ii)} with $j=k$, we have $0\le w\le W_k$.
          We next prove $1-f_k\ge H(w)$.
          If $w=0$, then $H(w)=0\le1-f_k$ as desired.
          If $w>0$, then $\E R_k=1$ implies $\E(1-R_k)_+=w/k>0$, so $f_k<1$.
          Applying \cref{fact:retained-shortfall}\textup{(ii)} with $j=k$ and $\ell=0$, we obtain $1-f_k\ge w/\Psi_0^{-1}(w)=H(w)$.

          The function $H$ is continuous on $[0,W_k]$ and satisfies $H(W_s)=(1-1/s)^s$ for $1\le s\le k$.
          For each integer $s$ with $1\le s<k$, the function $H$ is strictly concave on $[W_s,W_{s+1}]$, since
          \[
            H''(u)=-(s+1)s^{-2-1/s}u^{1/s-1}<0
          \]
          for every $u\in(W_s,W_{s+1})$.
          This completes the proof.
        \end{proof}

        The following proposition bounds $F_k(\mathsf P)$ from above using the values of $H$ and $A$ at $W_s$ and $W_{s+1}$.

        \begin{proposition}\label{prop:strict-chord}
          Let $k\ge2$ and $0<x<1/k$.
          Let $\mathsf P=(1-kx)\delta_0+kx\mathsf Q$, where $\mathsf Q$ is a probability distribution on $[0,1]$ with mean $1/k$.
          Suppose that $(1-a)/a\le2/(k-1)$.
          Then
          \[
            a^{-k}F_k(\mathsf P)\le1-H(w)+A(w).
          \]
          Moreover, if $W_s\le w\le W_{s+1}$ for an integer $1\le s<k$, then
          \[
            \begin{aligned}
              1-H(w)+A(w)
               & \le(1-\theta)\bigl[1-H(W_s)+A(W_s)\bigr]         \\
               & \quad+\theta\bigl[1-H(W_{s+1})+A(W_{s+1})\bigr],
            \end{aligned}
          \]
          where $\theta=(w-W_s)/(W_{s+1}-W_s)$.
          The second inequality is strict when $W_s<w<W_{s+1}$.
        \end{proposition}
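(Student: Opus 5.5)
The plan is to chain the three ingredients already in place: the mixture identity of \cref{fact:retained-sums}\textup{(i)}, the weighted allocation bound of \cref{lem:allocation}, and the failure lower bound of \cref{lem:critical}. The concavity and affinity on each node interval then give the chord inequality.

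First I verify that the weights $b_j=\binom kj((1-a)/a)^{k-j}$ are nondecreasing in $j$. With $\rho=(1-a)/a$,
\[
  \frac{b_{j+1}}{b_j}=\frac{k-j}{(j+1)\rho}.
\]
This ratio is at least $1$ when $\rho\le(k-j)/(j+1)$. For $1\le j\le k-2$ the right-hand side is minimized at $j=k-2$, where it equals $2/(k-1)$. So the hypothesis $\rho\le2/(k-1)$ gives $b_1\le\cdots\le b_{k-1}$, and the $b_j$ are nonnegative.

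Next, for $Y_i\sim\mathsf Q$, I use \cref{fact:retained-sums}\textup{(ii)}, which gives $e_1=0$ and $e_{j+1}\ge e_j+f_j$ with $e_k=w$. \Cref{lem:capacities} gives $f_j\le C_j(e_j)$, and \cref{lem:critical}\textup{(i)} gives $0\le w\le W_k$. These are exactly the hypotheses of \cref{lem:allocation} with $\kappa_j=b_j$, so $\sum_{j<k}b_jf_j\le A(w)$. \Cref{lem:critical}\textup{(i)} also gives $f_k\le1-H(w)$. Substituting both bounds into $a^{-k}F_k(\mathsf P)=f_k+\sum_{j<k}b_jf_j$ yields the first inequality.

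For the second inequality, fix $s$ with $W_s\le w\le W_{s+1}$ and write $w=(1-\theta)W_s+\theta W_{s+1}$. By \eqref{eq:interpolated-arrays}, each $f_j^*$ is affine on $[W_s,W_{s+1}]$, so $A$ is affine there and
\[
  A(w)=(1-\theta)A(W_s)+\theta A(W_{s+1}).
\]
By \cref{lem:critical}\textup{(ii)}, $H$ is continuous and strictly concave on $[W_s,W_{s+1}]$, so
\[
  H(w)\ge(1-\theta)H(W_s)+\theta H(W_{s+1}),
\]
with strict inequality for $0<\theta<1$, i.e.\ for $W_s<w<W_{s+1}$. Adding the equality for $A$ to the negative of the inequality for $H$, and the constant $1$, gives the claimed chord bound and its strictness.

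I expect no real obstacle, since all the substantive work sits in the cited lemmas. The one point needing care is the monotonicity check on the $b_j$, because \cref{lem:allocation} requires nondecreasing weights. For $k=2$ there is only the single weight $b_1$, so that check is vacuous.
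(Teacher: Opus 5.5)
Your proposal is correct and follows the paper's proof essentially step for step. You use the same ratio check $b_{j+1}/b_j=(k-j)a/((j+1)(1-a))\ge1$, the same substitution of \cref{lem:allocation} and \cref{lem:critical} into \cref{fact:retained-sums}\textup{(i)}, and the same affine-versus-strictly-concave chord argument on $[W_s,W_{s+1}]$. Your explicit check of the hypotheses of \cref{lem:allocation}, via \cref{fact:retained-sums}\textup{(ii)}, \cref{lem:capacities} and $0\le w\le W_k$, only spells out what the paper leaves implicit.
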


        \begin{proof}
          We first check that $b_1\le\cdots\le b_{k-1}$, as required by \cref{lem:allocation}.
          If $k\ge3$ and $1\le j\le k-2$, then by the definition of $b_j$ we have
          \[
            \frac{b_{j+1}}{b_j}
            =\frac{(k-j)a}{(j+1)(1-a)}
            \ge\frac{2a}{(k-1)(1-a)}
            \ge1,
          \]
          since $k-j\ge2$, $j+1\le k-1$, and $(1-a)/a\le2/(k-1)$.
          For $k=2$, the sequence consists only of $b_1$.
          Thus, $b_1\le\cdots\le b_{k-1}$ holds for all $k$.

          By \cref{lem:allocation}, $\sum_{j<k}b_jf_j\le A(w)$, while by \cref{lem:critical} we have $f_k\le1-H(w)$.
          Substitution into \cref{fact:retained-sums}\textup{(i)} proves the first inequality.
          For the second inequality, fix any integer $s$ with $1\le s<k$ and $W_s\le w\le W_{s+1}$, and set $\theta=(w-W_s)/(W_{s+1}-W_s)$.
          Then $w=(1-\theta)W_s+\theta W_{s+1}$ and $0\le\theta\le1$.
          The function $A$ is affine on $[W_s,W_{s+1}]$ by definition, while $H$ is strictly concave there by \cref{lem:critical}\textup{(ii)}.
          Hence,
          \[
            \begin{aligned}
              1-H(w)+A(w)
               & =1-H(w)+(1-\theta)A(W_s)
              +\theta A(W_{s+1})                         \\
               & \le1-(1-\theta)H(W_s)-\theta H(W_{s+1})
              +(1-\theta)A(W_s)
              +\theta A(W_{s+1})                         \\
               & =(1-\theta)\bigl[1-H(W_s)+A(W_s)\bigr]
              +\theta\bigl[1-H(W_{s+1})+A(W_{s+1})\bigr].
            \end{aligned}
          \]
          The inequality in the second line is strict when $0<\theta<1$.
        \end{proof}

        \subsection[Node comparisons for maximizing laws]{Node comparisons for maximizing laws}
        \label{sec:sec3-crossing-estimates}\label{sec:dimensions-two-five}

        At $x=\beta_k$, we compare $A(W_s)$ and $H(W_s)$ to prove the upper bound in \cref{thm:iid-tail} for $2\le k\le5$.
        For $k\ge6$, we show that any maximizing distribution with $0<w<W_k$ must satisfy $W_4<w<W_k$.

        Since $x=\beta_k=1/(k+c_k)$ and $c=1/x-k$, we have $c=c_k$.
        Thus,
        \[
          x=\frac1{k+c},\text{ } a=\frac{k}{k+c},\text{ }
          \delta=\frac{c}{k+c}\text{ and }\frac{\delta}{a}=\frac ck.
        \]
        Recall that $g_k(c_k)=1$.
        Since
        \[
          g_k'(u)=\left(1+\frac uk\right)^{k-1}
          -\left(1+\frac{u-1}{k}\right)^{k-1}>0
        \]
        for $k\ge2$ and $u\ge0$,
        we have $c_k<u$ whenever $g_k(u)>1$.

        We use the following upper bound for $c_k$.
        This bound will be used to prove the quantitative estimate for $H(W_4)-A(W_4)$ in \cref{lem:initial-nodes}\textup{(iii)}.
        We prove \cref{lem:crossing-upper} in \cref{app:crossing-upper}.

        \begin{lemma}\label{lem:crossing-upper}
          Let $k\ge5$ and set $\gamma=-\log(1-\mathrm{e}^{-1})$.
          Then
          \[
            c_k<\gamma+\frac{41}{500k}<\frac{11467}{25000}+\frac{41}{500k}.
          \]
        \end{lemma}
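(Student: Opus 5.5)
Since $g_k$ is strictly increasing on $[0,\infty)$ and $g_k(c_k)=1$, both inequalities follow once we show $g_k(u_k)>1$ for $u_k=\gamma+41/(500k)$. The second inequality is just the numerical bound $\gamma<11467/25000=0.45868$. We have $1-\mathrm e^{-1}\approx0.6321206$ and $\gamma\approx0.458675$, which can be certified by a short alternating-series bound for $\mathrm e^{-1}$ and for the logarithm. So the plan is to show $g_k(u)>1$ with $u=\gamma+\varepsilon/k$ and $\varepsilon=41/500$, for every $k\ge5$.

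We work with logarithms. Write
\[
  g_k(u)=\Bigl(1+\frac uk\Bigr)^k\Bigl[1-\Bigl(1-\frac{1}{k+u}\Bigr)^k\Bigr].
\]
The limit profile is $g_\infty(u)=\mathrm e^{u}(1-\mathrm e^{-1})$, and $g_\infty(\gamma)=1$ exactly, which explains the choice of $\gamma$. For the first factor, use
\[
  k\log(1+u/k)\ge u-\frac{u^2}{2k}.
\]
For the second factor, use
\[
  k\log\Bigl(1-\frac1{k+u}\Bigr)\le-\frac{k}{k+u}\le-1+\frac{u}{k+u},
\]
so that
\[
  \Bigl(1-\frac1{k+u}\Bigr)^k\le \mathrm e^{-1}\mathrm e^{u/(k+u)}.
\]
Substituting $u=\gamma+\varepsilon/k$, the first factor is at least $(1-\mathrm e^{-1})^{-1}\exp\bigl((\varepsilon-\gamma^2/2)/k-O(k^{-2})\bigr)$. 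The second factor is at least $(1-\mathrm e^{-1})-\mathrm e^{-1}\bigl(\mathrm e^{u/(k+u)}-1\bigr)$, which is roughly $(1-\mathrm e^{-1})\bigl(1-\frac{\mathrm e^{-1}}{1-\mathrm e^{-1}}\cdot\frac{\gamma}{k}\bigr)$.

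This first-order expansion alone looks insufficient. The coefficient condition would be $\varepsilon>\gamma^2/2+\gamma/(\mathrm e-1)\approx0.105+0.267$, which exceeds $0.082$. So the crude bound $k\log(1-1/(k+u))\le-k/(k+u)$ is too lossy, and we must keep the second-order term:
\[
  k\log\Bigl(1-\frac1{k+u}\Bigr)=-\frac{k}{k+u}-\frac{k}{2(k+u)^2}-\cdots.
\]
This contributes $-1+(u-\tfrac12)/k+O(k^{-2})$, and $u-\tfrac12\approx-0.041$ is small and negative, which helps. With this term kept, the first-order condition becomes
\[
  \varepsilon-\frac{\gamma^2}{2}-\frac{\gamma-1/2}{\mathrm e-1}>0,
\]
that is $\varepsilon>0.1052-0.0241\approx0.0811$. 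This is consistent with the constant $41/500=0.082$, and indicates the margin is tiny.

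So the key steps, in order, are as follows.

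1. Derive rigorous two-sided bounds with explicit remainders:
\[
  k\log(1+u/k)\ge u-\frac{u^2}{2k}+\frac{u^3}{3k^2}-\frac{u^4}{4k^3},
\]
\[
  k\log\Bigl(1-\frac1{k+u}\Bigr)\le-\frac{k}{k+u}-\frac{k}{2(k+u)^2}-\frac{k}{3(k+u)^3}.
\]

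2. Convert these, via $\mathrm e^{t}\ge1+t$ and convexity bounds on the exponential, into the form
\[
  \log g_k(u)\ge\frac{\varepsilon-\gamma^2/2-(\gamma-1/2)/(\mathrm e-1)}{k}-\frac{K}{k^2}
\]
with an explicit constant $K$.

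3. Verify that the right-hand side is positive for $k\ge k_0$.

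4. Handle the finitely many $5\le k<k_0$ by direct numerical evaluation of $g_k(u_k)$ with rational enclosures of $\gamma$.

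The main obstacle is steps 2 and 3. The leading margin is only about $10^{-3}/k$, so the $1/k^2$ remainder must be controlled quite tightly. Otherwise $k_0$ becomes large and the residual numerical verification becomes long. The $u^3/(3k^2)$ term is positive and favorable, so I would retain it to absorb part of the loss. If $k_0$ still turns out large, I would instead expand $\log g_k(\gamma+\varepsilon/k)$ exactly as a function of $t=1/k$ on $[0,1/5]$, and bound its second derivative in $t$ uniformly, so that a single Taylor estimate covers all $k\ge5$ at once.
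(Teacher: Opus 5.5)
Your reduction to showing $g_k(\gamma+C/k)>1$ with $C=41/500$ is right. So is your leading-order computation: the threshold $\gamma^2/2+(\gamma-\frac12)/(\mathrm{e}-1)\approx0.08114$ is exactly the paper's condition $D'(0)>0$, rewritten. The asymptotic-plus-finite-check route is legitimate in principle. However, everything that makes the lemma true sits in your steps 2--4, which you leave open, and the one uniform mechanism you propose for them cannot work.

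Put $t=1/k$ and $\psi(t)=\log g_k(\gamma+Ct)$. Then $\psi(0)=0$, $\psi'(0)\approx8.59\times10^{-4}$ and $\psi''(0)\approx-8.92\times10^{-3}$, so
\[
  \tfrac15\psi'(0)+\tfrac1{50}\psi''(0)\approx-6.7\times10^{-6}<0 .
\]
The second-order Taylor polynomial at $t=0$ is therefore already negative at $t=1/5$. Any uniform bound $\psi''\ge-M$ on $[0,1/5]$ has $M\ge|\psi''(0)|$, so your fallback gives a negative lower bound at $k=5$. For the same reason, any bound $\psi(1/k)\ge\psi'(0)/k-K/k^2$ valid for all large $k$ forces $K\ge|\psi''(0)|/2\approx4.46\times10^{-3}>5\psi'(0)$. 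So step 3 never reaches $k=5$.

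With constants produced by $\mathrm{e}^t\ge1+t$-type estimates, $k_0$ can be much larger. That leaves a long certified computation in which the margin at each $k$ is only of order $8.6\times10^{-4}/k$. Indeed $g_5(\gamma+C/5)-1\approx3\times10^{-5}$, and its positivity comes from higher-order terms that any second-order estimate throws away.

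The idea you are missing is to replace the expansion at $t=0$ by a chord argument, which needs only a sign condition on the second derivative. The paper writes $g_k(\gamma+C/k)>1$ as $D(1/k)>0$, where
\[
  D(t)=f_\gamma(t)-\log\bigl(1+\mathrm{e}^{f_{\gamma-1}(t)}\bigr),\qquad f_z(t)=\frac{\log(1+zt+Ct^2)}{t},
\]
and $D(0)=0$ because $\mathrm{e}^\gamma=1+\mathrm{e}^{\gamma-1}$. It proves $D''<0$ on $[0,1/5]$ from three ingredients:
\begin{itemize}
  \item the inequality $D''\le f_\gamma''-\frac{37}{100}f_{\gamma-1}''$, using $f_{\gamma-1}''<0$ and $0<p(t)\le\mathrm{e}^{-1}$;
  \item the values $f_z''(0)=2z(z^2/3-C)$;
  \item the bounds $0<f_\gamma'''\le3C^2$ and $0<-f_{\gamma-1}'''<1/150$, obtained from an integral representation of $f_z'''$.
\end{itemize}
A single exact rational evaluation, $g_5(g_-+C/5)>1$ with $g_-=45867/100000<\gamma$, then gives $D(1/5)>0$. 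Concavity yields $D(t)\ge5t\,D(1/5)>0$ for every $0<t\le1/5$. This needs neither $D'(0)$ nor any value of $k$ other than $5$. The endpoint evaluation automatically includes the favourable higher-order terms that defeat your Taylor bound.
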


        To rule out a given value of $w$, it suffices to show that $A(w)<H(w)$.
        Indeed, \cref{prop:strict-chord} would then imply $F_k(\mathsf P)<a^k=F_k(\mathsf B_{\beta_k})$, contradicting the maximality of $\mathsf P$.

        The following lemma also gives $H(W_4)-A(W_4)>3/5000-1/(400k)$ for $k\ge5$.
        We will use this lower bound in the proof of {\cref{lem:node-shortfall-margin}} in \cref{app:node-shortfall-margin}.

        \begin{lemma}\label{lem:initial-nodes}
          Let $k\ge2$ and $x=\beta_k$.
          \begin{enumerate}[label=\textup{(\roman*)},leftmargin=*,itemsep=0pt]
            \item For every integer $s$ with $2\le s\le\min\{4,k-1\}$, we have $A(W_s)<H(W_s)=((s-1)/s)^s$.
            \item We have $A(W_1)=H(W_1)=0$ and $A(W_k)=H(W_k)=(1-1/k)^k$.
                  \item\label{it:initial-fourth-gap} For $k\ge5$, we have
                  \[
                    H(W_4)-A(W_4)>\frac3{5000}-\frac1{400k}.
                  \]
            \item If $2\le k\le5$, then every nonnegative Borel law $\mathsf P$ maximizing $F_k$ under $m(\mathsf P)\le x$ has $w=0$ or $w=W_k$.
                  If $k\ge6$, then every nonnegative Borel law $\mathsf P$ maximizing $F_k$ under $m(\mathsf P)\le x$ has $w=0$ or $W_4<w\le W_k$.
          \end{enumerate}
        \end{lemma}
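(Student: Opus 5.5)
The plan is to compute $A(W_s)$ explicitly at each node and then compare it with $H(W_s)=(1-1/s)^s$, which is known from \cref{lem:critical}\textup{(ii)}. By \cref{fact:comparison-arrays}\textup{(i)} and the definition \eqref{eq:interpolated-arrays}, at a node we have $f_j^*(W_s)=f_j^{(s)}$. Hence
\[
  A(W_s)=\sum_{j=k-s+1}^{k-1}b_j\Bigl(1-\bigl(1-\tfrac1s\bigr)^{j-k+s}\Bigr).
\]
At $x=\beta_k$ we have $(1-a)/a=c/k$. Writing $i=k-j\in\{1,\ldots,s-1\}$, this gives
\[
  A(W_s)=\sum_{i=1}^{s-1}\binom ki\Bigl(\frac ck\Bigr)^{i}\Bigl(1-\bigl(1-\tfrac1s\bigr)^{s-i}\Bigr),
  \qquad c=c_k.
\]

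Part \textup{(ii)} then follows directly. At $s=1$ the sum is empty, so $A(W_1)=0=H(0)$. At $s=k$ the sum runs over $1\le i\le k-1$ and equals
\[
  \sum_{i=1}^{k-1}\binom ki(c/k)^i-(1-1/k)^k\sum_{i=1}^{k-1}\binom ki\bigl(c/(k-1)\bigr)^i.
\]
Using the binomial theorem and $g_k(c_k)=1$, which means $(1+c/k)^k-((k-1+c)/k)^k=1$, I expect this expression to collapse to $(1-1/k)^k$. This is the identity $F_k(\mathsf B_{\beta_k})=a^k$ written in retained-sum form, so a cleaner route is to note that $w=W_k$ is attained by $\mathsf Q=\mathsf B_{1/k}$. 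In that case $\mathsf P=\mathsf B_{\beta_k}$, every inequality in \cref{prop:strict-chord} is an equality, and $a^{-k}F_k=1$ forces $A(W_k)=H(W_k)$.

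For parts \textup{(i)} and \textup{(iii)}, each node $s=2,3,4$ gives $A(W_s)$ as an explicit polynomial in $c/k$ of degree at most $3$. For $s=2$ it is $k\cdot(c/k)\cdot\tfrac12=c/2$, which is less than $H(W_2)=1/4$ because $c\le1/2$, with strict inequality since $c_k<1/2$ for $k\ge3$ and $s\le k-1$ excludes $k=2$. For $s=3$ it is $\tfrac59c+\binom k2(c/k)^2\tfrac13\le\tfrac59c+\tfrac{c^2}{6}$, to be compared with $8/27$. For $s=4$ it is bounded by $\tfrac{37}{64}c+\tfrac{7}{32}c^2+\tfrac{c^3}{24}$ (using $\binom ki k^{-i}\le1/i!$), to be compared with $81/256$. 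Inserting the bound $c_k<\gamma+41/(500k)$ from \cref{lem:crossing-upper} for $k\ge5$, and the exact values $c_3,c_4$ (for $k=3$, only $s=2$ is needed) otherwise, I then verify the numerical margins. The margin at $s=4$ is the delicate one: with $\gamma\approx0.4587$ the gap is small, which explains the constant $3/5000-1/(400k)$. There I would keep the exact factor $\binom ki k^{-i}=\prod_{m<i}(1-m/k)/i!$, expand to first order in $1/k$, and bound the error linearly in $1/k$. I expect this careful numerical verification to be the main obstacle.

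For part \textup{(iv)}, a maximizing $\mathsf P$ has the form of \cref{prop:normalization} and satisfies $0\le w\le W_k$ by \cref{lem:critical}. The weights are monotone since $(1-a)/a=c_k/k<2/(k-1)$. On each interval $[W_s,W_{s+1}]$ whose two endpoints satisfy $A\le H$ by \textup{(i)}–\textup{(ii)}, \cref{prop:strict-chord} gives $a^{-k}F_k(\mathsf P)<1$ for interior $w$, and also at interior nodes where the inequality is strict. This contradicts $F_k(\mathsf P)\ge F_k(\mathsf C_{k,\beta_k})=a^k$. For $k\le5$ the nodes $W_1,\ldots,W_k$ are all covered by \textup{(i)}–\textup{(ii)}; for $k\ge6$ the covered nodes are $W_1,\ldots,W_4$ together with $W_k$.
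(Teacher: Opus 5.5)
Your node formula for $A(W_s)$ and your arguments for (ii) and (iv) are correct. The comparison at $s=3$ in (i), however, fails as you set it up.

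You drop the factor $1-1/k$ and propose to check $\tfrac59c+\tfrac{c^2}{6}<\tfrac{8}{27}$. This is equivalent to $9c^2+30c<16$, i.e.\ $c<0.4677\ldots$. But $c_4\approx0.479$, $c_5\approx0.475$, and $c_k>\tfrac{11}{24}+\tfrac{2}{25k}>0.4677$ for $6\le k\le8$ by \cref{lem:crossing-lower-finite}. So the inequality you intend to verify is false for $4\le k\le8$, exactly where $s=3$ is needed.

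Keeping the exact term $\tfrac{(1-1/k)c^2}{6}$ repairs this for $k\ge5$ via \cref{lem:crossing-upper}. For $k=4$ you would still need a certified bound $c_4<0.481$ (the threshold is about $0.4812$), which you would have to establish separately.

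The paper avoids all of this with an ingredient your plan lacks: a \emph{lower} bound $c_k>\gamma>5/12$ for every $k\ge2$, where $\gamma=-\log(1-\mathrm{e}^{-1})$, obtained from $g_k(\gamma)<1$. With it, the ratios $Q_s=A(W_s)/H(W_s)$ satisfy $Q_2<Q_3<Q_4$ for $s\le\min\{4,k\}$. Then $Q_k=1$ from (ii) settles $k=3,4$ at once, with no numerical information about $c_3$ or $c_4$. For $k\ge5$, all of (i) reduces to the single inequality $Q_4<1$, which is (iii).

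In (iii), also be careful where you linearize. Take $u=1/k$, $c_+=\gamma+\tfrac{41}{500}u$, and $R(u)=\tfrac{37}{64}c_+ +\tfrac{7}{32}(1-u)c_+^2+\tfrac1{24}(1-u)(1-2u)c_+^3$, so that $A(W_4)<R(1/k)$. The margin at $k=5$ is only $\tfrac{81}{256}-R(1/5)\approx1.1\cdot10^{-4}$. The first-order expansion of the concave function $R$ at $u=0$ lies above $R$ and overestimates $R(1/5)$ by about $5\cdot10^{-4}$. An expansion in $1/k$ around $k=\infty$ therefore cannot deliver (iii) for small $k$ without separate case checks.

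The paper anchors at $k=5$ instead. It proves $R''<0$ on $[0,1/5]$, checks $R(1/5)<\tfrac{81}{256}-10^{-4}$ and $R'(1/5)>\tfrac1{400}$, and integrates $R'\ge R'(1/5)$ over $[1/k,1/5]$. This is precisely what produces the bound $\tfrac{3}{5000}-\tfrac{1}{400k}$.
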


        \begin{proof}
          We first prove $c>\gamma>5/12$, where $\gamma=-\log(1-\mathrm{e}^{-1})$.
          Since $k\ge2$, we have $0<1-1/k<1$.
          By the definition of $g_k$, we have
          \[
            g_k(\gamma)
            =\int_0^1\left(1+\frac{\gamma-u}{k}\right)^{k-1}\dd u
            <\int_0^1\mathrm{e}^{(1-1/k)(\gamma-u)}\dd u
            \le\left(\int_0^1\mathrm{e}^{\gamma-u}\dd u\right)^{1-1/k}=1.
          \]
          The first inequality follows from $\log(1+v)\le v$ for $v>-1$, with strict inequality in the integrands whenever $u\ne\gamma$.
          The second inequality follows from the concavity of $y\mapsto y^{1-1/k}$.
          Thus, $c>\gamma$ by the strict monotonicity of $g_k$.
          Since $\mathrm{e}<11/4$, we also have $\gamma>\mathrm{e}^{-1}+\tfrac12\mathrm{e}^{-2}>52/121>5/12$.

          By the definitions of $e_j^*$ and $f_j^*$, we have $f_j^*(W_s)=f_j^{(s)}$ at every node $W_s$.
          Also, the definition of $b_j$ and $(1-a)/a=c/k$ imply $b_j=\binom{k}{j}(c/k)^{k-j}$.
          Substituting the values of $f_j^{(s)}$ from \cref{fact:comparison-arrays}\textup{(i)} into the definition of $A$, we obtain
          \[
            A(W_s)
            =\sum_{h=1}^{s-1}\binom{k}{h}\left(\frac ck\right)^h
            \left[1-\left(1-\frac1s\right)^{s-h}\right]
            \qquad(1\le s\le k).
          \]
          Taking $s=1$, we obtain $A(W_1)=H(W_1)=0$.
          For $s=k$, by separating the two terms in the summand and applying the binomial theorem, we have
          \[
            \begin{aligned}
              A(W_k)
               & =\left(1+\frac ck\right)^k
              -\left(1+\frac{c-1}{k}\right)^k-1+\left(1-\frac1k\right)^k \\
               & =g_k(c)-1+\left(1-\frac1k\right)^k=H(W_k),
            \end{aligned}
          \]
          where we used $g_k(c)=1$.
          For $2\le s\le\min\{4,k\}$, define $Q_s=A(W_s)/H(W_s)$.
          To prove \textup{(i)}, we must show that $Q_s<1$ for $2\le s\le\min\{4,k-1\}$.
          Evaluating the sum for $A(W_s)$ at $s=2,3,4$ with $s\le k$ and dividing by $H(W_s)=(1-1/s)^s$, we obtain
          \begin{equation}\label{eq:sec3-initial-node-values}
            \begin{aligned}
              Q_2 & =2c,                                     \\
              Q_3 & =\frac{15c}{8}+\frac{9(1-1/k)c^2}{16},   \\
              Q_4 & =\frac{148c}{81}+\frac{56(1-1/k)c^2}{81}
              +\frac{32(1-1/k)(1-2/k)c^3}{243},
            \end{aligned}
          \end{equation}
          and hence
          \[
            \begin{aligned}
              Q_3-Q_2
               & =\frac{c}{16}\bigl(9(1-1/k)c-2\bigr) &          & (k\ge3), \\
              Q_4-Q_3
               & =\frac{c}{3888}
              \bigl(501(1-1/k)c+512(1-1/k)(1-2/k)c^2-186\bigr)
               &                                      & (k\ge4).
            \end{aligned}
          \]
          For $k\ge3$, the bounds $c>5/12$ and $1-1/k\ge2/3$ imply $Q_3-Q_2>c/32>0$.
          For $k\ge4$, using $c>5/12$, $1-1/k\ge3/4$, and $1-2/k\ge1/2$, we obtain $Q_4-Q_3>(c/3888)(187/48)>0$.
          When $k=2$, there is no integer $s$ with $2\le s<k$.
          Since $A(W_k)=H(W_k)$, we have $Q_2<Q_3=1$ when $k=3$ and $Q_2<Q_3<Q_4=1$ when $k=4$.
          Thus, \textup{(i)} holds for $2\le k\le4$.

          For $k\ge5$, the inequalities $Q_2<Q_3<Q_4$ show that it suffices to prove $Q_4<1$.
          We prove the stronger bound in \textup{(iii)}.

          Set $C=41/500$ and $c_+(u)=11467/25000+Cu$ for $0\le u\le1/5$.
          Define
          \begin{equation}\label{eq:sec3-fourth-comparison-function}
            R(u)=\frac{37c_+(u)}{64}+\frac{7(1-u)c_+(u)^2}{32}
            +\frac{(1-u)(1-2u)c_+(u)^3}{24}.
          \end{equation}
          By \cref{lem:crossing-upper}, $c<c_+(1/k)$ for $k\ge5$.
          Since the expression for $Q_4$ in \eqref{eq:sec3-initial-node-values} is strictly increasing in $c\ge0$ and $H(W_4)=81/256$, we obtain $A(W_4)<R(1/k)$.
          On $[0,1/5]$, we have $5/12<c_+(u)<1/2$.
          Differentiating \eqref{eq:sec3-fourth-comparison-function} and using $5/12<c_+(u)<1/2$ and $0\le u\le1/5$, we obtain
          \[
            \begin{aligned}
              R''(u)
               & =-\frac{7Cc_+(u)}8+\frac{7C^2(1-u)}{16}
              +\frac{c_+(u)^3}{6}-\frac{C(3-4u)c_+(u)^2}{4}
              +\frac{C^2(1-u)(1-2u)c_+(u)}4                       \\
               & \le-\frac{35C}{96}+\frac{9C^2}{16}+\frac1{48}<0.
            \end{aligned}
          \]
          Direct calculation shows that $R(1/5)<81/256-1/10000$ and $R'(1/5)>1/400$.
          Since $R''<0$ on $[0,1/5]$, we have $R'(u)\ge R'(1/5)>1/400$ throughout this interval.
          For $k\ge5$, using $A(W_4)<R(1/k)$ and integrating $R'$ over $[1/k,1/5]$, we obtain
          \[
            \begin{aligned}
              H(W_4)-A(W_4)
               & >\frac{81}{256}-R(1/k)                            \\
               & =\frac{81}{256}-R(1/5)+\int_{1/k}^{1/5}R'(u)\dd u \\
               & \ge\frac{81}{256}-R(1/5)+R'(1/5)(1/5-1/k)         \\
               & >\frac1{10000}+\frac{1/5-1/k}{400}
              =\frac3{5000}-\frac1{400k}>0.
            \end{aligned}
          \]
          This proves \cref{lem:initial-nodes}\textup{(iii)} and $Q_4<1$.
          The inequalities $Q_2<Q_3<Q_4$ now imply $A(W_s)<H(W_s)$ for $s=2,3,4$ when $k\ge5$.

          Finally, let $\mathsf P$ be any law maximizing $F_k$ among laws with mean at most $\beta_k$.
          By \cref{prop:normalization}, we may write $\mathsf P=(1-a)\delta_0+a\mathsf Q$, where $a=k\beta_k$ and $\mathsf Q\in\Pcal([0,1])$ has mean $1/k$.
          By \cref{lem:critical}, $0\le w\le W_k$.
          By \cref{lem:crossing-parameters}, $(1-a)/a=c/k\le1/(2k)<2/(k-1)$, so \cref{prop:strict-chord} applies.
          Since $\mathsf B_{\beta_k}$ has mean $\beta_k$ and attains $F_k(\mathsf B_{\beta_k})=a^k$, maximality and \cref{prop:strict-chord} imply
          \[
            1\le a^{-k}F_k(\mathsf P)\le1-H(w)+A(w),
          \]
          and hence $H(w)-A(w)\le0$.
          The difference $H-A$ is zero at $W_1=0$ and $W_k$, and positive at $W_s$ for $2\le s\le\min\{4,k-1\}$.
          Fix any integer $s$ with $1\le s<k$ such that $H(W_s)\ge A(W_s)$ and $H(W_{s+1})\ge A(W_{s+1})$.
          By \cref{lem:critical}\textup{(ii)} and the definitions of $e_j^*$ and $f_j^*$ and $A$, for every $u\in(W_s,W_{s+1})$ we have
          \[
            \begin{aligned}
              H(u)
               & >\frac{W_{s+1}-u}{W_{s+1}-W_s}H(W_s)
              +\frac{u-W_s}{W_{s+1}-W_s}H(W_{s+1})      \\
               & \ge\frac{W_{s+1}-u}{W_{s+1}-W_s}A(W_s)
              +\frac{u-W_s}{W_{s+1}-W_s}A(W_{s+1})
              =A(u).
            \end{aligned}
          \]
          For $2\le k\le5$, this proves $H(u)-A(u)>0$ for every $0<u<W_k$.
          Since $0\le w\le W_k$ and $H(w)-A(w)\le0$, we have $w=0$ or $w=W_k$.
          For $k\ge6$, the same argument excludes $0<w\le W_4$, leaving $w=0$ or $W_4<w\le W_k$.
        \end{proof}

        \begin{lemma}\label{lem:low-dimensional-crossing}
          Let $2\le k\le5$.
          For every nonnegative Borel law $\mathsf P$ of mean at most $\beta_k$, $F_k(\mathsf P)\le(k\beta_k)^k=1-(1-\beta_k)^k$.
          Moreover, every maximizing law has $w=0$ or $w=W_k$.
        \end{lemma}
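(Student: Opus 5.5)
The plan is to combine attainment (\cref{lem:attainment}), the normalization of maximizers (\cref{prop:normalization}), and the node comparisons in \cref{lem:initial-nodes} with the chord inequality of \cref{prop:strict-chord}. First I will show that every maximizer has $w\in\{0,W_k\}$, and then read off the value of $F_k$ at those two endpoints.

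Fix $2\le k\le5$ and $x=\beta_k$. By \cref{lem:attainment}, some law $\mathsf P$ attains $\Phi_k(\beta_k)$. Since $\mathsf B_{\beta_k}$ is feasible, $\Phi_k(\beta_k)\ge(k\beta_k)^k$. By \cref{prop:normalization}, every maximizer can be written as $\mathsf P=(1-a)\delta_0+a\mathsf Q$ with $a=k\beta_k$ and $m(\mathsf Q)=1/k$. \Cref{lem:initial-nodes}\textup{(iv)} then gives $w=0$ or $w=W_k$ for every maximizer, which is the second assertion.

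For the bound itself, I apply \cref{prop:strict-chord} to a maximizer; its hypothesis $(1-a)/a=c_k/k\le2/(k-1)$ holds by \cref{lem:crossing-parameters}. This gives $a^{-k}F_k(\mathsf P)\le1-H(w)+A(w)$. At $w=0=W_1$ and at $w=W_k$, \cref{lem:initial-nodes}\textup{(ii)} gives $A(w)=H(w)$. Hence $F_k(\mathsf P)\le a^k=(k\beta_k)^k$. Because $\mathsf P$ attains the supremum, every law of mean at most $\beta_k$ satisfies $F_k\le\Phi_k(\beta_k)=F_k(\mathsf P)\le(k\beta_k)^k$. Finally, $(k\beta_k)^k=1-(1-\beta_k)^k$ by the definition of $\beta_k$.

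There is essentially no obstacle, since all the work was done in \cref{lem:initial-nodes}. The only point needing care is logical: the bound must be applied to an actual maximizer, which exists by \cref{lem:attainment}, rather than to an arbitrary law. For an arbitrary law, \cref{lem:initial-nodes}\textup{(iv)} is unavailable, because $w$ could lie strictly between two nodes, and at such $w$ the bound $1-H(w)+A(w)$ need not be at most $1$.
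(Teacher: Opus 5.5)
Your proposal is correct and follows the paper's proof essentially step for step. You take a maximizer from \cref{lem:attainment}, normalize it via \cref{prop:normalization}, use \cref{lem:initial-nodes}\textup{(iv)} to force $w\in\{0,W_k\}$, and close with \cref{prop:strict-chord} and the endpoint equalities $A=H$ at $W_1=0$ and $W_k$. You are also right that the bound must be derived for an actual maximizer and then transferred to all feasible laws through attainment.
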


        \begin{proof}
          Let $\mathsf P$ be any maximizing law under the mean bound $\beta_k$.
          Such a law exists by \cref{lem:attainment}.
          By \cref{prop:normalization}, $\mathsf P$ has mean $\beta_k$ and has the form $\mathsf P=(1-a)\delta_0+a\mathsf Q$ with $a=k\beta_k$ and $m(\mathsf Q)=1/k$.
          Let $Y_1,\ldots,Y_k$ be independent with law $\mathsf Q$ and set $w=k\E(Y_1+\cdots+Y_k-1)_+$.
          By \cref{lem:initial-nodes}, $w=0$ or $w=W_k$.
          By \cref{lem:crossing-parameters}, $(1-a)/a=c_k/k\le1/(2k)<2/(k-1)$, so \cref{prop:strict-chord} applies and gives
          \[
            a^{-k}F_k(\mathsf P)\le1-H(w)+A(w)=1,
          \]
          where the equality follows from \cref{lem:initial-nodes}\textup{(ii)} at $w=0,W_k$.
          Both candidate laws attain $(k\beta_k)^k$, proving the maximum.
        \end{proof}

        \Cref{lem:dimension-one,lem:low-dimensional-crossing,prop:mean-transfer} establish the upper bound in \cref{thm:iid-tail} for $1\le k\le5$.
        For $k\ge6$, we shall use the following corollary.

        \begin{corollary}\label{cor:remaining-excess-range}
          Let $k\ge6$ and let $\mathsf P$ be a maximizing law with $m(\mathsf P)\le \beta_k$.
          Write $\mathsf P=(1-k\beta_k)\delta_0+k\beta_k\mathsf Q$ as in \cref{prop:normalization}, and let $Y_1,\ldots,Y_k$ be independent with law $\mathsf Q$.
          If $w=k\E(Y_1+\cdots+Y_k-1)_+$ satisfies $0<w<W_k$, then $W_4<w<W_k$.
        \end{corollary}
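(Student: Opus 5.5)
This corollary is a direct restatement of \cref{lem:initial-nodes}\textup{(iv)} for $k\ge6$, so the plan is simply to verify that its hypotheses are met and then read off the conclusion.

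First, the law $\mathsf P$ is maximizing under the mean bound $\beta_k$ with $0<\beta_k<1/k$. \Cref{prop:normalization} therefore applies: $\mathsf P$ has mean $\beta_k$, is supported on $[0,1]$, and decomposes as $\mathsf P=(1-a)\delta_0+a\mathsf Q$. Here $a=k\beta_k$ and $\mathsf Q\in\Pcal([0,1])$ has mean $1/k$, which is exactly the setting in which $w$ is defined.

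Second, we check that \cref{prop:strict-chord} applies. By \cref{lem:crossing-parameters}, $(1-a)/a=c_k/k\le1/(2k)<2/(k-1)$. Hence $b_1\le\cdots\le b_{k-1}$, and the bound $1\le a^{-k}F_k(\mathsf P)\le1-H(w)+A(w)$ holds, the first inequality coming from comparison with $\mathsf B_{\beta_k}$. This gives $H(w)\le A(w)$.

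Third, we invoke the node comparisons. \Cref{lem:initial-nodes}\textup{(i)},\textup{(ii)} show that $H-A$ vanishes at $W_1=0$ and is positive at $W_2,W_3,W_4$; this uses $k\ge6$, so that $4\le k-1$. On each interval $[W_s,W_{s+1}]$ with $1\le s\le3$, the function $A$ is affine and $H$ is strictly concave by \cref{lem:critical}\textup{(ii)}. Consequently $H>A$ on $(W_s,W_{s+1})$, and also at the nodes $W_2,W_3,W_4$, so $H(w)>A(w)$ for all $0<w\le W_4$. This contradicts $H(w)\le A(w)$, so $w\notin(0,W_4]$. Combined with the hypothesis $0<w<W_k$, we obtain $W_4<w<W_k$.

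There is no real obstacle: all the quantitative work, in particular the estimate $Q_4<1$ via \cref{lem:crossing-upper}, was already done in \cref{lem:initial-nodes}. The only point needing care is confirming $k-1\ge4$, so that $W_4$ is an interior node where the strict comparison holds.
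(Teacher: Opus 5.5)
Your proposal is correct and follows the paper's route. The paper proves this corollary in one line by citing \cref{lem:initial-nodes}\textup{(iv)}, and your three steps (normalization via \cref{prop:normalization}, applicability of \cref{prop:strict-chord} giving $H(w)\le A(w)$, and the affine-versus-strictly-concave comparison on $[W_s,W_{s+1}]$ for $s=1,2,3$) are exactly the argument the paper gives for that part of the lemma, so you have simply inlined its proof.
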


        \begin{proof}
          By \cref{lem:initial-nodes}\textup{(iv)}, we have $W_4<w<W_k$.
        \end{proof}

        We conclude this section with an example illustrating the necessity for further analysis of maximizing laws when $k\ge6$.
        For $k=6$ and $x=\beta_6$, we have $A(W_5)>H(W_5)$.
        Indeed, by the definition of $A$ and \cref{fact:comparison-arrays}\textup{(i)}, a direct calculation using $c_6>17/36$ gives
        \[
          A(W_5)-H(W_5)
          =\frac{369c_6}{625}+\frac{61c_6^2}{300}
          +\frac{c_6^3}{30}+\frac{c_6^4}{432}
          -\frac{1024}{3125}>\frac1{12500}>0.
        \]
        Thus, we need a more detailed analysis of maximizing laws to rule out the remaining range $W_4<w<W_k$.

        \section{Mean and variance of sums conditional on failure}
        \label{sec:contact-shortfall}

        For $1\le k\le5$ and $0\le x\le1/k$, \cref{lem:dimension-one,lem:low-dimensional-crossing,prop:mean-transfer} imply
        \[
          F_k(\mathsf P)\le\max\{1-(1-x)^k,(kx)^k\}
        \]
        for every nonnegative Borel probability law $\mathsf P$ with $m(\mathsf P)\le x$.
        For $k\ge6$ and $x=\beta_k$, \cref{cor:remaining-excess-range} leaves $W_4<w<W_k$ for any maximizing law $\mathsf P$ with $0<w<W_k$.
        We now use maximality to show the properties that the distribution conditional on failure must satisfy.

        For such a maximizing law $\mathsf P$, let $X_1,\ldots,X_k$ be independent with law $\mathsf P$, and write $S_k=\sum_{i=1}^kX_i$.
        We study $1-S_k$ conditional on $S_k<1$.
        We will prove $\Var(S_k\mid S_k<1)<\Var(S_k)/20$ by \cref{lem:contact-moment}\textup{\ref{item:contact-moment-bounds}} and \cref{prop:finite-shortfall-margin}.

        In \cref{sec:contact}, we prove $\Prob(S_{k-1}<1-z)=p(M-z)/D$ for $\mathsf P$-almost every $z$ and $\Var(S_k)=D(r-\delta)$, with $p,M,D,r,\delta$ defined there.
        A covariance calculation then proves the conditional variance bound in \cref{lem:contact-moment}.
        In \cref{sec:sec3-energy-comparison}, we choose $L(w)>0$ and prove
        \[
          \E(S_k-1)_+-L(w)F_{k-1}(\mathsf P)\ge E_*(w)-L(w)V_*(w),
        \]
        where $E_*$ and $V_*$ are the functions defined in that subsection.
        We combine this inequality with \cref{lem:node-shortfall-margin} in \cref{sec:node-margin-transfer} to prove $M-r<(M-x)/20-3x/20$, as required by \cref{prop:finite-shortfall-margin}.

        \subsection[Moment identities from perturbations in the probabilities]{Moment identities from perturbations in the probabilities}
        \label{sec:contact}

        {We perturb the probabilities in $\mathsf P$ while keeping their total mass and mean fixed. This yields the formulas for $h$ in \cref{obs:contact-identities}. We then compute the moments in \cref{cor:contact-moments} and bound $\Var(S_k\mid S_k<1)$ in \cref{lem:contact-moment}.}
        Let $k\ge6$, let $0<x<1/k$, and let $\mathsf P$ maximize $F_k$ among nonnegative Borel probability laws of mean at most $x$.
        By \cref{prop:normalization}, we have $\mathsf P([0,1])=1$, $m(\mathsf P)=x$, and $\mathsf P(\{0\})\ge1-kx>0$.
        For independent $X_i\sim\mathsf P$, write $S_j=\sum_{i=1}^jX_i$ and $p=1-F_k(\mathsf P)$.
        Since $\E S_k=kx<1$, we have $p>0$.
        Write $\E[S_k\mid S_k<1]=1-r$.
        Let $h(z)=\Prob(S_{k-1}+z\ge1)$.

        The function $h(z)$ can be seen as the success probability when one coordinate is fixed at $z$.
          {For $z\in[0,1]$, the following signed measure describes a perturbation in the probabilities:}
        \[
          \rho_z=\delta_z-\frac zx\mathsf P+\left(\frac zx-1\right)\delta_0.
        \]
        The total mass of $\rho_z$ is $1-z/x+(z/x-1)=0$, and its first moment is $z-(z/x)m(\mathsf P)=0$.
        Thus, adding $\epsilon\rho_z$ preserves both the total mass and the mean.
        For $0<\epsilon\le\frac12\min\{x,\mathsf P(\{0\})\}$, the restriction of $\mathsf P+\epsilon\rho_z$ to $(0,1]$ is nonnegative, since $1-\epsilon z/x\ge1/2$.
Its mass at zero is at least $\mathsf P(\{0\})-\epsilon>0$.
Hence, $\mathsf P+\epsilon\rho_z$ is a probability law of mean $x$.
Using $\int h\dd\mathsf P=F_k(\mathsf P)$ and $h(0)=F_{k-1}(\mathsf P)$, differentiating at $\epsilon=0+$ and applying maximality, we obtain
\begin{equation}\label{eq:contact-first-variation}
  \begin{aligned}
    0 & \ge\left.\frac{\rd}{\rd\epsilon}F_k(\mathsf P+\epsilon\rho_z)\right|_{0+}
    =k\int h\dd\rho_z                                                             \\
      & =k\left[h(z)-F_{k-1}(\mathsf P)
    -\frac{F_k(\mathsf P)-F_{k-1}(\mathsf P)}x z\right].
  \end{aligned}
\end{equation}
Taking $z=1$ and using $h(1)=1$ and $F_{k-1}(\mathsf P)\le(k-1)x<1$, we obtain $(F_k(\mathsf P)-F_{k-1}(\mathsf P))/x\ge1-F_{k-1}(\mathsf P)>0$.
Thus, the following definitions have positive denominators,
\[
  M=\frac{x(1-F_{k-1}(\mathsf P))}{F_k(\mathsf P)-F_{k-1}(\mathsf P)}
  \text{ and }
  D=M-x=\frac{xp}{F_k(\mathsf P)-F_{k-1}(\mathsf P)}>0.
\]
We retain $h,p,r,M,D$ for the arguments below.

The following observation gives an affine upper bound for $h$ on $[0,1]$,
with equality $\mathsf P$-almost everywhere.
It also shows that $\mathsf P$ is supported on $[0,M]$ and gives an explicit
formula for the failure probability $\Prob(S_{k-1}<1-z)$
for $\mathsf P$-almost every $z$.

\begin{observation}\label{obs:contact-identities}
  Let $k\ge6$ and $0<x<1/k$, and suppose that $\mathsf P$ maximizes $F_k$ among nonnegative Borel probability laws of mean at most $x$.
  With $h,p,M,D$ as defined above, the following statements hold.
  \begin{enumerate}[label=\textup{(\roman*)},leftmargin=*,itemsep=0pt]
    \item\label{item:contact-majorant}\label{item:contact-equality}
    For every $z\in[0,1]$,
    \[
      h(z)\le F_{k-1}(\mathsf P)
      +\frac{F_k(\mathsf P)-F_{k-1}(\mathsf P)}x z,
    \]
    with equality for $\mathsf P$-almost every $z$.

    \item\label{item:contact-failure} We have $x<M\le1$ and $\mathsf P([0,M])=1$.
    Moreover, for $\mathsf P$-almost every $z$,
    \[
      1-h(z)=\Prob(S_{k-1}<1-z)=\frac{p(M-z)}D.
    \]
  \end{enumerate}
\end{observation}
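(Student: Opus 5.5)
The inequality in (i) is exactly the first-variation inequality \eqref{eq:contact-first-variation}, which holds for every $z\in[0,1]$. For the almost-everywhere equality, set
\[
  \psi(z)=F_{k-1}(\mathsf P)+\frac{F_k(\mathsf P)-F_{k-1}(\mathsf P)}{x}z-h(z)\ge0 .
\]
Integrating $\psi$ against $\mathsf P$ and using $\int h\dd\mathsf P=F_k(\mathsf P)$ together with $m(\mathsf P)=x$ gives
\[
  \int\psi\dd\mathsf P=F_{k-1}(\mathsf P)+\bigl(F_k(\mathsf P)-F_{k-1}(\mathsf P)\bigr)-F_k(\mathsf P)=0 .
\]
A nonnegative function with zero integral vanishes $\mathsf P$-almost everywhere. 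Measurability of $h$ is no issue, since $h$ is nondecreasing and hence Borel.

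For (ii), write $L=(F_k(\mathsf P)-F_{k-1}(\mathsf P))/x>0$. By definition $M=(1-F_{k-1}(\mathsf P))/L$, and $D=M-x>0$ was already shown, so $M>x$. The bound $M\le1$ is equivalent to $L\ge1-F_{k-1}(\mathsf P)$, which is the $z=1$ case of (i) using $h(1)=1$.

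Next, on the full-measure set where equality holds in (i), we have
\[
  1-h(z)=1-F_{k-1}(\mathsf P)-Lz=L(M-z).
\]
Since $1-h(z)\ge0$, every such $z$ satisfies $z\le M$. Therefore $\mathsf P([0,M])=1$; recall that $\mathsf P$ is supported on $[0,1]$ by \cref{prop:normalization}.

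Finally, $D=xp/(F_k-F_{k-1})=p/L$, so $L=p/D$, giving $1-h(z)=p(M-z)/D$. Also $1-h(z)=\Prob(S_{k-1}<1-z)$ by the definition of $h$.

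The only delicate point is justifying the derivative in \eqref{eq:contact-first-variation}, which the paper has already done. Beyond that, each step is bookkeeping, and I expect no real obstacle.
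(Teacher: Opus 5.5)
Your proof is correct and follows the paper's argument essentially step for step. The inequality comes from the first-variation bound \eqref{eq:contact-first-variation}, and the almost-everywhere equality comes from integrating the nonnegative gap against $\mathsf P$ using $m(\mathsf P)=x$ and $\int h\dd\mathsf P=F_k(\mathsf P)$. Part (ii) then follows from the $z=1$ case together with the identities $p/D=(F_k(\mathsf P)-F_{k-1}(\mathsf P))/x$ and $1-F_{k-1}(\mathsf P)=pM/D$. Your remark that $h$ is Borel because it is nondecreasing is a small point the paper leaves implicit.
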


\begin{proof}
  The upper bound in \ref{item:contact-majorant} follows from \eqref{eq:contact-first-variation}.
  Since $\int z\dd\mathsf P(z)=x$ and $\int h\dd\mathsf P=F_k(\mathsf P)$, we have
  \[
    \int\left(
    F_{k-1}(\mathsf P)
    +\frac{F_k(\mathsf P)-F_{k-1}(\mathsf P)}x z-h(z)
    \right)\dd\mathsf P(z)=0.
  \]
  The integrand is nonnegative by \ref{item:contact-majorant}, so it vanishes $\mathsf P$-almost everywhere.
  This proves the equality assertion in \ref{item:contact-equality}.

  We have $M>x$ because $D>0$.
  Taking $z=1$ in \ref{item:contact-majorant} shows that $M\le1$.
  Also, the definitions imply $p/D=(F_k(\mathsf P)-F_{k-1}(\mathsf P))/x$ and $1-F_{k-1}(\mathsf P)=pM/D$.
  By \ref{item:contact-equality}, for $\mathsf P$-almost every $z$,
  we have
  $h(z)=F_{k-1}(\mathsf P)+(F_k(\mathsf P)-F_{k-1}(\mathsf P))z/x
    =F_{k-1}(\mathsf P)+pz/D$.
  Using $1-F_{k-1}(\mathsf P)=pM/D$, we obtain
  $1-h(z)=pM/D-pz/D=p(M-z)/D$.
  Since $1-h(z)=\Prob(S_{k-1}<1-z)$, this proves the formula
  in \ref{item:contact-failure}.
  Since its left-hand side is nonnegative and $p/D>0$, we have $z\le M$ for $\mathsf P$-almost every $z$.
  Hence, $\mathsf P([0,M])=1$, proving \ref{item:contact-failure}.
\end{proof}

The random variable $S_k-X_1$ is independent of $X_1$ and has the same law as $S_{k-1}$.
For every bounded Borel function $\varphi:[0,1]\to\mathbb R$, conditioning on $X_1$ and applying {\cref{obs:contact-identities}\textup{\ref{item:contact-failure}}}, we obtain
\begin{equation}\label{eq:conditional-reweighting}
  \begin{aligned}
    \E[\varphi(X_1)\mid S_k<1]
     & =\frac{\E[\varphi(X_1)(1-h(X_1))]}p \\
     & =\frac{\E[(M-X_1)\varphi(X_1)]}D.
  \end{aligned}
\end{equation}

\noindent Equivalently, the conditional law of $X_1$ given $S_k<1$ is $(M-z)\mathsf P(\rd z)/D$.

Set $\delta=1-kx$.
The decrease in the mean of $S_k$ after conditioning on $S_k<1$ is $B=\E S_k-\E[S_k\mid S_k<1]=r-\delta$.
We retain this notation in the following corollary and subsequent arguments.
The following corollary shows that $B>0$ and expresses the variances of
$X_1$ and $S_k$ in terms of $D$ and $B$.
It also gives identities for $pr$ and $1-F_{k-1}(\mathsf P)$.

\begin{corollary}\label{cor:contact-moments}
  Let $k\ge6$ and $0<x<1/k$, and suppose that $\mathsf P$ maximizes $F_k$ among nonnegative Borel probability laws of mean at most $x$.
  With $p,r,M,D,\delta,B$ as defined above, the following statements hold.
  \begin{enumerate}[label=\textup{(\roman*)},leftmargin=*,itemsep=0pt]
    \item\label{item:contact-moment-identity} We have $B>0$ and
    $
      \Var(X_1)=\frac{DB}{k}\text{ and }
      \Var(S_k)=DB=(M-x)(r-\delta).
    $
    \item\label{item:contact-excess-identities} We have
    $
      pr=\delta+\E(S_k-1)_+
      \text{ and }1-F_{k-1}(\mathsf P)=\frac{pM}{M-x}.
    $
  \end{enumerate}
\end{corollary}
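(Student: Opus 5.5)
The whole corollary should follow from the conditional reweighting identity \eqref{eq:conditional-reweighting} together with \cref{obs:contact-identities}. The plan is to compute $\E[S_k\mid S_k<1]$ in two ways and read off the identities.

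\textbf{Step 1 (identity for $pr$).} From the pointwise identity $(1-S_k)_+=1-S_k+(S_k-1)_+$ and $\E S_k=kx=1-\delta$, we get
\[
pr=\E(1-S_k)_+=\delta+\E(S_k-1)_+ .
\]

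\textbf{Step 2 (identity for $1-F_{k-1}(\mathsf P)$).} This is already contained in the proof of \cref{obs:contact-identities}. There we have $1-F_{k-1}(\mathsf P)=pM/D$ with $D=M-x$.

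\textbf{Step 3 (variances).} Apply \eqref{eq:conditional-reweighting} with $\varphi(z)=z$. This gives
\[
\E[X_1\mid S_k<1]=\frac{\E[(M-X_1)X_1]}{D}=\frac{Mx-\E X_1^2}{D}.
\]
By exchangeability, $\E[S_k\mid S_k<1]=k\,\E[X_1\mid S_k<1]$. Hence
\[
B=\E S_k-\E[S_k\mid S_k<1]=kx-\frac{k(Mx-\E X_1^2)}{D}.
\]
Multiply by $D/k=(M-x)/k$. This gives $DB/k=(M-x)x-Mx+\E X_1^2=\E X_1^2-x^2=\Var(X_1)$. Since the $X_i$ are independent, $\Var(S_k)=k\Var(X_1)=DB$. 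Also $B=r-\delta$, because $\E[S_k\mid S_k<1]=1-r$ and $\E S_k=1-\delta$. So $\Var(S_k)=(M-x)(r-\delta)$.

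\textbf{Step 4 (positivity of $B$).} This is the only step needing care. Since $D>0$, we have $B>0$ if and only if $\Var(X_1)>0$, i.e.\ if and only if $\mathsf P$ is not a point mass. By \cref{prop:normalization}, $\mathsf P(\{0\})\ge1-kx>0$ and $m(\mathsf P)=x>0$. So $\mathsf P$ charges both $0$ and $(0,1]$, and is therefore nondegenerate. Hence $\Var(X_1)>0$, which gives $B>0$.

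One point to check is that \eqref{eq:conditional-reweighting} applies to $\varphi(z)=z$. This is fine, since $z$ is bounded on $[0,1]$ and $\mathsf P$ is supported on $[0,1]$. I expect no real obstacle here; the argument is bookkeeping once the reweighting identity is in hand.
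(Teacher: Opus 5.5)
Your proposal is correct and follows the paper's proof essentially step for step. The paper also takes $\varphi(z)=z$ in \eqref{eq:conditional-reweighting} together with exchangeability to get $\Var(X_1)=DB/k$, deduces $B>0$ from the nondegeneracy of $\mathsf P$ (it has mass at $0$ and mean $x>0$), and obtains (ii) from $(1-S_k)_+-(S_k-1)_+=1-S_k$ and the definitions of $M$ and $D$.
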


\begin{proof}
  The event $S_k<1$ is unchanged by permuting the coordinates, so $\E[X_1\mid S_k<1]=(1-r)/k$.
  Taking $\varphi(z)=z$ in \eqref{eq:conditional-reweighting}, we obtain
  \[
    \frac{1-r}{k}=\frac{Mx-\E X_1^2}{D}
    =x-\frac{\Var(X_1)}D.
  \]
  Subtracting $(1-r)/k$ from $x$ in the last equality and multiplying by $D>0$, we obtain
  \[
    \begin{aligned}
      \Var(X_1)
       & =D\left(x-\frac{1-r}{k}\right)
      =\frac Dk(kx-1+r)                 \\
       & =\frac Dk(r-\delta)
      =\frac{DB}{k},
    \end{aligned}
  \]
  where we used $\delta=1-kx$ and $B=r-\delta$.
  The variables $X_1,\ldots,X_k$ are independent and have the same variance, so
  \[
    \Var(S_k)=\sum_{i=1}^k\Var(X_i)
    =k\Var(X_1)=DB=(M-x)(r-\delta).
  \]
  Since $x>0$ and $\mathsf P(\{0\})>0$, the law is nonconstant.
  Thus, $\Var(X_1)>0$, and $D>0$ implies $B>0$.
  This proves \ref{item:contact-moment-identity}.

  Taking expectations in $(1-S_k)_+-(S_k-1)_+=1-S_k$ and using $\E(1-S_k)_+=pr$ and $\E S_k=1-\delta$ proves the first equality in \ref{item:contact-excess-identities}.
  The second follows from the definitions of $M$ and $D$.
  This proves \ref{item:contact-excess-identities}.
\end{proof}

Write $v=\Var(S_k\mid S_k<1)$.
The following lemma expresses $B(M-r)$ as the sum of $v$ and a
nonnegative term, yielding $v\le B(M-r)$ and $r\le M$.
\begin{lemma}\label{lem:contact-moment}
  Let $k\ge6$ and $0<x<1/k$, and suppose that $\mathsf P$ maximizes $F_k$ among nonnegative Borel probability laws of mean at most $x$.
    {Use the notation above.}
  \begin{enumerate}[label=\textup{(\roman*)},leftmargin=*,itemsep=0pt]
    \item\label{item:contact-moment-formula} For independent $X,X'\sim\mathsf P$, we have
    \[
      B(M-r)=v+\frac{k}{2p}\E\left[(X-X')
      \int_{X'}^X(h(t)-F_{k-1}(\mathsf P))\dd t\right].
    \]
    \item\label{item:contact-moment-bounds} We have $v\le B(M-r)$, $r\le M$, and
    \[
      \frac{\Var(S_k\mid S_k<1)}{\Var(S_k)}
      \le\frac{M-r}{M-x}.
    \]
  \end{enumerate}
\end{lemma}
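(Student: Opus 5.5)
The plan is to compute $v=\Var(S_k\mid S_k<1)$ directly by exchangeability and to reduce everything to moments of a single coordinate. Write $T=S_k-X_1$, which is independent of $X_1$ and has the law of $S_{k-1}$. Set $g(t)=h(t)-F_{k-1}(\mathsf P)$ and $\Gamma(z)=\int_0^z g(t)\dd t$.

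\emph{Step 1: symmetrize the pairwise term.} For independent $X,X'\sim\mathsf P$, expanding the product and using symmetry gives
\[
\E\left[(X-X')\int_{X'}^X g(t)\dd t\right]=\E[(X-X')(\Gamma(X)-\Gamma(X'))]=2\Cov(X,\Gamma(X)).
\]
The identity in (i) therefore reads $B(M-r)=v+\frac kp\Cov(X_1,\Gamma(X_1))$.

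\emph{Step 2: express $v$ through one coordinate.} By exchangeability of the coordinates on $\{S_k<1\}$,
\[
v=k\Cov(S_k,X_1\mid S_k<1)=k\,\E[S_kX_1\mid S_k<1]-(1-r)^2,
\]
since $\E[S_k\mid S_k<1]=1-r$. Split $\E[S_kX_1\1_{\{S_k<1\}}]=\E[X_1^2(1-h(X_1))]+\E[X_1\,G(X_1)]$, where $G(z)=\E[T\1_{\{T<1-z\}}]$.

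\emph{Step 3: bring in the integral of $h$.} The key identity is
\[
\E(1-z-T)_+=\int_0^{1-z}\Prob(T<u)\dd u=\int_z^1(1-h(t))\dd t.
\]
It gives $G(z)=(1-z)(1-h(z))-\int_z^1(1-h(t))\dd t$. Then write
\[
\int_z^1 h=\int_0^1h-\Gamma(z)-F_{k-1}(\mathsf P)\,z.
\]
After this, $\E[X_1G(X_1)]$ is an explicit combination of $\E[X_1\Gamma(X_1)]$, $\E[X_1^a h(X_1)]$ and constants.

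\emph{Step 4: collapse to the identity.} Replace $1-h(X_1)$ by $p(M-X_1)/D$ almost surely, using \cref{obs:contact-identities}. This turns every $h$-moment into polynomial moments of $X_1$.

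Evaluate the constant $\int_0^1 h$ by integrating the Step 3 identity at $z=0$: it gives $\E(1-T)_+=1-\int_0^1 h$, and $\E(1-T)_+=1-(k-1)x+\E(T-1)_+$. The term $\E\Gamma(X_1)$ must match the one coming from $\Cov$. The cleanest route is to group $\E[X_1\Gamma(X_1)]-x\E\Gamma(X_1)$, checking that the leftover $\E\Gamma(X_1)$ and $\int_0^1 h$ terms cancel against $(1-r)^2$.

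Then substitute $\E X_1^2=x^2+DB/k$ from \cref{cor:contact-moments}, together with $pr=\delta+\E(S_k-1)_+$, $1-F_{k-1}=pM/D$, $D=M-x$ and $B=r-\delta$. The target is exactly $B(M-r)-\frac kp\Cov(X_1,\Gamma(X_1))$.

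This bookkeeping is the main obstacle. The algebra is error-prone, and I expect terms like $\E(T-1)_+$ to have to cancel via the $\int_0^1h$ expression. I would organize the computation in the variables $u=1-z$ to keep it short.

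\emph{Step 5: part (ii).} The function $h(t)=\Prob(S_{k-1}\ge 1-t)$ is nondecreasing, so $g$ is nondecreasing and $\Gamma$ is convex. Hence $(X-X')(\Gamma(X)-\Gamma(X'))\ge0$ pointwise, and the extra term in (i) is nonnegative, giving $v\le B(M-r)$.

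Since $v\ge0$ and $B>0$ by \cref{cor:contact-moments}, this yields $r\le M$. Finally, dividing by $\Var(S_k)=(M-x)B>0$ from \cref{cor:contact-moments} gives
\[
\frac{\Var(S_k\mid S_k<1)}{\Var(S_k)}\le\frac{M-r}{M-x}.
\]
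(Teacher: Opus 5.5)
Your route is sound and does close. As written, however, Step~4 is only a promise, and the one input in it that is not mere bookkeeping is never identified.

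Write $\bar F=F_{k-1}(\mathsf P)$ and $Q(z)=\E(1-T-z)_+=\int_z^1(1-h(t))\dd t$, which is your Step~3 quantity, so that $\Gamma(z)=Q(z)-Q(0)+(1-\bar F)z$. The identity in \cref{obs:contact-identities} only controls $h$ on $\supp\mathsf P$, whereas $\Gamma(X_1)$ depends on $h$ on all of $[0,X_1]$. So after grouping $\E[X_1\Gamma(X_1)]-x\E\Gamma(X_1)$, you still need a separate evaluation of $\E\Gamma(X_1)$.

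That evaluation comes from averaging Step~3 at $z=X_1$. This gives $\E Q(X_1)=\E(1-S_k)_+=pr$, and hence $\E\Gamma(X_1)=pr-Q(0)+(1-\bar F)x$. The cancellation of $\E(T-1)_+$ that you expect is this same fact, in the form $\E\int_0^{X_1}h=\E(S_k-1)_+-\E(S_{k-1}-1)_+$. With it, Steps~2--3 give
\[
\begin{aligned}
\E[S_kX_1\1_{\{S_k<1\}}]
&=\frac pD(Mx-\E X_1^2)+(1-\bar F)\E X_1^2-xQ(0)-\E[X_1\Gamma(X_1)]\\
&=px-\frac{pB}{k}+\frac{pMB}{k}-xpr-\Cov(X_1,\Gamma(X_1)).
\end{aligned}
\]
Since $kx(1-r)-(1-r)^2=(1-r)B$, this yields $v=B(M-r)-\frac kp\Cov(X_1,\Gamma(X_1))$, which is (i) by your Step~1. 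Note that the $Q(0)$ terms (equivalently the $\int_0^1h$ terms) are eliminated through $pr$, not against $(1-r)^2$; the $(1-r)^2$ pairs with $kx(1-r)$.

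The paper reaches the same identity in two lines by working with $\Cov(X_1,Q(X_1))$ instead of $\Cov(X_1,\Gamma(X_1))$.
\begin{itemize}
\item Since $Q(X_1)=\E[(1-S_k)_+\mid X_1]$, exchangeability gives directly $\Cov(X_1,Q(X_1))=\frac1k\E[(\delta-(1-S_k))(1-S_k)_+]=-\frac pk(v+rB)$.
\item The pairwise formula, together with $Q(z)-Q(z')=-\int_{z'}^z(1-h)$, $1-h(0)=pM/D$ and $\E(X-X')^2=2DB/k$, gives $2\Cov(X_1,Q(X_1))=-\frac{2pMB}{k}+\E\bigl[(X-X')\int_{X'}^X(h(t)-h(0))\dd t\bigr]$.
\end{itemize}
Conditioning $(1-S_k)_+$ on $X_1$ absorbs exactly the terms that make your expansion laborious: $\int_0^1h$, $\E(S_{k-1}-1)_+$ and $G$. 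The affine part of $\Gamma$ contributes only $(1-\bar F)\Var(X_1)=pMB/k$. Your expansion is more explicit, but it gains nothing in generality.

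One correction in Step~5. Convexity of $\Gamma$ does not give $(X-X')(\Gamma(X)-\Gamma(X'))\ge0$; the convex function $z\mapsto-z$ violates it. What you need is that $\Gamma$ is nondecreasing on $[0,1]$, which holds because $g(t)=h(t)-h(0)\ge0$ for $t\ge0$. With that fix, your derivation of (ii) coincides with the paper's.
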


\begin{proof}
  Let $T=S_k-X_1$ and $Q(z)=\E(1-T-z)_+$.
  Since $T$ is independent of $X_1$, we have $Q(X_1)=\E[(1-S_k)_+\mid X_1]$.
  Symmetry and $S_k-kx=\delta-(1-S_k)$ give
  \[
    \begin{aligned}
      \Cov(X_1,Q(X_1))
       & =\frac1k\E[(S_k-kx)(1-S_k)_+] \\
       & =\frac pk(\delta r-r^2-v)
      =-\frac pk(v+rB).
    \end{aligned}
  \]
  By Fubini's theorem, $Q(z)-Q(z')=-\int_{z'}^z(1-h(t))\dd t$.
  For independent $X,X'\sim\mathsf P$, we have $2\Cov(X,Q(X))=\E[(X-X')(Q(X)-Q(X'))]$.
  Using $1-h(0)=pM/D$ and $\E(X-X')^2=2DB/k$ from {\cref{cor:contact-moments}\textup{\ref{item:contact-excess-identities}} and \textup{\ref{item:contact-moment-identity}}, respectively}, we obtain
  \[
    \begin{aligned}
      -\frac pk(v+rB)
       & =-\frac12\E\left[(X-X')\int_{X'}^X(1-h(t))\dd t\right] \\
       & =-\frac{pMB}{k}
      +\frac12\E\left[(X-X')\int_{X'}^X(h(t)-h(0))\dd t\right].
    \end{aligned}
  \]
  Rearranging and using $h(0)=F_{k-1}(\mathsf P)$ proves \ref{item:contact-moment-formula}.
  Since $h(t)\ge h(0)$ for $t\ge0$, we have $(X-X')\int_{X'}^X(h(t)-h(0))\dd t\ge0$.
  Thus, $v\le B(M-r)$, and $r\le M$ follows from $v\ge0$ and $B>0$.
  Since $D>0$, division by $\Var(S_k)=DB$ gives $v/\Var(S_k)\le(M-r)/D$.
\end{proof}

We now bound $M-r$ in terms of $D=M-x$.
The following lemma gives the bound $0\le M-r<(D-3x)/20$
for maximizing laws at $x=\beta_k$ with $0<w<W_k$.
In particular, it shows that $D>3x$.

\begin{lemma}\label{prop:finite-shortfall-margin}
  Let $k\ge6$ and $x=\beta_k$.
  Suppose that $\mathsf P$ maximizes $F_k$ among Borel probability laws on $[0,\infty)$ of mean at most $x$.
  Assume $0<w<W_k$, with $w=k\E(R_k-1)_+$ as in \cref{sec:common-bounds}.
  Use $M$, $D$ and $r$ from \cref{sec:contact}.
  \begin{enumerate}[label=\textup{(\roman*)},leftmargin=*,itemsep=0pt]
    \item\label{item:shortfall-margin} We have
    \[
      0\le M-r<\frac{M-x}{20}-\frac{3x}{20}.
    \]
    \item\label{item:shortfall-ratio} We have $D>3x$ and
    \[
      0\le\frac{M-r}{D}<\frac1{20}-\frac{3x}{20D}.
    \]
  \end{enumerate}
\end{lemma}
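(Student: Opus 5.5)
The lower bound $0\le M-r$ is already in \cref{lem:contact-moment}\textup{(ii)}, so only the strict upper bound in \textup{(i)} needs work. Part \textup{(ii)} then follows by dividing by $D=M-x>0$. The upper bound also forces $(D-3x)/20>M-r\ge0$, i.e.\ $D>3x$.

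My plan is to turn the target into an inequality between quantities that \cref{fact:retained-sums} expresses in terms of the retained sums. By \cref{cor:contact-moments}\textup{(ii)}, $pr=\delta+\E(S_k-1)_+$ and $pM=D(1-F_{k-1}(\mathsf P))$. Hence
\[
  p(M-r)=D\bigl(1-F_{k-1}(\mathsf P)\bigr)-\delta-\E(S_k-1)_+,
\]
so \textup{(i)} is equivalent to
\[
  D\Bigl(1-F_{k-1}(\mathsf P)-\frac p{20}\Bigr)+\frac{3xp}{20}<\delta+\E(S_k-1)_+ .
\]
At $x=\beta_k$ maximality gives $F_k(\mathsf P)=a^k$, so $p=1-a^k$ and $\delta=1-a$ are explicit constants. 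Also $D=xp/(a^k-F_{k-1}(\mathsf P))$ is an explicit decreasing function of $F_{k-1}(\mathsf P)$. The left side is therefore a function of the single unknown $F_{k-1}(\mathsf P)$, and the inequality reads
\[
  \E(S_k-1)_+ > \Lambda\bigl(F_{k-1}(\mathsf P)\bigr)
\]
for an explicit function $\Lambda$. I would linearise $\Lambda$ by a tangent or chord with slope $L=L(w)>0$ in the relevant range. This reduces the claim to a lower bound
\[
  \E(S_k-1)_+-L\,F_{k-1}(\mathsf P)\ge E_*(w)-L\,V_*(w),
\]
followed by a numerical comparison of $E_*(w)-L V_*(w)$ with the intercept for $W_4<w<W_k$; that range is available by \cref{cor:remaining-excess-range}.

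For the lower bound, I would substitute the formulas of \cref{fact:retained-sums}\textup{(i)}:
\[
  \E(S_k-1)_+=\frac{a^k}{k}\Bigl(w+\sum_{j<k}b_je_j\Bigr),\qquad
  F_{k-1}(\mathsf P)=a^{k-1}\sum_{j<k}\binom{k-1}{j}\Bigl(\frac{1-a}{a}\Bigr)^{k-1-j}f_j .
\]
The left side becomes a linear functional with positive coefficients on the $e_j$ and negative coefficients on the $f_j$. The constraints $e_1=0$, $e_{j+1}\ge e_j+f_j$, $f_j\le C_j(e_j)$ and $e_k=w$ are exactly those of \cref{lem:allocation}. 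I would minimise over this polytope-like region in the same way, using the concavity of $C_j$ and the telescoping $\Gamma_j$ argument with the signs adapted. The expected extremiser is the interpolated array $(e_j^*(w),f_j^*(w))$. This defines $E_*(w)=\frac{a^k}{k}\bigl(w+\sum_{j<k}b_je_j^*(w)\bigr)$ and $V_*(w)$ as the analogous interpolated value of $F_{k-1}$. Both are affine on each $[W_s,W_{s+1}]$, so the final comparison reduces to node values plus the endpoint behaviour near $W_k$.

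The main obstacle is the last step: checking the resulting numerical inequality uniformly in $k\ge6$ and $w\in(W_4,W_k)$. This is where \cref{lem:node-shortfall-margin}, whose proof is deferred to the appendix, is needed, together with the gap $H(W_4)-A(W_4)>3/5000-1/(400k)$ from \cref{lem:initial-nodes}\textup{(iii)} and the bounds on $c_k$ from \cref{lem:crossing-upper}. I would first try choosing $L$ so that the affine functional vanishes at $w=W_k$, where equality holds for $\mathsf B_{\beta_k}$. Strictness on the open range would then come from positivity at $W_4$ and affinity or concavity in between. If the margin at the nodes near $W_k$ is too thin, I would instead let $L$ depend on the interval $[W_s,W_{s+1}]$.
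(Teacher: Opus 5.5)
Your skeleton is the paper's: combine the weighted bound $\E(S_k-1)_+-L(w)F_{k-1}(\mathsf P)\ge E_*(w)-L(w)V_*(w)$ of \cref{lem:coupled-multipliers} with the margin $r_*-\frac{19}{20}M_*>\frac x5$ of \cref{lem:node-shortfall-margin}. The step you flag as the main obstacle is already packaged in that lemma. What actually remains is transferring the margin from $(r_*(w),M_*(w))$ to the actual $(r,M)$.

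There is also a false step in your reduction to one unknown. Maximality gives only $F_k(\mathsf P)\ge a^k$, i.e.\ $p\le p_0$, not $F_k(\mathsf P)=a^k$; the equality $\Phi_k(\beta_k)=a^k$ is exactly what this part of the paper is proving. This is repairable. For fixed $V=F_{k-1}(\mathsf P)$ your left-hand side $\frac{xp}{1-p-V}\bigl(1-V-\frac p{20}\bigr)+\frac{3xp}{20}$ is increasing in $p$, so setting $p=p_0$ is legitimate. The paper uses $p\le p_0$ in the same way, in $M\le M_*(w)$ and when replacing $1/p$ by $1/p_0$. But you have to argue this rather than assert equality.

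The genuine gap is in the linearisation. With $p=p_0$ you get $\Lambda(V)=C_0+\frac{19}{20}\cdot\frac{xp_0^2}{a^k-V}$ with $C_0$ independent of $V$. This is convex and increasing, so a tangent lies below it and is useless.

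What you need is that the line $\ell(V)=E_*(w)+L\,(V-V_*(w))$, which lies above $\Lambda$ at $V=V_*(w)$, still lies above it at the actual $V\le V_*(w)$. Write $D_0(V)=xp_0/(a^k-V)$, so that $D_0(V)\ge D$. The condition is that the secant slope $\frac{19}{20}D_0(V)D_0(V_*(w))/x$ of $\Lambda$ be at least $L$. That requires a \emph{lower} bound on $D$, and your outline has none.

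Nor can you take $L$ small, since the multipliers $\Gamma_j$ in \cref{lem:coupled-energy} must stay nonnegative. With the paper's $L(w)=s(s-1)/(2k)$ one even has $\ell(0)=E_*(w)-L(w)V_*(w)<0<\Lambda(0)$.

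The paper supplies the missing ingredient as $x<r_b\le r\le M\le M_*$:
\begin{itemize}
\item Conditioning on $Y_1,\ldots,Y_k$ gives $pr\ge\frac{aw}{k}+\delta(1-f_k)\ge\frac{aw}{k}+\delta H(w)\ge x(s+c)H(W_s)$, so $r\ge r_b=x(s+c)H(W_s)/p_0$.
\item The inequality $r\le M$ is \cref{lem:contact-moment}.
\item These give $r_*-r\le\frac{Lx(M_*-M)}{(M-x)(M_*-x)}\le\frac{Lx}{(r_b-x)^2}(M_*-M)<\frac9{10}(M_*-M)$ by \cref{lem:coupled-multipliers}\textup{(ii)}.
\item Since $\frac9{10}<\frac{19}{20}$, the node margin transfers to $r-\frac{19}{20}M>\frac x5$.
\end{itemize}
So $L$ is squeezed from both sides. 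Your proposal supplies neither the lower bound on $D$ nor the compatibility check $L(w)x<\frac9{10}(r_b-x)^2$.
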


The inequality $M-r<(D-3x)/20$ and {\cref{lem:contact-moment}\textup{\ref{item:contact-moment-bounds}}} give
\[
  \frac{\Var(S_k\mid S_k<1)}{\Var(S_k)}
  \le\frac{M-r}{D}
  <\frac1{20}-\frac{3x}{20D}.
\]
We will prove \cref{prop:finite-shortfall-margin} in \cref{sec:node-margin-transfer}.

\subsection[A joint bound for the expected excess and the tail probability]{A joint bound for $\E(S_k-1)_+$ and $\Prob(S_{k-1}\ge1)$}
\label{sec:sec3-energy-comparison}
\label{sec:sec3-multipliers}

The inequality $M-r<(D-3x)/20$ is equivalent to $r-19M/20>x/5$.
{To prove $r-19M/20>x/5$, we will bound $\E(S_k-1)_+-L(w)F_{k-1}(\mathsf P)$ from below by $E_*(w)-L(w)V_*(w)$, with $L,E_*,V_*$ defined below.}
Let $k\ge6$ and $0<x<1/k$, and set $a=kx$.
Let $\mathsf P=(1-a)\delta_0+a\mathsf Q$, where $\mathsf Q\in\Pcal([0,1])$ has mean $1/k$.
As in \cref{sec:common-bounds}, let $Y_1,\ldots,Y_k$ be independent with law $\mathsf Q$, and let $I_1,\ldots,I_k$ be independent Bernoulli variables with $\Prob(I_i=1)=a$, independent of $Y_1,\ldots,Y_k$.
Set $X_i=I_iY_i$.
Thus, each $Y_i$ is retained with probability $a$ and replaced by zero otherwise, independently for different $i$, so $X_1,\ldots,X_k$ are independent with common law $\mathsf P$.
We retain the notation $R_j,f_j,e_j,w,b_j$ from \cref{sec:common-bounds} and $S_j$ from \cref{sec:contact}.
For $1\le j\le k-1$, define the weights
\[
  v_j=\binom{k-1}{j}a^j(1-a)^{k-1-j}.
\]
Define the functions $E_*$ and $V_*$ by
\[
  E_*(w)=\frac{a^k}{k}\left(w+\sum_{j<k}b_je_j^*(w)\right)\text{ and }
  V_*(w)=\sum_{j<k}v_jf_j^*(w).
\]
By \cref{fact:comparison-arrays}, both $E_*$ and $V_*$ are affine on each interval $[W_s,W_{s+1}]$.

We bound how far $\E(S_k-1)_+$ can fall below $E_*(w)$ in terms of the gap $V_*(w)-F_{k-1}(\mathsf P)$.
For each fixed integer $k\ge6$, define the piecewise constant function $L$ on $[W_4,W_k)$ by
\[
  L(w)=\frac{s(s-1)}{2k}
  \text{ when }W_s\le w<W_{s+1},
\]
where $4\le s<k$.
  {We shall combine the inequality $\E(S_k-1)_+-L(w)F_{k-1}(\mathsf P)\ge E_*(w)-L(w)V_*(w)$ with the bounds in \cref{lem:node-shortfall-margin} to prove \cref{prop:finite-shortfall-margin}.}

For $k\ge6$ and $w\in[W_4,W_k)$, let $s$ be the unique integer with $4\le s<k$ and $W_s\le w<W_{s+1}$.
For $k-s<j<k$, define
\[
  \alpha_j=\frac{f_j^{(s+1)}-f_j^{(s)}}
  {e_j^{(s+1)}-e_j^{(s)}}>0.
\]
Define
\[
  \Gamma_{k-s} =L(w)v_{k-s} \text{ and }\Gamma_j=\frac{\Gamma_{j-1}-a^kb_j/k+L(w)v_j\alpha_j}{1+\alpha_j}
  \text{ for $k-s<j<k$.}
\]
The following lemma bounds $F_{k-1}(\mathsf P)$ from above by $V_*(w)$.
For $w\in[W_4,W_k)$, it also gives a lower bound for
$\E(S_k-1)_+-L(w)F_{k-1}(\mathsf P)$ when the multipliers
$\Gamma_j$ are nonnegative.
\begin{lemma}\label{lem:coupled-energy}
  Let $k\ge6$, $0<a<1$, and $\mathsf P=(1-a)\delta_0+a\mathsf Q$, where $\mathsf Q\in\Pcal([0,1])$ has mean $1/k$.
  For independent $Y_i\sim\mathsf Q$, set $w=k\E(Y_1+\cdots+Y_k-1)_+\in[0,W_k]$.
  Use the weights $v_j$ and the functions $E_*$ and $V_*$ defined above, and suppose that $v_1\le\cdots\le v_{k-1}$.
  For independent $X_i\sim\mathsf P$, write $S_k=\sum_{i=1}^kX_i$.
  \begin{enumerate}[label=\textup{(\roman*)},leftmargin=*,itemsep=0pt]
    \item\label{item:coupled-profile-general} We have $F_{k-1}(\mathsf P)\le V_*(w)$.
    \item\label{item:coupled-energy-bound} Suppose in addition that $w\in[W_4,W_k)$, let $s$ satisfy $4\le s<k$ and $W_s\le w<W_{s+1}$, and use $L(w)$ and $\Gamma_j$ defined above.
    If $\Gamma_j\ge0$ for every $k-s\le j<k$, then
    \[
      \E(S_k-1)_+-L(w)F_{k-1}(\mathsf P)\ge E_*(w)-L(w)V_*(w).
    \]
  \end{enumerate}
\end{lemma}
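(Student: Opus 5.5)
Both parts follow the template of \cref{lem:allocation}, applied to the retained-sum quantities $e_j,f_j$ of $\mathsf Q$. By \cref{fact:retained-sums}\textup{(ii)} we have $e_1=0$ and $e_{j+1}\ge e_j+f_j$. By \cref{lem:capacities} we have $f_j\le C_j(e_j)$, and $e_k=w$.

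For \textup{(i)}, conditioning on the number of retained samples among $X_1,\ldots,X_{k-1}$ gives
\[
  F_{k-1}(\mathsf P)=\sum_{j=1}^{k-1}v_jf_j,
\]
as in \cref{fact:retained-sums}\textup{(i)}. Since $v_1\le\cdots\le v_{k-1}$ and $e_k\le w$, applying \cref{lem:allocation} with $\kappa_j=v_j$ and $w\in[0,W_k]$ yields $\sum_jv_jf_j\le\sum_jv_jf_j^*(w)=V_*(w)$.

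For \textup{(ii)}, use \cref{fact:retained-sums}\textup{(i)} in the form
\[
  \E(S_k-1)_+=\frac{a^k}{k}\Bigl(w+\sum_{j<k}b_je_j\Bigr).
\]
The target inequality is then equivalent to
\[
  \sum_{j<k}\Bigl[L(w)v_jg_j-\frac{a^k}{k}b_jd_j\Bigr]\le0,
\]
where $d_j=e_j-e_j^*(w)$ and $g_j=f_j-f_j^*(w)$. The $w$ terms cancel because $e_k=e_k^*(w)=w$, so $d_k=0$. As in the proof of \cref{lem:allocation}, we have $d_{j+1}\ge d_j+g_j$, and for $k-s<j<k$ concavity of $C_j$ gives $g_j\le\alpha_jd_j$. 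For $j\le k-s$ we have $e_j^*(w)=0$, hence $d_j=e_j\ge0$. Those terms $-\frac{a^k}{k}b_jd_j$ are therefore nonpositive and may be dropped. Bounding the remaining early part by monotonicity of the $v_j$ and telescoping $f_j\le e_{j+1}-e_j$, exactly as in \eqref{eq:sec3-allocation-initial}, gives
\[
  \sum_{j\le k-s}L(w)v_jg_j\le L(w)v_{k-s}d_{k-s+1}=\Gamma_{k-s}d_{k-s+1}.
\]

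Next I would propagate through the middle indices with the potential identity. For $k-s<j<k$, the recursion defining $\Gamma_j$ gives
\[
  \Gamma_{j-1}d_j+L(w)v_jg_j-\frac{a^k}{k}b_jd_j
  =\Gamma_j(d_j+g_j)+\bigl(L(w)v_j-\Gamma_j\bigr)(g_j-\alpha_jd_j).
\]
To check it, expand and use $\Gamma_j(1+\alpha_j)=\Gamma_{j-1}-a^kb_j/k+L(w)v_j\alpha_j$. The first term is at most $\Gamma_jd_{j+1}$ provided $\Gamma_j\ge0$, which is the hypothesis. The second term is at most $0$ provided $L(w)v_j\ge\Gamma_j$. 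Iterating the identity gives a total of at most $\Gamma_{k-1}d_k=0$.

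The main obstacle is the sign requirement $L(w)v_j-\Gamma_j\ge0$, which is not among the hypotheses. I would first try to prove it by induction. The recursion writes $\Gamma_j$ as the weighted average
\[
  \Gamma_j=\frac{1}{1+\alpha_j}\,\Gamma_{j-1}+\frac{\alpha_j}{1+\alpha_j}\,L(w)v_j-\frac{a^kb_j}{k(1+\alpha_j)}.
\]
So $\Gamma_j\le L(w)v_j$ follows from $\Gamma_{j-1}\le L(w)v_{j-1}\le L(w)v_j$, using monotonicity of $v_j$ and dropping the subtracted term. The base case is the equality $\Gamma_{k-s}=L(w)v_{k-s}$.

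Two further points need care. The telescoping above should be checked with a carefully chosen endpoint, since $f_{k-s}^*(w)$ need not vanish. One must also handle the region where $C_j$ is evaluated beyond the current segment, where concavity still gives $g_j\le\alpha_jd_j$ whatever the sign of $d_j$.
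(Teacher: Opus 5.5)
Your proposal is correct and follows the paper's proof essentially step for step. Part (i) is \cref{lem:allocation} with $\kappa_j=v_j$. Part (ii) uses the same reduction to $\sum_{j<k}\bigl(L(w)v_jg_j-a^kb_jd_j/k\bigr)\le0$, the same early-index bound by $\Gamma_{k-s}d_{k-s+1}$ via $e^*_{k-s+1}(w)=f^*_{k-s}(w)$, the same identity for $k-s<j<k$, and the same induction giving $0\le\Gamma_j\le L(w)v_j$.
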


\begin{proof}
  By \cref{fact:retained-sums} with $x=a/k$, we have $F_{k-1}(\mathsf P)=\sum_{j<k}v_jf_j$ and $e_{j+1}\ge e_j+f_j$.
  Together with $f_j\le C_j(e_j)$ from \cref{lem:capacities}, these conditions allow us to apply \cref{lem:allocation} with weights $v_j$ and obtain $F_{k-1}(\mathsf P)\le\sum_{j<k}v_jf_j^*(w)=V_*(w)$.
  This proves \ref{item:coupled-profile-general}.
  We now assume the additional hypotheses of \ref{item:coupled-energy-bound}.
  Set $d_j=e_j-e_j^*(w)$ for $1\le j\le k$, and $g_j=f_j-f_j^*(w)$ for $1\le j<k$.
  By \cref{fact:retained-sums}\textup{(ii)}, we have
  $e_{j+1}\ge e_j+f_j$ for $1\le j<k$.
  By \cref{fact:comparison-arrays}\textup{(ii)}, we have
  $e_{j+1}^*(w)=e_j^*(w)+f_j^*(w)$.
  Subtracting this equality from $e_{j+1}\ge e_j+f_j$,
  we obtain
  $d_{j+1}=e_{j+1}-e_{j+1}^*(w)
    \ge(e_j-e_j^*(w))+(f_j-f_j^*(w))
    =d_j+g_j$.
  Moreover, $d_k=0$, since
  $e_k=e_k^*(w)=w$.
  For $k-s<j<k$, \cref{lem:capacities} gives that $C_j$ is concave and satisfies $f_j\le C_j(e_j)$.
  Also, $f_j^*(w)=C_j(e_j^*(w))$ by the definition of $C_j$ and  \cref{fact:comparison-arrays}.
  The function $C_j$ is affine with slope $\alpha_j$ on $[e_j^{(s)},e_j^{(s+1)}]$, which contains $e_j^*(w)$.
  Hence, by concavity,
  \[
    g_j\le C_j(e_j)-C_j(e_j^*(w))\le\alpha_jd_j.
  \]

  By \cref{fact:retained-sums}\textup{(i)} and the definitions of $E_*$ and $V_*$, the inequality
  \[
    \E(S_k-1)_+-L(w)F_{k-1}(\mathsf P)\ge E_*(w)-L(w)V_*(w)
  \]
  is equivalent to $\sum_{j<k}(L(w)v_jg_j-a^kb_jd_j/k)\le0$.
  We first handle the indices $j\le k-s$.
  By \cref{fact:comparison-arrays}, we have $e_j^*(w)=0$ and hence $d_j\ge0$ for $j\le k-s$, and $f_j^*(w)=0$ for $j<k-s$.
  Also, $e_{k-s+1}^*(w)=f_{k-s}^*(w)$ by \cref{fact:comparison-arrays}.
  {Summing $e_{j+1}-e_j\ge f_j$ from \cref{fact:retained-sums}\textup{(ii)} over $1\le j\le k-s$ and using $e_1=0$, we obtain $\sum_{j\le k-s}f_j\le e_{k-s+1}$.}
  Using $v_j\le v_{k-s}$ for $j\le k-s$ and $\Gamma_{k-s}=L(w)v_{k-s}$, we obtain
  \[
    \sum_{j\le k-s}\left(L(w)v_jg_j-\frac{a^kb_j}{k}d_j\right)
    \le L(w)v_{k-s}\left(\sum_{j\le k-s}f_j-f_{k-s}^*(w)\right)
    \le\Gamma_{k-s} d_{k-s+1}.
  \]
  By hypothesis, $\Gamma_j\ge0$.
  To prove the upper bound $\Gamma_j\le L(w)v_j$, start with $\Gamma_{k-s}=L(w)v_{k-s}$.
  If $\Gamma_{j-1}\le L(w)v_{j-1}\le L(w)v_j$, then, by the definition of $\Gamma_j$,
  \[
    (1+\alpha_j)\Gamma_j
    =\Gamma_{j-1}-\frac{a^kb_j}{k}+L(w)v_j\alpha_j
    \le L(w)v_j(1+\alpha_j).
  \]
  Induction yields $0\le\Gamma_j\le L(w)v_j$ for every $k-s\le j<k$.
  For $k-s<j<k$, using $d_j+g_j\le d_{j+1}$, $g_j\le\alpha_jd_j$, and the definition of $\Gamma_j$, we obtain
  \[
    \begin{aligned}
      L(w)v_jg_j-\frac{a^kb_j}{k}d_j
       & =\Gamma_j(d_j+g_j)-\Gamma_{j-1}d_j
      +(L(w)v_j-\Gamma_j)(g_j-\alpha_jd_j)   \\
       & \le\Gamma_jd_{j+1}-\Gamma_{j-1}d_j.
    \end{aligned}
  \]
  Summing over $k-s<j<k$, we obtain
  \[
    \sum_{k-s<j<k}\left(L(w)v_jg_j-\frac{a^kb_j}{k}d_j\right)
    \le\Gamma_{k-1}d_k-\Gamma_{k-s} d_{k-s+1}.
  \]
  Combining the estimates for $j\le k-s$ and $k-s<j<k$ and using $d_k=0$, we obtain $\sum_{j<k}(L(w)v_jg_j-a^kb_jd_j/k)\le0$.
\end{proof}

The next lemma proves $\Gamma_j>0$ for $k-s\le j<k$,
$r_b>x$, and $L(w)x/(r_b-x)^2<9/10$.
It also establishes $F_{k-1}(\mathsf P)\le V_*(w)$ and
$\E(S_k-1)_+-L(w)F_{k-1}(\mathsf P)
  \ge E_*(w)-L(w)V_*(w)$.

\begin{lemma}\label{lem:coupled-multipliers}
  Let $k\ge6$ and $4\le s<k$ be integers, and suppose that $11/24<c<1/2$.
  Write $x=1/(k+c)$, $a=kx$, and $p_0=(1-x)^k$.
  Let $w\in[W_s,W_{s+1})$, set $r_b=x(s+c)H(W_s)/p_0$, and use $L$, $E_*$, $V_*$, and $\Gamma_j$ defined above.
  \begin{enumerate}[label=\textup{(\roman*)},leftmargin=*,itemsep=0pt]
    \item\label{item:coupled-multipliers} We have $\Gamma_j>0$ for every $k-s\le j<k$.
    \item\label{item:coupled-ratio} We have $r_b>x$ and $L(w)x/(r_b-x)^2<9/10$.
    \item\label{item:coupled-comparison} Let $\mathsf P=(1-a)\delta_0+a\mathsf Q$, where $\mathsf Q\in\Pcal([0,1])$ has mean $1/k$, and suppose that $k\E(Y_1+\cdots+Y_k-1)_+=w$ for independent $Y_i\sim\mathsf Q$.
    For independent $X_i\sim\mathsf P$ and $S_k=\sum_{i=1}^kX_i$, we have
    $
      F_{k-1}(\mathsf P)\le V_*(w)
    $
    and
    \[
      \E(S_k-1)_+-L(w)F_{k-1}(\mathsf P)
      \ge E_*(w)-L(w)V_*(w).
    \]
  \end{enumerate}
\end{lemma}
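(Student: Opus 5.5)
Part \ref{item:coupled-comparison} is the easiest of the three, so I will take it first. The weights $v_j=\binom{k-1}{j}a^j(1-a)^{k-1-j}$ satisfy
\[
  \frac{v_{j+1}}{v_j}=\frac{(k-1-j)a}{(j+1)(1-a)}\ge\frac{a}{(k-1)(1-a)}=\frac{k}{(k-1)c}>1,
\]
using $(1-a)/a=c/k$ and $c<1/2$. Hence $v_1\le\cdots\le v_{k-1}$. Then \cref{lem:coupled-energy}\ref{item:coupled-profile-general} gives $F_{k-1}(\mathsf P)\le V_*(w)$. Given part \ref{item:coupled-multipliers}, \cref{lem:coupled-energy}\ref{item:coupled-energy-bound} gives the second inequality. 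So the work is in parts \ref{item:coupled-multipliers} and \ref{item:coupled-ratio}, which are explicit inequalities in $k,s,c$.

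For part \ref{item:coupled-ratio}, I would write $H(W_s)=(1-1/s)^s$ and $p_0=(1-x)^k$. Since $x=1/(k+c)$, we have $(1-x)^k=(1+1/(k+c-1))^{-k}$, which lies between $e^{-1}$ and roughly $e^{-1}(1+O(1/k))$. Also $(1-1/s)^s\ge(3/4)^4=81/256$ for $s\ge4$, so
\[
  \frac{r_b}{x}=\frac{(s+c)(1-1/s)^s}{p_0}\ge\mathrm{e}(s+c)\frac{81}{256}-O(1/k).
\]
For $s=4$ and $c>11/24$ this is about $3.83$, which is well above $1$. This gives $r_b-x\ge x\bigl((s+c)(1-1/s)^s/p_0-1\bigr)$, so
\[
  \frac{L(w)x}{(r_b-x)^2}\le\frac{s(s-1)}{2kx}\Bigl(\frac{(s+c)(1-1/s)^s}{p_0}-1\Bigr)^{-2}.
\]
Since $kx=a$ is close to $1$, this is roughly $\frac{s(s-1)}{2}\bigl(\mathrm e(s+c)(1-1/s)^s-1\bigr)^{-2}$. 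The denominator grows like $(s+c-\mathrm e)^2\cdot{}$const while the numerator grows like $s^2/2$, so the bound is asymptotically about $\mathrm e^{2}\mathrm e^{-2}/2\cdot(\ldots)$. I need to check that this stays below $9/10$ for every $s\ge4$, using $(1-1/s)^s\ge \mathrm e^{-1}(1-1/(2s)-1/(2s^2))$. I expect the ratio to be largest at moderate $s$, and that small cases ($s=4,5,6$) may need to be checked numerically, with a monotone bound handling large $s$.

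Part \ref{item:coupled-multipliers} is the main obstacle. I would unroll the recursion into a closed form:
\[
  \Gamma_j\prod_{i=k-s+1}^{j}(1+\alpha_i)=L(w)v_{k-s}+\sum_{i=k-s+1}^{j}\Bigl(L(w)v_i\alpha_i-\frac{a^kb_i}{k}\Bigr)\prod_{i<t\le j}(1+\alpha_t).
\]
It then suffices to show that each bracket is positive, that is, $L(w)v_i\alpha_i>a^kb_i/k$ for $k-s<i<k$. Writing $i=k-h$ with $1\le h<s$, we have
\[
  \frac{a^kb_i}{k v_i}=\frac{a\binom{k}{h}(c/k)^h}{k\binom{k-1}{h-1}(c/k)^{h-1}}=\frac{ac}{h}\cdot\frac1{1}\cdot\frac{1}{1},
\]
up to the factor $k/(k-h+ \ldots)$. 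After simplification this becomes roughly $\frac{c}{h}$ (more precisely $\frac{a c}{h}\cdot\frac{k}{k}$). So the condition becomes $\alpha_{k-h}>\frac{2c}{s(s-1)h}(1+o(1))$. Using \cref{fact:comparison-arrays}\textup{(i)} with $m=s-h$, the slope is
\[
  \alpha_{k-h}=\frac{(1-1/s)^{m}-(1-1/(s+1))^{m+1}}{(s+1)(1-1/(s+1))^{m+1}-s(1-1/s)^{m}+1}.
\]
I would bound it below by a mean-value argument via $g(\sigma)$ from the proof of \cref{lem:capacities}, namely $\alpha\ge\min_\sigma g/(1-\sigma g)\ge g(s+1)\ge(m+h-\ldots)$-type bounds, with $g(\sigma)\ge(\ell-1/2)/\sigma^2$ where $\ell=h$. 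This gives $\alpha_{k-h}\gtrsim(h-1/2)/(s+1)^2$, and the comparison needed is then
\[
  \frac{(h-\tfrac12)s(s-1)}{(s+1)^2}>\frac{2c}{h}.
\]
This holds for $h\ge2$ and $s\ge4$. The case $h=1$ is tight-ish: it needs $(s(s-1)/(2(s+1)^2))>2c\approx0.92$, which fails. So for $h=1$ I would instead use the exact slope of $f_{k-1}$ (where $g$ involves $\ell=1$ and $g\ge 1/(2\sigma^2)$ is too weak). If even the exact slope fails, I would rely on the accumulated positive $\Gamma_{k-2}$ term absorbing the single negative final bracket, which requires a careful but finite estimate.
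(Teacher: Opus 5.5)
Part (iii) is handled exactly as in the paper, and your outline for (ii) is sound. Part (i), however, has a genuine gap at the last index $j=k-1$, and that is where the whole difficulty lies.

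Normalize by $h_l=k\Gamma_{k-l}/v_{k-l}$. Since $v_{k-l-1}/v_{k-l}=\frac cl(1-\frac lk)$ and $a^kb_{k-l}/v_{k-l}=\frac{ac}{l}$, the recursion becomes
\[
  (1+\alpha_{k-l})h_l=\frac cl\Bigl(1-\frac lk\Bigr)h_{l+1}-\frac{ac}{l}+\frac{s(s-1)}{2}\alpha_{k-l},\qquad h_s=\frac{s(s-1)}{2}.
\]
Your last bracket is therefore $\frac{s(s-1)}2\alpha_{k-1}-ac$. The exact slope $\alpha_{k-1}=\frac1{s(s+1)[1-(1-s^{-2})^{s-1}]}-\frac1s$ gives $\frac{s(s-1)}2\alpha_{k-1}\approx0.20$ at $s=4$, tending to $1/4$, while $ac>5/12$. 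So the exact slope fails for every $s$, and the step you defer is the entire content of (i).

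Concretely, you need $c(1-\frac1k)h_2>ac-\frac{s(s-1)}2\alpha_{k-1}$, which is an explicit constant lower bound on $h_2$ (about $0.6$ when $k=6$). Positivity of the earlier brackets does not supply this. It only yields the inherited contribution $h_s\prod_{l=2}^{s-1}\frac cl(1-\frac lk)<c^2<\frac14$ from $\Gamma_{k-s}$, so the size of the $l=2$ bracket, and of $h_3$, must actually be quantified.

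Your mean-value bound $\alpha_{k-h}\ge(h-\frac12)/(s+1)^2$ is too lossy for this at small $s$. At $s=4$ it gives $\frac{s(s-1)}2\alpha_{k-2}\ge0.36$, against a true value of about $0.98$, so the $l=2$ bracket is bounded below only by about $0.15$. Likewise it gives only $\frac{s(s-1)}2\alpha_{k-1}\ge0.12$, which would push the required $h_2$ up to about $0.8$.

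The paper closes the gap in two steps.
\begin{enumerate}
\item It uses a sharper explicit slope bound, $\alpha_{k-l}\ge\frac{(2s+1)l-s-1}{2s(s+1)(s-l)}$, obtained from $\frac1{1-q^m}\ge\frac1{m(1-q)}+\frac{m-1}{2m}$ with $q=1-s^{-2}$.
\item It runs a downward induction with explicit thresholds:
\begin{itemize}
\item $h_l>1$ for $3\le l\le s$, using $(\frac{s(s-1)}2-1)\alpha_{k-l}>\frac54$;
\item $h_2>\frac23$, using $h_3>1$, $(\frac{s(s-1)}2-\frac23)\alpha_{k-2}>\frac34$ and $c/k<\frac1{12}$;
\item $(1+\alpha_{k-1})h_1>\frac15-c\bigl[\frac13+\frac{2/3-ac}{k}\bigr]>0$, which uses both $c<\frac12$ and $ac>\frac5{12}$ and has margin only $\frac1{80}$.
\end{itemize}
\end{enumerate}
Some chain of this kind is indispensable. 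Without it, (i) is unproved, and so is the second inequality of (iii), which depends on it.

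Two minor points.
\begin{itemize}
\item In (ii) you actually have $p_0<((2k-1)/(2k+1))^k<\mathrm e^{-1}$, not $p_0\ge\mathrm e^{-1}$. Using $p_0<\frac38$, $H(W_s)\ge\frac{81}{256}$, $c>\frac49$ and $a>\frac{12}{13}$ gives $L(w)x/(r_b-x)^2<\frac{1664s(s-1)}{3(27s-20)^2}\le\frac{104}{121}$. This bound is decreasing in $s$, so no numerical case check is needed.
\item In your unrolled formula, the product on the right should run over $k-s<t<i$. This is harmless for the sign argument.
\end{itemize}
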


For ease of exposition, the proof of {\cref{lem:coupled-multipliers}} is given in \cref{app:coupled-multipliers}.

To apply {\cref{lem:coupled-multipliers}} at $x=\beta_k$, we use the following lower bound for $c_k$.

\begin{lemma}\label{lem:crossing-lower-finite}
  For every integer $k\ge6$, we have
  \[
    c_k>\frac{11}{24}+\frac{2}{25k}.
  \]
\end{lemma}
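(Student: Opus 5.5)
Since $g_k$ is strictly increasing on $[0,\infty)$ and $g_k(c_k)=1$, it suffices to show that $g_k(u_k)<1$ for $u_k=\frac{11}{24}+\frac{2}{25k}$ and every $k\ge6$. I would use the integral representation already exploited in the proof of \cref{lem:initial-nodes}:
\[
  g_k(u)=\int_0^1\left(1+\frac{u-t}{k}\right)^{k-1}\dd t ,
\]
and bound the integrand from above by an explicit function whose integral can be computed in closed form.

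The first step is the elementary bound
\[
  (k-1)\log\Bigl(1+\frac yk\Bigr)\le\Bigl(1-\frac1k\Bigr)\Bigl(y-\frac{y^2}{2k}+\frac{y^3}{3k^2}\Bigr)
\]
for $|y|\le1$ with $y=u-t\in[u-1,u]$. Since $|y|<1$, $u<1/2$ and $k\ge6$, the cubic correction is tiny. This gives
\[
  g_k(u)\le\int_0^1\exp\Bigl(\Bigl(1-\tfrac1k\Bigr)(u-t)-\frac{(u-t)^2}{2k}+O(k^{-2})\Bigr)\dd t .
\]
Expanding to first order in $1/k$, with an explicit remainder valid for $k\ge6$, yields
\[
  g_k(u)\le(\mathrm e^{u}-\mathrm e^{u-1})\Bigl(1-\frac{\kappa(u)}{k}\Bigr)+\frac{\rho}{k^2},
\]
where $\kappa(u)$ is an explicit positive combination of the moments $\int_0^1(u-t)\mathrm e^{u-t}\dd t$ and $\int_0^1(u-t)^2\mathrm e^{u-t}\dd t$, and $\rho$ is a numerical constant.

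At $u=11/24$, the value $\mathrm e^{u}(1-\mathrm e^{-1})$ is slightly below $1$, because $11/24<\gamma=-\log(1-\mathrm e^{-1})\approx0.4587$. The margin is about $1-\mathrm e^{11/24-\gamma}\approx 4\times10^{-4}$, which is quite small. Writing $u=11/24+\frac{2}{25k}$, the shift contributes roughly $+\frac{2}{25k}$ to the bound, while the $-\kappa/k$ term contributes a negative $1/k$ correction. So I need to show
\[
  \frac{2}{25}-\kappa(11/24)+\frac{\rho'}{k}<0
\]
together with the constant-order margin. This reduces the claim to a one-variable inequality in $1/k\in(0,1/6]$.

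I expect the numerics to be the main obstacle, for two reasons. First, the constant-order margin of order $10^{-4}$ competes with $1/k$ terms at $k=6$, so the first-order expansion in $1/k$ may be too lossy for small $k$. Second, $\gamma-11/24\approx3\times10^{-4}$, so rational bounds on $\mathrm e$ must be precise to about five digits.

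To handle this, I would split the range of $k$:
\begin{itemize}
  \item For $6\le k\le K_0$, with $K_0$ modest (say around $40$), verify $g_k(u_k)<1$ directly by exact rational evaluation of the polynomial $g_k$.
  \item For $k>K_0$, use the analytic expansion above, where the remainder terms are controlled by $1/K_0$.
\end{itemize}
The cutoff $K_0$ is then chosen so that the remainder constants $\rho,\rho'$ are comfortably dominated.
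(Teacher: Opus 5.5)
Your plan is viable, but it is a genuinely different and much heavier route than the paper's.

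\textbf{Your route and where it is tight.} You expand $g_k(u)=\int_0^1(1+\frac{u-t}{k})^{k-1}\,\mathrm{d}t$ around its true limit $\mathrm e^{u}(1-\mathrm e^{-1})$. The only constant-order room this leaves is $1-\mathrm e^{11/24-\gamma}\approx3.4\times10^{-4}$. So the argument rests entirely on two things you have not yet supplied: remainder constants that you have not fixed, and an exact machine check of $g_k(u_k)<1$ for $6\le k\le K_0$. The check itself is legitimate, since $g_k(u_k)$ is an explicit rational number and the limiting inequality is strict; for example $g_6(283/600)\approx0.99948$.

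Be careful with the bookkeeping in your reduction. The $1/k$ coefficient $\frac2{25}-\kappa(\frac{11}{24})\approx-8\times10^{-4}$ has the right sign, but it is far too small to absorb a remainder $\rho'/k$ on its own; that would push $K_0$ into the hundreds. What actually closes the large-$k$ range is the constant margin against a remainder that is genuinely $O(k^{-2})$. Its constant must be below roughly $3.4\times10^{-4}K_0^2$.

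\textbf{The paper's route.} The paper needs none of this. It sets $c_*=\frac{11}{24}+\frac{2}{25k}$, $\eta=(\frac12-c_*)/k$ and $b=1-\eta$. Using $\binom{k-1}{2j}/(2j+1)\le\frac1{j!}\bigl(\frac{(k-1)(k-2)}6\bigr)^j$, it gets
\[
  g_k(c_*)=\Bigl(b+\frac1{2k}\Bigr)^k-\Bigl(b-\frac1{2k}\Bigr)^k
  =b^{k-1}\sum_{j\ge0}\frac{\binom{k-1}{2j}}{2j+1}(2kb)^{-2j}
  \le b^{k-1}\exp\Bigl(\frac{(k-1)(k-2)}{24k^2b^2}\Bigr).
\]
It then concludes
\[
  \frac{b^2}{k-1}\log g_k(c_*)\le b^2\log b+\frac{k-2}{24k^2}<-\eta+\frac32\eta^2+\frac{k-2}{24k^2}<-\frac{7}{9600k^2}.
\]
The constant $\frac{11}{24}=\frac12-\frac1{24}$ is exactly where the limit $\mathrm e^{c-1/2+1/24}$ of this majorant equals $1$. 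So the constant-order terms cancel identically and only an exact $k^{-2}$ comparison remains. The argument is uniform in $k$, uses no decimal information about $\mathrm e$ or $\gamma$, and needs no case split.

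\textbf{Trade-off.} What your route buys instead is the actual first-order asymptotics $c_k=\gamma+\kappa(\gamma)/k+O(k^{-2})$, with $\kappa(\gamma)\approx0.081$. That is more than the lemma requires, and it costs five-digit numerics, fully explicit second-order remainders, and a computer-assisted finite check.
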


The proof of \cref{lem:crossing-lower-finite} is given in \cref{app:crossing-lower-finite}.

At $x=\beta_k$, we have $11/24<c<1/2$ by {\cref{lem:crossing-lower-finite,lem:crossing-parameters}}, so {\cref{lem:coupled-multipliers}} applies.

\subsection{An interpolation estimate for the mean conditional on failure}
\label{sec:sec3-node-margin}

Let $k\ge6$ and $x=\beta_k$. Recall that $a=kx$ and $\delta=1-a$.
We estimate $E_*$ and $V_*$ at the interpolation nodes $W_i$ for $4\le i\le k$ and extend the resulting margin across each interpolation interval.
Recall that $a^k=1-(1-x)^k$ and $p_0=1-a^k$.
For $4\le s\le k$, write $E_s=E_*(W_s)$ and $V_s=V_*(W_s)$.
The following parameter bounds will be used throughout this subsection.

\begin{lemma}\label{lem:sec4-parameter-bounds}
  Let $k\ge6$ and $x=\beta_k$.
  Then
  \[
    \frac{11}{24}+\frac{2}{25k}<c<\frac{19}{40},\text{ }
    \frac{12}{13}<a<1\text{ and }\frac13<p_0<\mathrm{e}^{-1}.
  \]
\end{lemma}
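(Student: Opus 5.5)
We read the three pairs of bounds off the relations $x=1/(k+c)$, $a=k/(k+c)$ and $p_0=(1-x)^k=1-a^k$, at $c=c_k$.

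\emph{Bounds on $c$.} The lower bound $c>11/24+2/(25k)$ is exactly \cref{lem:crossing-lower-finite}. For the upper bound we split into two ranges.

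For $k\ge 6$ large enough, we use \cref{lem:crossing-upper}: it gives $c<11467/25000+41/(500k)$. This is below $19/40=11875/25000$ as soon as $41/(500k)<408/25000$, which holds for $k\ge 6$ because $41\cdot 50/(6\cdot 25000)\approx 0.0137<0.0163$. So \cref{lem:crossing-upper} already covers every $k\ge 6$.

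As a sanity check, or if a finite case resisted, we can use the monotonicity of $g_k$ established in \cref{lem:crossing-parameters}: it suffices to verify $g_k(19/40)>1$. Both routes are available, and the first is the cleaner one.

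\emph{Bounds on $a$.} Since $a=k/(k+c)$ with $c>0$, we get $a<1$. Using $c<1/2$, we get $a>k/(k+1/2)=2k/(2k+1)\ge 12/13$ for $k\ge 6$. The inequality is strict because $c<19/40<1/2$.

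\emph{Bounds on $p_0$.} We have $p_0=(1-x)^k=\bigl(1-1/(k+c)\bigr)^k$. For the upper bound, $\log(1-x)<-x$ gives $p_0<e^{-k/(k+c)}$. This is not below $e^{-1}$, so that route fails and we need more care.

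Instead we use the identity $p_0=1-a^k$. The inequality $p_0<e^{-1}$ is then equivalent to $a^k>1-e^{-1}$, that is, $\bigl(1+c/k\bigr)^{-k}>1-e^{-1}$, that is,
\[
  k\log(1+c/k)<\gamma=-\log(1-e^{-1}).
\]
Since $k\log(1+c/k)<c$, it suffices to have $c\le\gamma$. But the proof of \cref{lem:initial-nodes} shows $c>\gamma$, so this fails as well, and we go one order further. Using $\log(1+u)\le u-u^2/2+u^3/3$, we get
\[
  k\log(1+c/k)\le c-\frac{c^2}{2k}+\frac{c^3}{3k^2}.
\]
By \cref{lem:crossing-upper}, $c<\gamma+41/(500k)$. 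Since $c>11/24$, we have $c^2/2>0.105$, which exceeds $41/500=0.082$ by a margin that absorbs the $c^3/(3k^2)$ term for $k\ge 6$. Hence
\[
  k\log(1+c/k)<\gamma+\frac{0.082-0.105}{k}+\frac{0.04}{k^2}<\gamma.
\]

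For the lower bound $p_0>1/3$, we need $a^k<2/3$, that is, $k\log(1+c/k)>\log(3/2)\approx 0.405$. Using $\log(1+u)\ge u-u^2/2$ and $c>11/24\approx 0.458$, we get
\[
  k\log(1+c/k)\ge c-\frac{c^2}{2k}>0.458-\frac{0.25}{12}>0.43>0.405.
\]

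The main obstacle is the bound $p_0<e^{-1}$. It lies within $O(1/k)$ of being tight, so it needs the second-order expansion together with the sharp upper bound on $c$ from \cref{lem:crossing-upper}. All the other bounds are elementary.
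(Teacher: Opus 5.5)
Your proof is correct. For $c$ and $a$ it is exactly the paper's argument; the difference lies in the bounds on $p_0$.

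You rewrite $p_0=1-a^k$, which turns $p_0<\mathrm{e}^{-1}$ into $k\log(1+c/k)<\gamma$. Because $c_k>\gamma$, this forces you to use the sharp estimate $c<\gamma+41/(500k)$ of \cref{lem:crossing-upper} together with a third-order bound on $\log(1+u)$. For $p_0>1/3$ you use $c>11/24$ and a second-order bound.

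The paper instead stays with $p_0=(1-x)^k=\bigl(1-1/(k+c)\bigr)^k$, which is increasing in $c$. Then $c<1/2$ alone gives $p_0<\bigl((2k-1)/(2k+1)\bigr)^k<\mathrm{e}^{-1}$: write the base as $(1-v)/(1+v)$ with $v=1/(2k)$ and use $\log\frac{1-v}{1+v}<-2v$. Likewise $c>0$ alone gives $p_0>(1-1/k)^k\ge(5/6)^6>1/3$.

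Your route is so much more delicate because of sensitivity. We have $\partial_c\bigl(1-1/(k+c)\bigr)^k\approx\mathrm{e}^{-1}/k$, whereas $\partial_c\bigl(1-(1+c/k)^{-k}\bigr)\approx\mathrm{e}^{-c}$ is of order one. The identity $p_0=1-a^k$ holds only at $c=c_k$, so in your formulation you must locate $c_k$ to within $O(1/k)$.

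Hence your closing remark, that $p_0<\mathrm{e}^{-1}$ ``needs'' the second-order expansion and the sharp upper bound on $c$, reflects your chosen formulation rather than the problem itself. The direct route you abandoned after $\log(1-x)<-x$ works with only $c<1/2$, provided you first use monotonicity in $c$ and then the exact form $1-\frac{1}{k+1/2}=\frac{1-v}{1+v}$. Your argument gains nothing extra here. The paper's is shorter and uses \cref{lem:crossing-upper} only for the bound $c<19/40$.
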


\begin{proof}
  The bounds on $c$ follow from {\cref{lem:crossing-lower-finite,lem:crossing-upper}} and $k\ge6$.
  Since $0<c<1/2$, we have $1>a=k/(k+c)>k/(k+1/2)\ge12/13$.
  Since $c<1/2$, we have $p_0<((2k-1)/(2k+1))^k<\mathrm{e}^{-1}$.
  The lower bound $p_0>1/3$ follows from $p_0>(1-1/k)^k\ge(5/6)^6>1/3$.
\end{proof}

\begin{lemma}\label{lem:node-shortfall-margin}
  Let $k\ge6$, $x=\beta_k$, $a=kx$, $\delta=1-a$, and $p_0=1-a^k=(1-x)^k$.
  Use the functions $E_*$ and $V_*$ defined in \cref{sec:sec3-energy-comparison}.
  \begin{enumerate}[label=\textup{(\roman*)},leftmargin=*,itemsep=0pt]
    \item\label{item:profile-positive} For every $t\in[W_4,W_k]$, we have $V_*(t)<a^k$.
    \item\label{item:profile-margin} Define
    \[
      r_*(t)=\frac{\delta+E_*(t)}{p_0}\text{ and }
      M_*(t)=\frac{x(1-V_*(t))}{a^k-V_*(t)}
      =x+\frac{xp_0}{a^k-V_*(t)}.
    \]
    For every $t\in[W_4,W_k]$, we have
    \[
      r_*(t)-\frac{19M_*(t)}{20}>\frac{x}{5}.
    \]
  \end{enumerate}
\end{lemma}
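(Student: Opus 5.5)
Both claims are statements about explicit piecewise affine functions on $[W_4,W_k]$, so I will reduce them to finitely many node inequalities plus one uniform estimate in $k$. By \cref{fact:comparison-arrays}, $e_j^*$ and $f_j^*$ are affine on each $[W_s,W_{s+1}]$, hence so are $E_*$ and $V_*$. For (i), affineness means it suffices to check $V_s<a^k$ at every node $W_s$ with $4\le s\le k$. At a node, $f_j^*(W_s)=f_j^{(s)}$, and with $h=j-k+s$,
\[
  V_s=\sum_{j<k}v_jf_j^{(s)}=\sum_{h=1}^{s-1}\binom{k-1}{k-s+h}a^{k-s+h}(1-a)^{s-1-h}\Bigl[1-\Bigl(1-\tfrac1s\Bigr)^{h}\Bigr].
\]
This is $F_{k-1}$ of the thinned comparison law. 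A cruder bound suffices: $V_s\le\sum_{j<k}v_j=1-(1-a)^{k-1}$, but that exceeds $a^k$, so the factor $1-(1-1/s)^h<1$ must be used. The cleaner route is to note that $V_s\le V_k$, since the bracket increases in $s$ for fixed $j$ and $f_j^{(s)}\le f_j^{(k)}$. At $s=k$, the value $V_k=F_{k-1}(\mathsf B_{x})=1-(1-x)^{k-1}$ holds because the retained Bernoulli comparison reproduces $\mathsf B_x$. This is strictly less than $1-(1-x)^k=a^k$, which proves (i) and guarantees $M_*$ is well defined and finite.

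For (ii), I will convert the inequality into one that is linear in $(E_*,V_*)$. Since $a^k-V_*>0$, the condition $r_*-\tfrac{19}{20}M_*>x/5$ reads
\[
  \frac{\delta+E_*}{p_0}-\frac{19x}{20}-\frac{x}{5}>\frac{19}{20}\cdot\frac{xp_0}{a^k-V_*}.
\]
The right side is convex and increasing in $V_*$, while the left side is affine in $E_*$. Along each interval $[W_s,W_{s+1}]$, $(E_*,V_*)$ moves affinely, so the left side is affine in $t$ and the right side is convex in $t$. Their difference is therefore concave, and it suffices to verify the inequality at the nodes $W_s$ for $4\le s\le k$.

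At the nodes I will use the explicit values. Summing the comparison arrays gives $E_s=\frac{a^k}{k}\bigl(W_s+\sum_{h}b_{k-s+h}e^{(s)}_{k-s+h}\bigr)$, and $V_s$ is as above; all of these are expressible through $c$ via $b_j=\binom kj(c/k)^{k-j}$. At $s=k$ the comparison law is $\mathsf B_x$ itself, so $E_k=\E(S_k-1)_+$ for $\mathsf B_x$. Then $r_*=\E[1-S\mid S<1]$ under $\mathsf B_x$, which is $1$ because the sum is $0$ on failure. Also $M_*=x+xp_0/(a^k-1+(1-x)^{k-1})=x+x(1-x)/x=1$, so the margin is $1-19/20=1/20>x/5$, which holds since $x<1/6$. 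For $4\le s<k$, the binomial sums are dominated by the small-$h$ terms because $c/k$ is small. The leading behaviour is governed by the quantities in \cref{lem:initial-nodes}, in particular $H(W_4)-A(W_4)>3/5000-1/(400k)$, together with \cref{lem:sec4-parameter-bounds}, which gives $p_0\in(1/3,e^{-1})$ and $c\in(11/24,19/40)$. Beyond these, I will need the $s=4$ node to be the binding case and to show that the margin grows with $s$.

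The main obstacle is making the node check uniform over all $k\ge6$ and $4\le s<k$. My plan is to write $r_*(W_s)-\frac{19}{20}M_*(W_s)$ as a function of $c$, $1/k$ and $(1-1/s)^s$ with explicit tail bounds on the sums over $h$. I will then show that this quantity is monotone in $s$, reducing the problem to $s=4,5$, where finitely many polynomial inequalities in $c$ and $u=1/k\in(0,1/6]$ remain. Those can be handled by the same concavity-in-$u$ trick used to prove \cref{lem:initial-nodes}(iii).
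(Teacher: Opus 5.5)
Your reductions are correct and coincide with the paper's. Since $V_*$ is affine on each $[W_s,W_{s+1}]$, part (i) reduces to the nodes. Since $a^k-V_*$ is positive and affine there, $M_*$ is convex and $r_*-\tfrac{19}{20}M_*$ is concave, so (ii) also reduces to the nodes $W_s$, $4\le s\le k$. Your evaluation $r_*(W_k)=M_*(W_k)=1$ at $s=k$ is right. Your proof of (i) is a valid alternative to the paper's bound $1-V_s=\E q^{(s-1-N)_+}\ge q^{s-1}>\mathrm{e}^{-1}>p_0$. With $\ell=k-j\ge1$ one has $\frac{s-\ell}{s(s-1)}\le\frac1s<-\log\bigl(1-\frac1s\bigr)$, so $f_j^{(s)}$ is nondecreasing in $s$ (as in the proof of \cref{lem:capacities}). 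Hence $V_s\le V_k=1-(1-x)^{k-1}<a^k$. You should state this monotonicity explicitly.

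The gap is the node inequality $r_*(W_s)-\tfrac{19}{20}M_*(W_s)>x/5$ for $4\le s<k$, uniformly in $k\ge6$. This is essentially all of (ii), and you give only a plan whose two load-bearing steps are unsupported.

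First, monotonicity in $s$ is asserted with no argument, and it is numerically razor-thin. As $k\to\infty$, $k\bigl(r_*(W_s)-\tfrac{19}{20}M_*(W_s)\bigr)$ tends to roughly $0.2219$, $0.22339$ and $0.22342$ for $s=4,5,6$, against the target $a/5\to\tfrac15$. So proving monotonicity for all $k$ and $s$ would need precision of order $10^{-5}$, far finer than the $\approx 0.02$ slack needed to check each node directly.

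Second, the node values are not polynomials in $(c,1/k)$. They contain $p_0=1-(1+c/k)^{-k}$ and $a^{k-1}$, which must be controlled precisely. At $k=6$, $s=4$, merely replacing $p_0$ by its bound $\mathrm{e}^{-1}$ from \cref{lem:sec4-parameter-bounds} in $(a^k-V_4)/p_0=(1-p_0-V_4)/p_0$ already pushes the resulting lower bound below $x/5$. So the concavity-in-$u$ device of \cref{lem:initial-nodes}\textup{(iii)} does not transfer as stated, and $H(W_4)-A(W_4)$ by itself says nothing about $s\ge5$.

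What is missing is the paper's mechanism for the interior nodes, which starts by rewriting the node values probabilistically. With $N\sim\mathrm{Bin}(k-1,\delta)$ and $q=1-1/s$, one has $1-V_s=\E q^{(s-1-N)_+}$ and $k(\delta+E_s)=(s-a)(1-V_s)+\E(N-s+1)_+$. Setting $z_s=(a^k-V_s)/p_0$ then gives $kr_*(W_s)\ge(s-a)(1+z_s)$ and $kM_*(W_s)=a(1+1/z_s)$, so the node inequality follows from an explicit lower bound on the single quantity $z_s$.

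For $s=4$, the needed bound $z_4>a/(4-a)$ comes from the exact identity $(4-a)(1-V_4)-4p_0=4a^k\bigl[H(W_4)-A(W_4)\bigr]-a\Prob(N\ge4)$. This is where \cref{lem:initial-nodes}\textup{(iii)} enters, together with a geometric tail bound for $\Prob(N\ge4)$.

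For $5\le s<k$, the paper proves $z_s>\widehat z_s(a)$ for an explicit threshold. It splits $1-V_s$ into the main term $q^{s-1}(1+\delta/(s-1))^{k-1}$ and a small tail, bounds $-\log p_0$ sharply from below, and uses a concavity argument in $a$. Each node is treated directly, with no monotonicity in $s$.
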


The proof of {\cref{lem:node-shortfall-margin}} is given in \cref{app:node-shortfall-margin}.
We first prove the estimate at the nodes and then use the concavity of $r_*-19M_*/20$ on each interpolation interval.

\subsection{Proof of Lemma~{\ref{prop:finite-shortfall-margin}}}
\label{sec:node-margin-transfer}

We will prove $M\le M_*(w)$ and $r_*(w)-r\le(9/10)(M_*(w)-M)$.
Since $9/10<19/20$, {\cref{lem:node-shortfall-margin}\textup{\ref{item:profile-margin}}} will then give $r-19M/20>x/5$.

\begin{proof}
  Recall that $a=kx$, $\delta=1-a$, and $p_0=1-a^k=(1-x)^k$.
  By \cref{prop:normalization}, write $\mathsf P=(1-a)\delta_0+a\mathsf Q$ with $m(\mathsf Q)=1/k$.
  We have $0<p\le p_0$ since $\E S_k=a<1$ and $F_k(\mathsf P)\ge a^k$ by maximality.
  By \cref{cor:remaining-excess-range}, $W_4<w<W_k$.
  Let $s$ be the unique integer with $4\le s<k$ and $W_s\le w<W_{s+1}$.
  Set $r_b=x(s+c)H(W_s)/p_0$, and write $r_*=r_*(w)$ and $M_*=M_*(w)$.

  We first show that $x<r_b\le r\le M\le M_*$.
  Recall that $X_i=I_iY_i$, where the $I_i$ are independent Bernoulli variables of mean $a$ and are independent of $Y_1,\ldots,Y_k$.
  Thus, $\E[S_k\mid Y_1,\ldots,Y_k]=aR_k$.
  Since $(1-S_k)_+\ge0$ and $(1-S_k)_+\ge1-S_k$, we have
  \[
    \E[(1-S_k)_+\mid Y_1,\ldots,Y_k]\ge\max\{0,1-aR_k\}\ge a(1-R_k)_++\delta\1_{\{R_k<1\}}.
  \]
  The last inequality is an equality when $R_k<1$, while its right-hand side is zero when $R_k\ge1$.
  Since $\E R_k=km(\mathsf Q)=1$ and $e_k=w$, we have $\E(1-R_k)_+=\E(R_k-1)_+=w/k$.
  Taking expectations, we obtain
  \[
    \begin{aligned}
      pr=\E(1-S_k)_+
       & \ge\frac{aw}{k}+\delta\Prob(R_k<1) \\
       & =\frac{aw}{k}+\delta(1-f_k)        \\
       & \ge\frac{aw}{k}+\delta H(w).
    \end{aligned}
  \]
  The last inequality uses $1-f_k\ge H(w)$ from \cref{lem:critical}.
  By concavity of $H$ on $[W_s,W_{s+1}]$, we have
  \[
    H(w)\ge\min\{H(W_s),H(W_{s+1})\}=H(W_s),
  \]
  where the equality uses $H(W_{s+1})\ge H(W_s)$.
  Using $w\ge W_s=sH(W_s)$, $a/k=x$, and $\delta=cx$, we obtain
  \[
    pr\ge x(s+c)H(W_s),
  \]
  and hence $r\ge r_b$, since $p\le p_0$.

  By {\cref{lem:contact-moment}}, $r\le M$.
  Also, {\cref{lem:coupled-multipliers}\textup{\ref{item:coupled-comparison}}}, {\cref{lem:node-shortfall-margin}\textup{\ref{item:profile-positive}}}, and $p\le p_0$ imply
  \[
    M=x+\frac{xp}{F_k(\mathsf P)-F_{k-1}(\mathsf P)}
    \le x+\frac{xp_0}{a^k-V_*(w)}=M_*.
  \]
  Together with {\cref{lem:coupled-multipliers}\textup{\ref{item:coupled-ratio}}}, this proves
  \begin{equation}\label{eq:sec4-parameter-order}
    x<r_b\le r\le M\le M_*.
  \end{equation}

  Combining {\cref{lem:coupled-multipliers}\textup{\ref{item:coupled-comparison}}} and {\cref{cor:contact-moments}\textup{\ref{item:contact-excess-identities}}}, we obtain
  \[
    \begin{aligned}
      r+\frac{L(w)M}{M-x}
       & \ge\frac{\delta+E_*(w)+L(w)(1-V_*(w))}{p}   \\
       & \ge\frac{\delta+E_*(w)+L(w)(1-V_*(w))}{p_0}
      =r_*+\frac{L(w)M_*}{M_*-x}.
    \end{aligned}
  \]
  The second inequality uses $0<p\le p_0$ and $\delta+E_*(w)+L(w)(1-V_*(w))>0$.
  The last equality follows from the definitions of $r_*$ and $M_*$ in {\cref{lem:node-shortfall-margin}}.
  By {\cref{lem:coupled-multipliers}\textup{\ref{item:coupled-ratio}}} and {\eqref{eq:sec4-parameter-order}},
  \begin{equation}\label{eq:sec4-parameter-loss}
    r_*-r
    \le\frac{L(w)x(M_*-M)}{(M-x)(M_*-x)}
    \le\frac9{10}(M_*-M).
  \end{equation}
  Combining {\eqref{eq:sec4-parameter-order}}, {\eqref{eq:sec4-parameter-loss}}, and {\cref{lem:node-shortfall-margin}\textup{\ref{item:profile-margin}}}, we obtain
  \[
    r-\frac{19M}{20}
    \ge r_*-\frac{19M_*}{20}+\frac{M_*-M}{20}
    >\frac{x}{5}.
  \]
  Rearranging and using $r\le M$ proves \ref{item:shortfall-margin}.
  Since $M-r\ge0$, the upper bound $M-r<(D-3x)/20$ implies $D>3x$.
  Dividing by $D$ proves \ref{item:shortfall-ratio}.
\end{proof}

\section{Lower bounds for the tail probability of sums of samples from the lower interval}
\label{sec:support}

For $k\ge6$ and $x=\beta_k$, we consider a maximizing law
$\mathsf P$ with $0<w<W_k$.
In \cref{sec:support-gap-and-cap}, we show that $\mathsf P$
is supported on two separated intervals,
$[0,x+D/16]$ and $(x+17D/20,M]$, each with positive mass.
We call $[0,x+D/16]$ the \emph{lower interval} and $(x+17D/20,M]$ the \emph{upper interval}.
In \cref{sec:conditional-laws}, we decompose $\mathsf P$
into its conditional distributions on these two intervals
and introduce the corresponding random variables
$X^{\operatorname{L}}$ and $X^{\operatorname{U}}$.
In \cref{sec:conditional-moments}, we first prove that
$\lambda=k(1-q)<a$, where $q$ is the probability of the lower interval.
We then count the cases in which $S_k<1$ and zero or exactly one
of the $X_i$ lies in the upper interval to obtain
$T_0>\eta/3$, where
$T_0=\Prob(X_1^{\operatorname{L}}+\cdots+
  X_k^{\operatorname{L}}\ge1)$.
Finally, we derive bounds for
$\mathcal V=k\Var(X^{\operatorname{L}})/A^2$
using the variance identity for $S_k$ and two estimates for
$\Prob(X^{\operatorname{U}}+
  X_1^{\operatorname{L}}+\cdots+
  X_{k-1}^{\operatorname{L}}<1)$.

\subsection{Separation of support}
\label{sec:support-gap-and-cap}

Fix $k\ge6$ and write $x=\beta_k=(k+c_k)^{-1}$ and $c=c_k$.
By {\cref{lem:crossing-lower-finite,lem:crossing-upper}}, we have $11/24<c<1/2$.
Let $\mathsf P$ maximize $F_k$ among nonnegative Borel probability laws of mean at most $x$.
  By \cref{prop:normalization}, $\mathsf P$ has mean $x$, is supported on $[0,1]$, and can be written as $\mathsf P=(1-a)\delta_0+a\mathsf Q$, where $\mathsf Q\in\Pcal([0,1])$ has mean $1/k$.
  We retain the notation $w=e_k$ from \cref{sec:common-bounds}.
  By \cref{lem:critical}, we have $0\le w\le W_k$.
  If $w=0$ or $w=W_k$, then \cref{prop:strict-chord} and \cref{lem:initial-nodes}\textup{(ii)} give $F_k(\mathsf P)\le a^k$.
  We therefore study the remaining case $0<w<W_k$, for which \cref{cor:remaining-excess-range} gives $W_4<w<W_k$.
For independent $X_1,\ldots,X_k$ with law $\mathsf P$, we retain the notation $S_j,p$ from \cref{sec:contact}.
Recall that $a=kx$, $\delta=1-a=cx$, and $p_0=(1-x)^k=1-a^k$.
Since $\E S_k=a<1$, we have $p>0$.
Recall also that $r=\E[1-S_k\mid S_k<1]$, $M=x(1-F_{k-1}(\mathsf P))/(F_k(\mathsf P)-F_{k-1}(\mathsf P))$, and $D=M-x$.
By {\cref{obs:contact-identities}\textup{\ref{item:contact-failure}}}, we have $x<M\le1$ and $\mathsf P([0,M])=1$.

\begin{proposition}\label{prop:support-separation}
  Let $k\ge6$ be an integer and $x=\beta_k$.
  Suppose that $\mathsf P$ maximizes $F_k$ among nonnegative Borel probability laws of mean at most $x$.
  Write $\mathsf P=(1-kx)\delta_0+kx\mathsf Q$, where $\mathsf Q\in\Pcal([0,1])$ has mean $1/k$.
    For independent $Y_1,\ldots,Y_k$ with law $\mathsf Q$, set $w=k\E(Y_1+\cdots+Y_k-1)_+$ and assume $0<w<W_k$.
  Then $\mathsf P([x+D/16,x+17D/20])=0$ and $\mathsf P((x+17D/20,M])>0$.
\end{proposition}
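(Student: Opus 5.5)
We prove the gap by studying the conditional law of $X_1$ given failure. By \eqref{eq:conditional-reweighting}, this law is $(M-z)\mathsf P(\rd z)/D$. Write $\mu=\E[X_1\mid S_k<1]=(1-r)/k$; this is the reweighted mean, and \cref{cor:contact-moments} gives $x-\mu=\Var(X_1)/D$.

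\emph{Key identity.} Let $Z=X_1-x$. Since $\E[(M-X_1)Z]/D=\mu-x$, we have
\[
  \E[Z^2(M-X_1)]=D\,\E[Z^2\mid S_k<1]=D\bigl(\Var(X_1\mid S_k<1)+(x-\mu)^2\bigr).
\]
On the other side, $\E[Z^2(M-X_1)]=D\,\Var(X_1)-\E[Z^3]$. Combining these yields an expression for the third central moment $\E Z^3$ in terms of $\Var(X_1)$ and the conditional variance. \cref{lem:contact-moment} bounds the latter, and \cref{prop:finite-shortfall-margin} makes it tiny. Concretely, by exchangeability $\Var(X_1\mid S_k<1)\le v/k+\frac{k-1}{k}\cdot(\text{covariance term})$; I expect the covariance terms to be controllable by $v$ as well, since the conditional coordinates are negatively correlated on $\{S_k<1\}$. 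The outcome should be that the weighted mass of $(M-z)(z-x)^2$ is almost the full $D\Var(X_1)$, i.e.
\[
  \E[(X_1-x)^2(X_1-x-D\cdot\text{something})]\approx0
\]
with an error of order $\frac{M-r}{D}\Var$.

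\emph{Gap argument.} A cleaner route uses the cubic polynomial $\psi(z)=(z-x)^2\,(\text{linear})$, or more precisely $\psi(z)=(z-x-\alpha D)(z-x-\beta D)(M-z)$ combined with these moment identities. We choose $\alpha=1/16$ and $\beta=17/20$, so that $\psi$ is strictly positive on $[x+D/16,\,x+17D/20]$ (up to a bounded-below factor) and has a controlled sign elsewhere on $[0,M]$. Using the reweighting identity and $\E[Z]=0$, we compute $\int\psi\dd\mathsf P$ exactly in terms of $D$, $\Var(X_1)=DB/k$, the conditional variance $v$, and $M-r$. Then \cref{prop:finite-shortfall-margin}, i.e. $M-r<(D-3x)/20$, should make this integral at most $0$, while each part of $\mathsf P$ outside the gap contributes $\ge0$. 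Hence $\mathsf P$ gives zero mass to the gap.

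The constants $1/16$ and $17/20$ should come out of the factor $1/20$ and the $3x/20$ term, with $D>3x$ used to absorb contributions near zero. These contributions come from the atom at $0$, since $\mathsf P(\{0\})\ge1-a$ and $0$ lies at distance $x<D/3$ below the mean.

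\emph{Positive upper mass.} If $\mathsf P((x+17D/20,M])=0$, then $\mathsf P$ lives on $[0,x+D/16]$. In that case $\Var(X_1)\le$ (spread)$\cdot$(mean deviation) is small. Using $\Var(S_k)=DB$ with $B=r-\delta$ and the lower bound $r\ge r_b>x$ from the proof of \cref{prop:finite-shortfall-margin}, we get $\Var(X_1)\ge D(r_b-\delta)/k$. On the other hand, a law on $[0,x+D/16]$ with mean $x$ has variance at most $x\cdot D/16$. Since $r_b-\delta$ is comparable to $x$ (as $r_b>x$ and $\delta=cx<x/2$), this gives $D(r_b-\delta)/k$ versus $xD/16$, which is contradictory only if $(r_b-\delta)/k>x/16$. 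That fails for large $k$, so instead I would use the polynomial identity again: with no upper mass, the identity forces $\int\psi\dd\mathsf P$ to be strictly negative contributions only, which is inconsistent with $B>0$.

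The main obstacle is choosing $\psi$ and verifying the sign/positivity bookkeeping with the explicit constants $1/16$ and $17/20$ so that \cref{prop:finite-shortfall-margin} closes the inequality. That step is where I would first try a quadratic-times-$(M-z)$ test function matched to the first two reweighted moments.
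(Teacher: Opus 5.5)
Both halves of your plan have a genuine gap.

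\textbf{The gap $\mathsf P([x+D/16,x+17D/20])=0$.} A moment identity cannot deliver \emph{exactly} zero mass. Your cubic $\psi(z)=(z-x-D/16)(z-x-17D/20)(M-z)$ is actually $<0$ on the open gap and $\ge0$ on $[0,x+D/16)\cup(x+17D/20,M]$, the reverse of what you assert. Any function of opposite signs on the gap and off it must change sign at its ends, so a bound on $\int\psi\dd\mathsf P$ only trades gap mass against outside mass. Alternatively, a nonnegative $\psi$ with $\int\psi\dd\mathsf P\le0$ would force $\psi=0$ $\mathsf P$-a.e., including at the atom $0$ and on the whole unknown lower part of the support.

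The quantity you need is also not available. $\int\psi\dd\mathsf P$ involves $\E X_1^3$, which by \eqref{eq:conditional-reweighting} is governed by $\Var(X_1\mid S_k<1)$, not by $v=\Var(S_k\mid S_k<1)$. Since $v=k\Var(X_1\mid S_k<1)+k(k-1)\Cov(X_1,X_2\mid S_k<1)$, negative conditional correlation gives $\Var(X_1\mid S_k<1)\ge v/k$, which is the wrong direction.

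The missing idea is to use the contact identity \emph{pointwise}, together with $S_{k-1}\le S_k$. Suppose some $y=x+\theta D$ in the gap satisfied $\Prob(S_{k-1}<1-y)=p(M-y)/D$. Then $W\sim\law(1-S_k\mid S_k<1)$ would satisfy $\Prob(W>y)\le\Prob(S_{k-1}<1-y)/p=1-\theta$. Moreover $\E W=r$ and $r-y=(1-\mu-\theta)D>0$, where $\mu=(M-r)/D<1/20$. Cantelli then gives $v/D^2\ge\theta(19/20-\theta)^2/(1-\theta)\ge5041/96000$. This contradicts $v\le(r-\delta)(M-r)$ from \cref{lem:contact-moment}, which together with \cref{prop:finite-shortfall-margin} yields $v/D^2<19/400$. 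So no point of the gap satisfies an identity that holds $\mathsf P$-a.e., and the gap is $\mathsf P$-null.

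\textbf{Positive upper mass $\mathsf P((x+17D/20,M])>0$.} You correctly note that comparing $\Var(X_1)=DB/k$ with $xD/16$ fails for large $k$. Indeed, $B>(4-c)x$ is all that \cref{prop:finite-shortfall-margin} and $D>3x$ give, so $DB/k\le xD/16$ is consistent once $k$ is large. Your fallback, ``the polynomial identity forces strictly negative contributions,'' is not an argument.

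What is needed is information about $F_k(\mathsf P)$ itself. If $\mathsf P([0,x+D/16))=1$, the normalized coordinates $H_i=(X_i-x)/x$ are centered and bounded by $\max\{1,d/16\}$, where $d=D/x$. Meanwhile $s^2=k\E H_1^2=d(d+1-c-d\mu)>d(19d/20+13/20)$, so $s>\max\{3,19d/20\}$. Shevtsova's Berry--Esseen bound (\cref{lem:normal-comparison}) then gives $F_k(\mathsf P)=\Prob(\sum_iH_i\ge c)\le\Phi(-c/s)+B_3/(2s^3)<5/8$. This contradicts $F_k(\mathsf P)\ge a^k>5/8$. In words: a sum of many small coordinates whose mean falls short of $1$ by only $cx$, much less than the standard deviation $sx$, succeeds with probability near $1/2$.
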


To prove that $\mathsf P((x+17D/20,M])>0$, we use the following normal comparison.
Let $\Phi$ denote the distribution function of the standard normal distribution.

\begin{lemma}[Shevtsova~{\cite[p.~125]{Shevtsova2013}}]
  \label{lem:normal-comparison}
  Let $n\ge1$ be an integer, and let $\widetilde Z_1,\ldots,\widetilde Z_n$ be independent and identically distributed random variables with mean zero, positive variance, and finite third absolute moment.
  Write $U=\sum_{i=1}^n\widetilde Z_i$, $s^2=\sum_{i=1}^n\E\widetilde Z_i^2$, and $B_3=\sum_{i=1}^n\E|\widetilde Z_i|^3$.
  For every $t\in\mathbb R$, we have
  \[
    \left|\Prob(U<t)-\Phi(t/s)\right|
    \le
    0.3328\left(
    \frac{B_3}{s^3}+\frac{0.429}{\sqrt n}
    \right)
    \le\frac{B_3}{3s^3}+\frac1{7\sqrt n}.
  \]
\end{lemma}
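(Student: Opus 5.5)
This lemma is an external result, so the plan is to quote Shevtsova's bound in its original form, transfer it from distribution functions to the strict events $\{U<t\}$, and then check the numerical simplification.

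First, I would quote Shevtsova's refinement of the Berry--Esseen inequality for i.i.d.\ summands~\cite[p.~125]{Shevtsova2013}. In normalized form it reads
\[
  \sup_{t\in\mathbb R}\left|\Prob(U\le t)-\Phi(t/s)\right|
  \le0.3328\left(\frac{B_3}{s^3}+\frac{0.429}{\sqrt n}\right),
\]
where $B_3/s^3$ is the Lyapunov fraction. The hypotheses needed are exactly those assumed here: $\widetilde Z_i$ i.i.d., centered, with positive variance and finite third absolute moment. The only point to check is that the cited statement uses the same normalization, namely $s^2=\sum_i\E\widetilde Z_i^2$ and $B_3=\sum_i\E|\widetilde Z_i|^3$. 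For identically distributed summands this gives $B_3/s^3=\E|\widetilde Z_1|^3/(\sqrt n\,(\E\widetilde Z_1^2)^{3/2})$, which is the usual form in which the constant is stated.

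Second, I would pass from $\Prob(U\le t)$ to $\Prob(U<t)$. Take $t_m\uparrow t$ with $t_m<t$. Then $\Prob(U\le t_m)\to\Prob(U<t)$ by continuity of measure along the increasing events $\{U\le t_m\}\uparrow\{U<t\}$. Also $\Phi(t_m/s)\to\Phi(t/s)$ because $\Phi$ is continuous. The uniform bound at each $t_m$ therefore passes to the limit and gives the first displayed inequality for $\Prob(U<t)$.

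Third, the second inequality is a comparison of constants. Since $0.3328<1/3$, the first term is at most $B_3/(3s^3)$. Since $0.3328\cdot0.429=0.1427712<1/7\approx0.142857$, the second term is at most $1/(7\sqrt n)$.

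I expect no real obstacle. The only care needed is in matching the cited theorem's normalization and in the left-limit argument for strict inequalities.
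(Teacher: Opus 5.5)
Your proposal is correct and matches the paper, which cites Shevtsova's i.i.d.\ Berry--Esseen bound (constants $0.3328$ and $0.429$) and gives no proof of its own. The details you add are exactly the routine ones the paper leaves implicit: the left-limit argument that passes from $\Prob(U\le t)$ to $\Prob(U<t)$, and the numerical checks $0.3328<1/3$ and $0.3328\cdot0.429=0.1427712<1/7$.
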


\begin{proof}[Proof of \cref{prop:support-separation}]
  Write $d=D/x$, $\mu=(M-r)/D$, and $v=\Var(S_k\mid S_k<1)$.
  By {\cref{prop:finite-shortfall-margin}\textup{\ref{item:shortfall-margin}}},
  \[
    0\le M-r<\frac{D-3x}{20}.
  \]
  Consequently, $d>3$ and
  \[
    0\le\mu<\mu_0:=\frac{d-3}{20d}<\frac1{20}.
  \]

  We first prove that $\mathsf P([x+D/16,x+17D/20])=0$.
  By {\cref{lem:contact-moment}\textup{\ref{item:contact-moment-bounds}}}, we have $v\le(r-\delta)(M-r)$.
  Since $M-r=\mu D$, $M=x+D$, $\delta=cx$, and $d=D/x$, we have $(r-\delta)/D=1-\mu+(1-c)/d$.
  Hence, using $c\ge11/24$,
  \[
    \frac v{D^2}
    \le\mu\left(1-\mu+\frac{1-c}{d}\right)
    \le\mu\left(1-\mu+\frac{13}{24d}\right).
  \]
  For fixed $d>3$, the function $u\mapsto u(1-u+13/(24d))$ is strictly increasing on $[0,1/20]$.
  Since $\mu<\mu_0$, we obtain
  \[
    \frac v{D^2}
    <\mu_0\left(1-\mu_0+\frac{13}{24d}\right)
    <\frac{19}{400}.
  \]

  Let $W$ have the distribution of $1-S_k$ conditional on $S_k<1$.
  Then $\E W=r$ and $\Var(W)=v$.
  Suppose, for a contradiction, that there exists $y\in[x+D/16,x+17D/20]$ such that
  \[
    \Prob(S_{k-1}<1-y)=p\frac{M-y}{D}.
  \]
  Write $y=x+\theta D$, where $1/16\le\theta\le17/20$.
  Since $\mu<1/20$, we have $r-y=(1-\mu-\theta)D>0$.
  Since $y>0$ and $S_{k-1}\le S_k$, it follows that
  \[
    \Prob(W>y)
    =\frac{\Prob(S_k<1-y)}p
    \le\frac{\Prob(S_{k-1}<1-y)}p
    =\frac{M-y}{D}
    =1-\theta.
  \]
  Since $r-y>0$, Cantelli's inequality gives
  \[
    \theta\le\Prob(W\le y)
    =\Prob(W-r\le-(r-y))
    \le\frac{v}{v+(r-y)^2}.
  \]
  Since $0<\theta<1$,
  \[
    \frac v{D^2}
    \ge
    \frac{\theta(1-\mu-\theta)^2}{1-\theta}
    \ge
    \frac{\theta(19/20-\theta)^2}{1-\theta}.
  \]
  The second inequality uses $\mu<1/20$ and $19/20-\theta>0$.

  Let $f(\theta)=\theta(19/20-\theta)^2/(1-\theta)$ for $1/16\le\theta\le17/20$.
  Then
  \[
    f'(\theta)
    =\frac{(19/20-\theta)(2\theta^2-3\theta+19/20)}
    {(1-\theta)^2}.
  \]
  Set $\theta_*=(15-\sqrt{35})/20$.
  We have $f'(\theta)>0$ for $1/16\le\theta<\theta_*$ and $f'(\theta)<0$ for $\theta_*<\theta\le17/20$.
  Since $f(1/16)=5041/96000$ and $f(17/20)=17/300$, we obtain
  \[
    \frac v{D^2}\ge f(\theta)
    \ge\min\{f(1/16),f(17/20)\}
    =\frac{5041}{96000}>\frac{19}{400},
  \]
  contrary to $v/D^2<19/400$.
  Thus, no $y\in[x+D/16,x+17D/20]$ satisfies $\Prob(S_{k-1}<1-y)=p(M-y)/D$.
  By {\cref{obs:contact-identities}\textup{\ref{item:contact-failure}}}, $\Prob(S_{k-1}<1-y)=p(M-y)/D$ holds for $\mathsf P$-almost every $y$, so $\mathsf P([x+D/16,x+17D/20])=0$.

  Suppose that $\mathsf P((x+17D/20,M])=0$.
  Then $\mathsf P([0,x+D/16))=1$.
  For $X\sim\mathsf P$, set $H=(X-x)/x$ and write $s^2=k\E H^2$ and $B_3=k\E|H|^3$.
  Then $\E H=0$ and $-1\le H\le d/16$ almost surely.
  By {\cref{cor:contact-moments}\textup{\ref{item:contact-moment-identity}}}, $c<1/2$, and $d\mu<(d-3)/20$,
  \begin{equation}\label{eq:variance-floor}
    s^2=d(d+1-c-d\mu)
    >d\left(\frac{19d}{20}+\frac{13}{20}\right).
  \end{equation}
  Since $d>3$, \eqref{eq:variance-floor} gives $s>3$ and $s>19d/20$.
  Thus, $0<c/s<1/6$ and $B_3/s^3\le\max\{1,d/16\}/s$.
  The standard normal density exceeds $1/3$ on $[0,1/6]$, so $\Phi(-t)\le1/2-t/3$ for $0\le t\le1/6$.

  For $1\le i\le k$, set $H_i=(X_i-x)/x$.
  Then $S_k\ge1$ if and only if $\sum_{i=1}^kH_i\ge c$.
  Lyapunov's inequality gives $B_3/s^3\ge1/\sqrt{k}$.
  Hence, $B_3/(3s^3)+1/(7\sqrt{k})\le10B_3/(21s^3)<B_3/(2s^3)$.
    By \cref{lem:normal-comparison} and a straightforward calculation using $c>11/24$, $s>3$, and $s>19d/20$, we obtain
    \[
      \begin{aligned}
        F_k(\mathsf P)
         & =\Prob\left(\sum_{i=1}^kH_i\ge c\right) \\
         & \le\Phi(-c/s)+\frac{B_3}{2s^3}<\frac58.
      \end{aligned}
    \]
  But maximality and {\cref{lem:sec4-parameter-bounds}} give $F_k(\mathsf P)\ge a^k=1-p_0>5/8$, a contradiction.
\end{proof}

\subsection{Lower and upper conditional laws}
\label{sec:conditional-laws}

To prepare the counting and moment estimates in \cref{sec:conditional-moments}, we describe the conditional decomposition of a general probability law supported on two separated intervals.

Let $k\ge2$ be an integer and $c>0$, and write $x=1/(k+c)$.
Fix $M$, $\varepsilon$, and $\zeta$ with $x<M\le1$ and $0\le\varepsilon<\zeta<1$.
Write $D=M-x$.
Let $\mathsf P$ have mean $x$ and be supported on $[0,x+\varepsilon D]\cup(x+\zeta D,M]$.
We refer to $[0,x+\varepsilon D]$ and $(x+\zeta D,M]$ as the lower and upper intervals, respectively.
Suppose that $\mathsf P((x+\zeta D,M])>0$.
We retain the notation $a=kx$, $\delta=1-a=cx$, and $p_0=(1-x)^k$, and set $d=D/x$.
For independent $X_1,\ldots,X_k$ with law $\mathsf P$, we retain the notation $S_j=\sum_{i=1}^jX_i$, $p=\Prob(S_k<1)$, and $r=1-\E[S_k\mid S_k<1]$.
Since $\E S_k=a<1$, we have $p>0$.

Write $q=\mathsf P([0,x+\varepsilon D])$.
Since $\mathsf P((x+\zeta D,M])>0$, we have $q<1$.
If $q=0$, then $X_1>x$ almost surely, which contradicts $\E X_1=x$.
Hence, $0<q<1$.
Let $X^{\operatorname{L}}$ and $X^{\operatorname{U}}$ be random variables with distributions specified by
\[
  \begin{aligned}
    \Prob(X^{\operatorname{L}}\in E)
     & =\frac{\mathsf P(E\cap[0,x+\varepsilon D])}{q}, \\
    \Prob(X^{\operatorname{U}}\in E)
     & =\frac{\mathsf P(E\cap(x+\zeta D,M])}{1-q}
  \end{aligned}
\]
for every Borel set $E\subseteq\mathbb R$.
Then $\mathsf P=q\law(X^{\operatorname{L}})+(1-q)\law(X^{\operatorname{U}})$.
Write $x_0=\E X^{\operatorname{L}}$.
Taking means, we obtain
\[
  x=qx_0+(1-q)\E X^{\operatorname{U}}.
\]
Since $X^{\operatorname{U}}>x$ almost surely and
$x=qx_0+(1-q)\E X^{\operatorname{U}}$, we have $0\le x_0<x$.
Set $A=M-x_0$ and $J=X^{\operatorname{U}}-x_0$, so $D<A\le1$.
  Write $\lambda=k(1-q)$ for the expected number of upper observations among $k$ independent samples.
  Set $m=k(x-x_0)$ and $\eta=a-m/A$.
  The mean equation gives $m=\lambda\E J$.
  Since $J=A-(M-X^{\operatorname{U}})$, we have
  \[
    \eta=a-\lambda+\frac{\lambda\E(M-X^{\operatorname{U}})}A.
  \]
Since $0\le X^{\operatorname{L}}\le x+\varepsilon D$, we also have $-x_0/A\le(X^{\operatorname{L}}-x_0)/A\le(x+\varepsilon D-x_0)/A$ almost surely.

For independent copies $X_1^{\operatorname{L}},\ldots,X_k^{\operatorname{L}}$ of $X^{\operatorname{L}}$, let $T_0$ denote the probability that their sum is at least one, so
\[
  T_0=\Prob(X_1^{\operatorname{L}}+\cdots+X_k^{\operatorname{L}}\ge1).
\]
We will derive a lower bound for $T_0$ in \cref{lem:exact-upper-count} and, for the maximizing distributions under consideration, obtain a contradictory upper bound in \cref{sec:exclusion}.
Let $X_1^{\operatorname{U}},\ldots,X_k^{\operatorname{U}}$ be independent copies of $X^{\operatorname{U}}$, and let $I_1,\ldots,I_k$ be independent Bernoulli$(1-q)$ variables.
Let all $3k$ variables $X_i^{\operatorname{L}},X_i^{\operatorname{U}},I_i$ be mutually independent.
We may then realize independent samples from $\mathsf P$ as
\[
  X_i=(1-I_i)X_i^{\operatorname{L}}+I_iX_i^{\operatorname{U}}\qquad(1\le i\le k).
\]
Since $X_i^{\operatorname{L}}\le x+\varepsilon D<X_i^{\operatorname{U}}$ almost surely, we have $X_i\ge X_i^{\operatorname{L}}$ almost surely for every $i$.

By \cref{prop:support-separation}, for $k\ge6$ and $x=\beta_k$ this conditional decomposition applies to every maximizing distribution $\mathsf P$ with $0<w<W_k$, taking $\varepsilon=1/16$, $\zeta=17/20$, and $M=x(1-F_{k-1}(\mathsf P))/(F_k(\mathsf P)-F_{k-1}(\mathsf P))$.
By {\cref{cor:contact-moments}\textup{\ref{item:contact-moment-identity}}} and {\cref{obs:contact-identities}\textup{\ref{item:contact-failure}}}, these maximizing distributions also satisfy
\[
  \begin{aligned}
    \Var(S_k)          & =D(r-\delta),   \\
    \Prob(S_{k-1}<1-y) & =p\frac{M-y}{D}
    \quad\text{for $\mathsf P$-almost every $y$.}
  \end{aligned}
\]

\subsection{Bounds for the tail probability and variance for lower-interval sums}
\label{sec:conditional-moments}

We use the notation from \cref{sec:conditional-laws}, with the parameter ranges specified in each lemma below.
For independent copies $X_1^{\operatorname{L}},\ldots,X_k^{\operatorname{L}}$ of $X^{\operatorname{L}}$, write $B_j=\sum_{i=1}^j(X_i^{\operatorname{L}}-x_0)$ for $1\le j\le k$ and set $\mathcal V=k\Var(X^{\operatorname{L}})/A^2=\Var(B_k/A)$.

We prove a lower bound for $T_0$ and bounds for $\mathcal V$.
We first bound the expected number of samples from the upper interval and then estimate $T_0$.
Finally, in \cref{sec:lower-variance}, \cref{lem:conditional-variance-bounds,lem:averaged-contact-normal} establish bounds for $\mathcal V$.

\subsubsection{Bounding the expected number of samples from the upper interval}
\label{sec:conditional-counts}

Recall that $\lambda=k(1-q)$ is the expected number of upper observations among $k$ independent samples from $\mathsf P$. The next lemma will be used to show that $\lambda<a$ unless $\mathsf P=\mathsf B_x$.

\begin{lemma}\label{lem:upper-intensity}
  Let $k\ge6$ be an integer, $11/24\le c\le1/2$, $x=(k+c)^{-1}$, and $x<M\le1$.
  Suppose that $0\le\varepsilon\le1/10$ and $\zeta=17/20$.
  Let $\mathsf P$ be a probability distribution of mean $x$, supported on $[0,x+\varepsilon D]\cup(x+\zeta D,M]$, with $\mathsf P((x+\zeta D,M])>0$.
  Assume that $p\le p_0$, $D>3x$, and $r\le M$.
  Suppose also that $\Var(S_k)=D(r-\delta)$ and $\Prob(S_{k-1}<1-y)=p(M-y)/D$ for $\mathsf P$-almost every $y$.
  Then the following statements hold.
  \begin{enumerate}[label=\textup{(\roman*)},leftmargin=*,itemsep=0pt]
    \item\label{item:upper-intensity-zero} If $x_0=0$, then $M=1$ and $\mathsf P=(1-x)\delta_0+x\delta_1=\mathsf B_x$.
    \item\label{item:upper-intensity-parameters} If $x_0>0$, then $J>m+\delta-x_0/5$ almost surely and
    \[
      0<a-\lambda\le\eta<a,\text{ }
      q\le\frac DA\text{ and }
      0\le\frac{\E(M-X^{\operatorname{U}})}{A}<\frac{3D}{20A}.
    \]
    \item\label{item:upper-intensity-ratio} If $x_0>0$, then
    \[
      \frac{p_0A}{Dq^{k-1}}\le\mathrm{e}^{-(a-\lambda)}\le1.
    \]
  \end{enumerate}
\end{lemma}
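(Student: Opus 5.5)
The plan is to extract everything from the contact identity $\Prob(S_{k-1}<1-y)=p(M-y)/D$, evaluated at $y=0$ and at upper support points, together with the variance identity $\Var(S_k)=D(r-\delta)$ and the mean equation $m=\lambda\E J$.

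\emph{Part (i).} If $x_0=0$, then $X^{\operatorname{L}}=0$ almost surely. The law is then $q\delta_0+(1-q)\law(X^{\operatorname{U}})$, and every upper point $y$ exceeds $x+17D/20$. I would use the contact identity at $y=0$ (zero is an atom) and at $\mathsf P$-almost every upper point $y$. A sum $S_{k-1}$ with at least one upper coordinate is at least $x+17D/20$, and since $D>3x$ this is close to $M$. I then compare $\Prob(S_{k-1}<1-y)\ge q^{k-1}$, which holds because all-zero tuples fail whenever $y<1$. Together with the variance identity, I expect this to show that the upper law is concentrated at a single point $y$ with $1-y$ below the smallest positive sum. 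Hence $h$ is a step function, and affineness of $h$ on the support forces $M=1$ and $y=1$. Mean $x$ then gives $\mathsf B_x$.

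\emph{Part (ii).} For $x_0>0$ the work is quantitative.
\begin{itemize}
\item \textbf{Lower bound on $J$.} Since $X^{\operatorname{U}}>x+17D/20$ and $D>3x$, we get $J>x-x_0+17D/20$. I would then show that $m+\delta-x_0/5$ is smaller than this quantity, using $m=k(x-x_0)$ and $\delta=cx$. The natural route is to bound $x-x_0$ from the mean equation, $q(x-x_0)=(1-q)(\E X^{\operatorname{U}}-x)\ge(1-q)\,17D/20$, together with $r\le M$.
\item \textbf{Bound on $\E(M-X^{\operatorname{U}})$.} The bound $\E(M-X^{\operatorname{U}})<3D/20$ follows directly from $X^{\operatorname{U}}>x+17D/20=M-3D/20$.
\item \textbf{Bounds on $\eta$.} The formula $\eta=a-\lambda+\lambda\E(M-X^{\operatorname{U}})/A$ gives $\eta\ge a-\lambda$. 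Next, $\eta<a$ is equivalent to $m>0$, which holds because $x_0<x$.
\item \textbf{$\lambda<a$.} This is the key inequality. I would use $m=\lambda\E J$ together with $J>m+\delta-x_0/5$. Since $\delta=1-a$ and $m\le kx=a$, this gives $\lambda(m+\delta-x_0/5)<m$. Making $m+\delta-x_0/5\ge m/a$ precise should yield $\lambda<a$; I am not yet sure the constants close here.
\item \textbf{$q\le D/A$.} I would apply the contact identity at $\mathsf P$-almost every lower point $y$ of the lower support and average over $X^{\operatorname{L}}$. This gives $\E[p(M-X^{\operatorname{L}})/D]\ge\Prob(\text{all }k-1\text{ others lower and the sum fails})$. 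I would then compare this with the quantity at $y=0$, noting $1-F_{k-1}=pM/D\le1$.
\end{itemize}

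\emph{Part (iii).} Evaluate the contact identity averaged over the lower law: $\E[\Prob(S_{k-1}<1-X^{\operatorname{L}})]=p(M-x_0)/D=pA/D$. The left side is at least $q^{k-1}$ times the lower-only failure probability, but the right direction is an upper bound on $p_0A/(Dq^{k-1})$. So instead I use $p\le p_0$ and $pA/D\le1$ in reverse. A cleaner route is to bound $p_0\le q^{k-1}D/A\cdot\mathrm e^{-(a-\lambda)}$ via $p_0=(1-x)^k\le\mathrm e^{-kx}=\mathrm e^{-a}$ and $q^{k-1}=(1-\lambda/k)^{k-1}\ge\mathrm e^{-\lambda}$ up to corrections, combined with $q\le D/A$. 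The main obstacle is getting the exact constants with the $k-1$ exponent, and this step needs care. I would first try $q^{k-1}\ge q^k$ together with $(1-\lambda/k)^k/(1-a/k\cdot\ldots)$ comparisons, adjusting by $A/D\ge1$ as needed.
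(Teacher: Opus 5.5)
Your proposal has a genuine gap at its core. The almost-sure bound $J>m+\delta-x_0/5$ cannot come from the mean equation, $r\le M$ and the support hypothesis, and the same gap affects $\lambda<a$ and part~(i). Written out, the bound says $X^{\operatorname{U}}>1-(k-1)x_0-x_0/5$ for almost every upper atom. The support hypothesis only gives $X^{\operatorname{U}}>M-3D/20$. When $M=1$ and $x_0$ is small, this still permits atoms just above $17/20+3x/20$, far below $1-(k-1)x_0-x_0/5\approx1$, and first-moment information never locates individual atoms.

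The missing idea is to use the contact identity pointwise at upper atoms, together with an anti-concentration bound for the lower sums.
\begin{itemize}
\item Keeping only the event that the other $k-1$ samples are lower gives $q^{k-1}\Prob(B_{k-1}<1-y-(k-1)x_0)\le p(M-y)/D$.
\item The lemma of Fu, Han, Wang, Yan, Zhang and Zhou (\cref{lem:positive-deficit-tail}), applied to $X_i^{\operatorname{L}}/x_0$, bounds that probability below by $g(\vartheta_y)$, where $\vartheta_y=(1-y-(k-1)x_0)/x_0$.
\item Since $(M-y)/D<3/20$ on the upper interval, an atom is excluded as soon as $(q^{k-1}/p_0)\,g(\vartheta_y)>3/20$.
\end{itemize}
This requires a lower bound on $q^{k-1}/p_0$, and that is where the variance identity really enters. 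The law of total variance gives $\Var(S_k)\ge(\lambda/q)(\E X^{\operatorname{U}}-x)^2$, with $\E X^{\operatorname{U}}-x>17D/20$. Also $\Var(S_k)/D^2\le1+(1-c)x/D<85/72$. Together these give $\lambda/q<250/153$, hence $q^{k-1}/p_0>\mathrm e^{1-250/153}>3/20$. With $B_{k-1}=0$ this is exactly part~(i): any upper atom $y<1$ would satisfy $(M-y)/D\ge q^{k-1}/p_0>3/20$, so $X^{\operatorname{U}}=1=M$. Your step-function/affineness sketch does not supply this bound.

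Even with this tool, your order of steps in (ii) would be circular. Your algebra is right that $J>m+\delta-x_0/5$ implies $J>m/a$, since the difference is $(c-1/5)x_0>0$, and hence $\lambda<a$. But the threshold $\vartheta=1/5$ is usable only once $\lambda<a$ is already known, because $\mathrm e^{1-250/153}g(1/5)<3/20$. The paper therefore bootstraps in two stages.
\begin{itemize}
\item On $\{J\le m/a\}$ one has $\vartheta\ge c$, and $\mathrm e^{1-250/153}g(11/24)>3/20$. So $J>m/a$ almost surely, which gives $\lambda<a$, $q>1-x$ and $q^{k-1}/p_0>1$.
\item Only then does $g(1/5)>3/20$ yield $J>m+\delta-x_0/5$.
\end{itemize}

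The remaining items are elementary and need no contact identity. For $q\le D/A$: $A-D=x-x_0=m/k$ and $m/A=\lambda\E J/A\le\lambda$, so $D/A\ge1-\lambda/k=q$. For (iii), bounding $p_0$ and $q^{k-1}$ separately by exponentials loses a factor $1/q$. Instead, $Aq/D\le1$ and $(1-x)/q=1-(a-\lambda)/(kq)$ give $p_0A/(Dq^{k-1})\le\bigl(1-(a-\lambda)/(kq)\bigr)^k\le\mathrm e^{-(a-\lambda)/q}\le\mathrm e^{-(a-\lambda)}$.
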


To prove \cref{lem:upper-intensity} when $x_0>0$, we need to estimate the probability that $k-1$ lower observations have sum less than $1-X^{\operatorname{U}}$.
We use the following inequality when $1-X^{\operatorname{U}}$ exceeds their mean.
For $v>0$, define
\[
  g(v)=
  \begin{cases}
    v\mathrm{e}^{-v}, & 0<v<1, \\
    \mathrm{e}^{-1},  & v\ge1.
  \end{cases}
\]

\begin{lemma}[Fu, Han, Wang, Yan, Zhang and Zhou~{\cite[Corollary~1.2]{FHWYZZ}}]
  \label{lem:positive-deficit-tail}
  Let $n\ge1$ be an integer, and let $X_1,\ldots,X_n$ be independent nonnegative random variables with $\E X_i\le1$ for every $i$.
  Write $S=X_1+\cdots+X_n$.
  Then $\Prob(S<\E S+v)>g(v)$ for every $v>0$.
\end{lemma}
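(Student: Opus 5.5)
This lemma is imported from Fu, Han, Wang, Yan, Zhang and Zhou~\cite[Corollary~1.2]{FHWYZZ}, and in the paper I would invoke it as stated rather than reprove it. If I had to reconstruct it, the plan would be as follows. By monotonicity of $\Prob(S<\E S+v)$ in $v$ and the shape of $g$, it suffices to prove the bound $\Prob(S<\E S+v)>v\mathrm{e}^{-v}$ for $0<v<1$. The case $v\ge1$ is then the limit $v\uparrow1$, provided strictness is kept by a separate compactness-plus-equality-case argument.

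\textbf{Step 1: reduce to extremal laws.} Fix $n$, $v$ and the means $\mu_i=\E X_i\le1$. I would minimize $\Prob(S<\E S+v)$ over the product of the convex sets of laws with these means. The objective is linear in each coordinate's law when the others are held fixed. So, as in the support-reduction arguments of \cref{lem:safe-cap}, each $X_i$ may be replaced by a law with at most two support points, one of them $0$. That is, $X_i=t_i\1_{E_i}$ with $t_i\Prob(E_i)=\mu_i$.

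\textbf{Step 2: use the failure budget.} For such laws, a successful event $S\ge\E S+v$ forces at least one large atom to occur. I would then bound the failure probability from below by the probability that no atom with $t_i>\E S+v-(\text{rest})$ fires, together with a Markov-type estimate on the small atoms. Mimicking the product bound of \cref{lem:product-shortfall}, this gives
\[
  \Prob(S<\E S+v)\ge\prod_{i:\,t_i\text{ large}}(1-\mu_i/t_i)\cdot\Prob(\text{small part}<\E S+v).
\]
Optimizing over $t_i$ under $\mu_i\le1$ should push toward the Poisson-type configuration: many tiny atoms of height just above $\E S+v$, which gives $\mathrm{e}^{-1}$, mixed with a deficit. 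The worst value $v\mathrm{e}^{-v}$ should arise from balancing a single atom of probability about $1-v$ against a Poisson remainder.

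\textbf{Main obstacle.} The hard step is Step~2. Small atoms interact with the threshold $\E S+v$, which itself depends on all means. This is essentially the difficulty behind Feige-type inequalities, so a clean reduction to a one-parameter Poisson/binomial family is delicate. Because of this, I would in practice rely on the cited corollary and use it only as a black box in the proof of \cref{lem:upper-intensity}.
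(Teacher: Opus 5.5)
Your plan is what the paper does: \cref{lem:positive-deficit-tail} is quoted from \cite[Corollary~1.2]{FHWYZZ} without proof, and it is used only as a black box (applied to $X_1^{\operatorname{L}}/x_0,\ldots,X_{k-1}^{\operatorname{L}}/x_0$) in the proof of \cref{lem:upper-intensity}. Your reconstruction sketch would not work as a proof, since Step~2 is only heuristic and Step~1 is wrong: linearity in one coordinate's law reduces you to two-point laws straddling the mean (via the lower convex envelope of $t\mapsto\Prob(S-X_i<\E S+v-t)$), but the lower point need not be $0$, so relying on the citation is the right call.
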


\begin{proof}[Proof of \cref{lem:upper-intensity}]
  Since $\Var(S_k)=D(r-\delta)$, $r\le M$, $d>3$, and $c\ge11/24$,
  \[
    \frac{\Var(S_k)}{D^2}
    \le1+\frac{1-c}{d}<\frac{85}{72}.
  \]
  By independence and $\mathsf P=q\law(X^{\operatorname{L}})+(1-q)\law(X^{\operatorname{U}})$,
  \[
    \Var(S_k)\ge k(1-q)q(\E X^{\operatorname{U}}-x_0)^2
    =\frac{\lambda}{q}(\E X^{\operatorname{U}}-x)^2.
  \]
  Here $\E X^{\operatorname{U}}-x=q(\E X^{\operatorname{U}}-x_0)>17D/20$.
  Hence, $\lambda/q<(85/72)(20/17)^2=250/153$.

  Since $\log q>-(1-q)/q$ and $\log p_0\le k\log((k-1/2)/(k+1/2))<-1$, we obtain
  \begin{equation}\label{eq:prelude-count-ratio}
    \frac{q^{k-1}}{p_0}
    >\exp\left(1-\frac{k-1}{k}\frac{\lambda}{q}\right)
    >\mathrm{e}^{1-250/153}.
  \end{equation}
  We will use
  \[
    \mathrm{e}^{1-250/153}g(11/24)
    >\frac{11}{24}\mathrm{e}^{-11/10}
    >\frac{11}{24}\cdot\frac4{11}\cdot\frac9{10}
    =\frac3{20},
  \]
  where the inequalities follow from $250/153+11/24<21/10$, $\mathrm{e}^{-1}>4/11$, and $\mathrm{e}^{-1/10}>9/10$.
  Since $g(11/24)<1$, we also have $\mathrm{e}^{1-250/153}>3/20$.

  For every $y\in\mathbb R$, in the representation $X_i=(1-I_i)X_i^{\operatorname{L}}+I_iX_i^{\operatorname{U}}$, the probability that all $k-1$ samples lie in $[0,x+\varepsilon D]$ and have sum less than $1-y$ is $q^{k-1}\Prob(X_1^{\operatorname{L}}+\cdots+X_{k-1}^{\operatorname{L}}<1-y)$.
  Thus, $q^{k-1}\Prob(X_1^{\operatorname{L}}+\cdots+X_{k-1}^{\operatorname{L}}<1-y)\le\Prob(S_{k-1}<1-y)$.
  Using $\Prob(S_{k-1}<1-y)=p(M-y)/D$ for $\mathsf P$-almost every $y$, we obtain, for $\law(X^{\operatorname{U}})$-almost every $y$,
  \begin{equation}\label{eq:prelude-contact}
    p\frac{M-y}{D}
    \ge q^{k-1}\Prob(B_{k-1}<1-y-(k-1)x_0).
  \end{equation}
  Since $x+17D/20<X^{\operatorname{U}}\le M$ almost surely and $D<A$,
  \begin{equation}\label{eq:prelude-raw-cap}
    \frac{M-X^{\operatorname{U}}}{D}<\frac3{20}
    \text{ and }
    \frac JA=1-\frac{M-X^{\operatorname{U}}}A>\frac{17}{20}
    \quad\text{almost surely.}
  \end{equation}

  First suppose that $x_0=0$.
  Then $X^{\operatorname{L}}=0$ and $B_{k-1}=0$ almost surely, so $\Prob(B_{k-1}<1-y)=1$ for every $y<1$.
  If $\Prob(X^{\operatorname{U}}<1)>0$, then \eqref{eq:prelude-contact}, \eqref{eq:prelude-count-ratio}, and $p\le p_0$ give
  \[
    \frac{M-y}D\ge\frac{q^{k-1}}p
    >\mathrm{e}^{1-250/153}>\frac3{20}
  \]
  for $\law(X^{\operatorname{U}})$-almost every $y<1$.
  After substituting $y=X^{\operatorname{U}}$, the preceding inequality holds almost surely on $\{X^{\operatorname{U}}<1\}$, contrary to \eqref{eq:prelude-raw-cap}.
  Hence, $\Prob(X^{\operatorname{U}}<1)=0$.
  Since $X^{\operatorname{U}}\le M\le1$, we have $X^{\operatorname{U}}=1$ almost surely and $M=1$.
  Taking means then gives $1-q=x$, so $\mathsf P=(1-x)\delta_0+x\delta_1$.

  Henceforth assume $x_0>0$.
  For every $y\in\mathbb R$, write $\vartheta_y=(1-y-(k-1)x_0)/x_0$.
  For every fixed $y$ with $\vartheta_y>0$, apply \cref{lem:positive-deficit-tail} to $X_1^{\operatorname{L}}/x_0,\ldots,X_{k-1}^{\operatorname{L}}/x_0$, which are independent, nonnegative, and have mean one.
  Since $(1-y)/x_0=(k-1)+\vartheta_y$, we obtain $\Prob(B_{k-1}<1-y-(k-1)x_0)>g(\vartheta_y)$.
  Combining $\Prob(B_{k-1}<1-y-(k-1)x_0)>g(\vartheta_y)$ with \eqref{eq:prelude-contact} and $p\le p_0$ gives, for $\law(X^{\operatorname{U}})$-almost every $y$ with $\vartheta_y>0$,
  \begin{equation}\label{eq:prelude-tail-comparison}
    \frac{M-y}{D}
    >\frac{q^{k-1}}{p_0}g(\vartheta_y).
  \end{equation}
  Since $X^{\operatorname{U}}=x_0+J$ and $1-kx_0=\delta+m$, we have $\vartheta_{X^{\operatorname{U}}}=(\delta+m-J)/x_0$.

  We claim that $J>m/a$ almost surely.
  If $\Prob(J\le m/a)>0$, then, since $\delta=cx_0+cm/k$ and $a^{-1}=1+c/k$,
  \[
    \vartheta_{X^{\operatorname{U}}}\ge\frac{\delta+m-m/a}{x_0}=c
  \]
  on $\{J\le m/a\}$.
  Since $g$ is nondecreasing, substituting $y=X^{\operatorname{U}}$ in \eqref{eq:prelude-tail-comparison} and using \eqref{eq:prelude-count-ratio} gives
  \[
    \frac{M-X^{\operatorname{U}}}D
    >\mathrm{e}^{1-250/153}g(c)
    \ge\mathrm{e}^{1-250/153}g(11/24)
    >\frac3{20}
  \]
  almost surely on $\{J\le m/a\}$, contrary to \eqref{eq:prelude-raw-cap}.
  Taking expectations in $J>m/a$ gives $\E J>m/a=(\lambda/a)\E J$, and hence $\lambda<a$.

  In particular, $q>1-x$, so $q^{k-1}/p_0>1$.
  If $\Prob(J\le m+\delta-x_0/5)>0$, then $\vartheta_{X^{\operatorname{U}}}\ge1/5$ on $\{J\le m+\delta-x_0/5\}$.
  Substituting $y=X^{\operatorname{U}}$ in \eqref{eq:prelude-tail-comparison} therefore gives
  \[
    \frac{M-X^{\operatorname{U}}}D>g(1/5)
    =\frac{\mathrm{e}^{-1/5}}5>\frac4{25}>\frac3{20}
  \]
  almost surely on $\{J\le m+\delta-x_0/5\}$, contrary to \eqref{eq:prelude-raw-cap}.
  Thus, $J>m+\delta-x_0/5$ almost surely.

  Since $0<\E J\le A$ and $m=\lambda\E J$, we have $0<m/A\le\lambda$, so $0<a-\lambda\le\eta<a$.
  Also, $D=A-m/k$ implies $D/A=1-m/(kA)\ge1-\lambda/k=q$.
  Since $x+17D/20<X^{\operatorname{U}}\le M$ almost surely, taking expectations in $0\le M-X^{\operatorname{U}}<3D/20$ completes the proof of \textup{(ii)}.

  To prove \textup{(iii)}, note that $q-(1-x)=(a-\lambda)/k$.
  Since $q\le D/A$ and $1-(a-\lambda)/(kq)=(1-x)/q>0$, we obtain
  \[
    \frac{p_0A}{Dq^{k-1}}
    =\left(1-\frac{a-\lambda}{kq}\right)^k\frac{Aq}{D}
    \le\mathrm{e}^{-(a-\lambda)/q}\le\mathrm{e}^{-(a-\lambda)}\le1.
  \]
\end{proof}

The following corollary rules out $x_0=0$ and shows that
$\lambda<a$ and $0<\eta<a$ for maximizing laws with $0<w<W_k$.
  \begin{corollary}\label{cor:positive-lower-mean}
    Let $k\ge6$ and $x=\beta_k$, and let $\mathsf P$ maximize $F_k$ among nonnegative Borel laws of mean at most $x$.
    Assume $0<w<W_k$, with $w=k\E(R_k-1)_+$ as in \cref{sec:common-bounds}.
    Use the conditional decomposition from \cref{sec:conditional-laws} with $\varepsilon=1/16$ and $\zeta=17/20$.
    Then $x_0>0$, $\lambda<a$, and $0<\eta<a$.
  \end{corollary}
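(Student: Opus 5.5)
The plan is to check that every hypothesis of \cref{lem:upper-intensity} holds for such a maximizing law, and then read off the conclusions, ruling out $x_0=0$ by a contradiction with $0<w<W_k$.

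First, the standing hypotheses. \Cref{prop:support-separation} shows that $\mathsf P$ is supported on $[0,x+D/16]\cup(x+17D/20,M]$ and gives positive mass to the upper interval. So the decomposition of \cref{sec:conditional-laws} applies with $\varepsilon=1/16\le1/10$ and $\zeta=17/20$. By \cref{lem:crossing-lower-finite,lem:crossing-parameters}, $c=c_k\in(11/24,1/2)$. By \cref{prop:normalization}, $\mathsf P$ has mean $x$, and \cref{obs:contact-identities} gives $x<M\le1$. Next, $p\le p_0$ because maximality forces $F_k(\mathsf P)\ge F_k(\mathsf B_x)=a^k=1-p_0$. The bound $D>3x$ is \cref{prop:finite-shortfall-margin}\textup{(ii)}, and $r\le M$ is \cref{lem:contact-moment}\textup{(ii)}. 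The identity $\Var(S_k)=D(r-\delta)$ is \cref{cor:contact-moments}\textup{(i)}. Finally, the formula $\Prob(S_{k-1}<1-y)=p(M-y)/D$ for $\mathsf P$-almost every $y$ is \cref{obs:contact-identities}\textup{(ii)}. All hypotheses of \cref{lem:upper-intensity} therefore hold.

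Second, we rule out $x_0=0$. Suppose $x_0=0$. Then \cref{lem:upper-intensity}\textup{(i)} gives $\mathsf P=\mathsf B_x$, so $\mathsf Q=\law(Y_1)$ with $\mathsf P=(1-a)\delta_0+a\mathsf Q$ satisfies $(1-a)\delta_0+a\mathsf Q=(1-x)\delta_0+x\delta_1$. Hence $a\mathsf Q(\{1\})=x$ and $\mathsf Q$ is supported on $\{0,1\}$. So $\mathsf Q=(1-1/k)\delta_0+(1/k)\delta_1$, and $R_k\sim\mathrm{Bin}(k,1/k)$. The computation preceding \cref{fact:comparison-arrays} then gives
\[
  w=k\E(R_k-1)_+=k(1-1/k)^k=W_k,
\]
which contradicts the assumption $w<W_k$. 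Therefore $x_0>0$.

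Third, with $x_0>0$, \cref{lem:upper-intensity}\textup{(ii)} gives $0<a-\lambda\le\eta<a$. This yields $\lambda<a$ and $0<\eta<a$ at once.

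The only step that is more than bookkeeping is the identification of $w$ in the case $\mathsf P=\mathsf B_x$, namely deducing $\mathsf Q$ from $\mathsf P$ via the mixture representation, and that step is straightforward. The main care needed is simply matching each hypothesis of \cref{lem:upper-intensity} to the earlier result that supplies it, in particular confirming that $\varepsilon=1/16$ lies in the allowed range.
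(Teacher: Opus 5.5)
Your proposal is correct and follows the paper's proof essentially step for step. You verify the hypotheses of \cref{lem:upper-intensity} from maximality ($p\le p_0$), \cref{prop:support-separation}, \cref{prop:finite-shortfall-margin} and the moment identities of \cref{sec:contact}; you exclude $x_0=0$ because $\mathsf P=\mathsf B_x$ forces $\mathsf Q=(1-1/k)\delta_0+(1/k)\delta_1$ and hence $w=W_k$; and you then read off $\lambda<a$ and $0<\eta<a$ from \cref{lem:upper-intensity}\textup{(ii)}, with your use of \cref{lem:crossing-parameters} in place of \cref{lem:crossing-upper} for $c_k<1/2$ being equally valid.
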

  \begin{proof}
    Maximality gives $p\le p_0$.
    Together with \cref{prop:finite-shortfall-margin,prop:support-separation}, the moment identities in \cref{sec:contact}, and the bounds on $c_k$ in \cref{lem:crossing-lower-finite,lem:crossing-upper}, this verifies the hypotheses of \cref{lem:upper-intensity}.
    If $x_0=0$, then \cref{lem:upper-intensity}\textup{\ref{item:upper-intensity-zero}} gives $\mathsf P=\mathsf B_x$.
    The decomposition $\mathsf P=(1-a)\delta_0+a\mathsf Q$ then gives $\mathsf Q=(1-1/k)\delta_0+(1/k)\delta_1$, so $w=k(1-1/k)^k=W_k$, a contradiction.
    Thus $x_0>0$, and \cref{lem:upper-intensity}\textup{\ref{item:upper-intensity-parameters}} gives $\lambda<a$ and $0<\eta<a$.
  \end{proof}

\subsubsection{A lower bound for the tail probability by counting upper observations}
\label{sec:conditional-success-lower}

  By counting failures with zero or one upper observation, we will have
  \[
    T_0\ge p_0(a-\lambda)
    +\frac{p_0\lambda\E(M-X^{\operatorname{U}})}A
    =p_0\eta.
  \]
  We also compare $T_0=\Prob(B_k\ge\delta+m)$ with $\Prob(B_k/A\ge1-\eta)$, using $\delta+m=A(1-\eta)+\delta(1-A)\ge A(1-\eta)$.

\begin{lemma}\label{lem:exact-upper-count}
  Let $k\ge6$ be an integer, $11/24\le c\le1/2$,
  $x=(k+c)^{-1}$, and $x<M\le1$.
  Suppose that $0\le\varepsilon\le1/10$ and $\zeta=17/20$.
  Let $\mathsf P$ be a probability distribution of mean $x$, supported on
  $[0,x+\varepsilon D]\cup(x+\zeta D,M]$, with
  $\mathsf P((x+\zeta D,M])>0$ and positive lower conditional mean $x_0$.
  Assume that $p\le p_0$, $D>3x$, and $r\le M$.
  For independent $X_1,\ldots,X_k$ with distribution $\mathsf P$,
  suppose that $\Var(X_1+\cdots+X_k)=D(r-\delta)$ and
  $\Prob(X_1+\cdots+X_{k-1}<1-y)=p(M-y)/D$ for
  $\mathsf P$-almost every $y$.
  Then the following statements hold.
  \begin{enumerate}[label=\textup{(\roman*)},leftmargin=*,itemsep=0pt]
    \item\label{item:exact-upper-count-positive} We have $0<\eta<a$ and $\mathcal V>0$.
    \item\label{item:exact-upper-count-tail} We have
    \[
      p_0\eta\le T_0
      \le\Prob\left(\frac{B_k}{A}\ge1-\eta\right).
    \]
     \end{enumerate}
     \noindent In particular, $T_0>\eta/3$.
\end{lemma}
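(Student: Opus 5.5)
Since the hypotheses of \cref{lem:exact-upper-count} contain those of \cref{lem:upper-intensity}, part~\textup{(ii)} of that lemma applies (with $x_0>0$). It gives $0<a-\lambda\le\eta<a$ directly. For $\mathcal V>0$, we argue by contradiction. If $\Var(X^{\operatorname{L}})=0$, then $X^{\operatorname{L}}\equiv x_0>0$, so every lower observation equals $x_0$. We then feed this into the contact identity \eqref{eq:prelude-contact}. The probability that all $k-1$ samples are lower and sum to less than $1-y$ is $q^{k-1}$ whenever $(k-1)x_0<1-y$. Taking $y$ near the bottom of the upper support, together with $J>m+\delta-x_0/5$, should push $(M-X^{\operatorname{U}})/D$ above $3/20$, exactly as in that proof. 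Alternatively, and more robustly, we use $\Var(S_k)=D(r-\delta)$: with a degenerate lower part, all variance comes from upper draws, which is incompatible with the count bounds. I would try the first route first.

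For the upper bound in \textup{(ii)}, we write $T_0=\Prob(B_k\ge1-kx_0)$ and note that $1-kx_0=\delta+m$. The identity $\delta+m=A(1-\eta)+\delta(1-A)$ follows from $\eta=a-m/A$ and $a=1-\delta$. Since $A\le1$, this gives $\delta+m\ge A(1-\eta)$, and monotonicity yields $T_0\le\Prob(B_k/A\ge1-\eta)$.

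The main step is the lower bound $T_0\ge p_0\eta$. We use the representation $X_i=(1-I_i)X_i^{\operatorname{L}}+I_iX_i^{\operatorname{U}}$ and split $p=\Prob(S_k<1)$ according to the number $N$ of indices with $I_i=1$. On $\{N=0\}$ the failure probability is $q^k(1-T_0)$. On $\{N=1\}$, by symmetry and conditioning on the upper value $y$, it equals $kq^{k-1}(1-q)\,\E\Prob(X_1^{\operatorname{L}}+\cdots+X_{k-1}^{\operatorname{L}}<1-y)$ with $y=X^{\operatorname{U}}$. To bound this, note that the event $S_{k-1}<1-y$ contains the event that all $k-1$ samples are lower with small sum. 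Hence \eqref{eq:prelude-contact} gives $q^{k-1}\Prob(\cdots<1-y)\le p(M-y)/D$ for almost every $y$. The $N=1$ term is therefore at most $\lambda(1-q)\,p\,\E(M-X^{\operatorname{U}})/D$ up to factor adjustments; more precisely, it is at most $k(1-q)p\E(M-X^{\operatorname{U}})/D$. Dropping the terms with $N\ge2$ (which are nonnegative) gives
\[
  p\ge q^k(1-T_0)+\ldots,
\]
but we need the inequality in the other direction. So instead we use $p\le p_0$ together with the exact decomposition, which yields $q^k(1-T_0)\le p-\text{(terms with }N\ge1)$. This requires lower bounds on the $N\ge1$ terms, and that is the obstacle. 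The intended resolution, following the sketch before the lemma, is different: use $p\le p_0$ and bound $1-T_0$ by comparing $q^k$ with $p_0$ via \cref{lem:upper-intensity}\textup{(iii)}. Write $q^k(1-T_0)\le p_0-(\text{one-upper failures})$, and bound the one-upper failures from below using the exact contact formula together with $q^{k-1}\ge p_0A/D$ from \textup{(iii)}. This should produce
\[
  q^k(1-T_0)\le p_0\bigl(1-(a-\lambda)\bigr)-p_0\lambda\E(M-X^{\operatorname{U}})/A.
\]
Then use $q^k\ge p_0$ (because $q>1-x$) to conclude $1-T_0\le1-\eta$ after division, or at least $T_0\ge p_0\eta$. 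Getting the inequality directions consistent in this step is where I expect the difficulty. Finally, since $p_0>1/3$ by \cref{lem:sec4-parameter-bounds}, and in general since $p_0>(1-1/k)^k\cdot$const${}>1/3$ for $k\ge6$ and $c\le1/2$, we obtain $T_0>\eta/3$.
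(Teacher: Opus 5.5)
You correctly get $0<\eta<a$ from \cref{lem:upper-intensity}\textup{(ii)} and the upper bound $T_0\le\Prob(B_k/A\ge1-\eta)$. The main step, $T_0\ge p_0\eta$, has a genuine gap.

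Dropping the terms with $N\ge2$ gives $p\ge q^k(1-T_0)+(\text{one-upper failures})$, which is exactly the direction you need, contrary to your remark. What is missing is a \emph{lower} bound on the one-upper failures. Your comparison (all lower with small sum implies $S_{k-1}<1-y$) only bounds $\Prob(X_1^{\operatorname{L}}+\cdots+X_{k-1}^{\operatorname{L}}<1-y)$ from above. The bound $q^{k-1}\ge p_0A/D$ from \cref{lem:upper-intensity}\textup{(iii)} says nothing about that probability.

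The missing idea is the reverse comparison. In the coupling, $X_i\ge X_i^{\operatorname{L}}$ almost surely, so $\{S_{k-1}<1-y\}\subseteq\{X_1^{\operatorname{L}}+\cdots+X_{k-1}^{\operatorname{L}}<1-y\}$. Hence $\Prob(X_1^{\operatorname{L}}+\cdots+X_{k-1}^{\operatorname{L}}<1-y)\ge p(M-y)/D$ for $\law(X^{\operatorname{U}})$-almost every $y$. Averaging over $X^{\operatorname{U}}$ gives
\[
  p\ge q^k(1-T_0)+\lambda q^{k-1}\frac{p\,\E(M-X^{\operatorname{U}})}{D}.
\]
Dividing by $p$ and using $p\le p_0$ yields $T_0\ge1-p_0/q^k+p_0\lambda\E(M-X^{\operatorname{U}})/(qD)$.

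The $(a-\lambda)$ part of $\eta$ then comes from the all-lower term, not the one-upper term as your target inequality suggests. Since $q=1-x+(a-\lambda)/k$, the binomial theorem gives $q^k-p_0\ge(a-\lambda)(1-x)^{k-1}$. Using $q<1$ and $(1-x)^{k-1}\ge p_0$, this gives $1-p_0/q^k\ge p_0(a-\lambda)$. With $qD<A$ and $\eta=a-\lambda+\lambda\E(M-X^{\operatorname{U}})/A$, you obtain $T_0\ge p_0\eta$. Your displayed bound $q^k(1-T_0)\le p_0(1-(a-\lambda))-p_0\lambda\E(M-X^{\operatorname{U}})/A$ is not supported by any estimate you cite. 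Combined with $q^k\ge p_0$, it would even give the stronger $T_0\ge\eta$.

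For $\mathcal V>0$, your first route does not close as described. If $X^{\operatorname{L}}\equiv x_0$, the contact inequality \eqref{eq:prelude-contact} only forces $X^{\operatorname{U}}\ge1-(k-1)x_0$, i.e.\ $J\ge m+\delta$, almost surely, which is not yet a contradiction. Prove \textup{(ii)} first. A degenerate lower part with $kx_0<kx<1$ gives $T_0=0$, contradicting $T_0\ge p_0\eta>0$. Equivalently, all-lower samples then always fail, so $p\ge q^k>(1-x)^k=p_0$ because $\lambda<a$.

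Finally, \cref{lem:sec4-parameter-bounds} is stated only at $x=\beta_k$. Here argue directly that $p_0=(1-x)^k>(1-1/k)^k\ge(5/6)^6>1/3$.
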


\begin{proof}
  By \cref{lem:upper-intensity}\textup{\ref{item:upper-intensity-parameters}}, we have $\lambda<a$, $0<\eta<a$, $0<q\le D/A<1$, and $\E(M-X^{\operatorname{U}})\ge0$.
  We estimate the contributions to $\Prob(S_k<1)$ from samples with zero or one value in $(x+\zeta D,M]$.

  In the representation $X_i=(1-I_i)X_i^{\operatorname{L}}+I_iX_i^{\operatorname{U}}$, the probability that all $k$ samples are lower and $S_k<1$ is $q^k(1-T_0)$.
  The probability of exactly one upper sample is
  $k(1-q)q^{k-1}=\lambda q^{k-1}$.
  Conditional on $\sum_{i=1}^k I_i=1$, the sum has the distribution of $X^{\operatorname{U}}+X_1^{\operatorname{L}}+\cdots+X_{k-1}^{\operatorname{L}}$, with all variables independent.

  Since the coupling satisfies $X_i\ge X_i^{\operatorname{L}}$ almost surely, we have
  $\Prob(X_1^{\operatorname{L}}+\cdots+X_{k-1}^{\operatorname{L}}<1-y)\ge\Prob(S_{k-1}<1-y)$
  for every $y$.
  The hypothesis $\Prob(S_{k-1}<1-y)=p(M-y)/D$ holds for $\law(X^{\operatorname{U}})$-almost every $y$.
  Averaging over $X^{\operatorname{U}}$, independently of the lower samples, we obtain
  \[
    \Prob(X^{\operatorname{U}}+X_1^{\operatorname{L}}+\cdots+X_{k-1}^{\operatorname{L}}<1)
    \ge\frac pD\E(M-X^{\operatorname{U}}).
  \]
  The event that all $k$ samples are lower and $S_k<1$ is disjoint from the event that exactly one sample is upper and $S_k<1$, so
  \[
    p\ge q^k(1-T_0)
    +\lambda q^{k-1}\frac{p\E(M-X^{\operatorname{U}})}D.
  \]

  Since $\E S_k=a<1$, we have $p>0$.
  Dividing by $p$ and using $p\le p_0$ and $1-T_0\ge0$, we obtain
  \[
    \frac{q^k}{p_0}(1-T_0)
    +\frac{\lambda q^{k-1}\E(M-X^{\operatorname{U}})}D\le1.
  \]
  Rearranging, we have
  \[
    T_0\ge1-\frac{p_0}{q^k}
    +\frac{p_0\lambda\E(M-X^{\operatorname{U}})}{qD}.
  \]

  Since $q=1-x+(a-\lambda)/k$ and $a-\lambda>0$, the binomial
  theorem implies
  \[
    q^k-p_0\ge(a-\lambda)(1-x)^{k-1}.
  \]
  Using $q<1$, $D<A$, $(1-x)^{k-1}\ge p_0$, and
  $\E(M-X^{\operatorname{U}})\ge0$, we conclude that
  \[
    \begin{aligned}
      T_0
       & \ge
      \frac{(a-\lambda)(1-x)^{k-1}}{q^k}
      +\frac{p_0\lambda\E(M-X^{\operatorname{U}})}{qD}                           \\
       & \ge p_0\left(a-\lambda+\frac{\lambda\E(M-X^{\operatorname{U}})}A\right)
      =p_0\eta.
    \end{aligned}
  \]
  The last equality uses $m=\lambda\E J$ and $J=A-(M-X^{\operatorname{U}})$, which give $\eta=a-\lambda+\lambda\E(M-X^{\operatorname{U}})/A$.

  Since $\delta+m=A(1-\eta)+\delta(1-A)$, with $\delta>0$ and $A\le1$, we have $\delta+m\ge A(1-\eta)$.
  As $T_0=\Prob(B_k\ge\delta+m)$, this proves the upper bound in \textup{(ii)}.

  Since $c>0$, we have $p_0>(1-1/k)^k$.
  The sequence $(1-1/k)^k$ is increasing, and $(5/6)^6>1/3$.
  Thus $p_0>1/3$ for $k\ge6$, and hence $T_0>\eta/3$.

  Finally, if $\mathcal V=0$, then $X^{\operatorname{L}}=x_0$ almost surely.
  Since $x_0<x<1/k$, this would imply $T_0=0$,
  contradicting $T_0>\eta/3>0$.
  Therefore $\mathcal V>0$.
\end{proof}

\subsubsection{Bounds for the variance of lower-interval observations}
\label{sec:lower-variance}\label{sec:averaged-contact}

  We derive bounds for the normalized variance $\mathcal V=k\Var(X^{\operatorname{L}})/A^2$.
  The next lemma gives an upper bound from the lower support interval, a lower bound from the moment identity, and the estimate $\mathcal V<5\eta$ under its additional hypotheses.
\begin{lemma}\label{lem:conditional-variance-bounds}
  Let $k\ge2$ be an integer, $c>0$, $x=(k+c)^{-1}$, $x<M\le1$, and $0\le\varepsilon<\zeta<1$.
  Let $\mathsf P$ be a probability distribution of mean $x$, supported on $[0,x+\varepsilon D]\cup(x+\zeta D,M]$, with $\mathsf P((x+\zeta D,M])>0$.
  For independent $X_1,\ldots,X_k$ with distribution $\mathsf P$, assume that $\Var(X_1+\cdots+X_k)=D(r-\delta)$ and $\Prob(X_1+\cdots+X_{k-1}<1-y)=p(M-y)/D$ for $\mathsf P$-almost every $y$.
  Then the following statements hold.
  \begin{enumerate}[label=\textup{(\roman*)},leftmargin=*,itemsep=0pt]
    \item\label{item:conditional-variance-cap} Almost surely,
    \[
      -\frac{x_0}{A}
      \le\frac{X^{\operatorname{L}}-x_0}{A}
      \le\frac{x+\varepsilon D-x_0}{A}.
    \]
    Moreover, $\mathcal V\le kx_0(x+\varepsilon D-x_0)/A^2$.
    \item\label{item:conditional-variance-lower} We have
    \[
      \mathcal V
      \ge\frac{D}{Aq}
      \left[\frac{\eta}{a}+\frac{(1-c)x_0}{A}-\frac{M-r}{A}\right]
      +\frac{\lambda\E(M-X^{\operatorname{U}})}{Aq}
      \left[1-(1-\zeta)\frac DA\right].
    \]
    \item\label{item:conditional-variance-small} If, in addition, $k\ge6$, $11/24\le c\le1/2$, $r\le M$, $x_0>0$, $\lambda<a$, and $J>m+\delta-x_0/5$ almost surely, then $\mathcal V<5\eta$.
  \end{enumerate}
\end{lemma}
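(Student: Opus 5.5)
The plan is to prove the three parts in order. Part (i) is essentially bookkeeping; part (ii) comes from decomposing the variance identity $\Var(S_k)=D(r-\delta)$ over the lower and upper intervals; part (iii) combines (i) with the structural facts about $J$.

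For (i), the almost-sure bounds are immediate from $0\le X^{\operatorname{L}}\le x+\varepsilon D$ after subtracting $x_0$ and dividing by $A>0$. For the variance bound I would use the Bhatia--Davis inequality: a variable supported on $[\ell,u]$ with mean $\mu$ has variance at most $(\mu-\ell)(u-\mu)$. Indeed, $\E[(X^{\operatorname{L}}-0)(x+\varepsilon D-X^{\operatorname{L}})]\ge0$ rearranges to $\Var(X^{\operatorname{L}})\le x_0(x+\varepsilon D-x_0)$. Multiplying by $k/A^2$ gives the claim.

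For (ii), start from $D(r-\delta)=\Var(S_k)=k\E(X_1-x)^2$ and split over the mixture $\mathsf P=q\law(X^{\operatorname{L}})+(1-q)\law(X^{\operatorname{U}})$:
\[
  D(r-\delta)=kq\bigl[\Var(X^{\operatorname{L}})+(x-x_0)^2\bigr]+\lambda\E(X^{\operatorname{U}}-x)^2 .
\]
To isolate $kq\Var(X^{\operatorname{L}})=qA^2\mathcal V$, I need an upper bound on $\E(X^{\operatorname{U}}-x)^2$. Write $X^{\operatorname{U}}-x=D-(M-X^{\operatorname{U}})$, which gives
\[
  (X^{\operatorname{U}}-x)^2=D(X^{\operatorname{U}}-x)-(M-X^{\operatorname{U}})(X^{\operatorname{U}}-x).
\]
The mean equation gives $\lambda\E(X^{\operatorname{U}}-x)=kq(x-x_0)=qm$. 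In the cross term, write $X^{\operatorname{U}}-x=(X^{\operatorname{U}}-x_0)-(x-x_0)$ and $X^{\operatorname{U}}-x_0=A-(M-X^{\operatorname{U}})\ge A-(1-\zeta)D$; the latter holds almost surely because $M-X^{\operatorname{U}}<(1-\zeta)D$. This produces the term $\lambda\E(M-X^{\operatorname{U}})[1-(1-\zeta)D/A]/(Aq)$. The remaining terms must then be rearranged using $r-\delta=D-(M-r)+(1-c)x$, $D=A-m/k$, $x-x_0=m/k$, $m=\lambda\E J$ and $\eta=a-m/A$, so that they collapse to $\frac{D}{Aq}[\eta/a+(1-c)x_0/A-(M-r)/A]$. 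I expect this algebraic matching to be the fiddly part. Any leftover terms should be nonnegative, coming from $q\le1$ or from the $-(x-x_0)$ correction, and will be discarded; if some term has the wrong sign, I will instead keep the exact identity and bound only the cross term.

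For (iii), I would take the upper bound from (i), $\mathcal V\le kx_0(x+\varepsilon D-x_0)/A^2$, and bound it by $5\eta$. The hypothesis $J>m+\delta-x_0/5$, averaged and combined with $m=\lambda\E J$, gives $m>\lambda(m+\delta-x_0/5)$. Together with $\lambda<a$ and $kx_0=1-\delta-m$, this should force either $x_0$ to be small relative to $A\eta$ or $\eta\ge a-\lambda$ to be bounded below. The step I expect to be the main obstacle is turning $\eta=a-\lambda+\lambda\E(M-X^{\operatorname{U}})/A\ge a-\lambda$ into a lower bound comparable to $kx_0(x+\varepsilon D-x_0)/A^2$. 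The idea is that $\lambda$ close to $a$ forces $\E J$ close to $m/a$, and hence, by the lower bound on $J$, forces $x_0$ to be of order $A$ times a multiple of $a-\lambda$. I would first try to prove $kx_0\lesssim 5\eta A^2/(x+\varepsilon D-x_0)$ directly from these relations, using $11/24\le c\le1/2$ and $k\ge6$ to control the constants.
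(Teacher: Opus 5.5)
Parts (i) and (ii) are fine and follow the paper. In (ii), though, do not plan on discarding anything. The piece $\lambda(x-x_0)\E(M-X^{\operatorname{U}})=(m/k)\lambda\E(M-X^{\operatorname{U}})$ that you split off the cross term enters your lower bound for $qA^2\mathcal V$ with a minus sign, so it cannot be dropped. If you keep it, the relations $\lambda\E(M-X^{\operatorname{U}})=\lambda A-m$ and $q+\lambda/k=1$ make everything collapse to the exact identity
\[
  qA^2\mathcal V=D(r-\delta-m)+\lambda\E\bigl[J(M-X^{\operatorname{U}})\bigr],\qquad
  r-\delta-m=A\Bigl[\frac{\eta}{a}+\frac{(1-c)x_0}{A}-\frac{M-r}{A}\Bigr].
\]
The only inequality then needed is $J>A-(1-\zeta)D$. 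This is exactly the paper's normalized variance identity, with $\Theta=\lambda\E[J(M-X^{\operatorname{U}})]/A^2$.

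The genuine gap is in (iii). The bound from (i) is too weak to give $5\eta$, however the constants are handled. Averaging $J>m+\delta-x_0/5$ and using $kx_0=a-\lambda\E J$ and $\E J\le A$ yields only
\[
  \frac{x_0}{A}<\frac{a-\lambda}{a(c-\frac15)}\le\frac{\eta}{a(c-\frac15)},
\]
that is, $x_0/A=O(\eta)$. But
\[
  \frac{kx_0(x+\varepsilon D-x_0)}{A^2}=\frac{x_0}{A}\Bigl(\frac mA+k\varepsilon\frac DA\Bigr)
\]
carries an extra factor of order $1+k\varepsilon$. Part (iii) puts no restriction on $\varepsilon$, and even for $\varepsilon=1/16$ this factor is unbounded in $k$.

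Here is a concrete obstruction. Take $X^{\operatorname{U}}\equiv M$ and $X^{\operatorname{L}}$ supported on $\{0,x+\varepsilon D\}$, with $x_0/A$ just below $(a-\lambda)/[a(c-\frac15)]$. Then the support condition, mean $x$, $x_0>0$, $\lambda<a$ and $J>m+\delta-x_0/5$ all hold. Yet $\mathcal V$ equals the bound of (i), namely $\frac{x_0}{A}\bigl(\lambda+(k-\lambda)\varepsilon\bigr)$, which is far above $5\eta$ once $k\varepsilon$ is large. So the inequality you aim for cannot follow from the relations you list. Your plan for (iii) never uses $\Var(S_k)=D(r-\delta)$ or $r\le M$, and these are exactly what control $\mathcal V$.

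The missing step is to bound $\mathcal V$ \emph{from above} through the identity of (ii).
\begin{enumerate}
  \item From $(J-A)^2\ge0$ you get $\lambda\E J^2/A^2\ge 2m/A-\lambda$, hence $\Theta\le\lambda-m/A=\eta-(a-\lambda)$.
  \item Together with $M-r\ge0$ and $D/A\le1$, this gives $q\mathcal V\le(2+c/k)\eta-(a-\lambda)+(1-c)x_0/A$.
  \item Substitute $x_0/A<(a-\lambda)/[a(c-\frac15)]$, observe that $(1-c)/[a(c-\frac15)]-1>0$, and use $a-\lambda\le\eta$.
  \item Finish with $q>5/6$, $a\ge12/13$ and $c/k\le1/12$ to obtain $\mathcal V<\frac65\bigl(1+\frac52+\frac1{12}\bigr)\eta<5\eta$.
\end{enumerate}
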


\begin{proof}
  Subtracting $x_0$ from $0\le X^{\operatorname{L}}\le x+\varepsilon D$ and dividing by $A>0$ proves the almost-sure bounds in \textup{(i)}.
  Also, $(X^{\operatorname{L}})^2\le(x+\varepsilon D)X^{\operatorname{L}}$ almost surely.
  Taking expectations and using $\E X^{\operatorname{L}}=x_0$, we obtain
  $\Var(X^{\operatorname{L}})=\E[(X^{\operatorname{L}})^2]-x_0^2\le x_0(x+\varepsilon D-x_0)$.
  Thus $\mathcal V=k\Var(X^{\operatorname{L}})/A^2\le kx_0(x+\varepsilon D-x_0)/A^2$.

  Since $\mathsf P=q\law(X^{\operatorname{L}})+(1-q)\law(X^{\operatorname{U}})$, $\E X^{\operatorname{L}}=x_0$, and $J=X^{\operatorname{U}}-x_0$, we have
  $\E(X_1-x_0)^2=q\Var(X^{\operatorname{L}})+(1-q)\E J^2$.
  Also, $\E(X_1-x_0)=x-x_0=m/k$.
  By independence, $\lambda=k(1-q)$, and $A^2\mathcal V=k\Var(X^{\operatorname{L}})$,
  \[
    \Var(S_k)
    =k\E(X_1-x_0)^2-\frac{m^2}{k}
    =qA^2\mathcal V+\lambda\E J^2-\frac{m^2}{k}.
  \]

  Write $\Theta=\lambda\E[(J/A)(1-J/A)]$.
  Since $\lambda\E J=m$, we have
  $\lambda\E J^2/A^2=m/A-\Theta$.
  Substituting $\Var(S_k)=D(r-\delta)$ and
  $\lambda\E J^2/A^2=m/A-\Theta$, we obtain
  $q\mathcal V=D(r-\delta)/A^2-m/A+m^2/(kA^2)+\Theta$.
  Since $M=A+x_0$ and $\delta=cx_0+cm/k$, we have
  $(r-\delta)/A=1+(1-c)x_0/A-cm/(kA)-(M-r)/A$.
  Also, $m/A-m^2/(kA^2)=Dm/A^2$, since $D=A-m/k$.
  Consequently,
  \begin{equation}\label{eq:normalized-variance-identity}
    \begin{aligned}
      q\mathcal V
       & =\frac DA\left[\eta+\frac{(1-c)x_0}{A}
      +\frac{c\eta}{k}-\frac{M-r}{A}\right]+\Theta        \\
       & =\frac DA\left[\frac{\eta}{a}+\frac{(1-c)x_0}{A}
        -\frac{M-r}{A}\right]+\Theta.
    \end{aligned}
  \end{equation}
  Here $1-(m/A)(1+c/k)=\eta(1+c/k)=\eta/a$, because
  $\eta=a-m/A$ and $a^{-1}=1+c/k$.

  Since $x+\zeta D<X^{\operatorname{U}}\le M$, we have
  $1-(1-\zeta)D/A<J/A\le1$.
  Multiplying $J/A>1-(1-\zeta)D/A$ by
  $\lambda(1-J/A)\ge0$ and taking expectations, we obtain
  \[
    \Theta
    \ge\lambda\left[1-(1-\zeta)\frac DA\right]\E(1-J/A)
    =\frac{\lambda\E(M-X^{\operatorname{U}})}A\left[1-(1-\zeta)\frac DA\right].
  \]
  The equality follows from $1-J/A=(M-X^{\operatorname{U}})/A$.
  Substituting
  $\Theta\ge\lambda\E(M-X^{\operatorname{U}})[1-(1-\zeta)D/A]/A$
  into \eqref{eq:normalized-variance-identity} and dividing by
  $q>0$ proves the lower bound for $\mathcal V$.

  We now prove $\mathcal V<5\eta$.
  Assume that $k\ge6$, $11/24\le c\le1/2$, $r\le M$,
  $x_0>0$, $\lambda<a$, and
  $J>m+\delta-x_0/5$ almost surely.
  Taking expectations in $J>m+\delta-x_0/5$ and using
  $m=\lambda\E J$, we obtain
  $(1-\lambda)\E J-\delta>-x_0/5$.
  Since $kx_0=a-\lambda\E J$, we have
  \[
    \frac{(a-\lambda)\E J}{a}-cx_0
    =(1-\lambda)\E J-\delta
    >-\frac{x_0}{5}.
  \]
  Hence $(a-\lambda)\E J>a(c-1/5)x_0$.
  Since $a-\lambda>0$ and $\E J\le A$, it follows that
  $x_0/A<(a-\lambda)/[a(c-1/5)]$.
  Moreover, $0<m/A=\lambda\E J/A\le\lambda<a$ implies
  $0<a-\lambda\le\eta<a$.

  Since $(J-A)^2\ge0$, we have $J^2\ge2AJ-A^2$.
  Taking expectations and using $\lambda\E J=m$, we obtain
  $\lambda\E J^2/A^2\ge2m/A-\lambda$.
  Therefore,
  $\Theta=m/A-\lambda\E J^2/A^2\le\lambda-m/A=\eta-(a-\lambda)$.
  Since $M-r\ge0$, $D/A\le1$, $\eta>0$, and $1-c>0$,
  \eqref{eq:normalized-variance-identity} implies
  \[
    q\mathcal V
    \le\left(2+\frac ck\right)\eta-(a-\lambda)+\frac{(1-c)x_0}{A}.
  \]

  For $11/24\le c\le1/2$ and $a<1$, we have
  $(1-c)/[a(c-1/5)]-1>0$.
  Substituting $x_0/A<(a-\lambda)/[a(c-1/5)]$ and then
  using $a-\lambda\le\eta$, we obtain
  \[
    \mathcal V
    <\frac{1+(1-c)/[a(c-1/5)]+c/k}{q}\eta.
  \]
  Since $k\ge6$ and $11/24\le c\le1/2$, we have
  $a\ge12/13$, $(1-c)/[a(c-1/5)]<5/2$, and $c/k\le1/12$.
  Also, $\lambda<a<1$ implies $q=1-\lambda/k>5/6$.
  Since $(6/5)(1+5/2+1/12)<5$, we conclude that $\mathcal V<5\eta$.
\end{proof}

  The estimate $\mathcal V<5\eta$ gives a good upper bound only when $\eta$ is small.
  We therefore obtain another upper bound for $\mathcal V$ by estimating
  $\Prob(X^{\operatorname{U}}+X_1^{\operatorname{L}}+\cdots+X_{k-1}^{\operatorname{L}}<1)$
  from above using $\Prob(S_{k-1}<1-y)=p(M-y)/D$ and from below using a normal approximation.
Write $Q=\E(J-m-\delta)_+/A$.

\begin{lemma}\label{lem:averaged-contact-normal}
  Let $k\ge6$ be an integer, $11/24\le c\le1/2$, $x=(k+c)^{-1}$, and $x<M\le1$.
  Suppose that $0\le\varepsilon\le1/10$ and $\zeta=17/20$.
  Let $\mathsf P$ be a probability distribution of mean $x$, supported on $[0,x+\varepsilon D]\cup(x+\zeta D,M]$, with $\mathsf P((x+\zeta D,M])>0$ and positive lower conditional mean $x_0$.
  Assume that $p\le p_0$, $D>3x$, and $r\le M$.
  For independent $X_1,\ldots,X_k$ with distribution $\mathsf P$, suppose that $\Var(X_1+\cdots+X_k)=D(r-\delta)$ and $\Prob(X_1+\cdots+X_{k-1}<1-y)=p(M-y)/D$ for $\mathsf P$-almost every $y$.
  If $B_*>0$ and $|X^{\operatorname{L}}-x_0|\le AB_*$ almost surely, then
  \[
    \left(\frac12-\frac{p_0\E(M-X^{\operatorname{U}})}{Dq^{k-1}}\right)
    \sqrt{\frac{k-1}{k}\mathcal V}
    \le\frac25Q+\frac{B_*}{3}
    +\frac17\sqrt{\frac{\mathcal V}{k}}.
  \]
  Moreover, $Q<\eta/a-\E(M-X^{\operatorname{U}})/A-(c-1/5)x_0/A$.
\end{lemma}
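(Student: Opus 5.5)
The plan is to study the single-upper-observation failure probability
\[
  \Pi=\Prob\bigl(X^{\operatorname{U}}+X_1^{\operatorname{L}}+\cdots+X_{k-1}^{\operatorname{L}}<1\bigr)
\]
and bound it from both sides. For the upper bound, use $X_i\ge X_i^{\operatorname{L}}$ in reverse. Condition on all $k-1$ other samples lying in the lower interval. For $\law(X^{\operatorname{U}})$-almost every $y$ this gives
\[
  q^{k-1}\Prob\bigl(X_1^{\operatorname{L}}+\cdots+X_{k-1}^{\operatorname{L}}<1-y\bigr)\le\Prob(S_{k-1}<1-y)=\frac{p(M-y)}{D}.
\]
Averaging over $y=X^{\operatorname{U}}$ and using $p\le p_0$ then yields $\Pi\le p_0\E(M-X^{\operatorname{U}})/(Dq^{k-1})$.

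For the lower bound, rewrite the event in centred form. Since $X^{\operatorname{U}}=x_0+J$ and $1-kx_0=\delta+m$, the event is $\{B_{k-1}<\delta+m-J\}$. Split according to $J$: on $\{J\le m+\delta\}$ the threshold is nonnegative, so its probability is at least $\Prob(B_{k-1}<0)$. On $\{J>m+\delta\}$ the threshold is $-(J-m-\delta)$. In both cases, conditionally on $J$, the probability is at least $\Prob(B_{k-1}<0)-\Prob\bigl(-(J-m-\delta)_+\le B_{k-1}<0\bigr)$.

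Apply \cref{lem:normal-comparison} to $B_{k-1}/A$, which is a sum of $n=k-1$ i.i.d.\ centred terms with $s^2=\frac{k-1}{k}\mathcal V$. The term $B_3/s^3$ is controlled by $B_3\le B_*\sum\E Z_i^2=B_*s^2$, so $B_3/(3s^3)\le B_*/(3s)$. The $1/(7\sqrt{k-1})$ term will be compared with $\frac17\sqrt{\mathcal V/k}/s$; since $s=\sqrt{(k-1)\mathcal V/k}$, this matches after multiplying through by $s$. This gives
\[
  \Prob(B_{k-1}<0)\ge\tfrac12-\text{error}.
\]
The window probability $\Prob(-t\le B_{k-1}/A<0)$ is bounded by $\Phi(0)-\Phi(-t/s)+2\cdot\text{error}$. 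The Gaussian density is at most $1/\sqrt{2\pi}<2/5$, so the window contributes at most $\tfrac25 t/s$. Taking expectations in $t=(J-m-\delta)_+/A$ produces $\tfrac25Q/s$.

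Combining the two bounds and multiplying by $s$ gives the displayed inequality. The main obstacle is the error bookkeeping. The window term naively costs the Berry--Esseen error twice, so I would instead bound the whole event directly as $\Prob(B_{k-1}/A<-t)\ge\Phi(-t/s)-\text{error}$, then use $\Phi(-t/s)\ge\tfrac12-\tfrac25t/s$. Averaging over $t$ then yields exactly one error term, $B_*/(3s)+\frac1{7\sqrt{k-1}}$, as the statement requires.

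For the final claim about $Q$, start from $J>m+\delta-x_0/5$ (\cref{lem:upper-intensity}) and $J\le A$. These give $(J-m-\delta)_+\le J-m-\delta+x_0/5$, so after taking expectations
\[
  AQ\le\E J-m-\delta+\frac{x_0}{5}.
\]
Substitute $\E J=A-\E(M-X^{\operatorname{U}})$, $\delta=cx_0+cm/k$, and $m/A=a-\eta$, and use the identity $1-(m/A)(1+c/k)=\eta/a$. This yields
\[
  Q\le\frac{\eta}{a}-\frac{\E(M-X^{\operatorname{U}})}{A}-\Bigl(c-\frac15\Bigr)\frac{x_0}{A}.
\]
Strictness comes from the strict inequality $J>m+\delta-x_0/5$.
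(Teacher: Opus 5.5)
Your proposal is correct and, in its final form, essentially the paper's argument: apply \cref{lem:normal-comparison} once at the threshold $(\delta+m-J)/A$, use $\Phi(-t)\ge\frac12-\frac25t_+$ and average over $J$, and get the $Q$ bound from $(m+\delta-J)_+<x_0/5$ via \cref{lem:upper-intensity}. The only omission is that \cref{lem:normal-comparison} needs $\mathcal V>0$, which the paper takes from \cref{lem:exact-upper-count}\textup{(i)}; when $\mathcal V=0$ the inequality holds trivially anyway.
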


\begin{proof}
  Take $X^{\operatorname{U}}$ independently of $X_1^{\operatorname{L}},\ldots,X_{k-1}^{\operatorname{L}}$.
  We first estimate $\Prob(B_{k-1}+J<\delta+m)$ from above.
  In the representation $X_i=(1-I_i)X_i^{\operatorname{L}}+I_iX_i^{\operatorname{U}}$, the event that all $k-1$ observations are lower has probability $q^{k-1}$.
  Conditional on $I_1=\cdots=I_{k-1}=0$, their sum has the distribution of $X_1^{\operatorname{L}}+\cdots+X_{k-1}^{\operatorname{L}}$.
    {Thus, for each fixed $y$, the probability that all $k-1$ observations are lower and their sum is less than $1-y$ is at most $\Prob(S_{k-1}<1-y)$.}
  The hypothesis $\Prob(S_{k-1}<1-y)=p(M-y)/D$ holds for $\law(X^{\operatorname{U}})$-almost every $y$.
  Consequently,
  \[
    q^{k-1}\Prob(X_1^{\operatorname{L}}+\cdots+X_{k-1}^{\operatorname{L}}<1-y)
    \le p\frac{M-y}{D}.
  \]

  Since $X^{\operatorname{U}}=x_0+J$ and $1-kx_0=\delta+m$, the event $X^{\operatorname{U}}+X_1^{\operatorname{L}}+\cdots+X_{k-1}^{\operatorname{L}}<1$ is equivalent to $B_{k-1}+J<\delta+m$.
  Dividing the preceding inequality by $q^{k-1}$ and averaging over $X^{\operatorname{U}}$, we obtain
  \begin{equation}\label{eq:conditional-failure-average}
    \Prob(B_{k-1}+J<\delta+m)
    \le\frac{p\E(M-X^{\operatorname{U}})}{Dq^{k-1}}
    \le\frac{p_0\E(M-X^{\operatorname{U}})}{Dq^{k-1}}.
  \end{equation}
  The last inequality follows from $p\le p_0$ and $\E(M-X^{\operatorname{U}})\ge0$.

  We now estimate the same probability from below.
  By \cref{lem:exact-upper-count}\textup{\ref{item:exact-upper-count-positive}}, we have $\mathcal V>0$.
  Write $\sigma^2=\Var(B_{k-1}/A)=(k-1)\mathcal V/k$.
  {The variables $(X_i^{\operatorname{L}}-x_0)/A$} are independent, identically distributed, and have mean zero.
  Since their absolute values are at most $B_*$, we have
  \[
    (k-1)\E\left|\frac{X^{\operatorname{L}}-x_0}{A}\right|^3
    \le B_*(k-1)\E\left(\frac{X^{\operatorname{L}}-x_0}{A}\right)^2
    =B_*\sigma^2.
  \]
  By \cref{lem:normal-comparison}, the absolute difference between $\Prob(B_{k-1}/A<s)$ and $\Phi(s/\sigma)$ is at most $B_*/(3\sigma)+1/(7\sqrt{k-1})$ for every real $s$.
  Here $\Phi$ denotes the standard normal distribution function.

  Since $J$ is independent of the lower observations, conditioning on $J$ does not change the distribution of $B_{k-1}/A$.
  We may therefore apply this inequality at $s=(\delta+m-J)/A$ and then take expectations over $J$.
    {To estimate $\E\Phi((\delta+m-J)/(A\sigma))$, note that} $\Phi(-t)\ge1/2-(2/5)t_+$ for every real $t$, since $\Phi'(u)=(2\pi)^{-1/2}\mathrm{e}^{-u^2/2}<2/5$ for every $u\in\mathbb R$.
  Applying this inequality with $t=(J-m-\delta)/(A\sigma)$ and using the definition of $Q$, we obtain $\E\Phi((\delta+m-J)/(A\sigma))\ge1/2-2Q/(5\sigma)$.
  Together with \eqref{eq:conditional-failure-average}, these estimates imply
  \[
    \begin{aligned}
      \frac{p_0\E(M-X^{\operatorname{U}})}{Dq^{k-1}}
       & \ge
      \E\Phi\left(\frac{\delta+m-J}{A\sigma}\right)
      -\frac{B_*}{3\sigma}-\frac1{7\sqrt{k-1}} \\
       & \ge
      \frac12-\frac{2Q}{5\sigma}
      -\frac{B_*}{3\sigma}-\frac1{7\sqrt{k-1}}.
    \end{aligned}
  \]
  Rearranging and multiplying by $\sigma>0$ proves the first assertion, since $\sigma=\sqrt{(k-1)\mathcal V/k}$ and $\sigma/\sqrt{k-1}=\sqrt{\mathcal V/k}$.

  It remains to prove $Q<\eta/a-\E(M-X^{\operatorname{U}})/A-(c-1/5)x_0/A$.
  We first compute $\E(J-m-\delta)$, and then estimate the contribution from the event $J<m+\delta$.
  Since $x=x_0+m/k$ and $\delta=cx$, we have $\delta/A=cx_0/A+cm/(kA)$.
  Using $\E J=A-\E(M-X^{\operatorname{U}})$, $a^{-1}=1+c/k$, and $\eta=a-m/A$, we obtain
  \[
    \begin{aligned}
      \frac{\E(J-m-\delta)}A
       & =1-\frac{\E(M-X^{\operatorname{U}})}A-\frac mA-\frac{\delta}{A}      \\
       & =1-\frac{\E(M-X^{\operatorname{U}})}A-\frac{cx_0}{A}
      -\frac mA\left(1+\frac ck\right)                                        \\
       & =\frac{\eta}{a}-\frac{\E(M-X^{\operatorname{U}})}A-\frac{cx_0}{A}.
    \end{aligned}
  \]

  By \cref{lem:upper-intensity}\textup{\ref{item:upper-intensity-parameters}}, we have $J>m+\delta-x_0/5$ almost surely.
  Since $x_0>0$, it follows that $(m+\delta-J)_+<x_0/5$ almost surely, and hence $\E(m+\delta-J)_+<x_0/5$.
  Applying the identity $s_+=s+(-s)_+$ to $s=J-m-\delta$ and taking expectations, we conclude that
  \[
    Q
    =\frac{\E(J-m-\delta)}A
    +\frac{\E(m+\delta-J)_+}{A}
    <\frac{\eta}{a}-\frac{\E(M-X^{\operatorname{U}})}A-\left(c-\frac15\right)\frac{x_0}{A}.
  \]
\end{proof}

\section{\texorpdfstring{Determining $\Phi_k(x)$}{Determining Phi(k,x)}}\label{sec:exclusion}

  In this section, we prove the upper bound in \cref{thm:iid-tail}.
  By \cref{lem:dimension-one,lem:low-dimensional-crossing,prop:mean-transfer}, it remains to treat $k\ge6$ at $x=\beta_k$.
  We exclude $0<w<W_k$ for every maximizing distribution and then use the endpoint bounds and \cref{prop:mean-transfer} to determine $\Phi_k(x)$ for every $x\ge0$.
  Fix $k\ge6$ and $x=\beta_k$, and suppose for a contradiction that $\mathsf P$ is a maximizing distribution with $0<w<W_k$.
By \cref{prop:support-separation}, the conditional decomposition in \cref{sec:conditional-laws} applies with $\varepsilon=1/16$ and $\zeta=17/20$.
For independent copies $X_1^{\operatorname{L}},\ldots,X_k^{\operatorname{L}}$ of the lower conditional variable, recall that $B_k=\sum_{i=1}^k(X_i^{\operatorname{L}}-x_0)$ and $\mathcal V=k\Var(X^{\operatorname{L}})/A^2$.
If $x_0=0$, then $\mathsf P=\mathsf B_x$ by \cref{lem:upper-intensity}\textup{\ref{item:upper-intensity-zero}}, and hence $w=W_k$, a contradiction.
Thus, $x_0>0$, and \cref{lem:upper-intensity}\textup{\ref{item:upper-intensity-parameters}} gives $\lambda<a$ and $J>m+\delta-x_0/5$ almost surely.
Consequently, \cref{lem:exact-upper-count,lem:conditional-variance-bounds} give
\[
  0<\eta<a<1,\text{ }T_0>\frac{\eta}{3}\text{ and }0<\mathcal V<5\eta.
\]
Also, \cref{lem:exact-upper-count}\textup{\ref{item:exact-upper-count-tail}} gives $T_0\le\Prob(B_k/A\ge1-\eta)$.

The essential supremum of $X^{\operatorname{L}}$ is its least almost-sure upper bound, given by
\[
  \esssup X^{\operatorname{L}}
  =\inf\{b\in\mathbb R:\Prob(X^{\operatorname{L}}\le b)=1\}.
\]
We first prove $\eta/a<1/10$, which implies $\eta<1/10$ and $1-\eta>9/10$ since $a<1$.
Then in \cref{lem:conditional-support}, we prove $\esssup X^{\operatorname{L}}<A/10$ and $\mathcal V<1/25$.
Consequently, $|X_i^{\operatorname{L}}-x_0|/A<1/10$ almost surely for every $i$.
Applying \cref{lem:small-conditional-tail} to $(X_i^{\operatorname{L}}-x_0)/A$ and using \cref{lem:exact-upper-count}\textup{\ref{item:exact-upper-count-tail}}, we obtain
\[
  T_0
  \le\Prob\left(\frac{B_k}{A}\ge1-\eta\right)
  <\frac{\eta}{3},
\]
contradicting the bound $T_0>\eta/3$ from \cref{lem:exact-upper-count}.

\subsection{Incompatible variance bounds from moments and normal approximation}
\label{sec:large-defect-exclusion}\label{sec:tail-estimates}

We prove $\eta/a<1/10$ by showing that the lower variance bound in \cref{lem:conditional-variance-bounds}\textup{\ref{item:conditional-variance-lower}} and the normal approximation inequality in \cref{lem:averaged-contact-normal} are incompatible when $\eta/a\ge1/10$.
By bounding $p_0A/(Dq^{k-1})$ from above, we obtain a positive lower bound for the coefficient of $\sqrt{\mathcal V}$ in \cref{lem:averaged-contact-normal}.

\begin{lemma}\label{lem:large-defect}
  Let $k\ge6$ be an integer, $11/24\le c\le1/2$, $x=(k+c)^{-1}$, and $x<M\le1$.
  Set $D=M-x$, $\varepsilon=1/16$, and $\zeta=17/20$.
  Suppose that $\mathsf P$ is a probability distribution of mean $x$ such that
  \[
    \mathsf P\bigl([0,x+D/16]\cup(x+17D/20,M]\bigr)=1
    \text{ and }\mathsf P((x+17D/20,M])>0.
  \]
  Suppose also that $x_0>0$.
  Assume that $p\le p_0$, $D>3x$, $r\le M$, and $(M-r)/D<1/20-3x/(20D)$, and that $\Var(S_k)=D(r-\delta)$ and $\Prob(S_{k-1}<1-y)=p(M-y)/D$ for $\mathsf P$-almost every $y$.
  Then $\eta/a<1/10$.
\end{lemma}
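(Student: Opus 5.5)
We argue by contradiction: assume $\eta/a\ge1/10$ and play the lower variance bound of \cref{lem:conditional-variance-bounds}\textup{(ii)} against the normal-approximation inequality of \cref{lem:averaged-contact-normal}. The hypotheses of \cref{lem:upper-intensity} hold, so $x_0>0$ gives $\lambda<a$, $0<a-\lambda\le\eta<a$, $q\le D/A$, $\E(M-X^{\operatorname{U}})/A<3D/(20A)$, and $J>m+\delta-x_0/5$ almost surely. Part (iii) of that lemma gives $p_0A/(Dq^{k-1})\le1$. Combining this with $\E(M-X^{\operatorname{U}})<3D/20$, and hence $\E(M-X^{\operatorname{U}})/A<3/20$, yields
\[
\frac{p_0\E(M-X^{\operatorname{U}})}{Dq^{k-1}}\le\frac{\E(M-X^{\operatorname{U}})}{A}<\frac3{20}.
\]
So the coefficient of $\sqrt{(k-1)\mathcal V/k}$ in \cref{lem:averaged-contact-normal} is at least $1/2-\E(M-X^{\operatorname{U}})/A>7/20$.

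For the upper side, I take $B_*=(x+D/16-x_0)/A$ in \cref{lem:averaged-contact-normal}. The condition $|X^{\operatorname{L}}-x_0|\le AB_*$ needs $x_0\le x+D/16-x_0$. This should follow from $x_0<x$ together with the bound $x_0/A<(a-\lambda)/[a(c-1/5)]$ from the proof of \cref{lem:conditional-variance-bounds}\textup{(iii)}. If necessary I will instead take $B_*=\max\{x_0,x+D/16-x_0\}/A$.

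Next I bound the quantities on the right-hand side. Since $x<D/3$ and $D<A$, we get $B_*\le(x+D/16)/A<(1/3+1/16)D/A<2/5$. Crudely, $\sqrt{\mathcal V/k}\le\sqrt{5\eta/6}$. The lemma also gives $Q<\eta/a-\E(M-X^{\operatorname{U}})/A-(c-1/5)x_0/A$. Thus the right-hand side is roughly $\frac25(\eta/a)+\frac{2}{15}\frac DA+\frac17\sqrt{\mathcal V/6}$, and the inequality becomes an upper bound on $\sqrt{\mathcal V}$ that is linear in $\eta/a$ and $D/A$.

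For the lower side, I use \cref{lem:conditional-variance-bounds}\textup{(ii)} with $(M-r)/A\le(M-r)/D\cdot D/A<\frac1{20}\cdot\frac DA$. Since $q\le D/A$, the first term is
\[
\frac{D}{Aq}\Bigl[\frac\eta a-\frac1{20}\frac DA\Bigr]\ge\frac\eta a-\frac{1}{20}\ge\frac{1}{20}\cdot\frac{\eta}{a}\cdot\ldots,
\]
and in fact at least $\eta/a-1/20\ge\eta/(2a)$ once $\eta/a\ge1/10$. The second term is nonnegative and can be dropped, or kept to absorb the $\E(M-X^{\operatorname{U}})$ contributions. Hence $\mathcal V\ge\eta/(2a)$.

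The two bounds together should read $\frac{7}{20}\sqrt{\tfrac56\cdot\tfrac{\eta}{2a}}\lesssim\frac25\frac\eta a+\frac{2}{15}+\frac1{7\sqrt6}\sqrt{\mathcal V}$. Together with $\mathcal V<5\eta<5$ (and $\eta<a<1$), this is meant to fail for $\eta/a\ge1/10$.

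The main obstacle is this final numerical incompatibility. The constant-size $B_*/3$ term does not shrink with $\eta$, and the lower variance bound only scales like $\eta/a$. So I expect to need more care. First, I would keep the dependence on $D/A$: $B_*$ is small when $D/A$ is small, while $q\le D/A$ boosts the lower bound on $\mathcal V$ by the factor $1/q\ge A/D$. Second, I would use the exact variance identity \eqref{eq:normalized-variance-identity} rather than the crude lower bound. I would parametrize by $t=D/A\in(0,1]$ and $u=\eta/a\in[1/10,1)$ and verify the resulting two-variable inequality on a case split: small $t$, where the factor $1/q$ dominates, versus $t$ near $1$, where $\eta/a$ large dominates.
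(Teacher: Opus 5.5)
Your plan of assuming $\eta/a\ge1/10$ and playing the lower bound of \cref{lem:conditional-variance-bounds}\textup{(ii)} against \cref{lem:averaged-contact-normal} is the paper's plan. However, the proposal stops exactly where the proof lives, and the estimates you write down give no contradiction. With $u=\eta/a$, your final display reads roughly $0.23\sqrt u\lesssim0.4u+0.13+0.06\sqrt{\mathcal V}$. This holds for every $u\in[1/10,1]$: at $u=1/10$ the left side is about $0.07$ and the right side exceeds $0.17$. The true slack is tiny. At the worst point of the paper's final check, $(1/k,\eta/a,x_0/A)=(1/6,1/10,0)$, the margin is $187/67200$.

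Your suggested repair rests on a false premise. We have $A-D=x-x_0=m/k$ and $m/A=a-\eta<1$, so $D/A>1-1/k\ge5/6$ always. Also $q=1-\lambda/k>5/6$ because $\lambda<a<1$. So there is no small-$D/A$ regime, the factor $1/q$ stays below $6/5$, and a two-variable check in $(D/A,\eta/a)$ is not the right reduction.

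What is missing is joint bookkeeping of two further free parameters, $x_0/A\in(0,1/3)$ and $\theta=\E(M-X^{\operatorname{U}})/(\tfrac3{20}D)\in[0,1)$, together with the compensations they control.

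(a) Split $B_*=\max\{x_0/A,(x+D/16-x_0)/A\}$ and use $x-x_0=m/k$. This gives $(x+D/16-x_0)/A=\tfrac1{16}+\tfrac{15m}{16kA}\le\tfrac1{16}+\tfrac{15}{16k}(1-\eta/a)$, about half your $2/5$. Your hoped-for inequality $2x_0\le x+D/16$ says exactly that $x_0/A$ is at most this quantity, and that need not hold.

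(b) Keep the $(1-c)x_0/A$ term and the $-3x/(20D)$ part of the hypothesis on $(M-r)/D$. Together they give $\mathcal V\ge\tfrac\eta a-\tfrac1{20}+\tfrac{13x_0}{20A}+\tfrac{1-\eta/a}{6k}$; the last term is already needed at $k=6$. Also keep the second summand of \cref{lem:conditional-variance-bounds}\textup{(ii)}, which you dropped; it contributes $\tfrac3{20}\cdot\tfrac{17}{20}\,\theta\,m/A$.

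(c) Bound $\tfrac17\sqrt{\mathcal V/k}$ through $\mathcal V\le kx_0(x+D/16-x_0)/A^2$ from \cref{lem:conditional-variance-bounds}\textup{(i)}, not through $\mathcal V<5\eta$.

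(d) Let the coefficient and $Q$ depend on $\theta$. Improve the coefficient through $p_0A/(Dq^{k-1})\le\mathrm{e}^{-(a-\lambda)}$ and the identity expressing $(a-\lambda)/a$ in terms of $\eta/a$ and $\E(M-X^{\operatorname{U}})/A$.

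These gains only work jointly. Even with all of them, the inequality fails at $k=6$, $\eta/a=1/10$ if $x_0/A$ is treated inconsistently (near $1/3$ in $B_*$, near $0$ in the variance bound). It also fails there if $\theta$ is treated inconsistently (near $1$ in the coefficient, $0$ in $\mathcal V$ and $Q$).

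The paper handles this as follows. Concavity of the square root in $\theta$ reduces everything to $(1-\theta)P_0+\theta P_1<0$. Concavity arguments then reduce positivity of $P_0$ and $P_1$ on $[0,1/6]\times[1/10,1]\times[0,1/3]$ to finitely many explicit evaluations (\cref{clm:large-defect-endpoint-positive}).
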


\begin{proof}
  Suppose that $\eta/a\ge1/10$, and set $t=1/k$ and $\rho=3/20$.
  By \cref{lem:upper-intensity}\textup{\ref{item:upper-intensity-parameters}} and the assumptions on $k$ and $c$, we have $0<t\le1/6$, $1/10\le\eta/a<1$, and $12/13\le a<1$.
  Also, $x_0<x$ and $A>D>3x$ imply $0<x_0/A<1/3$.
  We have $m/A=a(1-\eta/a)$ and $D/A=1-tm/A$.
  Set $\theta=\E(M-X^{\operatorname{U}})/(\rho D)$.
  Since $0\le M-X^{\operatorname{U}}<\rho D$ almost surely, we have $0\le\theta<1$.
  Also, $(x+D/16-x_0)/A=1/16+15tm/(16A)$.

  We will derive
  \[
    (1-\theta)P_0(t,\eta/a,x_0/A)
    +\theta P_1(t,\eta/a,x_0/A)<0,
  \]
  where $P_0$ and $P_1$ will be defined in \eqref{eq:large-defect-endpoint-functions}.
  However, \cref{clm:large-defect-endpoint-positive} gives $P_0(t,\eta/a,x_0/A)>0$ and $P_1(t,\eta/a,x_0/A)>0$.
  Since $0\le\theta<1$, their weighted sum is positive, a contradiction.

  To estimate $\mathcal V$ from below, set
  \[
    \mathcal V_0=\frac{\eta}{a}-\frac1{20}+\frac{13x_0}{20A}
    +\frac{t(1-\eta/a)}6,
    \text{ and }\mathcal V_1=\mathcal V_0+\frac{1-\eta/a}{9}.
  \]
  We have $\mathcal V_1\ge \mathcal V_0\ge1/20$.
  Since $x/A=x_0/A+tm/A$ and $D/A=1-tm/A$, the assumption $(M-r)/D<1/20-3x/(20D)$ implies
  \[
    \begin{aligned}
      \frac{\eta}{a}+\frac{(1-c)x_0}{A}-\frac{M-r}{A}
       & >\frac{\eta}{a}-\frac1{20}
      +\left(1-c+\frac3{20}\right)\frac{x_0}{A}
      +\frac{tm}{5A}                  \\
       & =\frac{\eta}{a}-\frac1{20}
      +\left(1-c+\frac3{20}\right)\frac{x_0}{A}
      +\frac{at(1-\eta/a)}5
      \ge \mathcal V_0.
    \end{aligned}
  \]
  The last inequality uses $c\le1/2$ and $a/5\ge12/65>1/6$.
  Using $q\le D/A$, $\lambda\ge m/A$, and $\E(M-X^{\operatorname{U}})=\theta\rho D$ in \cref{lem:conditional-variance-bounds}\textup{\ref{item:conditional-variance-lower}}, we obtain
  \begin{equation}\label{eq:large-defect-variance-lower}
    \begin{aligned}
      \mathcal V
       & >\frac{D\mathcal V_0}{Aq}
      +\frac{\lambda\E(M-X^{\operatorname{U}})}{Aq}\left(1-\frac{\rho D}{A}\right) \\
       & \ge \mathcal V_0+\rho(1-\rho)\frac mA\theta
      \ge(1-\theta)\mathcal V_0+\theta \mathcal V_1.
    \end{aligned}
  \end{equation}
  The last inequality follows from $m/A=a(1-\eta/a)$ and $\rho(1-\rho)a>1/9$.

  We next estimate $Q$ in \cref{lem:averaged-contact-normal}.
  We have $0\le(M-X^{\operatorname{U}})/A<\rho D/A$, $\E(M-X^{\operatorname{U}})/A=\theta\rho D/A$, and $Q=\E(\eta/a-cx_0/A-(M-X^{\operatorname{U}})/A)_+$.
  Set $Q_0=(\eta/a-11x_0/(24A))_+$ and $Q_1=(\eta/a-11x_0/(24A)-\rho+\rho t(1-\eta/a))_+$.
  By convexity of $s\mapsto(\eta/a-cx_0/A-s)_+$ on $[0,\rho D/A]$ and the equality $\E(M-X^{\operatorname{U}})/(\rho D)=\theta$, we obtain
  \[
    \begin{aligned}
      Q & \le(1-\theta)\left(\frac{\eta}{a}-\frac{cx_0}{A}\right)_+
      +\theta\left(\frac{\eta}{a}-\frac{cx_0}{A}-\frac{\rho D}{A}\right)_+ \\
        & \le(1-\theta)Q_0+\theta Q_1.
    \end{aligned}
  \]
  The second inequality uses $c\ge11/24$ and $\rho D/A\ge\rho-\rho t(1-\eta/a)$.

  To bound $B_*/3+(1/7)\sqrt{\mathcal V/k}$ in \cref{lem:averaged-contact-normal}, write
  \[
    E=\frac1{16}+\frac{15t(1-\eta/a)}{16},
    \text{ and }R=\frac{\max\{x_0/A,E\}}3+\frac{x_0/A+E}{14}.
  \]
  By \cref{lem:conditional-variance-bounds}\textup{\ref{item:conditional-variance-cap}}, we may take $B_*=\max\{x_0/A,(x+D/16-x_0)/A\}$ in \cref{lem:averaged-contact-normal}.
  We have $(x+D/16-x_0)/A\le E$, and $\mathcal V\le kx_0(x+D/16-x_0)/A^2$ by \cref{lem:conditional-variance-bounds}\textup{\ref{item:conditional-variance-cap}}.
  The arithmetic--geometric mean inequality therefore yields
  \[
    \begin{aligned}
      \frac{B_*}{3}+\frac17\sqrt{\frac{\mathcal V}{k}}
       & \le\frac{\max\{x_0/A,E\}}3
      +\frac{\sqrt{x_0(x+D/16-x_0)}}{7A} \\
       & \le\frac{\max\{x_0/A,E\}}3
      +\frac17\sqrt{\frac{x_0}{A}E}      \\
       & \le\frac{\max\{x_0/A,E\}}3
      +\frac{x_0/A+E}{14}=R.
    \end{aligned}
  \]

  Define $\beta(s)=\sqrt{(1+s)/11}$ for $1/10\le s\le1$.
  We will prove
  \[
    \frac12-\rho\frac{p_0A}{Dq^{k-1}}>\beta(\eta/a).
  \]
  By \cref{lem:upper-intensity}\textup{\ref{item:upper-intensity-ratio}}, $p_0A/(Dq^{k-1})\le\mathrm{e}^{-(a-\lambda)}\le1$.
  Since $a\ge12/13$ and
  \[
    \mathrm{e}^{12/13}
    >1+\frac{12}{13}+\frac12\left(\frac{12}{13}\right)^2
    =\frac{397}{169}>\frac94,
  \]
  we have $\mathrm{e}^{-a}<4/9$.
  Convexity of $z\mapsto\mathrm{e}^{-z}$ on $[0,a]$ and $0<a-\lambda<a$ give
  \[
    \frac{p_0A}{Dq^{k-1}}
    \le\mathrm{e}^{-(a-\lambda)}
    \le1-(1-\mathrm{e}^{-a})\frac{a-\lambda}{a}
    <1-\frac{5(a-\lambda)}{9a}.
  \]
  Since $m=\lambda\E J$, $J=A-(M-X^{\operatorname{U}})$, and $\eta=a-m/A$, we have
  \[
    \frac{a-\lambda}{a}
    =\frac{\eta/a-\E(M-X^{\operatorname{U}})/A}{1-\E(M-X^{\operatorname{U}})/A}.
  \]
  If $\eta/a\ge\rho$, then $\E(M-X^{\operatorname{U}})/A<\rho$ implies $(a-\lambda)/a\ge(\eta/a-\rho)/(1-\rho)$, and hence
  \[
    \begin{aligned}
      \frac12-\rho\frac{p_0A}{Dq^{k-1}}
       & \ge\frac7{20}+\frac5{51}\left(\frac{\eta}{a}-\frac3{20}\right) \\
       & =\frac13+\frac{\eta}{10a}+\frac{1-\eta/a}{510}
      \ge\frac13+\frac{\eta}{10a}.
    \end{aligned}
  \]
  If $\eta/a\le\rho$, then $p_0A/(Dq^{k-1})\le1$ gives $1/2-\rho p_0A/(Dq^{k-1})\ge7/20>1/3+\eta/(10a)$.
  For $1/10\le s\le1$, the arithmetic--geometric mean inequality and $\sqrt{2/11}<13/30$ give
  \[
    \beta(s)
    =\sqrt{\frac2{11}}\sqrt{\frac{1+s}{2}}
    \le\sqrt{\frac2{11}}\frac{3+s}{4}
    <\frac{13(3+s)}{120}
    \le\frac13+\frac{s}{10}.
  \]
  The last inequality is equivalent to $s\le1$.
  Thus, $1/2-\rho p_0A/(Dq^{k-1})>\beta(\eta/a)$, and $0<\beta(s)<1/2$ for $1/10\le s\le1$.
  Since $\E(M-X^{\operatorname{U}})=\rho D\theta$ and $D/A<1$, we obtain
  \[
    \begin{aligned}
      \frac12-\frac{p_0\E(M-X^{\operatorname{U}})}{Dq^{k-1}}
       & =\frac12-\rho\theta\frac DA\frac{p_0A}{Dq^{k-1}} \\
       & \ge\frac{1-\theta}{2}
      +\theta\left(\frac12-\rho\frac{p_0A}{Dq^{k-1}}\right)
      \ge\frac{1-\theta}{2}+\theta\beta(\eta/a)>0.
    \end{aligned}
  \]
  We may therefore substitute \eqref{eq:large-defect-variance-lower} into the left-hand side of the inequality in \cref{lem:averaged-contact-normal}.
  Using also $Q\le(1-\theta)Q_0+\theta Q_1$ and $B_*/3+(1/7)\sqrt{\mathcal V/k}\le R$, we obtain
  \begin{equation}\label{eq:large-defect-normal}
    \begin{aligned}
       & \left(\frac{1-\theta}{2}+\theta\beta(\eta/a)\right)
      \sqrt{(1-t)\bigl((1-\theta)\mathcal V_0+\theta \mathcal V_1\bigr)} \\
       & \qquad<\frac25\bigl((1-\theta)Q_0+\theta Q_1\bigr)+R.
    \end{aligned}
  \end{equation}
  Define
  \begin{equation}\label{eq:large-defect-endpoint-functions}
    \begin{aligned}
      P_0(t,\eta/a,x_0/A)
       & =\frac12\sqrt{(1-t)\mathcal V_0}-\frac25Q_0-R,         \\
      P_1(t,\eta/a,x_0/A)
       & =\beta(\eta/a)\sqrt{(1-t)\mathcal V_1}-\frac25Q_1-R.
    \end{aligned}
  \end{equation}
  Since $0<\beta(\eta/a)<1/2$ and $\mathcal V_1\ge \mathcal V_0$, concavity of the square root gives
  \[
    \begin{aligned}
       & \left(\frac{1-\theta}{2}+\theta\beta(\eta/a)\right)
      \sqrt{(1-\theta)\mathcal V_0+\theta \mathcal V_1}                   \\
       & \quad\ge\left(\frac{1-\theta}{2}+\theta\beta(\eta/a)\right)
      \bigl((1-\theta)\sqrt{\mathcal V_0}+\theta\sqrt{\mathcal V_1}\bigr) \\
       & \quad=\frac{1-\theta}{2}\sqrt{\mathcal V_0}
      +\theta\beta(\eta/a)\sqrt{\mathcal V_1}                           \\
       & \qquad\quad+\theta(1-\theta)\bigl(1/2-\beta(\eta/a)\bigr)
      \bigl(\sqrt{\mathcal V_1}-\sqrt{\mathcal V_0}\bigr)                 \\
       & \quad\ge\frac{1-\theta}{2}\sqrt{\mathcal V_0}
      +\theta\beta(\eta/a)\sqrt{\mathcal V_1}.
    \end{aligned}
  \]
  After multiplication by $\sqrt{1-t}$, \eqref{eq:large-defect-normal} yields
  \begin{equation}\label{eq:large-defect-contact-obstruction}
    (1-\theta)P_0(t,\eta/a,x_0/A)
    +\theta P_1(t,\eta/a,x_0/A)<0.
  \end{equation}
  We extend the defining formulas for $P_0,P_1$ and their auxiliary functions by allowing $t$, $\eta/a$, and $x_0/A$ to vary independently over $[0,1/6]\times[1/10,1]\times[0,1/3]$.
  \begin{claim}\label{clm:large-defect-endpoint-positive}
    The functions $P_0$ and $P_1$ defined in \eqref{eq:large-defect-endpoint-functions} are positive throughout $[0,1/6]\times[1/10,1]\times[0,1/3]$.
  \end{claim}
  The proof of \cref{clm:large-defect-endpoint-positive} is given in \cref{app:large-defect-endpoint-positive}.
  Since $0\le\theta<1$, \cref{clm:large-defect-endpoint-positive} contradicts \eqref{eq:large-defect-contact-obstruction}.
  Therefore, $\eta/a<1/10$.
\end{proof}

  \begin{corollary}\label{cor:small-omega}
    Let $k\ge6$ and $x=\beta_k$, and let $\mathsf P$ maximize $F_k$ among nonnegative Borel laws of mean at most $x$.
    If $0<w<W_k$, then the conditional decomposition with $\varepsilon=1/16$ and $\zeta=17/20$ satisfies $0<\eta<a/10<1/10$.
  \end{corollary}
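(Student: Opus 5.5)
This corollary is a direct assembly of earlier results: check that every hypothesis of \cref{lem:large-defect} holds for the maximizing law $\mathsf P$, obtain $\eta/a<1/10$, and combine it with $\eta>0$ and $a<1$.

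\emph{Step 1 (structure).} Since $0<w<W_k$, \cref{prop:support-separation} gives $\mathsf P([x+D/16,x+17D/20])=0$ and $\mathsf P((x+17D/20,M])>0$. By \cref{obs:contact-identities}, $\mathsf P([0,M])=1$ with $x<M\le1$. Together these show that $\mathsf P$ is supported on $[0,x+D/16]\cup(x+17D/20,M]$ and puts positive mass on the upper interval. Hence the conditional decomposition of \cref{sec:conditional-laws} applies with $\varepsilon=1/16$ and $\zeta=17/20$.

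\emph{Step 2 (remaining hypotheses).} Lemmas \ref{lem:crossing-lower-finite} and \ref{lem:crossing-parameters} give $11/24<c<1/2$. The other hypotheses come from earlier results as follows.
\begin{itemize}
\item Maximality gives $F_k(\mathsf P)\ge a^k$, so $p\le p_0$.
\item \cref{prop:finite-shortfall-margin} gives $D>3x$ and $(M-r)/D<1/20-3x/(20D)$.
\item \cref{lem:contact-moment} gives $r\le M$.
\item \cref{cor:contact-moments}\textup{(i)} gives $\Var(S_k)=D(r-\delta)$.
\item \cref{obs:contact-identities}\textup{(ii)} gives $\Prob(S_{k-1}<1-y)=p(M-y)/D$ for $\mathsf P$-almost every $y$.
\item \cref{cor:positive-lower-mean} gives $x_0>0$ and $0<\eta<a$.
\end{itemize}

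\emph{Step 3 (conclusion).} \cref{lem:large-defect} now yields $\eta/a<1/10$, that is, $\eta<a/10$. Since $a=kx<1$, this gives $a/10<1/10$. Combined with $\eta>0$, we obtain $0<\eta<a/10<1/10$.

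There is no real obstacle here: all the substantive work is in \cref{lem:large-defect} and its deferred \cref{clm:large-defect-endpoint-positive}. The only care needed is to confirm that the strict-inequality hypotheses (for example, $D>3x$ and $x_0>0$) come from the cited results and are not merely assumed.
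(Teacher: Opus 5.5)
Your proposal is correct and matches the paper's proof: both use \cref{cor:positive-lower-mean} for $x_0>0$ and $\eta>0$, verify the remaining hypotheses of \cref{lem:large-defect} from \cref{prop:finite-shortfall-margin,prop:support-separation}, the moment identities of \cref{sec:contact} and the bounds on $c_k$, and then conclude using $a=kx<1$. The one hypothesis you use without citing it, that $\mathsf P$ has mean exactly $x$, comes from \cref{prop:normalization}.
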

  \begin{proof}
    By \cref{cor:positive-lower-mean}, we have $x_0>0$ and $\eta>0$.
    The moment identities in \cref{sec:contact}, \cref{prop:finite-shortfall-margin,prop:support-separation}, and the bounds on $c_k$ verify the remaining hypotheses of \cref{lem:large-defect}.
    Hence $\eta/a<1/10$.
    Since $a=k/(k+c_k)<1$, we obtain $0<\eta<a/10<1/10$.
  \end{proof}

\subsection{\texorpdfstring{{Upper bounds for $\esssup X^{\operatorname{L}}$ and $\mathcal V$}}{Upper bounds for ess sup X(L) and V}}
\label{sec:small-defect-exclusion}

We first prove $b_0:=\esssup X^{\operatorname{L}}<A/10$.
Since $0\le X^{\operatorname{L}}\le b_0$ almost surely and $x_0=\E X^{\operatorname{L}}\in[0,b_0]$, this will also give $|X^{\operatorname{L}}-x_0|\le b_0<A/10$ almost surely.
The bound $|X^{\operatorname{L}}-x_0|<A/10$ allows us to take $B_*=1/10$ in \cref{lem:averaged-contact-normal} and prove $\mathcal V<1/25$.

  \begin{lemma}\label{lem:conditional-support}
    Let $k\ge6$ be an integer, $11/24\le c\le1/2$, $x=(k+c)^{-1}$, $x<M\le1$, and $0\le\varepsilon\le1/10$, and set $D=M-x$ and $\zeta=17/20$.
    Let $\mathsf P$ be a probability law of mean $x$, supported on $[0,x+\varepsilon D]\cup(x+17D/20,M]$, with positive mass on the upper interval.
    Use the conditional notation from \cref{sec:conditional-laws}, and set $\mathcal V=k\Var(X^{\operatorname{L}})/A^2$.
    For independent $X_1,\ldots,X_k$ with law $\mathsf P$, write $h(t)=\Prob(S_{k-1}+t\ge1)$.
    Assume that $p\le p_0$, $0\le M-r<(D-3x)/20$, and $h(y)-h(0)=py/D$ for $\mathsf P$-almost every $y$.
    If $0<\eta\le1/10$, then $\esssup X^{\operatorname{L}}<A/10$ and $\mathcal V<1/25$.
  \end{lemma}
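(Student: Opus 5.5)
The plan is to prove the two assertions in order. First we bound $b_0:=\esssup X^{\operatorname{L}}$ using the contact identity $h(y)-h(0)=py/D$ at points $y$ near $b_0$. We then feed the resulting almost-sure bound $|X^{\operatorname{L}}-x_0|\le b_0$ into \cref{lem:averaged-contact-normal}, with $B_*=b_0/A<1/10$, to get $\mathcal V<1/25$.

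Before either step we collect the parameter bounds already available in this setting: $\lambda<a$, $0<a-\lambda\le\eta\le1/10$, $J>m+\delta-x_0/5$ almost surely, $q\le D/A$, and $\mathcal V<5\eta\le1/2$. These come from \cref{lem:upper-intensity}\textup{\ref{item:upper-intensity-parameters}} and \cref{lem:conditional-variance-bounds}; note that the contact identity $h(y)-h(0)=py/D$ is exactly \cref{obs:contact-identities}\textup{\ref{item:contact-failure}} in the present notation. From the proof of \cref{lem:conditional-variance-bounds}\textup{\ref{item:conditional-variance-small}} we also record $x_0/A<(a-\lambda)/[a(c-1/5)]\le\eta/[a(c-1/5)]$, which is below roughly $0.42$. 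Likewise $Q<\eta/a-\E(M-X^{\operatorname{U}})/A-(c-1/5)x_0/A$ is small.

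\emph{Step 1 (support).} Suppose, for a contradiction, that $b_0\ge A/10$. Then $\mathsf P$ charges every neighbourhood of some $y\in[A/10,x+D/16]$, so we may pick such $y$ with $h(y)-h(0)=py/D\le p_0y/D$. Since $q\le D/A$, we have $y/D\le y/(qA)$. We bound $h(y)-h(0)=\Prob(1-y\le S_{k-1}<1)$ from below by the event that exactly one of the $k-1$ samples is upper and the remaining $k-2$ are lower; this event has probability $(k-1)(1-q)q^{k-2}\approx\lambda q^{k-2}$. On it, using $1-kx_0=\delta+m$, the sum equals $1+(J-m-\delta)-x_0+B_{k-2}$. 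So we need $B_{k-2}+(J-m-\delta)\in[x_0-y,x_0)$, an interval of length $y\ge A/10$. The two error terms are controlled as follows.
\begin{itemize}
\item $J-m-\delta$ lies in $(-x_0/5,\infty)$ almost surely, and its positive part has mean $AQ$, which is small.
\item $B_{k-2}/A$ has variance less than $\mathcal V<5\eta$; we control it by Chebyshev, or by \cref{lem:normal-comparison} if Chebyshev is too weak.
\end{itemize}
Together these should show that the window is hit with probability at least an absolute constant times $\min\{1,y/(A\sqrt{\mathcal V})\}$. Comparing with $p_0y/D$, and using \cref{lem:upper-intensity}\textup{\ref{item:upper-intensity-ratio}} ($p_0A/(Dq^{k-1})\le1$) together with $\lambda\ge a-\eta\ge0.8$, should give the contradiction.

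\emph{Step 2 (variance).} With $b_0<A/10$ we have $|X^{\operatorname{L}}-x_0|\le b_0<A/10$ almost surely. We apply \cref{lem:averaged-contact-normal} with $B_*=b_0/A$. Since $\E(M-X^{\operatorname{U}})<3D/20$ and $p_0A/(Dq^{k-1})\le1$, the coefficient of $\sqrt{(k-1)\mathcal V/k}$ is at least $1/2-3/20=7/20$. We then solve the resulting inequality, which is linear in $\sqrt{\mathcal V}$, for $\sqrt{\mathcal V}$.

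\emph{Expected obstacle.} The crude inputs $Q\le 1/(10a)$ and $B_*\le1/10$ only give $\mathcal V\lesssim0.09$. So the main obstacle is sharpening the constants so that the bound drops below $1/25$. I would try several refinements:
\begin{itemize}
\item keep the negative terms in the bound for $Q$, namely $-\E(M-X^{\operatorname{U}})/A-(c-1/5)x_0/A$;
\item use the exact value $B_*=\max\{x_0,b_0-x_0\}/A$ rather than $1/10$;
\item use the actual factor $e^{-(a-\lambda)}$ in the coefficient rather than the bound $1$;
\item in Step 1, establish the stronger bound $b_0\le cA$ for a smaller constant $c$.
\end{itemize}
If these refinements are still not enough, I would iterate Steps 1 and 2: a smaller $\mathcal V$ tightens Step 1, and a smaller $b_0$ in turn improves Step 2.
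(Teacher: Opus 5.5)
Your Step~2 is essentially the paper's argument. Step~1, however, has a genuine gap.

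\textbf{Why Step~1 does not close.} Write $Z=J-m-\delta=X^{\operatorname U}+(k-1)x_0-1$ and $W=\Prob\bigl(Z+B_{k-2}\in[x_0-y,x_0)\bigr)$. After your one-upper decomposition and the reduction $p_0/D\le q^{k-1}/A$ from \cref{lem:upper-intensity}\textup{(iii)}, a contradiction requires
\[
W>\frac{k-\lambda}{(k-1)\lambda}\cdot\frac yA>\frac yA .
\]
This is an anti-concentration bound with constant larger than $1$, and nothing you list yields it.
\begin{itemize}
\item Chebyshev only bounds deviations from above. \cref{lem:normal-comparison} gives a CDF approximation with an additive error, which is useless for a window of length $y$.
\item Even in an idealised Gaussian regime, the window has probability at most about $y/(\sqrt{2\pi}\,\sigma A)$ with $\sigma^2=(k-2)\mathcal V/k$. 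This is below $y/A$ as soon as $\mathcal V\ge1/4$, and at this stage you only know $\mathcal V<5\eta\le1/2$.
\item In the opposite regime, $X^{\operatorname L}$ is nearly constant apart from a tiny atom at $b_0$. Then $B_{k-2}\approx0$ and $W\approx\Prob(Z<x_0)$. Your only information on $Z$ is $-x_0/5<Z\le A+kx_0-1$ and $\E Z_+=AQ$. With $M$ near $1$, this allows $Z$ to be spread over a range of length about $kx_0$, so $W$ can be of order $1/k$.
\end{itemize}
In such configurations the contact identity at $y$ is violated because $W$ is too \emph{small}. No lower bound on $W$ can detect that.

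\textbf{The missing idea.} You need the conditional-variance identity of \cref{lem:contact-moment}. This is also where the hypothesis $M-r<(D-3x)/20$ enters, and your Step~1 never uses it beyond $r\le M$.
\begin{itemize}
\item Integrating the a.e.\ identity against $\mathsf P$ gives $1-h(0)=pM/D$. This yields the reweighting formula and $\Var(S_k)=D(r-\delta)$; separately, $\eta>0$ forces $x_0>0$.
\item The covariance computation, together with $v=\Var(S_k\mid S_k<1)\ge0$, then gives
\[
\frac{k}{2p}\E\Bigl[(X-X')\int_{X'}^{X}\bigl(h(t)-h(0)\bigr)\dd t\Bigr]\le(r-\delta)(M-r)<\frac{(A+x_0)(A-3x_0)}{20}<\frac{A^2}{20}.
\]
\item For a lower bound, take points $y_n\uparrow b_0$ of the lower support at which the contact identity holds. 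Monotonicity of $h$ gives $h(t)-h(0)\ge pb_0/D$ for all $t>b_0$.
\item Keep only lower--upper pairs, which have total probability $2q(1-q)$. Use $X^{\operatorname U}-b_0>3D/4$ and $\lambda(\E X^{\operatorname U}-x_0)=m$. The left side then exceeds $\frac34 mqb_0>\frac12Ab_0$, since $m/A=a-\eta>4/5$ and $q>5/6$.
\end{itemize}
Comparing the two bounds gives $\frac12Ab_0<A^2/20$, hence $b_0<A/10$.

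\textbf{Step~2 needs no iteration and no sharper $B_*$.} Your value $0.09$ came from two losses at once: replacing the coefficient by its worst case $7/20$, and dropping $-e$ from $Q$, where $e=\E(M-X^{\operatorname U})/A$. Keep $e$ on both sides. Use $p_0A/(Dq^{k-1})\le1$, $Q<\eta/a-e\le13/120-e$, $B_*=1/10$, $\sqrt{(k-1)/k}>9/10$ and $1/(7\sqrt k)<3/50$. Then \cref{lem:averaged-contact-normal} gives
\[
\Bigl(\frac{39}{100}-\frac{9e}{10}\Bigr)\sqrt{\mathcal V}<\frac{23}{300}-\frac{2e}{5}<\frac15\Bigl(\frac{39}{100}-\frac{9e}{10}\Bigr).
\]
Since $e<3/20$, the coefficient is positive, so $\sqrt{\mathcal V}<1/5$ and $\mathcal V<1/25$.
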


\begin{proof}
  The assumption $0\le M-r<(D-3x)/20$ implies $r\le M$ and $D>3x$.
  Also, $x_0>0$: otherwise $m=a$ and $A=M\le1$, so $\eta=a-a/M\le0$, contrary to $\eta>0$.
  Taking expectations in $h(X_1)-h(0)=pX_1/D$ gives
  \[
    1-p-h(0)=\E[h(X_1)-h(0)]=\frac{px}{D}.
  \]
  Since $D+x=M$, it follows that
  \[
    1-h(0)=\frac{pM}{D},\text{ and }
    \Prob(S_{k-1}<1-y)=\frac{p(M-y)}D
    \quad\text{for $\mathsf P$-almost every $y$.}
  \]
    The contact identity gives, for every bounded Borel function $\varphi$,
    \[
      \E[\varphi(X_1)\mid S_k<1]
      =\frac{\E[(M-X_1)\varphi(X_1)]}{D}.
    \]
    Taking $\varphi(y)=y$ and using $\E[X_1\mid S_k<1]=(1-r)/k$ yields $\Var(S_k)=D(r-\delta)$.
    Write $v=\Var(S_k\mid S_k<1)$, and let $X,X'$ be independent with law $\mathsf P$.
    The covariance calculation in the proof of \cref{lem:contact-moment} uses only independence and these identities, and therefore yields
    \[
      v=(r-\delta)(M-r)
      -\frac{k}{2p}\E\left[(X-X')\int_{X'}^X(h(t)-h(0))\dd t\right].
    \]
  Applying \cref{lem:upper-intensity}\textup{\ref{item:upper-intensity-parameters}}, we obtain $\lambda<a<1$ and $0\le\E(M-X^{\operatorname{U}})/A<3/20$.
  Since $k\ge6$, $a\ge12/13$, and $\eta\le1/10$, we have $q=1-\lambda/k>5/6$ and $m/A=a-\eta\ge12/13-1/10>4/5$.
  Since $x=qx_0+(1-q)\E X^{\operatorname{U}}$, we also have $\lambda(\E X^{\operatorname{U}}-x_0)=m$.

  Set $b_0=\esssup X^{\operatorname{L}}\ge x_0>0$.
  Since $v\ge0$, we have
  \[
    \frac{k}{2p}\E\left[
      (X-X')\int_{X'}^X\bigl(h(t)-h(0)\bigr)\dd t
      \right]\le(r-\delta)(M-r).
  \]
  We will prove $\frac12Ab_0<(r-\delta)(M-r)<A^2/20$, and hence $b_0<A/10$.
  Since $D<A$ and $x_0<x$, we have
  \[
    0\le M-r<\frac{D-3x}{20}<\frac{A-3x_0}{20}.
  \]
  Consequently, $A>3x_0$.
  Since $r-\delta=\Var(S_k)/D\ge0$ and $r-\delta\le M=A+x_0$, we have
  \[
    (r-\delta)(M-r)
    <\frac{(A+x_0)(A-3x_0)}{20}<\frac{A^2}{20}.
  \]

  For each integer $n\ge1$, choose $y_n\in[0,x+\varepsilon D]\cap(b_0-1/n,b_0]$ such that $h(y_n)-h(0)=py_n/D$.
  Such a choice is possible because $\Prob(X^{\operatorname{L}}>b_0-1/n)>0$ and $h(y)-h(0)=py/D$ for almost every $y$ under the law of $X^{\operatorname{L}}$.
  For every $t>b_0$, monotonicity gives $h(t)-h(0)\ge py_n/D$ for all $n$, and letting $n\to\infty$ yields $h(t)-h(0)\ge pb_0/D$.
  Since $h(t)-h(0)\ge0$ for $t\ge0$, independent variables $X^{\operatorname{L}}$ and $X^{\operatorname{U}}$ satisfy
  \[
    \int_{X^{\operatorname{L}}}^{X^{\operatorname{U}}}(h(t)-h(0))\dd t\ge\frac{pb_0}{D}(X^{\operatorname{U}}-b_0),
    \text{ and }
    X^{\operatorname{U}}-b_0>\left(\frac{17}{20}-\varepsilon\right)D
    \ge\frac{3D}{4}.
  \]
  The product $(X-X')\int_{X'}^X(h(t)-h(0))\dd t$ is nonnegative and unchanged when $X$ and $X'$ are interchanged.
  We retain only the pairs satisfying $X\in[0,x+\varepsilon D]$ and $X'\in(x+17D/20,M]$, together with the pairs satisfying the reversed conditions.
  The two events have total probability $2q(1-q)$.
  Using $\lambda=k(1-q)$, $\E(X^{\operatorname{U}}-X^{\operatorname{L}})=\E X^{\operatorname{U}}-x_0$, and $\lambda(\E X^{\operatorname{U}}-x_0)=m$, we obtain
  \[
    \begin{aligned}
      \frac12Ab_0
       & <\frac34mqb_0
      <\frac{\lambda qb_0}{D}\E[(X^{\operatorname{U}}-X^{\operatorname{L}})(X^{\operatorname{U}}-b_0)] \\
       & \le(r-\delta)(M-r)<\frac{A^2}{20}.
    \end{aligned}
  \]
  The first inequality uses $m/A>4/5$ and $q>5/6$, and the second uses $X^{\operatorname{U}}-b_0>3D/4$.
  Thus, $b_0<A/10$.

  To prove $\mathcal V<1/25$, we apply \cref{lem:averaged-contact-normal} with $B_*=1/10$, since $0\le X^{\operatorname{L}},x_0\le b_0<A/10$.
  In particular, $Q<\eta/a-\E(M-X^{\operatorname{U}})/A-(c-1/5)x_0/A<\eta/a-\E(M-X^{\operatorname{U}})/A\le13/120-\E(M-X^{\operatorname{U}})/A$.
  By \cref{lem:upper-intensity}\textup{\ref{item:upper-intensity-ratio}}, we have $p_0A/(Dq^{k-1})\le1$, so $p_0\E(M-X^{\operatorname{U}})/(Dq^{k-1})\le\E(M-X^{\operatorname{U}})/A$.
  Using $\sqrt{(k-1)/k}>9/10$ and $1/(7\sqrt{k})<3/50$ in \cref{lem:averaged-contact-normal}, we obtain
  \[
    \frac9{10}\left(\frac12-\frac{\E(M-X^{\operatorname{U}})}A\right)\sqrt{\mathcal V}
    <\frac25\left(\frac{13}{120}-\frac{\E(M-X^{\operatorname{U}})}A\right)
    +\frac1{30}+\frac3{50}\sqrt{\mathcal V}.
  \]
  Rearranging gives
  \begin{equation}\label{eq:small-defect-variance-comparison}
    \begin{aligned}
      \left(\frac{39}{100}-\frac{9\E(M-X^{\operatorname{U}})}{10A}\right)\sqrt{\mathcal V}
       & <\frac{23}{300}-\frac{2\E(M-X^{\operatorname{U}})}{5A}                       \\
       & <\frac15\left(\frac{39}{100}-\frac{9\E(M-X^{\operatorname{U}})}{10A}\right).
    \end{aligned}
  \end{equation}
  The second inequality follows because the difference between its right- and left-hand sides is $1/750+11\E(M-X^{\operatorname{U}})/(50A)>0$.
  As $\E(M-X^{\operatorname{U}})/A<3/20$, the coefficient $39/100-9\E(M-X^{\operatorname{U}})/(10A)$ is positive.
  Dividing \eqref{eq:small-defect-variance-comparison} by this coefficient yields $\sqrt{\mathcal V}<1/5$, and hence $\mathcal V<1/25$.
\end{proof}

\subsection{An upper tail bound for sums of independent bounded mean-zero variables}
\label{sec:conditional-success-upper}

The bounds $|X^{\operatorname{L}}-x_0|<A/10$ and $\mathcal V<\min\{5\eta,1/25\}$ now yield an upper bound for $T_0$.
The next estimate applies to independent variables, which need not be identically distributed.

\begin{lemma}\label{lem:small-conditional-tail}
  Let $k\ge1$ be an integer and $0<\eta\le1/10$.
  Suppose that $V_1,\ldots,V_k$ are independent random variables satisfying $\E V_i=0$ and $|V_i|\le1/10$ almost surely for every $i$.
  If $\sum_{i=1}^k\Var(V_i)<\min\{5\eta,1/25\}$, then $\Prob(V_1+\cdots+V_k\ge1-\eta)<\eta/3$.
\end{lemma}

\begin{proof}
  Write $S=\sum_{i=1}^kV_i$, $v=\sum_{i=1}^k\E V_i^2$, and $\psi(t)=\mathrm{e}^t-1-t$.
  We prove $\Prob(S\ge1-\eta)\le v/16$, which gives the result because $v/16<5\eta/16<\eta/3$.
  We will apply Markov's inequality to $\psi(10S)$.
  Since $\E S=0$, we have $\E\psi(10S)=\E\mathrm{e}^{10S}-1$.
  Taylor's formula gives, for $|t|\le1$,
  \[
    0\le\psi(t)
    =t^2\int_0^1(1-s)\mathrm{e}^{st}\dd s
    \le\frac34t^2,
  \]
  since $\int_0^1(1-s)\mathrm{e}^s\dd s=\mathrm{e}-2<3/4$.
  Thus, $\E\mathrm{e}^{10V_i}\le1+75\E V_i^2$.
  By independence,
  \[
    \E\mathrm{e}^{10S}
    \le\prod_{i=1}^k(1+75\E V_i^2)\le\mathrm{e}^{75v}.
  \]
  Since the function $s\mapsto\mathrm{e}^{75s}-1$ is convex and $0\le v<1/25$, we obtain
  \[
    \E\psi(10S)=\E\mathrm{e}^{10S}-1
    \le\mathrm{e}^{75v}-1\le25v(\mathrm{e}^3-1).
  \]
  The function $\psi$ is nonnegative on $\mathbb R$ and increasing on $[0,\infty)$.
  As $1-\eta\ge9/10$, the event $S\ge1-\eta$ implies $\psi(10S)\ge\psi(9)$.
  Markov's inequality therefore yields
  \[
    \Prob(S\ge1-\eta)
    \le\frac{\E\psi(10S)}{\psi(9)}
    \le25v\frac{\mathrm{e}^3-1}{\mathrm{e}^9-10}
    \le\frac v{16}<\frac{5\eta}{16}<\frac{\eta}{3}.
  \]
  The numerical bound $(\mathrm{e}^3-1)/(\mathrm{e}^9-10)<\mathrm{e}^{-6}<1/400$ follows from $\mathrm{e}^3>20$.
\end{proof}

\subsection{\texorpdfstring{Proof of the upper bound for $\Phi_k(x)$}{Proof of the upper bound for Phi(k,x)}}
\label{sec:other-probability-consequences}

  We first prove that every maximizing law at $x=\beta_k$ has $w=0$ or $w=W_k$.
  At either endpoint, \cref{lem:initial-nodes}\textup{(ii)} gives $A(w)=H(w)$, so \cref{prop:strict-chord} yields $F_k(\mathsf P)\le(k\beta_k)^k$.
  Since a maximizing law exists, this proves the required upper bound for $\Phi_k(\beta_k)$.
  We then use \cref{prop:mean-transfer} to obtain the bound for every $x\ge0$.

\begin{proposition}\label{prop:maximizing-endpoints}
  Let $k\ge2$ and $x=\beta_k$.
  Suppose that $\mathsf P$ maximizes $F_k$ among nonnegative Borel probability laws of mean at most $x$.
  Write $\mathsf P=(1-a)\delta_0+a\mathsf Q$ as in \cref{prop:normalization}, where $a=kx$.
  For independent $Y_1,\ldots,Y_k$ with law $\mathsf Q$, set $w=k\E(Y_1+\cdots+Y_k-1)_+$.
  Then $w=0$ or $w=W_k$.
\end{proposition}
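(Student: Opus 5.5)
The plan is to split on $k$. For $2\le k\le5$ the conclusion is exactly \cref{lem:initial-nodes}\textup{(iv)}, so nothing new is needed there. For $k\ge6$ I will argue by contradiction. Suppose $\mathsf P$ is maximizing with $0<w<W_k$. By \cref{cor:remaining-excess-range} we then have $W_4<w<W_k$, and every earlier structural result applies.

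\textbf{Step 1 (setup).} \cref{prop:support-separation} gives the two-interval structure with $\varepsilon=1/16$ and $\zeta=17/20$, so the conditional decomposition of \cref{sec:conditional-laws} is available. The following hypotheses of \cref{lem:upper-intensity,lem:exact-upper-count,lem:conditional-variance-bounds,lem:averaged-contact-normal} are then in force:
\begin{itemize}
\item the moment identities $\Var(S_k)=D(r-\delta)$ and $\Prob(S_{k-1}<1-y)=p(M-y)/D$, from \cref{cor:contact-moments} and \cref{obs:contact-identities};
\item the bounds $p\le p_0$, $D>3x$ and $0\le M-r<(D-3x)/20$, from \cref{prop:finite-shortfall-margin};
\item the bounds $11/24<c<1/2$ on $c=c_k$.
\end{itemize}

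\textbf{Step 2 (consequences of the lemmas).} \cref{cor:positive-lower-mean} gives $x_0>0$, $\lambda<a$ and $0<\eta<a$; here the case $x_0=0$ is excluded because it would force $\mathsf P=\mathsf B_x$ and hence $w=W_k$. \cref{lem:exact-upper-count} then gives $T_0>\eta/3$ together with $T_0\le\Prob(B_k/A\ge1-\eta)$. \cref{lem:conditional-variance-bounds}\textup{(iii)} gives $\mathcal V<5\eta$. \cref{cor:small-omega}, which is the application of \cref{lem:large-defect}, gives $\eta<a/10<1/10$.

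\textbf{Step 3 (small support and variance).} I apply \cref{lem:conditional-support}. This requires the identity $h(y)-h(0)=py/D$ for $\mathsf P$-almost every $y$, which is \cref{obs:contact-identities}\textup{(i)} rewritten using $p/D=(F_k-F_{k-1})/x$. The lemma yields $\esssup X^{\operatorname{L}}<A/10$ and $\mathcal V<1/25$.

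\textbf{Step 4 (contradiction).} Set $V_i=(X_i^{\operatorname{L}}-x_0)/A$. These are independent, have mean zero, and satisfy $|V_i|<1/10$, because $0\le X^{\operatorname{L}},x_0\le\esssup X^{\operatorname{L}}<A/10$. Their total variance is $\mathcal V<\min\{5\eta,1/25\}$. \cref{lem:small-conditional-tail} therefore gives $\Prob(B_k/A\ge1-\eta)<\eta/3$. Combined with Step 2 this yields
\[
\eta/3<T_0\le\Prob(B_k/A\ge1-\eta)<\eta/3,
\]
which is a contradiction. Hence $w\in\{0,W_k\}$.

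The main obstacle is bookkeeping rather than new mathematics. For each invoked lemma one must confirm its hypotheses exactly: the ranges $11/24\le c\le1/2$ and $\varepsilon\le1/10$, and the contact identity in the precise form that lemma assumes. The delicate analytic work is already packaged in \cref{lem:large-defect} and \cref{clm:large-defect-endpoint-positive}, which rule out $\eta/a\ge1/10$.
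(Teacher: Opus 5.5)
Your proposal is correct and follows the paper's proof essentially step for step. You treat $2\le k\le5$ via \cref{lem:initial-nodes}\textup{(iv)}, and for $k\ge6$ you derive the contradiction $\eta/3<T_0\le\Prob(B_k/A\ge1-\eta)<\eta/3$ from \cref{lem:exact-upper-count}, \cref{lem:large-defect}, \cref{lem:conditional-variance-bounds}\textup{(iii)}, \cref{lem:conditional-support} and \cref{lem:small-conditional-tail}. One hypothesis should be cited explicitly: the condition $J>m+\delta-x_0/5$ required by \cref{lem:conditional-variance-bounds}\textup{(iii)} comes from \cref{lem:upper-intensity}\textup{(ii)} once $x_0>0$, as in the paper, not from \cref{cor:positive-lower-mean}.
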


\begin{proof}
  For $2\le k\le5$, the conclusion follows from \cref{lem:initial-nodes}\textup{(iv)}.
  Fix $k\ge6$ and write $x=\beta_k=(k+c_k)^{-1}$.
  Let $\mathsf P$ be any maximizing distribution under the constraint $m(\mathsf P)\le x$.
  By \cref{prop:normalization}, $\mathsf P$ is supported on $[0,1]$, has mean $x$, and can be written as
  \[
    \mathsf P=(1-a)\delta_0+a\mathsf Q,
  \]
  where $a=kx$ and $\mathsf Q\in\Pcal([0,1])$ has mean $1/k$.
  In particular, $\mathsf P(\{0\})\ge1-a>0$.
  For independent $X_1,\ldots,X_k$ with law $\mathsf P$, we retain the notation $S_j,p,h,r$ from \cref{sec:contact}.
  Since $\E S_k=a<1$, we have $p>0$.
  Both $\mathsf B_x$ and $\mathsf C_{k,x}$ attain $a^k=1-(1-x)^k$.
  Hence, maximality gives $F_k(\mathsf P)\ge a^k$, or $p\le p_0=(1-x)^k$.
  Suppose for a contradiction that $0<w<W_k$.
  We will apply \cref{lem:upper-intensity,lem:large-defect} and then compare the upper and lower bounds for $T_0$ to exclude this possibility.

  By {\cref{lem:crossing-lower-finite,lem:crossing-upper}}, we have $11/24<c_k<1/2$.
  Recall that $\delta=1-a$.
  Let $M$ and $D$ be as in \cref{sec:contact}, so
  \[
    M=\frac{x(1-F_{k-1}(\mathsf P))}{F_k(\mathsf P)-F_{k-1}(\mathsf P)}
    \text{ and }
    D=M-x.
  \]
  By {\cref{obs:contact-identities}\textup{\ref{item:contact-failure}} and \cref{cor:contact-moments}\textup{\ref{item:contact-moment-identity}}}, we have $x<M\le1$, $\mathsf P([0,M])=1$, and
  \[
    \Var(S_k)=D(r-\delta),\text{ and }
    \Prob(S_{k-1}<1-y)=\frac{p(M-y)}D
    \quad\text{for $\mathsf P$-almost every $y$.}
  \]
  By {\cref{lem:contact-moment}\textup{\ref{item:contact-moment-bounds}}}, we have $r\le M$.
  By {\cref{prop:finite-shortfall-margin}\textup{\ref{item:shortfall-ratio}}}, we have $D>3x$ and
  \[
    0\le\frac{M-r}{D}<\frac1{20}-\frac{3x}{20D}.
  \]

  By \cref{prop:support-separation}, we have $\mathsf P([x+D/16,x+17D/20])=0$ and $\mathsf P((x+17D/20,M])>0$.
  We use $q,X^{\operatorname{L}},X^{\operatorname{U}},x_0,\lambda$ from the conditional decomposition in \cref{sec:conditional-laws} with $\varepsilon=1/16$ and $\zeta=17/20$.
  The upper interval $(x+17D/20,M]$ has positive probability, so $q<1$, while $q\ge\mathsf P(\{0\})\ge1-a>0$.
  Recall that $A=M-x_0$, $J=X^{\operatorname{U}}-x_0$, $m=k(x-x_0)$, and $\eta=a-m/A$.
  Apply \cref{lem:upper-intensity} with $\varepsilon=1/16$ and $\zeta=17/20$.
  If $x_0=0$, then $\mathsf P=\mathsf B_x$ by \cref{lem:upper-intensity}\textup{\ref{item:upper-intensity-zero}}.
  Since $a=kx$, the decomposition $\mathsf P=(1-a)\delta_0+a\mathsf Q$ then implies $\mathsf Q=(1-1/k)\delta_0+(1/k)\delta_1$.
  Recall that $R_k=Y_1+\cdots+Y_k$.
  Since $\E R_k=1$ and $R_k$ takes nonnegative integer values,
  \[
    w=k\E(R_k-1)_+
    =k\E(1-R_k)_+
    =k\Prob(R_k=0)
    =k(1-1/k)^k
    =W_k,
  \]
  contrary to $0<w<W_k$.
  Thus, $x_0>0$, and \cref{lem:upper-intensity}\textup{\ref{item:upper-intensity-parameters}} gives $\lambda<a$, $J>m+\delta-x_0/5$ almost surely, and $0<\eta<a$.

  By the almost-everywhere equality in {\cref{obs:contact-identities}\textup{\ref{item:contact-equality}}} and the definition of $D$, we have $h(y)-h(0)=py/D$ for $\mathsf P$-almost every $y$.
  For independent $X,X'\sim\mathsf P$, {\cref{lem:contact-moment}\textup{\ref{item:contact-moment-formula}}} gives
  \[
    v=(r-\delta)(M-r)
    -\frac{k}{2p}\E\left[
      (X-X')\int_{X'}^X\bigl(h(t)-h(0)\bigr)\dd t
      \right],
  \]
  where $v=\Var(S_k\mid S_k<1)$.

  Applying \cref{lem:large-defect} to $\mathsf P$, we obtain $\eta/a<1/10$.
  Since $0<\eta<a<1$, it follows that $0<\eta<1/10$.

  \begin{claim}\label{lem:small-defect-exclusion}
    We have $T_0<\eta/3$.
  \end{claim}
  \begin{proof}
    Applying \cref{lem:conditional-variance-bounds}\textup{\ref{item:conditional-variance-small}} and \cref{lem:conditional-support} with $\varepsilon=1/16$ and $\zeta=17/20$, we obtain
    \[
      b_0:=\esssup X^{\operatorname{L}}<\frac A{10},\text{ and }
      \mathcal V<\min\left\{5\eta,\frac1{25}\right\}.
    \]
    For independent copies $X_1^{\operatorname{L}},\ldots,X_k^{\operatorname{L}}$, the variables $(X_i^{\operatorname{L}}-x_0)/A$ are independent and centered.
    Since $0\le X_i^{\operatorname{L}},x_0\le b_0$, their absolute values are at most $b_0/A<1/10$, and their total variance is $\mathcal V$.
    Applying \cref{lem:exact-upper-count}\textup{\ref{item:exact-upper-count-tail}} and \cref{lem:small-conditional-tail}, we obtain
    \[
      T_0
      \le\Prob\left(\frac{B_k}{A}\ge1-\eta\right)
      <\frac{\eta}{3}.
    \]
  \end{proof}
  However, \cref{lem:exact-upper-count}\textup{\ref{item:exact-upper-count-tail}} gives $T_0>\eta/3$, a contradiction.

  By \cref{lem:critical}, every maximizing distribution has $0\le w\le W_k$.
  Since $0<w<W_k$ has been excluded, we have $w=0$ or $w=W_k$.
\end{proof}

  \begin{proof}[Proof of the upper bound for $\Phi_k(x)$ in \cref{thm:iid-tail}]
    For $k=1$, \cref{lem:dimension-one} gives $\Phi_1(x)=\min\{x,1\}$.
    Fix $k\ge2$ and set $a=k\beta_k$.
    By \cref{lem:attainment}, choose a law $\mathsf P$ maximizing $F_k$ among nonnegative Borel laws of mean at most $\beta_k$.
    By \cref{prop:normalization}, write $\mathsf P=(1-a)\delta_0+a\mathsf Q$, where $\mathsf Q\in\Pcal([0,1])$ has mean $1/k$.
    By \cref{prop:maximizing-endpoints}, we have $w=0$ or $w=W_k$.
    Since $(1-a)/a=c_k/k\le1/(2k)<2/(k-1)$, \cref{prop:strict-chord} applies.
    At both endpoints, \cref{lem:initial-nodes}\textup{(ii)} gives $A(w)=H(w)$, and hence
    \[
      a^{-k}\Phi_k(\beta_k)
      =a^{-k}F_k(\mathsf P)
      \le1-H(w)+A(w)=1.
    \]
    Therefore $\Phi_k(\beta_k)\le(k\beta_k)^k$.
    Both $\mathsf B_{\beta_k}$ and $\mathsf C_{k,\beta_k}$ attain $(k\beta_k)^k=1-(1-\beta_k)^k$, so equality holds.
    The hypothesis of \cref{prop:mean-transfer} is now verified.
    It follows that $\Phi_k(x)=1-(1-x)^k$ for $0\le x<\beta_k$, that $\Phi_k(x)=(kx)^k$ for $\beta_k<x\le1/k$, and that $\Phi_k(x)=1$ for $x\ge1/k$.
    Together with the value at $\beta_k$, this proves the formula in \cref{thm:iid-tail} for every $k\ge1$ and $x\ge0$.
  \end{proof}

\section{Proof of Theorem~\ref{thm:iid-tail}}\label{sec:equality-cases}

    The formula for $\Phi_k(x)$ was proved in \cref{sec:other-probability-consequences}.
    In this section, we determine all maximizing distributions, completing the proof of \cref{thm:iid-tail}.
    We first treat $x=\beta_k$ in \cref{sec:crossing-proof} and then every mean bound $x\ge0$ in \cref{sec:all-means-proof}.

\subsection{\texorpdfstring{Maximum and maximizing laws at $x=\beta_k$}{Maximum and maximizing laws at the crossing mean}}
\label{sec:crossing-proof}

By \cref{prop:maximizing-endpoints}, every maximizing law at $x=\beta_k$ has $w=0$ or $w=W_k$.
  We identify the distributions corresponding to these values and obtain all equality cases in \cref{prop:crossing-equality}.

\begin{lemma}\label{lem:endpoint-laws}
  Let $k\ge2$ and let $\mathsf Q\in\Pcal([0,1])$ have mean $1/k$.
  For independent $Y_1,\ldots,Y_k$ with law $\mathsf Q$, write $R_k=Y_1+\cdots+Y_k$ and $w=k\E(R_k-1)_+$.
  Then $w=0$ implies $\mathsf Q=\delta_{1/k}$, while $w=W_k$ implies $\mathsf Q=(1-1/k)\delta_0+(1/k)\delta_1$.
\end{lemma}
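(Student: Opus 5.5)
The plan is to handle the two endpoint cases separately. In each case the value of $w$ forces equality in an inequality whose equality cases are explicit.

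\emph{Case $w=0$.} Here $\E(R_k-1)_+=0$, so $R_k\le1$ almost surely. Since $\E R_k=k\cdot(1/k)=1$, the nonnegative variable $1-R_k$ has mean zero, and therefore $R_k=1$ almost surely. Now $Y_1$ and $Y_2+\cdots+Y_k$ are independent and their sum is almost surely constant. Hence $Y_1$ is almost surely constant: for independent $U,V$ with $U+V$ constant, $\Var(U+V)=\Var U+\Var V=0$. Because the mean is $1/k$, the constant must be $1/k$, so $\mathsf Q=\delta_{1/k}$.

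\emph{Case $w=W_k$.} I would invoke \cref{lem:product-shortfall}\textup{(ii)} with $j=k$. The identity $\E(1-R_k)_+=(k-k+w)/k=w/k$ together with $w=W_k=k(1-1/k)^k$ gives $\E(1-R_k)_+=(1-1/k)^k=\E\prod_{i=1}^k(1-Y_i)$. By \cref{lem:product-shortfall}\textup{(i)} we also have $(1-R_k)_+\le\prod_i(1-Y_i)$ pointwise. Combining the two, equality holds almost surely.

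The strictness clause of \cref{lem:product-shortfall}\textup{(i)} states that the inequality is strict whenever every $Y_i\in(0,1)$. So the event $\{Y_i\in(0,1)\text{ for all }i\}$ has probability zero. By independence this probability equals $\mathsf Q((0,1))^k$, so $\mathsf Q((0,1))=0$. This means $\mathsf Q$ is supported on $\{0,1\}$, and the mean condition forces $\mathsf Q=(1-1/k)\delta_0+(1/k)\delta_1$.

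The only step needing any care is the use of strictness in the second case. The point is that the event where all coordinates lie strictly inside $(0,1)$ is a product event, which is exactly what turns the almost-sure equality into the support conclusion. I expect no real obstacle.
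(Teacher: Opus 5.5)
Your proof is correct and follows the paper's argument essentially step for step. For $w=0$ you force $R_k=1$ almost surely and conclude by vanishing variance; the paper writes this as $k\Var(Y_1)=0$ rather than splitting off $Y_1$, which is the same argument. For $w=W_k$ you combine the pointwise bound of \cref{lem:product-shortfall}\textup{(i)} with equality of expectations, then apply its strictness clause to the product event $\{Y_i\in(0,1)\text{ for all } i\}$, which has probability $\mathsf Q((0,1))^k$.
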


\begin{proof}
  If $w=0$, then $\E(R_k-1)_+=0$, so $R_k\le1$ almost surely.
  Since $\E R_k=1$, we conclude that $R_k=1$ almost surely.
  By independence, we have $k\Var(Y_1)=0$, and hence $\mathsf Q=\delta_{1/k}$.

  If $w=W_k$, then by independence and $\E Y_i=1/k$ we have
  \[
    \E(1-R_k)_+=\frac{w}{k}=\left(1-\frac1k\right)^k=\E\prod_{i=1}^k(1-Y_i).
  \]
  By \cref{lem:product-shortfall}\textup{(i)} with $y_i=Y_i$, we have $(1-R_k)_+\le\prod_{i=1}^k(1-Y_i)$ almost surely.
  Since the two sides have equal expectations, we obtain $(1-R_k)_+=\prod_{i=1}^k(1-Y_i)$ almost surely.
  If $\mathsf Q((0,1))>0$, then all $Y_1,\ldots,Y_k$ lie in $(0,1)$ with probability $\mathsf Q((0,1))^k>0$ by independence.
  On this event, \cref{lem:product-shortfall}\textup{(i)} implies $(1-R_k)_+<\prod_{i=1}^k(1-Y_i)$, contradicting the almost-sure equality $(1-R_k)_+=\prod_{i=1}^k(1-Y_i)$.
  Thus, $Y_1\in\{0,1\}$ almost surely, and $\Prob(Y_1=1)=\E Y_1=1/k$.
  Therefore, $\mathsf Q=(1-1/k)\delta_0+(1/k)\delta_1$.
\end{proof}

\begin{proposition}\label{prop:crossing-equality}
  Let $k\ge2$.
  Then $\Phi_k(\beta_k)=1-(1-\beta_k)^k=(k\beta_k)^k$.
  The maximizing nonnegative Borel laws under $m(\mathsf P)\le\beta_k$ are exactly $\mathsf B_{\beta_k}$ and $\mathsf C_{k,\beta_k}$.
\end{proposition}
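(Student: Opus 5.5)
The value $\Phi_k(\beta_k)=(k\beta_k)^k=1-(1-\beta_k)^k$ has already been established in \cref{sec:other-probability-consequences}: the upper bound comes from \cref{prop:maximizing-endpoints}, \cref{prop:strict-chord} and \cref{lem:initial-nodes}\textup{(ii)}, and both $\mathsf B_{\beta_k}$ and $\mathsf C_{k,\beta_k}$ have mean $\beta_k$ and attain this value. So only the classification of maximizers remains.

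Let $\mathsf P$ be any nonnegative Borel law with $m(\mathsf P)\le\beta_k$ and $F_k(\mathsf P)=\Phi_k(\beta_k)$. I would apply \cref{prop:normalization} with $x=\beta_k\in(0,1/k)$. This shows that $\mathsf P$ is supported on $[0,1]$, has mean exactly $\beta_k$, and can be written as $\mathsf P=(1-a)\delta_0+a\mathsf Q$ with $a=k\beta_k$ and $m(\mathsf Q)=1/k$.

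Next I would take independent $Y_1,\ldots,Y_k$ with law $\mathsf Q$ and set $w=k\E(Y_1+\cdots+Y_k-1)_+$. By \cref{prop:maximizing-endpoints}, $w=0$ or $w=W_k$.
\begin{itemize}
\item If $w=0$, then \cref{lem:endpoint-laws} gives $\mathsf Q=\delta_{1/k}$, so $\mathsf P=(1-a)\delta_0+a\delta_{1/k}=\mathsf C_{k,\beta_k}$.
\item If $w=W_k$, then \cref{lem:endpoint-laws} gives $\mathsf Q=(1-1/k)\delta_0+(1/k)\delta_1$. Hence $\mathsf P=(1-a/k)\delta_0+(a/k)\delta_1=(1-\beta_k)\delta_0+\beta_k\delta_1=\mathsf B_{\beta_k}$.
\end{itemize}

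The substantive work is all in \cref{prop:maximizing-endpoints}, which is already proved. The only point needing care is that \cref{prop:normalization} applies to every maximizer, not just the one produced by \cref{lem:attainment}. Its hypotheses are only that $0<x<1/k$ and that $\mathsf P$ is a nonnegative Borel maximizer, so this holds, and \cref{prop:maximizing-endpoints} is likewise stated for an arbitrary maximizer.

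Finally, $\mathsf B_{\beta_k}\ne\mathsf C_{k,\beta_k}$, since one has an atom at $1$ and the other does not. So the maximizers are exactly these two laws.
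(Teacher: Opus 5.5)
Your proposal is correct and matches the paper's proof. The paper also takes an arbitrary maximizer, normalizes it via \cref{prop:normalization}, invokes \cref{prop:maximizing-endpoints} to get $w\in\{0,W_k\}$, and then identifies $\mathsf Q$ (hence $\mathsf P$) through \cref{lem:endpoint-laws}. It cites the value of $\Phi_k(\beta_k)$ from \cref{sec:other-probability-consequences} exactly as you do.
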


\begin{proof}
  By \cref{lem:attainment}, $F_k$ attains its maximum among nonnegative Borel laws with mean at most $\beta_k$.
  Fix an arbitrary maximizing law $\mathsf P$ under $m(\mathsf P)\le\beta_k$.
  By \cref{prop:normalization}\textup{(i)}, $\mathsf P$ is supported on $[0,1]$ and has mean $\beta_k$.
  By \cref{prop:normalization}, we can write $\mathsf P=(1-a)\delta_0+a\mathsf Q$ with $a=k\beta_k$ and $m(\mathsf Q)=1/k$.
  Let $Y_1,\ldots,Y_k$ be independent with law $\mathsf Q$ and set $w=k\E(Y_1+\cdots+Y_k-1)_+$.
  By \cref{prop:maximizing-endpoints}, $w=0$ or $w=W_k$.
    The numerical formula proved in \cref{sec:other-probability-consequences} gives $\Phi_k(\beta_k)=(k\beta_k)^k$.
    Both $\mathsf B_{\beta_k}$ and $\mathsf C_{k,\beta_k}$ have mean $\beta_k$, and
    \[
      F_k(\mathsf B_{\beta_k})
      =1-(1-\beta_k)^k
      =(k\beta_k)^k
      =F_k(\mathsf C_{k,\beta_k}).
    \]
    Thus both distributions are maximizing.

    By \cref{lem:endpoint-laws}, $w=0$ implies $\mathsf Q=\delta_{1/k}$, while $w=W_k$ implies $\mathsf Q=(1-1/k)\delta_0+(1/k)\delta_1$.
    Substituting into $\mathsf P=(1-a)\delta_0+a\mathsf Q$, we obtain $\mathsf P=\mathsf C_{k,\beta_k}$ when $w=0$ and $\mathsf P=\mathsf B_{\beta_k}$ when $w=W_k$.
    Since $\mathsf P$ was an arbitrary maximizing law, no other nonnegative Borel law with mean at most $\beta_k$ attains $\Phi_k(\beta_k)$.
    Conversely, the two displayed success probabilities show that both $\mathsf B_{\beta_k}$ and $\mathsf C_{k,\beta_k}$ attain this maximum.
    This proves the formula and all equality cases in \cref{prop:crossing-equality}.
\end{proof}

\subsection{\texorpdfstring{Maximum and maximizing laws for every mean bound $x\ge0$}{Maximum and maximizing laws for every mean bound}}
\label{sec:all-means-proof}

  By \cref{prop:mean-transfer,prop:crossing-equality}, the formula for $\Phi_k(x)$ holds for every $k\ge2$ and $x\ge0$.
  We now determine all maximizing laws.
  For $0<x<\beta_k$, we use $\mathsf P_\theta^+=(1-\theta)\mathsf P+\theta\delta_1$, which replaces an observation by one with probability $\theta$.
  For $\beta_k<x<1/k$, we use $\mathsf P_q^-=q\mathsf P+(1-q)\delta_0$, which replaces an observation by zero with probability $1-q$.
  As in \cref{prop:mean-transfer}, the parameters are chosen so that the resulting law has mean at most $\beta_k$.
  The cases $x=0$, $x=1/k$, and $x>1/k$ are treated separately at the end of the proof.

\Needspace{12\baselineskip}
\begin{proposition}\label{prop:equality-transfer}
  Fix $k\ge2$.
  Suppose that the maximizing laws under $m(\mathsf P)\le\beta_k$ are exactly $\mathsf B_{\beta_k}$ and $\mathsf C_{k,\beta_k}$.
  Then the maximizing laws for every mean bound $x\ge0$ are characterized as follows.

  \begin{enumerate}[label=\textup{(\roman*)},leftmargin=*,itemsep=0pt]

    \item If $x=0$, then the unique maximizing law is $\delta_0$.

    \item If $0<x<{\beta_k}$, then the unique maximizing law is $\mathsf B_x$.

    \item If $x={\beta_k}$, then the maximizing laws are exactly $\mathsf B_{{\beta_k}}$ and $\mathsf C_{k,{\beta_k}}$.

    \item If $\beta_k<x<1/k$, then the unique maximizing law is $\mathsf C_{k,x}$.

    \item If $x=1/k$, then the unique maximizing law is $\delta_{1/k}$.

    \item If $x>1/k$, then a feasible law is maximizing if and only if it is supported on $[1/k,\infty)$.
  \end{enumerate}
\end{proposition}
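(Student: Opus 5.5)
We treat the six cases separately, transferring equality through the two mixtures of \cref{fact:mixtures} exactly as in \cref{prop:mean-transfer}. By \cref{prop:mean-transfer} and \cref{prop:crossing-equality}, the value $\Phi_k(x)$ is known for every $x\ge0$. In particular, $\Phi_k(x)=1-(1-x)^k$ on $[0,\beta_k]$ and $\Phi_k(x)=(kx)^k$ on $[\beta_k,1/k]$. Case (iii) is the hypothesis.

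Case (i) is immediate, since a nonnegative law of mean zero is $\delta_0$.

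For case (ii), let $0<x<\beta_k$ and let $\mathsf P$ be maximizing. Then $F_k(\mathsf P)=1-(1-x)^k>0$, while $F_k(\mathsf P)\le km(\mathsf P)<1$.
\begin{itemize}
\item First we show $m(\mathsf P)=x$. If $m(\mathsf P)<x$, then a small $\theta>0$ in $\mathsf P_\theta^+$ keeps the mean at most $x$ and strictly increases $F_k$ by \cref{fact:mixtures}(i), since $F_k(\mathsf P)<1$. This contradicts maximality.
\item Set $\theta=(\beta_k-x)/(1-x)$, so $m(\mathsf P_\theta^+)=\beta_k$. The identity $1-F_k(\mathsf P_\theta^+)=(1-\theta)^k[1-F_k(\mathsf P)]=(1-\beta_k)^k$ shows that $\mathsf P_\theta^+$ is maximizing at $\beta_k$. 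Hence $\mathsf P_\theta^+\in\{\mathsf B_{\beta_k},\mathsf C_{k,\beta_k}\}$.
\item $\mathsf P_\theta^+$ has an atom at $1$ of mass at least $\theta>0$, whereas $\mathsf C_{k,\beta_k}$ has no atom at $1$ because $k\ge2$. So $\mathsf P_\theta^+=\mathsf B_{\beta_k}$.
\item Solving $(1-\theta)\mathsf P+\theta\delta_1=\mathsf B_{\beta_k}$ for $\mathsf P$ gives a measure supported on $\{0,1\}$ with mean $x$, so $\mathsf P=\mathsf B_x$.
\end{itemize}

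For case (iv), let $\beta_k<x<1/k$ and let $\mathsf P$ be maximizing with value $(kx)^k$.
\begin{itemize}
\item Set $q=\beta_k/x$. \Cref{fact:mixtures}(ii) gives
\[
(k\beta_k)^k\ge F_k(\mathsf P_q^-)\ge q^kF_k(\mathsf P)\ge q^k(kx)^k=(k\beta_k)^k .
\]
So every inequality is an equality, and $\mathsf P_q^-$ is maximizing at mean at most $\beta_k$.
\item Hence $\mathsf P_q^-\in\{\mathsf B_{\beta_k},\mathsf C_{k,\beta_k}\}$. The equality case of \cref{fact:mixtures}(ii) forces $F_j(\mathsf P)=0$ for every $j<k$, and in particular $F_1(\mathsf P)=\mathsf P([1,\infty))=0$.
\item If $\mathsf P_q^-=\mathsf B_{\beta_k}$, then $\mathsf P$ would need an atom at $1$, which is impossible. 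So $\mathsf P_q^-=\mathsf C_{k,\beta_k}$.
\item Then $\mathsf P$ is supported on $\{0,1/k\}$, and unmixing gives $\mathsf P(\{1/k\})=k\beta_k/q=kx$. Thus $\mathsf P=\mathsf C_{k,x}$.
\end{itemize}
This unmixing is the only delicate point: the weight $q\mathsf P$ must be read off from the atoms, which works because $q\mathsf P\le\mathsf P_q^-$.

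For cases (v) and (vi), the value is $1$.
\begin{itemize}
\item If $\mathsf P([1/k,\infty))<1$, then with positive probability all $k$ samples lie below $1/k$, so $F_k(\mathsf P)<1$.
\item Hence every maximizing law is supported on $[1/k,\infty)$. Conversely, any feasible law supported there has $F_k(\mathsf P)=1$ and is maximizing, which settles (vi).
\item For $x=1/k$, a law supported on $[1/k,\infty)$ with mean at most $1/k$ must equal $\delta_{1/k}$, which settles (v).
\end{itemize}

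The main obstacle is case (iv). It needs both the equality clause of \cref{fact:mixtures}(ii) to exclude the atom at $1$, and a careful identification of $\mathsf P$ from $\mathsf P_q^-$. Case (ii) is analogous but simpler, since the atom at $1$ created by the mixture directly identifies which maximizer occurs.
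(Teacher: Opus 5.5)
Your proposal is correct and follows the paper's proof essentially step for step. It uses the same mixtures $\mathsf P_\theta^+$ and $\mathsf P_q^-$ with $\theta=(\beta_k-x)/(1-x)$ and $q=\beta_k/x$, the same use of the atom at $1$ (via the equality clause of \cref{fact:mixtures}(ii) in case (iv)) to decide between $\mathsf B_{\beta_k}$ and $\mathsf C_{k,\beta_k}$, and the same positive-probability argument on $[0,1/k)$ for $x\ge1/k$. The only difference is your preliminary step in case (ii) showing $m(\mathsf P)=x$, which is harmless but unnecessary: $m(\mathsf P)\le x$ already gives $m(\mathsf P_\theta^+)\le\beta_k$, and unmixing $\mathsf B_{\beta_k}$ determines $\mathsf P$ completely.
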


\begin{proof}
  Fix an arbitrary $x\in(0,\beta_k)$ and an arbitrary maximizing law $\mathsf P$ under $m(\mathsf P)\le x$.
  Take $\theta=({\beta_k}-x)/(1-x)$.
  Then $m(\mathsf P_\theta^+)\le{\beta_k}$ and $1-F_k(\mathsf P_\theta^+)=(1-\theta)^k[1-F_k(\mathsf P)]$.
  Since $1-F_k(\mathsf P)=(1-x)^k$ and $1-\theta=(1-{\beta_k})/(1-x)$, we obtain $1-F_k(\mathsf P_\theta^+)=(1-{\beta_k})^k$.
  Thus, $\mathsf P_\theta^+$ maximizes $F_k$ under the mean bound $\beta_k$ and has positive mass at one.
  Since $\mathsf C_{k,{\beta_k}}$ has no mass at one, while $\mathsf P_\theta^+(\{1\})>0$, we must have $\mathsf P_\theta^+=\mathsf B_{{\beta_k}}$, which implies $\mathsf P=\mathsf B_x$.

  Now fix an arbitrary $x\in(\beta_k,1/k)$ and an arbitrary maximizing law $\mathsf P$ under $m(\mathsf P)\le x$.
  Take $q={\beta_k}/x\in(0,1)$, so $m(\mathsf P_q^-)\le{\beta_k}$.
  Since $m(\mathsf P_q^-)\le\beta_k$, the hypothesis gives $F_k(\mathsf P_q^-)\le(k\beta_k)^k$.
  Thus, \cref{fact:mixtures}\textup{(ii)} gives
  \[
    (k\beta_k)^k=q^kF_k(\mathsf P)\le F_k(\mathsf P_q^-)\le(k\beta_k)^k.
  \]
  Thus, equality holds, and \cref{fact:mixtures}\textup{(ii)} gives $F_j(\mathsf P)=0$ for every $1\le j<k$.
  In particular, $F_1(\mathsf P)=0$, so $\mathsf P(\{1\})=\mathsf P_q^-(\{1\})=0$.
  Since $m(\mathsf P_q^-)\le\beta_k$ and $F_k(\mathsf P_q^-)=(k\beta_k)^k=1-(1-\beta_k)^k$, the hypothesis of \cref{prop:equality-transfer} implies that $\mathsf P_q^-$ is either $\mathsf B_{\beta_k}$ or $\mathsf C_{k,\beta_k}$.
  Since $\mathsf P_q^-(\{1\})=0$ and $\mathsf B_{\beta_k}(\{1\})=\beta_k>0$, we have $\mathsf P_q^-=\mathsf C_{k,\beta_k}$, and hence $\mathsf P=\mathsf C_{k,x}$.

  Thus, among Borel probability laws $\mathsf P$ on $[0,\infty)$ with $m(\mathsf P)\le x$, the only possible maximizer is $\mathsf B_x$ when $0<x<\beta_k$ and $\mathsf C_{k,x}$ when $\beta_k<x<1/k$.
  Conversely,
  \[
    \begin{aligned}
      F_k(\mathsf B_x)
       & =1-(1-x)^k=\Phi_k(x)
       &                      & (0<x<\beta_k),   \\
      F_k(\mathsf C_{k,x})
       & =(kx)^k=\Phi_k(x)
       &                      & (\beta_k<x<1/k).
    \end{aligned}
  \]

  At $x=\beta_k$, the hypothesis states that, among Borel probability laws $\mathsf P$ on $[0,\infty)$ with $m(\mathsf P)\le\beta_k$, the equality $F_k(\mathsf P)=1-(1-\beta_k)^k=(k\beta_k)^k$ holds if and only if $\mathsf P=\mathsf B_{\beta_k}$ or $\mathsf P=\mathsf C_{k,\beta_k}$.
  At $x=0$, nonnegativity forces $\mathsf P=\delta_0$.
  For $x\ge1/k$, the feasible law $\delta_{1/k}$ attains one.
  An i.i.d.\ law has success probability one exactly when it is supported on $[1/k,\infty)$.
  Indeed, if it assigns positive mass to $[0,1/k)$, then all $k$ observations lie there with positive probability and their sum is less than one.
  At mean bound $1/k$, support on $[1/k,\infty)$ forces $\delta_{1/k}$.
  For larger bounds it characterizes precisely the stated maximizing laws.
\end{proof}

  \begin{proof}[Proof of \cref{thm:iid-tail}]
    By \cref{lem:dimension-one}, $\Phi_1(x)=\min\{x,1\}$ for every $x\ge0$.
    For $0\le x\le1$, the unique maximizing law is $\mathsf B_x$.
    For $x>1$, a law with mean at most $x$ is maximizing if and only if it is supported on $[1,\infty)$.
    Thus, the formula and all equality statements hold for $k=1$.

    Fix $k\ge2$.
    By \cref{prop:crossing-equality}, every nonnegative Borel law $\mathsf P$ with $m(\mathsf P)\le\beta_k$ satisfies $F_k(\mathsf P)\le1-(1-\beta_k)^k=(k\beta_k)^k$, with equality exactly for $\mathsf B_{\beta_k}$ and $\mathsf C_{k,\beta_k}$.
    By \cref{prop:mean-transfer}, this bound at $\beta_k$ implies $F_k(\mathsf P)\le1-(1-x)^k$ for $0<x<\beta_k$ and $F_k(\mathsf P)\le(kx)^k$ for $\beta_k<x<1/k$, whenever $\mathsf P$ is a nonnegative Borel law with $m(\mathsf P)\le x$.
    These bounds are attained by $\mathsf B_x$ and $\mathsf C_{k,x}$, respectively.
    At $x=0$, nonnegativity forces $\mathsf P=\delta_0$, while for $x\ge1/k$, the feasible law $\delta_{1/k}$ attains probability one.
    Together with the value at $\beta_k$ and the comparison of the two candidate probabilities in \cref{prop:mean-transfer}, this proves
    \[
      \Phi_k(x)=
      \begin{cases}
        \max\{1-(1-x)^k,(kx)^k\}, & 0\le x\le1/k, \\
        1,                        & x\ge1/k.
      \end{cases}
    \]

    It remains to verify the equality statements for $k\ge2$.
    The classification at $\beta_k$ in \cref{prop:crossing-equality} verifies the hypothesis of \cref{prop:equality-transfer}.
    By \cref{prop:equality-transfer}\textup{(i)(ii)}, the unique maximizing law is $\mathsf B_x$ for $0\le x<\beta_k$, where $\mathsf B_0=\delta_0$.
    At $x=\beta_k$, \cref{prop:crossing-equality} identifies exactly the two maximizing laws $\mathsf B_{\beta_k}$ and $\mathsf C_{k,\beta_k}$.
    By \cref{prop:equality-transfer}\textup{(iv)(v)}, the unique maximizing law is $\mathsf C_{k,x}$ for $\beta_k<x\le1/k$, including $\mathsf C_{k,1/k}=\delta_{1/k}$ at $x=1/k$.
    These are precisely \textup{(ii)--(iv)} of \cref{thm:iid-tail}.
    Finally, by \cref{prop:equality-transfer}\textup{(vi)}, for $x>1/k$ a law with mean at most $x$ is maximizing if and only if it assigns probability one to $[1/k,\infty)$, proving \textup{(v)} for $k\ge2$.
    Together with the case $k=1$, this completes the proof of the formula and all equality cases in \cref{thm:iid-tail}.
  \end{proof}
\section{The Erd\H{o}s matching conjecture}\label{sec:emc}
We use \cref{thm:iid-tail} to prove \cref{thm:emc}.
We sample the vertex weights of an approximate fractional cover
independently, relating edge density to
the tail probability of a sum of i.i.d.\ nonnegative
random variables.

Throughout this section, fix $k\ge3$.
Recall that $\rho_k=1/\beta_k=k+c_k$.
By \cref{sec:crossing-notation}, $k<\rho_k<k+1/2$.
For a finite vertex set $V$ and $S\subseteq V$, write $\mathcal A_V(S)=\{E\in\binom{V}{k}:E\cap S\ne\emptyset\}$.
In particular, $\mathcal A_{[n]}(S)=\mathcal A(S)$, as defined in \cref{sec:matching-intro}.
For an $s$-set $S$ and a $(ks+k-1)$-set $Q$ in $[n]$, the families $\mathcal A(S)$ and $\binom Qk$ have sizes $M_k(n,s)$ and $C_k(s)$, respectively.
Write $D_k(n,s)=\max\{C_k(s),M_k(n,s)\}$, and, for $U\subseteq[n]$, set $\mathcal F[U]=\mathcal F\cap\binom Uk$.
It is easy to verify the following fact.

\begin{fact}\label{fact:construction-densities}
  For fixed $k\ge3$ and positive integers $n,s$ with $n\ge k(s+1)$, the following estimates hold uniformly.
  \begin{enumerate}[label=\textup{(\roman*)},leftmargin=*,itemsep=0pt]
    \item\label{it:construction-cover-density} We have $M_k(n,s)/\binom nk=1-(1-s/n)^k+O_k(n^{-1})$.
    \item\label{it:construction-clique-density} We have $C_k(s)/\binom nk=(ks/n)^k+O_k(n^{-1})$.
  \end{enumerate}
\end{fact}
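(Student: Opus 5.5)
The plan is to expand both binomial coefficients as polynomials in the relevant variables and compare them with $\binom nk$. The one thing to be careful about is uniformity: $s$ may be as large as roughly $n/k$, but the ratio $s/n$ always lies in $[0,1/k]$.

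For \textup{(i)}, start from
\[
  \binom{n-s}{k}\Big/\binom nk=\prod_{i=0}^{k-1}\frac{n-s-i}{n-i}.
\]
Each factor can be rewritten as
\[
  \frac{n-s-i}{n-i}=1-\frac{s}{n-i}=1-\frac sn-\frac{si}{n(n-i)}.
\]
Since $n\ge k(s+1)>k$ and $i<k$, we have $n-i\ge n/k$, and hence the correction term satisfies
\[
  0\le\frac{si}{n(n-i)}\le\frac{k^2 s}{n^2}\le\frac kn.
\]
Every factor lies in $[0,1]$. We then use the telescoping inequality $|\prod u_i-\prod v_i|\le\sum|u_i-v_i|$, valid for $u_i,v_i\in[0,1]$. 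This shows that the product differs from $(1-s/n)^k$ by at most $k^2/n$. Subtracting from one gives
\[
  M_k(n,s)\Big/\binom nk=1-\left(1-\frac sn\right)^k+O_k(n^{-1}).
\]

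For \textup{(ii)}, write $N=ks+k-1$ and note that $N\le n$, since $n\ge k(s+1)$. Then
\[
  C_k(s)\Big/\binom nk=\prod_{i=0}^{k-1}\frac{N-i}{n-i}.
\]
Both $(N-i)/(n-i)$ and $ks/n$ lie in $[0,1]$, so we again compare factor by factor. We have
\[
  \left|\frac{N-i}{n-i}-\frac Nn\right|=\frac{i(n-N)}{n(n-i)}\le\frac{k^2}{n},
\]
and $|N/n-ks/n|=(k-1)/n$. The telescoping bound then gives a total error $O_k(n^{-1})$ against $(ks/n)^k$. For $n\le k$ there is nothing to prove, since $n\ge k(s+1)$ excludes this case. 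All constants depend only on $k$, so the estimates are uniform.

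There is no real obstacle here. The only point needing care is to keep the error bounds uniform when $s$ is comparable to $n$, and the telescoping product inequality on $[0,1]$ handles this.
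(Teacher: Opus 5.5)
Your proof is correct: the factor-by-factor comparison with the telescoping bound $|\prod u_i-\prod v_i|\le\sum|u_i-v_i|$ on $[0,1]$ gives explicit errors of at most $k^2/n$ in (i) and $k(k^2+k-1)/n$ in (ii), uniformly in $s$. The paper gives no proof and simply calls the fact easy to verify, and your argument is exactly the routine check it leaves to the reader.
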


\subsection{Cover--clique stability}\label{sec:matching-sequences}\label{sec:probability-stability}

\subsubsection{Approximate fractional covers}\label{sec:approximate-covers}
A fractional vertex cover of $\mathcal F$ is a function $w:[n]\to[0,1]$ such that $\sum_{v\in E}w(v)\ge1$ for every $E\in\mathcal F$.
Let $\tau^*(\mathcal F)$ denote the minimum total weight of a fractional vertex cover of $\mathcal F$.
A fractional matching assigns nonnegative edge weights whose sum at each vertex is at most one.
Let $\nu^*(\mathcal F)$ denote the maximum total weight of a fractional matching of $\mathcal F$.
By linear-programming duality (see, for example, Schrijver~\cite[Chapter~7]{Schrijver}), we have $\nu(\mathcal F)\le\nu^*(\mathcal F)=\tau^*(\mathcal F)$.
Alon, Frankl, Huang, R\"odl, Ruci\'nski and Sudakov~\cite{AFHRS}, Alon, Huang and Sudakov~\cite{AlonHuangSudakov2012}, and \L{}uczak, Mieczkowska and \v{S}ileikis~\cite{LMS} established the connection between fractional vertex covers and inequalities for sums of nonnegative variables.

We use the following approximate covering result.

\begin{lemma}[Cao--Liu--Zhang~{\cite[Lemma~3.2]{CLZ}}]
  \label{lem:approx-cover}
  Fix $k\ge2$, $0\le\xi<1/k$ and $\rho>0$.
  There exists an integer $n_0=n_0(k,\xi,\rho)$ such that, for every integer $n\ge n_0$, the following holds.
  If $0\le x\le\xi$ and $\mathcal H\subseteq\binom{[n]}k$ satisfies $\nu(\mathcal H)\le xn$, then there is a function $w:[n]\to[0,1]$ such that
  \[
    \sum_{v=1}^n w(v)\le(x+\rho)n,
  \]
  and all but at most $\rho n^k$ edges $E\in\mathcal H$ satisfy
  \[
    \sum_{v\in E}w(v)\ge1.
  \]
\end{lemma}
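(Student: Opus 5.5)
The plan follows the regularity-type approach of Alon--Frankl--Huang--R\"odl--Ruci\'nski--Sudakov: bound the number of edges of $\mathcal H$ that the fractional cover misses by a small fraction of $n^k$, and obtain the cover from a fractional matching argument on a randomly sparsified copy of $\mathcal H$.

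\textbf{Step 1 (LP-duality reduction).} We first aim to show that $\mathcal H$ contains a subfamily $\mathcal H'$ with
\[
  |\mathcal H\setminus\mathcal H'|\le\rho n^k
  \quad\text{and}\quad
  \nu^*(\mathcal H')\le(x+\rho)n .
\]
Given such an $\mathcal H'$, LP duality $\tau^*=\nu^*$ produces $w:[n]\to[0,1]$ of total weight at most $(x+\rho)n$ covering every edge of $\mathcal H'$. This is exactly the required conclusion.

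\textbf{Step 2 (producing $\mathcal H'$).} Let $\mathcal H'$ be the set of edges of $\mathcal H$ lying in few ``heavy'' configurations; equivalently, delete the edges responsible for a large fractional matching. Concretely, take a maximum fractional matching $\mu$ of $\mathcal H$ and suppose $|\mu|>(x+\rho)n$. We would then like to convert $\mu$ into an integral matching of size larger than $xn$, contradicting $\nu(\mathcal H)\le xn$. The standard tool is the R\"odl nibble (Pippenger--Spencer / Frankl--R\"odl), which turns a fractional matching with small codegrees into an almost-perfect integral one: if every pair of vertices has $\mu$-codegree $o(1)$, then there is an integral matching of size $(1-o(1))|\mu|$.

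The main obstacle is that $\mu$ may be concentrated, with large weight on edges through some pair, so codegrees are not small. To handle this, first apply weak hypergraph regularity, or more simply a dense-set cleanup, to $\mathcal H$. Delete all edges containing a pair of degree less than $\rho n^{k-2}$ (at most $\binom n2\rho n^{k-2}\le\rho n^k/2$ edges), and work in the reduced hypergraph of a weak regular partition with $t$ parts. In the reduced $k$-graph $R$, which is weighted by densities, the ordinary matching number of $\mathcal H$ controls the fractional matching number of $R$ scaled by $n/t$. The reason is that a fractional matching in $R$ can be realized by disjoint greedy embeddings inside regular $k$-tuples of clusters, each of which contains almost-perfect integral matchings in proportion. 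Hence $\nu^*(R)\le(x+\rho/2)t$. Take an optimal fractional cover $w_R$ of $R$ and lift it to $[n]$ by giving each vertex the weight of its cluster. Its total weight is at most $(x+\rho/2)n$ plus exceptional vertices. The edges it fails to cover lie in irregular tuples, low-density tuples, or tuples meeting the exceptional set, and choosing $t$ large in terms of $\rho$ bounds their number by $\rho n^k$.

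The key difficulty, which I would check carefully, is the embedding step: a fractional matching of $R$ with weight $y_e$ on a regular dense $k$-tuple $e$ must yield $(1-o(1))y_e n/t$ disjoint edges of $\mathcal H$ inside $e$, using only a $y_e$-proportion of each cluster, with these proportions summing to at most $1$ per cluster. This follows by splitting each cluster randomly into pieces of the right proportions, since regularity is inherited by pieces of linear size, and then greedily matching inside each regular $k$-partite piece. The condition $\xi<1/k$ keeps $x+\rho<1/k$, so the greedy matching never exhausts the clusters. Finally, $n_0$ is taken large enough for the regularity lemma with parameters depending only on $k,\xi,\rho$.
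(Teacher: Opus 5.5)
The paper does not prove this lemma; it quotes it from Cao--Liu--Zhang and uses it as a black box, so your argument has to stand on its own. Your final route is correct in outline and is the standard way to prove such a statement. It consists of weak hypergraph regularity, LP duality on the reduced $k$-graph $R$ of $\epsilon$-regular tuples with density at least $d$, lifting an optimal fractional cover of $R$ clusterwise, and charging the uncovered edges to four sources: the exceptional set, non-crossing $k$-sets, irregular tuples and sparse tuples. Drop the nibble paragraph. Its target, contradicting $\nu(\mathcal H)\le xn$ from a fractional matching of $\mathcal H$ itself of size $(x+\rho)n$, is false in general: vertex-disjoint Fano planes have $\nu^*=n/3$ but $\nu=n/7$. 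That is exactly why $\rho n^k$ edges must be discarded. The codegree clean-up is never used and only consumes half of your error budget. Also, $R$ should be unweighted, since only the positivity $d>\epsilon$ of the density is used.

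There is one genuine hole, in the realization step that is supposed to give $\nu^*(R)\le(x+\rho/2)t$. You split each cluster into pieces of size $y_e m$ (with $m$ the cluster size) and appeal to regularity being inherited by pieces of linear size. However, an optimal fractional matching of $R$ can have all weights tiny. If $R$ is complete, $y_e=1/\binom{t-1}{k-1}$ is optimal, every piece has far fewer than $\epsilon m$ vertices, and regularity guarantees no edge inside any of them. More generally, each greedy embedding loses about $\epsilon m$ vertices per tuple, and summed over up to $\binom tk$ support tuples this loss is not $o(n)$.

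The fix is to take $y$ to be a basic optimal solution of the LP $\max\sum_e y_e$ subject to $\sum_{e\ni i}y_e\le1$ for $i\in[t]$ and $y\ge0$; such a solution has at most $t$ nonzero coordinates. For each support tuple, choose disjoint subsets of size $\lfloor y_e m\rfloor$ in its clusters, which is possible by the vertex constraints. Then match greedily inside these subsets until some remaining subset has fewer than $\epsilon m$ vertices. Until then, regularity of the whole tuple gives the remainders density at least $d-\epsilon>0$, so they span an edge of $\mathcal H$; no random splitting or inherited regularity is needed. The total loss is at most $t(\epsilon m+1)\le\epsilon n+t$, so $\nu(\mathcal H)\ge\nu^*(R)m-\epsilon n-t$, which yields the bound on $\nu^*(R)$ for small $\epsilon$ and large $n$.

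Finally, your use of $\xi<1/k$ is misplaced. Since $\rho$ is arbitrary, $x+\rho<1/k$ need not hold, and the disjointness of the pieces comes from the fractional matching constraints, not from $\xi$. The argument never uses $\xi<1/k$, and it gives $n_0$ depending only on $k$ and $\rho$.
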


To relate the size of a family $\mathcal F\subseteq\binom{[n]}k$ to a tail probability, fix weights $w:[n]\to[0,1]$.
Choose vertices $v_1,\ldots,v_k$ independently and uniformly from $[n]$ with replacement.
Set $X_i=w(v_i)$ for $1\le i\le k$.
Define a probability measure $\mathsf P_w$ on $[0,1]$ by assigning to each weight value the proportion of vertices having that weight.
Since each vertex is chosen with probability $1/n$, for every $1\le i\le k$ and $t\in[0,1]$ we have
\[
  \Prob(X_i=t)=\mathsf P_w(\{t\})=\frac{|\{v\in[n]:w(v)=t\}|}{n}.
\]
Equivalently, $\mathsf P_w=n^{-1}\sum_{v=1}^n\delta_{w(v)}$.
Since the choices of $v_1,\ldots,v_k$ are independent, $X_1,\ldots,X_k$ are independent.
Thus, $F_k(\mathsf P_w)=\Prob(X_1+\cdots+X_k\ge1)$.
Let $\mathcal U_w=\{E\in\mathcal F:\sum_{v\in E}w(v)<1\}$ be the edges not covered by $w$.
The mean of $\mathsf P_w$ is the average of the vertex weights $w(1),\ldots,w(n)$.
For a fixed edge $E\in\mathcal F\setminus\mathcal U_w$, the event that $\{v_1,\ldots,v_k\}=E$ consists of exactly $k!$ ordered samples, one for each ordering of the $k$ distinct vertices of $E$.
By independence, each such sample has probability $(1/n)^k$, and on this event $X_1+\cdots+X_k=\sum_{v\in E}w(v)\ge1$.
These events are pairwise disjoint as $E$ ranges over $\mathcal F\setminus\mathcal U_w$.
Consequently, we have
\begin{equation}\label{eq:cover-sampling}
  \begin{aligned}
    m(\mathsf P_w)   & =\E X_1=\frac1n\sum_{v=1}^n w(v),                                       \\
    F_k(\mathsf P_w) & =\Prob(X_1+\cdots+X_k\ge1)\ge\sum_{E\in\mathcal F\setminus\mathcal U_w}
    \Prob(\{v_1,\ldots,v_k\}=E)=\frac{k!}{n^k}\bigl(|\mathcal F|-|\mathcal U_w|\bigr).
  \end{aligned}
\end{equation}

\subsubsection{Nearly extremal families}\label{sec:sequence-reduction}
\begin{lemma}\label{lem:matching-sequence}
  Fix $k\ge3$.
  Let $(n_i,s_i)$ be integer pairs with $s_i\to\infty$, $n_i\ge k(s_i+1)$, and $n_i/s_i\to R<\infty$ as $i\to\infty$.
  Suppose $\mathcal F_i\subseteq\binom{[n_i]}k$ satisfies $\nu(\mathcal F_i)\le s_i$ and $|\mathcal F_i|\ge D_k(n_i,s_i)-o(n_i^k)$.
  For each $i$, define its distances from the two constructions by
  \[
    d_C(\mathcal F_i)=\min_{\substack{Q\subseteq[n_i]\\|Q|=ks_i+k-1}}
    \left|\mathcal F_i\triangle\binom Qk\right|
    \text{ and }
    d_B(\mathcal F_i)=\min_{\substack{S\subseteq[n_i]\\|S|=s_i}}
    \left|\mathcal F_i\triangle\mathcal A(S)\right|.
  \]
  Then the following asymptotic conclusions hold as $i\to\infty$ along the original sequence.
  \begin{enumerate}[label=\textup{(\roman*)},leftmargin=*,itemsep=0pt]
    \item\label{it:matching-sequence-dichotomy} We have $\min\{d_C(\mathcal F_i),d_B(\mathcal F_i)\}=o(n_i^k)$.
    \item\label{it:matching-sequence-clique}\label{it:matching-sequence-cover}
    If $k\le R<\rho_k$, then $d_C(\mathcal F_i)=o(n_i^k)$.
    If $R>\rho_k$, then $d_B(\mathcal F_i)=o(n_i^k)$.
    At $R=\rho_k$, the approximating construction in~\textup{(i)} may be a clique family for some indices and a cover family for others.
  \end{enumerate}
\end{lemma}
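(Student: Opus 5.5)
We argue by subsequences. It suffices to show that every subsequence of $(\mathcal F_i)$ has a further subsequence along which $\min\{d_C,d_B\}=o(n_i^k)$, with the appropriate construction when $R\ne\rho_k$. Set $x=1/R\in(0,1/k]$. Since $s_i\to\infty$ and $n_i/s_i\to R$, we have $n_i\to\infty$ and $s_i/n_i\to x$.

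\textbf{Reducing to the LMS problem.} We first handle $x<1/k$. Choose $\xi$ with $x<\xi<1/k$ and a sequence $\rho\downarrow0$ slowly enough, and apply \cref{lem:approx-cover} with $s_i\le xn_i$ replaced by $s_i/n_i\le\xi$. This gives weights $w_i$ with average at most $s_i/n_i+o(1)$ and an uncovered set $|\mathcal U_{w_i}|=o(n_i^k)$. By \eqref{eq:cover-sampling} and \cref{fact:construction-densities},
\[
F_k(\mathsf P_{w_i})\ge\frac{k!}{n_i^k}\bigl(|\mathcal F_i|-o(n_i^k)\bigr)\ge\max\{1-(1-x)^k,(kx)^k\}-o(1)=\Phi_k(x)-o(1).
\]
Pass to a weakly convergent subsequence $\mathsf P_{w_i}\Rightarrow\mathsf P$ on $[0,1]$ using \cref{fact:weak-convergence}\textup{(i)}. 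Then $m(\mathsf P)\le x$, and by \cref{fact:weak-convergence}\textup{(iii)} $F_k(\mathsf P)\ge\Phi_k(x)$, so $\mathsf P$ is maximizing. By \cref{thm:iid-tail}, $\mathsf P=\mathsf B_x$ if $x>\beta_k$ (i.e.\ $R<\rho_k$ fails; concretely $x<\beta_k$ gives $\mathsf B_x$ and $x>\beta_k$ gives $\mathsf C_{k,x}$), and at $x=\beta_k$ it is one of the two. Note that $R<\rho_k$ means $x>\beta_k$, which gives the clique law.

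\textbf{Cover case ($\mathsf P=\mathsf B_x$).} By Portmanteau \textup{(v)} on continuity intervals, all but $o(n_i)$ vertices have weight in $[0,\epsilon)\cup(1-\epsilon,1]$. Let $S'$ be the set of heavy vertices; $|S'|=xn_i+o(n_i)=s_i+o(n_i)$. Adjust $S'$ by $o(n_i)$ vertices to obtain an $s_i$-set $S$. Any covered edge with no vertex in $S'$ has weight sum at most $k\epsilon+(\text{contribution of }o(n_i)\text{ middle vertices})$. Since $k\epsilon<1$, all but $o(n_i^k)$ covered edges meet $S'$, and hence $|\mathcal F_i\setminus\mathcal A(S)|=o(n_i^k)$. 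Combined with $|\mathcal F_i|\ge M_k(n_i,s_i)-o(n_i^k)$, this gives $|\mathcal A(S)\setminus\mathcal F_i|=o(n_i^k)$, so $d_B=o(n_i^k)$.

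\textbf{Clique case ($\mathsf P=\mathsf C_{k,x}$).} Similarly, all but $o(n_i)$ weights lie near $0$ or near $1/k$, and $ks_i+o(n_i)$ lie near $1/k$. An edge whose weights sum to at least $1$ must have all $k$ vertices near $1/k$, up to $o(n_i^k)$ exceptions; edges containing a vertex near $0$ have sum at most $(k-1)(1/k+\epsilon)+\epsilon<1$. Adjusting to a $(ks_i+k-1)$-set $Q$ gives $|\mathcal F_i\setminus\binom Qk|=o(n_i^k)$, and size comparison with $C_k(s_i)$ finishes.

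\textbf{Conclusion.} Since every subsequence has such a further subsequence, the conclusions hold along the original sequence. In particular, for $R=\rho_k$ both limits may occur along different subsequences, which still yields \textup{(i)}.

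\textbf{The main obstacle} is the boundary case $x=1/k$ (that is, $R=k$), where \cref{lem:approx-cover} requires $\xi<1/k$. Here I would use directly $C_k(s_i)/\binom{n_i}k\to1$ and invoke a Kleitman-type or removal/stability argument. Alternatively, I would apply the lemma with $x$ slightly below $1/k$ after deleting $o(n_i)$ vertices (discarding $o(n_i^k)$ edges and reducing $\nu$ by a little), which makes $\mathsf C_{k,x}$ the unique maximizer for the shifted density. I expect the bookkeeping in this limit, ensuring the $o(1)$ mean slack still lands strictly in the clique regime, to be the delicate step.
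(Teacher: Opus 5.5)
For $R>k$ your argument is essentially the paper's. It runs through an approximate fractional cover from \cref{lem:approx-cover} with a diagonally chosen $\rho\to0$, the sampling bound \eqref{eq:cover-sampling}, weak compactness with upper semicontinuity of $F_k$, and uniqueness of maximizers from \cref{thm:iid-tail}. It then rounds the heavy vertices to an $s_i$-set, or the vertices of weight near $1/k$ to a $(ks_i+k-1)$-set. Your first sentence naming the limit law swaps $\mathsf B_x$ and $\mathsf C_{k,x}$ before you correct it; the corrected version is right. For $R\ne\rho_k$, you deduce \textup{(ii)} from the uniqueness of the subsequential limit. This is a valid substitute for the paper's size comparison $d_B(\mathcal F_i)\ge D_k(n_i,s_i)-M_k(n_i,s_i)-o(n_i^k)$ (resp.\ the analogue for $d_C$).

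The gap is the case $R=k$. You leave it open, and neither suggested fix works as stated.

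\textbf{Vertex deletion fails.} \cref{lem:approx-cover} needs a fixed $\xi<1/k$, and deleting $o(n_i)$ vertices cannot bring the ratio there. Deleting vertices lowers $n_i$ without forcing $\nu$ down. For $\mathcal F_i=\binom Qk$ itself with $|Q|=ks_i+k-1$, the matching number of what remains is still $(1/k-o(1))$ times the number of remaining vertices.

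\textbf{No Kleitman-type or removal argument is needed.} Your observation $C_k(s_i)/\binom{n_i}k\to1$ already finishes the case. For any $(ks_i+k-1)$-set $Q_i$, since $n_i-|Q_i|=o(n_i)$, we have $\binom{n_i}k-C_k(s_i)\le(n_i-|Q_i|)\binom{n_i-1}{k-1}=o(n_i^k)$. Also $|\mathcal F_i|\ge D_k(n_i,s_i)-o(n_i^k)\ge C_k(s_i)-o(n_i^k)$. Hence
\[
  \left|\mathcal F_i\triangle\binom{Q_i}k\right|
  \le\left(\binom{n_i}k-C_k(s_i)\right)+\left(\binom{n_i}k-|\mathcal F_i|\right)=o(n_i^k).
\]
This gives both \textup{(i)} and \textup{(ii)} when $R=k$. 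It is exactly how the paper disposes of this case before restricting to $x=1/R<1/k$. So your ``main obstacle'' is in fact the trivial case, and with this count inserted your proof is complete.
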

\begin{proof}
  We first handle the case $R=k$.
  Choose any $(ks_i+k-1)$-set $Q_i\subseteq[n_i]$.
  Since $n_i-|Q_i|=o(n_i)$,
  \[
    \binom{n_i}k-C_k(s_i)
    \le(n_i-|Q_i|)\binom{n_i-1}{k-1}=o(n_i^k).
  \]
  Since $|\mathcal F_i|\ge D_k(n_i,s_i)-o(n_i^k)$ and $D_k(n_i,s_i)\ge C_k(s_i)$, we have $\binom{n_i}k-|\mathcal F_i|=o(n_i^k)$.
  Consequently,
  \[
    \left|\mathcal F_i\triangle\binom{Q_i}k\right|
    \le\left(\binom{n_i}k-C_k(s_i)\right)
    +\left(\binom{n_i}k-|\mathcal F_i|\right)=o(n_i^k).
  \]
  This proves both assertions when $R=k$.
  Henceforth assume $R>k$, and write $x=1/R$ and $x_i=s_i/n_i$.

  By \cref{fact:construction-densities} and $x_i\to x$, we have $D_k(n_i,s_i)/\binom{n_i}k\to\Phi_k(x)$.
For a fixed $\xi$ with $x<\xi<1/k$ and each integer $\ell\ge1$,
  let $n_0(\ell)$ be an integer supplied by \cref{lem:approx-cover}
  with $\rho=1/\ell$.
  Since $x_i\to x<\xi$ and $n_i\to\infty$, we may choose positive integers $a_\ell$ recursively as follows.
  For each integer $\ell\ge1$, choose $a_\ell$ such that $0\le x_i\le\xi$ and $n_i\ge n_0(\ell)$ for every $i\ge a_\ell$, taking $a_\ell>a_{\ell-1}$ when $\ell\ge2$.
  For each $i\ge a_1$, let $\ell$ be the unique integer with $a_\ell\le i<a_{\ell+1}$, and set $\epsilon_i=1/\ell$.
  For each such $i$, since $\nu(\mathcal F_i)\le s_i=x_i n_i$, apply \cref{lem:approx-cover} and choose a weight function $w_i:[n_i]\to[0,1]$ with $\sum_{v=1}^{n_i}w_i(v)\le(x_i+\epsilon_i)n_i$ and $|\mathcal U_i|\le\epsilon_i n_i^k$, where $\mathcal U_i=\{E\in\mathcal F_i:\sum_{v\in E}w_i(v)<1\}$ is the family of sets not covered by $w_i$.
  We henceforth consider only $i\ge a_1$, since omitting finitely many initial terms does not affect the $o(n_i^k)$ conclusions of \cref{lem:matching-sequence}.

  As $i\to\infty$, the integer $\ell$ determined by $a_\ell\le i<a_{\ell+1}$ tends to infinity, so $\epsilon_i=1/\ell\to0$.
  Set $\mathsf P_i=\mathsf P_{w_i}=n_i^{-1}\sum_{v=1}^{n_i}\delta_{w_i(v)}$.
  By \eqref{eq:cover-sampling}, $m(\mathsf P_i)\le x_i+\epsilon_i$ and
  \begin{equation}\label{eq:sequence-success-lower-bound}
    F_k(\mathsf P_i)\ge\frac{k!}{n_i^k}(|\mathcal F_i|-|\mathcal U_i|)
    \ge\Phi_k(x)-o(1).
  \end{equation}
  The last inequality follows from $|\mathcal F_i|\ge D_k(n_i,s_i)-o(n_i^k)$, \cref{fact:construction-densities}, $k!\binom{n_i}{k}/n_i^k=1+o(1)$, and $\epsilon_i\to0$, together with $x_i\to x$ and the continuity of $\Phi_k$.

  Suppose that \textup{(i)} fails.
  After passing to a subsequence and relabelling, we may assume that there exists $\delta>0$ such that
  \begin{equation}\label{eq:sequence-distance-lower-bounds}
    \min\{d_C(\mathcal F_i),d_B(\mathcal F_i)\}\ge\delta n_i^k
    \text{ for every }i.
  \end{equation}
  By the weak sequential compactness of $\Pcal([0,1])$, we may pass to a further subsequence, again relabelling, such that $\mathsf P_i$ converges weakly to some $\mathsf P\in\Pcal([0,1])$.
  All limits in the remainder of the proof of \textup{(i)} are taken along this subsequence.
  By weak convergence on $[0,1]$, $m(\mathsf P)=\lim_{i\to\infty}m(\mathsf P_i)\le\lim_{i\to\infty}(x_i+\epsilon_i)=x$.
  By \cref{fact:weak-convergence}\textup{(ii)}, the product laws $\mathsf P_i^{\otimes k}$ converge weakly to $\mathsf P^{\otimes k}$.
  Applying \cref{thm:portmanteau}\textup{(iii)} to the closed set $H_k=\{(t_1,\ldots,t_k)\in[0,1]^k:t_1+\cdots+t_k\ge1\}$, we obtain
  \[
    \Phi_k(x)\le\limsup_{i\to\infty}\mathsf P_i^{\otimes k}(H_k)
    \le\mathsf P^{\otimes k}(H_k)=F_k(\mathsf P)\le\Phi_k(x).
  \]
  Here the first inequality follows from \eqref{eq:sequence-success-lower-bound}, and the last follows from $m(\mathsf P)\le x$ and the definition of $\Phi_k(x)$.
  Thus, $\mathsf P$ is a maximizing distribution.
  \Cref{thm:iid-tail} implies $\mathsf P\in\{\mathsf B_x,\mathsf C_{k,x}\}$.
  If $\mathsf P=\mathsf B_x$, then we will choose an $s_i$-set $S_i\subseteq[n_i]$ and take $\mathcal H_i=\mathcal A(S_i)$.
  If $\mathsf P=\mathsf C_{k,x}$, then we will choose a $(ks_i+k-1)$-set $Q_i\subseteq[n_i]$ and take $\mathcal H_i=\binom{Q_i}k$.
  The sets will be chosen so that
  \begin{equation}\label{eq:sequence-outside-edges}
    |\mathcal F_i\setminus\mathcal H_i|=o(n_i^k).
  \end{equation}

  \medskip
  \noindent{\bf Case 1. $\mathsf P=\mathsf B_x$.}
  \medskip

  We construct $S_i\subseteq[n_i]$ with $|S_i|=s_i$ and
  $|\mathcal F_i\setminus\mathcal A(S_i)|=o(n_i^k)$.
  Fix $0<\alpha<1/(2k)$.
  We first select the vertices with $w_i(v)>1-\alpha$ and then add or remove
  $o(n_i)$ vertices to obtain $|S_i|=s_i$.
  Define
  \[
    S_i'=\{v:w_i(v)>1-\alpha\}
    \text{ and }
    I_i^B=\{v:\alpha<w_i(v)\le1-\alpha\}.
  \]
  We have $\mathsf B_x((1-\alpha,1])=x$ and $\mathsf B_x((\alpha,1-\alpha])=0$.
  The intervals $(1-\alpha,1]$ and $(\alpha,1-\alpha]$ have boundaries of $\mathsf B_x$-measure zero in $[0,1]$.
  Since $\mathsf P_i$ converges weakly to $\mathsf B_x$, \cref{thm:portmanteau}\textup{(v)} implies
  \[
    \frac{|S_i'|}{n_i}=\mathsf P_i((1-\alpha,1])\to x
    \text{ and }
    \frac{|I_i^B|}{n_i}=\mathsf P_i((\alpha,1-\alpha])\to0.
  \]
  In particular, $|S_i'|=s_i+o(n_i)$ and $|I_i^B|=o(n_i)$.
  If a $k$-set $E$ avoids both $S_i'$ and $I_i^B$, then all its vertex weights are at most $\alpha$, and hence $\sum_{v\in E}w_i(v)\le k\alpha<1$.
  Thus, every edge of $\mathcal F_i\setminus\mathcal A(S_i')$ either belongs to $\mathcal U_i$ or meets $I_i^B$, and
  \[
    |\mathcal F_i\setminus\mathcal A(S_i')|
    \le|\mathcal U_i|+|I_i^B|\binom{n_i-1}{k-1}=o(n_i^k).
  \]
  For each $i$, choose an $s_i$-set $S_i$ by adding or removing exactly $\bigl||S_i'|-s_i\bigr|$ vertices from $S_i'$.
  We have $|S_i\triangle S_i'|=\bigl||S_i'|-s_i\bigr|=o(n_i)$.
  Only $k$-sets containing a changed vertex can change their membership in the cover family, so
  \[
    |\mathcal A(S_i')\triangle\mathcal A(S_i)|
    \le|S_i'\triangle S_i|\binom{n_i-1}{k-1}=o(n_i^k).
  \]
  Therefore, $\mathcal H_i=\mathcal A(S_i)$ satisfies \eqref{eq:sequence-outside-edges} and has size $M_k(n_i,s_i)$.

  \medskip
  \noindent{\bf Case 2. $\mathsf P=\mathsf C_{k,x}$.}
  \medskip

  We construct $Q_i\subseteq[n_i]$ with $|Q_i|=ks_i+k-1$ and
  $|\mathcal F_i\setminus\binom{Q_i}k|=o(n_i^k)$.
  Fix $0<\alpha<1/(2k^2)$.
  We first select the vertices with $|w_i(v)-1/k|\le\alpha$ and then add or remove
  $o(n_i)$ vertices to obtain $|Q_i|=ks_i+k-1$.
  Define
  \[
    Q_i'=\{v:|w_i(v)-1/k|\le\alpha\}
    \text{ and }
    I_i^C=\{v:w_i(v)>\alpha\}\setminus Q_i'.
  \]
  Since $1/k-\alpha>\alpha$, we have $Q_i'\subseteq\{v:w_i(v)>\alpha\}$.
  The intervals $[1/k-\alpha,1/k+\alpha]$ and $(\alpha,1]$ have boundaries of $\mathsf C_{k,x}$-measure zero in $[0,1]$.
  Both intervals contain $1/k$ and exclude $0$, so their $\mathsf C_{k,x}$-measure is $kx$.
  Since $\mathsf P_i$ converges weakly to $\mathsf C_{k,x}$, \cref{thm:portmanteau}\textup{(v)} implies
  \[
    \frac{|Q_i'|}{n_i}
    =\mathsf P_i([1/k-\alpha,1/k+\alpha])\to kx
    \text{ and }
    \frac{|I_i^C|}{n_i}
    =\mathsf P_i((\alpha,1])-\frac{|Q_i'|}{n_i}\to0.
  \]
  In particular, $|Q_i'|=ks_i+o(n_i)$ and $|I_i^C|=o(n_i)$.
  If a $k$-set $E$ is not contained in $Q_i'$ and avoids $I_i^C$, then at least one of its vertices has weight at most $\alpha$, and all the others have weight at most $1/k+\alpha$.
  Thus,
  \[
    \sum_{v\in E}w_i(v)\le\alpha+(k-1)(1/k+\alpha)
    =\frac{k-1}{k}+k\alpha<1.
  \]
  Every edge of $\mathcal F_i\setminus\binom{Q_i'}k$ therefore either belongs to $\mathcal U_i$ or meets $I_i^C$, so
  \[
    \left|\mathcal F_i\setminus\binom{Q_i'}k\right|
    \le|\mathcal U_i|+|I_i^C|\binom{n_i-1}{k-1}=o(n_i^k).
  \]
  Since $ks_i+k-1\le n_i$, for each $i$ choose a $(ks_i+k-1)$-set $Q_i$ by adding or removing exactly $\bigl||Q_i'|-(ks_i+k-1)\bigr|$ vertices from $Q_i'$.
  Since $k$ is fixed, we have $|Q_i\triangle Q_i'|=\bigl||Q_i'|-(ks_i+k-1)\bigr|=o(n_i)$.
  As only $k$-sets containing a changed vertex can change their membership in the clique family,
  \[
    \left|\binom{Q_i'}k\triangle\binom{Q_i}k\right|
    \le|Q_i'\triangle Q_i|\binom{n_i-1}{k-1}=o(n_i^k).
  \]
  Therefore, $\mathcal H_i=\binom{Q_i}k$ satisfies \eqref{eq:sequence-outside-edges} and has size $C_k(s_i)$.

  \medskip
  In either case, $|\mathcal H_i|\le D_k(n_i,s_i)$.
  Since $|\mathcal F_i|\ge D_k(n_i,s_i)-o(n_i^k)$, the bound $|\mathcal H_i|\le D_k(n_i,s_i)$ and \eqref{eq:sequence-outside-edges} give
  \[
    |\mathcal F_i\triangle\mathcal H_i|
    =2|\mathcal F_i\setminus\mathcal H_i|+|\mathcal H_i|-|\mathcal F_i|\le2|\mathcal F_i\setminus\mathcal H_i|+o(n_i^k)
    =o(n_i^k).
  \]
  This contradicts \eqref{eq:sequence-distance-lower-bounds}, proving \textup{(i)} along the whole original sequence.

  To prove \textup{(ii)}, we return to the original sequence and compare the sizes of the cover and clique families.
  Taking the minima in the definitions of $d_B(\mathcal F_i)$ and $d_C(\mathcal F_i)$, and using $|\mathcal F_i|\ge D_k(n_i,s_i)-o(n_i^k)$, we obtain
  \[
    \begin{aligned}
      d_B(\mathcal F_i) & \ge|\mathcal F_i|-M_k(n_i,s_i)\ge D_k(n_i,s_i)-M_k(n_i,s_i)-o(n_i^k), \\
      d_C(\mathcal F_i) & \ge|\mathcal F_i|-C_k(s_i)\ge D_k(n_i,s_i)-C_k(s_i)-o(n_i^k).
    \end{aligned}
  \]
  If $k<R<\rho_k$, then $x=1/R>\beta_k$, and \cref{fact:construction-densities} implies
  \[
    \frac{C_k(s_i)-M_k(n_i,s_i)}{n_i^k}
    \longrightarrow\frac{(kx)^k-1+(1-x)^k}{k!}>0.
  \]
  Thus, $d_B(\mathcal F_i)\ge c n_i^k$ for some $c>0$ and all sufficiently large $i$.
  By \textup{(i)}, we have $d_C(\mathcal F_i)=o(n_i^k)$.
  If $R>\rho_k$, then $x=1/R<\beta_k$, and \cref{fact:construction-densities} implies
  \[
    \frac{M_k(n_i,s_i)-C_k(s_i)}{n_i^k}
    \longrightarrow\frac{1-(1-x)^k-(kx)^k}{k!}>0.
  \]
  Thus, $d_C(\mathcal F_i)\ge c n_i^k$ for some $c>0$ and all sufficiently large $i$.
  By \textup{(i)}, we have $d_B(\mathcal F_i)=o(n_i^k)$.
  Together with the case $R=k$ proved above, this establishes \textup{(ii)}.
\end{proof}

\subsection{Exact bounds and equality cases}\label{sec:emc-proof}\label{sec:sparse-range}
We now prove \cref{thm:emc}.
By \cref{lem:matching-sequence}, we reduce to the cases where the family is close to either a cover family or a clique family.
Near a cover family, we extend matchings greedily and apply \cref{lem:sparse-emc} to the remaining family to obtain $|\mathcal F|\le M_k(n,s)$.
Near a clique family, shifting produces a large clique, and \cref{lem:large-clique-emc} gives $|\mathcal F|\le C_k(s)$.
We first use \cref{thm:bde} to handle the sparse range and restrict $n/s$ to a bounded interval.

\begin{theorem}[Bollob\'as, Daykin and Erd\H{o}s~{\cite[Theorem~1]{BDE}}]\label{thm:bde}
  Let $k\ge3$, $b\ge1$, and $n>2k^3b$ be integers.
  If $\mathcal J\subseteq\binom{[n]}k$ satisfies $\nu(\mathcal J)\le b$ and no $b$-set meets every edge of $\mathcal J$, then
  \[
    |\mathcal J|\le M_k(n,b)-\binom{n-b-k}{k-1}+1.
  \]
\end{theorem}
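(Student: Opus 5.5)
The statement just above is Theorem~\ref{thm:bde}, which the paper cites from Bollob\'as, Daykin and Erd\H{o}s rather than proving. If I had to supply a proof, I would use the classical argument built on the structure of a maximum matching. This is a plan rather than a new result.

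Set up: let $\mathcal J$ satisfy $\nu(\mathcal J)\le b$ with no $b$-set meeting every edge, and $n>2k^3b$. Fix a maximum matching $E_1,\ldots,E_t$ with $t\le b$, and let $U=\bigcup E_i$, so $|U|=kt\le kb$. Every edge meets $U$.

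Key step, degree dichotomy:
\begin{enumerate}[label=\textup{(\arabic*)},leftmargin=*,itemsep=0pt]
\item Call a vertex $v\in U$ \emph{heavy} if its degree is at least roughly $2k^2b\binom{n-2}{k-2}$ (more precisely, large enough that greedy augmentation works), and \emph{light} otherwise.
\item Let $T$ be the set of heavy vertices. A greedy exchange argument shows that one can find $|T|$ pairwise disjoint edges, one through each heavy vertex, avoiding any prescribed set of $O(kb)$ other vertices. The reason is that the edges through $v$ that meet a fixed set of size $O(k^2b)$ number at most $O(k^2b)\binom{n-2}{k-2}$, which is less than the degree of $v$.
\item From (2) it follows that the family $\mathcal J'$ of edges avoiding $T$ has matching number at most $b-|T|$, and its edges lie inside the light part.
\end{enumerate}

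Counting, split into cases on $|T|$:
\begin{enumerate}[label=\textup{(\arabic*)},leftmargin=*,itemsep=0pt]
\item If $|T|=b$, then $T$ meets every edge, contradicting the hypothesis. So $|T|\le b-1$.
\item If $|T|\le b-2$, the edges through $T$ number at most $|T|\binom{n-1}{k-1}$. The edges avoiding $T$ are covered by light vertices of $U$, so they number at most $k\bigl(b-|T|\bigr)$ times the light-degree bound. Since $n>2k^3b$, this light-degree contribution is far below $\binom{n-|T|-1}{k-1}$, and the total falls well below $M_k(n,b)-\binom{n-b-k}{k-1}+1$.
\item The tight case is $|T|=b-1$. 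Here $\mathcal J'$ has matching number at most $1$ and is therefore intersecting, yet it is not a star at a single vertex, since otherwise $T$ plus that vertex would be a $b$-cover. Applying the Hilton--Milner bound to $\mathcal J'$ on the $n-b+1$ remaining vertices gives $|\mathcal J'|\le\binom{n-b}{k-1}-\binom{n-b-k}{k-1}+1$. Adding the at most $M_k(n,b-1)$ edges meeting $T$, and using $M_k(n,b-1)+\binom{n-b}{k-1}=M_k(n,b)$, yields exactly the stated bound.
\end{enumerate}

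The main obstacle is making the heavy/light threshold and the greedy augmentation precise. Specifically, one must ensure that the disjoint edges through heavy vertices can be chosen simultaneously and that they also avoid any future matching edges, which requires the constant $2k^3$. One must also verify that the $|T|\le b-2$ case really loses more than the Hilton--Milner deficit $\binom{n-b-k}{k-1}$. I would first try the threshold $\deg(v)>kb\cdot k\binom{n-2}{k-2}$ and check both inequalities directly with $n>2k^3b$.
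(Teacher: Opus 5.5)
The paper does not prove Theorem~\ref{thm:bde}; it quotes it from Bollob\'as, Daykin and Erd\H{o}s. Your outline therefore has to stand on its own. Its architecture is sound, and your case $|T|=b-1$ is exactly right: apply Hilton--Milner on the $n-b+1>2k$ vertices outside $T$, then use $M_k(n,b-1)+\binom{n-b}{k-1}=M_k(n,b)$.

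The gap is the $|T|\le b-2$ count, which as written does not close at $n>2k^3b$. First, your threshold is a factor of $k$ too large. The $O(k^2b)$ forbidden set appears only if you route edges through all heavy vertices before knowing $|T|\le b$. Instead, bound $|T|$ by routing through just $b+1$ heavy vertices. In the third item of your dichotomy, route through $T$ after a $(b-|T|+1)$-matching $\mathcal N$ of $\mathcal J'$. In both cases, when you choose the edge through the $i$-th heavy vertex, the set to avoid (earlier edges, $V(\mathcal N)$, the remaining heavy vertices) has at most $kb$ elements. So ``heavy'' should mean $\deg(v)>kb\binom{n-2}{k-2}$, which is the threshold the paper uses in the proof of \cref{lem:sparse-hm}. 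With your $k^2b\binom{n-2}{k-2}$, the case $|T|=b-2$ essentially requires $2k^3b\binom{n-2}{k-2}\le\binom{n-b}{k-1}$, that is $n\ge 2k^3(k-1)b$, which fails in part of the stated range.

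Second, $\mathcal J'$ must not be covered by ``the light vertices of $U$''. There can be $kb-|T|$ of them, so for $|T|=b-2$ that cover gives a bound of order $k^2b^2\binom{n-2}{k-2}$, which would need $n$ of order $k^3b^2$. Cover $\mathcal J'$ instead by the vertex set of a maximal matching of $\mathcal J'$; by your third item it has at most $k(b-|T|)$ vertices. For these vertices to have degree at most the threshold, define ``heavy'' on all of $[n]$. With the threshold above, every heavy vertex lies in $U$ anyway, because a vertex $v\notin U$ lies in at most $kb\binom{n-2}{k-2}$ edges meeting $U$, and every edge meets $U$. Then for $j=b-|T|\ge2$,
\[
|\mathcal J|\le M_k(n,b-j)+kj\cdot kb\binom{n-2}{k-2},
\qquad
M_k(n,b)-M_k(n,b-j)\ge j\binom{n-b}{k-1}.
\]
It therefore suffices that $k^2jb\binom{n-2}{k-2}\le(j-1)\binom{n-b}{k-1}$. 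Since $j-1\ge j/2$, this follows from $2k^2b\binom{n-2}{k-2}\le\binom{n-b}{k-1}$. That inequality holds for $n>2k^3b$, because $\binom{n-b}{k-1}\ge\frac{n-b}{k-1}\bigl(1-\frac{(k-2)(b-1)}{n-k+1}\bigr)\binom{n-2}{k-2}$. With these two corrections your plan becomes a complete proof.
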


We shall use the following consequence.

\begin{lemma}\label{lem:sparse-emc}
  Let $k\ge3$, $n\ge k$, and $b\ge0$ be integers with $b\le n/(4k^3)$.
  If $\mathcal J\subseteq\binom{[n]}k$ satisfies $\nu(\mathcal J)\le b$, then $|\mathcal J|\le M_k(n,b)$.
  Equality holds if and only if $\mathcal J=\mathcal A(B')$ for some $b$-set $B'\subseteq[n]$.
\end{lemma}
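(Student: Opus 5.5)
We derive \cref{lem:sparse-emc} from \cref{thm:bde} by splitting according to whether $\mathcal J$ has a vertex cover of size $b$. The degenerate case $b=0$ is immediate: $\nu(\mathcal J)=0$ forces $\mathcal J=\emptyset$, and $M_k(n,0)=0$ with $\mathcal A(\emptyset)=\emptyset$. So we assume $b\ge1$. Then $n\ge4k^3b>2k^3b$, and the integrality hypotheses of \cref{thm:bde} hold.

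\emph{Case 1: some $b$-set $B'$ meets every edge of $\mathcal J$.} Then $\mathcal J\subseteq\mathcal A(B')$, so $|\mathcal J|\le|\mathcal A(B')|=M_k(n,b)$. Equality holds exactly when $\mathcal J=\mathcal A(B')$. Conversely, $\mathcal A(B')$ has matching number at most $b$, since every edge meets the $b$-set $B'$ and disjoint edges meet it in distinct vertices. Hence these families are admissible and attain the bound.

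\emph{Case 2: no $b$-set meets every edge.} Now \cref{thm:bde} gives
\[
|\mathcal J|\le M_k(n,b)-\binom{n-b-k}{k-1}+1.
\]
It suffices to show $\binom{n-b-k}{k-1}\ge2$, because then $|\mathcal J|<M_k(n,b)$, and equality is impossible in this case. Since $b\le n/(4k^3)$, we have $n-b-k\ge n(1-1/(4k^3))-k$. Since $n\ge4k^3\ge108$ and $k\ge3$, this quantity is much larger than $k$. Thus $n-b-k\ge k$, and then $\binom{n-b-k}{k-1}\ge\binom{k}{k-1}=k\ge3>1$. This gives strict inequality.

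The cases together give the bound and the equality characterization. The only step needing care is the small arithmetic check that $\binom{n-b-k}{k-1}\ge2$ under the stated constraints; everything else is direct.
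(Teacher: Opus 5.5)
Your proposal is correct and follows the paper's proof essentially line for line. You dispose of $b=0$ directly. For $b\ge1$ you split on whether some $b$-set covers $\mathcal J$, and in the uncovered case you apply \cref{thm:bde} together with $\binom{n-b-k}{k-1}>1$ to get strict inequality.
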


\begin{proof}
  The case $b=0$ is immediate.
  Suppose $b\ge1$.
  If no $b$-set meets every edge of $\mathcal J$, then $n\ge4k^3b>2k^3b$, and by \cref{thm:bde}, we have
  \[
    |\mathcal J|\le M_k(n,b)-\binom{n-b-k}{k-1}+1<M_k(n,b).
  \]
  The strict inequality follows from $\binom{n-b-k}{k-1}>1$.
  Otherwise, $\mathcal J\subseteq\mathcal A(B')$ for some $b$-set $B'$, and equality holds exactly when $\mathcal J=\mathcal A(B')$.
\end{proof}

Now we are ready to show \cref{thm:emc}.
It suffices to show that, for all sufficiently large $s$, every family satisfying its hypotheses and $|\mathcal F|\ge D_k(n,s)$ is either a cover family or a clique family.
Suppose this assertion fails for arbitrarily large $s$.
Choose positive integers $s_i\to\infty$, integers $n_i$, and families $\mathcal F_i\subseteq\binom{[n_i]}k$ such that, for every $i$,
\begin{equation}\label{eq:emc-counterexample-sequence}
  n_i\ge k(s_i+1),\text{ } \nu(\mathcal F_i)\le s_i\text{ and }
  |\mathcal F_i|\ge D_k(n_i,s_i).
\end{equation}
We may choose each $\mathcal F_i$ to be neither $\mathcal A(S)$ for any $s_i$-set $S\subseteq[n_i]$ nor $\binom Qk$ for any $(ks_i+k-1)$-set $Q\subseteq[n_i]$.
If $n_i\ge4k^3s_i$, then by \cref{lem:sparse-emc}, we have
\[
  D_k(n_i,s_i)\le|\mathcal F_i|\le M_k(n_i,s_i)\le D_k(n_i,s_i).
\]
All inequalities are equalities, so the equality case of \cref{lem:sparse-emc} gives $\mathcal F_i=\mathcal A(S_i')$ for some $s_i$-set $S_i'\subseteq[n_i]$, a contradiction.
Hence, $k<n_i/s_i<4k^3$ for every $i$.
Passing to a subsequence and relabelling, we may therefore assume that
$
  \frac{n_i}{s_i}\longrightarrow R_*$
  as $i\to\infty
$
for some $R_*\in[k,4k^3]$.
By \cref{lem:matching-sequence}\textup{(ii)}, we have $d_C(\mathcal F_i)=o(n_i^k)$ if $k\le R_*<\rho_k$, and $d_B(\mathcal F_i)=o(n_i^k)$ if $R_*>\rho_k$.
If $R_*=\rho_k$, then \cref{lem:matching-sequence}\textup{(i)} implies $\min\{d_B(\mathcal F_i),d_C(\mathcal F_i)\}=o(n_i^k)$.
At $R_*=\rho_k$, we pass to a subsequence on which either $d_B(\mathcal F_i)\le d_C(\mathcal F_i)$ for every $i$ or $d_C(\mathcal F_i)\le d_B(\mathcal F_i)$ for every $i$.
After relabelling when necessary, we obtain one of the following two cases.
\begin{enumerate}[label=\textup{Case~\arabic*.},leftmargin=*,itemsep=0pt]
  \item There are $s_i$-sets $S_i\subseteq[n_i]$ such that
        \begin{equation}\label{eq:emc-cover-alternative}
          R_*\ge\rho_k\text{ and }
          |\mathcal F_i\triangle\mathcal A(S_i)|=o(n_i^k).
        \end{equation}
  \item There are $(ks_i+k-1)$-sets $Q_i\subseteq[n_i]$ such that
        \begin{equation}\label{eq:emc-clique-alternative}
          k\le R_*\le\rho_k\text{ and }
          \left|\mathcal F_i\triangle\binom{Q_i}k\right|=o(n_i^k).
        \end{equation}
\end{enumerate}
We treat these two cases separately.

\subsubsection{Case 1: The cover case}\label{sec:local-cover-bounds}\label{sec:local-matching-bounds}
In this case, \eqref{eq:emc-cover-alternative} holds.
Since $R_*\ge\rho_k>k$, choose constants $R_0,L_0$ with $k<R_0<R_*<L_0$.
Then $R_0s_i\le n_i\le L_0s_i$ for all sufficiently large $i$.
We will prove $|\mathcal F_i|\le M_k(n_i,s_i)$ by removing vertices of $S_i$ through which a matching can be extended greedily.
To this end, we prove the following lemma.

\begin{lemma}\label{lem:local-cover}
  Fix an integer $k\ge3$ and real constants $k<R<L$.
  There exist $\delta=\delta(k,R,L)>0$ and an integer $s_0=s_0(k,R,L)$ such that the following holds whenever $s,n$ are integers with $s\ge s_0$ and $Rs\le n\le Ls$.
  If $\mathcal F\subseteq\binom{[n]}k$ satisfies $\nu(\mathcal F)\le s$ and $|\mathcal A(S)\setminus\mathcal F|\le\delta n^k$ for some $s$-set $S\subseteq[n]$, then
  \[
    |\mathcal F|\le M_k(n,s),
  \]
  with equality exactly when $\mathcal F=\mathcal A(S')$ for some $s$-set $S'\subseteq[n]$.
  Moreover, for every $s$-set $S\subseteq[n]$ satisfying $|\mathcal A(S)\setminus\mathcal F|\le\delta n^k$, there is a set $B\subseteq S$ such that
  \[
    \nu(\mathcal F[U])\le |B|\le\frac{|U|}{4k^3},
  \]
  where $U=([n]\setminus S)\cup B$.
\end{lemma}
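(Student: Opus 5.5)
Split the vertices of $S$ into \emph{good} and \emph{bad} ones, then run a greedy matching-extension argument. Call $v\in S$ good if its link in $\mathcal F$ restricted to $[n]\setminus S$ is almost complete: all but at most $\sqrt\delta\,n^{k-1}$ of the $(k-1)$-subsets of $[n]\setminus S$ form an edge with $v$. Let $B\subseteq S$ be the set of bad vertices. Each bad vertex is responsible for at least $\sqrt\delta\,n^{k-1}$ missing edges of $\mathcal A(S)$, and these missing edge sets are disjoint because each edge meets $S$ only at $v$. Hence $|B|\sqrt\delta\,n^{k-1}\le\delta n^k$, so $|B|\le\sqrt\delta\,n$. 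Since $|U|\ge n-s\ge(1-1/R)n$, choosing $\delta$ small in terms of $k,R$ gives $|B|\le|U|/(4k^3)$.

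Next I would prove $\nu(\mathcal F[U])\le|B|$. Suppose instead that $\mathcal F[U]$ has a matching $\mathcal M$ of size $|B|+1$. It covers at most $k(|B|+1)=O(\sqrt\delta\,n)$ vertices. Extend it greedily through the $s-|B|$ good vertices one at a time. At each step, the vertices used so far in $[n]\setminus S$ number at most $k(s+1)$. The remaining part of $[n]\setminus S$ has size at least $n-s-k(s+1)-O(\sqrt\delta n)\ge n-(k+1)s-O(\sqrt\delta n)$.

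This is the step I expect to be the main obstacle. The greedy count requires the free part of $[n]\setminus S$ to have size linear in $n$, which is where $R>k$ enters. The free part has size at least $(n-ks)-s-O(\sqrt\delta n)$, and this could be negative when $R<k+1$. So I would not count the free part directly; instead I would use its complement. A good vertex $v$ misses at most $\sqrt\delta\,n^{k-1}$ link sets. The number of $(k-1)$-subsets of $[n]\setminus S$ that avoid the at most $k(s+1)$ used vertices is at least $\binom{n-s-(k-1)s-k-O(\sqrt\delta n)}{k-1}$. We have $n-ks\ge(R-k)s$, which is linear in $n$, so this count exceeds $\sqrt\delta\,n^{k-1}$ once $\delta$ is small relative to $(R-k)/L$. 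Good vertices use only $k-1$ outside vertices each, and $\mathcal M$ uses $k(|B|+1)$. The total number of used outside vertices is therefore $(k-1)(s-|B|)+k(|B|+1)\le(k-1)s+O(\sqrt\delta n)$, and the free outside set has size at least $n-ks-O(\sqrt\delta n)\ge((R-k)/L-O(\sqrt\delta))n$, which suffices. The result is a matching of size $s-|B|+|B|+1=s+1$, contradicting $\nu(\mathcal F)\le s$.

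For the size bound, write $b=|B|$ and $u=|U|=n-s+b$. Every edge of $\mathcal F$ either meets $S\setminus B$, giving at most $\binom nk-\binom{n-s+b}k$ edges, or lies in $U$. By \cref{lem:sparse-emc} applied to $\mathcal F[U]$ with $b\le u/(4k^3)$ (for $b=0$ the family $\mathcal F[U]$ is empty), we get $|\mathcal F[U]|\le M_k(u,b)=\binom uk-\binom{u-b}k$. Summing, $|\mathcal F|\le\binom nk-\binom{n-s}k=M_k(n,s)$. Equality forces every edge meeting $S\setminus B$ to be present, and forces $\mathcal F[U]=\mathcal A_U(B')$ for some $b$-set $B'\subseteq U$ by the equality case of \cref{lem:sparse-emc}. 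Then $\mathcal F=\mathcal A((S\setminus B)\cup B')$ with $|(S\setminus B)\cup B'|=s$, as required. Conversely, every cover family attains the bound.
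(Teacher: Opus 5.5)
Your proposal is correct and follows the paper's proof essentially step for step. You declare a vertex of $S$ bad when it has many missing link sets into $[n]\setminus S$, and bound $|B|$ using the disjointness of these missing edges. A greedy extension through $S\setminus B$ then gives $\nu(\mathcal F[U])\le|B|$: each good vertex consumes only $k-1$ outside vertices, so at least $n-ks-O(\sqrt\delta\, n)$ of them stay free. Finally, \cref{lem:sparse-emc} applied to $\mathcal F[U]$ yields the bound and the equality case.

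The differences are cosmetic. The paper uses the threshold $\binom{\lfloor (R-k)s/2\rfloor}{k-1}$, giving $|B|=O(\delta s)$, where you use $\sqrt\delta\,n^{k-1}$, giving $|B|\le\sqrt\delta\, n$. Your first free-set estimate, with $k(s+1)$ used vertices, is an overcount, but you fix it with the correct count $(k-1)(s-|B|)+k(|B|+1)$.
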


\begin{proof}
  Let $\gamma=R-k>0$.
  Choose $\delta=\delta(k,R,L)>0$ sufficiently small, and then choose $s_0=s_0(k,R,L)$ sufficiently large for the estimates below.
  Fix any $s$-set $S\subseteq[n]$ with $|\mathcal A(S)\setminus\mathcal F|\le\delta n^k$, and set $W=[n]\setminus S$.
  For each $v\in S$, let $m(v)$ be the number of missing edges whose unique vertex in $S$ is $v$.
  Thus,
  \[
    m(v)=\left|\left\{A\in\binom W{k-1}:
    A\cup\{v\}\notin\mathcal F\right\}\right|.
  \]
  Set $D_0=\binom{\lfloor\gamma s/2\rfloor}{k-1}$ and $B=\{v\in S:m(v)\ge D_0\}$.
  Write $b=|B|$.
  Since no missing edge is counted for two vertices of $S$, we have $bD_0\le|\mathcal A(S)\setminus\mathcal F|\le\delta n^k$.
  Set $U=W\cup B$ and $N=|U|=n-s+b$.
  Since $D_0=\Theta_{k,R}(s^{k-1})$ and $n\le Ls$, we have $b\le\delta L^k s^k/D_0=O_{k,R,L}(\delta s)$.
  Since $N\ge(R-1)s$ and $\delta$ is sufficiently small, we may assume that
  \begin{equation}\label{eq:cover-b-small}
    b\le s/3,\text{ } b\le\gamma s/4\text{ and }b\le N/(4k^3).
  \end{equation}
  We also have $\gamma s\ge4$ and $N\ge k$ by the choice of $s_0$.

  We claim that the family $\mathcal F[U]$ has matching number at most $b$.
  Suppose for a contradiction that $\mathcal F[U]$ contains a matching $\mathcal M_0$ of $b+1$ edges.
  Write $S\setminus B=\{v_1,\ldots,v_{s-b}\}$.
  Since $U=W\cup B$, the edges of $\mathcal M_0$ avoid $S\setminus B$.
  Fix any integer $j$ with $0\le j<s-b$ and any edges $E_t=\{v_t\}\cup A_t\in\mathcal F$, with $A_t\in\binom W{k-1}$ for $1\le t\le j$, such that $\mathcal M_0\cup\{E_1,\ldots,E_j\}$ is a matching.
  The edges of $\mathcal M_0$ use at most $k(b+1)$ vertices of $W$, and the $j$ added edges use $j(k-1)$ more.
  Since $j\le s-b-1$, $n\ge(k+\gamma)s$, $b\le\gamma s/4$ by \eqref{eq:cover-b-small}, and $\gamma s\ge4$, the number of unused vertices of $W$ is at least
  \[
    \begin{aligned}
      |W|-k(b+1)-j(k-1)
       & \ge |W|-k(b+1)-(s-b-1)(k-1) \\
       & =n-ks-b-1
      \ge\gamma s-b-1
      \ge\lfloor\gamma s/2\rfloor.
    \end{aligned}
  \]
  Among the unused vertices there are at least $D_0$ choices of $A\in\binom W{k-1}$.
  Since $v_{j+1}\notin B$, we have $m(v_{j+1})<D_0$, so one such $A$ satisfies $\{v_{j+1}\}\cup A\in\mathcal F$.
  Choose one such set as $A_{j+1}$, and set $E_{j+1}=\{v_{j+1}\}\cup A_{j+1}$.
  The edge $E_{j+1}$ is disjoint from the current matching because $v_{j+1}$ is new and $A_{j+1}$ uses only unused vertices of $W$.
  After all $s-b$ vertices have been processed, the matching has $(b+1)+(s-b)=s+1$ edges, a contradiction.

  Every edge of $\mathcal F$ either meets $S\setminus B$ or belongs to $\mathcal F[U]$.
  We have $\nu(\mathcal F[U])\le b$ and $b\le N/(4k^3)$.
  By \cref{lem:sparse-emc}, we have
  \[
    |\mathcal F|\le\binom nk-\binom Nk+|\mathcal F[U]|
    \le\binom nk-\binom{N-b}k=M_k(n,s),
  \]
  where $N-b=n-s$.
  Equality forces $\mathcal F[U]=\mathcal A_U(B')$ for a $b$-set $B'\subseteq U$ and requires every $k$-set meeting $S\setminus B$ to belong to $\mathcal F$.
  Thus, $\mathcal F=\mathcal A((S\setminus B)\cup B')$, whose center has size $s$.
  Conversely, $|\mathcal A(S')|=M_k(n,s)$ for every $s$-set $S'$.
\end{proof}

We now apply \cref{lem:local-cover} to complete the proof in the cover case.
By \eqref{eq:emc-cover-alternative}, $|\mathcal A(S_i)\setminus\mathcal F_i|=o(n_i^k)$, so \cref{lem:local-cover} applies with $R=R_0$ and $L=L_0$ for all sufficiently large $i$.
Consequently,
\[
  D_k(n_i,s_i)\le|\mathcal F_i|
  \le M_k(n_i,s_i)\le D_k(n_i,s_i).
\]
Thus, $|\mathcal F_i|=M_k(n_i,s_i)$.
The equality case of \cref{lem:local-cover} then implies $\mathcal F_i=\mathcal A(S_i')$ for some $s_i$-set $S_i'\subseteq[n_i]$, a contradiction.

\subsubsection{Case 2: The clique case}\label{sec:local-clique-bounds}
In this case, \eqref{eq:emc-clique-alternative} holds.
We first state a bound for families containing a large clique.
Write $\omega(\mathcal H)$ for the size of a largest clique of a $k$-graph $\mathcal H$.

\begin{lemma}\label{lem:large-clique-emc}
  For each fixed $k\ge3$, there exist $\eta>0$ and $s_0$ such that the following holds whenever $s\ge s_0$ and $k(s+1)\le n\le(k+1/2)s$.
  If $\mathcal H\subseteq\binom{[n]}k$ satisfies $\nu(\mathcal H)\le s$ and $\omega(\mathcal H)\ge(1-\eta)ks$, then $|\mathcal H|\le C_k(s)$.
  Equality holds if and only if $\mathcal H=\binom Uk$ for some $(ks+k-1)$-set $U\subseteq[n]$.
\end{lemma}

We first use \cref{lem:large-clique-emc} to prove the following theorem.
The proof of \cref{lem:large-clique-emc} is deferred to \cref{sec:large-clique-emc-proof}.

\begin{theorem}\label{lem:local-clique}
  For each fixed $k\ge3$, there exist $\delta>0$ and $s_0$ such that the following holds whenever $s\ge s_0$ and $k(s+1)\le n\le(k+1/2)s$.
  If $\mathcal F\subseteq\binom{[n]}k$ satisfies $\nu(\mathcal F)\le s$ and $|\binom Qk\setminus\mathcal F|\le\delta n^k$ for some $(ks+k-1)$-set $Q\subseteq[n]$, then $|\mathcal F|\le C_k(s)$.
  Equality holds if and only if $\mathcal F=\binom{Q'}k$ for some $(ks+k-1)$-set $Q'\subseteq[n]$.
\end{theorem}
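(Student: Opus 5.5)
The plan is to reduce \cref{lem:local-clique} to \cref{lem:large-clique-emc} by showing that a family close to $\binom Qk$ with $\nu(\mathcal F)\le s$ has, after a shifting step, a clique of size at least $(1-\eta)ks$. Let $\eta$ and $s_0'$ be given by \cref{lem:large-clique-emc}. We choose $\delta$ small in terms of $\eta$ and $k$, and $s_0\ge s_0'$ large.

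\textbf{Step 1: shift inside $Q$.} Shifting (compression) operators $S_{ij}$ with $i<j$ do not increase the matching number and preserve $|\mathcal F|$. We relabel so that $Q=[ks+k-1]$ and apply only shifts $S_{ij}$ with $i,j\in Q$. Such shifts map $k$-subsets of $Q$ to $k$-subsets of $Q$. The number of missing $k$-subsets of $Q$ does not increase under a shift: $S_{ij}$ is a bijection-type operation on $\binom Qk$ that never removes a set of $\binom Qk$ without replacing it by another. So the shifted family $\mathcal F'$ still satisfies $|\binom Qk\setminus\mathcal F'|\le\delta n^k$, and it is stable under all shifts within $Q$. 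The family $\mathcal F'\cap\binom Qk$ is then a down-set in the shifting order on $Q$.

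\textbf{Step 2: extract a large clique.} Let $T=[t]$ with $t=\lceil(1-\eta)ks\rceil$. I claim that every $k$-subset of $T$ lies in $\mathcal F'$. Suppose not. Shiftedness within $Q$ implies that the "largest" subset of $T$, namely $\{t-k+1,\ldots,t\}$, is missing. Shiftedness then propagates this absence upward: every $k$-set $E\subseteq Q$ that dominates $\{t-k+1,\ldots,t\}$ coordinatewise is missing. Such a set has all elements at least $t-k+1\ge(1-\eta)ks-k$ and satisfies the coordinatewise constraint, so the number of such sets is $\Omega_k((\eta ks)^k)$. For $\delta\ll\eta^k$ and $s$ large, this exceeds $\delta n^k$ because $n\le(k+1/2)s$. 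This contradicts Step 1. Hence $\omega(\mathcal F')\ge(1-\eta)ks$.

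\textbf{Step 3: apply the lemma and handle equality.} \Cref{lem:large-clique-emc} applied to $\mathcal F'$ gives $|\mathcal F|=|\mathcal F'|\le C_k(s)$. In the equality case it gives $\mathcal F'=\binom Uk$.

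The main obstacle is transferring the equality case back through the shifts. I would follow the standard approach and undo the shifts one at a time. Suppose $S_{ij}(\mathcal G)=\binom Uk$ with $\nu(\mathcal G)\le s$ and $|U|=ks+k-1$. Then I would argue that $\mathcal G$ is itself a clique $\binom{U'}k$. If $i\in U$ and $j\notin U$ and $\mathcal G$ is not a clique, then $\mathcal G$ contains sets through both $i$ and $j$ without being a clique. Using $|U|=ks+k-1\ge k(s+1)-1$ and the room outside $U$ (since $n\ge k(s+1)$), one then builds $s+1$ disjoint edges: take an edge through $j$, an edge through $i$, and fill the rest greedily inside the clique part. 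This contradicts $\nu(\mathcal G)\le s$. I expect this backward-induction argument to require the most care.

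An alternative that avoids it is to observe that $\mathcal F$ itself satisfies the hypotheses of \cref{lem:large-clique-emc}. The shifting order is used only to exhibit a clique, and for that it suffices to show that $\mathcal F$ already contains all $k$-subsets of some $T\subseteq Q$ with $|T|\ge(1-\eta)ks$. This could follow from a greedy deletion argument: repeatedly remove a vertex of $Q$ lying in many missing sets. A counting bound of at most $k\delta n^k/\binom{|Q|}{k-1}\cdot O(1)$ such removals keeps $|T|\ge(1-\eta)ks$, but killing all missing sets this way needs care. I would try this first and fall back on shifting if the greedy count fails.
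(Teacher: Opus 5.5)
Your proof of the inequality is the paper's argument. You shift only with $S_{ij}$, $i<j$, $i,j\in Q=[ks+k-1]$. This preserves $|\mathcal F|$ and the number of members inside $\binom Qk$, and does not increase $\nu$. A missing $k$-subset of $[h]$, $h=\lceil(1-\eta)ks\rceil$, would force all $\binom{ks+k-1-h}{k}>\delta n^k$ $k$-subsets of the top part of $Q$ to be missing. You then apply \cref{lem:large-clique-emc}.

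The difference is in transferring equality back through the shifts. The paper never analyses a single shift. If $S_{ab}(\mathcal G)=\binom Uk$, every $k$-subset of $U\setminus\{a,b\}$ is untouched by the shift. So $\mathcal G$ itself contains a clique of order at least $ks+k-3\ge(1-\eta)ks$, has $C_k(s)$ members and $\nu(\mathcal G)\le s$, and the equality case of \cref{lem:large-clique-emc} applies to $\mathcal G$ directly.

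Your direct route also works, but it skips its one real step. If $i\in U$ and $j\notin U$, then $\mathcal G$ is obtained from $\binom Uk$ by replacing $D\cup\{i\}$ with $D\cup\{j\}$ for all $D$ in some $\mathcal D\subseteq\binom{U\setminus\{i\}}{k-1}$. The remaining positions of $i,j$ relative to $U$ either cannot occur or force $\mathcal G=\binom Uk$. If $\mathcal D=\emptyset$ or $\mathcal D=\binom{U\setminus\{i\}}{k-1}$, then $\mathcal G$ is a clique family. Otherwise, ``an edge through $j$ and an edge through $i$'' need not be disjoint. You must find $D\in\mathcal D$ and $D'\notin\mathcal D$ with $D\cap D'=\emptyset$. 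This holds because $|U\setminus\{i\}|=ks+k-2$ is large: if $D\cap D'\ne\emptyset$, any $(k-1)$-set avoiding $D\cup D'$ is disjoint from whichever of $D,D'$ lies in the opposite class. The remaining $k(s-1)$ vertices of $U\setminus\{i\}$ then give $s-1$ further edges, for $s+1$ disjoint edges in total. With that detail supplied, your version is elementary and independent of the lemma, while the paper's is a two-line reuse of it.

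Do not pursue your alternative of finding the clique in $\mathcal F$ itself: the hypotheses do not force $\omega(\mathcal F)$ to be large. Delete each member of $\binom Qk$ independently with probability $\delta$. With high probability this gives a family with $\nu(\mathcal F)\le s$ and at most $\delta n^k$ missing sets. Its cliques are independent sets in a random $k$-graph of constant density, so its clique number is only $O\bigl((\log n)^{1/(k-1)}\bigr)$. Hence no argument that only counts missing sets in $\mathcal F$ can produce a clique of order $(1-\eta)ks$. Shifting is essential rather than a fallback, because it pushes all missing sets of $\binom Qk$ toward the top of $Q$.
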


\begin{proof}
  Choose $0<\eta<1$ as in \cref{lem:large-clique-emc}, and choose $s_0$ sufficiently large.
  Choose $0<\delta<(\eta/(2k+1))^k$.
  Let $s,n$ and $\mathcal F$ satisfy the hypotheses of \cref{lem:local-clique}.
  Fix a $(ks+k-1)$-set $Q\subseteq[n]$ such that $|\binom Qk\setminus\mathcal F|\le\delta n^k$.
  By relabelling, we may assume that $Q=[m]$, where $m=ks+k-1$.
  For distinct vertices $a,b\in[n]$ and a family $\mathcal G\subseteq\binom{[n]}k$, define the shift $S_{ab}(\mathcal G)$ by replacing each edge $E\in\mathcal G$ with $b\in E$, $a\notin E$ and $(E\setminus\{b\})\cup\{a\}\notin\mathcal G$ by $(E\setminus\{b\})\cup\{a\}$.
  All other edges remain unchanged.
  Let $\mathcal F'$ be obtained from $\mathcal F$ by repeatedly applying shifts $S_{ab}$ with $1\le a<b\le m$ until no such shift changes the family.
  Each shift preserves the size of the family and does not increase its matching number.
  Since both shifted vertices lie in $Q$, the number of missing edges in $Q$ also remains unchanged.

  Set $h=\lceil(1-\eta)ks\rceil$.
  We show that every $k$-set in $[h]$ belongs to $\mathcal F'$.
  Suppose for a contradiction that there exists $A\in\binom{[h]}k\setminus\mathcal F'$, and fix one such $A$.
  Fix an arbitrary $B\in\binom{\{h+1,\ldots,m\}}k$.
  If $B\in\mathcal F'$, then replacing its vertices one at a time by the vertices of $A$ would keep every resulting $k$-set in $\mathcal F'$, since every vertex of $A$ is smaller than every vertex of $B$ and $\mathcal F'$ is unchanged by all shifts $S_{ab}$ with $1\le a<b\le m$.
  This would give $A\in\mathcal F'$, contrary to the choice of $A$.
  Since $B$ was arbitrary, every $k$-set in $\{h+1,\ldots,m\}$ is missing from $\mathcal F'$.
  By our choice of $s_0$, we have $m-h\ge\eta ks+k-2\ge\max\{k,\eta ks/2\}$.
  Therefore,
  \[
    \left|\binom Qk\setminus\mathcal F'\right|
    \ge\binom{m-h}k
    \ge\left(\frac{m-h}{k}\right)^k
    \ge(\eta s/2)^k
    >\delta n^k,
  \]
  where the last inequality follows from $n\le(k+1/2)s$ and $\delta<(\eta/(2k+1))^k$.
  This contradicts $|\binom Qk\setminus\mathcal F'|=|\binom Qk\setminus\mathcal F|\le\delta n^k$.
  Hence, $[h]$ is a clique in $\mathcal F'$, so $\omega(\mathcal F')\ge(1-\eta)ks$.
  We also have $\nu(\mathcal F')\le s$.
  By \cref{lem:large-clique-emc}, we have $|\mathcal F|=|\mathcal F'|\le C_k(s)$.

  Suppose that $|\mathcal F|=C_k(s)$.
  The equality case of \cref{lem:large-clique-emc} implies $\mathcal F'=\binom{U_t}k$ for some $m$-set $U_t$.
  Write the sequence of shifts as $\mathcal F=\mathcal F^{(0)},\mathcal F^{(1)},\ldots,\mathcal F^{(t)}=\mathcal F'$, where $\mathcal F^{(\ell)}=S_{a_\ell b_\ell}(\mathcal F^{(\ell-1)})$ for $1\le\ell\le t$.
  Every $\mathcal F^{(\ell)}$ has $C_k(s)$ members and matching number at most $s$.
  We prove by reverse induction that every $\mathcal F^{(\ell)}$ is a clique family on $m$ vertices.
  The assertion holds for $\ell=t$.
  Suppose that $1\le\ell\le t$ and $\mathcal F^{(\ell)}=\binom{U_\ell}k$, where $|U_\ell|=m$.
  Since $S_{a_\ell b_\ell}$ only affects sets containing $a_\ell$ or $b_\ell$, every $k$-set in $U_\ell\setminus\{a_\ell,b_\ell\}$ also belongs to $\mathcal F^{(\ell-1)}$.
  Thus, $\mathcal F^{(\ell-1)}$ contains a clique on $U_\ell\setminus\{a_\ell,b_\ell\}$, whose size satisfies $|U_\ell\setminus\{a_\ell,b_\ell\}|\ge ks+k-3\ge ks\ge(1-\eta)ks$.
  The equality case of \cref{lem:large-clique-emc} therefore implies $\mathcal F^{(\ell-1)}=\binom{U_{\ell-1}}k$ for some $m$-set $U_{\ell-1}$.
  The induction shows that $\mathcal F=\binom{U_0}k$.
  Conversely, every clique family on $m$ vertices has $C_k(s)$ members.
\end{proof}

We now apply \cref{lem:local-clique} to finish the clique case.
By \eqref{eq:emc-clique-alternative},
\[
  \left|\binom{Q_i}k\setminus\mathcal F_i\right|
  \le\left|\mathcal F_i\triangle\binom{Q_i}k\right|=o(n_i^k).
\]
By \eqref{eq:emc-counterexample-sequence} and $n_i/s_i\to R_*\le\rho_k<k+1/2$, we have $k(s_i+1)\le n_i\le(k+1/2)s_i$ for all sufficiently large $i$.
By \cref{lem:local-clique}, for all sufficiently large $i$ we have
\[
  D_k(n_i,s_i)\le|\mathcal F_i|
  \le C_k(s_i)\le D_k(n_i,s_i).
\]
Equality holds throughout, so \cref{lem:local-clique} gives $\mathcal F_i=\binom{Q_i'}k$ for some $(ks_i+k-1)$-set $Q_i'\subseteq[n_i]$, a contradiction.

Thus, for all sufficiently large $s$, every family satisfying the hypotheses of \cref{thm:emc} and $|\mathcal F|\ge D_k(n,s)$ is a cover family or a clique family.
These have sizes $M_k(n,s)$ and $C_k(s)$, respectively, so the claimed bound and equality cases follow, including both types when the sizes coincide.
This proves \cref{thm:emc}.

\subsection{An exact bound for families containing a large clique}\label{sec:large-clique-emc-proof}
We first prove a quantitative lower bound for $C_k(s)-|\mathcal H'|$ and then deduce \cref{lem:large-clique-emc}.

A $k$-graph is \emph{$s$-saturated} if its matching number is $s$ and adding any missing edge creates a matching of size $s+1$.
We need the following fact.

\begin{fact}\label{fact:large-clique-deficit}
  Let $k\ge3$ be an integer and $0<\eta\le1/(64k^4)$.
  Let $n,s$ be positive integers such that $s\ge60k^4$, $\eta ks\ge k-1$, and $k(s+1)\le n\le(k+1/2)s$.
  Let $\mathcal H'\subseteq\binom{[n]}k$ be $s$-saturated, with $u=\omega(\mathcal H')\ge(1-\eta)ks$, and set $d=ks+k-1-u$.
  Then
  \[
    C_k(s)-|\mathcal H'|
    \ge d\binom{ks}{k-1}
    \left[2-\left(\frac n{ks}\right)^{k-1}-5k^3\eta-\frac{3k^3}{s}\right].
  \]
\end{fact}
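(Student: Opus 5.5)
Let $U$ be a largest clique of $\mathcal H'$, so $|U|=u=ks+k-1-d$, and put $Q\supseteq U$ with $|Q|=ks+k-1$. We write $C_k(s)=\binom{u+d}{k}$ and aim to show that $\mathcal H'$ loses at least about $2d\binom{ks}{k-1}$ relative to this clique while gaining at most about $d\binom{ks}{k-1}(n/(ks))^{k-1}$ from edges outside $\binom Uk$. First, $\binom{u+d}{k}-\binom uk\le d\binom{ks+k-1}{k-1}$, and this is $d\binom{ks}{k-1}(1+O(k^2/s))$. So it suffices to bound $|\mathcal H'\setminus\binom Uk|$ by
\[
d\binom{ks}{k-1}\Bigl[\Bigl(\frac n{ks}\Bigr)^{k-1}-1+5k^3\eta+O(k^3/s)\Bigr].
\]

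The key structural step uses the matching number. Since $|U|=ks+k-1-d$, $U$ contains $s+1-\lceil d/k\rceil$ disjoint $k$-sets (roughly $s-d/k$), with some vertices of $U$ to spare. For an edge $E\notin\binom Uk$ meeting $[n]\setminus U$ in $j\ge1$ vertices, we complete greedily inside $U\setminus E$ with disjoint $k$-sets. We then classify the edges meeting $W=[n]\setminus U$ (of size $n-u=n-ks-k+1+d\le s/2+d$). Any collection of pairwise disjoint edges meeting $W$, together with disjoint $k$-sets from the remaining part of $U$, is a matching. This shows that $\mathcal H'$ restricted to edges touching $W$ has ``cover-like'' structure: the edges meeting $W$ must admit a small vertex set of size about $d/k+1$ hitting those that are disjoint from some further structure. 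Concretely, let $\mathcal E$ be the edges meeting $W$. If $\mathcal E$ had a matching of size $t$ using $\ell$ vertices of $U$, then $t+\lfloor(u-\ell)/k\rfloor\le s$. Since each edge uses at most $k-1$ vertices of $U$, this forces $t\le d+O(1)$ in the worst case, where each edge uses one vertex of $W$ and $k-1$ of $U$ and saves one slot. A greedy/Bollobás--Daykin--Erdős type argument, applying \cref{lem:sparse-emc} to the link structure on $W$, then yields a set $T$ of at most about $d$ vertices (in $U\cup W$) such that nearly all edges of $\mathcal E$ meet $T$ in $U$-vertices with one vertex in $W$.

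Saturation enters in two ways. Maximality of $U$ means every $w\in W$ misses some $(k-1)$-set of $U$. Saturation also means each missing edge creates an $(s+1)$-matching, which we use to show the deficiency inside $\binom Uk$ relative to $\binom{Q}{k}$ is genuinely about $d\binom{ks}{k-1}$. Counting edges through $T$: each vertex of $T$ lies in at most $\binom{n-1}{k-1}\le\binom{ks}{k-1}(n/(ks))^{k-1}(1+O(k^2/s))$ edges. Those already in $\binom Uk$ count about $\binom{ks}{k-1}$ per vertex, so the new edges are at most $d\binom{ks}{k-1}[(n/(ks))^{k-1}-1]$. Edges not through $T$ contribute error terms of order $\eta k^3 d\binom{ks}{k-1}$, using $|W|\le s/2+\eta ks+k$ and $d\le\eta ks+k-1$.

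The main obstacle is the middle step: proving that the edges meeting $W$ are, up to an $O(k^3\eta)$-fraction error, covered by roughly $d$ vertices, with an exact leading coefficient of one copy of $\binom{n-1}{k-1}$ per unit of $d$. I would try a weighted augmenting-matching argument: take a maximum matching $\mathcal M$ in $\mathcal E$, let $T=\bigcup\mathcal M$, and bound $|\mathcal M|\le d+1$ via the slot count. Edges avoiding $T$ are impossible, and edges meeting $T$ in $\ge2$ vertices or in $W$-vertices are lower order ($O(k^2d\, n^{k-2})$ and $O(k|W\cap T|\binom{n}{k-1})$ with refinement). The delicate point is replacing $|T|\le k(d+1)$ by effectively $d$ high-degree vertices, via the standard ``each matching edge has at most one high-degree vertex, else augment'' trick, with thresholds tuned to $\eta$.
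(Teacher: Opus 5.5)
There is a genuine gap, located exactly at the step you defer ``with refinement''. Your final count charges each high-degree vertex $b$ with at most $\binom{n-1}{k-1}-\binom{u-1}{k-1}$ edges outside $\binom Uk$, and that is valid only for $b\in U$. Every edge of your maximum matching $\mathcal M\subseteq\mathcal E$ contains a vertex of $W=[n]\setminus U$, and such a vertex $v$ can itself be a high-degree vertex. For $v\in W$ there is nothing in $\binom Uk$ to subtract, and nothing in your plan bounds its link into the clique, i.e.\ the number of $A\in\binom U{k-1}$ with $\{v\}\cup A\in\mathcal H'$. These edges are not lower order, and they cannot be controlled using only $\nu(\mathcal H')\le s$ and the maximality of $U$. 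Take $|U|=ks+k-2$, give one vertex $v\notin U$ the link $\binom U{k-1}\setminus\{D\}$, and leave all other vertices isolated. Then $\nu(\mathcal H')\le s$, $\omega(\mathcal H')=u$ and $d=1$, the edges through $v$ number $\binom u{k-1}-1$ (the whole main term), and $C_k(s)-|\mathcal H'|=1$, far below the claimed bound. So saturation must be used precisely to bound the links of outside vertices into the clique. In your plan it is spent instead on ``the deficiency inside $\binom Uk$ relative to $\binom Qk$'', which is just the number $\binom{u+d}k-\binom uk$ and needs no saturation.

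The paper supplies this step as follows. With $Q$ a largest clique, it takes an $s$-edge matching $\mathcal M$ maximizing $|Q\cup V(\mathcal M)|$. It uses saturation, applied to a missing set with $k-1$ vertices in $Q$, to show that $T=Q\cup V(\mathcal M)$ has exactly $ks+k-1$ vertices. It then proves $d_Q(v)\le(k-1)d\binom u{k-2}$ for each of the $d$ vertices $v\in T\setminus Q$. Maximality of $Q$ yields a missing set $e=\{v\}\cup D$, and saturation yields an $(s+1)$-matching $\mathcal N\ni e$. The choice of $\mathcal M$ forces the non-clique part $\mathcal N'$ of $\mathcal N\setminus\{e\}$ to satisfy $|V(\mathcal N')\setminus Q|\le d$. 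A larger link would then let one replace $e$ by some $\{v\}\cup D'\in\mathcal H'$ avoiding $V(\mathcal N')$ and refill with clique edges, giving an $(s+1)$-matching in $\mathcal H'$. This produces the $d\binom u{k-1}-(k-1)d^2\binom u{k-2}$ missing sets inside $\binom Tk$. Separately, the fact that every edge not inside $Q$ meets $B=V(\mathcal M')$ drives the augmenting high-degree argument, which allows at most $|F\setminus Q|$ high-degree vertices in each $F\in\mathcal M'$. That argument bounds the edges leaving $T$ by $d(\binom n{k-1}-\binom u{k-1})$ plus lower-order terms.

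There are two smaller slips. You need the lower bound $\binom{u+d}k-\binom uk\ge d\binom u{k-1}\ge(1-2(k-1)\eta)d\binom{ks}{k-1}$, not an upper bound. Also, the edges meeting $V(\mathcal M)$ at least twice number $O(k^2d^2\binom n{k-2})$, not $O(k^2dn^{k-2})$.
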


In \cref{sec:clique-hm-proof}, we will apply \cref{fact:large-clique-deficit} with $d\ge2$ to families whose largest clique has at most $ks+k-3$ vertices.

\begin{proof}
  Set $m=ks+k-1$, and let $Q$ be a largest clique in $\mathcal H'$, of order $u$.
  Then $(1-\eta)ks\le u\le m$.

  Set $d=m-u\ge0$.
  The range $k(s+1)\le n\le(k+1/2)s$ implies $s\ge2k$.
  Since $u<n$ and $Q$ is a largest clique, there is a missing $k$-set with exactly $k-1$ vertices in $Q$.
  Since $\mathcal H'$ is $s$-saturated, adding this set creates an $(s+1)$-edge matching containing it, and the remaining $s$ edges form a matching in $\mathcal H'$.
  Choose an $s$-edge matching $\mathcal M$ in $\mathcal H'$ maximizing $|Q\cup V(\mathcal M)|$, and let $\mathcal M'=\{E\in \mathcal M:E\not\subseteq Q\}$.
  Every $s$-edge matching leaves at most $k-1$ vertices of $Q$ uncovered, since otherwise one more clique edge could be added.
  In particular, $|Q\cup V(\mathcal M)|=ks+|Q\setminus V(\mathcal M)|\le m$.
  The matching obtained by deleting the added missing set leaves its $k-1$ vertices in $Q$ uncovered.
  By the choice of $\mathcal M$, we have $|Q\cup V(\mathcal M)|=ks+k-1=m$.
  Write $T=Q\cup V(\mathcal M)$, $B=V(\mathcal M')$, and $r=|\mathcal M'|$.
  Then $|T\setminus Q|=d$.

  Since $|T|=ks+k-1$, we have
  \[
    C_k(s)-|\mathcal H'|
    =\left|\binom Tk\setminus\mathcal H'\right|
    -\left|\mathcal H'\setminus\binom Tk\right|.
  \]
  We first bound $\left|\binom Tk\setminus\mathcal H'\right|$ from below and then $\left|\mathcal H'\setminus\binom Tk\right|$ from above.

  Each edge of $\mathcal M'$ contains a vertex outside $Q$, so $r\le d$.
  Since $\eta ks\ge k-1$ and $\eta\le1/(64k^4)$, we have $0\le d\le\eta ks+k-1\le2\eta ks\le s/2$.
  Also, $|Q\setminus B|=u-(kr-d)=k(s-r)+k-1$.
  If $E\in\mathcal H'$ satisfies $E\not\subseteq Q$ and $E\cap B=\emptyset$, then $|Q\setminus(B\cup E)|\ge|Q\setminus B|-(k-1)=k(s-r)$.
  Choose $k(s-r)$ vertices of $Q\setminus(B\cup E)$ and partition them into $s-r$ clique edges.
  Together with $\mathcal M'$ and $E$, these form a matching of size $s+1$.
  Thus, every edge not contained in $Q$ meets $B$.

  To count the missing $k$-sets in $T$, for each $v\in T\setminus Q$ define
  \[
    d_Q(v)=\left|\left\{D\in\binom Q{k-1}:D\cup\{v\}\in\mathcal H'\right\}\right|.
  \]

  We claim that $d_Q(v)\le(k-1)d\binom u{k-2}$ for every $v\in T\setminus Q$.
  Suppose for a contradiction that $d_Q(v)>(k-1)d\binom u{k-2}$ for some $v\in T\setminus Q$.
  By the maximality of $Q$, there is a missing set $e=\{v\}\cup D$ with $D\in\binom Q{k-1}$.
  Let $\mathcal N$ be an $(s+1)$-edge matching in $\mathcal H'\cup\{e\}$ containing $e$, and let $\mathcal N'=\{E\in \mathcal N\setminus\{e\}:E\not\subseteq Q\}$.
  Since $\mathcal N\setminus\{e\}$ is an $s$-edge matching in $\mathcal H'$, the choice of $\mathcal M$ implies $|V(\mathcal N')\setminus Q|\le d$.
  Consequently, $|\mathcal N'|\le d$ and $|V(\mathcal N')\cap Q|\le(k-1)d$.
  Choose $D'\in\binom{Q\setminus V(\mathcal N')}{k-1}$ such that $e'=\{v\}\cup D'\in\mathcal H'$.
  Indeed, for each vertex of $V(\mathcal N')\cap Q$, choosing the other $k-2$ vertices produces exactly $\binom{u-1}{k-2}$ members of $\binom Q{k-1}$ containing it.
  Since $v\in T\setminus Q$, we have $d\ge1$, so the number of members of $\binom Q{k-1}$ meeting $V(\mathcal N')$ is at most $|V(\mathcal N')\cap Q|\binom{u-1}{k-2}<(k-1)d\binom u{k-2}$.
  Since $d_Q(v)>(k-1)d\binom u{k-2}$, at least one of the sets counted by $d_Q(v)$ avoids $V(\mathcal N')$.
  Since $v\notin V(\mathcal N')$ and $D'$ avoids $V(\mathcal N')$, the family $\mathcal N'\cup\{e'\}$ is a matching.
  Note that $\mathcal N$ consists of $\mathcal N'$, the added set $e$, and $s-|\mathcal N'|$ edges contained in $Q$.
  Both $e$ and $e'$ use exactly $k-1$ vertices of $Q$, so
  \[
    |Q\setminus(V(\mathcal N')\cup e')|
    \ge k(s-|\mathcal N'|).
  \]
  Choose $k(s-|\mathcal N'|)$ of these unused vertices and partition them into $s-|\mathcal N'|$ clique edges.
  Together with $\mathcal N'$ and $e'$, these edges form a matching of size $s+1$ in $\mathcal H'$, a contradiction.
  This shows that $d_Q(v)\le(k-1)d\binom u{k-2}$ for every $v\in T\setminus Q$.

  The sets consisting of one vertex of $T\setminus Q$ and $k-1$ vertices of $Q$ are distinct for different outside vertices.
  Hence, summing the bound on $d_Q(v)$ over these $d$ vertices, we obtain
  \begin{equation}\label{eq:glp-missing-edges}
    \left|\binom Tk\setminus\mathcal H'\right|
    \ge d\binom u{k-1}-(k-1)d^2\binom u{k-2}.
  \end{equation}

  We now bound the edges outside $\binom Tk$.
  Let $W=[n]\setminus T$.
  Every edge of $\mathcal H'\setminus\binom Tk$ meets both $W$ and $B$, so we split these edges into
  \[
    \mathcal E_1=\left\{E\in\mathcal H'\setminus\binom Tk:|E\cap B|=1\right\}
    \text{ and }
    \mathcal E_2=\left\{E\in\mathcal H'\setminus\binom Tk:|E\cap B|\ge2\right\}.
  \]
  Since $r\le d$, we obtain
  \begin{equation}\label{eq:glp-multiple-intersections}
    |\mathcal E_2|\le\binom{kr}{2}\binom n{k-2}
    \le\frac{k^2d^2}{2}\binom n{k-2}.
  \end{equation}

  Each edge of $\mathcal E_1$ has a unique vertex $b\in B$, and its remaining $k-1$ vertices avoid $B$ and meet $W$.
  We therefore define
  \[
    \mathcal L=\left\{L\in\binom{[n]\setminus B}{k-1}:L\cap W\ne\emptyset\right\}.
  \]
  Every edge of $\mathcal E_1$ has a unique representation $\{b\}\cup L$ with $b\in B$ and $L\in\mathcal L$.
  For $b\in B$, let $a(b)=\left|\{L\in\mathcal L:\{b\}\cup L\in\mathcal H'\}\right|$.
  To bound $|\mathcal E_1|=\sum_{b\in B}a(b)$, we estimate $\sum_{b\in F}a(b)$ for each $F\in \mathcal M'$.
  Fix $F\in \mathcal M'$ and write $\ell_F=|F\setminus Q|$.
  We first prove the following claim.

  \begin{claim}\label{clm:large-clique-e1-matching}
    There is no matching of size $\ell_F+1$ in $\mathcal E_1$ such that every edge meets $F$.
  \end{claim}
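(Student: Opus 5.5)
The plan is to argue by contradiction with a matching-exchange argument. Suppose $\mathcal E_1$ contains a matching $\{G_0,\ldots,G_{\ell_F}\}$ of $\ell_F+1$ edges, each meeting $F$. Each $G_t\in\mathcal E_1$ meets $B$ in exactly one vertex, so that vertex lies in $F$. The $G_t$ are pairwise disjoint, so they use $\ell_F+1$ distinct vertices $b_0,\ldots,b_{\ell_F}$ of $F$. Their remaining vertices $L_t=G_t\setminus\{b_t\}$ avoid $B$, each meets $W$, and they are pairwise disjoint. We will replace $F$ in $\mathcal M$ by these $\ell_F+1$ edges and refill with clique edges inside $Q$, producing a matching of size $s+1$ in $\mathcal H'$. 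This contradicts $\nu(\mathcal H')=s$.

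The construction proceeds as follows. Keep every edge of $\mathcal M'\setminus\{F\}$; these are disjoint from the $G_t$ because they lie in $B\setminus F$ and the $L_t$ avoid $B$. Add $G_0,\ldots,G_{\ell_F}$. This gives $r-1+\ell_F+1=r+\ell_F$ disjoint edges.

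It remains to fill $s-r-\ell_F+1$ further edges with clique edges from $Q$ avoiding everything used. Count the available vertices of $Q$. We know $|Q\setminus B|=k(s-r)+k-1$. Since $F$ has exactly $k-\ell_F$ vertices in $Q$, the set $Q\setminus(B\setminus F)$ has $k(s-r)+k-1+(k-\ell_F)$ vertices. The $G_t$ use at most $(\ell_F+1)(k-1)$ of these: $\ell_F+1$ of the $b_t$ are in $F$, and at most $k-2$ further vertices of each $L_t$ can lie in $Q$, since $L_t$ meets $W$. The $b_t$ in $F\cap Q$ are already counted among the $k-\ell_F$ vertices.

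A careful count therefore has to track how many $b_t$ lie in $F\setminus Q$ versus $F\cap Q$. Only $\ell_F$ vertices of $F$ lie outside $Q$, so at least one $b_t$ lies in $F\cap Q$. The main obstacle is this bookkeeping: showing that the remaining unused vertices of $Q$ number at least $k(s-r-\ell_F+1)$. I expect the inequality to come out exactly tight, with a deficit of at most $k-1$ absorbed by the slack "$+k-1$" in $|Q\setminus B|$, using that each $L_t$ has at most $k-2$ vertices in $Q$.

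If the count is short by a constant, I would first try to repair it using the freedom in choosing $\mathcal M$. By maximality of $|Q\cup V(\mathcal M)|$, every vertex of $W$ is outside $T$. An $L_t$ vertex in $W$ therefore does not consume any $Q$-vertex, and this is what gives the extra room.

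With enough free vertices, partition $k(s-r-\ell_F+1)$ of them into clique edges of $Q$. The total is then $(r+\ell_F)+(s-r-\ell_F+1)=s+1$ disjoint edges of $\mathcal H'$, a contradiction, which proves the claim.
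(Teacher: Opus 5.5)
Your exchange argument is the paper's: keep $\mathcal M'\setminus\{F\}$, add the $\ell_F+1$ edges of $\mathcal E_1$, and refill with clique edges of $Q$. The bookkeeping you left open already closes with the bounds you wrote down, since $k(s-r)+k-1+(k-\ell_F)-(\ell_F+1)(k-1)=k(s-r-\ell_F+1)$ exactly. So no case split on whether $b_t\in F\cap Q$ is needed, and no fallback to the choice of $\mathcal M$ is needed either; that fallback would be vacuous anyway, since $W=[n]\setminus T$ by definition. The paper makes the same count from outside $Q$: the new edges contribute at least $\ell_F+1$ vertices of $W$, so $|V(\widetilde{\mathcal M})\setminus Q|\ge d+1$ and $|Q\cup V(\widetilde{\mathcal M})|\ge u+d+1=k(s+1)$.
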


  \begin{proof}
    Suppose for a contradiction that $E_1,\ldots,E_{\ell_F+1}$ form such a matching.
    Each $E_j$ meets $B$ only in a single vertex of $F$, so it is disjoint from all the edges in $\mathcal M'\setminus\{F\}$.
    Replacing $F$ by these edges produces a matching $\widetilde{\mathcal M}=(\mathcal M'\setminus\{F\})\cup\{E_1,\ldots,E_{\ell_F+1}\}$ in $\mathcal H'$, so $|\widetilde{\mathcal M}|\le s$.
    Deleting $F$ removes $\ell_F$ vertices outside $Q$, while the new edges add at least $\ell_F+1$ distinct vertices of $W$.
    This implies $|V(\widetilde{\mathcal M})\setminus Q|\ge d+1$.
    Thus, $|Q\cup V(\widetilde{\mathcal M})|\ge u+d+1=k(s+1)$.
    Hence, we have $|Q\setminus V(\widetilde{\mathcal M})|=|Q\cup V(\widetilde{\mathcal M})|-k|\widetilde{\mathcal M}|\ge k(s+1-|\widetilde{\mathcal M}|)$.
    Choose $k(s+1-|\widetilde{\mathcal M}|)$ of these unused vertices and partition them into $s+1-|\widetilde{\mathcal M}|$ clique edges.
    Together with $\widetilde{\mathcal M}$, they form an $(s+1)$-matching in $\mathcal H'$, a contradiction.
  \end{proof}

  Set $A=(k-1)^2\binom n{k-2}$.
  We next show that at most $\ell_F$ vertices $b\in F$ satisfy $a(b)>A$.
  Otherwise, choose distinct $b_1,\ldots,b_{\ell_F+1}\in F$ with $a(b_j)>A$.
  We can choose pairwise disjoint sets $L_j\in\mathcal L$ successively, with $\{b_j\}\cup L_j\in\mathcal H'$.
  Fix any integer $j$ with $1\le j\le\ell_F+1$ and any pairwise disjoint sets $L_1,\ldots,L_{j-1}\in\mathcal L$ satisfying $\{b_t\}\cup L_t\in\mathcal H'$ for every $1\le t<j$.
  Note that $j\le\ell_F+1\le k$, since the $b_j$'s are distinct vertices of the $k$-set $F$.
  At step $j$, the previously chosen $L_1,\ldots,L_{j-1}$ occupy at most $(j-1)(k-1)\le(k-1)^2$ vertices.
  For each such vertex $v$, at most $\binom{n-1}{k-2}\le\binom n{k-2}$ candidates counted by $a(b_j)$ contain $v$.
  Thus, at most $A$ sets counted by $a(b_j)$ meet $\bigcup_{t=1}^{j-1}L_t$, and $a(b_j)>A$ allows us to choose $L_j$ disjoint from $\bigcup_{t=1}^{j-1}L_t$.
  Every $L_j$ avoids $B$, so the $\ell_F+1$ edges $\{b_j\}\cup L_j$ form a matching in $\mathcal E_1$ in which every edge meets $F$, contradicting \cref{clm:large-clique-e1-matching}.
  Thus, $\sum_{b\in F}a(b)\le\ell_F|\mathcal L|+kA$ for every $F\in \mathcal M'$.

  Summing over $F\in \mathcal M'$, we obtain
  \[
    |\mathcal E_1|=\sum_{b\in B}a(b)
    =\sum_{F\in \mathcal M'}\sum_{b\in F}a(b)
    \le d|\mathcal L|+krA
    \le d|\mathcal L|+k(k-1)^2d\binom n{k-2}.
  \]
  Here $\sum_{F\in \mathcal M'}\ell_F=\sum_{F\in \mathcal M'}|F\setminus Q|=|T\setminus Q|=d$, since every vertex of $T\setminus Q$ lies in exactly one edge of $\mathcal M'$.
  The last inequality uses $r\le d$.
  Every $L\in\mathcal L$ meets $W\subseteq[n]\setminus Q$, so $|\mathcal L|\le\binom n{k-1}-\binom u{k-1}$.
  Therefore,
  \begin{equation}\label{eq:glp-single-intersections}
    |\mathcal E_1|
    \le d\left(\binom n{k-1}-\binom u{k-1}\right)
    +k(k-1)^2d\binom n{k-2}.
  \end{equation}

  Combining \eqref{eq:glp-missing-edges}, \eqref{eq:glp-multiple-intersections} and \eqref{eq:glp-single-intersections}, and using $\binom u{k-2}\le\binom n{k-2}$, we obtain
  \[
    C_k(s)-|\mathcal H'|
    \ge d\left[2\binom u{k-1}-\binom n{k-1}
      -\left(\left(k-1+\frac{k^2}{2}\right)d+k(k-1)^2\right)\binom n{k-2}\right].
  \]
  We compare the three binomial coefficients with $\binom{ks}{k-1}$.
  For $s\ge2k$, we have $ks-j\ge ks/2$ for $0\le j\le k-2$.
  Since $u\ge(1-\eta)ks$ and $ks-j\ge ks/2$ for
$0\le j\le k-2$, a direct calculation gives
\[
  \binom{u}{k-1}
  \ge (1-2(k-1)\eta)\binom{ks}{k-1}.
\]
  Using $\prod_{j=0}^{k-2}(1-j/(ks))\ge1-(k-1)(k-2)/(2ks)$ and $1/(1-a)\le1+2a$ for $0\le a\le1/2$, we obtain
  \[
    \binom n{k-1}\le\left[\left(\frac n{ks}\right)^{k-1}+\frac{2(k-1)}s\right]\binom{ks}{k-1}.
  \]
  Since $k(s+1)\le n\le(k+1/2)s$ and $s\ge2k$,
a direct calculation gives
\[
  \binom n{k-2}<\frac3s\binom{ks}{k-1}.
\]
Combining the three estimates above with $d\le2\eta ks$, we obtain
\[
  \begin{aligned}
    C_k(s)-|\mathcal H'|
    &\ge d\binom{ks}{k-1}
    \biggl[
      2-\left(\frac n{ks}\right)^{k-1}
      -\bigl(4(k-1)+6k(k-1)+3k^3\bigr)\eta\\
    &\qquad\qquad
      -\frac{2(k-1)+3k(k-1)^2}{s}
    \biggr]\\
    &\ge d\binom{ks}{k-1}
    \left[
      2-\left(\frac n{ks}\right)^{k-1}
      -5k^3\eta-\frac{3k^3}{s}
    \right].
  \end{aligned}
\]
The second inequality follows from
$4(k-1)+6k(k-1)+3k^3\le5k^3$ and
$2(k-1)+3k(k-1)^2\le3k^3$ for $k\ge3$.
  This proves \cref{fact:large-clique-deficit}.\qedhere
\end{proof}

\begin{proof}[Proof of \cref{lem:large-clique-emc}]
  Choose $0<\eta\le1/(64k^4)$, and then choose $s_0=s_0(k,\eta)$ such that $s_0\ge60k^4$ and $\eta ks_0\ge k-1$.
  Let $\mathcal H$ satisfy the hypotheses of \cref{lem:large-clique-emc}.

  Add edges to $\mathcal H$ while the matching number remains at most $s$, obtaining a maximal family $\mathcal H'$.
  Since $n\ge k(s+1)$, the family $\mathcal H'$ has a missing edge.
  By maximality, adding any missing edge creates an $(s+1)$-matching.
  Thus, $\nu(\mathcal H')=s$ and $\mathcal H'$ is $s$-saturated.
  Let $Q$ be a largest clique in $\mathcal H'$, set $u=|Q|$ and $d=ks+k-1-u$, and apply \cref{fact:large-clique-deficit}.
  Since $n/(ks)\le1+1/(2k)$, we have
  \[
    2-\left(\frac n{ks}\right)^{k-1}
    \ge2-\left(1+\frac1{2k}\right)^{k-1}
    >2-\mathrm{e}^{1/2}>0.
  \]
  By our choices of $\eta$ and $s_0$, we have $5k^3\eta+3k^3/s\le5/(64k)+1/(20k)<2-\mathrm{e}^{1/2}$.
  By \cref{fact:large-clique-deficit}, we therefore have $|\mathcal H|\le|\mathcal H'|\le C_k(s)$, with strict inequality whenever $d>0$.
  If $d=0$, then $|Q|=ks+k-1$.
  Any edge $E\in\mathcal H'$ not contained in $Q$ leaves at least $ks$ vertices of $Q$ unused.
  Partitioning $ks$ of these vertices into $s$ clique edges gives an $(s+1)$-matching together with $E$, a contradiction.
  Thus, $\mathcal H'=\binom Qk$, and equality forces $\mathcal H=\binom Qk$.
  Conversely, every clique family on $ks+k-1$ vertices has $C_k(s)$ members and satisfies the hypotheses.
\end{proof}

\section{Stability and Hilton--Milner refinements of the Erd\H{o}s matching bound}\label{sec:concluding}\label{sec:matching-consequences}
For fixed $k\ge3$ and sufficiently large $s$, we consider families
$\mathcal F\subseteq\binom{[n]}k$ with $\nu(\mathcal F)\le s$.
In \cref{sec:unified-stability}, we show that, when $n/s$ is bounded,
$|\mathcal F|\ge D_k(n,s)-\delta n^k$ with sufficiently small $\delta>0$
implies that $\mathcal F$ can be changed into a cover or clique family
by adding or deleting at most $\varepsilon n^k$ edges.
In \cref{sec:hm-proofs}, we determine the maximum size and all equality cases
when no $s$-set meets every edge, which excludes the cover families.
In \cref{sec:clique-hm-proof}, we instead require at least $k(s+1)$ non-isolated
vertices, which excludes the clique families, and determine the maximum size
and all equality cases for $k(s+1)\le n\le(\rho_k-\varepsilon)s$.

\subsection{A stability result for the Erd\H{o}s matching conjecture}\label{sec:unified-stability}
We use \cref{lem:matching-sequence} to prove the following unified stability result.
As in \cref{sec:emc}, write $D_k(n,s)=\max\{C_k(s),M_k(n,s)\}$.

\begin{proposition}\label{prop:unified-stability}
  Let $k\ge3$ be an integer and let $L>k$.
  For every $\varepsilon>0$ and $\eta>0$, there exist $\delta=\delta(k,L,\varepsilon,\eta)>0$ and an integer $s_0=s_0(k,L,\varepsilon,\eta)$ such that the following holds.
  Let $n$ and $s$ be integers with $s\ge s_0$ and $k(s+1)\le n\le Ls$, and let $\mathcal F\subseteq\binom{[n]}k$ satisfy
  \[
    \nu(\mathcal F)\le s
    \text{ and }
    |\mathcal F|\ge D_k(n,s)-\delta n^k.
  \]
  Then the following hold.
  \begin{enumerate}[label=\textup{(\roman*)},leftmargin=*,itemsep=0pt]
    \item If $n\ge(\rho_k+\eta)s$, then there is an $s$-set $S\subseteq[n]$ such that $|\mathcal F\triangle\mathcal A(S)|\le\varepsilon n^k$.
    \item If $n\le(\rho_k-\eta)s$, then there is a $(ks+k-1)$-set $Q\subseteq[n]$ such that $\left|\mathcal F\triangle\binom Qk\right|\le\varepsilon n^k$.
    \item If $(\rho_k-\eta)s<n<(\rho_k+\eta)s$, then there is either an $s$-set $S\subseteq[n]$ such that $|\mathcal F\triangle\mathcal A(S)|\le\varepsilon n^k$, or a $(ks+k-1)$-set $Q\subseteq[n]$ such that $\left|\mathcal F\triangle\binom Qk\right|\le\varepsilon n^k$.
  \end{enumerate}
\end{proposition}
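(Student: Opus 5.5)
I will argue by contradiction and reduce everything to \cref{lem:matching-sequence}, which is already stated in sequential form. Fix $k\ge3$, $L>k$, $\varepsilon>0$ and $\eta>0$, and suppose that no pair $(\delta,s_0)$ works for one of the three parts. Taking $\delta=1/i$ and $s_0=i$ for $i=1,2,\ldots$ then yields integers $s_i\ge i$ and $n_i$ with $k(s_i+1)\le n_i\le Ls_i$, together with families $\mathcal F_i\subseteq\binom{[n_i]}k$ satisfying $\nu(\mathcal F_i)\le s_i$ and $|\mathcal F_i|\ge D_k(n_i,s_i)-n_i^k/i$, each violating the conclusion of the relevant part. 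In particular $|\mathcal F_i|\ge D_k(n_i,s_i)-o(n_i^k)$. Since $n_i/s_i\in[k,L]$, I pass to a subsequence along which $n_i/s_i\to R\in[k,L]$. \Cref{lem:matching-sequence} applies to this subsequence because $s_i\to\infty$, $n_i\ge k(s_i+1)$ and $R<\infty$.

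The case analysis then follows the three parts.

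\emph{Part (iii).} Each counterexample has $\min\{d_B(\mathcal F_i),d_C(\mathcal F_i)\}>\varepsilon n_i^k$. This contradicts \cref{lem:matching-sequence}\textup{(i)}, which gives $\min\{d_B,d_C\}=o(n_i^k)$ whatever the value of $R$. Note that the window hypothesis on $n$ is never used here: for bounded $n/s$ the dichotomy holds in general.

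\emph{Part (i).} Here $n_i\ge(\rho_k+\eta)s_i$, so $R\ge\rho_k+\eta>\rho_k$. \Cref{lem:matching-sequence}\textup{(ii)} gives $d_B(\mathcal F_i)=o(n_i^k)$, which eventually falls below $\varepsilon n_i^k$, a contradiction.

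\emph{Part (ii).} Here $n_i\le(\rho_k-\eta)s_i$, so $k\le R\le\rho_k-\eta<\rho_k$. \Cref{lem:matching-sequence}\textup{(ii)} gives $d_C(\mathcal F_i)=o(n_i^k)$, again a contradiction. This covers the boundary value $R=k$ as well, since the lemma treats that case separately.

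There is little genuine obstacle, since the analytic work (weak compactness, \cref{thm:iid-tail}, and \cref{lem:approx-cover}) is already packaged in \cref{lem:matching-sequence}. The points needing care are the following.
\begin{itemize}
\item The condition $|\mathcal F_i|\ge D_k-\delta_in_i^k$ with $\delta_i\to0$ must be recognised as exactly the $o(n_i^k)$ hypothesis.
\item The conclusions of \cref{lem:matching-sequence} hold along the original sequence fed to it; since I apply the lemma to the subsequence itself, this causes no difficulty.
\item In parts (i) and (ii) the strict separation $|R-\rho_k|\ge\eta$ is what removes the ambiguity at $R=\rho_k$ that the lemma allows. The $\eta$-margin is essential here: without it $R$ could equal $\rho_k$, and the lemma would only give the dichotomy, not the specific construction.
\end{itemize}
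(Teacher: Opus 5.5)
Your proposal is correct and follows essentially the same route as the paper: a contradiction sequence with $\delta=1/i$ and $s_0=i$, a subsequence along which $n_i/s_i\to R\in[k,L]$, then \cref{lem:matching-sequence}\textup{(i)} for part (iii) and \cref{lem:matching-sequence}\textup{(ii)} for parts (i) and (ii). The one step you leave implicit is treating a single part at a time, and it is harmless because the hypotheses are monotone in $\delta$ and $s_0$ (take the smallest $\delta$ and the largest $s_0$ over the three parts); the paper achieves the same effect by passing to a subsequence on which either $n_i\ge(\rho_k+\eta)s_i$ for all $i$ or $n_i\le(\rho_k-\eta)s_i$ for all $i$.
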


\begin{proof}
  Suppose for a contradiction that \cref{prop:unified-stability} fails for some fixed $k$, $L$, $\varepsilon$ and $\eta$ as in the statement.
  Then there are integer pairs $(n_i,s_i)$ with $s_i\ge i$ and $k(s_i+1)\le n_i\le Ls_i$, and families $\mathcal F_i\subseteq\binom{[n_i]}k$ satisfying
  \[
    \nu(\mathcal F_i)\le s_i
    \text{ and }
    |\mathcal F_i|\ge D_k(n_i,s_i)-\frac{n_i^k}{i},
  \]
  such that the conclusion of \cref{prop:unified-stability} fails for every $i$.

  Since $n_i/s_i$ is bounded, after passing to a subsequence, we may assume that $n_i/s_i\to R$ for some $R\in[k,L]$.
  By \cref{lem:matching-sequence}\textup{(i)}, we have $\min\{d_B(\mathcal F_i),d_C(\mathcal F_i)\}=o(n_i^k)$.
  Since $\varepsilon>0$, for every sufficiently large $i$, at least one of these distances is at most $\varepsilon n_i^k$.
  Thus, \textup{(iii)} of \cref{prop:unified-stability} holds whenever $(\rho_k-\eta)s_i<n_i<(\rho_k+\eta)s_i$.
  Since every $\mathcal F_i$ is a counterexample, after omitting finitely many initial terms and passing to a further subsequence, we may assume that either $n_i\ge(\rho_k+\eta)s_i$ for every $i$ or $n_i\le(\rho_k-\eta)s_i$ for every $i$.
  In the first case, failure of the cover conclusion means that $d_B(\mathcal F_i)>\varepsilon n_i^k$.
  But $R\ge\rho_k+\eta>\rho_k$, so \cref{lem:matching-sequence}\textup{(ii)} implies $d_B(\mathcal F_i)=o(n_i^k)$, a contradiction.
  In the second case, failure of the clique conclusion means that $d_C(\mathcal F_i)>\varepsilon n_i^k$.
  But $k\le R\le\rho_k-\eta<\rho_k$, so \cref{lem:matching-sequence}\textup{(ii)} implies $d_C(\mathcal F_i)=o(n_i^k)$, again a contradiction.
  This proves \cref{prop:unified-stability}.
\end{proof}

\subsection{A cover-type Hilton--Milner refinement}\label{sec:hm-proofs}

For an $s$-set $S\subseteq[n]$ and a $(ks+k-1)$-set $Q\subseteq[n]$, the cover family $\mathcal A(S)$ and the clique family $\binom Qk$ prevent an $(s+1)$-matching in different ways.
In the cover family, every edge meets a fixed $s$-set, while the clique family has only $ks+k-1$ non-isolated vertices.
We first consider how large a family with matching number at most $s$ can be if it is not contained in any cover family.
Writing $\tau(\mathcal F)$ for the minimum size of a vertex set meeting every edge of $\mathcal F$, we therefore seek the maximum size under $\nu(\mathcal F)\le s<\tau(\mathcal F)$.
For $s=1$, this is equivalent to the problem of nontrivial intersecting families studied by Hilton and Milner~\cite{HiltonMilner}.

\subsubsection{Extremal constructions and statement}\label{sec:hm-constructions}

We first describe the extremal constructions.
Under the condition that $\nu(\mathcal F)\le s<\tau(\mathcal F)$, the clique family $\binom Qk$ remains an admissible lower bound construction since its cover number is $ks>s$.
A competing construction is the following family $\mathcal H(S_0,u,E_0)$, introduced by Bollob\'as, Daykin and Erd\H{o}s~\cite{BDE}.
For integers $k\ge3$, $s\ge1$ and $n\ge k(s+1)$, choose an $(s-1)$-set $S_0\subseteq[n]$ and set $V=[n]\setminus S_0$.
Choose $u\in V$ and $E_0\in\binom{V\setminus\{u\}}k$, and define
\[
  \mathcal H(S_0,u,E_0)
  =\mathcal A(S_0)\cup\{E_0\}
  \cup\left\{E\in\binom Vk:u\in E,\ E\cap E_0\ne\emptyset\right\}.
\]
The edges avoiding $S_0$ are pairwise intersecting and have no common vertex.
A matching therefore has at most one edge avoiding $S_0$ and at most $s-1$ edges meeting it, so $\nu(\mathcal H(S_0,u,E_0))\le s$.
Equivalently, $\mathcal H(S_0,u,E_0)$ can be obtained from $\mathcal A(S_0\cup\{u\})$ by deleting the $\binom{n-s-k}{k-1}$ edges through $u$ that avoid $S_0\cup E_0$ and adding $E_0$.
Consequently, its size is
\[
  |\mathcal H(S_0,u,E_0)|
  =\mathcal{HM}_k(n,s)
  :=M_k(n,s)-\binom{n-s-k}{k-1}+1.
\]
The family $\mathcal H(S_0,u,E_0)$ is a
Hilton--Milner-type construction.
Moreover, $\tau(\mathcal H(S_0,u,E_0))=s+1$.
Indeed, $S_0\cup\{u,v\}$ is a cover for any $v\in E_0$.
Any cover $X$ of size at most $s$ must contain $S_0$.
Otherwise, since $n-|X|\ge n-s\ge k$, there is an edge through a vertex of $S_0\setminus X$ that avoids $X$, a contradiction.
But then $X\cap V$ has size at most one and cannot cover the edges avoiding $S_0$, which have no common vertex.

When $k=3$, the third construction occurs.
For $T\in\binom V3$, define
\[
  \mathcal T(S_0,T)
  =\mathcal A(S_0)\cup
  \left\{E\in\binom V3:|E\cap T|\ge2\right\},
\]
where $S_0$ is again an $(s-1)$-set in $[n]$ and $V=[n]\setminus S_0$.
The triples in $V$ meeting $T$ in at least two vertices are pairwise intersecting and have cover number two.
Thus, $\nu(\mathcal T(S_0,T))\le s$.
Any cover of size at most $s$ would again have to contain $S_0$, leaving at most one vertex to cover the triples in $V$.
Hence, $\tau(\mathcal T(S_0,T))=s+1$.
Since $|V|=n-s+1$, we have
\[
  |\mathcal T(S_0,T)|
  =\binom n3-\binom{n-s+1}3+3(n-s-2)+1
  =\mathcal{HM}_3(n,s).
\]

Write $\mathcal R_k(n,s)$ for the collection of all families $\mathcal H(S_0,u,E_0)$, together with the families $\mathcal T(S_0,T)$ when $k=3$.
As shown above, every member has size $\mathcal{HM}_k(n,s)$, matching number at most $s$, and cover number $s+1$.

For $s=1$, the theorem of Hilton and Milner~\cite{HiltonMilner} establishes the sharp bound $|\mathcal F|\le\mathcal{HM}_k(n,1)$ whenever $k\ge3$ and $n>2k$.
For general $s$, \cref{thm:bde} implies $|\mathcal F|\le\mathcal{HM}_k(n,s)$ whenever $k\ge3$ and $n>2k^3s$.
For $k\ge3$ and $\nu(\mathcal F)=s<\tau(\mathcal F)$, Frankl and Kupavskii~\cite{FranklKupavskii2019} proved $|\mathcal F|\le\mathcal{HM}_k(n,s)$ when $n\ge(s+\max\{25,2s+2\})k$, and also when $n\ge(2+o(1))ks$ as $s\to\infty$, with $o(1)$ depending only on $s$.
For the full range $n\ge k(s+1)$, the problem remains open in general.

Our results determine the sharp bound and all equality cases throughout the full range $n\ge k(s+1)$, for every fixed $k\ge3$ and all sufficiently large $s$.

\begin{theorem}\label{cor:hilton-milner}
  For every fixed integer $k\ge3$, there is an integer $s_{\mathcal{HM}}(k)$ such that, for all integers $s\ge s_{\mathcal{HM}}(k)$ and $n\ge k(s+1)$, every family $\mathcal F\subseteq\binom{[n]}k$ with $\nu(\mathcal F)\le s<\tau(\mathcal F)$ satisfies
  \[
    |\mathcal F|\le\max\{C_k(s),\mathcal{HM}_k(n,s)\}.
  \]
  Equality holds if and only if one of the following statements holds.
  \begin{enumerate}[label=\textup{(\roman*)},leftmargin=*,itemsep=0pt]
    \item $C_k(s)>\mathcal{HM}_k(n,s)$ and $\mathcal F=\binom Qk$ for some $(ks+k-1)$-set $Q\subseteq[n]$.
    \item $C_k(s)<\mathcal{HM}_k(n,s)$ and $\mathcal F\in\mathcal R_k(n,s)$.
    \item $C_k(s)=\mathcal{HM}_k(n,s)$, $\mathcal F=\binom Qk$ for some $(ks+k-1)$-set $Q\subseteq[n]$ or $\mathcal F\in\mathcal R_k(n,s)$.
  \end{enumerate}
\end{theorem}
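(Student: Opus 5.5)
We follow the same compactness-plus-local-analysis scheme as in the proof of \cref{thm:emc}. The aim is to show that, for large $s$, every $\mathcal F$ with $\nu(\mathcal F)\le s<\tau(\mathcal F)$ and $|\mathcal F|\ge\max\{C_k(s),\mathcal{HM}_k(n,s)\}$ lies in $\mathcal R_k(n,s)$ or is a clique family.

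The sparse range $n>2k^3s$ is handled by \cref{thm:bde}, which gives $|\mathcal F|\le\mathcal{HM}_k(n,s)$. For the equality cases there we need the known characterization of equality in the Bollob\'as--Daykin--Erd\H{o}s/Frankl--Kupavskii setting. If no such statement is available, we rederive it with the greedy argument sketched below, using vertices of huge degree.

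Suppose the claim fails along a sequence with $s_i\to\infty$ and $n_i/s_i\to R_*\in[k,2k^3]$. Since $\mathcal{HM}_k(n,s)=M_k(n,s)-O(n^{k-1})$, we have $\max\{C_k,\mathcal{HM}_k\}\ge D_k(n,s)-o(n^k)$. \Cref{lem:matching-sequence} then puts $\mathcal F_i$ within $o(n_i^k)$ of a cover family $\mathcal A(S_i)$ (with $R_*\ge\rho_k$) or of a clique family (with $R_*\le\rho_k$).

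\textbf{Clique case.} We need a strengthening of \cref{lem:local-clique}: near a clique family with $n\le(k+1/2)s$, either $\mathcal F$ is a clique family or $|\mathcal F|$ falls short of $C_k(s)$ by at least $c\,s^{k-1}$. This comes from \cref{fact:large-clique-deficit}: after shifting and saturating, $d\ge1$ forces a deficit of order $d\binom{ks}{k-1}$, and we must check that shifting inside $Q$ preserves the relevant structure. Near $R_*=\rho_k$ the competitor $\mathcal{HM}_k$ is only $O(n^{k-1})$ below $M_k$, so comparing $C_k-\Omega(s^{k-1})$ with $\mathcal{HM}_k$ requires care. If $\mathcal F$ is not a clique family, then $|\mathcal F|\le C_k(s)-\binom{ks}{k-1}\cdot\Omega(1)$, which is strictly less than $\max\{C_k,\mathcal{HM}_k\}$ unless $\mathcal{HM}_k>C_k-\Omega(s^{k-1})$. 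In that regime $R_*=\rho_k$, and we fall back on the cover argument below; the cover argument must therefore also tolerate families that are close to both constructions.

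\textbf{Cover case.} We adapt \cref{lem:local-cover}. Let $B\subseteq S_i$ be the set of low-degree vertices, $U=([n]\setminus S_i)\cup B$, so that $\nu(\mathcal F[U])\le|B|$, and every edge meets $(S_i\setminus B)\cup U$-edges. Because $\tau(\mathcal F)>s$, the family $\mathcal F[U]$ has $\tau(\mathcal F[U])>|B|$ whenever all edges through $S_i\setminus B$ are present. Applying \cref{thm:bde} to $\mathcal F[U]$ (valid since $|B|\le|U|/(4k^3)$) loses at least $\binom{|U|-|B|-k}{k-1}-1$ edges. If instead $\mathcal F[U]$ has a $|B|$-cover $B'$, then $X=(S_i\setminus B)\cup B'$ is an $s$-set and some edge $E_0$ avoids $X$. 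We then count the edges meeting $X$ that are forced missing. Fixing an $(s-1)$-subset of $X$ disjoint from $E_0$ and using the greedy matching extension, each vertex of $X$ whose edges avoid $X\setminus\{v\}\cup E_0$ loses at least $\binom{n-s-k}{k-1}$ edges unless $X$ has a special structure. This yields $|\mathcal F|\le\mathcal{HM}_k(n,s)$, with equality only for $\mathcal H(S_0,u,E_0)$, or, when $k=3$, for $\mathcal T(S_0,T)$.

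\textbf{Main obstacle.} The hardest step is this exact stability analysis in the cover case: pinning down equality, including the extra triangle-type family for $k=3$, uniformly down to $n\approx\rho_k s$. Following Frankl--Kupavskii, we would first reduce to the case where exactly one vertex $u$ of $X$ has a ``defective'' link. We then analyze the link of $u$ in $V\setminus S_0$ as an intersecting-type $(k-1)$-graph constrained against $E_0$, and apply Hilton--Milner for links: a $(k-1)$-graph forced to meet all edges avoiding $S_0$. That argument produces the $u\in E$, $E\cap E_0\neq\emptyset$ structure, or the $|E\cap T|\ge2$ structure when $k=3$.
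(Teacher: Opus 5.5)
Your skeleton is the paper's: the sparse range via \cref{thm:bde}, compactness via \cref{lem:matching-sequence}, then the local cover and clique lemmas. However, you have put the difficulty in the wrong place, and the ingredient that is actually needed is only asserted.

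\textbf{Cover case.} You miss a one-line observation: $\tau(\mathcal F[U])>|B|$ holds \emph{unconditionally}, not only when all edges through $S\setminus B$ are present. Every edge of $\mathcal F$ that avoids $S\setminus B$ lies in $\binom Uk$. So a cover $T$ of $\mathcal F[U]$ with $|T|\le|B|$ would make $T\cup(S\setminus B)$ a cover of $\mathcal F$ of size at most $s$, which is impossible. Consequently your branch ``$\mathcal F[U]$ has a $|B|$-cover $B'$ and some edge $E_0$ avoids $X=(S\setminus B)\cup B'$'' is empty, since $X$ itself covers $\mathcal F$. The Frankl--Kupavskii-style link analysis down to $n\approx\rho_k s$ that you call the main obstacle is therefore never needed. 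The same observation gives $|B|\ge1$.

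What \emph{is} needed is the equality case of \cref{thm:bde} in the sparse regime. If $N\ge4k^3b$, $\nu(\mathcal J)\le b<\tau(\mathcal J)$ and $|\mathcal J|=\mathcal{HM}_k(N,b)$, then $\mathcal J\in\mathcal R_k(N,b)$. You leave this to an unspecified ``known characterization''. The paper proves it (\cref{lem:sparse-hm}) by induction on $b$:
\begin{enumerate}[label=\textup{(\arabic*)},leftmargin=*,itemsep=0pt]
  \item Averaging degrees over the vertex set of a maximal matching yields $v$ with $d_{\mathcal J}(v)>kb\binom{N-2}{k-2}$. This forces $\nu(\mathcal J-v)\le b-1$, while trivially $\tau(\mathcal J-v)>b-1$.
  \item The identity $\binom{N-1}{k-1}+\mathcal{HM}_k(N-1,b-1)=\mathcal{HM}_k(N,b)$ shows that equality makes $v$ a full star and $\mathcal J-v$ extremal.
  \item The base case $b=1$ is the Hilton--Milner theorem with its equality case (the paper cites Han--Kohayakawa). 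This is exactly where $\mathcal T(S_0,T)$ enters for $k=3$.
\end{enumerate}
Applied to $\mathcal F[U]$, equality in $|\mathcal F|\le\binom nk-\binom Nk+|\mathcal F[U]|\le\mathcal{HM}_k(n,s)$ forces every $k$-set meeting $S\setminus B$ into $\mathcal F$ and gives $\mathcal F[U]\in\mathcal R_k(N,b)$. Adding $S\setminus B$ to the $(b-1)$-set then gives $\mathcal F\in\mathcal R_k(n,s)$. This induction uses $N\ge4k^3b$, so take your sparse threshold to be $4k^3s$, not $2k^3s$.

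\textbf{Clique case.} The strengthening you propose is false as stated. The family $\binom Qk$ minus one edge is within $o(n^k)$ of a clique family and satisfies $\nu\le s<\tau$. It is not a clique family, yet it falls short of $C_k(s)$ by only one.

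No strengthening is needed, however. Since $|\mathcal F|\ge\max\{C_k(s),\mathcal{HM}_k(n,s)\}\ge C_k(s)$, the exact bound and equality case of \cref{lem:local-clique} give $|\mathcal F|=C_k(s)$ and $\mathcal F=\binom{Q'}k$ at once. There is no delicate comparison with $\mathcal{HM}_k$ near $R_*=\rho_k$. The cover argument also never needs to handle families close to both constructions.
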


\subsubsection{The Hilton--Milner bound in the sparse range}

We will use the following lemma in the proof of \cref{cor:hilton-milner}.

\begin{lemma}\label{lem:sparse-hm}
  Let $k\ge3$, $n\ge k$, and $b\ge1$ be integers with $b\le n/(4k^3)$.
  If $\mathcal J\subseteq\binom{[n]}k$ satisfies $\nu(\mathcal J)\le b<\tau(\mathcal J)$, then $|\mathcal J|\le\mathcal{HM}_k(n,b)$.
  Equality holds if and only if $\mathcal J\in\mathcal R_k(n,b)$.
\end{lemma}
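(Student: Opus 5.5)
The bound itself is immediate from \cref{thm:bde}. Since $b\le n/(4k^3)$, we have $n\ge4k^3b>2k^3b$. The hypothesis $\tau(\mathcal J)>b$ says that no $b$-set meets every edge, so \cref{thm:bde} gives $|\mathcal J|\le M_k(n,b)-\binom{n-b-k}{k-1}+1=\mathcal{HM}_k(n,b)$. For the converse direction of the equality statement, the discussion in \cref{sec:hm-constructions} already shows that every member of $\mathcal R_k(n,b)$ has size $\mathcal{HM}_k(n,b)$, matching number at most $b$, and cover number $b+1>b$. All the work is therefore in the structural statement: equality forces $\mathcal J\in\mathcal R_k(n,b)$.

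For the structure, I would first try to extract it from the proof of Bollob\'as, Daykin and Erd\H{o}s rather than from the statement of \cref{thm:bde}. If that is not usable, I would reduce to the $s=1$ Hilton--Milner theorem via a kernel argument. Assume $|\mathcal J|=\mathcal{HM}_k(n,b)$ and call a vertex \emph{heavy} if its degree exceeds $k(k-1)b\binom{n-2}{k-2}$ or so. The argument then has four steps.

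\begin{enumerate}
\item Any $b$ heavy vertices, together with a matching in the non-heavy part, extend greedily. This gives at most $b$ heavy vertices.
\item A counting argument shows there are at least $b-1$ heavy vertices. If there were fewer, the edges meeting no heavy vertex would be covered by few low-degree vertices, so $|\mathcal J|$ would fall short of $\mathcal{HM}_k(n,b)\approx b\binom{n-1}{k-1}$ by order $\binom{n-1}{k-1}$, since $n\ge4k^3b$ makes the loss from each non-heavy vertex dominant.
\item If there are exactly $b$ heavy vertices, the greedy extension shows that every edge meets them, contradicting $\tau(\mathcal J)>b$. Hence there are exactly $b-1$ heavy vertices; call this set $S_0$.
\item Since $S_0$ has $b-1$ elements, the greedy extension through $S_0$ shows that the family $\mathcal J'$ of edges avoiding $S_0$, living on $V=[n]\setminus S_0$ with $|V|=n-b+1>2k$, is intersecting. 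Because $\tau(\mathcal J)>b$, $\mathcal J'$ has no common vertex, so it is a nontrivial intersecting family. Consequently $|\mathcal J|\le|\mathcal A(S_0)|+|\mathcal J'|$, and the Hilton--Milner theorem bounds $|\mathcal J'|\le\binom{n-b}{k-1}-\binom{n-b-k}{k-1}+1$, which sums exactly to $\mathcal{HM}_k(n,b)$.
\end{enumerate}

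Equality then forces $\mathcal A(S_0)\subseteq\mathcal J$ and forces $\mathcal J'$ to be a Hilton--Milner extremal family on $V$. By the classical equality case of Hilton and Milner (for $k\ge4$, or $k=3$ with $|V|>6$), such a family is either $\{E_0\}\cup\{E\ni u:E\cap E_0\ne\emptyset\}$, or, when $k=3$, the triangle family $\{E:|E\cap T|\ge2\}$. These give exactly $\mathcal H(S_0,u,E_0)$ and $\mathcal T(S_0,T)$, so $\mathcal J\in\mathcal R_k(n,b)$.

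I expect step 2, showing there are at least $b-1$ heavy vertices with exact equality in mind, to be the main obstacle. It needs a careful count showing that a configuration with at most $b-2$ heavy vertices loses at least about $\binom{n-b}{k-1}/2$ edges relative to $\mathcal{HM}_k(n,b)$, using $n\ge4k^3b$. I would try to borrow the corresponding estimate from the Bollob\'as--Daykin--Erd\H{o}s argument or from the Frankl--Kupavskii treatment of $\nu=s<\tau$.
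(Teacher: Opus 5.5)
Your architecture is sound: locate $b-1$ heavy vertices, then apply Hilton--Milner with its equality case once on $V=[n]\setminus S_0$. Steps 1, 3 and 4 are fine. But Step 2 is a genuine gap. It is the step that produces $S_0$, and you give only a heuristic for it.

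Worse, with your threshold $d_{\mathcal J}(v)>\theta\binom{n-2}{k-2}$, $\theta=k(k-1)b$, the natural count does not close. Let $H$ be the set of heavy vertices, $h=|H|\le b-2$, and let $\mathcal J''$ consist of the edges avoiding $H$.
\begin{itemize}
\item Greedy extension gives $\nu(\mathcal J'')\le b-h$. So every edge of $\mathcal J''$ meets the at most $k(b-h)$ non-heavy vertices of a maximal matching of $\mathcal J''$, and $|\mathcal J''|\le k(b-h)\theta\binom{n-2}{k-2}$.
\item The available slack $\mathcal{HM}_k(n,b)-\sum_{i=1}^{h}\binom{n-i}{k-1}$ is only about $(b-h-1)\binom{n-1}{k-1}$.
\end{itemize}
For $b-h=2$ and $n=4k^3b$, the comparison needs roughly $2k(k-1)\theta<n$, i.e.\ $(k-1)^2<2k$. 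This fails for every $k\ge4$. So the estimate cannot be waved through or borrowed without checking constants.

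The repair is to lower the threshold to $d_{\mathcal J}(v)>kb\binom{n-2}{k-2}$. This still suffices for every greedy extension you use. When you add the edge through the next heavy vertex to a partial matching aimed at $b+1$ edges, you must avoid the vertices of the edges already chosen plus the heavy vertices still to be used. That is at most $kb$ vertices, and each lies in at most $\binom{n-2}{k-2}$ edges through the current vertex. Then $|\mathcal J''|\le k^2b(b-h)\binom{n-2}{k-2}$, while $b-h\ge2$ and $\binom{n-b}{k-1}\ge\frac12\binom{n-1}{k-1}$ give
\[
  \mathcal{HM}_k(n,b)-\sum_{i=1}^{h}\binom{n-i}{k-1}
  >(b-h-1)\binom{n-b}{k-1}
  \ge\frac{(b-h)(n-1)}{4(k-1)}\binom{n-2}{k-2}.
\]
Since $n-1>4k^2(k-1)b$, this yields $|\mathcal J|<\mathcal{HM}_k(n,b)$ whenever $h\le b-2$.

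Completed this way, your route works. Because the cases $h\le b-2$, $h=b-1$, $h\ge b$ are exhaustive, it even reproves the bound of \cref{thm:bde} in this range from Hilton--Milner alone.

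The paper avoids your Step 2 by inducting on $b$. Under equality it needs only one heavy vertex $v$, obtained by averaging degrees over the at most $kb$ vertices of a maximal matching (which meet every edge). It then shows $\nu(\mathcal J-v)\le b-1<\tau(\mathcal J-v)$. Finally it applies \cref{thm:bde} at $(n-1,b-1)$ together with $\mathcal{HM}_k(n,b)=\binom{n-1}{k-1}+\mathcal{HM}_k(n-1,b-1)$, invoking Hilton--Milner uniqueness only at $b=1$. That trades your multi-vertex count for a one-line averaging, at the cost of using \cref{thm:bde} at every level of the induction.
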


\begin{proof}
  Since $n\ge4k^3b>2k^3b$, \cref{thm:bde} implies $|\mathcal J|\le\mathcal{HM}_k(n,b)$.
  We now determine the equality cases by induction on $b$.
  When $b=1$, we have $n>2k$, so equality implies $\mathcal J\in\mathcal R_k(n,1)$ by~\cite[Theorem~1.2]{HanKohayakawa}.

  Suppose $b\ge2$, $\nu(\mathcal J)\le b<\tau(\mathcal J)$, and $|\mathcal J|=\mathcal{HM}_k(n,b)$.
  We first find a vertex $v$ with $d_{\mathcal J}(v)>kb\binom{n-2}{k-2}$, and then prove $\nu(\mathcal J-v)\le b-1$.
  To this end, let $\mathcal M$ be a maximal matching in $\mathcal J$ and set $U=\bigcup_{E\in\mathcal M}E$.
  Since $|\mathcal M|\le b$, we have $|U|=k|\mathcal M|\le kb$.
  Every edge of $\mathcal J$ meets $U$, since otherwise it could be added to $\mathcal M$, contradicting maximality.
  Thus, counting incidences between $U$ and the edges of $\mathcal J$, we obtain
  \[
    \sum_{u\in U}d_{\mathcal J}(u)
    =\sum_{E\in\mathcal J}|E\cap U|
    \ge|\mathcal J|.
  \]
  By averaging, there is a vertex $v\in U$ such that
  \[
    d_{\mathcal J}(v)\ge\frac{|\mathcal J|}{|U|}
    \ge\frac{|\mathcal J|}{kb}
    =\frac{\mathcal{HM}_k(n,b)}{kb}>\frac{b-1}{kb}\binom{n-b+1}{k-1}
    \ge\frac1{2k}\binom{n-b+1}{k-1},
  \]
  where the strict inequality follows from
  \[
    \mathcal{HM}_k(n,b)
    =\mathcal{HM}_k(n-b+1,1)+\sum_{i=1}^{b-1}\binom{n-i}{k-1},
  \]
  obtained from the definition of $\mathcal{HM}_k(n,s)$.
  Calculation shows that
  \[
    \begin{aligned}
      \binom{n-b+1}{k-1}
       & =\binom{n-1}{k-1}-\sum_{i=1}^{b-2}\binom{n-i-1}{k-2}   \\
       & \ge\binom{n-1}{k-1}-(b-2)\binom{n-2}{k-2}              \\
       & =\left(1-\frac{(b-2)(k-1)}{n-1}\right)\binom{n-1}{k-1}
      \ge\frac12\binom{n-1}{k-1},
    \end{aligned}
  \]
  where the last inequality follows from $n\ge4k^3b$.
  Consequently,
  \begin{equation}\label{eq:sparse-hm-high-degree}
    d_{\mathcal J}(v)
    >\frac1{4k}\binom{n-1}{k-1}
    =\frac{n-1}{4k(k-1)}\binom{n-2}{k-2}
    >kb\binom{n-2}{k-2}.
  \end{equation}
  The last inequality follows from $n-1\ge4k^3b-1>4k^2(k-1)b$.

  Let $\mathcal J-v=\{E\in\mathcal J:v\notin E\}$.
  We claim that $\nu(\mathcal J-v)\le b-1$.
  Suppose for a contradiction that $\mathcal J-v$ contains a matching $\mathcal N$ of $b$ edges.
  Let $W=\bigcup_{E\in\mathcal N}E$.
  Then $|W|=kb$ and $v\notin W$.
  For each $w\in W$, at most $\binom{n-2}{k-2}$ edges of $\mathcal J$ contain both $v$ and $w$.
  Thus, at most $kb\binom{n-2}{k-2}$ edges of $\mathcal J$ contain $v$ and meet $W$.
  By \eqref{eq:sparse-hm-high-degree}, there exists an edge $F\in\mathcal J$ that contains $v$ and is disjoint from $W$.
  Then $\mathcal N\cup\{F\}$ is a matching of $b+1$ edges in $\mathcal J$, contradicting $\nu(\mathcal J)\le b$.
  Thus, $\nu(\mathcal J-v)\le b-1$.
  Also, $\tau(\mathcal J-v)>b-1$, since a cover of size at most $b-1$ together with $v$ would cover $\mathcal J$ with at most $b$ vertices.
  Since $n-1\ge4k^3(b-1)>2k^3(b-1)$, \cref{thm:bde} applies to $\mathcal J-v$, and hence
  \[
    |\mathcal J|
    =d_{\mathcal J}(v)+|\mathcal J-v|
    \le\binom{n-1}{k-1}+\mathcal{HM}_k(n-1,b-1)
    =\mathcal{HM}_k(n,b).
  \]
  Since $|\mathcal J|=\mathcal{HM}_k(n,b)$, equality holds throughout.
  Thus, $\mathcal J$ contains every $k$-set through $v$, and $\mathcal J-v$ attains the bound $\mathcal{HM}_k(n-1,b-1)$.
  By induction, $\mathcal J-v\in\mathcal R_k(n-1,b-1)$ after relabeling.
  Adding all $k$-sets through $v$ replaces $S_0$ by $S_0\cup\{v\}$ in the construction $\mathcal H(S_0,u,E_0)$, or in $\mathcal T(S_0,T)$ when $k=3$.
  Hence, $\mathcal J\in\mathcal R_k(n,b)$.
  Every family in $\mathcal R_k(n,b)$ attains equality, as verified in \cref{sec:hm-constructions}.
  This completes the proof.
\end{proof}

\subsubsection{Proof of Theorem~\ref{cor:hilton-milner}}

Fix $k\ge3$ and choose $\eta>0$ so that $k<\rho_k-\eta<\rho_k+\eta<k+1/2$.
Choose $\varepsilon>0$ sufficiently small for \cref{lem:local-cover} with $R=\rho_k-\eta$ and $L=4k^3$, for \cref{lem:local-clique}, and for the comparisons below.
Let $s$ be sufficiently large in terms of $k$ and let $\mathcal F$ satisfy the hypotheses of \cref{cor:hilton-milner} with $|\mathcal F|\ge\max\{C_k(s),\mathcal{HM}_k(n,s)\}$.
We show that $\mathcal F$ is either a clique family or a member of $\mathcal R_k(n,s)$.
This holds by \cref{lem:sparse-hm} when $n\ge4k^3s$, so assume $n<4k^3s$ from now on.

Direct calculation shows that
\[
  \begin{aligned}
    0 & \le D_k(n,s)-\max\{C_k(s),\mathcal{HM}_k(n,s)\} \\
      & \le M_k(n,s)-\mathcal{HM}_k(n,s)
    =\binom{n-s-k}{k-1}-1\le n^{k-1},
  \end{aligned}
\]
and thus $|\mathcal F|\ge D_k(n,s)-n^{k-1}=D_k(n,s)-o(n^k)$.
By \cref{prop:unified-stability}, there is either an $s$-set $S\subseteq[n]$ with $|\mathcal F\triangle\mathcal A(S)|\le\varepsilon n^k$, or a $(ks+k-1)$-set $Q\subseteq[n]$ with $\left|\mathcal F\triangle\binom Qk\right|\le\varepsilon n^k$.

\medskip
\noindent{\bf Case 1.} There is an $s$-set $S\subseteq[n]$ with $|\mathcal F\triangle\mathcal A(S)|\le\varepsilon n^k$.
\medskip

We first show that $n\ge(\rho_k-\eta)s$ in this case.
If $n<(\rho_k-\eta)s$, then \cref{fact:construction-densities} and the definition of $\rho_k$ imply $C_k(s)>M_k(n,s)+\varepsilon n^k$ for sufficiently small $\varepsilon$ and sufficiently large $s$.
This contradicts
\[
  C_k(s)\le|\mathcal F|
  \le|\mathcal A(S)|+\varepsilon n^k
  =M_k(n,s)+\varepsilon n^k.
\]
By \cref{lem:local-cover}, there is a set $B\subseteq S$ such that $\nu(\mathcal F[U])\le b\le N/(4k^3)$,
where $b=|B|$, $U=([n]\setminus S)\cup B$, and $N=|U|=n-s+b$.
Also, $N\ge n-s\ge k$.
We claim that $\tau(\mathcal F[U])>b$.
Suppose for a contradiction that $\mathcal F[U]$ has a cover $T\subseteq U$ with $|T|\le b$.
Then $T\cup(S\setminus B)$ covers $\mathcal F$, since every member of $\mathcal F$ avoiding $S\setminus B$ belongs to $\mathcal F[U]$.
But $|T\cup(S\setminus B)|\le b+(s-b)=s$, contradicting $\tau(\mathcal F)>s$.
In particular, $b\ge1$, since $b=0$ would force $\mathcal F[U]=\emptyset$.
Applying \cref{lem:sparse-hm} to $\mathcal F[U]$, we obtain $|\mathcal F[U]|\le\mathcal{HM}_k(N,b)$.
Every member of $\mathcal F$ either meets $S\setminus B$ or belongs to $\mathcal F[U]$, so
\[
  \begin{aligned}
    \max\{C_k(s),\mathcal{HM}_k(n,s)\}
     & \le|\mathcal F|                           \\
     & \le\binom nk-\binom Nk+|\mathcal F[U]|    \\
     & \le\binom nk-\binom Nk+\mathcal{HM}_k(N,b)
    =\mathcal{HM}_k(n,s),
  \end{aligned}
\]
where the last equality uses $N-b=n-s$.
Equality holds throughout.
Thus, every $k$-set meeting $S\setminus B$ belongs to $\mathcal F$.
Moreover, \cref{lem:sparse-hm} implies $\mathcal F[U]\in\mathcal R_k(N,b)$ after relabeling $U$.
Hence, $\mathcal F\in\mathcal R_k(n,s)$, with the $(b-1)$-set in the construction of $\mathcal F[U]$ enlarged to an $(s-1)$-set by adding $S\setminus B$.

\medskip
\noindent{\bf Case 2.} There is a $(ks+k-1)$-set $Q\subseteq[n]$ with $\left|\mathcal F\triangle\binom Qk\right|\le\varepsilon n^k$.
\medskip

We first show that $n\le(\rho_k+\eta)s$.
If $n>(\rho_k+\eta)s$, then \cref{fact:construction-densities}, the definition of $\rho_k$, and $M_k(n,s)-\mathcal{HM}_k(n,s)\le n^{k-1}$ imply $\mathcal{HM}_k(n,s)>C_k(s)+\varepsilon n^k$ for sufficiently small $\varepsilon$ and sufficiently large $s$.
This contradicts
\[
  \mathcal{HM}_k(n,s)\le|\mathcal F|
  \le\left|\binom Qk\right|+\varepsilon n^k
  =C_k(s)+\varepsilon n^k.
\]
Since $n\le(\rho_k+\eta)s<(k+1/2)s$, \cref{lem:local-clique} implies that
\[
  \max\{C_k(s),\mathcal{HM}_k(n,s)\}
  \le|\mathcal F|\le C_k(s).
\]
Again equality holds throughout, and the equality case of \cref{lem:local-clique} shows that $\mathcal F$ is a clique family.

\medskip
Thus, in either case, $|\mathcal F|=\max\{C_k(s),\mathcal{HM}_k(n,s)\}$, and $\mathcal F$ has one of the forms stated in \cref{cor:hilton-milner}.
This completes the proof of \cref{cor:hilton-milner}.

\subsection{A clique-type Hilton--Milner refinement}\label{sec:clique-hm-proof}

\Cref{cor:hilton-milner} refines the Erd\H{o}s matching bound by imposing $\tau(\mathcal F)>s$, which excludes the cover families but leaves the clique families admissible.
For a family $\mathcal F\subseteq\binom{[n]}k$, write $V^+(\mathcal F)=\bigcup_{E\in\mathcal F}E$ for its set of non-isolated vertices.
A clique family has exactly $ks+k-1$ non-isolated vertices.
We now seek the maximum size of $\mathcal F$ subject to
\[
  \nu(\mathcal F)\le s
  \text{ and }
  |V^+(\mathcal F)|\ge k(s+1).
\]
The cover family has no isolated vertices and remains admissible, so a strict improvement on the Erd\H{o}s matching bound is possible only when $C_k(s)>M_k(n,s)$, which requires $n\le(k+c_k+o(1))s$ as $s\to\infty$, since $\rho_k=k+c_k$.

\subsubsection{Extremal constructions and statement}\label{sec:clique-constructions}

We now describe the extremal constructions.
For integers $k\ge3$, $s\ge1$, and $n\ge k(s+1)$, set $q=ks+k-2$ and $t=n-q\ge2$.
Choose a $q$-set $Q\subseteq[n]$, a set $D\in\binom Q{k-1}$, a vertex $v\in[n]\setminus Q$, and a vertex $u\in Q$, and define
\[
  \begin{split}
    \mathcal J_1(Q,D,v)={}&\binom Qk
    \cup\left\{\{v\}\cup A:A\in\binom Q{k-1},\ A\cap D\ne\emptyset\right\}\\
    &\quad\cup\left\{\{w\}\cup D:w\in[n]\setminus(Q\cup\{v\})\right\},\\
    \mathcal J_2(Q,u)={}&\binom Qk
    \cup\left\{\{w\}\cup A:w\in[n]\setminus Q,\
    A\in\binom Q{k-1},\ u\in A\right\}.
  \end{split}
\]
Both families have no isolated vertices.
Indeed, they contain $\binom Qk$, and each vertex $w\in[n]\setminus Q$ belongs to $\{w\}\cup D$ in $\mathcal J_1$ and to $\{w,u\}\cup A$ in $\mathcal J_2$, where $A$ is any $(k-2)$-subset of $Q\setminus\{u\}$.
In either construction, the members meeting $[n]\setminus Q$ are pairwise intersecting and each contains exactly one vertex outside $Q$.
Thus, every matching covers at most $q+1=k(s+1)-1$ vertices and has at most $s$ members.
Their respective sizes are
\begin{align*}
  J_k^{(1)}(n,s) & =\binom qk+\binom q{k-1}
  -\binom{q-k+1}{k-1}+t-1,                       \\
  J_k^{(2)}(n,s) & =\binom qk+t\binom{q-1}{k-2}.
\end{align*}
We write
$
  J_k(n,s)=\max\{J_k^{(1)}(n,s),J_k^{(2)}(n,s)\}.
$
For fixed $k\ge3$ and sufficiently large $s$, Frankl~\cite{Frankl2020Nontrivial} proved that, when $n=k(s+1)$, the maximum size of a family with no isolated vertices and matching number at most $s$ is $J_k^{(1)}(n,s)$.
For each fixed $k\ge3$, Guo and Lu~\cite{GuoLu2026} proved the sharp bound $|\mathcal F|\le J_k(n,s)$ for families with $\nu(\mathcal F)\le s$ and $|V^+(\mathcal F)|\ge k(s+1)$ throughout $k(s+1)\le n\le(1+\beta)ks+k-2$ for some sufficiently small constant $0<\beta=\beta(k)<1/(3^{k+3}k!\,k^k)$ and all sufficiently large $s$.
For $k=3$, Guo, Lu, and Peng~\cite{GLP} proved the sharp bound $|\mathcal F|\le J_3(n,s)$ throughout $3s/(1-c_0)\le n\le45s/13$ for any sufficiently small fixed constant $c_0>0$ and all sufficiently large $s$.

\begin{theorem}\label{cor:clique-hilton-milner}
  For every fixed integer $k\ge3$ and every fixed $0<\varepsilon<\rho_k-k$, there is an integer $s_{\mathrm{cl}}(k,\varepsilon)$ such that, for all integers $s\ge s_{\mathrm{cl}}(k,\varepsilon)$ and $k(s+1)\le n\le(\rho_k-\varepsilon)s$, every family $\mathcal F\subseteq\binom{[n]}k$ with $\nu(\mathcal F)\le s$ and $|V^+(\mathcal F)|\ge k(s+1)$ satisfies
  \[
    |\mathcal F|\le\max\{J_k^{(1)}(n,s),J_k^{(2)}(n,s)\}.
  \]
  Equality holds if and only if one of the following statements holds.
  \begin{enumerate}[label=\textup{(\roman*)},leftmargin=*,itemsep=0pt]
    \item $J_k^{(1)}(n,s)>J_k^{(2)}(n,s)$ and $\mathcal F=\mathcal J_1(Q,D,v)$.
    \item $J_k^{(1)}(n,s)<J_k^{(2)}(n,s)$ and $\mathcal F=\mathcal J_2(Q,u)$.
    \item $J_k^{(1)}(n,s)=J_k^{(2)}(n,s)$ and either $\mathcal F=\mathcal J_1(Q,D,v)$ or $\mathcal F=\mathcal J_2(Q,u)$.
  \end{enumerate}
\end{theorem}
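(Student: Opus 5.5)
The plan follows the template of the proof of \cref{thm:emc}: first pass to a stability statement near the clique family, then perform an exact local analysis. Fix $k$ and $\varepsilon$, and let $\mathcal F$ satisfy $\nu(\mathcal F)\le s$, $|V^+(\mathcal F)|\ge k(s+1)$ and $|\mathcal F|\ge J_k(n,s)$. The aim is to show that $\mathcal F$ is some $\mathcal J_1(Q,D,v)$ or some $\mathcal J_2(Q,u)$.

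\textbf{Step 1: reduction to a large clique.} Since $J_k^{(2)}(n,s)\ge\binom qk+t\binom{q-1}{k-2}$, we get $J_k(n,s)\ge C_k(s)-O(n^{k-1})$. Because $n\le(\rho_k-\varepsilon)s$, \cref{fact:construction-densities} gives $C_k(s)-M_k(n,s)\ge c(\varepsilon)n^k$. Thus $|\mathcal F|\ge D_k(n,s)-o(n^k)$, and \cref{prop:unified-stability}(ii) with $\eta=\varepsilon$ yields a $(ks+k-1)$-set $Q_0$ with $|\mathcal F\triangle\binom{Q_0}k|\le\delta n^k$. Running the shifting argument from the proof of \cref{lem:local-clique} inside $Q_0$ produces $\omega\ge(1-\eta)ks$. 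Shifting, however, may destroy the condition $|V^+|\ge k(s+1)$. I would therefore avoid shifting and work directly with a saturated supergraph.

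To do this, extend $\mathcal F$ to an $s$-saturated $\mathcal H'\supseteq\mathcal F$. Adding edges preserves $|V^+|\ge k(s+1)$. One still needs $\omega(\mathcal H')\ge(1-\eta)ks$. I would obtain this from the shifted family: shifts within $Q_0$ are injective on edges and do not increase $\nu$, so a clique found in the shifted family pulls back to a near-clique in $\mathcal F$. Completing this near-clique via saturation (a missing $k$-set inside a near-clique can be added without creating an $(s+1)$-matching only if ...) is the delicate point. If it fails, I would instead apply \cref{fact:large-clique-deficit} to the saturated closure of the shifted family and transfer the value of $d$ back.

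\textbf{Step 2: the deficit estimate with $d\ge2$.} Let $u=\omega(\mathcal H')$ and $d=ks+k-1-u$. The case $d=0$ is excluded, since it forces $\mathcal H'=\binom Qk$, contradicting $|V^+|\ge k(s+1)$. If $d\ge2$, then \cref{fact:large-clique-deficit} gives
\[
C_k(s)-|\mathcal H'|\ge2\binom{ks}{k-1}\bigl[2-(n/(ks))^{k-1}-o(1)\bigr].
\]
On the other hand, $C_k(s)-J_k(n,s)\le\binom{ks+k-2}{k-1}\bigl(1-c'\bigr)$ for some margin: indeed $C_k(s)-\binom qk=\binom q{k-1}$, and $J_k$ recovers part of this. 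I expect to show that $2(2-(1+1/(2k))^{k-1})>1$, which holds for $k\ge3$ because $(1+1/(2k))^{k-1}<e^{1/2}<3/2$. This gives $|\mathcal F|<J_k(n,s)$ whenever $d\ge2$.

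\textbf{Step 3: the case $d=1$.} Here $|Q|=q=ks+k-2$ is a clique, and the argument of \cref{fact:large-clique-deficit} shows that every edge not inside $Q$ meets $Q$ in exactly $k-1$ vertices and contains exactly one outside vertex. Moreover, these outside edges are pairwise intersecting, since two disjoint ones together with $s-1$ clique edges from the remaining $\ge k(s-1)$ vertices of $Q$ would give an $(s+1)$-matching. So the link sets $\{A\in\binom Q{k-1}\}$ attached to distinct outside vertices form a cross-intersecting system on $Q$ in which every outside vertex is non-isolated, because $|V^+|\ge k(s+1)$ forces all $t\ge2$ outside vertices to appear. Maximising the sum of the link sizes is a Hilton--Milner-type cross-intersecting problem with $t\ge2$ families on a ground set of size $q\gg k$. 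The extremal configurations are one vertex taking all sets meeting $D$ with the others taking $\{D\}$, which gives $\mathcal J_1$, or all vertices taking the star at $u$, which gives $\mathcal J_2$. I would prove this by a Frankl-type argument: if some link family is not a star and is not $\{D\}$, a Kruskal--Katona or Hilton--Milner bound forces the other families to be tiny. Equality tracing then yields the classification, together with $\mathcal F=\mathcal H'$.

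I expect this cross-intersecting optimisation, together with the transfer of the clique structure in Step 1 without losing the non-isolation condition, to be the main obstacle.
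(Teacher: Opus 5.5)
Your proposal has a genuine gap in Step 1, and this step carries the whole argument. Closeness $|\mathcal F\triangle\binom{Q_0}k|\le\delta n^k$ does not by itself give a clique of order $(1-\eta)ks$ in an $s$-saturated $\mathcal H'\supseteq\mathcal F$. A $k$-set $E\subseteq Q_0$ is non-addable as soon as some $s$-matching avoiding $E$ uses vertices outside $Q_0$, so the saturation may still miss $k$-sets spread over $Q_0$. You leave exactly this point open (``only if \dots''). Saying that a clique of the shifted family ``pulls back to a near-clique'' adds nothing, since $\mathcal F$ is already a near-clique.

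The fallback fails as well. Ordinary shifts inside $Q_0$ may isolate low-degree vertices of $Q_0$. Then nothing prevents the saturation of the shifted family from being $\binom Qk$ with $|Q|=ks+k-1$ (your $d=0$), which only gives $|\mathcal F|\le C_k(s)$. The parameter $d$ of a saturation also has no sense in which it ``transfers back'' along shifts. In fact, for an $s$-saturated family, $d\ge2$ is handled by \cref{fact:large-clique-deficit}. Also, $d=1$ automatically yields two outside vertices with nonempty links, since otherwise $Q$ together with one outside vertex would span a clique of order $ks+k-1$. So the hypothesis $|V^+|\ge k(s+1)$ is needed precisely to exclude $d=0$, and that is exactly what ordinary shifting can destroy. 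Any use of shifting also requires carrying the equality structure back through the shift sequence, which your plan does not address.

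The missing idea is a shifting that never isolates a vertex. The paper works with families without isolated vertices, deleting isolated vertices at the end and using that $J_k(n,s)$ increases in $n$. It applies $S_{uv}$ only if $v$ stays non-isolated and $\sum_w d(w)^2$ strictly increases. This terminates and preserves size, $\nu\le s$ and non-isolation. Consider the core $T_i=\{v\in Q_i:d(v)>\frac12\binom{n-2}{k-1}\}$, which contains all but $o(s)$ vertices of $Q_i$. For $u,v\in T_i$, no shift $S_{uv}$ can isolate $v$, since that would force $d(u)+d(v)\le\binom{n-2}{k-1}$. If moreover $d(u)\ge d(v)$ and the shift moves $a>0$ sets, it raises $\sum_w d(w)^2$ by $2a(d(u)-d(v))+2a^2$. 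So the final family is stable under all these shifts. It therefore contains a clique on the $\lceil(1-\eta)ks\rceil$ highest-degree core vertices, and \cref{lem:large-clique} applies. For equality one argues backwards: each predecessor in the shift sequence contains a clique of order $q-2\ge(1-\eta)ks$ and satisfies the hypotheses of \cref{lem:large-clique}. Hence each predecessor is again some $\mathcal J_1$ or $\mathcal J_2$.

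There are two smaller corrections.
\begin{itemize}
\item In Step 2, $\mathrm{e}^{1/2}\approx1.65>3/2$, and $2\bigl(2-(1+\tfrac1{2k})^{k-1}\bigr)>1$ fails for every $k\ge7$. Its limit under $n\le\rho_ks$ is $2(2-\mathrm{e}^{c_k})\approx0.84$. You must keep the gain in $J_k^{(2)}$. With $x=(k-1)(n/(ks)-1)$ one has $J_k^{(2)}(n,s)-C_k(s)\ge\binom{ks}{k-1}(x-1-5k/s)$ and $(n/(ks))^{k-1}\le\mathrm{e}^x$. Moreover $3-2\mathrm{e}^x+x\ge(1-2x)(1+x)\ge1/k$.
\item In Step 3, $|V^+|\ge k(s+1)$ gives only $t'\ge2$ outside vertices with nonempty links, not all $t$; the bound is monotone in $t'$. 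The cross-intersecting optimisation with its equality case is exactly \cref{thm:cross-intersection,thm:weighted-cross-intersection}, which can be cited rather than reproved.
\end{itemize}
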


For each fixed $k\ge3$, the limiting coefficient $\rho_k$ in \cref{cor:clique-hilton-milner} is asymptotically optimal.
Indeed, the cover family $\mathcal A(S)$ has no isolated vertices and there is a constant $K=K(k)>0$ such that $M_k(n,s)>J_k(n,s)$ for all sufficiently large $s$ and all $n\ge\rho_k s+K$.

\subsubsection{The refined bound for families containing a large clique}

To prove $|\mathcal H|\le J_k(n,s)$ in \cref{lem:large-clique}, we use the following theorem of Shi, Frankl and Qian.

\begin{theorem}[Shi--Frankl--Qian~{\cite{SFQ}}]
  \label{thm:cross-intersection}
  Let $r\ge1$, $q\ge2r$ and $t\ge2$.
  Suppose $\mathcal L_1,\ldots,\mathcal L_t\subseteq\binom{[q]}r$ are nonempty and every member of $\mathcal L_i$ meets every member of $\mathcal L_j$ whenever $i\ne j$.
  Then
  \[
    \sum_{i=1}^t|\mathcal L_i|
    \le\max\left\{\binom qr-\binom{q-r}r+t-1,\ t\binom{q-1}{r-1}\right\}.
  \]
\end{theorem}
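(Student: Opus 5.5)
The plan is to reduce the $t$-family statement to a two-quantity optimization and then show the optimum sits at one of two extremes. Write $\mathcal G=\bigcup_{i=1}^t\mathcal L_i$. Let $\mathcal I\subseteq\mathcal G$ be the sets lying in at least two of the families, and for $A\in\mathcal G$ let $\mu(A)$ be the number of families containing $A$. Then
\[
  \sum_{i=1}^t|\mathcal L_i|=\sum_{A\in\mathcal G}\mu(A)\le|\mathcal G\setminus\mathcal I|+t|\mathcal I|.
\]

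Two structural observations follow.
\begin{itemize}
\item[(a)] Every $A\in\mathcal I$ meets every member of $\mathcal G$. If $A\in\mathcal L_i\cap\mathcal L_j$ with $i\ne j$, then any $B\in\mathcal L_h$ has $h\ne i$ or $h\ne j$, so $A$ meets $B$. In particular, $\mathcal I$ is intersecting.
\item[(b)] Two members of $\mathcal G\setminus\mathcal I$ that lie in different families intersect.
\end{itemize}

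Before analyzing these, I would apply the standard $(a,b)$-shifts simultaneously to all $\mathcal L_i$. These preserve the sizes, nonemptiness and the cross-intersecting property. We may therefore assume all families are shifted, so that $\mathcal I$ and $\mathcal G$ are shifted as well.

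\emph{Case $\mathcal I=\emptyset$.} The families are then pairwise disjoint. Pick one family $\mathcal L_1$ and a set $F\in\mathcal L_1$. Every member of $\mathcal L_2\cup\cdots\cup\mathcal L_t$ meets $F$. The plan is to show that the sum is at most that of the Hilton--Milner-type configuration $\mathcal L_1=\{F\}$, $\mathcal L_2=\{A:A\cap F\ne\emptyset\}\setminus\{\text{$t-2$ sets}\}$, and the remaining families singletons. To do this, I would use the two-family bound $|\mathcal A|+|\mathcal B|\le\binom qr-\binom{q-r}r+1$ for nonempty cross-intersecting $\mathcal A,\mathcal B\subseteq\binom{[q]}r$ (Hilton--Milner) together with induction on $t$. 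Merging $\mathcal L_{t-1}\cup\mathcal L_t$ into one family keeps the system cross-intersecting, costs at most one in the count, and reduces $t$. This gives the first term $\binom qr-\binom{q-r}r+t-1$.

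\emph{Case $\mathcal I\ne\emptyset$.} By (a), $\mathcal G$ lies in the family of sets meeting all of $\mathcal I$. For shifted $\mathcal I$, I would estimate $|\mathcal G|$ through Kruskal--Katona and the shadow of the complements of $\mathcal I$. This should bound $|\mathcal G\setminus\mathcal I|+t|\mathcal I|$ by a function $\Psi(|\mathcal I|)$. The intent is that $\Psi$ is convex (or at least maximized at an endpoint) on the admissible range $1\le|\mathcal I|\le\binom{q-1}{r-1}$, where the upper limit comes from EKR, since $q\ge2r$ and $\mathcal I$ is intersecting.
\begin{itemize}
\item At the upper endpoint, $\mathcal I$ is a full star, $\mathcal G=\mathcal I$, and the total is $t\binom{q-1}{r-1}$.
\item At the lower endpoint, $|\mathcal I|=1$, and the count reduces to the Hilton--Milner configuration above.
\end{itemize}
Taking the maximum of the two endpoint values gives the claimed bound.

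The main obstacle is the intermediate regime of the second case: proving that mixed configurations, with a sizable but non-star intersecting $\mathcal I$ and extra sets in $\mathcal G\setminus\mathcal I$, never beat both endpoints. I would first try the Hilton--Milner stability bound for a non-star intersecting $\mathcal I$: once $\mathcal I$ is not a star, $|\mathcal I|$ is at most the Hilton--Milner number, while (a) forces $\mathcal G$ into a much smaller family. If that is too weak, I would use a direct comparison of the shifted families with the lexicographic initial segments, where the sum can be computed explicitly and checked to be convex in the segment length.
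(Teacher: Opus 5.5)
\textbf{There is a genuine gap: the case $\mathcal I\ne\emptyset$ is never proved.} The paper does not prove this statement; it imports it from Shi--Frankl--Qian, so your outline has to stand on its own. Your preliminary steps are sound: the bound $\sum_i|\mathcal L_i|\le|\mathcal G|+(t-1)|\mathcal I|$, observations (a) and (b), simultaneous shifting, and the case $\mathcal I=\emptyset$. In that last case you need no induction. The families are pairwise disjoint, so one application of the two-family Hilton--Milner bound to $\mathcal L_1$ and $\mathcal L_2\cup\cdots\cup\mathcal L_t$ already gives $\binom qr-\binom{q-r}r+1$.

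Once $\mathcal I\ne\emptyset$, however, observation (a) says exactly that $\mathcal G$ and $\mathcal I$ are nonempty cross-intersecting families with $|\mathcal I|\le\binom{q-1}{r-1}$ by EKR. What remains to be shown is
\[
  |\mathcal A|+c|\mathcal B|\le\max\Bigl\{\binom qr-\binom{q-r}r+c,\ (1+c)\binom{q-1}{r-1}\Bigr\}
  \quad\text{for }\mathcal A=\mathcal G,\ \mathcal B=\mathcal I,\ c=t-1.
\]
This is the weighted two-family inequality that the paper quotes from the same source as \cref{thm:weighted-cross-intersection}, with $r=k-1$ and $q>2r$; the cases $q=2r$ and $r=1$ are elementary. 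Your reduction is a correct and clean derivation of the $t$-family bound from that inequality. But it moves the difficulty rather than resolving it, and you give no argument for the inequality itself.

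\textbf{The mechanism you sketch would not supply it.} After Hilton's lemma (Kruskal--Katona), the largest possible $|\mathcal G|$ for $|\mathcal I|=m$ is $f(m)$: the number of $r$-sets meeting each of the first $m$ sets of $\binom{[q]}r$ in lexicographic order. This $f$ is not convex. Take $r=3$ and $q\ge7$. The first $q-2$ lex sets are $\{1,2,x\}$, and the successive decrements $f(m)-f(m+1)$ are $\binom{q-4}2$, $q-5$, $1$, then $0$ for $4\le m\le q-3$. When $\{1,3,4\}$ enters at $m=q-1$, the decrement jumps back to $\binom{q-4}2$. So $\Psi(m)=f(m)+(t-1)m$ is not convex. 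Its maximality at an endpoint of $[1,\binom{q-1}{r-1}]$ is true, but it must be proved by a direct comparison over the whole range, and that comparison is the real content of the theorem.

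Your Hilton--Milner-stability fallback also misses the critical configurations. These have $\mathcal I$ a proper sub-star. For example, take $\mathcal L_1=\cdots=\mathcal L_{t-1}=\mathcal B$, a sub-star of size $m$, and let $\mathcal L_t$ be the family of all $r$-sets meeting every member of $\mathcal B$. The sum is then exactly $(t-1)m+|\mathcal L_t|$, your reduction is tight, and Hilton--Milner stability says nothing because $\mathcal I=\mathcal B$ is a star.
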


To show that $|\mathcal H|=J_k(n,s)$ in \cref{lem:large-clique} forces $\mathcal H=\mathcal J_1(Q,D,v)$ or $\mathcal H=\mathcal J_2(Q,u)$, we use the following special case of a theorem of Shi, Frankl and Qian.

\begin{theorem}[Shi--Frankl--Qian~{\cite{SFQ}}]\label{thm:weighted-cross-intersection}
  Let $k\ge3$ and $q>2(k-1)$ be integers, let $Q$ be a $q$-set, and let $c>0$.
  Suppose that $\mathcal A,\mathcal B\subseteq\binom Q{k-1}$ are nonempty, every member of $\mathcal A$ meets every member of $\mathcal B$, and $|\mathcal B|\le\binom{q-1}{k-2}$.
  Then
  \[
    |\mathcal A|+c|\mathcal B|
    \le\max\left\{\binom q{k-1}-\binom{q-k+1}{k-1}+c,\ (1+c)\binom{q-1}{k-2}\right\}.
  \]
  If equality holds, then either $\mathcal B=\{D\}$ and $\mathcal A=\{A\in\binom Q{k-1}:A\cap D\ne\emptyset\}$ for some $D\in\binom Q{k-1}$, or $\mathcal A=\mathcal B=\{A\in\binom Q{k-1}:z\in A\}$ for some $z\in Q$.
\end{theorem}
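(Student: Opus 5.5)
We plan to follow the Kruskal--Katona route that is standard for cross-intersecting pairs. Write $r=k-1$, so that $q>2r$, and let $b=|\mathcal B|$ with $1\le b\le\binom{q-1}{r-1}$. Fix $\mathcal B$. Then $\mathcal A$ must avoid every $r$-set that is disjoint from some member of $\mathcal B$. Passing to complements $\mathcal B^c=\{Q\setminus B:B\in\mathcal B\}\subseteq\binom Q{q-r}$, these forbidden sets are exactly the $r$-shadow $\partial_r(\mathcal B^c)$. This gives
\[
  |\mathcal A|\le\binom qr-|\partial_r(\mathcal B^c)|.
\]
By the Kruskal--Katona theorem, $|\partial_r(\mathcal B^c)|\ge f(b)$, where $f(b)$ is the shadow size of the first $b$ sets of $\binom Q{q-r}$ in colex order. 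Equivalently, $\mathcal B$ consists of the first $b$ sets of $\binom Qr$ in lex order. We may therefore replace the problem by maximizing the one-variable function
\[
  g(b)=\binom qr-f(b)+cb,\qquad 1\le b\le\binom{q-1}{r-1}.
\]
At $b=1$ we get $f(1)=\binom{q-r}r$, so $g(1)$ is the first term of the maximum. At $b=\binom{q-1}{r-1}$, $\mathcal B$ is the full star at a point $z$, $\mathcal A$ is forced into the same star, and $g$ equals $(1+c)\binom{q-1}{r-1}$.

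The main step is to show that $g(b)<\max\{g(1),g(\binom{q-1}{r-1})\}$ for every interior $b$. We will prove a ``convexity at the scale of stars'' statement for the lex-initial families by induction on $r$ (or on $q$). Write a lex-initial $\mathcal B$ with $1<b\le\binom{q-1}{r-1}$: all its members contain the element $1$, so $\mathcal B=\{\{1\}\cup B':B'\in\mathcal B'\}$ with $\mathcal B'$ lex-initial in $\binom{[2,q]}{r-1}$. The family of $r$-sets meeting every member of $\mathcal B$ then splits into two parts: the star at $1$, contributing $\binom{q-1}{r-1}$, and the $r$-sets inside $[2,q]$ that meet every member of $\mathcal B'$. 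The second part is an instance of the same cross-intersection problem with parameters $(q-1,r-1\text{ vs.\ }r)$. This gives a recursion for $g$, and we aim to show that the increments $g(b+1)-g(b)=c-(f(b+1)-f(b))$ change sign at most once from negative to positive. Equivalently, $f(b+1)-f(b)$ is nonincreasing along the blocks of the lex order; this is the monotonicity of shadow increments that the recursion should make visible. A function whose increments change sign at most once in this way is maximized at an endpoint. If the clean unimodality fails, we will fall back on the Frankl--Kupavskii style direct comparison: show $g(b)\le g(1)+(b-1)\bigl(g(\binom{q-1}{r-1})-g(1)\bigr)/(\binom{q-1}{r-1}-1)$ using the explicit shadow sizes at the block breakpoints $b=\binom{j}{r-1}$ type values, where the hypothesis $q>2r$ enters. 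We expect this step to be the main obstacle.

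For the equality statement, we will argue as follows. Strict inequality in the interior range reduces the problem to $b=1$ or $b=\binom{q-1}{r-1}$. When $b=1$, say $\mathcal B=\{D\}$, $\mathcal A$ must equal all $r$-sets meeting $D$. When $b=\binom{q-1}{r-1}$, we need the uniqueness case of Kruskal--Katona at a full star, which is due to Füredi--Griggs and M\"ors. Since $q>2r$, that result forces $\mathcal B^c$ to be a star complement, i.e.\ $\mathcal B$ is the star at some $z$. Then $\mathcal A$, which must have size $\binom{q-1}{r-1}$ and cross-intersect $\mathcal B$, must be that same star. Finally, we will check that both configurations do attain the respective values.
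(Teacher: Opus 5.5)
Your reduction is sound. Kruskal--Katona applied to the complements $\mathcal B^c$ gives $|\mathcal A|+c|\mathcal B|\le g(|\mathcal B|)$ with $\mathcal B$ lex-initial, and the two endpoint values are right. The equality analysis at $b=1$ and $b=N:=\binom{q-1}{r-1}$ (via F\"uredi--Griggs/M\"ors) is also fine, \emph{provided} you have the strict inequality $g(b)<\max\{g(1),g(N)\}$ for $1<b<N$. (The paper gives no proof of this statement; it simply quotes Shi--Frankl--Qian, so your argument must supply this step itself.) But that interior inequality is the whole theorem, and the mechanism you propose for it is false. ``Nonincreasing within blocks'' neither is equivalent to nor implies a single sign change of $c-\bigl(f(b+1)-f(b)\bigr)$, and $f$ is not concave. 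Already for $r=3$, $q=7$, the first six lex sets $123,124,125,126,127,134$ give $f=4,7,9,10,10,13$, so the increments are $3,2,1,0,3$. With $c=1/2$ this gives $g(4)=27<g(5)=27.5>g(6)=25$, an interior local maximum. So no argument based on the sign pattern of the increments can work once $r\ge3$.

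What is really needed is your fallback, the strict chord inequality $f(b)>f(1)+(b-1)\frac{f(N)-f(1)}{N-1}$ for $1<b<N$, and the proposal gives no proof of it. Checking it only at breakpoints does not suffice. Since $f$ is merely nondecreasing, a chord bound at two consecutive breakpoints does not interpolate to the points between them. Moreover, for $r\ge4$ the function $f$ is not concave even inside a first-level block: for $q=9$, $r=4$, the increments inside the block of sets containing $\{1,2\}$ begin $4,3,2,1,0,4$. Your own recursion shows what a working induction must handle. Removing the star at $1$ leaves the problem of bounding $|\mathcal A'|+c|\mathcal B'|$ for cross-intersecting $\mathcal A'\subseteq\binom{[2,q]}{r}$ and lex-initial $\mathcal B'\subseteq\binom{[2,q]}{r-1}$ of arbitrary size. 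That is a two-uniformity problem, not an instance of the statement you are proving. To close the gap you would have to formulate and prove a stronger statement, for pairs of different uniformities and with $|\mathcal B|$ confined between consecutive breakpoints $\binom{q-j}{r-j}$ (the lex-initial families of sets containing $[j]$). You would then compare the breakpoint values as functions of $c$. As written, the proposal identifies the obstacle but does not overcome it. Both the bound and the equality characterization, which rests on strictness in the interior, therefore remain unproved.
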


To prove \cref{cor:clique-hilton-milner}, we need the following lemma.

\begin{lemma}\label{lem:large-clique}
  For each fixed $k\ge3$, there exist $\eta>0$ and $s_0$ such that the following holds whenever $s\ge s_0$ and $k(s+1)\le n\le(k+1/2)s$.
  If $\mathcal H\subseteq\binom{[n]}k$ satisfies $\nu(\mathcal H)\le s$, $|V^+(\mathcal H)|\ge k(s+1)$, and $\omega(\mathcal H)\ge(1-\eta)ks$, then $|\mathcal H|\le J_k(n,s)$.
  Equality holds if and only if one of the following statements holds for some admissible choice of the parameters defining $\mathcal J_1$ and $\mathcal J_2$.
  \begin{enumerate}[label=\textup{(\roman*)},leftmargin=*,itemsep=0pt]
    \item $J_k^{(1)}(n,s)>J_k^{(2)}(n,s)$ and $\mathcal H=\mathcal J_1(Q,D,v)$.
    \item $J_k^{(1)}(n,s)<J_k^{(2)}(n,s)$ and $\mathcal H=\mathcal J_2(Q,u)$.
    \item $J_k^{(1)}(n,s)=J_k^{(2)}(n,s)$ and either $\mathcal H=\mathcal J_1(Q,D,v)$ or $\mathcal H=\mathcal J_2(Q,u)$.
  \end{enumerate}
\end{lemma}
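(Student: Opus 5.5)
We would follow the proof of \cref{lem:large-clique-emc} and \cref{fact:large-clique-deficit}, replacing the final counting by \cref{thm:cross-intersection,thm:weighted-cross-intersection}. First add edges to $\mathcal H$ while keeping $\nu\le s$, obtaining an $s$-saturated $\mathcal H'\supseteq\mathcal H$. The condition $|V^+|\ge k(s+1)$ and the clique bound $\omega\ge(1-\eta)ks$ are monotone under adding edges, so they pass to $\mathcal H'$. Let $Q^\circ$ be a largest clique of $\mathcal H'$, of order $u$, and set $d=ks+k-1-u$.

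If $d=0$, the argument at the end of \cref{lem:large-clique-emc} shows $\mathcal H'=\binom{Q^\circ}k$. This contradicts $|V^+(\mathcal H)|\ge k(s+1)$, so $d\ge1$. If $d\ge2$, \cref{fact:large-clique-deficit} gives
\[
C_k(s)-|\mathcal H'|\ge 2\binom{ks}{k-1}\bigl(2-\mathrm{e}^{1/2}-o(1)\bigr).
\]
Since $C_k(s)-J_k(n,s)$ is at most about $\binom{ks}{k-1}$ times a constant that is small compared with this, we get $|\mathcal H|<J_k(n,s)$ whenever $d\ge2$. This comparison must be checked explicitly: $J^{(2)}_k\ge\binom qk+2\binom{q-1}{k-2}$ and $C_k(s)=\binom qk+\binom q{k-1}$, so $C_k(s)-J_k\le\binom q{k-1}$. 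Since $2(2-\mathrm{e}^{1/2})<1$, the crude bound does not suffice. We would instead redo the counting in \cref{fact:large-clique-deficit} with $d$ fixed and large. The missing sets contribute about $d\binom u{k-1}$ and the outside edges at most $d(\binom n{k-1}-\binom u{k-1})$, so the deficit is roughly $d\binom{ks}{k-1}\bigl(2-(n/ks)^{k-1}\bigr)$. For $d\ge2$ this exceeds $\binom q{k-1}$, because $2(2-(1+1/(2k))^{k-1})>1$ for $k\ge3$; this holds since $(1+1/(2k))^{k-1}<\mathrm{e}^{1/2}<3/2$. With $\eta$ small and $s$ large, the error terms are absorbed.

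The main case is $d=1$, that is $|Q^\circ|=q=ks+k-2$. We claim every edge of $\mathcal H'$ not in $\binom{Q^\circ}k$ has exactly one vertex outside $Q^\circ$, and that any two such edges intersect. Indeed, two disjoint such edges, or one edge with at least two outside vertices, leave at least $k(s-1)$ vertices of $Q^\circ$ available (respectively $\ge k(s-|\cdot|)$ after a short count), and we complete them to an $(s+1)$-matching with clique edges, contradicting $\nu\le s$. So for each $w\notin Q^\circ$, let $\mathcal L_w=\{A\in\binom{Q^\circ}{k-1}:\{w\}\cup A\in\mathcal H'\}$. These families are cross-intersecting, and since $|V^+|\ge k(s+1)$, at least $t'\ge1$ of them are nonempty, namely all $t=n-q$ of them after saturation. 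Then
\[
|\mathcal H'|=\binom qk+\sum_w|\mathcal L_w|,
\]
and \cref{thm:cross-intersection} with $r=k-1$ gives exactly $\binom qk+\max\{\binom q{k-1}-\binom{q-k+1}{k-1}+t-1,\;t\binom{q-1}{k-2}\}=J_k(n,s)$. Nonempty families are needed for $t\ge2$; empty ones only lower the sum, and the theorem is applied to the nonempty ones, with monotonicity in $t$.

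For equality, $\mathcal H=\mathcal H'$, and we must pin down the $\mathcal L_w$. Let $\mathcal B$ be a smallest $\mathcal L_w$ and let $\mathcal A$ be another. By pigeonhole and cross-intersection, either some family has size at most $\binom{q-1}{k-2}$, or the sum is already too small. Grouping the bound as $|\mathcal A|+(t-1)|\mathcal B|$ requires care, because the $\mathcal L_w$ need not be equal. We would apply \cref{thm:weighted-cross-intersection} with $c=t-1$ after showing all families except possibly one are equal at the extremum. That structural step, identifying the structure of the cross-intersecting families in the equality case, is the expected obstacle. The two outcomes of the theorem give $\mathcal J_1(Q,D,v)$ (with $\mathcal L_v$ the sets meeting $D$ and all other $\mathcal L_w=\{D\}$) or $\mathcal J_2(Q,u)$. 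The converse holds by the size computations in \cref{sec:clique-constructions}.
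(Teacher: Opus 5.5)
Your skeleton matches the paper's: saturate, split on the order $u$ of a largest clique, use \cref{fact:large-clique-deficit} when $d\ge2$, and use cross-intersecting links with \cref{thm:cross-intersection,thm:weighted-cross-intersection} when $d=1$. However, two steps fail as written.

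\textbf{The case $d\ge2$.} Your concluding inequality $2\bigl(2-(1+1/(2k))^{k-1}\bigr)>1$ is false for $k\ge7$; for $k=7$, $(15/14)^6\approx1.513$. Its justification $\mathrm{e}^{1/2}<3/2$ is also false, since $\mathrm{e}^{1/2}\approx1.649$, as you yourself observed a sentence earlier. Redoing the counting in \cref{fact:large-clique-deficit} cannot help, because it reproduces the same bound $d\binom{ks}{k-1}\bigl(2-(n/ks)^{k-1}\bigr)$.

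The actual loss is that you pair the worst case $n\approx(k+1/2)s$ in the deficit with the worst case $t=2$ in $C_k(s)-J_k(n,s)$. But $t=n-q$ grows with $n$, so you must compare at the same $n$. Set $x=(k-1)(n/(ks)-1)\in[0,(k-1)/(2k)]$. Then $J_k^{(2)}(n,s)-C_k(s)=t\binom{q-1}{k-2}-\binom q{k-1}\ge(x-1-5k/s)\binom{ks}{k-1}$, and $(n/(ks))^{k-1}\le\mathrm{e}^x$. So for $d\ge2$ the fact gives $(J_k^{(2)}(n,s)-|\mathcal H'|)/\binom{ks}{k-1}\ge3-2\mathrm{e}^x+x-10k^3\eta-(6k^3+5k)/s$. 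Using $\mathrm{e}^x\le1+x+x^2$, this is at least $(1-2x)(1+x)-\cdots\ge1/k-\cdots>0$.

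\textbf{The equality case when $d=1$.} You leave the structural step open. The grouping you suggest, with $\mathcal B$ a \emph{smallest} link, gives $\sum_w|\mathcal L_w|\ge|\mathcal A|+(t-1)|\mathcal B|$, which is the wrong direction.

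The missing idea is to take $v,w$ whose links are the largest and second largest. Then $\sum_x|\mathcal L_x|\le|\mathcal L_v|+(t-1)|\mathcal L_w|$ holds automatically, with no need to show the other links are equal.

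The hypothesis $|\mathcal L_w|\le\binom{q-1}{k-2}$ of \cref{thm:weighted-cross-intersection} comes from that same theorem. Apply it to $\mathcal L_v$ and a $\binom{q-1}{k-2}$-element subfamily of $\mathcal L_w$ with $c$ large; this would force $|\mathcal L_v|\le\binom{q-1}{k-2}<|\mathcal L_w|$, a contradiction.

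Applying the theorem with $c=t-1$ then gives the bound. Equality forces $|\mathcal L_x|=|\mathcal L_w|$ for every $x\ne v$, together with one of the two extremal configurations for $(\mathcal L_v,\mathcal L_w)$. Cross-intersection with $\mathcal L_v$ then pins down each remaining link:
\begin{itemize}
\item In the first configuration, $\mathcal L_x=\{D\}$, since any $B\ne D$ is disjoint from some $(k-1)$-set containing a point of $D\setminus B$, and that set lies in $\mathcal L_v$.
\item In the second configuration, every member of $\mathcal L_x$ contains $z$. So $\mathcal L_x\subseteq\mathcal L_v$, and equal sizes give $\mathcal L_x=\mathcal L_v$.
\end{itemize}

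\textbf{Minor points.} You need $t'\ge2$ nonempty links, which follows from $|V^+(\mathcal H)|\ge q+2$; $t'\ge1$ is not enough for \cref{thm:cross-intersection}. Your claim that saturation makes all $t$ links nonempty is unjustified. It is also unnecessary: in the equality case, strict monotonicity of the bound in $t'$ forces $t'=t$.
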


\begin{proof}
  Choose $0<\eta=\eta(k)\le1/(64k^4)$ sufficiently small for the estimates in Case~2, and then choose $s_0=s_0(k,\eta)\ge60k^4$ sufficiently large that $\eta ks_0\ge k-1$ and the estimates below hold.
  Recall that $q=ks+k-2$ and $t=n-q\ge2$.

  Extend $\mathcal H$ to an $s$-saturated family $\mathcal H'$.
  Let $Q$ be a largest clique in $\mathcal H'$.
  Write $u$ for the order of $Q$.
  Since $\mathcal H\subseteq\mathcal H'$, we have $u\ge\omega(\mathcal H)\ge(1-\eta)ks$.
  If $u\ge q+2=k(s+1)$, then a partition of $k(s+1)$ vertices of $Q$ into $s+1$ sets of size $k$ forms an $(s+1)$-matching in $\mathcal H'$, contradicting $\nu(\mathcal H')=s$.
  Thus, $u\le q+1$.
  If $u=q+1$, then the condition $|V^+(\mathcal H)|\ge q+2$ ensures that there is an edge $E\in\mathcal H$ not contained in $Q$.
  Since $|Q\setminus E|\ge q+1-(k-1)=ks$, we can choose $s$ pairwise disjoint $k$-sets in $Q\setminus E$.
  Together with $E$, they form an $(s+1)$-matching in $\mathcal H'$, a contradiction.
  Hence, $(1-\eta)ks\le u\le q$.
  We now consider two cases according to the value of $u$.

  \medskip
  \noindent{\bf Case 1.} $u=q$.
  \medskip

  In this case, we claim that every edge of $\mathcal H'$ not contained in $Q$ has exactly one vertex outside $Q$.
  Indeed, an edge $E\in\mathcal H'$ containing at least two vertices outside $Q$ would leave at least $q-(k-2)=ks$ vertices of $Q$ unused.
  Choose $s$ pairwise disjoint $k$-sets among these vertices.
  Together with $E$, they form an $(s+1)$-matching in $\mathcal H'$, a contradiction.

  For each $v\in[n]\setminus Q$, define
  \[
    \mathcal L_v=\left\{A\in\binom Q{k-1}:A\cup\{v\}\in\mathcal H'\right\}.
  \]
  For distinct $v,w\in[n]\setminus Q$, we have $A\cap B\ne\emptyset$ for all $A\in\mathcal L_v$ and $B\in\mathcal L_w$.
  Otherwise, disjoint $A\in\mathcal L_v$ and $B\in\mathcal L_w$ would extend to two disjoint edges $\{v\}\cup A$ and $\{w\}\cup B$ of $\mathcal H'$.
  The remaining $q-2(k-1)=k(s-1)$ vertices of $Q$ can be partitioned into $s-1$ clique edges, completing an $(s+1)$-matching in $\mathcal H'$, a contradiction.

  Let $t'$ be the number of vertices $v\in[n]\setminus Q$ for which $\mathcal L_v\ne\emptyset$.
  We have $t'\ge2$, since otherwise $|V^+(\mathcal H)|\le q+1$.
  The $t'$ nonempty families $\mathcal L_v\subseteq\binom Q{k-1}$ satisfy the hypotheses of \cref{thm:cross-intersection}, since $q\ge2(k-1)$.
  Since $Q$ is a clique and every edge of $\mathcal H'$ outside $Q$ is counted by exactly one link, \cref{thm:cross-intersection} implies
  \[
    \begin{aligned}
      |\mathcal H|
       & \le|\mathcal H'|=\binom qk+\sum_{v\in[n]\setminus Q}|\mathcal L_v|                         \\
       & \le\binom qk+\max\left\{\binom q{k-1}-\binom{q-k+1}{k-1}+t'-1,\ t'\binom{q-1}{k-2}\right\} \\
       & \le J_k(n,s),
    \end{aligned}
  \]
  where the last inequality uses $t'\le t$ and the definition of $J_k(n,s)$.
  This proves the upper bound in this case.

  We now characterize the equality cases.
  Suppose that $|\mathcal H|=J_k(n,s)$.
  Equality holds throughout the bound on $|\mathcal H|$, so $\mathcal H=\mathcal H'$ and $t'=t$.
  In particular, $\mathcal L_x$ is nonempty for every $x\in[n]\setminus Q$.
  Choose distinct $v,w\in[n]\setminus Q$ whose links have the largest and second largest sizes, respectively.

  To apply \cref{thm:weighted-cross-intersection} to $\mathcal L_v$ and $\mathcal L_w$, we first show that $|\mathcal L_w|\le\binom{q-1}{k-2}$.
  Suppose otherwise and choose $\mathcal B\subseteq\mathcal L_w$ with $|\mathcal B|=\binom{q-1}{k-2}$.
  Every member of $\mathcal L_v$ meets every member of $\mathcal B$, and $q=ks+k-2>2(k-1)$ for $s\ge2$.
  Thus, \cref{thm:weighted-cross-intersection} applies to $\mathcal L_v$ and $\mathcal B$.
  Since $\binom{q-1}{k-2}>1$, for sufficiently large $c$ we have
  \[
    (1+c)\binom{q-1}{k-2}
    >\binom q{k-1}-\binom{q-k+1}{k-1}+c.
  \]
  Hence, \cref{thm:weighted-cross-intersection} gives
  \[
    |\mathcal L_v|+c\binom{q-1}{k-2}
    \le(1+c)\binom{q-1}{k-2}.
  \]
  Hence, $|\mathcal L_v|\le\binom{q-1}{k-2}$, contradicting $|\mathcal L_v|\ge|\mathcal L_w|>\binom{q-1}{k-2}$.
  Thus, $|\mathcal L_w|\le\binom{q-1}{k-2}$.

  To determine the possible forms of the two largest links, we show that $(\mathcal L_v,\mathcal L_w)$ attains equality in \cref{thm:weighted-cross-intersection}.
  By the choice of $w$, each of the $t-1$ links other than $\mathcal L_v$ has size at most $|\mathcal L_w|$.
  Hence, by \cref{thm:weighted-cross-intersection},
  \[
    \begin{aligned}
      J_k(n,s)-\binom qk
       & =|\mathcal H|-\binom qk \\
       & =\sum_{x\in[n]\setminus Q}|\mathcal L_x| \\
       & \le|\mathcal L_v|+(t-1)|\mathcal L_w| \\
       & \le\max\left\{\binom q{k-1}-\binom{q-k+1}{k-1}+t-1,\ t\binom{q-1}{k-2}\right\} \\
       & =J_k(n,s)-\binom qk.
    \end{aligned}
  \]
  Thus, both inequalities are equalities.
  The first implies that $|\mathcal L_x|=|\mathcal L_w|$ for every $x\in[n]\setminus(Q\cup\{v\})$.
  By \cref{thm:weighted-cross-intersection}, the second implies that one of the following statements holds.
  \begin{enumerate}[label=\textup{(\roman*)},leftmargin=*,itemsep=0pt]
    \item $\mathcal L_w=\{D\}$ and $\mathcal L_v=\{A\in\binom Q{k-1}:A\cap D\ne\emptyset\}$ for some $D\in\binom Q{k-1}$.
    \item $\mathcal L_v=\mathcal L_w=\{A\in\binom Q{k-1}:z\in A\}$ for some $z\in Q$.
  \end{enumerate}

  If \textup{(i)} holds, then we show that $\mathcal L_x=\{D\}$ for every $x\in[n]\setminus(Q\cup\{v\})$.
  Fix such an $x$ and suppose that $B\ne D$ for some $B\in\mathcal L_x$.
  Since $|B|=|D|$, we can choose some $d\in D\setminus B$.
  As $q>2(k-1)$, there is a $(k-1)$-set $A\subseteq Q\setminus B$ containing $d$.
  Since $d\in A\cap D$, the set $A$ meets $D$ and therefore belongs to $\mathcal L_v$ by \textup{(i)}.
  However, $A\subseteq Q\setminus B$ implies $A\cap B=\emptyset$, contradicting the fact that every member of $\mathcal L_v$ meets every member of $\mathcal L_x$.
  Thus, no member of $\mathcal L_x$ can differ from $D$.
  Since $x$ was arbitrary and each of these links is nonempty, $\mathcal L_x=\{D\}$ for every $x\in[n]\setminus(Q\cup\{v\})$.
  Recall that every edge of $\mathcal H$ not contained in $Q$ has exactly one vertex outside $Q$.
  By \textup{(i)}, the edges containing $v$ are precisely the sets $\{v\}\cup A$ with $A\in\binom Q{k-1}$ and $A\cap D\ne\emptyset$.
  For each $x\in[n]\setminus(Q\cup\{v\})$, the equality $\mathcal L_x=\{D\}$ means that the only edge containing $x$ is $\{x\}\cup D$.
  Since $\mathcal H$ also contains all $k$-subsets of $Q$, we have $\mathcal H=\mathcal J_1(Q,D,v)$.

  If \textup{(ii)} holds, then $\mathcal L_v$ consists of all $(k-1)$-subsets of $Q$ containing $z$.
  Fix $x\in[n]\setminus(Q\cup\{v\})$ and take any $B\in\mathcal L_x$.
  Since $B$ meets every member of $\mathcal L_v$, it must meet every $(k-1)$-subset of $Q$ containing $z$.
  We show that this forces $z\in B$.
  Suppose otherwise.
  Since $z\in Q\setminus B$ and $q>2(k-1)$, there is a $(k-1)$-set $A\subseteq Q\setminus B$ containing $z$.
  This set contains $z$ but is disjoint from $B$, contradicting the fact that $B$ meets every $(k-1)$-subset of $Q$ containing $z$.
  Thus, every member of $\mathcal L_x$ contains $z$, so $\mathcal L_x\subseteq\mathcal L_v$.
  We already concluded that $|\mathcal L_x|=|\mathcal L_w|$, and \textup{(ii)} states that $\mathcal L_w=\mathcal L_v$, so $\mathcal L_x=\mathcal L_v$.
  Therefore, for every $x\in[n]\setminus Q$, the edges containing $x$ are precisely the sets $\{x\}\cup A$ with $A\in\binom Q{k-1}$ and $z\in A$.
  Since $\mathcal H$ also contains all $k$-subsets of $Q$, we have $\mathcal H=\mathcal J_2(Q,z)$.

  Among $\mathcal J_1(Q,D,v)$ and $\mathcal J_2(Q,z)$, only those attaining the maximum defining $J_k(n,s)$ can occur, since $|\mathcal H|=J_k(n,s)$.
  This finishes the proof of Case 1.

  \medskip
  \noindent{\bf Case 2.} $(1-\eta)ks\le u\le q-1$.
  \medskip

  In this case, we show that $|\mathcal H'|<J_k^{(2)}(n,s)$.
  Set $d=q+1-u\ge2$ and $x=(k-1)(n/(ks)-1)$.
  Since $k(s+1)\le n\le(k+1/2)s$, we have $0\le x\le(k-1)/(2k)<1/2$.
  Since $q=ks+k-2$ and $t=n-ks-k+2$, we obtain $J_k^{(2)}(n,s)-C_k(s)=t\binom{q-1}{k-2}-\binom q{k-1}$ from Pascal's identity.
  For $s\ge2k$, we have $\binom{q-1}{k-2}\ge\binom{ks}{k-2}$.
  By the product formula, $\binom{ks+k-3}{k-2}<2\binom{ks}{k-2}<4\binom{ks}{k-1}/s$.
  Therefore, Pascal's identity yields
  \[
    \binom q{k-1}-\binom{ks}{k-1}
    =\sum_{j=1}^{k-2}\binom{ks+j-1}{k-2}
    <\frac{4k}{s}\binom{ks}{k-1}.
  \]
  As $t=n-ks-k+2$ and $\binom{ks}{k-2}/\binom{ks}{k-1}=(k-1)/(ks-k+2)$, we also have
  \[
    \frac{t\binom{q-1}{k-2}}{\binom{ks}{k-1}}
    \ge\frac{t(k-1)}{ks}
    =x-\frac{(k-1)(k-2)}{ks}
    \ge x-\frac{k-1}{s}.
  \]
  Consequently,
  \[
    \frac{J_k^{(2)}(n,s)-C_k(s)}{\binom{ks}{k-1}}
    \ge-1+x-\frac{5k}{s}.
  \]
  \Cref{fact:large-clique-deficit} applies to the $s$-saturated family $\mathcal H'$ with $d=q+1-u$.
  Since $(n/(ks))^{k-1}\le\mathrm{e}^x$ and $5k^3\eta+3k^3/s\le5/(64k)+1/(20k)<2-\mathrm{e}^{1/2}$, the quantity $2-(n/(ks))^{k-1}-5k^3\eta-3k^3/s$ is positive.
  Using $d\ge2$ and $\mathrm{e}^x\le1+x+x^2$ for $x\le1/2$, we obtain
  \[
    \begin{aligned}
      \frac{J_k^{(2)}(n,s)-|\mathcal H'|}{\binom{ks}{k-1}}
       & \ge d\left[2-\left(\frac n{ks}\right)^{k-1}-5k^3\eta-\frac{3k^3}s\right]
      -1+x-\frac{5k}s                                                             \\
       & \ge 3-2\mathrm{e}^x+x-10k^3\eta-\frac{6k^3+5k}s                          \\
       & \ge (1-2x)(1+x)-10k^3\eta-\frac{6k^3+5k}s                                \\
       & \ge\frac1k-10k^3\eta-\frac{6k^3+5k}s>0.
    \end{aligned}
  \]
  Indeed, $x\le(k-1)/(2k)$ implies $(1-2x)(1+x)\ge1/k$, while $10k^3\eta+(6k^3+5k)/s\le5/(32k)+1/(10k)+1/(12k^3)<1/k$.
  Hence, $|\mathcal H|\le|\mathcal H'|<J_k^{(2)}(n,s)\le J_k(n,s)$.
  Equality can therefore occur only in Case~1.

  \medskip
  This completes the proof.
\end{proof}

\subsubsection{Proof of Theorem~\ref{cor:clique-hilton-milner}}\label{sec:support-refinement-proof}
\begin{proof}
  Fix $k\ge3$ and $0<\varepsilon<\rho_k-k$, and set $R=\rho_k-\varepsilon$, so $k<R<\rho_k<k+1/2$.
  We first consider families without isolated vertices.
  We show that, for all sufficiently large $s$ and $k(s+1)\le n\le Rs$, any such family $\mathcal F\subseteq\binom{[n]}k$ satisfying $\nu(\mathcal F)\le s$ and $|\mathcal F|\ge J_k(n,s)$ equals $\mathcal J_1(Q,D,v)$ or $\mathcal J_2(Q,u)$ for a choice of parameters giving size $J_k(n,s)$.
  This will establish both the upper bound and its equality cases for families without isolated vertices.
  We will then delete isolated vertices to obtain the full statement.

  Suppose that the assertion for families without isolated vertices fails for arbitrarily large $s$.
  Choose a sequence of counterexamples $\mathcal F_i\subseteq\binom{[n_i]}k$ with $s_i\to\infty$ and $k(s_i+1)\le n_i\le Rs_i$.
  Each $\mathcal F_i$ has no isolated vertices, satisfies $\nu(\mathcal F_i)\le s_i$, and has
  \begin{equation}\label{eq:support-counterexample-sequence}
    |\mathcal F_i|\ge J_k(n_i,s_i),
  \end{equation}
  but equals neither $\mathcal J_1(Q,D,v)$ nor $\mathcal J_2(Q,u)$ for any choice of parameters giving size $J_k(n_i,s_i)$.

  Ordinary shifts may create isolated vertices, so we restrict ourselves to shifts that preserve the absence of isolated vertices.
  For a family $\mathcal H$, set $\mathcal D(\mathcal H)=\sum_v d_{\mathcal H}(v)^2$.
  For each $i$, start with $\mathcal H=\mathcal F_i$.
  Whenever there exists an ordered pair of distinct vertices $(u,v)$ satisfying $d_{S_{uv}(\mathcal H)}(v)>0$ and $\mathcal D(S_{uv}(\mathcal H))>\mathcal D(\mathcal H)$, choose one such pair and replace $\mathcal H$ by $S_{uv}(\mathcal H)$.
  Only the degree of $v$ can decrease under $S_{uv}$, so $d_{S_{uv}(\mathcal H)}(v)>0$ ensures that no vertex becomes isolated.
  Repeat this step until no ordered pair satisfies both conditions, and denote the resulting family by $\mathcal G_i$.
  The process terminates because $\mathcal D$ takes integer values and is bounded above on the finite set of families on $[n_i]$.
  By our choice of shifts, every family along the sequence has $|\mathcal F_i|$ edges, no isolated vertices, and matching number at most $s_i$.

  Passing to a subsequence and relabeling, we may assume that
  \begin{equation}\label{eq:support-limiting-ratio}
    n_i/s_i\longrightarrow R_*\in[k,R].
  \end{equation}
  Since $R_*<\rho_k$, we have $(k/R_*)^k>1-(1-1/R_*)^k$.
  Thus, \eqref{eq:support-limiting-ratio} and \cref{fact:construction-densities} imply $D_k(n_i,s_i)=C_k(s_i)$ for all sufficiently large $i$.
  Also, the formulas for $J_k^{(1)}(n,s)$ and $J_k^{(2)}(n,s)$ imply $J_k(n,s)=C_k(s)+O_k(s^{k-1})$ uniformly for $k(s+1)\le n\le Rs$.
  Since $n_i=\Theta_k(s_i)$, \eqref{eq:support-counterexample-sequence} implies
  \[
    |\mathcal G_i|=|\mathcal F_i|
    \ge D_k(n_i,s_i)-O_k(s_i^{k-1})
    =D_k(n_i,s_i)-o(n_i^k).
  \]
  By \cref{lem:matching-sequence}\textup{(ii)}, there are $(ks_i+k-1)$-sets $Q_i\subseteq[n_i]$ such that
  \begin{equation}\label{eq:support-clique-approximation}
    \left|\mathcal G_i\triangle\binom{Q_i}k\right|=o(n_i^k)=o(s_i^k).
  \end{equation}

  Let $0<\eta<1$ be the constant supplied by \cref{lem:large-clique}.
  The following claim shows that the approximation in \eqref{eq:support-clique-approximation} yields a clique large enough to apply \cref{lem:large-clique}.

  \begin{claim}\label{lem:support-preserving-compression}
    For all sufficiently large $i$, we have $\omega(\mathcal G_i)\ge(1-\eta)ks_i$.
  \end{claim}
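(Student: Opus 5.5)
The plan is to imitate the shifting argument in the proof of \cref{lem:local-clique}, adapted to the restricted shifts used to build $\mathcal G_i$. The main difference is that $\mathcal G_i$ is stable only under those shifts $S_{uv}$ that keep $v$ non-isolated and increase $\mathcal D$. Call $\mathcal G_i$ \emph{stable} for an ordered pair $(u,v)$ if $S_{uv}(\mathcal G_i)=\mathcal G_i$.

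First we order the vertices of $Q_i$ by degree in $\mathcal G_i$. By \eqref{eq:support-clique-approximation}, all but $o(s_i)$ vertices of $Q_i$ have degree $\binom{ks_i+k-2}{k-1}-o(s_i^{k-1})$ inside $\binom{Q_i}k$. Likewise all but $o(s_i)$ vertices outside $Q_i$ have degree $o(s_i^{k-1})$.

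Next we show that, for many pairs, the shift must be trivial. Take $u,v$ with $d(u)\ge d(v)$, where $u$ is typical in $Q_i$. If $S_{uv}$ moves a nonzero number $\ell$ of edges, then $d(u)$ rises by $\ell$ and $d(v)$ drops by $\ell$. Hence $\mathcal D$ increases by $2\ell(d(u)-d(v))+2\ell^2>0$. So stability forces one of two alternatives: $S_{uv}$ is trivial, or $v$ becomes isolated, which means every edge through $v$ is shifted. The second alternative is impossible when $v$ lies in an edge together with $u$, because such edges are never shifted. It is also impossible when $v$ is a typical vertex of $Q_i$, since $v$ then lies in many edges containing $u$. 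Choose a set $Q_i^*\subseteq Q_i$ of $(1-o(1))ks_i$ typical vertices, ordered by decreasing degree. Then $\mathcal G_i$ is stable under $S_{ab}$ for all $a<b$ in $Q_i^*$.

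Finally we copy the argument of \cref{lem:local-clique}. Set $h=\lceil(1-\eta)ks_i\rceil$ and work in the first $h+\lfloor\eta ks_i/2\rfloor$ vertices of $Q_i^*$. Suppose a $k$-set $A$ among the first $h$ vertices were missing. By stability, every $k$-set among the remaining $\ge\eta ks_i/2-o(s_i)$ vertices of $Q_i^*$ would then also be missing. That gives $\Omega(\eta^k s_i^k)$ missing sets inside $\binom{Q_i}k$, contradicting \eqref{eq:support-clique-approximation}. Hence $\omega(\mathcal G_i)\ge(1-\eta)ks_i$.

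The main obstacle is the second step: verifying that, for every pair of typical vertices, the alternative "$v$ becomes isolated" cannot occur. A further point is that the tie case $d(u)=d(v)$ still gives strict increase through the $2\ell^2$ term. I would check these by direct degree counting, using that typical vertices share edges.
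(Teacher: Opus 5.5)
Your plan is the paper's proof. You take a large high-degree core of $Q_i$ ordered by degree. You use $\mathcal D(S_{uv}(\mathcal G_i))-\mathcal D(\mathcal G_i)=2\ell(d(u)-d(v))+2\ell^2$ and the termination rule to reduce stability to ruling out that $S_{uv}$ isolates $v$. Then you run the one-vertex-at-a-time replacement from the proof of \cref{lem:local-clique}. Your counting of typical vertices, the tie case, and the final clique argument are all correct.

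\textbf{The gap is in the non-isolation step.} You argue that isolation is impossible because typical $u,v$ lie in many common edges, but typicality does not give that. Typicality means $o(s_i^{k-1})$ missing members of $\binom{Q_i}k$ through a vertex. Only $\binom{ks_i+k-3}{k-2}=O(s_i^{k-2})$ members of $\binom{Q_i}k$ contain both $u$ and $v$, and all of them can be missing while both vertices stay typical. For example, forbid every pair of a perfect matching of $Q_i$ from co-occurring in an edge: this removes only $O(s_i^{k-1})$ sets, and every vertex remains typical. So the unmoved edges that keep $v$ alive must come from the other source: edges $E\ni v$ whose image $(E\setminus\{v\})\cup\{u\}$ already lies in $\mathcal G_i$.

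\textbf{The repair.} If $S_{uv}$ isolated $v$, every edge through $v$ would avoid $u$ and have its image missing. The map $E\mapsto(E\setminus\{v\})\cup\{u\}$ is injective. It would send the at least $\binom{ks_i+k-2}{k-1}-o(s_i^{k-1})$ members of $\mathcal G_i\cap\binom{Q_i}k$ through $v$ to that many missing members of $\binom{Q_i}k$ through $u$, contradicting typicality of $u$. The paper does the equivalent count as $d(u)+d(v)\le\binom{n_i-2}{k-1}$. That is why it defines its core $T_i$ by the threshold $\frac12\binom{n_i-2}{k-1}$ and verifies $\binom{ks_i+k-2}{k-1}>\frac12\binom{n_i-2}{k-1}$ using $n_i\le(k+1/2)s_i$. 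Your typicality-plus-injection version avoids that range estimate. With this repair the proof goes through.
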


  \begin{proof}
    Set $\Delta_i=\binom{ks_i+k-2}{k-1}$, and let
    \begin{equation}\label{eq:high-core-degrees}
      T_i=\left\{v\in Q_i:d_{\mathcal G_i}(v)>\frac12\binom{n_i-2}{k-1}\right\}.
    \end{equation}
    We first show that $|Q_i\setminus T_i|=o(s_i)$.
    A short calculation shows that
    \[
      \frac{\Delta_i}{\binom{n_i-2}{k-1}}
      =\prod_{j=0}^{k-2}\frac{ks_i+k-2-j}{n_i-2-j}.
    \]
    Every numerator is at least $ks_i$ and every denominator is at most $n_i$, so each factor is at least $ks_i/n_i\ge2k/(2k+1)$.
    By Bernoulli's inequality $(1-t)^{k-1}\ge1-(k-1)t$ for $0\le t\le1$, we therefore have
    \[
      \frac{\Delta_i}{\binom{n_i-2}{k-1}}
      \ge\left(1-\frac1{2k+1}\right)^{k-1}
      \ge1-\frac{k-1}{2k+1}
      =\frac{k+2}{2k+1}>\frac12.
    \]
    For $n_i\ge k(s_i+1)$, a similar argument shows that $\binom{n_i-2}{k-1}\ge(ks_i)^{k-1}/(k-1)!$.
    Hence, $\Delta_i-\frac12\binom{n_i-2}{k-1}=\Omega_k(s_i^{k-1})$.
    Each vertex $v\in Q_i\setminus T_i$ lies in $\Delta_i$ members of $\binom{Q_i}k$, of which at most $d_{\mathcal G_i}(v)\le\frac12\binom{n_i-2}{k-1}$ belong to $\mathcal G_i$.
    Thus, at least $\Delta_i-\frac12\binom{n_i-2}{k-1}$ of these members are missing.
    Summing the missing-member counts over $v\in Q_i\setminus T_i$ counts each missing member at most $k$ times, once for each of its vertices in $Q_i\setminus T_i$.
    Together with \eqref{eq:support-clique-approximation}, this implies
    \[
      |Q_i\setminus T_i|\left(\Delta_i-\frac12\binom{n_i-2}{k-1}\right)
      \le k\left|\binom{Q_i}k\setminus\mathcal G_i\right|
      =o(s_i^k).
    \]
    Hence, $|T_i|=ks_i+k-1-o(s_i)$.

    Order the vertices of $T_i$ as $v_1,\ldots,v_{|T_i|}$ in nonincreasing order of their degrees in $\mathcal G_i$.
    Fix distinct $u,v\in T_i$ with $u$ preceding $v$ in this order.
    If $S_{uv}$ made $v$ isolated, then no member of $\mathcal G_i$ could contain both $u$ and $v$, and no $(k-1)$-set $A\subseteq[n_i]\setminus\{u,v\}$ could have both $A\cup\{u\}$ and $A\cup\{v\}$ in $\mathcal G_i$.
    Otherwise, in either case, a member containing $v$ would remain unchanged under $S_{uv}$ by the definition of shifting.
    These two exclusions would imply $d_{\mathcal G_i}(u)+d_{\mathcal G_i}(v)\le\binom{n_i-2}{k-1}$, contrary to \eqref{eq:high-core-degrees}.
    Thus, $S_{uv}$ creates no isolated vertex.
    Let $a$ be the number of members moved by $S_{uv}$.
    If $a>0$, then
    \[
      \mathcal D(S_{uv}(\mathcal G_i))-\mathcal D(\mathcal G_i)
      =2a(d_{\mathcal G_i}(u)-d_{\mathcal G_i}(v))+2a^2>0.
    \]
    Since $d_{S_{uv}(\mathcal G_i)}(v)>0$, the construction of $\mathcal G_i$ gives $\mathcal D(S_{uv}(\mathcal G_i))\le\mathcal D(\mathcal G_i)$, a contradiction.
    Therefore, $a=0$, so $S_{uv}(\mathcal G_i)=\mathcal G_i$ whenever $u$ precedes $v$ in $T_i$.

    Set $h_i=\lceil(1-\eta)ks_i\rceil$.
    For all sufficiently large $i$, we have $h_i\le|T_i|$ and may set $U_i=\{v_1,\ldots,v_{h_i}\}$.
    Since $T_i\subseteq Q_i$, we have
    \begin{equation}\label{eq:dense-high-degree-core}
      \left|\binom{T_i}k\setminus\mathcal G_i\right|
      \le\left|\binom{Q_i}k\setminus\mathcal G_i\right|
      =o(s_i^k).
    \end{equation}
    Also, $|T_i\setminus U_i|=\eta ks_i+o(s_i)$, so $\binom{|T_i\setminus U_i|}{k}=\Theta_{k,\eta}(s_i^k)$.
    By \eqref{eq:dense-high-degree-core}, for each sufficiently large $i$ choose a member $B\in\mathcal G_i\cap\binom{T_i\setminus U_i}k$.
    Fix an arbitrary set $A\in\binom{U_i}k$.
    Now we show that $A\in \mathcal G_i$.
    To this end, write $A=\{a_1,\ldots,a_k\}$ and $B=\{b_1,\ldots,b_k\}$.
    Since $A\subseteq U_i$ and $B\subseteq T_i\setminus U_i$, the sets $A,B$ are disjoint and every vertex of $A$ precedes every vertex of $B$ in the ordering of $T_i$.
    For $0\le j\le k$, set
    \[
      B_j=\{a_1,\ldots,a_j,b_{j+1},\ldots,b_k\}.
    \]
    We have $B_0=B\in\mathcal G_i$ and $B_k=A$.
    Suppose that $1\le j\le k$ and $B_{j-1}\in\mathcal G_i$.
    Since $b_j\in B_{j-1}$ and $a_j\notin B_{j-1}$, if $B_j\notin\mathcal G_i$, then $S_{a_jb_j}$ would replace $B_{j-1}$ by $B_j$.
    This would contradict $S_{a_jb_j}(\mathcal G_i)=\mathcal G_i$, which holds because $a_j$ precedes $b_j$ in $T_i$.
    Thus, $B_j\in\mathcal G_i$, and induction shows that $A=B_k\in\mathcal G_i$.
    Since $A$ was arbitrary, $U_i$ is a clique of order $h_i\ge(1-\eta)ks_i$.
  \end{proof}

  Since $\mathcal G_i$ has no isolated vertices, $|V^+(\mathcal G_i)|=n_i\ge k(s_i+1)$.
  For all sufficiently large $i$, \cref{lem:support-preserving-compression,lem:large-clique} therefore imply
  \[
    J_k(n_i,s_i)\le|\mathcal F_i|=|\mathcal G_i|\le J_k(n_i,s_i).
  \]
  Equality holds throughout.
  Since shifts preserve the number of members, every family along the sequence of shifts from $\mathcal F_i$ to $\mathcal G_i$ has exactly $J_k(n_i,s_i)$ members.
  The equality case of \cref{lem:large-clique} shows that $\mathcal G_i$ is one of the constructions $\mathcal J_1$ and $\mathcal J_2$ attaining $J_k(n_i,s_i)$ and hence contains a clique on $q_i=ks_i+k-2$ vertices.

  It remains to transfer the equality classification from $\mathcal G_i$ back to $\mathcal F_i$ along the chosen shift sequence.
  Fix any sufficiently large $i$ such that $q_i-2\ge(1-\eta)ks_i$.
  We now argue backwards along the shift sequence for this fixed $i$.
  For the initial step, we have already shown that $\mathcal G_i$ contains a clique on $q_i$ vertices.
  For the induction step, consider two consecutive families $\mathcal K,\mathcal K'$ in the shift sequence, where $\mathcal K'=S_{uv}(\mathcal K)$.
  Assume inductively that $\mathcal K'$ contains a clique on a $q_i$-set $Q$.
  We show that the preceding family $\mathcal K$ satisfies an equality case of \cref{lem:large-clique}, which will also establish the existence of a clique on $q_i$ vertices in $\mathcal K$.
  A shift does not change membership of any $k$-set avoiding both $u$ and $v$, so $\binom{Q\setminus\{u,v\}}k\subseteq\mathcal K$.
  Hence,
  \[
    \omega(\mathcal K)\ge q_i-2=ks_i+k-4\ge(1-\eta)ks_i.
  \]

  The family $\mathcal K$ has no isolated vertices, satisfies $\nu(\mathcal K)\le s_i$, and has $|\mathcal K|=J_k(n_i,s_i)$.
  \Cref{lem:large-clique} again shows that $\mathcal K$ is one of the constructions $\mathcal J_1$ and $\mathcal J_2$ attaining $J_k(n_i,s_i)$ and, in particular, contains a clique on $q_i$ vertices.
  This completes the backward induction, so $\mathcal F_i$ is one of the constructions $\mathcal J_1$ and $\mathcal J_2$ attaining $J_k(n_i,s_i)$, a contradiction.
  This proves the bound and equality classification for families without isolated vertices.

  It remains to extend these conclusions to families with isolated vertices.
  Let $\mathcal F\subseteq\binom{[n]}k$ satisfy the hypotheses of \cref{cor:clique-hilton-milner}, and set $n'=|V^+(\mathcal F)|$.
  Delete its isolated vertices and relabel the remaining vertices to obtain a family $\mathcal F'\subseteq\binom{[n']}k$ with no isolated vertices.
  Deleting isolated vertices and relabeling preserve both the number of members and the matching number, so $\nu(\mathcal F')\le s$ and $|\mathcal F'|=|\mathcal F|$.
  For fixed $k,s$, $J_k^{(1)}(n,s)$ and $J_k^{(2)}(n,s)$ strictly increase with $n$, so their maximum $J_k(n,s)$ does as well.
  Since $k(s+1)\le n'\le n\le Rs$, the case without isolated vertices gives $|\mathcal F'|\le J_k(n',s)$, and hence
  \[
    |\mathcal F|=|\mathcal F'|\le J_k(n',s)\le J_k(n,s).
  \]
  Equality forces $n'=n$, so $\mathcal F$ has no isolated vertices and is one of the constructions $\mathcal J_1$ and $\mathcal J_2$ attaining $J_k(n,s)$.
\end{proof}

\section*{Declaration on the use of generative AI}
Generative AI was used for language polishing and for developing parts of the mathematical arguments in this manuscript.
For \cref{thm:iid-tail}, the authors independently developed the proof framework and proofs for $k=2,3,4,5$ and $k\ge10^{100}$ without any AI assistance.
Building on the authors' proof framework, generative AI developed a unified argument valid for every $k\ge6$ in early September 2026, thereby resolving the remaining cases $6\le k<10^{100}$.
The main refinements were a sharper estimate for the conditional shortfall in {\cref{prop:finite-shortfall-margin}}, an exact counting lower bound without the additive error term in the earlier estimate in \cref{lem:exact-upper-count}, and refined variance and tail bounds for the conditional distributions in \cref{lem:conditional-variance-bounds,lem:averaged-contact-normal,lem:conditional-support,lem:small-conditional-tail}.
For \cref{thm:emc}, the authors' initial argument left a gap of width $o(s)$ in the range of $n$ around $n=\rho_k s$ as $s\to\infty$ for fixed $k$, where the cover and clique constructions have asymptotically equal sizes.
This gap was closed following a suggestion of using weak convergence, given by generative AI (see \cref{lem:matching-sequence,sec:emc-proof}).
All other arguments proving \cref{thm:emc} were developed independently by the authors without any AI assistance.
The authors have independently checked  all AI-generated arguments in detail and take full responsibility for the mathematical correctness and content of the paper.

\section*{Acknowledgement}
The authors would like to express sincere thanks to Ziyan Liu and Chen L\"{u} for their kind support and encouragement during the completion of this manuscript.

\appendix
\section{Proofs of auxiliary results}
\subsection{Proof of Lemma~\ref{lem:crossing-upper}}
\label{app:crossing-upper}

We first establish the derivative estimates used in the proof of \cref{lem:crossing-upper}.

\begin{lemma}\label{lem:log-power-curvature}
  Let $z\in\mathbb R$, $C>z^2/4$, and $T>0$.
  Define
  \[
    f_z(t)=\frac{\log(1+zt+Ct^2)}t\text{ for }t>0, \text{ and }f_z(0)=z.
  \]
  Write $\beta_z^2=C-z^2/4$.
  The following statements hold.
  \begin{enumerate}[label=\textup{(\roman*)},leftmargin=*,itemsep=0pt]
    \item The function $f_z$ has continuous derivatives of all orders on $[0,\infty)$ with
          \[
            f_z'(0)=C-z^2/2 \text{, }f_z''(0)=2z(z^2/3-C) \text{ and }
            f_z'''(0) =-3z^4/2+6Cz^2-3C^2.
          \]
    \item If $z>0$ and $\beta_z^2<(z/2+Cts)^2<3\beta_z^2$ for all $(t,s)\in[0,T]\times[0,1]$, then $0<f_z'''(t)\le3C^2$ for $0\le t\le T$.
    \item If $z<0$ and $(z/2+Cts)^2>6\beta_z^2$ for all $(t,s)\in[0,T]\times[0,1]$, then $0<-f_z'''(t)\le\frac{-f_z'''(0)}{(1+zT+CT^2)^2}$ for $0\le t\le T$.
  \end{enumerate}
\end{lemma}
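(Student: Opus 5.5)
The plan is to represent $f_z$ through an integral that makes its smoothness at $0$ and the sign of its third derivative transparent. Write $p(t)=1+zt+Ct^2$. The hypothesis $C>z^2/4$ means $p$ has no real roots, so $p>0$ on $\mathbb R$. Then
\[
  f_z(t)=\frac{\log p(t)}{t}=\int_0^1\frac{p'(ts)}{p(ts)}\dd s=\int_0^1 g(ts)\dd s,
  \qquad g(u)=\frac{z+2Cu}{1+zu+Cu^2}.
\]
Since $g$ is real-analytic on $\mathbb R$, differentiation under the integral shows that $f_z$ has continuous derivatives of all orders on $[0,\infty)$, with $f_z^{(n)}(t)=\int_0^1 s^n g^{(n)}(ts)\dd s$. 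In particular $f_z^{(n)}(0)=g^{(n)}(0)/(n+1)$. Expanding $\log p(t)=zt+(C-z^2/2)t^2+(z^3/3-zC)t^3+\cdots$ and dividing by $t$ gives the Taylor coefficients of $f_z$, and multiplying them by $n!$ yields the three values in (i). This step is routine bookkeeping.

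For (ii) and (iii) I will complete the square: $p(u)=(\sqrt C u+z/(2\sqrt C))^2+\beta_z^2/C$. Set $y=z/2+Cu$, so that $p(u)=(y^2+\beta_z^2)/C$ and $g(u)=2Cy/(y^2+\beta_z^2)$. Since $dy/du=C$, the chain rule gives
\[
  g'''(u)=2C^4\,\frac{d^3}{dy^3}\frac{y}{y^2+\beta^2}
  =2C^4\cdot\frac{6(y^4-6\beta^2y^2+\beta^4)}{(y^2+\beta^2)^4}.
\]
I will check this formula by differentiating directly, or via partial fractions in $1/(y\pm i\beta)$. The numerator factors as $(y^2-(3+2\sqrt2)\beta^2)(y^2-(3-2\sqrt2)\beta^2)$. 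The sign of $f_z'''(t)=\int_0^1 s^3g'''(ts)\dd s$ is therefore governed by where $y^2$ lies relative to $(3\pm2\sqrt2)\beta^2$ along the segment $y=z/2+Cts$.

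In (ii) the hypothesis gives $\beta^2<y^2<3\beta^2$. This range lies inside $((3-2\sqrt2)\beta^2,(3+2\sqrt2)\beta^2)$, so the numerator is negative there, which makes $g'''<0$.

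That sign is opposite to the claimed $f_z'''>0$, so either my sign is wrong or the convention differs. I will recheck it by comparing with $f_z'''(0)=-3z^4/2+6Cz^2-3C^2=-3(y_0^4-6\beta^2y_0^2+\beta^4)\cdot(\text{positive})$ at $y_0=z/2$. Evaluated correctly, $g'''$ carries the factor $-(y^4-6\beta^2y^2+\beta^4)$, and the positivity then matches. With the correct sign the integrand is positive on the whole range. The bound $f_z'''(t)\le 3C^2$ then follows by maximizing $|q(y)|/(y^2+\beta^2)^4$, with $q$ the quartic numerator, over $y^2\in(\beta^2,3\beta^2)$, and integrating against $s^3$, which contributes the factor $1/4$.

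In (iii) the hypothesis $y^2>6\beta^2$ keeps $y^2$ on the far side of both roots except where $6<3+2\sqrt2\approx5.83$ fails. Since $6\beta^2>(3+2\sqrt2)\beta^2$, the numerator has a definite sign and $-f_z'''>0$.

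The main obstacle is the comparison bound in (iii). I will write $-g'''(ts)$ as $(\text{its value at }0)\times\phi(y)$, where $\phi$ is the ratio of the quartic numerator to $(y^2+\beta^2)^4$ normalized at $y_0$. For $z<0$, $y=z/2+Cts$ increases from the negative value $z/2$ while $y^2$ decreases. I will show $\phi$ is controlled by $(p(0)/p(ts))^2=((y_0^2+\beta^2)/(y^2+\beta^2))^2$, using that on $y^2>6\beta^2$ the quartic ratio is monotone in $y^2$. Since $p$ decreases on the relevant interval, $p(ts)\ge p(T)=1+zT+CT^2$. Integrating $s^3$ then gives $-f_z'''(t)\le -f_z'''(0)/p(T)^2$. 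The delicate part is verifying that the ratio $(y^4-6\beta^2y^2+\beta^4)/(y_0^4-6\beta^2y_0^2+\beta^4)$ is at most $((y^2+\beta^2)/(y_0^2+\beta^2))^2$ when $6\beta^2<y^2\le y_0^2$. I would reduce this to a one-variable inequality in $w=y^2/\beta^2$ by checking monotonicity of $(w^2-6w+1)/(w+1)^2$ for $w>6$: its derivative has the sign of $8(w^2-1)>0$, which gives exactly the needed decrease.
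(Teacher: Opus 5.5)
Your route is the paper's: $f_z(t)=\int_0^1 L'(ts)\dd s$ with $L=\log p$, differentiation under the integral, and an explicit formula for $L^{(4)}$ in the completed-square variable $y=z/2+Cu$. Your corrected formula $g'''=-12C^4q(y)/(y^2+\beta^2)^4$, with $q(y)=y^4-6\beta^2y^2+\beta^4$, is the paper's $L^{(4)}=12C^2(1-2\xi^2)/B^2$. Here $B=p=(y^2+\beta^2)/C$ and $\xi=(y^2-\beta^2)/(y^2+\beta^2)$, so $1-2\xi^2=-q(y)/(y^2+\beta^2)^2$. Your treatment of (i) and of both sign claims is correct. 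So is the comparison bound in (iii), via monotonicity of $(w^2-6w+1)/(w+1)^2=2\xi^2-1$. The formula you display before rechecking has the wrong sign; in fact $\frac{d^3}{dy^3}\frac{y}{y^2+\beta^2}=-6q(y)/(y^2+\beta^2)^4$.

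The gap is the upper bound $f_z'''\le 3C^2$ in (ii). Maximizing $-q(y)/(y^2+\beta^2)^4$ over $y^2\in(\beta^2,3\beta^2)$ cannot produce it. With $w=y^2/\beta^2$, the function $-(w^2-6w+1)/(w+1)^4$ is decreasing on $(1,3)$ with supremum $1/4$ at $w=1$. This route therefore gives only $f_z'''\le\frac14\cdot 12C^4\cdot\frac{1}{4\beta^4}=\frac{3C^4}{4\beta^4}$. The hypothesis of (ii) at $ts=0$ says $z^2/4>\beta^2$, hence $C=\beta^2+z^2/4>2\beta^2$ and $3C^4/(4\beta^4)>3C^2$. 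So your bound is always strictly weaker than the claim.

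What is missing is control of the denominator from the starting point of the segment. You have $y^2+\beta^2=C\,p(ts)$, and $p(ts)\ge 1$ because $z>0$ and $ts\ge0$. You also have $-q(y)\le (y^2+\beta^2)^2$, since the difference is $2(y^2-\beta^2)^2$. Together these give $g'''(ts)\le 12C^4/(y^2+\beta^2)^2=12C^2/p(ts)^2\le 12C^2$, and integrating against $s^3$ yields $3C^2$. This is exactly the paper's use of $B\ge1$ and $1-2\xi^2\le1$.

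Alternatively, maximize only over the actual range of $y$. Since $y$ increases from $z/2>0$, the decreasing function above is largest at $ts=0$. This gives $f_z'''(t)\le f_z'''(0)=3C^2-\frac32(z^2-2C)^2\le 3C^2$.

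A smaller point in (iii): your claims that $y^2$ and $p$ decrease along the segment need $y<0$ on all of $[0,T]$. This follows from $y^2>6\beta^2>0$ and the intermediate value theorem, as in the paper, and should be stated.
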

\begin{proof}
  Write $B(v)=1+zv+Cv^2$ and $L(v)=\log B(v)$.
  Since $C>z^2/4$, we have $B(v)\ge1-z^2/(4C)>0$ for every $v\in\mathbb R$.
  Since $L(0)=0$, we have $f_z(t)=\int_0^1L'(ts)\dd s$ for $t>0$.
  At $t=0$, the integral equals $L'(0)=z=f_z(0)$.
  Since $L$ is smooth on $\mathbb R$, all derivatives of $L'(ts)$ with respect to $t$ are continuous and bounded on $[-M,M]\times[0,1]$ for every $M>0$.
  Differentiating under the integral sign, we obtain
  \[
    f_z^{(j)}(t)=\int_0^1s^jL^{(j+1)}(ts)\dd s
  \]
  for every integer $j\ge0$ and every $t\ge0$.
  In particular, $f_z$ has continuous derivatives of all orders on $[0,\infty)$.
  At zero, we have $f_z^{(j)}(0)=L^{(j+1)}(0)/(j+1)$.
  Direct differentiation yields
  \[
    L''(0)=2C-z^2 \text{, }
    L'''(0)=2z^3-6Cz \text{ and }
    L^{(4)}(0)=-6z^4+24Cz^2-12C^2.
  \]
  This proves \textup{(i)}.

  To prove \textup{(ii)} and \textup{(iii)}, define
  \[
    \xi(v)=\frac{(z/2+Cv)^2-\beta_z^2}{(z/2+Cv)^2+\beta_z^2}
    =1-\frac{2\beta_z^2}{CB(v)}.
  \]
  Since $B''=2C$ and $B'''=0$, differentiation yields
  \[
    L''=-\frac{2C\xi}{B} \text{ and }
    L^{(4)}=\frac{12C^2}{B^2}-6(L'')^2
    =\frac{12C^2}{B^2}(1-2\xi^2).
  \]
  Hence,
  \begin{equation}\label{eq:log-power-third-derivative}
    f_z'''(t)=12C^2\int_0^1
    \frac{s^3\bigl(1-2\xi(ts)^2\bigr)}{B(ts)^2}\dd s.
  \end{equation}

  Under the hypotheses of \textup{(ii)}, we have $B(v)\ge1$ and $0<\xi(v)<1/2$ for $0\le v\le T$.
  By \eqref{eq:log-power-third-derivative}, for $0\le t\le T$ we obtain
  \[
    0<f_z'''(t)\le12C^2\int_0^1s^3\dd s=3C^2.
  \]
  This proves \textup{(ii)}.

  Under the hypotheses of \textup{(iii)}, we have $(z/2+Cv)^2>6\beta_z^2>0$, so $z/2+Cv\ne0$ for $0\le v\le T$.
  Since $z/2<0$ and $v\mapsto z/2+Cv$ is continuous, the intermediate value theorem implies $z/2+Cv<0$ for $0\le v\le T$.
  Thus, $B'(v)=z+2Cv<0$, and $\xi(v)=1-2\beta_z^2/(CB(v))$ also decreases with $v$.
  Since $5/7<\xi(v)<1$, we have $0<2\xi(v)^2-1\le2\xi(0)^2-1$ for $0\le v\le T$.
  Using $B(ts)\ge B(T)>0$ and $2\xi(ts)^2-1\le2\xi(0)^2-1$ in \eqref{eq:log-power-third-derivative}, for $0\le t\le T$ we obtain
  \[
    0<-f_z'''(t)
    \le\frac{3C^2\bigl(2\xi(0)^2-1\bigr)}{B(T)^2}
    =\frac{-f_z'''(0)}{(1+zT+CT^2)^2},
  \]
  proving \textup{(iii)}.
\end{proof}

We now prove \cref{lem:crossing-upper}.

\begin{proof}
  Recall that $\gamma=-\log(1-\mathrm{e}^{-1})$.
  Set $C=41/500$, $T=1/5$, and $\rho=37/100$.
  Write $g_-=45867/100000$ and $g_+=11467/25000$.
  The alternating Taylor bounds for the exponential imply
  \[
    \begin{aligned}
      \mathrm{e}^{-g_-}+\mathrm{e}^{-1}
       & >\sum_{j=0}^{7}\frac{(-g_-)^j}{j!}
      +\sum_{j=0}^{9}\frac{(-1)^j}{j!}>1,   \\
      \mathrm{e}^{-g_+}+\mathrm{e}^{-1}
       & <\sum_{j=0}^{8}\frac{(-g_+)^j}{j!}
      +\sum_{j=0}^{12}\frac{(-1)^j}{j!}<1.
    \end{aligned}
  \]
  Since $\mathrm{e}^{-\gamma}+\mathrm{e}^{-1}=1$, we obtain
  \begin{equation}\label{eq:crossing-gamma-bounds}
    g_-<\gamma<g_+.
  \end{equation}

  For $0\le t\le T$, define
  \[
    D(t)=f_\gamma(t)-\log\bigl(1+\mathrm{e}^{f_{\gamma-1}(t)}\bigr),
  \]
  where $f_z$ is the function from \cref{lem:log-power-curvature}.
  Since $\mathrm{e}^\gamma=1+\mathrm{e}^{\gamma-1}$, we have $D(0)=0$.
  For every $k\ge5$, the inequality $D(1/k)>0$ is equivalent to $g_k(\gamma+C/k)>1$.
  We will prove that $D$ is strictly concave on $[0,T]$ and $D(T)>0$.

  Set $p(t)=\mathrm{e}^{f_{\gamma-1}(t)}/(1+\mathrm{e}^{f_{\gamma-1}(t)})$.
  Differentiating $D$ twice, we obtain
  \begin{equation}\label{eq:crossing-second-derivative}
    D''(t)=f_\gamma''(t)-p(t)f_{\gamma-1}''(t)
    -p(t)(1-p(t))\bigl(f_{\gamma-1}'(t)\bigr)^2.
  \end{equation}
  We first show that $f_{\gamma-1}''(t)<0$ and $p(t)\le\rho$ on $[0,T]$.
  Since $9/20<\gamma<23/50$, we have $C>z^2/4$ for $z=\gamma,\gamma-1$.
  For $(t,s)\in[0,T]\times[0,1]$, we have $\gamma/2+Cts>0$ and $(\gamma-1)/2+Cts<0$.
  As $ts$ increases from $0$ to $T$, the square $(\gamma/2+Cts)^2$ increases and the square $((\gamma-1)/2+Cts)^2$ decreases.
  Using $9/20<\gamma<23/50$ and the values of $z/2+Cts$ for $z=\gamma,\gamma-1$ at $ts=0$ and $ts=T=1/5$, we obtain
  \[
    \begin{aligned}
      \beta_\gamma^2
       & <\frac{\gamma^2}{4}
      \le (\gamma/2+Cts)^2
      \le\left(\frac\gamma2+\frac{41}{2500}\right)^2
      <3\beta_\gamma^2,                                       \\
      ((\gamma-1)/2+Cts)^2
       & \ge\left(\frac{\gamma-1}{2}+\frac{41}{2500}\right)^2
      >6\beta_{\gamma-1}^2.
    \end{aligned}
  \]
  Hence, \cref{lem:log-power-curvature}\textup{(ii),(iii)} imply $0<f_\gamma'''(t)\le3C^2$ and $f_{\gamma-1}'''(t)<0$ for $0\le t\le T$, respectively.
  By \cref{lem:log-power-curvature}\textup{(i)}, we also have $f_{\gamma-1}'(0)<0$ and $f_{\gamma-1}''(0)<0$.
  It follows that $f_{\gamma-1}'(t)<0$ and $f_{\gamma-1}''(t)<0$ throughout $[0,T]$.
  Since $f_{\gamma-1}$ decreases, we have $0<p(t)\le p(0)=\mathrm{e}^{-1}<\rho$.
  Using $f_{\gamma-1}''(t)<0$ and $0<p(t)\le\rho<1$ in \eqref{eq:crossing-second-derivative}, we obtain
  \[
    D''(t)\le f_\gamma''(t)-\rho f_{\gamma-1}''(t).
  \]
  It therefore suffices to prove $f_\gamma''(t)-\rho f_{\gamma-1}''(t)<0$ for $0\le t\le T$.

  By \cref{lem:log-power-curvature}\textup{(i)}, for $g_--1\le z\le g_+-1$ we have $-f_z'''(0)>0$ and $\frac{\partial}{\partial z}(-f_z'''(0))=6z(z^2-2C)<0$.
  The denominator $(1+zT+CT^2)^2$ is positive and increasing in $z$ on $[g_--1,g_+-1]$.
  Thus, \cref{lem:log-power-curvature}\textup{(iii)} implies, for every $t\in[0,T]$,
  \[
    0<-f_{\gamma-1}'''(t)
    \le\frac{-f_{\gamma-1}'''(0)}{(1+(\gamma-1)T+CT^2)^2}
    \le\frac{-f_{g_--1}'''(0)}{(1+(g_--1)T+CT^2)^2}
    <\frac1{150}.
  \]
  Also, $\frac{\partial}{\partial z}f_z''(0)=2(z^2-C)>0$ on both $[g_-,g_+]$ and $[g_--1,g_+-1]$.
  For every $t\in[0,T]$, integrating $f_\gamma'''(u)\le3C^2$ and $-f_{\gamma-1}'''(u)<1/150$ over $0\le u\le t$, we obtain
  \begin{equation}\label{eq:crossing-relative-curvature}
    \begin{aligned}
      f_\gamma''(t)-\rho f_{\gamma-1}''(t)
       & \le f_\gamma''(0)-\rho f_{\gamma-1}''(0)
      +t\left(3C^2+\frac\rho{150}\right)          \\
       & \le f_{g_+}''(0)-\rho f_{g_--1}''(0)
      +T\left(3C^2+\frac\rho{150}\right)
      <-\frac1{12500}.
    \end{aligned}
  \end{equation}
  By \eqref{eq:crossing-relative-curvature}, we have $D''(t)<0$ for $0\le t\le T$.

  Finally, the strict monotonicity of $g_5$ implies
  \[
    g_5(\gamma+C/5)
    >g_5(g_-+C/5)
    =\left(\frac{547507}{500000}\right)^5
    -\left(\frac{447507}{500000}\right)^5>1.
  \]
  Hence, $D(T)>0$.
  For $0<t\le T$, concavity yields
  \[
    D(t)\ge(1-t/T)D(0)+(t/T)D(T)>0.
  \]
  Thus, $g_k(\gamma+C/k)>1=g_k(c_k)$ for every $k\ge5$.
  The strict monotonicity of $g_k$ and \eqref{eq:crossing-gamma-bounds} imply $c_k<\gamma+C/k<g_++C/k$, as required.
\end{proof}

\subsection{Proof of Lemma~{\ref{lem:coupled-multipliers}}}
\label{app:coupled-multipliers}

\begin{proof}
  Under the assumptions $k\ge6$ and $11/24<c<1/2$, we have $12/13<a<1$, $5/12<ac<1/2$, and $a=1-ac/k$.

  By \cref{fact:comparison-arrays} and the definition of $\alpha_j$, for $1\le l<s$ we have
  \begin{equation}\label{eq:sec3-explicit-slopes}
    \alpha_{k-l}
    =\frac{1}{s(s+1)[1-(1-s^{-2})^{s-l}]}-\frac1s.
  \end{equation}
  Set $h_l=k\Gamma_{k-l}/v_{k-l}$ for every integer $l$ with $1\le l\le s$.
  For every integer $l$ with $1\le l<s$, we have
  \[
    \frac{v_{k-l-1}}{v_{k-l}}=\frac cl\left(1-\frac lk\right)
    \text{ and }\frac{a^kb_{k-l}}{v_{k-l}}=\frac{ac}{l}.
  \]
  The initial value and the defining recurrence for $\Gamma_j$ therefore give
  \begin{equation}\label{eq:sec3-h-recurrence}
    \begin{aligned}
      h_s & =kL(w),                                                              \\
      (1+\alpha_{k-l})h_l
          & =\frac cl\left[\left(1-\frac lk\right)h_{l+1}-a\right]
      +kL(w)\alpha_{k-l}
          &                                                        & (1\le l<s).
    \end{aligned}
  \end{equation}

  We first bound $\alpha_{k-l}$ from below.
  For $0<q<1$ and an integer $m\ge1$, summing $q^j+q^{m-j}\le1+q^m$ for $1\le j<m$, we obtain $(1+q)/(1-q)\le2m/(1-q^m)-m$.
  Hence,
  \begin{equation}\label{eq:sec3-geometric-reciprocal}
    \frac1{1-q^m}
    \ge\frac1{m(1-q)}+\frac{m-1}{2m},
  \end{equation}
  with strict inequality when $m\ge2$.
  Combining \eqref{eq:sec3-explicit-slopes} with \eqref{eq:sec3-geometric-reciprocal} for $q=1-s^{-2}$ and $m=s-l$, we obtain
  \begin{equation}\label{eq:sec3-multiplier-slope-lower}
    \alpha_{k-l}\ge
    \frac{(2s+1)l-s-1}{2s(s+1)(s-l)},
  \end{equation}
  for $1\le l<s$, with strict inequality when $l\le s-2$.

  We first show that $h_l>1$ for $3\le l\le s$.
  Recall that $kL(w)=s(s-1)/2$.
  By \eqref{eq:sec3-explicit-slopes}, $\alpha_{k-l}$ increases with $l$, so, by \eqref{eq:sec3-multiplier-slope-lower}, for $3\le l<s$,
  \[
    (kL(w)-1)\alpha_{k-l}
    \ge\frac{(s-2)(5s+2)}{4s(s-3)}
    =\frac54+\frac{7s-4}{4s(s-3)}>\frac54.
  \]
  For $l=2$ and $l=1$, \eqref{eq:sec3-multiplier-slope-lower} implies
  \[
    \begin{aligned}
      \left(kL(w)-\frac23\right)\alpha_{k-2}
                        & >\frac34+\frac{3s^2+3s-4}{12s(s+1)(s-2)}>\frac34, \\
      kL(w)\alpha_{k-1} & >\frac{s}{4(s+1)}\ge\frac15.
    \end{aligned}
  \]
  For $3\le l<s$, suppose that $h_{l+1}>0$.
  Since $l<k$, the term $(c/l)(1-l/k)h_{l+1}$ is positive.
  Subtracting $1+\alpha_{k-l}$ from \eqref{eq:sec3-h-recurrence}, we therefore obtain
  \[
    (1+\alpha_{k-l})(h_l-1)
    >-\frac{ac}{l}-1+(kL(w)-1)\alpha_{k-l}
    >-\frac16-1+\frac54>0.
  \]
  Starting from $h_s=kL(w)>1$, downward induction shows that $h_l>1$ for every $3\le l\le s$.
  Taking $l=2$ in \eqref{eq:sec3-h-recurrence} and using $h_3>1$, we obtain
  \[
    \begin{aligned}
      (1+\alpha_{k-2})\left(h_2-\frac23\right)
       & >\frac c2\left(1-\frac2k-a\right)+\frac34-\frac23 \\
       & >-\frac ck+\frac1{12}>0.
    \end{aligned}
  \]
  The last line uses $a<1$ and $c/k<1/12$, so $h_2>2/3$.
  Finally, taking $l=1$ in \eqref{eq:sec3-h-recurrence} and using $h_2>2/3$ and $a=1-ac/k$, we obtain
  \[
    \begin{aligned}
      (1+\alpha_{k-1})h_1
       & >\frac15+c\left[\frac23\left(1-\frac1k\right)-a\right] \\
       & =\frac15-c\left[\frac13+\frac{2/3-ac}{k}\right]        \\
       & >\frac15-\frac12\left(\frac13+\frac1{24}\right)>0.
    \end{aligned}
  \]
  Here $(2/3-ac)/k<1/24$ because $ac>5/12$ and $k\ge6$, and we also used $c<1/2$.
  Thus, $h_l>0$ for every $1\le l\le s$.
  Since $v_{k-l}>0$, it follows that $\Gamma_j>0$ for every $k-s\le j<k$.

  We next prove $r_b>x$ and $L(w)x/(r_b-x)^2<9/10$, as asserted in \ref{item:coupled-ratio}.
  Since $c<1/2$,
  \[
    p_0=(1-x)^k
    <\left(\frac{2k-1}{2k+1}\right)^k
    <\mathrm{e}^{-1}<\frac38.
  \]
  The node values $H(W_s)=(1-1/s)^s$ increase with $s$, so $H(W_s)\ge H(W_4)=81/256$.
  Since $c>11/24>4/9$, the definition of $r_b$ implies
  \[
    \frac{r_b}{x}-1
    =\frac{(s+c)H(W_s)}{p_0}-1
    >\frac{27}{32}\left(s+\frac49\right)-1
    =\frac{27s-20}{32}>0.
  \]
  Consequently,
  \begin{equation}\label{eq:sec3-L-ratio}
    \frac{L(w)x}{(r_b-x)^2}
    =\frac{s(s-1)}{2a(r_b/x-1)^2}
    <\frac{1664s(s-1)}{3(27s-20)^2}
    \le\frac{104}{121}<\frac9{10}.
  \end{equation}
  The function $s\mapsto s(s-1)/(27s-20)^2$ is decreasing for real $s\ge4$, since
  \[
    \frac{\rd}{\rd s}
    \frac{s(s-1)}{(27s-20)^2}
    =\frac{20-13s}{(27s-20)^3}<0.
  \]
  At $s=4$, we have $1664s(s-1)/[3(27s-20)^2]=104/121$, proving \eqref{eq:sec3-L-ratio}.

  For \ref{item:coupled-comparison}, the weights $v_j$ are increasing, since $v_{j+1}/v_j=k(k-1-j)/(c(j+1))>1$ for $1\le j\le k-2$.
  Thus, {\cref{lem:coupled-energy}\textup{\ref{item:coupled-profile-general}}} gives $F_{k-1}(\mathsf P)\le V_*(w)$.
  By \ref{item:coupled-multipliers}, every $\Gamma_j$ is positive, so \cref{lem:coupled-energy}\textup{\ref{item:coupled-energy-bound}} yields
  \[
    \E(S_k-1)_+-L(w)F_{k-1}(\mathsf P)\ge E_*(w)-L(w)V_*(w).
  \]
\end{proof}

\subsection{Proof of Lemma~{\ref{lem:crossing-lower-finite}}}
\label{app:crossing-lower-finite}

\begin{proof}
  Set $c_*=11/24+2/(25k)$ and $\eta=(1/2-c_*)/k=1/(24k)-2/(25k^2)$.
  Then $0<\eta<1/(24k)$.
  Write $b=1-\eta$, so $0<b<1$.
  Since $g_k$ is strictly increasing and $g_k(c_k)=1$, it suffices to prove $g_k(c_*)<1$.
  By the definition of $g_k$, we have
  \[
    g_k(c_*)=\left(b+\frac{1}{2k}\right)^k
    -\left(b-\frac{1}{2k}\right)^k.
  \]
  Expanding the two powers and cancelling the even terms, we obtain
  \[
    g_k(c_*)
    =b^{k-1}\sum_{j=0}^{\lfloor(k-1)/2\rfloor}
    \frac{\binom{k-1}{2j}}{2j+1}(2kb)^{-2j}.
  \]
  For $1\le j\le\lfloor(k-1)/2\rfloor$, a calculation yields
  \[
    \frac{\binom{k-1}{2j}}{2j+1}
    =\frac{\prod_{r=0}^{j-1}(k-1-2r)(k-2-2r)}
    {\prod_{r=1}^{j}(2r)(2r+1)}
    \le\frac{1}{j!}\left(\frac{(k-1)(k-2)}{6}\right)^j,
  \]
  since each numerator pair is at most $(k-1)(k-2)$ and $(2r)(2r+1)\ge6r$ for $r\ge1$.
  At $j=0$, both sides equal $1$.
  Using the bound for $\binom{k-1}{2j}/(2j+1)$ term by term and adding the nonnegative terms with $j>\lfloor(k-1)/2\rfloor$, we obtain
  \begin{equation}\label{eq:crossing-lower-exponential-bound}
    \begin{aligned}
      0<g_k(c_*)
       & \le b^{k-1}\sum_{j=0}^{\lfloor(k-1)/2\rfloor}
      \frac1{j!}\left(\frac{(k-1)(k-2)}{24k^2b^2}\right)^j     \\
       & \le b^{k-1}\sum_{j=0}^{\infty}
      \frac1{j!}\left(\frac{(k-1)(k-2)}{24k^2b^2}\right)^j     \\
       & =b^{k-1}\exp\left(\frac{(k-1)(k-2)}{24k^2b^2}\right).
    \end{aligned}
  \end{equation}

  The inequality $\log(1-\eta)\le-\eta-\eta^2/2$ implies
  \[
    b^2\log b
    \le-(1-\eta)^2\left(\eta+\frac{\eta^2}{2}\right)
    =-\eta+\frac32\eta^2-\frac12\eta^4
    <-\eta+\frac32\eta^2.
  \]
  Taking logarithms in \eqref{eq:crossing-lower-exponential-bound} and using $\eta^2<1/(576k^2)$, we obtain
  \[
    \begin{aligned}
      \frac{b^2}{k-1}\log g_k(c_*)
       & \le b^2\log b+\frac{k-2}{24k^2}<-\eta+\frac32\eta^2+\frac{k-2}{24k^2} \\
       & <\frac{1}{k^2}
      \left(\frac{2}{25}-\frac{1}{12}+\frac{1}{384}\right)
      =-\frac{7}{9600k^2}<0.
    \end{aligned}
  \]
  Thus, $g_k(c_*)<1=g_k(c_k)$, and the strict monotonicity of $g_k$ implies $c_k>c_*$.
\end{proof}
\subsection{Proof of Lemma~{\ref{lem:node-shortfall-margin}}}
\label{app:node-shortfall-margin}

\begin{proof}
  We first prove \ref{item:profile-positive}.
  Recall that $E_s=E_*(W_s)$ and $V_s=V_*(W_s)$ for $4\le s\le k$.
  For $4\le s\le k$, write $q=(s-1)/s$.
  Let $N\sim Bin(k-1,\delta)$ and, independently, $I\sim Bernoulli(\delta)$.
  Substituting the comparison values from \cref{fact:comparison-arrays} into the definitions of $E_*$ and $V_*$ and then averaging over $I$, we obtain
  \begin{equation}\label{eq:node-energy-linear}
    \begin{aligned}
      1-V_s & =\E q^{(s-1-N)_+},                       \\
      k(\delta+E_s)
            & =\E\left[sq^{(s-N-I)_+}+(N+I-s)_+\right] \\
            & =(s-a)(1-V_s)+\E(N-s+1)_+.
    \end{aligned}
  \end{equation}
  By {\cref{lem:sec4-parameter-bounds}}, $p_0<\mathrm{e}^{-1}$.
  Since $(s-1)\log(1-1/s)>-1$, we have $1-V_s\ge q^{s-1}>\mathrm{e}^{-1}>p_0$, proving $V_s<a^k$.

  Since $V_*$ is affine on $[W_s,W_{s+1}]$ for every integer $s$ with $4\le s<k$, the node inequalities $V_s<a^k$ imply $V_*(t)<a^k$ throughout $[W_4,W_k]$.
  We now prove \ref{item:profile-margin} at the nodes.
  Set $z_s=(a^k-V_s)/p_0>0$.
  By \eqref{eq:node-energy-linear} and the definitions of $r_*$ and $M_*$,
  \[
    kr_*(W_s)\ge(s-a)(1+z_s)\text{ and }
    kM_*(W_s)=a\left(1+\frac1{z_s}\right).
  \]
  Thus, increasing $z_s$ improves the lower bound for $r_*(W_s)$ and decreases $M_*(W_s)$.
  For integers $n\ge3$, $0<a\le1$, and $z>0$, define
  \[
    \mathcal G_n(a,z)
    =n+1-2a+(n+1-a)z-\frac{19a}{20z}.
  \]
  By \eqref{eq:node-energy-linear} and the definitions of $r_*$ and $M_*$,
  \[
    \begin{aligned}
      k\left(r_*(W_s)-\frac{19M_*(W_s)}{20}-\frac{x}{20}\right)
       & =\mathcal G_{s-1}(a,z_s)+\frac{\E(N-s+1)_+}{p_0} \\
       & \ge\mathcal G_{s-1}(a,z_s).
    \end{aligned}
  \]
  Moreover, $\partial_z\mathcal G_n(a,z)=n+1-a+19a/(20z^2)>0$.
  Since $kx=a$, it suffices to prove $\mathcal G_{s-1}(a,z_s)>3a/20$ for every $4\le s\le k$.

  We first consider $s=4$.
  By \cref{fact:comparison-arrays} and $H(W_4)=(3/4)^4$,
  \begin{equation}\label{eq:fourth-node-remainder}
    (4-a)(1-V_4)-4p_0
    =4a^k\bigl[H(W_4)-A(W_4)\bigr]-a\Prob(N\ge4).
  \end{equation}
  Since consecutive summands have ratio $(k-1-h)c/[k(h+1)]\le c/5$ for $4\le h\le k-2$, comparison with a geometric series gives
  \[
    \begin{aligned}
      \frac{a\Prob(N\ge4)}{a^k}
       & =\sum_{h=4}^{k-1}\binom{k-1}{h}(c/k)^h \\
       & \le\frac{c^4}{24(1-c/5)}
      \prod_{j=1}^4\left(1-\frac jk\right)      \\
       & <\frac3{1250}\left(1-\frac5k\right)
      <\frac3{1250}-\frac1{100k}
      <4\{H(W_4)-A(W_4)\}.
    \end{aligned}
  \]
  For the second inequality, we use $c<19/40$ and
  \[
    \prod_{j=1}^4\left(1-\frac jk\right)
    \le\left(1-\frac3k\right)\left(1-\frac4k\right)
    =1-\frac7k+\frac{12}{k^2}
    \le1-\frac5k,
  \]
  where the last step uses $k\ge6$.
  The last inequality in the estimate for $a\Prob(N\ge4)/a^k$ follows from \cref{lem:initial-nodes}\textup{(iii)}.
  Thus, $z_4>a/(4-a)$ by \eqref{eq:fourth-node-remainder}, and hence
  \[
    \mathcal G_3(a,z_4)
    >\mathcal G_3\left(a,\frac a{4-a}\right)
    =\frac{4-a}{20}>\frac{3a}{20}.
  \]

  Now let $5\le s<k$ and set $n=s-1$, so $q=n/(n+1)$ and $k\ge n+2$.
  For every integer $s\ge5$, define $\widehat z_s$ on $[0,1]$ by
  \[
    \widehat z_s(u)=\frac{u(19s+4u)}{20s(s-u)}.
  \]
  We first check that $z_s>\widehat z_s(a)$ would imply $\mathcal G_{s-1}(a,z_s)>3a/20$.
  Substituting $\widehat z_s(a)$ into $\mathcal G_{s-1}$ gives
  \[
    \mathcal G_{s-1}(a,\widehat z_s(a))-\frac{3a}{20}
    =\frac{a(s-4a)(s-a)}{5s(19s+4a)}>0.
  \]
  Since $\mathcal G_{s-1}(a,z)$ increases with $z$, it remains to prove
  \begin{equation}\label{eq:node-unified-denominator}
    z_s>\widehat z_s(a).
  \end{equation}

  Since $q^{(n-N)_+}=q^{n-N}-(q^{n-N}-1)\1_{\{N>n\}}$, we may write $1-V_s=H_n-T_n$, where
  \[
    H_n=q^n(1+\delta/n)^{k-1}\text{ and }
    T_n=a^{k-1}\sum_{h=n+1}^{k-1}\binom{k-1}{h}(c/k)^h
    (q^{n-h}-1).
  \]
  Since $a^{k-1}/p_0=(1-p_0)/(ap_0)<3$ and $q^{-\ell}-1\le(\ell/n)q^{1-\ell}$ for $\ell\ge1$, we obtain
  \[
    \begin{aligned}
      \frac{T_n}{p_0}
       & <\frac{3c^{n+1}}{n(n+1)!}
      \sum_{\ell\ge1}\ell
      \left(\frac{c}{q(n+2)}\right)^{\ell-1} \\
       & =\frac{3c^{n+1}}{n(n+1)!}
      \left(1-\frac{c}{q(n+2)}\right)^{-2}.
    \end{aligned}
  \]
  Since $c<1/2$, it follows that
  \begin{equation}\label{eq:node-short-tail}
    \frac{T_n}{p_0}
    <\frac{3}{n2^{n+1}(n+1)!}
    \left(1-\frac{n+1}{2n(n+2)}\right)^{-2}
    <\begin{cases}
      1/4000,     & n=4,  \\
      1/(500n^3), & n\ge5
    \end{cases}.
  \end{equation}
  The case $n=4$ follows by substitution.
  For $n\ge5$, the quantity
  \[
    \frac{3n^2}{2^{n+1}(n+1)!}
    \left(1-\frac{n+1}{2n(n+2)}\right)^{-2}
  \]
  decreases with $n$ and is less than $1/500$ at $n=5$.

  To obtain a lower bound for $H_n/p_0$, we next prove
  \begin{equation}\label{eq:node-short-log}
    \log(H_n/p_0)>n\log q+1-\frac1{2n}+\frac{23a}{24n}
    -\begin{cases}
      (1-a)/70, & n=4,   \\
      0,        & n\ge5.
    \end{cases}
  \end{equation}
  For $0<v<1$, we have
  \[
    \log\frac{1+v}{1-v}>2v+\frac{2v^3}{3}>2v.
  \]
  Hence,
  \[
    -\log p_0>\frac{k}{k+c-1/2}+\frac1{12k^2}
    \text{ and }
    \log(1+\delta/n)>\frac{\delta}{n+\delta/2},
  \]
  where we use $c<1/2$.
  For $0<u<1/2$, define
  \[
    P_n(u)=n(1/2-u)+k(u-11/24)
    -u\left(u+\frac1{24}+\frac{u}{2n}\right)+\frac{n}{12k}.
  \]
  Then
  \[
    \begin{aligned}
       & nk\left[\log\frac{H_n}{p_0}-n\log q-1
      +\frac1{2n}-\frac{23a}{24n}\right]       \\
       & \qquad>n(1/2-c)-\frac{11k}{24}
      +\frac{kc(k-1/24)}{k+c+c/(2n)}+\frac{n}{12k}
      \ge P_n(c).
    \end{aligned}
  \]
  Since $k\ge n+2$, we have $P_n'(u)=k-n-1/24-2u(1+1/(2n))>0$ for every $u\in(0,1/2)$.
  For $n\ge5$, using $c>\bar c=11/24+2/(25k)$ and $\bar c<17/36$, we obtain
  \[
    P_n(c)>P_n(\bar c)>
    \frac2{25}+\frac5{24}
    -\frac{17}{36}\left(\frac{17}{36}+\frac1{24}
    +\frac{17}{360}\right)>0.
  \]
  For $n=4$, using $k(1-a)=ac>11/26$, we obtain
  \[
    P_4(c)+\frac{4k(1-a)}{70}
    >\frac2{25}+\frac16+\frac{11}{455}
    -\frac{17}{36}\left(\frac{17}{36}+\frac1{24}
    +\frac{17}{288}\right)>0.
  \]
  The bounds for $P_n(c)$ prove \eqref{eq:node-short-log}.

  Since $z_s=H_n/p_0-T_n/p_0-1$, it suffices to prove
  \[
    \log\frac{H_n}{p_0(1+\widehat z_s(a))}>\frac{T_n}{p_0}.
  \]
  For every integer $n\ge4$, define $\Lambda_n$ on $[0,1]$ by
  \[
    \Lambda_n(u)
    =-n\log(1+1/n)+1+\frac{23u-12}{24n}
    -\log\bigl(1+\widehat z_{n+1}(u)\bigr).
  \]
  For the fixed $n=s-1\in\{4,\ldots,k-2\}$, \eqref{eq:node-short-log} gives
  \[
    \log\frac{H_n}{p_0(1+\widehat z_{n+1}(a))}
    >\begin{cases}
      \Lambda_4(a)-(1-a)/70, & n=4,   \\
      \Lambda_n(a),          & n\ge5.
    \end{cases}
  \]
  For $u\ge0$, we have
  \[
    \log(1+u)\le Q(u):=\frac{u(6+u)}{6+4u}.
  \]
  Indeed, $Q(0)=0$ and
  \[
    Q'(u)-\frac1{1+u}
    =\frac{u^3}{(1+u)(2u+3)^2}\ge0.
  \]

  To apply endpoint estimates at the fixed parameter $a$, we will prove $\Lambda_n''(u)\le-1/(n+1)^2$ for every integer $n\ge4$ and every $u\in[0,1]$.
  For every integer $s\ge5$ and every $u\in[0,1]$, write
  \[
    1+\widehat z_s(u)=\frac{U_s(u)}{s-u}\text{ and }
    U_s(u)=s-\frac u{20}+\frac{u^2}{5s}.
  \]
  Then
  \[
    U_s''(u)U_s(u)-U_s'(u)^2
    =\frac{159}{400}+\frac{u}{50s}\left(1-\frac{4u}{s}\right)>0.
  \]
  Consequently,
  \[
    \frac{\rd^2}{\rd u^2}
    \log(1+\widehat z_s(u))
    \ge\frac1{(s-u)^2}\ge\frac1{s^2},
  \]
  and hence $\Lambda_n''(u)\le-1/(n+1)^2$ for every integer $n\ge4$ and every $u\in[0,1]$.

  First suppose $n=4$.
  The function $u\mapsto\Lambda_4(u)-(1-u)/70$ is concave on $[12/13,1]$, so it suffices to prove that both endpoint values exceed $1/4000$.
  Using $\log(1+u)\le Q(u)$, $Q(1/4)=25/112$, $\widehat z_5(12/13)=3849/17225$, and $\widehat z_5(1)=99/400$, we obtain
  \[
    \begin{aligned}
      \Lambda_4(12/13)-\frac{1-12/13}{70}
       & >\frac{92}{455}-Q\left(\frac{3849}{17225}\right) \\
       & >\frac{92}{455}-\frac{21}{104}
      =\frac1{3640}>\frac1{4000},                         \\
      \Lambda_4(1)
       & >\frac{149}{672}-Q\left(\frac{99}{400}\right)    \\
       & >\frac{149}{672}-\frac{31}{140}
      =\frac1{3360}>\frac1{4000}.
    \end{aligned}
  \]
  The bounds $Q(3849/17225)<21/104$ and $Q(99/400)<31/140$ follow by substituting into $Q(u)=u(6+u)/(6+4u)$.
  Concavity therefore implies $\Lambda_4(u)-(1-u)/70>1/4000$ for every $u\in[12/13,1]$.
  In particular, $\Lambda_4(a)-(1-a)/70>1/4000$ because $a\in(12/13,1)$.

  Now suppose $n\ge5$.
  Since $\Lambda_n''(u)\le-1/(n+1)^2$ for every $u\in[0,1]$, the function $u\mapsto\Lambda_n(u)-u(1-u)/(2(n+1)^2)$ is concave on $[0,1]$.
  Applying concavity at $u=a$ between the endpoints $0$ and $1$, we obtain
  \[
    \Lambda_n(a)
    \ge a\Lambda_n(1)
    +(1-a)\left[\Lambda_n(0)+\frac{a}{2(n+1)^2}\right].
  \]
  Applying $\log(1+u)\le Q(u)$ with $u=1/n$, we obtain
  \[
    \Lambda_n(0)
    >1-nQ(1/n)-\frac1{2n}
    =-\frac1{n(3n+2)}.
  \]
  Since $a\ge12/13$, we have
  \[
    \Lambda_n(0)+\frac{a}{2(n+1)^2}
    >\frac{5n^2-14n-13}{13n(3n+2)(n+1)^2}>0,
  \]
  where $5n^2-14n-13=5(n-4)^2+26(n-4)+11>0$ for $n\ge4$.
  Hence, $\Lambda_n(a)\ge a\Lambda_n(1)$.
  At $u=1$, we have $\widehat z_{n+1}(1)=(19n+23)/(20n(n+1))$.
  Applying $\log(1+u)\le Q(u)$ to both logarithms in the definition of $\Lambda_n$ gives
  \[
    \begin{aligned}
      \Lambda_n(1)
       & >1+\frac{11}{24n}-nQ(1/n)
      -Q\left(\frac{19n+23}{20n(n+1)}\right)               \\
       & =\frac3{6n+4}+\frac{11}{24n}
      -Q\left(\frac{19n+23}{20n(n+1)}\right)               \\
       & =\frac{n^2\{180(n-5)^2+621(n-5)+553\}+5145n+1886}
      {240n(n+1)(3n+2)(30n^2+49n+23)}
      >\frac1{100n^3}.
    \end{aligned}
  \]
  For the last inequality, the numerator exceeds $500n^2$ for $n\ge5$, while the denominator is less than $50000n^5$.
  Here we use $n+1<5n/4$, $3n+2<7n/2$, and $30n^2+49n+23<42n^2$.
  It follows that
  \[
    \Lambda_n(a)>\frac{a}{100n^3}>\frac1{500n^3}.
  \]

  Comparing $\Lambda_4(a)-(1-a)/70>1/4000$ for $n=4$ and $\Lambda_n(a)>1/(500n^3)$ for $n\ge5$ with \eqref{eq:node-short-tail}, we obtain, for every integer $n$ with $4\le n\le k-2$,
  \[
    \log\frac{H_n}{p_0(1+\widehat z_{n+1}(a))}>\frac{T_n}{p_0}\ge0.
  \]
  Exponentiating and using $\exp(u)\ge1+u$ for $u\ge0$ and $\widehat z_s(a)>0$, we obtain
  \[
    z_s=\frac{H_n}{p_0}-\frac{T_n}{p_0}-1
    >\widehat z_s(a)\left(1+\frac{T_n}{p_0}\right)
    \ge\widehat z_s(a).
  \]
  This proves \eqref{eq:node-unified-denominator}.

  It remains to consider $s=k$.
  Here $q=(k-1)/k$, and $\E(N-k+1)_+=0$ because $N\le k-1$.
  Using the binomial distribution of $N$, we obtain
  \[
    1-V_k=\E q^{k-1-N}=(\delta+aq)^{k-1}=(1-x)^{k-1}.
  \]
  Equation \eqref{eq:node-energy-linear} implies $r_*(W_k)=M_*(W_k)=1$.
  Thus,
  \[
    r_*(W_k)-\frac{19M_*(W_k)}{20}
    =\frac1{20}>\frac{x}{5},
  \]
  since $x<1/k\le1/6<1/4$.
  Thus, $r_*(W_s)-19M_*(W_s)/20>x/5$ for every integer $s$ with $4\le s\le k$.

  Fix an arbitrary $t\in[W_4,W_k]$.
  Choose an integer $s\in\{4,\ldots,k-1\}$ and $\theta\in[0,1]$ such that $t=(1-\theta)W_s+\theta W_{s+1}$.
  On $[W_s,W_{s+1}]$, the functions $r_*$ and $a^k-V_*$ are affine, and $a^k-V_*>0$ by \ref{item:profile-positive}.
  The reciprocal of a positive affine function is convex, so $M_*$ is convex and $r_*-19M_*/20$ is concave on the same interval.
  Applying concavity and then the margin at $W_s$ and $W_{s+1}$, we obtain
  \[
    \begin{aligned}
      r_*(t)-\frac{19M_*(t)}{20}
       & \ge(1-\theta)\left(r_*(W_s)-\frac{19M_*(W_s)}{20}\right)        \\
       & \quad+\theta\left(r_*(W_{s+1})-\frac{19M_*(W_{s+1})}{20}\right)
      >\frac{x}{5}.
    \end{aligned}
  \]
  This proves \ref{item:profile-margin}.
\end{proof}

\subsection{Proof of Claim~\ref{clm:large-defect-endpoint-positive}}
\label{app:large-defect-endpoint-positive}

\begin{proof}
  In \eqref{eq:large-defect-endpoint-functions}, write $s=\eta/a$ and $u=x_0/A$.
  Throughout, $(t,s,u)\in[0,1/6]\times[1/10,1]\times[0,1/3]$ and $\beta(s)=\sqrt{(1+s)/11}$.
  We have $\mathcal V_0(t,s,u)=s-1/20+13u/20+t(1-s)/6$ and $\mathcal V_1(t,s,u)=\mathcal V_0(t,s,u)+(1-s)/9$.

  We first prove $P_1>0$.
  For fixed $s,u$, the functions $Q_1$ and $R$ are nondecreasing in $t$, whereas
  \[
    \partial_t[(1-t)\mathcal V_1]
    =-\frac{19}{18}\left(s-\frac1{10}\right)
    -\frac{13u}{20}-\frac{t(1-s)}3\le0.
  \]
  Since $\beta(s)>0$, we have $P_1(t,s,u)\ge P_1(1/6,s,u)$.
  At $t=1/6$, we have
  \[
    \beta(s)\sqrt{\frac56\mathcal V_1(1/6,s,u)}
    =\sqrt{\frac{(1+s)(155s+16+117u)}{2376}}.
  \]
  Since $1+s$ and $155s+16+117u$ are positive and affine in $(s,u)$, concavity of $(r_1,r_2)\mapsto\sqrt{r_1r_2}$ on $(0,\infty)^2$ implies that $\beta(s)\sqrt{(5/6)\mathcal V_1(1/6,s,u)}$ is jointly concave in $(s,u)$.
  Moreover,
  \[
    \begin{aligned}
      Q_1(1/6,s,u) & =\left(\frac{39s}{40}-\frac{11u}{24}-\frac18\right)_+, \\
      R(1/6,s,u)   & =\frac{\max\{u,(7-5s)/32\}}3
      +\frac{u+(7-5s)/32}{14}.
    \end{aligned}
  \]
  Thus, $Q_1(1/6,s,u)$ and $R(1/6,s,u)$ are jointly convex, so $P_1(1/6,s,u)$ is jointly concave and
  \[
    P_1(t,s,u)\ge
    \min_{(s_0,u_0)\in\{1/10,1\}\times\{0,1/3\}}
    P_1(1/6,s_0,u_0).
  \]
  Substituting $(s,u)\in\{1/10,1\}\times\{0,1/3\}$ gives
  \[
    \begin{aligned}
      P_1(1/6,1/10,0)
       & =\sqrt{\frac7{480}}-\frac{221}{2688}
      >\frac1{10}-\frac{221}{2688}
      =\frac{239}{13440}>0,                                      \\
      P_1(1/6,1/10,1/3)
       & =\sqrt{\frac{47}{1440}}-\frac{1205}{8064}
      >\frac16-\frac{1205}{8064}
      =\frac{139}{8064}>0,                                       \\
      P_1(1/6,1,0)
       & =\sqrt{\frac{19}{132}}-\frac{17}{50}-\frac{17}{672}
      >\frac38-\frac{17}{50}-\frac{17}{672}
      =\frac{163}{16800}>0,                                      \\
      P_1(1/6,1,1/3)
       & =\sqrt{\frac{35}{198}}-\frac{251}{900}-\frac{281}{2016}
      >\frac{21}{50}-\frac{251}{900}-\frac{281}{2016}
      =\frac{29}{16800}>0.
    \end{aligned}
  \]
  Hence, $P_1>0$ on $[0,1/6]\times[1/10,1]\times[0,1/3]$.

  We now prove $P_0>0$.
  For fixed $t$, the function $\frac12\sqrt{(1-t)\mathcal V_0(t,s,u)}$ is jointly concave in $(s,u)$, whereas $Q_0$ and $R$ are jointly convex.
  Hence, $P_0$ is jointly concave in $(s,u)$, so
  \[
    P_0(t,s,u)\ge
    \min_{(s_0,u_0)\in\{1/10,1\}\times\{0,1/3\}}
    P_0(t,s_0,u_0).
  \]
  At $s=1$, we have $\mathcal V_0=\mathcal V_1$, $Q_0-Q_1=3/20$, $(1-t)\mathcal V_1\ge19/24$, and $\beta(1)=\sqrt{2/11}<3/7$.
  Therefore,
  \[
    \begin{aligned}
      P_0(t,1,u)-P_1(t,1,u)
       & >\frac1{14}\sqrt{\frac{19}{24}}-\frac3{50} \\
       & >\frac1{14}\cdot\frac89-\frac3{50}
      =\frac{11}{3150}>0.
    \end{aligned}
  \]
  Since $P_1>0$, we obtain $P_0(t,1,u)>0$.
  At $(s,u)=(1/10,1/3)$, we have $Q_0=0$, $E\le13/64$, and $(1-t)\mathcal V_0\ge(5/6)(4/15)=2/9$, so
  \[
    P_0(t,1/10,1/3)>
    \frac16-\frac19-\frac{1/3+13/64}{14}
    =\frac{139}{8064}>0.
  \]
  Finally, at $(s,u)=(1/10,0)$, we have $Q_0=1/10$, $R=17(2+27t)/1344$, and $(1-t)\mathcal V_0=(1-t)(1+3t)/20$.
  The function $t\mapsto(1-t)(1+3t)/20$ is positive and concave on $[0,1/6]$, so $t\mapsto P_0(t,1/10,0)$ is concave.
  At $t=0$ and $t=1/6$, we have
  \[
    \begin{aligned}
      P_0(0,1/10,0)   & >\frac1{10}-\frac1{25}-\frac{17}{672}>0, \\
      P_0(1/6,1/10,0) & =\frac{187}{67200}>0.
    \end{aligned}
  \]
  Thus, $P_0>0$ on $[0,1/6]\times[1/10,1]\times[0,1/3]$.
\end{proof}

\end{document}